\tikzset{
  altstackar/.style={decorate, decoration={show path construction,
    lineto code={
      \path (\tikzinputsegmentfirst); \pgfgetlastxy{\xstart}{\ystart}
      \path (\tikzinputsegmentlast); \pgfgetlastxy{\xend}{\yend}
      \path ($(0,0)!2.5pt!(\ystart-\yend,\xend-\xstart)$); \pgfgetlastxy{\xperp}{\yperp}
      \foreach \n[evaluate=\n as \k using .5*#1-\n+.5] in {1,...,#1}{
        \ifodd\n{\draw[->, shorten <=2pt, shift={($\k*(\xperp,\yperp)$)}](\xstart,\ystart)--(\xend,\yend);}
        \else{\draw[<-, shorten >=2pt, shift={($\k*(\xperp,\yperp)$)}](\xstart,\ystart)--(\xend,\yend);}\fi
      }
    }
  }}, altstackar/.default={1}
}
\DeclareSymbolFontAlphabet{\mathbb}{AMSb}
\DeclareSymbolFontAlphabet{\mathbbl}{bbold}
\DeclareMathOperator{\WCart}{WCart}
\DeclareMathOperator*{\Cone}{Cone}
\DeclareMathOperator{\Qcoh}{Qcoh}
\newcommand{\Prism}{{\mathlarger{\mathbbl{\Delta}}}}
\setlist[enumerate]{itemsep=2pt,parsep=2pt,before={\parskip=2pt}}
\newcommand{\cosimp}[3]{\xymatrix@1{#1 \ar@<.4ex>[r] \ar@<-.4ex>[r] & {\ }#2 \ar@<0.8ex>[r] \ar[r] \ar@<-.8ex>[r] & {\ } #3 \ar@<1.2ex>[r] \ar@<.4ex>[r] \ar@<-.4ex>[r] \ar@<-1.2ex>[r] & \cdots }}
\newcommand{\adjunction}[4]{\xymatrix@1{#1{\ } \ar@<0.3ex>[r]^{ {\scriptstyle #2}} & {\ } #3 \ar@<0.3ex>[l]^{ {\scriptstyle #4}}}}
\begin{document}

\numberwithin{equation}{section}
\newtheorem{theorem}{Theorem}[subsection]
\newtheorem*{theorem*}{Theorem}
\newtheorem*{definition*}{Definition}
\newtheorem{proposition}[theorem]{Proposition}
\newtheorem{lemma}[theorem]{Lemma}
\newtheorem{corollary}[theorem]{Corollary}

\theoremstyle{definition}
\newtheorem{definition}[theorem]{Definition}
\newtheorem{question}[theorem]{Question}
\newtheorem{remark}[theorem]{Remark}
\newtheorem{warning}[theorem]{Warning}
\newtheorem{example}[theorem]{Example}
\newtheorem{notation}[theorem]{Notation}
\newtheorem{convention}[theorem]{Convention}
\newtheorem{construction}[theorem]{Construction}
\newtheorem{claim}[theorem]{Claim}
\newtheorem{assumption}[theorem]{Assumption}

\crefname{assumption}{assumption}{assumptions}
\crefname{construction}{construction}{constructions}

\def\todo#1{\textcolor{red}%
{\footnotesize\newline{\color{red}\fbox{\parbox{\textwidth}{\textbf{todo: } #1}}}\newline}}

\def\commentbox#1{\textcolor{red}%
{\footnotesize\newline{\color{red}\fbox{\parbox{\textwidth}{\textbf{comment: } #1}}}\newline}}

\newcommand{\dR}{{\mathrm{dR}}}
\newcommand{\qc}{q-\mathrm{crys}}
\newcommand{\Ainf}{{A_{\mathrm{inf}}}}
\newcommand{\Ain}{{\mathbb{A}_{\mathrm{inf}}}}
\newcommand{\Shv}{\mathrm{Shv}}
\newcommand{\Vect}{\mathrm{Vect}}
\newcommand{\et}{\mathrm{\acute{e}t}}
\newcommand{\eh}{\mathrm{\acute{e}h}}
\newcommand{\proet}{\mathrm{pro\acute{e}t}}
\newcommand{\crys}{\mathrm{crys}}
\renewcommand{\inf}{\mathrm{inf}}
\newcommand{\Hom}{\mathrm{Hom}}
\newcommand{\RHom}{\mathrm{RHom}}
\newcommand{\Sch}{\mathrm{Sch}}
\newcommand{\fSch}{\mathrm{fSch}}
\newcommand{\Rig}{\mathrm{Rig}}
\newcommand{\Spf}{\mathrm{Spf}}
\newcommand{\Spa}{\mathrm{Spa}}
\newcommand{\Spec}{\mathrm{Spec}}
\newcommand{\Bl}{\mathrm{Bl}}
\newcommand{\perf}{\mathrm{perf}}
\newcommand{\Perf}{\mathrm{Perf}}
\newcommand{\Pic}{\mathrm{Pic}}
\newcommand{\qsyn}{\mathrm{qsyn}}
\newcommand{\perfd}{\mathrm{perfd}}
\newcommand{\arc}{{\rm arc}}
\newcommand{\conj}{\mathrm{conj}}
\newcommand{\rad}{\mathrm{rad}}
\newcommand{\Id}{\mathrm{Id}}
\newcommand{\coker}{\mathrm{coker}}
\newcommand{\im}{\mathrm{im}}
\newcommand{\TXA}{X_{/A}^{\Prism}}
\newcommand{\TXAn}{X_{/A, n}^{\Prism}}
\newcommand{\Cond}{\mathrm{Cond}}
\newcommand{\CHaus}{\mathrm{CHaus}}
\newcommand{\cg}{\mathrm{cg}}
\newcommand{\topo}{\mathrm{top}}
\newcommand{\Top}{\mathrm{Top}}
\newcommand{\ac}{\mathbb{A}_{\mathrm{crys}}}
\newcommand{\bc}{\mathbb{B}_{\mathrm{crys}}^+}
\newcommand{\oc}{\mathcal{O}_{\Prism}[[\frac{\mathcal{I}_{\Prism}}{p}]]}
\newcommand{\ocn}{\mathcal{O}_{\Prism}[[\frac{\mathcal{I}_{\Prism}}{p}]]/(\mathcal{I}_{\Prism}/p)^n}
\newcommand{\ocm}{\mathcal{O}_{\Prism}[[\frac{\mathcal{I}_{\Prism}}{p}]]/(\mathcal{I}_{\Prism}/p)^m}
\newcommand{\Ab}{\mathrm{Ab}}
\newcommand{\Set}{\mathrm{Set}}
\newcommand{\Pro}{\mathrm{Pro}}
\newcommand{\Ind}{\mathrm{Ind}}
\newcommand{\Sm}{\mathrm{SmRig}}
\newcommand{\HTlog}{\operatorname{HTlog}}
\newcommand{\HT}{\operatorname{HT}}
\newcommand{\Mod}{\mathrm{Mod}}
\newcommand{\AT}{A\left\langle \underline{T}^{}\right\rangle}
\newcommand{\MIC}{\mathrm{MIC}}
\newcommand{\nil}{\mathrm{Nil}}
\newcommand{\bdr}{{\mathbb{B}_{\mathrm{dR}}^{+}}}
\newcommand{\an}{{\mathrm{an}}}
\newcommand{\uHom}{\underline{\mathrm{Hom}}}
\newcommand{\Sym}{\operatorname{Sym}}
\newcommand{\Lie}{\operatorname{Lie}}
\newcommand{\fib}{\operatorname{fib}}
\newcommand{\Eq}{\operatorname{Eq}}
\newcommand{\TX}{\widetilde{X^{\Prism}}}
\newcommand{\TXn}{\widetilde{X^{\Prism}_{[n]}}}
\newcommand{\TXL}{\widetilde{\Spf(\mathcal{O}_L)^{\Prism}_{[n]}}}
\newcommand{\TXM}{\widetilde{X^{\Prism}_{[n-1]}}}
\newcommand{\qd}{q\text{-}\nabla}
\newcommand{\qh}{q\text{-}\mathrm{Higgs}}
\newcommand{\qe}{q\text{-}\mathrm{EHIG}}
\newcommand{\TXV}{\widetilde{X^{\Prism}_{[1]}}}
\newcommand{\XY}{\Prism_X(Y)}
\newcommand{\TR}{\widetilde{\Spf(R^+)^{\Prism}}}
\newcommand{\TRn}{\widetilde{\Spf(R^+)_n^{\Prism}}}
\setcounter{tocdepth}{1}

\title{A stacky approach to prismatic crystals via $q$-prism charts}
\author{Zeyu Liu}
\address{Department of Mathematics, University of California Berkeley, 970 Evans Hall, MC 3840 Berkeley, CA 94720}
\email{zeliu@berkeley.edu}
\begin{abstract}
Let $Y$ be a locally complete intersection over $\mathcal{O}_K$, the ring of integers of a $p$-adic field containing a $p$-power root of unity $\zeta_p$. We classify $\mathcal{D}(Y_{\Prism}, \mathcal{O}_{\Prism})$ by studying quasi-coherent complexes on $Y^{\Prism}$ (the prismatization of $Y$) via $q$-prism charts. We also develop a Galois descent mechanism to remove the assumption on $\mathcal{O}_K$. As an application, we classify $\mathcal{D}(\WCart)$ and give a purely algebraic calculation of the cohomology of the structure sheaf on $(\mathbb{Z}_p)_{\Prism}$. Along the way, for $Y$ a locally complete intersection over $\overline{A}$ with $A$ lying over a $q$-prism, we classify $\mathcal{D}(Y_{/A}^{\Prism}, \mathcal{O}_{\Prism})$, i.e. quasi-coherent complexes on the relative prismatization of $Y$.
\end{abstract}

\maketitle
\setcounter{tocdepth}{2}
\tableofcontents
\section{Introduction}
In this paper, we work with a $p$-adic field $K$. More precisely, let $\mathcal{O}_K$ be a complete discrete valuation ring of mixed characteristic with fraction field $K$ and perfect residue field $k$ of characteristic $p$. 

Introduced by Bhatt and Scholze in \cite{bhatt2022prisms}, prismatic cohomology theory turns out to be a ground breaking work in $p$-adic geometry and $p$-adic Hodge theory. It connects various known $p$-adic cohomology theories via specialization, and when passing to coefficients, various prismatic crystals are closely related to $p$-adic Galois representations and $p$-adic local systems. For example, in \cite{bhatt2021prismatic} Bhatt-Scholze have identified $\Vect^{\varphi}(X_{\Prism},\mathcal{O}_{\Prism})$ with the category of crystalline $\mathbb{Z}_p$-local systems on the generic fiber of $X$ for $X=\Spf(\mathcal{O}_K)$, giving new methods of studying $\mathbb{Z}_p$-crystalline representations of $G_K$, a key object in the area of $p$-adic Hodge theory. By working with analytic prismatic $F$-crystals instead, such a result was later generalized to $X$ a smooth $p$-adic formal scheme over $\Spf(\mathcal{O}_K)$ due to the work of Guo-Reinecke \cite{guo2024prismatic} or that of Du-Liu-Moon-Shimizu \cite{du2024completed}. Hence it becomes both important and urgent to understand the underlying prismatic crystals, viewed as objects in $\mathcal{D}(X_{\Prism}, \mathcal{O}_{\Prism})$.

Later prismatic cohomology theory and its variants have been packaged into coherent information on certain stacks, furnished by the pioneering work of Drinfeld \cite{drinfeld2020prismatization} and Bhatt-Lurie \cite{bhatt2022absolute, bhatt2022prismatization}, \cite{bhatt2023gauge}. More precisely, to any (quasi-syntomic) $p$-adic formal scheme $X$, they associate $X^{\Prism}$, the prismatization of $X$, whose quasi-coherent complexes parameterize prismatic crystals. Also, they introduce $X^{\mathcal{N}}$ and $X^{\mathrm{Syn}}$, which admit canonical morphisms $X^{\Prism}\to X^{\mathcal{N}}\to X^{\mathrm{Syn}}$ and further encode information on Nygaard filtrations and Frobenius structures. Under such a stacky language, Bhatt and Lurie have proved that the category of reflexive $F$-gauges on $\mathcal{O}_K^{\mathrm{Syn}}$ is equivalent to the category of lattices in crystalline
representations of $G_K$. Also, recently in  \cite{terentiuk2024prismatic} Terentiuk-Vologodsky-Xu have established an equivalence between a subcategory of quasi-coherent complexes on $W(k)^{\mathrm{Syn}}$ and certain derived category of Fontaine-Laffaille modules. For the purpose of understanding such $F$-gauges, we need to know more about their underlying complexes, obtained by pulling back along $X^{\Prism}\to X^{\mathrm{Syn}}$. It thus becomes a fundamental question in $p$-adic Hodge theory to understand $\mathcal{D}(X^{\Prism})$. 

Towards this direction, Bhatt and Lurie \cite{bhatt2022absolute,bhatt2022prismatization} have done a detailed study on the structure of the Hodge-Tate stack of $X$, which is a closed substack inside $X^{\Prism}$ and is denoted as $X^{\HT}$. Their work identifies $X^{\HT}$ with the classifying stack of certain groupoids for $X$ a smooth $p$-adic formal scheme over $\mathcal{O}_K$. Such a geometric description directly leads to the classification of (derived) Hodge-Tate crystals on the prismatic site of $X$, recorded in  \cite{bhatt2022absolute, bhatt2022prismatization} and \cite{anschutz2022v,anschutz2023hodge}.

    On the other hand, while the geometry of the Hodge-Tate locus $X^{\HT}$ is usually well understood, little is known about the structure of $X^{\Prism}$, even in the simplest case $X=\Spf(\mathcal{O}_K)$, leaving the description of $\mathcal{D}(X^{\Prism})$ still mysterious\footnote{Indeed, the only known result so far is Gros-Le Stum-Quir\'os's \cite{gros2023absolute} classification for $\Vect(X_{\Prism},\mathcal{O}_{\Prism})$ when $X=W(k)[\zeta_p]$.
    }. In \cite{liu2024prismatization} we made partial progress on this question. More precisely, we constructed certain nilpotent thickenings of $X^{\HT}=X_{1}^{\Prism}$ inside $X^{\Prism}=X^{\Prism}_{\infty}$, denoted as $X_{n}^{\Prism}$ in \textit{loc. cit.}, such that $\mathcal{D}(X_{n}^{\Prism})\simeq \mathcal{D}(X_{\Prism},\mathcal{O}_{\Prism}/\mathcal{I}_{\Prism}^n)$. For $X=\Spf(W(k))$ and $n\leq p$, by pulling back to the Breuil-Kisin chart along $\rho: \mathfrak{S}\to \Spf(W(k))^{\Prism}$, we then classify $\mathcal{D}(\Spf(W(k)_{n}^{\Prism})$ using linear algebraic data (roughly it means that $\rho^*\mathcal{E}$ is equipped with a monodromy operator $\theta_{\mathcal{E}}$ for $\mathcal{E}\in \mathcal{D}(\Spf(W(k)_{n}^{\Prism})$), see \cite[Theorem 1.1]{liu2024prismatization} for details. For a general $\mathcal{O}_K$ and $n$, we further observe that a monodromy operator exists on $\rho^*\mathcal{E}$ only when restricted to a slight shrinking of $\Spf(\mathcal{O}_K)^{\Prism}_n$, namely the locus obtained by adding $\frac{\mathcal{I}_{\Prism}}{p}$. Moreover, the proof in \cite{liu2024prismatization} actually implies that this is \textit{optimal} if working with the Breuil-Kisin prism. As a consequence, we classify $\mathcal{D}((\mathcal{O}_{K})_{\Prism}, \ocn)$ for all $n$ (hence also $\mathcal{D}((\mathcal{O}_{K})_{\Prism}, \oc)$) in \cite[Theorem 1.6]{liu2024prismatization}, which could be viewed as integral models for (truncated)-de Rham prismatic crystals over $\mathcal{O}_K$ studied in \cite{liu2023rham}. 

    After realizing the mystery of studying $\mathcal{D}(X^{\Prism})$ via Breuil-Kisin prism charts, it is natural to ask whether the strategy developed in \cite{liu2024prismatization} can be applied to some other charts to get a complete understanding of $\mathcal{D}(X^{\Prism})$, for $X$ a general quasi-syntomic $p$-adic scheme. Motivated by the recent work of Michel Gros, Bernard Le Stum and Adolfo Quir\'os \cite{gros2023absolute}, which classifies $\operatorname{Vect}((W(k)[\zeta_p])_{\Prism}, \mathcal{O}_{\Prism})$ (hence also $\operatorname{Vect}((W(k)[\zeta_p])_{\Prism}, \mathcal{O}_{\Prism}/{\mathcal{I}}^{m})$ for all $m$) via absolute $q$-calclus, we turn our attention to the $q$-prism charts in this paper. Our first result is the classification of $\mathcal{D}(\Spf(\mathcal{O}_K)^{\Prism})$ for $\mathcal{O}_K=W(k)[\zeta_{p^{\alpha+1}}]$ ($\alpha\geq 0$), the cyclotomic ring obtained by adjoining $p^{\alpha+1}$-th root of unity to $W(k)$. To state it, we fix some notations first.
    \begin{notation}
        Let $X=\Spf(W(k)[\zeta_{p^{\alpha+1}}])$ and $(A, d)=(W(k)[[q-1]], [p]_{q^{p^\alpha}})\in X_{\Prism}$. Let $\gamma_A: A\to A$ be the $W(k)$-linear ring automorphism sending $q$ to $q^{p^{\alpha+1}+1}$, then $\partial_A:=\frac{\gamma_A-\Id}{q(q^{p^\alpha}-1)}$ is a $\gamma_A$-derivation of $A$, i.e. $\partial_A(x_1x_2)=\gamma_A(x_1)\partial_A(x_2)+\partial_A(x_1)x_2$. We define the \textit{Ore extension} $A[\partial; \gamma_A, \partial_A]$ to be the noncommutative ring obtained by giving the ring of polynomials $A[\partial]$ a new multiplication law, subject to the identity
    $$\partial r=\gamma_A(r)\partial+\partial_A(r), ~~~\forall r\in A.$$
    \end{notation}
    \begin{theorem}[{\cref{thmt.main classification}}. Quasi-coherent complexes on the prismatization of cyclotomic rings]\label{intro.main thm1}
        Assume that $p>2$ or $\alpha>0$. With the preceding notations, for $n\in \mathbb{N}\cup \{\infty\}$, the pullback along the covering $\rho: \Spf(A/d^n)\to X_n^{\Prism}$\footnote{By abuse of notation, when $n=\infty$, this just means $\rho: \Spf(A)\to X^{\Prism}$.} induces a fully faithful functor 
    \begin{align*}
        &\beta_n^{+}: \mathcal{D}(X_n^{\Prism}) \rightarrow  \mathcal{D}(A/d^n[\partial; \gamma_A, \partial_A]), \qquad \mathcal{E}\mapsto (\rho^{*}(\mathcal{E}),\partial_{\mathcal{E}}),
    \end{align*}
    whose essential image consists of those objects $M\in \mathcal{D}(A/d^n[\partial; \gamma_A, \partial_A])$ 
     satisfying the following pair of conditions:
    \begin{itemize}
        \item $M$ is $(p,d)$-complete.
        \item The action of $\partial$ on the cohomology $\mathrm{H}^*(M\otimes^{\mathbb{L}}k)$ \footnote{Here the derived tensor product means the derived base change along $A/d^n\to A/(d,q-1)=k$.} is locally nilpotent. 
    \end{itemize}
    In particular, per \cref{rem. cohomology of the structure sheaf}, for $\mathcal{E}\in \mathcal{D}(X_n^{\Prism})$,
    \begin{equation*}
        \mathrm{R} \Gamma(X_n^{\Prism}, \mathcal{E})\xrightarrow{\simeq} \fib(\rho^*\mathcal{E}\stackrel{\partial_{\mathcal{E}}}{\longrightarrow} \rho^*\mathcal{E}).
    \end{equation*}
    \end{theorem}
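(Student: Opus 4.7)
The plan is to present $X_n^{\Prism}$ as a formal-groupoid quotient of $\Spf(A/d^n)$ along the chart $\rho$, descend quasi-coherent complexes along it, and match descent data with modules over the Ore extension.

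First, I would describe the Čech nerve of $\rho$. Using the universal property of $X_n^{\Prism}$ and the explicit shape of the $q$-prism $(A,d)=(W(k)[[q-1]],[p]_{q^{p^\alpha}})$, the self-product $\Spf(A/d^n)\times_{X_n^{\Prism}}\Spf(A/d^n)$ should be a torsor over $\Spf(A/d^n)$ under a formal $1$-parameter group, with the two projections differing by the automorphism $\gamma_A:q\mapsto q^{p^{\alpha+1}+1}$. Infinitesimally, this action is generated by $\gamma_A-\Id$, equivalently, after normalizing by the unit $q(q^{p^\alpha}-1)$, by the twisted derivation $\partial_A$. The same computation, iterated, identifies the higher degrees of the Čech nerve with the nerve of this formal-group action.

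Next, I would translate descent data into Ore-extension modules. By flat descent for stacks, $\mathcal{D}(X_n^{\Prism})$ is the totalization of the cosimplicial $\infty$-category attached to the Čech nerve; a descent datum on $\rho^*\mathcal{E}$ unfolds into an isomorphism between the two pullbacks which, via the preceding step, linearizes to an operator $\partial_{\mathcal{E}}$ on $\rho^*\mathcal{E}$ obeying precisely the defining relation $\partial r=\gamma_A(r)\partial+\partial_A(r)$ of the Ore extension. The higher cosimplicial data is then determined by iterated applications of $\partial_{\mathcal{E}}$, yielding full faithfulness of $\beta_n^+$.

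The main obstacle, and the heart of the argument, is pinning down the essential image. The $(p,d)$-completeness is structural, but the local-nilpotence condition on $\mathrm{H}^*(M\otimes^{\mathbb{L}}k)$ is the crucial integrability hypothesis: it is what allows one to exponentiate the infinitesimal derivation $\partial$ into a convergent, genuine descent datum along the formal-group action, via a $q$-analog of a Taylor series in $\partial$. The subtle point is that such a series converges only after checking nilpotence at the closed fiber $(d,q-1)$; $(p,d)$-adic completeness then propagates this to a genuine action on all of $M$. Granting such an integrability lemma, the quasi-inverse to $\beta_n^+$ is built, and the cohomology formula $\mathrm{R}\Gamma(X_n^{\Prism},\mathcal{E})=\fib(\partial_{\mathcal{E}})$ follows as an immediate consequence: invariants under the formal-group action agree with derived invariants under its infinitesimal generator.
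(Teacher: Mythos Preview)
Your proposal has a genuine gap at its first step. The claim that $\Spf(A/d^n)\times_{X_n^{\Prism}}\Spf(A/d^n)$ is a torsor under a formal $1$-parameter group, with projections related by the single automorphism $\gamma_A$, is neither justified nor correct as stated. The paper stresses that for $n>1$ this self-product is ``too complicated to understand'' directly, and in Section~3.4 it shows that the closest candidate group quotient $[\Spf(A)/(1+p^{\alpha+1}\mathbb{Z}_p)^\times]$ (where $\gamma_A$ is the action of the topological generator $1+p^{\alpha+1}$) is \emph{not} $X^{\Prism}$; pullback along that map even fails to be fully faithful (\cref{rem. decalage}). Your subsequent plan to build a quasi-inverse by an ``integrability lemma'' exponentiating $\partial$ into a convergent descent datum is precisely the cocycle-checking reconstruction the paper explicitly sets out to avoid (cf.\ \cref{intro. rmk advantage}), and without the \v{C}ech nerve in hand it has no clear target. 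Finally, you treat the cohomology formula as a corollary of full faithfulness, but in the paper the logic runs the other way.

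The paper's route is quite different and never computes the \v{C}ech nerve. It works only with a first-order thickening $R=A[\epsilon]/(\epsilon^2-q(q^{p^\alpha}-1)\epsilon)$ and an explicit homotopy $\gamma_{b,c}$ between two maps $\Spf(R)\to X^{\Prism}$ (\cref{propt.key automorphism of functors}); this suffices to \emph{define} $\partial_{\mathcal{E}}$ but not to run descent. The cohomology formula $\mathrm{R}\Gamma(X_n^{\Prism},\mathcal{E})\simeq\fib(\partial_{\mathcal{E}})$ is then proved directly, by d\'evissage to the Hodge--Tate locus where $X^{\HT}\simeq BG_\pi$ with $G_\pi\cong\mathbb{G}_a^\sharp$, and an explicit hand computation showing that $\partial$ on $\rho_*\mathcal{O}_X$ has kernel $\mathcal{O}_K$ and is surjective (\cref{lem. calculate partial on HT}; this is where $p>2$ or $\alpha>0$ is used). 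Full faithfulness follows by combining this formula with the free resolution $A\simeq[A[\partial;\gamma_A,\partial_A]\xrightarrow{\cdot\partial}A[\partial;\gamma_A,\partial_A]]$ and the fact that $\mathcal{D}(X_n^\Prism)$ is generated by the twists $\mathcal{I}^k$ (\cref{propt.generation}). The essential image is then pinned down not by integrating $\partial$ but by a generation argument \`a la Bhatt--Lurie: one checks the nilpotence condition on the generators $\beta_n^+(\mathcal{I}^k)$ (where $\partial$ acts by a multiple of $e=d'(q)$, which vanishes in $k$), and shows that any nonzero object in the proposed target admits a nonzero map from some $\beta_n^+(\mathcal{I}^k)[m]$.
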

\begin{remark}\label{intro.remark compare with work}
\begin{itemize}
    \item \cref{intro.main thm1} fails for $p=2$ and $\alpha=0$. Indeed, the assumption that $p>2$ or $\alpha>0$ precisely separates out the ramified case and Bhatt-Lurie's work \cite{bhatt2022absolute,bhatt2022prismatization} shows that $\Spf(\mathcal{O}_K)^{\HT}$ for $\mathcal{O}_K$ ramified behaves quite differently from $\Spf(W(k))^{\HT}$. It is still \textit{false} when $p=2, \alpha=0$ even if we modify the nilpotence condition in the statement, namely instead requiring $\partial^2-\partial$ being locally nilpotent. Actually, the failure already shows up on the Hodge-Tate locus for cohomology reasons, see \cref{rem. failure for p=2} for details.
    \item One can restrict \cref{intro.main thm1} to perfect complexes (resp. vector bundles) to get the corresponding classification for $\Perf(X_n^{\Prism})$ (resp. $\Vect(X_n^{\Prism})$), see \cref{corot.perfect version of the remain theorem} for details.
    \item Our results are new for $\alpha>0$. When $\alpha=0$ and restricted to the abelian level (i.e. working with vector bundles), \cite{gros2023absolute} identifies $\operatorname{Vect}((W(k)[\zeta_p])_{\Prism}, \mathcal{O}_{\Prism})$ with the category of finite projective $A$ modules equipped with a $q(p)$-connection, denoted as $\nabla_{q(p)} (A)$ in \textit{loc.cit.}. Given $(M,\partial_M)\in \nabla_{q(p)} (A)$, one can check that the Leibnitz rule on $\partial_{M}$ given in \cite{gros2023absolute} precisely promotes such a pair to an $A[\partial; \gamma_A, \partial_A]$-module, whose underlying module is just $M$ and $\partial$ acts on it via $\partial_M$. In this sense, our \cref{intro.main thm1} is a generalization of the main theorem in \cite{gros2023absolute} to derived coefficients when $\alpha=0$. However, our methods are different. While they use the prismatic site, we adopt the perspective furnished by Drinfeld and Bhatt–Lurie’s stacks as well as the strategy working beyond the Hodge-Tate locus developed in \cite{liu2024prismatization}.
    \item When $n=1$, note that $\mathcal{E}\in \mathcal{D}(X^{\HT})$ is equipped with two operators: the $q$-connection $\partial_{\mathcal{E}}$ from \cref{intro.main thm1} (constructed via the $q$ prism) as well as the Sen operator $\theta_{\mathcal{E}}$ from \cite[Theorem 2.5]{anschutz2022v} (constructed utilizing the Breuil-Kisin prism), hence so is $\rho^*\mathcal{E}$. The relations between the action of $\partial_{\mathcal{E}}$ and $\theta_{\mathcal{E}}$ on 
    $\rho^*\mathcal{E}$ can be summarized as follows:
    \begin{equation*}
\partial_{\mathcal{E}}=d^{\prime}(q)\cdot\frac{(1+p^{\alpha+1})^{\theta_{\mathcal{E}}/E^{\prime}(\pi)}-1}{p^{\alpha+1}}.
    \end{equation*}
    see \cref{rem. compare two operators} for details.
    \item In $p$-adic Hodge theory, the existence of the Frobenius structure typically forces the monodromy operator to be nilpotent. Taking \cref{intro.main thm1} as an input, arguing similarly as that in \cite[Corollary 10.9, Corollary 10.11]{gros2023absolute} (which treats the case $\alpha=0$), one can show that the category of $q$-connections in vector bundles over $A$ equipped with an additional Frobenius structure is equivalent to the category of prismatic $F$-crystals in vector bundles on $(\mathcal{O}_K)_{\Prism}$ for $\mathcal{O}_K=(W(k)[\zeta_{p^{\alpha+1}}]$, hence also parameterize crystalline $\mathbb{Z}_p$-representations of $G_K$ by invoking \cite[Theorem 5.6]{bhatt2021prismatic}.
\end{itemize}
\end{remark}

We now explain the idea of proving \cref{intro.main thm1} in more detail, as this strategy works uniformly to prove several upcoming theorems. For a moment we assume $X$ is a smooth affine $p$-adic formal scheme and $n\in \mathbb{N}\cup \{\infty\}$.

As pointed out at the very beginning, it's typically difficult to describe the structure of $X_{n}^{\Prism}$ unless $n=1$ (For $X=\Spf(W(k))$, we do know the structure of $X^{\Prism}_n$ for $n\leq p$ due to a result of Sasha Petrov, see \cite[Proposition 3.17]{liu2024prismatization}). This roughly means that it is hard to understand $\square$, the fiber product of the following diagram  
\[ \xymatrix{ \square \ar[r] \ar[d] & \Spf(B/J^n) \ar[d]^{\eta} \\
\Spf(B/J^n) \ar[r]^{\eta} & X^{\Prism}_n}\]
for any transversal prism $(B,J)\in X_{\Prism}$ covering the final object in the topos and $n>1$, here $\eta_n$ is the induced covering map. When translating this picture into the prismatic site, this means $B^{(1)}/J^n$ is too complicated to understand for $n>1$ \footnote{When $n=1$, it is always a divided power polynomial algebra over $B/J$ due to the work of Tian \cite{tian2023finiteness}.}, where $B^{(1)}$ is the self-product of $(B,J)$ in $X_{\Prism}$. All of the known work for classifying various prismatic crystals via the prismatic site relies on an explicit calculation of $B^{(1)}$ (and its variants) for suitable $B$, from which they deduce a Sen/monodromy operator on the evaluation of prismatic crystals at $(B,J)$ by identifying prismatic crystals with stratification over the simplicial diagram $B^{(\bullet)}$, which (plus certain nilpotence condition on this operator) is essentially enough for reconstructing the corresponding prismatic crystals. To name a few, \cite{morrow2010generalised}, \cite{tian2023finiteness}, \cite{gao2023hodge}, \cite{min2022p}, \cite{ogus2022crystalline}, \cite{liu2023rham}, \cite{gao2022rham}, \cite{gros2023absolute}, \cite{tsuji2024prismatic}, \cite{gao2024prismatic}.

A key observation in \cite{liu2024prismatization}, which is inspired by the recent work of Bhatt-Lurie \cite{bhatt2022absolute} as well as that of Anschütz-Heuer-Le Bras \cite{anschutz2022v} for $n=1$, is that to produce a monodromy operator on $\eta^{*}\mathcal{E}$ for $\mathcal{E}\in \mathcal{D}(X_n^{\Prism})$, it actually doesn't require the full understanding of $\square$. Instead, it suffices to work with an infinitesimal neighborhood of the diagonal embedding $\Spf(B/J^n)\xhookrightarrow{} \square$. More precisely, as long as we can construct certain nontrivial ring homomorphism $\psi=\Id+\epsilon \nabla: B/J^n\to B/J^n\oplus B/J^n\cdot \epsilon$\footnote{The algebraic structure on the target is to be determined later. Indeed, $\epsilon^2$ is not always $0$ unless we work with the Breuil-Kisin prism.} and specify an isomorphism $\gamma$ between two functors $\eta\circ \psi$ and $\eta\circ \iota$\footnote{Here $\iota: B/J^n\to B/J^n\oplus B/J^n\cdot \epsilon$ is the canonical inclusion.}, i.e. the following commutative diagram 
\[\xymatrixcolsep{5pc}\xymatrix{\Spf(B/J^n\oplus B/J^n\cdot \epsilon)\ar[d]^{\iota}\ar[r]^{\psi}& \Spf(B/J^n)\ar@{=>}[dl]^{\gamma} \ar[d]_{}^{\eta}
\\\Spf(B/J^n)  \ar^{\eta}[r]&X_n^{\Prism},}\]
then for any $\mathcal{E}\in \mathcal{D}(X_n^{\Prism})$, $\gamma$ will induce an isomorphism  $$\gamma_{}: \psi^{*}\eta^{*}(\mathscr{E}) \stackrel{\simeq}{\longrightarrow} \iota^{*}\eta^{*}\mathscr{E},$$
which essentially leads to a monodromy operator $\nabla_{\mathcal{E}}: \eta^{*}\mathcal{E}\to \eta^{*}\mathcal{E}$ satisfying certain Leibnitz rules which depends on the algebraic structure on $B/J^n\oplus B/J^n\cdot \epsilon$. 

From such a uniform stacky point of view, 
\begin{itemize}
    \item In \cite{liu2024prismatization}, we considered the case $X=\Spf(W(k))$ and constructed such a diagram for the Breuil-Kisin prism $(B,J)=(W(k)[[\lambda]], (\lambda))$ when $n\leq p$ (this bound is optimal, see \cite[Remark 3.2]{liu2024prismatization}). Also, again using the Breuil-Kisin prism, for a general $X=\Spf(\mathcal{O}_K)$ and $n\in \mathbb{N}\cup \{\infty\}$, we constructed such a diagram for $\TXn$ (see \cite[Definition 4.3]{liu2024prismatization}). Such constructions essentially lead to the classification results \cite[Theorem 1.1, Theorem 1.6]{liu2024prismatization} stated at the beginning of the introduction. 
    \item Let $X=\Spf(W(k)[\zeta_{p^{\alpha+1}}])$ and $(B, J)=(W(k)[[q-1]], [p]_{q^{p^\alpha}})\in X_{\Prism}$. We construct the above diagram for $\psi: B\to B[\epsilon]/(\epsilon^2-q(q^{p^\alpha}-1)\epsilon)$, from which we obtain the $q$-connections in \cref{intro.main thm1}. The complexity of the $q$-twisted Leibnitz rule comes from the complicated algebraic structure on the target of $\psi$, which is a new phenomenon when working with the $q$-prism. 
    \item When $X=\Spf(R)$ is small affine over $\overline{A}$ (in the sense of Construction \ref{intro.con}) with $(A,d)$ lying over a $q$-prism, we construct the above diagram for $\psi: \Tilde{R}\to \Tilde{R}\oplus \epsilon \Omega_{\Tilde{R}/A}$ with $(\epsilon dT_i)^2=(q-1)T_i\cdot\epsilon dT_i$ on the target, from which we extract the $q$-Higgs derivations in \cref{intro.main 3}.
\end{itemize}
\begin{remark}
    Comparing the arithmetic case and the geometric case above, we see $q$ plays the role of a coordinate $T_i$. 
\end{remark}
\begin{remark}\label{intro. rmk advantage}
    Beyond the benefit of dealing with derived coefficients uniformly, there are several other advantages of developing such a stacky method.
    \begin{itemize}
    \item First, it's usually much easier to observe certain integral structures from the stacky perspective. For example, in \cite{liu2024prismatization} we were able to classify $\mathcal{D}(\Spf(W(k)_{n}^{\Prism})$ for $n\leq p$ as well as $\mathcal{D}((\mathcal{O}_{K})_{\Prism}, \ocn)$ for all $n$ and $\mathcal{O}_{K}$, while working with prismatic cite in \cite{liu2023rham} and \cite{gao2022rham} only deals with the isogeny category $\Vect((\mathcal{O}_K)_{\Prism},\mathcal{O}_{\Prism}/\mathcal{I}_{\Prism}^n[\frac{1}{p}])$ (i.e. with $p$-inverted) for $n\geq 2$ and it's unclear to see how to refine arguments there to integral levels.
    \item  Moreover, the stacky approach allows us to work beyond the smooth case without much extra effort, cf. \cref{intro.main 3}, \cref{intro.main thm4}, which is even necessary to understand $\mathcal{D}(X^{\Prism})$ for $X=\Spf(\mathcal{O}_K)$ a point, see the discussion after \cref{intro. rem cohomology} for details. On the other hand, in every work studying prismatic crystals via the prismatic site we mentioned, a local description is given only over a framed smooth lifting except Ogus' \cite{ogus2022crystalline} and Tsuji's \cite{tsuji2024prismatic}.
        \item Finally, in comparison with the classical methods working with the prismatic site and hence requiring the calculation of $B^{(1)}$, we only need to construct the above diagram, which is usually much simpler, see the proof of \cref{propt.key automorphism of functors} for a typical example. Moreover, a key step when working with the prismatic site is to reconstruct a prismatic crystal starting with a monodromy operator, which amounts to checking certain cocycle conditions that always involve heavy calculations. On the other hand, following a strategy of Bhatt-Lurie, the essential image of $\beta_n^+$ in \cref{intro.main thm1} can be determined easily once we prove full faithfulness and know the generators of the source. 
    \end{itemize}
\end{remark}

On the other hand, the $q$-prism chart was used slightly differently by Bhatt-Lurie \cite{bhatt2022absolute} when studying quasi-coherent complexes on the Cartier-Witt stack, namely they realize the latter as a quotient of the $q$-prism. At the very end of the preparation of this project, we realize that we can play a similar game for $X^{\Prism}$ when $X=\Spf(W(k)[\zeta_{p^{\alpha+1}}])$ and give an alternative classification of $\mathcal{D}(X^{\Prism})$, more akin to Bhatt-Lurie's approach in \cite[Section 3.8]{bhatt2022absolute} (we will explain the relation of this description with \cref{intro.main thm1} in \cref{rem. relation with BL}). Indeed, for $(A, I)=(W(k)[[q-1]], [p]_{q^{p^\alpha}})\in X_{\Prism}$, it admits an action by $\mathbb{Z}_p^{\times}$, where an element $u\in \mathbb{Z}_p^{\times}$ acts by the automorphism $q\mapsto q^u$, which will be denoted as $\gamma_u$. In \cref{prop. group quotient}, we show that the cover map $\rho: \Spf(A)\to X^{\Prism}$ factors through the quotient stack $[\Spf(A)/(1+p^{\alpha+1}\mathbb{Z}_p)^{\times}]$ and moreover, it sits in the following commutative diagram:
\[\xymatrixcolsep{5pc}\xymatrix{[\Spf(A/(q^{p^{\alpha}}-1))/(1+p^{\alpha+1}\mathbb{Z}_p)^{\times}]\ar[d]^{}\ar[r]^{q^{p^{\alpha}}\mapsto 1}& [\Spf(A)/(1+p^{\alpha+1}\mathbb{Z}_p)^{\times}] \ar[d]_{}^{\rho}
\\\Spf(A/(q^{p^{\alpha}}-1)) \ar[r]^{\rho_{}}&X^{\Prism}.}\]
Then an alternative description of $\mathcal{D}(X^{\Prism})$ is given as follows:
\begin{theorem}[{\cref{thmt.main classification II}}]\label{into thmt.main classification II}
    Let $X=\Spf(W(k)[\zeta_{p^{\alpha+1}}])$. Assume that $p>2$ or $\alpha>0$. Then the diagram above induces a fully faithful functor of $\infty$-categories 
$$F: \mathcal{D}(X^{\Prism}) \longrightarrow \mathcal{D}([\Spf(A)/(1+p^{\alpha+1}\mathbb{Z}_p)^{\times}])\times_{\mathcal{D}([\Spf(A/(q^{p^{\alpha}}-1))/(1+p^{\alpha+1}\mathbb{Z}_p)^{\times}])} \mathcal{D}(\Spf(A/(q^{p^{\alpha}}-1))).$$
The essential image of $F$ consists of those pairs $(M,\gamma_M)$ with $M\in \mathcal{D}(A)$ and $\gamma_M$ being an automorphism of $M$ which is congruent to the identity modulo $q(q^{p^{\alpha}}-1)$ such that the following holds:
\begin{itemize}
    \item $M\in \mathcal{D}(A)$ is $(p,d)$-complete.
    \item the action of $\partial_M$ on the cohomology $\mathrm{H}^*(M\otimes^{\mathbb{L}}k)$ \footnote{Here the derived tensor product means the derived base change along $A/d^n\to A/(d,q-1)=k$.} is locally nilpotent, where $\partial_M: M\to M$ is an operator satisfying $\gamma_M=\Id+q(q^{p^{\alpha}}-1)\partial_M$.
\end{itemize}
    In particular, for $\mathcal{E}\in \mathcal{D}(X^{\Prism})$, the diagram 
\begin{equation}\label{intro diagram}
\begin{gathered}
\xymatrixcolsep{5pc}\xymatrix{\mathrm{R} \Gamma(X^{\Prism}, \mathcal{E})\ar[d]^{}\ar[r]^{}&\mathrm{R} \Gamma([\Spf(A)/(1+p^{\alpha+1}\mathbb{Z}_p)^{\times}], \rho^{*} \mathcal{E}) 
\ar[d]_{}^{}
\\\mathrm{R} \Gamma(\Spf(A/(q^{p^{\alpha}}-1)), \rho^{*} \mathcal{E})  \ar^{}[r]&\mathrm{R} \Gamma([\Spf(A/(q^{p^{\alpha}}-1))/(1+p^{\alpha+1}\mathbb{Z}_p)^{\times}], \rho^{*} \mathcal{E})}
\end{gathered}
\end{equation}
    is a pullback square in the $\infty$-category $\hat{\mathcal{D}}(\mathbb{Z}_p)$.
\end{theorem}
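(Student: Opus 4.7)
The plan is to leverage \cref{intro.main thm1} and unpack the fiber-product description on the right-hand side to exhibit an equivalence with the same category of Ore-extension modules. Set $G := (1+p^{\alpha+1}\mathbb{Z}_p)^{\times}$, which under the hypothesis $p > 2$ or $\alpha > 0$ is topologically isomorphic to $\mathbb{Z}_p$ as a pro-$p$ group. An object of $\mathcal{D}([\Spf(A)/G])$ is an $A$-complex $M$ with a continuous $G$-semilinear action $\{\gamma_{M,u}\}_{u\in G}$, and pulling back to $\mathcal{W} := [\Spf(A/(q^{p^{\alpha}}-1))/G]$ and matching with the image of a module from $\mathcal{D}(\Spf(A/(q^{p^{\alpha}}-1)))$ forces the induced $G$-action on $M/(q^{p^{\alpha}}-1)$ to be trivial. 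Since $q$ is a unit in $A$, this is equivalent to demanding that the single generator $\gamma_M := \gamma_{M,1+p^{\alpha+1}}$ is congruent to the identity modulo $q(q^{p^{\alpha}}-1)$, matching the data in the statement.

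Next I would establish a direct equivalence between these data $(M,\gamma_M)$ and the Ore-extension modules $(M,\partial_M)$ produced by \cref{intro.main thm1}. Given $(M,\gamma_M)$ satisfying the congruence, define $\partial_M := (\gamma_M - \Id)/(q(q^{p^{\alpha}}-1))$, which is well-defined because $q(q^{p^{\alpha}}-1)$ is a non-zero-divisor on $A$. Using the identity $\gamma_A(r) = r + q(q^{p^{\alpha}}-1)\partial_A(r)$ for $r \in A$, a short calculation shows that the $\gamma_A$-semilinearity $\gamma_M(rm)=\gamma_A(r)\gamma_M(m)$ is equivalent to the Ore-extension relation $\partial_M(rm)=\gamma_A(r)\partial_M(m)+\partial_A(r)m$. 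Conversely, given $(M,\partial_M)$, the formula $\gamma_M := \Id + q(q^{p^{\alpha}}-1)\partial_M$ recovers the generator, and the full continuous $G$-action is reconstructed by setting $\gamma_{M,u} := \gamma_M^{t}$ for $u = (1+p^{\alpha+1})^{t}$ via the binomial series in $q(q^{p^{\alpha}}-1)\partial_M$. The local nilpotence of $\partial_M$ on $M\otimes^{\mathbb{L}}k$ together with the $(p,d)$-completeness of $M$ supplies the convergence needed to make sense of this $p$-adic iteration, and matches the nilpotence condition imposed on the essential image.

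Finally I would check that this equivalence of data intertwines with the functor $F$ coming from the stated diagram, whence the square \eqref{intro diagram} becomes a pullback as a byproduct. For the structure sheaf the left-hand side computes $\fib(A \xrightarrow{\partial_A} A)$ by \cref{intro.main thm1}, while the right-hand side computes the totalization gluing the continuous $G$-cohomologies of $A$ and $A/(q^{p^{\alpha}}-1)$ along their common $G$-equivariant Hodge-Tate reduction; using $G \cong \mathbb{Z}_p$ this again reduces to $\fib(A \xrightarrow{\gamma-\Id} A) = \fib(A \xrightarrow{\partial_A} A)$ after dividing by the unit $q(q^{p^{\alpha}}-1)$. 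The main obstacle, in my estimation, is the $p$-adic reconstruction step: one must verify that the binomial series converges to a genuine continuous homomorphism $G \to \mathrm{Aut}(M)$, and this is exactly where the hypothesis $p>2$ or $\alpha>0$ enters, as it ensures $G$ is torsion-free pro-$p$ so that compatibility with the single generator $\gamma_M$ suffices to determine the entire action.
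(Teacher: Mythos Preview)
Your overall strategy—reducing to \cref{intro.main thm1} by matching the fiber-product data with Ore-extension modules—is reasonable and differs from the paper's route, but the execution has a genuine gap: the element $q(q^{p^{\alpha}}-1)$ is \emph{not} a unit in $A$. Only $q$ is a unit; $q^{p^{\alpha}}-1$ lies in the maximal ideal (it is $(p,d)$-topologically nilpotent). This error appears twice. First, when you write $\partial_M := (\gamma_M - \Id)/(q(q^{p^{\alpha}}-1))$, this division is not well-defined on an arbitrary complex $M$ merely because the divisor is regular on $A$; rather, $\partial_M$ must be taken as the additional \emph{datum} supplied by the trivialization in the fiber product (a nullhomotopy of $\gamma_M-\Id$ on $M/(q^{p^\alpha}-1)$ is precisely a choice of factorization through multiplication by $q^{p^\alpha}-1$). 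Second, and more seriously, your verification of the pullback square asserts $\fib(A \xrightarrow{\gamma-\Id} A) = \fib(A \xrightarrow{\partial_A} A)$ ``after dividing by the unit $q(q^{p^{\alpha}}-1)$''. Since this element is not a unit, these fibers genuinely differ; indeed \cref{rem. decalage} records that pullback to $[\Spf(A)/G]$ alone is \emph{not} fully faithful, and the whole point of passing to the fiber product with $\Spf(A/(q^{p^\alpha}-1))$ is to cancel exactly the discrepancy $\operatorname{cofib}\bigl(\fib(\partial_{\mathcal{E}})\to\fib(\gamma_u-\Id)\bigr) \simeq (\rho^*\mathcal{E}\otimes^{\mathbb L}A/(q^{p^\alpha}-1))[-1]$.

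The paper instead proves the pullback square by a direct $3\times3$ diagram of fiber sequences: one writes $\gamma_u-\Id = q(q^{p^\alpha}-1)\cdot\partial_{\mathcal{E}}$ via \cref{prop. monodromy and group action}, and places the three columns $\rho^*\mathcal{E}\xrightarrow{\partial_{\mathcal{E}}}\rho^*\mathcal{E}$, $\rho^*\mathcal{E}\xrightarrow{\gamma_u-\Id}\rho^*\mathcal{E}$, and $0\to\rho^*\mathcal{E}\otimes A/(q^{p^\alpha}-1)$ side by side, linked horizontally by $\Id$ and by multiplication by $q(q^{p^\alpha}-1)$; the induced top row is then the desired fiber sequence identifying $\mathrm{R}\Gamma(X^{\Prism},\mathcal{E})$ with the pullback. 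The essential image is then handled, as in \cref{thmt.main classification}, by reducing to the Hodge-Tate locus and using generation by the $\mathcal{I}^k$. Your approach can be salvaged along similar lines: once you treat $\partial_M$ as a chosen factorization rather than a quotient, the correspondence with Ore-extension modules is correct, and the cohomology comparison becomes exactly this octahedral argument rather than a division by a nonexistent unit.
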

\begin{remark}[{The relation between \cref{into thmt.main classification II} with \cref{intro.main thm1}}]\label{rem. relation with BL}
 Let $u=1+p^{\alpha+1}$, which is a topological generator of $(1+p^{\alpha+1}\mathbb{Z}_p)^{\times}$, then $\gamma_u=\gamma_A$ and for $\mathcal{E}\in \mathcal{D}(X^{\Prism})$, our construction of $\partial_{\mathcal{E}}$  implies that $\gamma_u$ acts on $\rho^*\mathcal{E}$ via $1+q(q^{p^{\alpha}}-1)\partial_{\mathcal{E}}$ (see \cref{prop. monodromy and group action}). Consequently, the pullback from $\mathcal{D}(X^{\Prism})$ to $\mathcal{D}([\Spf(A)/(1+p^{\alpha+1}\mathbb{Z}_p)^{\times}])$ along
 $\rho: [\Spf(A)/(1+p^{\alpha+1}\mathbb{Z}_p)^{\times}]\to X^{\Prism}$ is not fully faithful yet (see \cref{rem. decalage} for details), and to eliminate such a difference, we need to utilize the above diagram. However, it is \cref{intro.main thm1} which can be more easily generalized to the high dimensional case, as the forthcoming \cref{intro.main 3} and \cref{intro.main thm4} justifies. 
\end{remark}

Before moving on to work with a general $X$, we discuss how to apply \cref{intro.main thm1} or \cref{into thmt.main classification II} to understand $\mathcal{D}(\WCart)$ via Galois descent theory, as suggested by \cite{bhatt2022absolute}. For this purpose, we tentatively assume $X=\mathbb{Z}_p[\zeta_p]$ and consider the $q$-prism $(A,d)=(\mathbb{Z}_p[[q-1]], [p]_q)\in X_{\Prism}$. As discussed in \cite[Section 3.5]{bhatt2022absolute} $A=\mathbb{Z}_p[[q-1]]$ admits an action of $\mathbb{F}_p^{\times}$, which carries each element $e\in \mathbb{F}_p^{\times}$ to the automorphism of $\mathbb{Z}_p[[q-1]]$ given by $\gamma_e: q\mapsto q^{[e]}$, for which $[e]$ is the Teichmuller lift of $e$. One can see that such an action preserves the ideal $(d)$, hence induces an automorphism $\gamma_e: A/d^n\to A/d^n$ for any $n\geq 1$. As an upshot, $\mathbb{F}_p^{\times}$ acts on $X^{\Prism}$. Moreover, the natural morphism $X^{\Prism}\to \WCart$ is $\mathbb{F}_p^{\times}$-equivariant, with the target given the trivial action. Hence it induces a natural morphism $\pi: [X^{\Prism}_n/\mathbb{F}_p^{\times}] \to \WCart_n$ (see \cref{prop. AD descent morphism} for details). Then our main result is the following:
\begin{theorem}[{\cref{thm. descent for the cartier witt stack to q prism}}]\label{intro. main theorem 2}
    Assume $p>2$. Then pullback along $\pi$ induces a functor 
    $$F: \mathcal{D}(\WCart) \longrightarrow \mathcal{D}_{}([X^{\Prism}/\mathbb{F}_p^{\times}])\times_{[\Spf(\mathbb{Z}_p)/\mathbb{F}_p^{\times}]} \mathcal{D}(\Spf(\mathbb{Z}_p)),$$ which turns out to be fully faithful. Moreover, the essential image of $F$ consists of those $\mathcal{E}$ in the target whose underlying complex in $ \mathcal{D}_{}([X^{\Prism}/\mathbb{F}_p^{\times}])$ (still denoted as $\mathcal{E}$ by abuse of notation) satisfies the following additional condition:
    \begin{itemize}
        \item the $q$-Higgs connection $\partial_{\mathcal{E}_0}$ of $\mathcal{E}_0:=\mathcal{E}|_{X^{\HT}}$ can be factored as $e\circ \partial_{\mathcal{E}_0}^{\prime}$ such that the action of $(\partial_{\mathcal{E}_0}^{\prime})^p-\partial_{\mathcal{E}_0}^{\prime}$ on the cohomology $\mathrm{H}^*(\rho^*\mathcal{E}\otimes^{\mathbb{L}}k)$ is locally nilpotent, here $e=d^{\prime}(q)\in W(k)[\zeta_p]$.
    \end{itemize}
\end{theorem}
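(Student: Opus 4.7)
The plan is to combine \cref{into thmt.main classification II} with $\mathbb{F}_p^{\times}$-Galois descent along $\pi$. The key geometric input is the identification of $\WCart$ as a pushout of stacks
\begin{equation*}
\WCart \simeq [X^{\Prism}/\mathbb{F}_p^{\times}] \sqcup_{[\Spf(\mathbb{Z}_p)/\mathbb{F}_p^{\times}]} \Spf(\mathbb{Z}_p),
\end{equation*}
where the map from the left factor is $\pi$, the map from the right factor is the ``$q=1$'' point $\Spf(\mathbb{Z}_p)\hookrightarrow \WCart$, and the overlap $[\Spf(\mathbb{Z}_p)/\mathbb{F}_p^{\times}]$ is the stacky fibre of $\pi$ over this point. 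Applying $\mathcal{D}(-)$ turns this pushout of stacks into a pullback of $\infty$-categories, and $F$ is the tautological comparison functor.

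To obtain full faithfulness I would prove the geometric pushout above. Since $p>2$, $|\mathbb{F}_p^{\times}|=p-1$ is prime to $p$, so equivariant quasi-coherent descent along $\Spf(A)\to[\Spf(A)/\mathbb{F}_p^{\times}]$ and its mod-$(q-1)$ variant is automatic. The content of the pushout is then that $\pi$ is an isomorphism over the open locus $\WCart\setminus\Spf(\mathbb{Z}_p)$; this reflects that $\mathbb{Z}_p[\zeta_p]/\mathbb{Z}_p$ is a tamely ramified Kummer extension with Galois group $\mathbb{F}_p^{\times}$ and that the $\mathbb{F}_p^{\times}$-action on $X^{\Prism}$ is free away from $q=1$. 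Combined with the construction of $\pi$, this yields the fully-faithful $F$ via quasi-coherent descent against the pullback diagram.

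For the essential image, I would use the $\mathbb{F}_p^{\times}$-equivariant form of \cref{intro.main thm1}. An object of $\mathcal{D}([X^{\Prism}/\mathbb{F}_p^{\times}])$ is, by \cref{intro.main thm1} at $\alpha=0$, an $(A[\partial;\gamma_A,\partial_A],\mathbb{F}_p^{\times})$-module; the equivariance of $\partial$ under $\gamma_e\colon q\mapsto q^{[e]}$ forces $\partial$ to be divisible by the scalar recording the linearisation of the Galois action on the conormal bundle $(q-1)/(q-1)^2$ along the Hodge-Tate locus, and a direct computation with $d=[p]_q$ identifies this scalar with $e=d'(q)|_{q=\zeta_p}$. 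Hence $\partial_{\mathcal{E}_0}=e\cdot\partial'_{\mathcal{E}_0}$ for a unique operator $\partial'_{\mathcal{E}_0}$. Compatibility with the $\Spf(\mathbb{Z}_p)$-factor of the fibre product then translates into a descent condition along the $\mathbb{F}_p^{\times}$-torsor $X^{\HT}\otimes_{\mathcal{O}_K}k\to \Spf(k)$, which mod $p$ is governed by the Artin--Schreier polynomial $T^p-T$; this is precisely the requirement that $(\partial'_{\mathcal{E}_0})^p-\partial'_{\mathcal{E}_0}$ act locally nilpotently on $\mathrm{H}^*(\rho^*\mathcal{E}\otimes^{\mathbb{L}} k)$.

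The main obstacle, in my view, is the careful verification of the pushout description of $\WCart$ and the matching of the operator produced intrinsically on $\WCart$ after descending from $\pi$ with the one supplied upstairs by \cref{intro.main thm1}. Once these geometric and algebraic identifications are firmly in place, the Artin--Schreier condition in the statement is a mechanical translation of Galois descent along an $\mathbb{F}_p^{\times}$-torsor modulo $p$.
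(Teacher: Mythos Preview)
Your central geometric claim---that $\WCart$ is the pushout $[X^{\Prism}/\mathbb{F}_p^{\times}] \sqcup_{[\Spf(\mathbb{Z}_p)/\mathbb{F}_p^{\times}]} \Spf(\mathbb{Z}_p)$---cannot be correct. If it were, then applying $\mathcal{D}(-)$ would make $F$ an \emph{equivalence} onto the full fiber product, whereas the theorem asserts only full faithfulness together with a proper essential image cut out by a nilpotence condition. The paper is explicit elsewhere (see e.g.\ \cref{rem. decalage} and the discussion after \cref{prop. group quotient}) that these quotient presentations are not isomorphisms, and the same phenomenon occurs here. In particular, the assertion that $\pi$ is an isomorphism away from the de Rham point, or that the $\mathbb{F}_p^{\times}$-action on $X^{\Prism}$ is free away from $q=1$, is not established and is not the mechanism by which full faithfulness holds.

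The paper's route is entirely different and does not pass through any pushout. Full faithfulness (\cref{thm. full faithful embedding}) is reduced, via the generators $\mathcal{I}^k$ and d\'evissage along the $\mathcal{I}$-adic filtration, to the Hodge--Tate statement $\mathrm{R}\Gamma(\WCart^{\HT},\mathcal{E})\simeq \mathrm{R}\Gamma([X^{\HT}/\mathbb{F}_p^{\times}],\pi^*\mathcal{E})$ (\cref{thm. hodge state full faithful}). That in turn is proved by an explicit mod-$p$ comparison of two operators: on the left one has Bhatt--Lurie's Sen operator $\theta$, and on the right the $\mathbb{F}_p^{\times}$-invariants of $\fib(\partial)$ are identified, using \cref{lem. calculate invariants} and \cref{lem. commute two prism on carrier witt stack}, with $\fib$ of a new operator $\partial_v$ built from $1+v\in\mathbb{G}_m^{\sharp}(\mathbb{Z}_p[v]/(v^2-pv))$. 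The key computation is that $\partial_v=\theta$ modulo $p$; this uses $p>2$ so that all higher divided powers of $v$ vanish mod $p$. There is no Galois-theoretic pushout---only a direct identification of cohomology.

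Your sketch of the essential image is closer in spirit to what the paper does, but the divisibility $\partial_{\mathcal{E}_0}=e\cdot\partial'_{\mathcal{E}_0}$ is not a formal consequence of $\mathbb{F}_p^{\times}$-equivariance alone. In the paper it comes from the fiber-product condition: since $\rho^*\mathcal{H}$ lies in $\mathcal{D}([\Spf(\mathcal{O}_K)/\mathbb{F}_p^{\times}])\times_{\mathcal{D}([\Spf(\mathbb{F}_p)/\mathbb{F}_p^{\times}])}\mathcal{D}(\Spf(\mathbb{F}_p))$, it has the form $N\otimes_{\mathbb{Z}_p}\mathcal{O}_K$, and then $(\mathcal{O}_K\cdot\epsilon)^{\mathbb{F}_p^{\times}}=\mathbb{Z}_p\cdot v$ with $v=e\epsilon$ (\cref{lem. calculate invariants}) forces the equivariant $\partial_{\mathcal{H}}\cdot\epsilon$ to descend to $\partial_N:N\to N\cdot v$, whence the factor of $e$. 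The Artin--Schreier condition on $\partial'$ then comes from \cref{prop. new q derivation on hodge state stack}, not from any torsor argument.
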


Combining \cref{intro.main thm1} with \cref{intro. main theorem 2}, we can understand $\mathcal{D}(\WCart)$ quite well.
\begin{corollary}[{\cref{cor. main cor 1}}]\label{intro. main cor 1}
We have a fully faithful functor 
$$F: \mathcal{D}(\WCart) \longrightarrow \mathcal{D}_{}([\Spf(A[\partial; \gamma_A, \partial_A])/\mathbb{F}_p^{\times}])\times_{[\Spf(\mathbb{Z}_p)/\mathbb{F}_p^{\times}]} \mathcal{D}(\Spf(\mathbb{Z}_p)).$$
Moreover, the essential image of $F$ consists of those $\mathcal{E}$ in the target whose underlying complex in $ \mathcal{D}_{}([\Spf(A[\partial; \gamma_A, \partial_A])/\mathbb{F}_p^{\times}])$ (still denoted as $\mathcal{E}$ by abuse of notation) satisfies the following additional condition:
    \begin{itemize}
        \item the action of $\partial$ on $\mathcal{E}_0:=\mathcal{E}|_{X^{\HT}}$ can be factored as $e\circ \partial_{}^{\prime}$ such that the action of $(\partial_{}^{\prime})^p-\partial_{}^{\prime}$ on the cohomology $\mathrm{H}^*(\rho^*\mathcal{E}\otimes^{\mathbb{L}}k)$ is locally nilpotent, here $e=d^{\prime}(q)\in W(k)[\zeta_p]$.
    \end{itemize}
\end{corollary}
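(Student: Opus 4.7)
The plan is to compose the $\mathbb{F}_p^{\times}$-equivariant upgrade of \cref{intro.main thm1} (applied with $\alpha=0$, for which the standing hypothesis $p>2$ suffices) with the descent description of $\mathcal{D}(\WCart)$ furnished by \cref{intro. main theorem 2}. The latter already realizes $\mathcal{D}(\WCart)$ as a full subcategory of $\mathcal{D}([X^{\Prism}/\mathbb{F}_p^{\times}])\times_{[\Spf(\mathbb{Z}_p)/\mathbb{F}_p^{\times}]}\mathcal{D}(\Spf(\mathbb{Z}_p))$, so the task reduces to identifying the first factor with $\mathcal{D}([\Spf(A[\partial;\gamma_A,\partial_A])/\mathbb{F}_p^{\times}])$ compatibly with pullback to $\mathcal{D}([\Spf(\mathbb{Z}_p)/\mathbb{F}_p^{\times}])$ and then matching essential images.

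First I would equip the Ore extension $A[\partial;\gamma_A,\partial_A]$ with an $\mathbb{F}_p^{\times}$-action extending the action on $A$. Since $\partial$ is produced in the proof of \cref{intro.main thm1} from an infinitesimal lifting datum attached to the cover $\rho\colon \Spf(A)\to X^{\Prism}$, and this cover is $\mathbb{F}_p^{\times}$-equivariant by construction, the action automatically transports to $\partial$. Concretely, using $\partial_A=(\gamma_A-\Id)/(q(q-1))$ and $\gamma_e\colon q\mapsto q^{[e]}$, one checks that $\gamma_e(\partial)=u_e\cdot\partial$ for a unit $u_e\in A$, and that this is compatible with the defining relation of the Ore extension. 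The equivariant version of $\beta_\infty^{+}$ from \cref{intro.main thm1} then yields a fully faithful embedding
$$\mathcal{D}([X^{\Prism}/\mathbb{F}_p^{\times}])\hookrightarrow \mathcal{D}([\Spf(A[\partial;\gamma_A,\partial_A])/\mathbb{F}_p^{\times}])$$
cut out by the equivariant analogues of $(p,d)$-completeness and local nilpotence of $\partial$ on $\mathrm{H}^{*}(M\otimes^{\mathbb{L}}k)$.

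Composing this with the functor in \cref{intro. main theorem 2} produces $F$, and full faithfulness is the formal composition of two fully faithful embeddings. For the essential image, on the Hodge-Tate fiber the Ore-extension operator $\partial$ acts precisely as the $q$-Higgs connection $\partial_{\mathcal{E}_0}$ of \cref{intro. main theorem 2}, so the factorization $\partial_{\mathcal{E}_0}=e\circ\partial'$ together with local nilpotence of $(\partial')^p-\partial'$ translates verbatim into the Ore-extension picture. The nilpotence condition on $\partial$ itself inherited from \cref{intro.main thm1} is in fact redundant here for $p>2$: evaluating $e=d'(q)=1+2q+\cdots+(p-1)q^{p-2}$ at $q=1$ gives $p(p-1)/2\equiv 0\pmod p$, so $e\equiv 0\pmod{(p,q-1)}$, and hence $\partial=e\partial'$ already vanishes on $M\otimes^{\mathbb{L}}k$. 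This is why only the single ``additional condition'' stated in the corollary survives.

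The principal technical obstacle I anticipate is precisely the equivariance verification for the Ore extension: one must confirm that the $\mathbb{F}_p^{\times}$-action on $A$ extends to a ring action on $A[\partial;\gamma_A,\partial_A]$ compatible with the noncommutative multiplication. Rather than grinding through formulas, the cleanest approach is functorial --- pulling back along the equivariant cover $\rho\colon [\Spf(A)/\mathbb{F}_p^{\times}]\to [X^{\Prism}/\mathbb{F}_p^{\times}]$ automatically endows the resulting Ore-extension module with an $\mathbb{F}_p^{\times}$-equivariant structure, so the equivariance is inherited directly from the construction of $\beta_\infty^+$ rather than imposed by hand. Once this naturality is recorded, the rest of the argument is formal.
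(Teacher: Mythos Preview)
Your proposal is correct and follows essentially the same route as the paper: establish that the construction of $\partial_{\mathcal{E}}$ is $\mathbb{F}_p^{\times}$-equivariant (the paper does this via \cref{cor.Fp equivariance of theta} and the explicit formula $\gamma_u(\partial)=q^{1-[u]}\frac{q-1}{q^{[u]}-1}\partial$, while you argue functorially from the equivariance of $\rho$), and then compose \cref{thmt.main classification} with \cref{thm. descent for the cartier witt stack to q prism}. Your observation that the nilpotence condition on $\partial$ from \cref{intro.main thm1} is automatic given the factorization $\partial=e\partial'$ (since $e\equiv 0$ in $k$) is a useful clarification that the paper's terse proof leaves implicit.
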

\begin{remark}\label{intro.remark compare with BL}
One can compare our \cref{intro. main theorem 2} and \cref{intro. main cor 1} 
with \cite[Theorem 3.8.3]{bhatt2022absolute}, in which they prove that there is a fully faithful functor 
\begin{equation*}
    F^{\prime}: \mathcal{D}(\WCart)\to \mathcal{D}([\Spf(A)/\mathbb{Z}_p^{\times}])\times_{\mathcal{D}([\Spf(\mathbb{Z}_p)/\mathbb{Z}_p^{\times}])} \mathcal{D}(\Spf(\mathbb{Z}_p)),
\end{equation*}
which is not essentially surjective (see \cite[Warning 3.7.12]{bhatt2022absolute}). More or less, given $\mathcal{E}\in \mathcal{D}(\WCart)$ corresponding to $(M,\partial_M)$ under \cref{intro. main cor 1}, the information of $\partial_M$ is hidden in the $(1+p\mathbb{Z}_p)^{\times}$ action on $F^{\prime}(\mathcal{E})$ as $\mathbb{Z}_p^{\times}=(1+p\mathbb{Z}_p)^{\times}\rtimes \mathbb{F}_p^{\times}$, see \cite[Propostition 3.7.1]{bhatt2022absolute}. In this sense, \cite[Theorem 3.8.3]{bhatt2022absolute} could be viewed as a combination of \cref{into thmt.main classification II} and \cref{intro. main theorem 2}, while \cref{intro. main cor 1} combines \cref{intro.main thm1} and \cref{intro. main theorem 2} instead.
\end{remark}

Another application of \cref{intro. main theorem 2} is the calculation of the cohomology of the structure sheaf on $(\mathbb{Z}_p)_{\Prism}$.
\begin{proposition}[{\cref{prop. cohomology of Zp}}]\label{intro.prop cohomology of the structure sheaf}
     Assume that $p>2$. The Cartier-Witt stack $\WCart$ is of cohomological dimension $1$. Moreover, 
    \begin{equation*}
        \mathrm{H}^{0}(\WCart, \mathcal{O})=\mathbb{Z}_p, ~~~\quad \mathrm{H}^{1}(\WCart, \mathcal{O})\simeq \prod_{n\in \mathbb{N}, n\not\equiv p\mod p+1} \mathbb{Z}_p.
    \end{equation*}
\end{proposition}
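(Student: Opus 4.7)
The plan is to specialize \cref{intro. main theorem 2} to $X = \Spf(\mathbb{Z}_p[\zeta_p])$ (i.e.\ $\alpha = 0$, $(A,d)=(\mathbb{Z}_p[[q-1]], [p]_q)$) in order to translate the cohomology of $\mathcal{O}_{\WCart}$ into an $\mathbb{F}_p^\times$-equivariant computation on the $q$-prism. Since $p>2$, the order $p-1$ of $\mathbb{F}_p^\times$ is invertible in $\mathbb{Z}_p$, so for any $\mathbb{Z}_p$-linear action derived and ordinary invariants coincide. In particular the two terms $\mathrm{R}\Gamma([\Spf(\mathbb{Z}_p)/\mathbb{F}_p^\times], \mathcal{O})$ and $\mathrm{R}\Gamma(\Spf(\mathbb{Z}_p), \mathcal{O})$ both equal $\mathbb{Z}_p$ in degree $0$, and the natural map between them is the identity. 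Applying the fully faithful functor $F$ from \cref{intro. main theorem 2} to the tensor unit $\mathcal{O}$, the fibre product in the target collapses, and we obtain
\begin{equation*}
\mathrm{R}\Gamma(\WCart,\mathcal{O}) \;\simeq\; \mathrm{R}\Gamma(X^{\Prism},\mathcal{O})^{\mathbb{F}_p^\times}.
\end{equation*}

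Now \cref{intro.main thm1} with $\mathcal{E}=\mathcal{O}$ and $n=\infty$ identifies $\mathrm{R}\Gamma(X^{\Prism},\mathcal{O})$ with the two-term complex $\fib(A \xrightarrow{\partial} A)$ where $\partial = (\gamma_A - 1)/q(q-1)$. Taking $\mathbb{F}_p^\times$-invariants preserves the two-term shape, which immediately gives $\mathrm{cd}(\WCart)\leq 1$. For $H^0$, since $q(q-1)$ is a non-zerodivisor, $\ker\partial = A^{\gamma_A=1}$; the explicit formula $\gamma_A(q-1) \equiv (1+p)(q-1) \pmod{(q-1)^2}$ together with induction on the $(q-1)$-adic filtration forces $A^{\gamma_A=1}=\mathbb{Z}_p$, which is trivially $\mathbb{F}_p^\times$-fixed. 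This yields $H^0(\WCart,\mathcal{O})=\mathbb{Z}_p$.

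For $H^1$, the relation $\gamma_A - 1 = q(q-1)\partial$ together with $q\in A^\times$ fits into a short exact sequence of two-term complexes whose cofibre is $A/q(q-1)A \simeq A/(q-1)A = \mathbb{Z}_p$ placed in cohomological degree $1$. This yields a cofibre sequence
\begin{equation*}
\fib(\partial) \longrightarrow \fib(\gamma_A - 1) \longrightarrow \mathbb{Z}_p[-1].
\end{equation*}
Taking $\mathbb{F}_p^\times$-invariants, the associated long exact sequence collapses to
\begin{equation*}
0 \longrightarrow H^1(\WCart,\mathcal{O}) \longrightarrow \bigl(A/(\gamma_A-1)A\bigr)^{\mathbb{F}_p^\times} \longrightarrow \mathbb{Z}_p \longrightarrow 0,
\end{equation*}
reducing the problem to an explicit identification of the Iwasawa-type cokernel $(A/(\gamma_A-1)A)^{\mathbb{F}_p^\times}$ together with its constant-term map to $\mathbb{Z}_p$. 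Using the $(q-1)$-adic filtration, $\gamma_A - 1$ is upper-triangular with $n$-th diagonal entry $(1+p)^n - 1$ of $p$-adic valuation $1 + v_p(n)$, while $\gamma_e \in \mathbb{F}_p^\times$ acts on the $n$-th associated-graded line by the character $\chi^n: e\mapsto [e]^n$. After extracting an explicit topological $\mathbb{Z}_p$-basis of $A^{\mathbb{F}_p^\times}$ (by applying the projector $\frac{1}{p-1}\sum_{e}[e]^{-n}\gamma_e$ to $(q-1)^n$, for instance), one can inductively solve $(\gamma_A - 1)g = f$ against this basis to pin down the invariant cokernel as a product of $\mathbb{Z}_p$'s indexed by the "missing" directions.

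The main obstacle is this last step: producing the precise index set $\{n\in\mathbb{N}: n\not\equiv p\pmod{p+1}\}$ requires a careful bookkeeping of the off-diagonal terms of $\gamma_A - 1$ against the chosen $\mathbb{F}_p^\times$-invariant basis, followed by the subtraction of the single trivial-weight direction coming from the $\mathbb{Z}_p$-quotient in the fibre sequence above. The modulus $p+1$ reflects the defining relation $\gamma_A(q) = q^{p+1}$, which governs how iterates of $\gamma_A$ interact with the Teichmüller decomposition on the integral structure of $A$; once the combinatorics of which invariant weights admit integral solutions is laid out, the claimed product description falls out.
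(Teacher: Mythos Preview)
Your setup is correct and in fact reduces to exactly the same computation the paper performs. The identity $\gamma_A-1=q(q-1)\partial$ makes the passage from $\fib(\partial)$ to $\fib(\gamma_A-1)$ nothing more than the paper's map $r\colon a\epsilon\mapsto aq(q-1)$, so after taking $\mathbb{F}_p^\times$-invariants your short exact sequence
\[
0\longrightarrow H^1(\WCart,\mathcal{O})\longrightarrow \bigl(A/(\gamma_A-1)A\bigr)^{\mathbb{F}_p^\times}\longrightarrow \mathbb{Z}_p\longrightarrow 0
\]
is the inclusion $B\hookrightarrow \mathbb{Z}_p[[\tilde p]]$ of the ideal $B=\{g:g(p)=0\}=(\tilde p-p)$ modulo the image of $f:=(\gamma_A-1)|_{A^{\mathbb{F}_p^\times}}$; here $A^{\mathbb{F}_p^\times}=\mathbb{Z}_p[[\tilde p]]$ with $\tilde p=\sum_{e\in\mathbb{F}_p}q^{[e]}$. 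So up to this point you and the paper are doing the same thing in different language.

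The genuine gap is the step you label ``the main obstacle''. Your proposed approach via the $(q-1)$-adic filtration is not the efficient one: on $\mathrm{gr}^n$ the map $\gamma_A-1$ has diagonal entry $(1+p)^n-1$, which is \emph{not} a unit, so the off-diagonal terms genuinely matter and you cannot read off the cokernel from the associated graded. The paper sidesteps this entirely by working in the $\tilde p$-adic basis of $A^{\mathbb{F}_p^\times}$ instead. One computes directly
\[
f(\tilde p)=(\gamma_A-1)(\tilde p)=\sum_{e\in\mathbb{F}_p^\times}q^{[e](p+1)}-\sum_{e\in\mathbb{F}_p^\times}q^{[e]}
\]
and observes (by counting degrees and constant terms) that in the basis $e_i:=\tilde p^{\,i}(\tilde p-p)$ of $B$ this equals $e_p+(\text{lower})$. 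Since $f$ satisfies the twisted Leibniz rule $f(xy)=xf(y)+yf(x)+f(x)f(y)$ and $e_i\cdot e_j=e_{i+j+1}+(\text{lower})$, an easy induction gives $f(\tilde p^{\,k})=e_{k(p+1)-1}+(\text{lower})$ for all $k\ge 1$. Thus $f$ is upper-triangular with \emph{unit} leading coefficients from $\{\tilde p^{\,k}\}_{k\ge 1}$ onto $\{e_j:j\equiv p\pmod{p+1}\}$, whence $\ker f=\mathbb{Z}_p$ and $\operatorname{coker}(f\colon\mathbb{Z}_p[[\tilde p]]\to B)\simeq\prod_{j\not\equiv p\ (p+1)}\mathbb{Z}_p$. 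This is precisely the missing bookkeeping; the modulus $p+1$ appears because multiplying by $f(\tilde p)$ raises the $e$-index by $p+1$, which in turn comes from $\gamma_A(q)=q^{p+1}$, as you anticipated.
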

\begin{remark}\label{intro. rem cohomology}
    The cohomology of the structure sheaf on $\WCart$ was known before, for example, $\mathrm{H}^{0}(\WCart, \mathcal{O})=\mathbb{Z}_p$ is stated in \cite[Corollary 4.7.2]{drinfeld2020prismatization}. Via topological methods, in \cite{bhatt2019topological} Bhatt-Morrow-Scholze show that $\mathrm{H}^{1}((\mathbb{Z}_p)_{\Prism}, \mathcal{O}_{\Prism})=\prod_{n\in \mathbb{N}} \mathbb{Z}_p.$ Note that there is a bijection between the sets $\{n\in \mathbb{N}| n\not\equiv p\mod p+1\}$ and $\mathbb{N}$.
\end{remark}

Next we proceed to discuss the classification of $\mathcal{D}(\Spf(\mathcal{O}_K)^\Prism)$ for a general $\mathcal{O}_K$ and we can separate this question into three cases.
\begin{itemize}
    \item If $\mathcal{O}_K=W(k)[\zeta_{p^{\alpha+1}}]$ is a cyclotomic ring, then this was already solved by \cref{intro.main thm1}.
    \item If $\mathcal{O}_K$ doesn't contain $\zeta_p$, then we can consider $\mathcal{O}_K^{\prime}=\mathcal{O}_K[\zeta_p]$ first, and then run Galois descent similar as that in \cref{intro. main theorem 2} provided we can understand $\mathcal{D}(\Spf(\mathcal{O}_K^{\prime})^\Prism)$.
    \item If $\mathcal{O}_K$ contains $\zeta_p$, then $\mathcal{O}_K$ could be viewed as a locally complete intersection over $\mathcal{O}_{K_0}=W(k)[\zeta_p]$. Indeed, if we choose a uniformizer $\pi\in \mathcal{O}_K$ and suppose $E(u)$ is the minimal polynomial of $\pi$ over $W(k)[\frac{1}{p}]$, suppose $E(u)$ factored as $\prod_{i=0}^{t} E_i$ over $\mathcal{O}_{K_0}$ with $E_i$ monic and irreducible, WLOG we may assume $\pi$ is a root of $E_0$ (then $E_i(\pi)\neq 0$ for $i>0$ as $E$ is irreducible), then $\mathcal{O}_K=\mathcal{O}_{K_0}[u]/E_0(u)$.
\end{itemize}

With the discussion above, the question turns into classifying $\mathcal{D}(Y^\Prism)$ for $Y$ a locally complete intersection over $\mathcal{O}_K=W(k)[\zeta_{p^{}}]$. Fix  $(A,d)=(W(k)[[q-1]], [p]_q)\in (W(k)[\zeta_{p^{}}])_{\Prism}$ as usual. By considering the following pullback square,
\[ \xymatrix{  Y_{/A}^{\Prism} \ar[d]^{}\ar[r]^{\rho_A} & Y^{\Prism} \ar[d]\\  \Spf(A) \ar[r]^{\rho_A} &\Spf(\mathcal{O}_K)^{\Prism},}\]
where $Y_{/A}^{\Prism}$ is the relative prismatization, we could proceed in two steps. First, we need to understand how $\mathcal{D}(\Spf(\mathcal{O}_K)^{\Prism})$ differs from $\mathcal{D}(\Spf(A))$, which was exactly illustrated in \cref{intro.main thm1}. Secondly, we would like to have a characterization of $\mathcal{D}(Y_{/A}^{\Prism})$. It turns out for the purpose of classifying complexes on the relative prismatization, we can work under a much more general setting: we don't need to assume the base prism is exactly the $q$-prism $(W(k)[[q-1]], [p]_q)$, instead, any prism lying over it works. More precisely,
\begin{construction}\label{intro.con}
Fix $(A,d)$ to be a transversal prism over $(W(k)[[q-1]], [p]_q)$. Let $X=\Spf(R)$ be a small affine over $\Bar{A}=A/([p]_q)$ with a fixed chart $\square:=\overline{A}\left\langle T_1, \ldots, T_m\right\rangle \rightarrow R$. Moreover, this \'etale chart map uniquely lifts to a prism $(\Tilde{R}, I)$ over $(A\left\langle \underline{T}\right\rangle, [p]_q)$. Let $Y=\Spf(R/(\Bar{x}_1,\cdots,\Bar{x}_r)) \hookrightarrow X$ ($x_i\in \Tilde{R}$ and $\Bar{x}_i$ is its image in $\Bar{A}$) be a closed embedding such that the prismatic envelope with respect to the morphism of $\delta$-pairs $(\Tilde{R}, (d))\to (\Tilde{R}, (d,x_1,\cdots,x_m))$ exists and is exactly given by $\Tilde{R}\{\frac{x_1,\cdots,x_r}{d}\}^{\wedge}_{\delta}$\footnote{It is obtained by freely adjoining $\frac{x_i}{d}$ to $\Tilde{R}$ in the category of derived $(p,I)$-complete simplicial $\delta$-$A$-algebras, see \cite{bhatt2022prisms}[Corollary 2.44] for the precise definition. It is a derived ring that might not be concentrated on degree $0$.}, denoted as $\Prism_X(Y)$ later. One can show that (\cref{prop. lci case base to R}) such data induces a covering map 
\[\rho: \Spf(\Prism_X(Y)/d^n)\to Y_{/A,n}^{\Prism}.\]
\end{construction}
\begin{theorem}[{\cref{thmt. relative main classification},~\cref{thmt. lci main classification}}]\label{intro.main 3}
    Let $Y$ be as that in the previous construction or $Y=X$. For $n\in \mathbb{N}\cup \{\infty\}$. Then the covering map $\rho$ above induces a fully faithful functor\footnote{By abuse of notation, when $Y=X$, we use $\Prism_X(X)$ to denote $\Tilde{R}$.} 
    \begin{align*}
        &\beta_n^{+}: \mathcal{D}(Y_{/A, n}^{\Prism}) \rightarrow \mathcal{D}(\XY/d^n[\underline{\nabla}; \gamma_{\XY}, \nabla_{\XY}]), \qquad \mathcal{E}\mapsto (\rho^{*}(\mathcal{E}),\nabla_{\mathcal{E}}),
    \end{align*}
   whose essential image consists of those objects $M\in \mathcal{D}(\XY/d^n[\underline{\nabla}; \gamma_{\XY}, \nabla_{\XY}])$ 
     satisfying the following pair of conditions:
    \begin{itemize}
        \item $M$ is $(p,d)$-adically complete.
        \item The action of $\nabla_{i}$ on the cohomology $\mathrm{H}^*(M\otimes_{\XY/d^n}^{\mathbb{L}}\XY/(d,p))$ 
        is locally nilpotent for all $i$. 
    \end{itemize}
    In particular, by \cref{rem. rel cohomology of q higgs}, for $\mathcal{E}\in \mathcal{D}(Y_{/A,n}^{\Prism})$,
    \begin{equation*}
        \mathrm{R} \Gamma(Y_{/A, n}^{\Prism}, \mathcal{E})\xrightarrow{\simeq} (\rho^*\mathcal{E} \xrightarrow{\nabla_{\mathcal{E}}} \rho^*\mathcal{E}\otimes_{\XY/d^n}(\XY\otimes_{\Tilde{R}}\Omega_{(\Tilde{R}/d^n)/(A/d^n)})\cdots \xrightarrow{\nabla_{\mathcal{E}}\wedge \nabla_{\mathcal{E}}} \cdots 0).
    \end{equation*}
\end{theorem}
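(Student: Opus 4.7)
The plan is to follow the stacky strategy outlined in the introduction, constructing a first-order infinitesimal neighborhood of the diagonal inside the self-fiber product of $\rho$ that is explicit enough to extract the $q$-Higgs data, while being canonical enough to reassemble into full descent data. Concretely, I would begin by producing a ring homomorphism $\psi: \XY/d^n \to \XY/d^n \oplus \epsilon\, \Omega_{\XY/A}$, where the target carries the noncommutative multiplication $(\epsilon\, dT_i)^2 = (q-1) T_i \cdot \epsilon\, dT_i$ dictated by the $q$-deformation of the de Rham complex relative to $A$. The map sends $T_i \mapsto T_i + \epsilon\, dT_i$ and extends $\delta$-compatibly; the universal property of the prismatic envelope $\XY = \Tilde{R}\{\tfrac{x_1,\ldots,x_r}{d}\}_\delta^\wedge$ (applied to the target regarded as a derived $\delta$-$A$-algebra) is what makes this extension exist and be unique. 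One then exhibits a canonical 2-isomorphism $\gamma$ between the two compositions $\eta\circ\psi$ and $\eta\circ\iota$ into $Y_{/A,n}^{\Prism}$, where $\iota$ is the canonical inclusion; both maps classify the same prismatic datum after reduction modulo $\epsilon$, and $Y_{/A,n}^{\Prism}$ parametrizes such data up to a unique equivalence, yielding $\gamma$.

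Given $\mathcal{E} \in \mathcal{D}(Y_{/A,n}^{\Prism})$, pullback along $\gamma$ yields $\psi^*\rho^*\mathcal{E} \simeq \iota^*\rho^*\mathcal{E}$, which unwinds to the $q$-Higgs derivation $\nabla_{\mathcal{E}}$; the noncommutative relation on the target $\XY/d^n \oplus \epsilon\,\Omega_{\XY/A}$ translates into the Leibniz-type relation defining the Ore extension $\XY/d^n[\underline{\nabla}; \gamma_{\XY}, \nabla_{\XY}]$. This defines $\beta_n^+$, and the cohomology formula follows by recognizing the mapping fibers of successive wedge powers of $\nabla_{\mathcal{E}}$ as computing the Čech totalization along $\rho$ restricted to first-order neighborhoods.

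For full faithfulness, the approach is descent along $\rho$: since $\rho$ is a cover, $\mathcal{D}(Y_{/A,n}^{\Prism})$ is the totalization of $\mathcal{D}$ evaluated on the Čech nerve $\rho^{\bullet}$. I would show that the entire descent datum is reconstructed from the first-order data captured by $\psi$, essentially because the cotangent complex of $\rho$ at the diagonal is controlled by $\Omega_{\XY/A}$ after derived $(p,d)$-completion, so higher infinitesimal neighborhoods are determined formally by iterates of $\nabla_{\mathcal{E}}$ subject to the twisted Leibniz rule. This step mirrors the proof in \cite{liu2024prismatization} for the Breuil--Kisin setup but with the $q$-twisted multiplication structure, and it is the main obstacle: one must carefully check that the Maurer--Cartan equation on $\XY/d^n \oplus \epsilon\,\Omega_{\XY/A}$ implies the full cocycle condition on the Čech nerve, which requires a precise identification of the $(p,d)$-completed symmetric algebra on $\Omega_{\XY/A}$ with the appropriate prismatic envelope of the diagonal.

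To identify the essential image, I would adapt the strategy of Bhatt--Lurie: since $\rho^*\mathcal{O}_{Y_{/A,n}^{\Prism}} \simeq \XY/d^n$ tensor-generates the target, the image is automatically $(p,d)$-complete, and the local nilpotence of $\nabla_i$ on $\mathrm{H}^*(M \otimes^{\mathbb{L}} \XY/(d,p))$ corresponds to the condition picked out by the Hodge--Tate substack $Y_{/A,1}^{\Prism}$, where the $\nabla_i$ become ordinary Higgs fields on the (shifted) conormal bundle and the nilpotence is a standard feature of Hodge--Tate crystals. Essential surjectivity onto the subcategory cut out by the two conditions then follows by devissage: induct on $n$ using the square-zero extensions $\mathcal{I}_{\Prism}^{k}/\mathcal{I}_{\Prism}^{k+1}$, with each successive layer falling under the Hodge--Tate case, and finally pass to $n=\infty$ by derived $(p,d)$-completeness. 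Throughout, the case $Y = X$ is treated as a degenerate instance of the same argument with no $x_i$'s and $\XY = \Tilde{R}$.
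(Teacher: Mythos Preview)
Your construction of $\beta_n^+$ via the first-order neighborhood $\psi$ and the $2$-isomorphism $\gamma$ is essentially what the paper does (modulo the cosmetic point that the relevant differentials are $\Omega_{\Tilde R/A}$ base-changed to $\XY$, not $\Omega_{\XY/A}$). The gap is in your full faithfulness argument. You propose to reconstruct the full descent datum along the \v{C}ech nerve of $\rho$ from the first-order data, which would require identifying higher-order prismatic envelopes of the diagonal with something like a $(p,d)$-completed symmetric algebra on $\Omega$. As the paper's introduction emphasizes (see \cref{intro. rmk advantage}), understanding the self-product of the cover modulo $d^n$ for $n>1$ is precisely the obstacle the stacky method is designed to \emph{avoid}; no such identification is available integrally, and deducing the cocycle condition from the $q$-Higgs field is the ``heavy calculation'' route the paper does not take.

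The paper's actual argument bypasses descent reconstruction entirely. First, one proves the cohomology formula $\mathrm{R}\Gamma(Y_{/A,n}^{\Prism},\mathcal E)\simeq \mathrm{DR}(\rho^*\mathcal E,\nabla_{\mathcal E})$ directly: by d\'evissage this reduces to $n=1$, where $X_{/A}^{\HT}$ is the classifying stack of $(\mathbb G_a^\sharp)^m$, and the formula follows from an explicit Poincar\'e lemma for $\rho_*\mathcal O_X$ (\cref{prop. relative calculate partial on HT}, \cref{cor. relative poincare lemma}, \cref{prop. relative ht case fiber seq}). Second, the source $\mathcal D(Y_{/A,n}^\Prism)$ is generated under shifts and colimits by the structure sheaf (\cref{propt. relative generation}), while $\RHom$ from $\Tilde R$ in the Ore-extension category is computed by the \emph{same} de Rham complex via a Koszul resolution (\cref{rem. rel cohomology of q higgs}); full faithfulness then reduces to testing on this single generator. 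The essential image is handled not by d\'evissage on $n$ either, but by showing the target subcategory $\mathcal C_n$ is also generated by $\beta_n^+(\mathcal O)$: after reducing to $\mathcal C_1$ and to $\XY/(d,p)$, the nilpotence hypothesis lets one iterate the commuting $\nabla_i$ on a nonzero cohomology class until all of them kill it, producing a nonzero map from a shift of $\beta_1^+(\mathcal O)$. In the lci case the paper further exploits that $Y_{/A}^{\HT}\to X_{/A}^{\HT}$ is affine to rewrite $\mathcal D(Y_{/A}^{\HT})$ as $\pi_*\mathcal O$-modules inside the already-classified smooth case (\cref{thmt. lci main classification}).
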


The non-commutative ring $\XY/d^n[\underline{\nabla}; \gamma_{\XY}, \nabla_{\XY}]$ appearing above is defined as follows:
\begin{definition}\label{intro. def Rel q higgs}
 We define the \textit{Ore extension} $\XY[\underline{\nabla}; \gamma_{\XY}, \nabla_{\XY}]$ to be the noncommutative ring obtained by giving the ring of polynomials $\XY[\nabla_1,\cdots,\nabla_m]$ a new multiplication law, subject to the identity
    \begin{equation*}
        \begin{split}
           \nabla_i\cdot \nabla_j&=\nabla_j\cdot \nabla_i,  ~~~\forall ~ 1\leq i,j\leq m
           \\ \nabla_i \cdot r&=\gamma_{\XY,i}(r)\cdot\nabla_i+\nabla_{\XY,i}(r), ~~~\forall~ r\in \XY, 1\leq i\leq m .
        \end{split}
    \end{equation*}
    Here $\gamma_{\XY,i}$ (resp. $\nabla_{\XY,i}$) is the ring automorphism (resp. $\gamma_{\XY,i}$-derivation) on $\XY$ defined in \cref{rem. lci replace psi by i}, extending $\gamma_{\Tilde{R},i}$ (resp. $\nabla_{\Tilde{R},i}$) on $\Tilde{R}$ defined above \cref{lem. twist S in the relative case}. 
\end{definition}
\begin{remark}\label{intro.rem Tsuji}
    Recently Tsuji studied $\Vect((Y/A)_{\Prism},\mathcal{O}_{\Prism})$ via relative prismatic site in \cite{tsuji2024prismatic}. He proved that when $X$ is small affine over the $m$-dimensional torus over $A$, there is a canonical equivalence between $\Vect((Y/A)_{\Prism},\mathcal{O}_{\Prism})$ and the category of $D$-modules with quasi-nilpotent $q$-Higgs field. Moreover, he was also able to calculate the cohomology of a prismatic crystal in terms of the cohomology of the associated $q$-Higgs complex, see \cite[Theorem 0.1, Theorem 0.2]{tsuji2024prismatic}. Such results should be closely related to \cref{intro.main 3} when restricted to the abelian level. But again, our methods are totally different as we follow the stacky roadmap directed by Drinfeld, Bhatt-Lurie and apply the strategy working beyond the Hodge-Tate locus developed in \cite{liu2024prismatization}.
\end{remark}

Following the picture illustrated before Construction \ref{intro.con}, essentially by combing the tool-kits proving \cref{intro.main thm1} and \cref{intro.main 3}, we end up with the following classification of $\mathcal{D}(Y^{\Prism})$ for $Y$ a locally complete intersection over $W(k)[\zeta_{p^{\alpha+1}}]$.
\begin{theorem}[{\cref{thmt. AR main classification}, \cref{thmt. ARL main classification}}]\label{intro.main thm4}
    Let $Y$ be as that in Construction \ref{intro.con} or $Y=X$. We further assume $(A,d)=(W(k)[[q-1]], [p]_{q^{p^\alpha}})$ with $p>2$ or $\alpha>0$. Then for $n\in \mathbb{N}\cup \{\infty\}$, the covering map $\rho_Y: \Spf(\Prism_X(Y)/d^n)\to Y_{n}^{\Prism}$ induces a fully faithful functor\footnote{Similarly as before, when $Y=X$, we use $\Prism_X(X)$ to denote $\Tilde{R}$.}  
    \begin{align*}
        &\beta_n^{+}: \mathcal{D}(Y_{ n}^{\Prism}) \rightarrow \mathcal{D}(\XY/d^n[\partial, \underline{\nabla}; \gamma_{\Tilde{R}}]), \qquad \mathcal{E}\mapsto (\rho_Y^{*}(\mathcal{E}), \partial_{\mathcal{E}}, \nabla_{\mathcal{E}}),
    \end{align*}
    whose essential image consists of those objects $M\in \mathcal{D}(\XY/d^n[\partial, \underline{\nabla}; \gamma_{\Tilde{R}}])$ satisfying the following pair of conditions:
    \begin{itemize}
        \item $M$ is $(p,d)$-adically complete.
        \item The action of $\partial$  and $\nabla_{i}$ on the cohomology $\mathrm{H}^*(M\otimes_{\XY/d^n}^{\mathbb{L}}\XY/(d,q-1))$ 
        is locally nilpotent for all $i$.
    \end{itemize}
    
     In particular, by \cref{absolute rem. rel cohomology of q higgs}, for $\mathcal{E}\in \mathcal{D}(Y_{n}^{\Prism})$,
     \begin{equation*}
        \mathrm{R} \Gamma(Y_{n}^{\Prism}, \mathcal{E})\xrightarrow{\simeq} \fib(\mathrm{DR}(\rho_Y^*\mathcal{E},\nabla_{\mathcal{E}})\xrightarrow{\Tilde{\partial}_{\mathcal{E}}^{[\bullet]}} \mathrm{DR}(\rho_Y^*\mathcal{E},\nabla_{\mathcal{E}})).
    \end{equation*}
\end{theorem}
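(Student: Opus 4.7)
The plan is to combine the two stacky toolkits already developed in this paper: the one behind \cref{intro.main thm1}, which produces the arithmetic $q$-connection $\partial$ from the $q$-prism direction, and the one behind \cref{intro.main 3}, which produces the geometric $q$-Higgs field $\nabla_1,\ldots,\nabla_m$ from the coordinates $T_i$. The conceptual input is the fiber square
\[ \xymatrix{ Y_{/A, n}^{\Prism} \ar[d] \ar[r] & Y_n^{\Prism} \ar[d] \\ \Spf(A) \ar[r]^{\rho_A} & \Spf(\mathcal{O}_K)_n^{\Prism}, } \]
which says that base changing $Y_n^{\Prism}$ along the $q$-prism cover recovers the relative prismatization. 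Accordingly, a complex on $Y_n^{\Prism}$ is a complex on $Y_{/A, n}^{\Prism}$ equipped with descent data along $\rho_A$, and by the two preceding theorems, such data should be encoded precisely by $(\nabla_1,\ldots,\nabla_m)$ together with an additional $\partial$.

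Concretely, the first step is to assemble a single ring homomorphism
\[ \psi: \Prism_X(Y)/d^n \longrightarrow \Prism_X(Y)/d^n \oplus \epsilon_0 \Prism_X(Y)/d^n \oplus \bigoplus_{i=1}^{m} \epsilon_i \Prism_X(Y)/d^n \]
where the algebra structure on the target is arranged so that $\epsilon_0^{2}=q(q^{p^{\alpha}}-1)\epsilon_0$ (copying the target used for the $q$-connection in \cref{intro.main thm1}), $\epsilon_i^{2}=(q-1)T_i\,\epsilon_i$ (copying the target used for $\nabla_i$ in \cref{intro.main 3}), and the mixed products $\epsilon_0\epsilon_i$ are fixed by compatibility with the $\delta$-structure on $\Prism_X(Y)$. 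I would then verify that $\rho_Y\circ \psi$ is canonically isomorphic to $\rho_Y\circ \iota$ as functors landing in $Y_n^{\Prism}$ (where $\iota$ is the canonical inclusion), extending the corresponding arguments used in \cref{intro.main thm1} and \cref{intro.main 3} to the prismatic envelope $\Prism_X(Y)$.

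Given such a diagram, pullback along $\rho_Y$ produces a functorial $\XY/d^n[\partial,\underline{\nabla};\gamma_{\Tilde{R}}]$-action on $\rho_Y^{*}\mathcal{E}$ for each $\mathcal{E}\in \mathcal{D}(Y_n^{\Prism})$, and this defines $\beta_n^{+}$. For full faithfulness, I would factor $\beta_n^{+}$ through pullback to the relative prismatization: the resulting factor $\mathcal{D}(Y_{/A, n}^{\Prism})\to \mathcal{D}(\XY/d^n[\underline{\nabla};\gamma_{\XY},\nabla_{\XY}])$ is fully faithful by \cref{intro.main 3}, and the remaining step - that descent from $\Spf(A)$ to $\Spf(\mathcal{O}_K)_n^{\Prism}$ is controlled by the extra $\partial$-structure - reduces to the machinery of \cref{intro.main thm1} applied in the new variables. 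For the essential image, I would follow the Bhatt--Lurie strategy emphasized in \cref{intro. rmk advantage}: exhibit a generating family of $\mathcal{D}(Y_n^{\Prism})$ coming from the cover $\rho_Y$, and conclude that an object of the target category lies in the image exactly when its reduction modulo $(d,q-1)$ satisfies the stated local nilpotence on both $\partial$ and each $\nabla_i$.

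The main obstacle I anticipate is constructing the diagram in Step 1 so that the algebra structure on the target of $\psi$ faithfully encodes the commutation relations defining the Ore extension $\XY/d^n[\partial,\underline{\nabla};\gamma_{\Tilde{R}}]$; in particular, the cross-terms $\epsilon_0\epsilon_i$ must yield precisely the interaction between $\partial$ and $\nabla_i$ dictated by the common automorphism $\gamma_{\Tilde{R}}$. A secondary difficulty arises when $n=\infty$ and $Y$ is genuinely singular, since $\Prism_X(Y)$ need not be concentrated in degree zero and the derived completeness condition must be propagated carefully when cascading the nilpotence assumption outward from the Hodge-Tate locus.
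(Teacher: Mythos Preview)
Your overall strategy matches the paper's: assemble a single $\psi:\XY\to S$ combining the arithmetic direction $\epsilon_0$ (from \cref{intro.main thm1}) with the geometric directions $\epsilon_1,\ldots,\epsilon_m$ (from \cref{intro.main 3}), build the isomorphism of functors over $Y_n^{\Prism}$, prove full faithfulness by factoring through the relative prismatization and invoking the fiber sequence for $\partial$ on $\Spf(\mathcal{O}_K)^{\HT}$, and pin down the essential image via generation by the $\mathcal{I}^k$.

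There is one genuine point where your plan diverges from what the paper actually does, and it is exactly the obstacle you flag. You propose to encode the interaction between $\partial$ and $\nabla_i$ in the mixed products $\epsilon_0\epsilon_i$, determined by $\delta$-compatibility. The paper does \emph{not} do this: in the combined ring $S$ it simply sets $\epsilon_i\epsilon_j=0$ for all $i\neq j$ (including $i=0$), so the single $\psi$ carries no information about how $\partial$ and $\nabla_i$ commute. Instead, the commutation relation is extracted separately (\cref{prop. AR two derivation vanish}) from the group-theoretic identity $\gamma_0\circ\gamma_i=\gamma_i^{\,p^{\alpha+1}+1}\circ\gamma_0$ among automorphisms of $\Tilde{R}$: one restricts the isomorphism $\gamma_{b,c_\psi}$ to the closed loci $\epsilon_0=q\beta$ and $\epsilon_i=\beta T_i$, where the $\psi_i$ become the honest automorphisms $\gamma_0,\gamma_i$, and then compares the two orders of composition using uniqueness of the homotopy. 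Your proposed route (fixing $\epsilon_0\epsilon_i$ by $\delta$-compatibility alone) is not obviously viable, and in any case is not what makes the argument work.

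Two minor corrections. First, in the absolute setting with $d=[p]_{q^{p^\alpha}}$ one has $\beta=q^{p^\alpha}-1$, so the correct relation is $\epsilon_i^{2}=(q^{p^\alpha}-1)T_i\,\epsilon_i$, not $(q-1)T_i\,\epsilon_i$. Second, your concern about $\Prism_X(Y)$ failing to be discrete is moot: the standing hypothesis on $Y$ (Construction~\ref{intro.con}, verified in \cref{lem. ARL discrete case}) is precisely that the prismatic envelope is discrete and $d$-torsion free.
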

The non-commutative ring $\XY[\partial, \underline{\nabla}; \gamma_{\XY}]$ just adds one variable $\partial$ to $\XY[\underline{\nabla}; \gamma_{\XY}, \nabla_{\XY}]$ to encode the arithmetic information, and the relations with other variables are given below.
\begin{definition}[{\cref{absolute def.skew polynomial}, \cref{rem. ARL RELATIONS OF GAMMA}}] We define the \textit{Ore extension} $\XY[\partial, \underline{\nabla}; \gamma_{\XY}]$ to be the noncommutative ring obtained by giving the ring of polynomials $\XY[\partial, \nabla_1,\cdots,\nabla_m]$ a new multiplication law, subject to the identity
    \begin{equation*}
        \begin{split}
           \nabla_i\cdot \nabla_j&=\nabla_j\cdot \nabla_i,  ~~~\forall ~ 1\leq i,j\leq m,
           \\ \nabla_i \cdot r&=\gamma_{\XY,i}(r)\cdot\nabla_i+\nabla_{\XY,i}(r), ~~~\forall~ r\in \Tilde{R}, 1\leq i\leq m,
           \\ \partial\cdot r&=\gamma_{\XY,0}(r)\cdot\partial+\partial_{\XY}(r), ~~~\forall~ r\in \Tilde{R},
           \\ \partial\cdot \nabla_{i}&=s_0^{-1}(1+\beta q \mathscr{D}(\nabla_{i}))\nabla_{i}\cdot \partial+(s_0^{-1}\mathscr{D}(\nabla_{i})-s_1)\cdot \nabla_{i},~~~\forall ~ 1\leq i\leq m.
        \end{split}
    \end{equation*}
    Here  $\gamma_{\XY,i}$ (resp. $\partial$ for $i=0$ and $\nabla_{\XY,i}$ for $i\neq 0$) is the ring automorphism (resp. $\gamma_{\XY,i}$-derivation) on $\XY$ defined in \cref{rem. lci replace psi by i}, extending $\gamma_{\Tilde{R},i}$ (resp. $\partial_{\Tilde{R}}$, $\nabla_{\Tilde{R},i}$) on $\Tilde{R}$ defined above \cref{lem. AR relation between 0 and i}. $s_0, s_1, \mathscr{D}(\nabla_i)$ are defined in \cref{lem. AR relation between 0 and i}.
\end{definition}
\begin{remark}
    \begin{itemize}
        \item To the best knowledge of the author, \cref{intro.main thm4} is the most general result so far. In the interesting case that $Y$ is a locally complete intersection over $\mathcal{O}_K$ (for example, $Y$ is a small affine over $\mathcal{O}_K$) with $\mathcal{O}_K$ containing $\zeta_p$, then following the discussion after \cref{intro. rem cohomology}, we can write $Y$ as a locally complete intersection over $\mathcal{O}_{K_0}=W(k)[\zeta_p]$, hence we could apply \cref{intro.main thm4} to describe $\mathcal{D}(Y_{n}^{\Prism})$, i.e. $\mathcal{D}(Y_{\Prism}, \mathcal{O}_{\Prism}/\mathcal{I}_{\Prism}^n)$ for $n\in \mathbb{N}\cup \{\infty\}$. Even in the case that $\mathcal{O}_K$ doesn't contain $\zeta_p$, we could run Galois descent and reduce to the previous case. An analog of \cref{intro. main theorem 2} to this full generality will appear in an upcoming work. 
        \item For $X$ a smooth $p$-adic formal scheme over $\mathcal{O}_K$, recently Gao-Min-Wang \cite{gao2024prismatic} classify $\Vect(X_{\Prism},\mathcal{O}_{\Prism}/\mathcal{I}_{\Prism}^n[\frac{1}{p}])$ for any $n$ (and hence $\Vect(X_{\Prism},\bdr)$) using prismatic site and Breuil-Kisin prisms. On the other hand, it is known that the results in \cite{liu2024prismatization} still hold for such a general $X$. Actually, in an upcoming joint work, for $Y$ a locally complete intersection over $\mathcal{O}_K$, we give a classification of $\mathcal{D}(Y_{\Prism}, \ocn)$ for all $n$ (hence also $\mathcal{D}(Y_{\Prism}, \oc)$) via Breuil-Kisin charts, which are integral models for de Rham prismatic crystals. Moreover, we will relate them with local systems over certain integral period sheaves on the $v$-site of the generic fiber of $Y$ studied in \cite{liu2024stacky}, which appears naturally in \textit{loc.cit.} when they study a stacky $p$-adic Riemann-Hilbert correspondence.
        \item Considering the special case  $Y=\Spf(W(k)[\zeta_{p^{\alpha+1}}])/\pi^m$\footnote{It is a non-trivial result to see such a scheme satisfies our assumption on $Y$, see \cref{lem. ARL discrete case}.}, where $\pi=\zeta_{p^{\alpha+1}}-1$ is a uniformizer. Then \cref{intro.main thm4} provides a concrete complex to calculate the cohomology of the structure sheaf on $Y_{\Prism}$ (this is exactly how we prove \cref{intro.prop cohomology of the structure sheaf}). After a detailed study of certain filtered version of this complex, we believe it can be used to calculate the $K$ groups of $Y$. Note that recently Antieau-Krause-Nikolaus study prismatic cohomology of $\delta$-rings in \cite{antieau2023prismatic}, from which they construct concrete complexes to calculate the syntomic cohomology of $\mathcal{O}_K/\pi^m$ in \cite{antieau2024k}, and hence the $K$ groups of $\mathcal{O}_K/\pi^m$ via trace methods. We expect our complex to offer an alternative calculation. Moreover, by varying $Y$ in \cref{intro.main thm4}, it might shed some light on the calculation of $K$ groups of a more general $Y$.
        \item Heuristically one can think of $\mathcal{D}(\XY[\partial, \underline{\nabla}; \gamma_{\Tilde{R}}])$ as the derived category of \textit{enhanced $q$-Higgs modules}, where the notation latter is encouraged by the work of Min-Wang \cite{min2022p}, where they identify Hodge-Tate crystals, i.e. $\Vect(\Spf(R)_{\Prism}, \overline{\mathcal{O}}_{\Prism})$ with the category of enhanced Higgs modules over $R$ for $R$ smooth affine over $\mathcal{O}_K$. 
    \end{itemize}
\end{remark}

\subsection*{Outline}
The paper is organized as follows. In section 2 we recollect some constructions and properties of the (truncated) prismatization functor. Section 3 explains the construction of the $q$-Higgs connection on $\rho^{*}(\mathscr{E})$ for $\mathscr{E} \in \mathcal{D}((W(k)[\zeta_{p^{\alpha+1}}])^{\Prism})$ and we prove \cref{intro.main thm1} and \cref{into thmt.main classification II}. Moreover we develop the Galosi descent machine and prove \cref{intro. main theorem 2} as well as several related results. In section 4 we study quasi-coherent complexes on the relative prismatization of $Y$ over $A$, for $A$ a transversal prism over the $q$-prism and prove \cref{intro.main 3}. Finally in section 5, we study $\mathcal{D}(Y^{\Prism})$ for $Y$ a locally complete intersection over a cyclotomic ring and obtain \cref{intro.main thm4}.

\subsection*{Notations and conventions}
\begin{itemize}
    \item In this paper $\mathcal{O}_K$ is a complete discrete valuation ring of mixed characteristic with fraction field $K$ and perfect residue field $k$ of characteristic $p$.
    \item For $X$ a $p$-adic bounded formal scheme, $X^{\Prism}$ (resp. $X^{\HT}$) is the prismatization of $X$ (resp. the Hodge-Tate stack of $X$) defined as $\WCart_X$ (resp. $\WCart_X^{\HT}$) in \cite{bhatt2022absolute} and \cite{bhatt2022prismatization}. But when $X=\mathbb{Z}_p$, we stick to the original notion $\WCart$ and $\WCart^{\HT}$. Moreover, if $X$ is over a base prism $(A,I)$, we write $X_{/A}^{\Prism}$ for the relative prismatization of $X$ over $A$, denoted as $\WCart_{X/A}$ in \cite{bhatt2022prismatization}.
    \item $[n]_{q}:=\frac{q^n-1}{q-1}=1+q+\cdots +q^{n-1}$.
    \item When $R$ is a non-commutative ring, we write $\mathcal{D}(R)$ for the derived category of left $R$-modules.
\end{itemize}
\subsection*{Acknowledgments}
The influence of the work of Bhatt and Lurie \cite{bhatt2022absolute,bhatt2022prismatization} on this paper should be obvious to readers, we thank them for their pioneering and wonderful work. We express our gratitude to Benjamin Antieau, Bhargav Bhatt, Kiran Kedlaya, Shizhang Li, and Noah Olander for helpful discussions. Special thanks to Johannes Anschütz and Sasha Petrov for their interest in this project as well as useful discussions and suggestions on various parts of this paper. The author is indebted to his mentor Martin Olsson for constant help and guidance throughout the writing of this paper. During the preparation of the project, the author was partially supported by the Simons Collaboration grant on Perfection under Professor Olsson.

\section{Recollections on the prismatization functor}
Motivated by Bhatt and Lurie's definition of $\WCart^{\mathrm{HT}}$ (see \cite{bhatt2022absolute}), in \cite{liu2024prismatization} we define certain nilpotent thickenings of $\WCart^{\mathrm{HT}}$ inside $\WCart$. 
\begin{definition}[{\cite[Definition 2.1]{liu2024prismatization}}]\label{defi.truncated}
    Let $R$ be a $p$-nilpotent commutative ring. Fix a positive integer $n$, We let $\WCart^{\mathrm{HT}}_{n}(R)$ denote the full subcategory of $\WCart(R)$
spanned by those Cartier-Witt divisors $\alpha: I \rightarrow W(R)$ for which the composite map $I^{\otimes n} \xrightarrow{\alpha^{\otimes n}} W(R) \twoheadrightarrow R$
is equal to zero. The construction $R \mapsto \WCart^{\mathrm{HT}}_n(R)$ determines a closed substack of the Cartier-Witt stack $\WCart$.
We denote this closed substack by $\WCart_n$. 
\end{definition}

In particular, when $n=1$, $\WCart_1$ coincides with $\WCart^{\mathrm{HT}}$ and we will switch freely between these two notations. In general, $\WCart_{n}$ could be viewed as a infinitesimal thickening of $\WCart^{\mathrm{HT}}$. To unify the notation, sometimes we write $\WCart_{\infty}$ for $\WCart$ in this paper.
\begin{remark}\label{remark.second definition}
As pointed out in \cite[Remark 2.3]{liu2024prismatization}, there is an alternative definition for $\WCart_{n}$. We briefly review it here. Recall that given a Cartier-Witt divisor $I\rightarrow W(R)$, its base change along the restriction map $W(R)\rightarrow R$ is a generalized Cartier divisor. Consequently this determines a morphism of stacks $\mu: \WCart \to [\mathbb{A}^1/\mathbb{G}_m]$, which actually factors through the substack $[\widehat{\mathbb{A}}^1/\mathbb{G}_m]$ as the image of $I$ in $R$ is nilpotent (see \cite[Remark 3.1.6]{bhatt2022absolute} for details). From this point of view, unwinding \cref{defi.truncated}, we see that the diagram 
    $$ \xymatrix@R=50pt@C=50pt{ \WCart_n \ar[d] \ar@{^{(}->}[r] & \WCart \ar[d]^{\mu}\\
[\Spf(\mathbb{Z}[[t]]/t^n)/\mathbb{G}_m] \ar@{^{(}->}[r]^-{} & [\widehat{\mathbb{A}}^1/\mathbb{G}_m] }$$
is a pullback square, which gives an equivalent definition of $\WCart_n$.
\end{remark}

Following \cite[Construction 3.7]
{bhatt2022prismatization}, we can generalize the previous construction to any bounded $p$-adic formal scheme $X$.
\begin{construction}[{\cite[Construction 2.7]{liu2024prismatization}}]\label{ConsAbsHT}
Let $X$ be a bounded $p$-adic formal scheme. Given $n\in \mathbb{N}\cup \{\infty\}$, form a fiber square
\[ \xymatrix{ X_{n}^{\Prism} \ar[r] \ar[d] & X^{\Prism} \ar[d] \\
\mathrm{WCart}_n \ar[r] & \mathrm{WCart} }\]
defining the closed substack $X_{n}^{\Prism}$ inside $X^{\Prism}$, the prismatization of $X$. We will call $X_{n}^{\Prism}$ \textit{the $n$-truncated prismatization of $X$}.
\end{construction}

With such a definition in hand, \cite[Proposition 8.13, Proposition 8.15]{bhatt2022prismatization} imply that quasi-coherent complexes on $X_n^{\Prism}$ corresponds to prismatic crystals on $X_{\Prism}$ under mild assumption on $X$:
\begin{proposition}[{\cite[Proposition 8.13, Proposition 8.15]{bhatt2022prismatization}, see \cite[Proposition 2.11]{liu2024prismatization}}]\label{prop. recollect}
    Assume that $X$ is a quasi-syntomic $p$-adic formal scheme, then there is an equivalence
    $$\mathcal{D}_{}(X_{n}^{\Prism}) \xrightarrow{\sim} \lim _{(A, I)\in X_{\Prism}} \widehat{\mathcal{D}}(A/I^{n})=:\mathcal{D}_{\crys}(X_{\Prism}, \mathcal{O}_{\Prism}/\mathcal{I}_{\Prism}^n) $$
    of symmetric monoidal stable $\infty$-categories. Similar results hold if we consider perfect complexes or vector bundles on both sides instead.
\end{proposition}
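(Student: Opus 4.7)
The plan is to reduce to the affine quasi-regular semiperfectoid (qrsp) case by quasi-syntomic descent on both sides, then appeal to Bhatt--Lurie's explicit description of the prismatization in that case.

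First I would check that both functors $X \mapsto \mathcal{D}(X_n^{\Prism})$ and $X \mapsto \mathcal{D}_{\crys}(X_{\Prism}, \mathcal{O}_{\Prism}/\mathcal{I}_{\Prism}^n)$ are sheaves for the quasi-syntomic topology on quasi-syntomic $p$-adic formal schemes. The right-hand side is a sheaf essentially by construction (after restricting to qrsp covers, where the prismatic site is well-behaved). For the left-hand side, it suffices to verify that $X \mapsto X^{\Prism}_n$ carries quasi-syntomic covers to covers of stacks and commutes with base change; by \cref{ConsAbsHT} this reduces to the same statement for $X^{\Prism}$ itself, which is proved in Bhatt--Lurie's foundational work. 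Next I would write down a symmetric monoidal comparison morphism: for each prism $(A,I) \in X_{\Prism}$, the map $\Spf(A) \to X^{\Prism}$ composed with $\Spf(A/I^n) \to \Spf(A)$ factors through $X^{\Prism}_n$ (since $I^n$ maps to zero in $A/I^n$, so the induced Cartier--Witt divisor satisfies the truncation condition of \cref{defi.truncated}), yielding $\eta : \Spf(A/I^n) \to X^{\Prism}_n$, and the assignment $\mathcal{E} \mapsto ((A,I) \mapsto \eta^*\mathcal{E})$ defines the comparison, which is manifestly symmetric monoidal in $\mathcal{E}$.

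Having reduced to $X = \Spf(R)$ with $R$ qrsp, the prismatic site of $R$ admits an initial object $(\Prism_R, I_R)$, so the limit on the right collapses to $\widehat{\mathcal{D}}(\Prism_R/I_R^n)$. On the left, the key input is the Bhatt--Lurie identification $X^{\Prism} \simeq \Spf(\Prism_R)$ for qrsp $R$, which immediately implies $X_n^{\Prism} \simeq \Spf(\Prism_R/I_R^n)$ and hence $\mathcal{D}(X_n^{\Prism}) = \widehat{\mathcal{D}}(\Prism_R/I_R^n)$; under these identifications the comparison map becomes the identity. The variants for perfect complexes and vector bundles follow because both properties are local for the quasi-syntomic topology and preserved under the equivalences above.

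The main obstacle is precisely this qrsp identification $X^{\Prism} \simeq \Spf(\Prism_R)$: it requires knowing that the prismatic cohomology of a qrsp ring is concentrated in degree zero and is a Nygaard-completed prism representing the prismatization functor on affine test schemes. This is a genuine geometric theorem rather than a formal consequence of the definitions, and it is where the substantive work of \cite{bhatt2022prismatization} enters. Once it is granted, the rest of the argument is a descent formality together with the routine verification that the initial-object collapse on the prismatic-site side matches the stacky-side computation.
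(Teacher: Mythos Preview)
The paper does not supply its own proof of this proposition: it is stated with citations to \cite[Proposition 8.13, Proposition 8.15]{bhatt2022prismatization} and \cite[Proposition 2.11]{liu2024prismatization}, and no argument is given in the text. Your outline is the standard one underlying those references --- quasi-syntomic descent to reduce to qrsp $R$, followed by the Bhatt--Lurie identification $R^{\Prism}\simeq \Spf(\Prism_R)$ --- and is correct in substance; the only minor inaccuracy is the parenthetical about Nygaard completion, which is not relevant here (for qrsp $R$ the initial prism $(\Prism_R,I_R)$ exists outright and represents the prismatization without any completion subtlety).
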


Finally we quickly explain how to pass from a prism in $X_{\Prism}$ to an affine point of $X_n^{\Prism}$ following \cite[Construction 3.10]{bhatt2022prismatization}. Such a construction will be used frequently in this paper without further notice. 
\begin{construction}\label{construction. covers}
    Let $X$ be a bounded $p$-adic formal scheme. Fix an object $(\operatorname{Spf}(A) \leftarrow \operatorname{Spf}(\bar{A}) \rightarrow X) \in X_{\Prism}$. Given $(p,I)$-nilpotent $A$-algebra $R$, the structure morphism $A\to R$ uniquely lifts to $\delta$-algebra homomorphism $A\rightarrow W(R)$, along which the base change of the inclusion $I\rightarrow A$ determines a Cartier-Witt divisor $I\otimes_A W(R)\rightarrow W(R)$ together with a map $\eta: \Spec(\overline{W(R)})\to \Spf(\Bar{A})\to X$ of derived schemes. Letting $R$ vary, such construction induces a morphism $\rho_{A}: \Spf(A) \rightarrow X^{\Prism}$. Moreover, 
Then $\rho_{A}$ carries the formal subscheme $\Spf(A/I^n) \subset \Spf(A)$ to $X_n^{\Prism}$, and therefore restricts to a morphism $\rho_{n,A}: \Spf(A/I^n) \rightarrow X_n^{\Prism}$. Later we will omit $n$ and $A$ when both of them are clear in the context.
\end{construction}

Given $\rho_{n,A}$ above, one could define the \textit{$n$-truncated relative prismatization}.
\begin{construction}
    Let $X$ be a bounded $p$-adic formal scheme. Given $n\in \mathbb{N}\cup \{\infty\}$, form a fiber square
\[ \xymatrix{ X_{/A, n}^{\Prism} \ar[r] \ar[d] & X_n^{\Prism} \ar[d] \\
\Spf(A/I^n) \ar[r]^{\rho_{A,n}} & \Spf(\Bar{A})_n^{\Prism} }\]
defining the closed substack $X_{/A, n}^{\Prism}$ inside $X_{/A}^{\Prism}$, the relative prismatization of $X$. We will call $X_{/A, n}^{\Prism}$ \textit{the $n$-truncated relative prismatization of $X$}.
\end{construction}

Then the following analog of \cref{prop. recollect} is due to \cite[Theorem 6.5]{bhatt2022prismatization}.
\begin{proposition}
    Let $(A,I)$ be a prism and $X$ be a quasi-syntomic $p$-adic formal scheme over $\Bar{A}$, then there is an equivalence
    $$\mathcal{D}_{qc}(X_{/A,n}^{\Prism}) \xrightarrow{\sim} \lim _{(A, I)\in (X/A)_{\Prism}} \widehat{\mathcal{D}}(A/I^{n})=:\mathcal{D}_{\crys}((X/A)_{\Prism}, \mathcal{O}_{\Prism}/\mathcal{I}_{\Prism}^n) $$
    of symmetric monoidal stable $\infty$-categories. Similar results hold if we consider perfect complexes or vector bundles on both sides instead.
\end{proposition}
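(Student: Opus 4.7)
The plan is to bootstrap from the untruncated case, which is \cite[Theorem 6.5]{bhatt2022prismatization}, by matching $n$-truncation on both sides of the equivalence. The overall argument exactly parallels the proof of the absolute analog \cref{prop. recollect}, with $X^{\Prism}$ replaced by $X_{/A}^{\Prism}$ and $(X)_{\Prism}$ replaced by $(X/A)_{\Prism}$.

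First I would identify the closed immersion $X_{/A, n}^{\Prism} \hookrightarrow X_{/A}^{\Prism}$ with the vanishing locus of the $n$-th power of the Hodge-Tate ideal sheaf $\mathcal{I}_{\Prism}$. From the fiber square defining $X_{/A, n}^{\Prism}$, this closed immersion is the base change of $X_n^{\Prism} \hookrightarrow X^{\Prism}$ along $\rho_{A,n}$, which by \cref{remark.second definition} is the pullback of $[\Spf(\mathbb{Z}[[t]]/t^n)/\mathbb{G}_m] \hookrightarrow [\widehat{\mathbb{A}}^1/\mathbb{G}_m]$ along the map $\mu$ classifying $\mathcal{I}_{\Prism}$. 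In particular, if $(B,J)\in (X/A)_{\Prism}$ gives the cover $\rho_B: \Spf(B) \to X_{/A}^{\Prism}$ furnished by \cref{construction. covers}, then $\rho_B$ restricts to a cover $\Spf(B/J^n) \to X_{/A, n}^{\Prism}$, and the pullback of $\mathcal{O}_{X_{/A, n}^{\Prism}}$ is precisely $B/J^n$.

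Having made this identification, I would invoke \cite[Theorem 6.5]{bhatt2022prismatization} to obtain the untruncated equivalence $\mathcal{D}_{qc}(X_{/A}^{\Prism}) \simeq \mathcal{D}_{\crys}((X/A)_{\Prism}, \mathcal{O}_{\Prism})$. Then $\mathcal{D}_{qc}(X_{/A, n}^{\Prism})$ is realized (via pushforward along the closed immersion) as the full subcategory of $\mathcal{D}_{qc}(X_{/A}^{\Prism})$ consisting of objects annihilated by $\mathcal{I}_{\Prism}^n$, and under the untruncated equivalence this matches the full subcategory of $\mathcal{D}_{\crys}((X/A)_{\Prism}, \mathcal{O}_{\Prism})$ of crystals whose evaluation at each $(B,J)$ lies in $\widehat{\mathcal{D}}(B/J^n)$; this is by definition $\mathcal{D}_{\crys}((X/A)_{\Prism}, \mathcal{O}_{\Prism}/\mathcal{I}_{\Prism}^n)$. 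The symmetric monoidal structure is inherited, and the perfect/vector bundle variants follow since these finiteness conditions can be checked on any prismatic chart and are preserved under the identification.

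The main point requiring care is the first step: verifying that the stacky truncation $X_{/A, n}^{\Prism}$ really agrees with the crystal-side truncation by $\mathcal{I}_{\Prism}^n$. This reduces to the Bhatt--Lurie description of the Hodge-Tate ideal on $\WCart$ together with the compatibility of $\mathcal{I}_{\Prism}$ with base change to $X_{/A}^{\Prism}$, both already in place in \cite{bhatt2022prismatization}; once recorded, the remaining full faithfulness, essential surjectivity, and monoidal compatibility are immediate from the untruncated equivalence.
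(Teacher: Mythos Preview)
The paper does not give a proof of this proposition at all: it simply states that the result ``is due to \cite[Theorem 6.5]{bhatt2022prismatization}'' and moves on. Your proposal correctly supplies the argument one would use to extract the truncated statement from the untruncated equivalence of that theorem, and the reasoning is sound; in particular, your identification of $X_{/A,n}^{\Prism}$ with the vanishing locus of $\mathcal{I}_{\Prism}^n$ via \cref{remark.second definition} and base change is exactly what is needed. (In the relative setting the argument is even a bit simpler than you indicate, since $\mathcal{I}_{\Prism}$ is already trivialized by the generator $d \in A$.) So there is no discrepancy to flag: you have written out what the paper leaves implicit in its citation.
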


\section{$q$-connections on the prismatization of the absolute cyclotomic ring}
In this section we work with $\mathcal{O}_K=W(k)[\zeta_{p^{\alpha+1}}]$ ($\alpha\geq 0$) and the $q$-prism $(A, I)=(W(k)[[q-1]], [p]_{q^{p^\alpha}})$. $X$ in this section refers to $\Spf(\mathcal{O}_K)$. As $(A,I)$ defines a transversal prism in $X_{\Prism}$, we have a faithfully flat cover $\rho: \Spf(A)\to X^{\Prism}$ as explained in Construction \ref{construction. covers}. Our goal is to construct a $q$-connection $\partial$ on $\rho^*\mathscr{E}$ for $\mathscr{E} \in \Qcoh(X^{\Prism})$ following the strategy explained in the introduction and then study its behaviors. For that purpose, we need several preliminaries.
\subsection{Construction of the $q$-connection after pulling back to the $q$-prism}
\begin{lemma}\label{lemt.extend delta}
Let $R=A[\epsilon]/(\epsilon^2-q(q^{p^\alpha}-1)\epsilon)=A\oplus \epsilon A$. Then the $W(k)$-linear ring homomorphism $\psi: A \to  R$ sending $q$ to $q+\epsilon [p]_{q^{p^{\alpha}}}$ induces a ring homomorphism $ A/([p]_{q^{p^\alpha}}^n) \to R/([p]_{q^{p^\alpha}}^n)$ for any $n\in \mathbb{N}$, which will still be denoted as $\psi$ by abuse of notation.
\end{lemma}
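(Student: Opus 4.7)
The lemma makes two claims: first, that $\psi\colon A\to R$ is a well-defined continuous $W(k)$-algebra homomorphism; second, that $\psi$ descends to the quotients by $d^n$, where $d := [p]_{q^{p^\alpha}}$. I would dispatch the first claim briefly: since $A = W(k)[[q-1]]$ is the free $(p,q-1)$-adically complete $W(k)$-algebra on $q-1$, and $R = A\oplus A\epsilon$ inherits the $(p,q-1)$-adic topology from $A$, and since $d\equiv p\pmod{q-1}$ gives $\psi(q)-1 = (q-1)+\epsilon d\in (p,q-1)R$, the assignment $q\mapsto q+\epsilon d$ extends uniquely to a continuous $W(k)$-algebra map.

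The heart of the proof is the descent $A/d^n\to R/d^n$, and my plan is to reduce it to the single statement $\psi(d)\in dR$. Granted this, $\psi(d^n)=\psi(d)^n\in (dR)^n\subseteq d^nR$, so the ideal $(d^n)\subseteq A$ is carried into $(d^n)R$ and $\psi$ descends uniquely to the quotients.

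To verify $\psi(d)\in dR$, I would exploit that $d=P(q)$ for the integer polynomial $P(x)=\sum_{i=0}^{p-1}x^{ip^\alpha}$, so $\psi(d) = P(q+\epsilon d)$. Expanding by the Hasse--Schmidt (integer coefficient) Taylor formula $P(q+y) = P(q)+\sum_{k\ge 1}P_{[k]}(q)\,y^k$ with $y=\epsilon d$, and iterating the defining relation $\epsilon^2=q(q^{p^\alpha}-1)\epsilon$ to get $\epsilon^k=(q(q^{p^\alpha}-1))^{k-1}\epsilon$ for every $k\ge 1$, each term $(\epsilon d)^k$ becomes $(q(q^{p^\alpha}-1))^{k-1}d^k\epsilon$, which visibly lies in $dR$. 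Summing, $\psi(d)-d\in \epsilon\cdot dA\subseteq dR$, which is what is needed.

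The one point requiring genuine care — and the closest thing to a real obstacle in this otherwise short argument — is that $\epsilon$ is \emph{not} square-zero but satisfies the twisted relation $\epsilon^2=q(q^{p^\alpha}-1)\epsilon$. One therefore cannot truncate Taylor expansions at the linear term as in the usual infinitesimal setting; the higher-order terms must be rewritten via this relation. Fortunately the extra factors that appear are elements of $A$, multiplied by further powers of $d$, so $d$-divisibility is preserved throughout the expansion. With this bookkeeping in place I expect no further difficulty.
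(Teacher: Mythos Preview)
Your argument is correct. The reduction to $\psi(d)\in dR$ is the right move, and your Taylor expansion combined with the identity $\epsilon^k=(q(q^{p^\alpha}-1))^{k-1}\epsilon$ cleanly shows that every term beyond the constant one carries a factor of $d$. The paper takes a different route: rather than expanding $P(q+\epsilon d)$ abstractly, it proves by induction on $k$ the closed formula
\[
\psi(q^k)=q^k+\epsilon\, q^{k-1}[p]_{q^{p^\alpha}}[k]_{q^{p^{\alpha+1}}},
\]
and then reads off $\psi(d)\in dR$ by summing over $k=0,\dots,(p-1)p^\alpha$. Your approach is more economical for the lemma as stated, but the paper's explicit formula is not wasted effort: it is invoked repeatedly later (for instance in the ghost-coordinate computations for $b$ and $c$, and in the $\mathbb{F}_p^\times$-equivariance checks), so the paper front-loads a calculation it will reuse. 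If you intend to continue along the paper's path you will eventually need this formula anyway; if not, your argument stands on its own.
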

\begin{proof}
    As $A$ is topologically free over $W(k)$, any $W(k)$-linear ring homomorphism from $A$ to $R$ is uniquely determined by its image of $q$. Next by induction we calculate that for any $k\in \mathbb{N}$, $$\psi(q^k)=q^k+\epsilon q^{k-1}[p]_{q^{p^\alpha}} [k]_{q^{p^{\alpha+1}}}=q^k+\epsilon [pk]_{q^{p^\alpha}} q^{k-1}.$$ For $k=1$, this holds by the definition of $\psi$. Suppose it holds up to $k$, then a direct induction shows that 
    \begin{equation*}
    \begin{split}
        \psi(q^{k+1})&=\psi(q^{k})\cdot \psi(q)=(q^k+\epsilon q^{k-1}[p]_{q^{p^\alpha}} [k]_{q^{p^{\alpha+1}}})(q+\epsilon [p]_{q^{p^\alpha}})
        \\&=q^{k+1}+\epsilon [p]_{q^{p^\alpha}} q^k(1+[k]_{q^{p^{\alpha+1}}})+\epsilon^2 q^{k-1}[p]_{q^{p^\alpha}}^2 [k]_{q^{p^{\alpha+1}}}
        \\&=q^{k+1}+\epsilon [p]_{q^{p^\alpha}} q^k(1+[k]_{q^{p^{\alpha+1}}})+\epsilon q^{k}\cdot (q^{p^\alpha}-1)\cdot (\frac{q^{p^{\alpha+1}}-1}{q^{p^\alpha}-1})^2\cdot \frac{q^{kp^{\alpha+1}}-1}{q^{p^{\alpha+1}}-1}
        \\&=q^{k+1}+\epsilon [p]_{q^{p^\alpha}} q^k(1+[k]_{q^{p^{\alpha+1}}}+q^{kp^{\alpha+1}}-1)
        \\&=q^{k+1}+\epsilon [p]_{q^{p^\alpha}} q^k [k+1]_{q^{p^{\alpha+1}}},
    \end{split}
    \end{equation*}
    hence the claim works for all positive integers.

    It then follows that $\psi([p]_{q^{p^\alpha}})=\psi(\sum_{i=0}^{p-1} q^{p^{\alpha }i})=[p]_{q^{p^\alpha}}+\epsilon [p]_{q^{p^\alpha}} (\sum_{i=0}^{p-1} q^{p^{\alpha }i-1}[p^{\alpha }i]_{q^{p^{\alpha+1}}})$, hence $\psi([p]_{q^{p^\alpha}})\in ([p]_{q^{p^\alpha}})R$, which further implies that $\psi$ induces a ring homomorphism $ A/([p]_{q^{p^\alpha}}^n) \to R/([p]_{q^{p^\alpha}}^n)$ for any $n\in \mathbb{N}$. 
\end{proof}
\begin{remark}\label{rem.relation with automorphism}
    One can check that the composition of $\psi$ with the quotient map $R\to A$ sending $\epsilon$ to $q(q^{p^\alpha}-1)$ is an automorphism of $A$ under which $q$ is taken to $q^{p^{\alpha+1}+1}$. We will denote this automorphism as $\gamma_A$ for later use. In this sense, $\psi(f)=f+\epsilon \partial_A(f)$ for 
    \begin{equation*}
        \begin{split}
            \partial_A: A&\longrightarrow A\\
            f&\longmapsto \frac{\gamma_A(f)-f}{q(q^{p^\alpha}-1)}.
        \end{split}
    \end{equation*}
    Note that $\partial_A$ is only $W(k)$-linear.
\end{remark}
\begin{remark}
    If $\alpha=0$, then $\psi(q^k)=q^k+\epsilon [pk]_q q^{k-1}$. One can compare $\psi$ with the $q(p)$-derivation in \cite{gros2023absolute}. Roughly speaking, $\psi=\Id+\epsilon \partial_{q(p)}$, here $\partial_{q(p)}$ is the \textit{absolute $q(p)$-derivation} defined in loc. cit.
\end{remark}
\begin{remark}\label{rem. delta ring structure on R}
We upgrade $R$ to a $\delta$-ring extending that structure on $A$ by requiring that $\varphi(\epsilon)=q^{p-1}\frac{q^{p^{\alpha+1}}-1}{q^{p^\alpha}-1}\epsilon$. Actually, as $(p,q^{p^\alpha}-1)$ forms a regular sequence in $R$, one see that $$\varphi(\epsilon)-\epsilon^p=\frac{q^{p-1}\epsilon}{q^{p^\alpha}-1}(q^{p^{\alpha+1}}-1-(q^{p^\alpha}-1)^p)$$
is zero modulo $p$, hence $\varphi$ is indeed a lift of the Fribenius modulo $p$.
    
\end{remark}
\begin{proposition}\label{lemT.construct b}
    For $R=A[\epsilon]/(\epsilon^2-q(q^{p^\alpha}-1)\epsilon)=A\oplus \epsilon A$ as in the previous lemma, there exists a unique $b$ in $W(R)^{\times}$ such that the following holds:
    \begin{itemize}
        \item $\Tilde{g}([p]_{q^{p^\alpha}})=\Tilde{f}([p]_{q^{p^\alpha}})\cdot b$, where $\Tilde{g}$ is the unique $\delta$-ring map such that the following diagram commutes:
    \[\begin{tikzcd}[cramped, sep=large]
& W(R) \arrow[d, "p_0"]\\ 
A \arrow[ru, "\Tilde{g}"] \arrow[r, "\psi"]& R
\end{tikzcd}\]
and $\Tilde{f}$ is defined similarly with the bottom line in the above diagram replaced with the canonical embedding $A\hookrightarrow R$.
    \end{itemize}
\end{proposition}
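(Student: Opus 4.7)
The plan is to deduce uniqueness from the fact that $\tilde{f}([p]_{q^{p^\alpha}})$ is a non-zero-divisor, and to construct $b$ explicitly in ghost coordinates after verifying that $\psi \colon A\to R$ is itself a $\delta$-ring map. For uniqueness, I would observe that $R=A\oplus\epsilon A$ is $p$-torsion-free, hence so is $W(R)$; since $[p]_{q^{p^\alpha}}$ is distinguished in the $\delta$-ring $A$ and $\tilde{f}$ is a $\delta$-ring map into $W(R)$, its image $\tilde{f}([p]_{q^{p^\alpha}})$ is a distinguished element of $W(R)$, in particular a non-zero-divisor, so $b$ is unique if it exists.

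For existence, the key preliminary step is to verify that $\psi$ is a $\delta$-ring map. This reduces to the identity $\varphi_R(\psi(q)) = \psi(q)^p$, i.e.\
$(q+\epsilon[p]_{q^{p^\alpha}})^p = q^p + q^{p-1}\,\varphi(\epsilon)\,\varphi([p]_{q^{p^\alpha}}).$
Expanding the left-hand side using $\epsilon^2 = q(q^{p^\alpha}-1)\epsilon$ and the telescoping identity $1+(q^{p^\alpha}-1)[p]_{q^{p^\alpha}} = q^{p^{\alpha+1}}$, both sides collapse to $q^p + q^{p-1}\epsilon\cdot\frac{q^{p^{\alpha+2}}-1}{q^{p^\alpha}-1}$ after invoking the explicit formula for $\varphi(\epsilon)$ from \cref{rem. delta ring structure on R}.

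Granting this, the adjunction between $\delta$-rings and ordinary rings makes both $\tilde{f}$ and $\tilde{g}$ factor through the canonical $\delta$-section $s_A\colon A\to W(A)$ as $W(\iota)\circ s_A$ and $W(\psi)\circ s_A$. In ghost coordinates, letting $g_n$ denote the $n$-th ghost map, this yields $g_n(\tilde{f}(a)) = \varphi_R^n(\iota(a))$ and $g_n(\tilde{g}(a)) = \varphi_R^n(\psi(a))$ for all $a\in A$. By \cref{lemt.extend delta}, $\psi([p]_{q^{p^\alpha}}) = [p]_{q^{p^\alpha}}\cdot v$ with $v = 1+\epsilon\sum_{i=0}^{p-1}q^{p^\alpha i-1}[p^\alpha i]_{q^{p^{\alpha+1}}}$; the relation $\epsilon^2 = q(q^{p^\alpha}-1)\epsilon$ produces an explicit inverse of $v$, so $v\in R^{\times}$. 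I then set $b := s_R(v)\in W(R)$, where $s_R\colon R\to W(R)$ is the canonical $\delta$-section of the $\delta$-ring $R$ provided by \cref{rem. delta ring structure on R}. Since $s_R$ is a ring homomorphism and $v\in R^{\times}$, $b$ is a unit with ghost vector $(v,\varphi_R(v),\varphi_R^2(v),\ldots)$; ghost-wise multiplication then gives
\[
g_n(\tilde{f}([p]_{q^{p^\alpha}})\cdot b) = \varphi_R^n([p]_{q^{p^\alpha}})\cdot\varphi_R^n(v) = \varphi_R^n(\psi([p]_{q^{p^\alpha}})) = g_n(\tilde{g}([p]_{q^{p^\alpha}})),
\]
and injectivity of the ghost map on the $p$-torsion-free ring $W(R)$ forces $\tilde{g}([p]_{q^{p^\alpha}}) = \tilde{f}([p]_{q^{p^\alpha}})\cdot b$.

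The principal non-routine step is the $\delta$-compatibility of $\psi$; this is what allows $\tilde{g}$ to be described via the canonical section $s_A$ and in turn lets $b := s_R(v)$ be extracted canonically from the unit $v$ furnished by \cref{lemt.extend delta}. The remainder is formal manipulation of the ghost-coordinate description.
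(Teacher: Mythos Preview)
Your proof is correct and takes a more conceptual route than the paper's. The paper does not establish that $\psi$ is a $\delta$-ring map in this proof; instead, it computes the ghost coordinates $r_n$ of the putative $b$ directly via $w_n(\tilde{g}(d)) = \psi(\varphi_A^n(d)) = \psi([p]_{q^{p^{n+\alpha}}})$, writes out each $r_n$ explicitly, and then invokes Dwork's lemma (checking that $\varphi(r_n) - r_{n+1}$ is divisible by $p^{n+1}$, which in fact vanishes identically) to show that $(r_n)$ lies in the image of the ghost map. Your observation that $\psi$ is already a $\delta$-map lets you bypass Dwork entirely: once $\psi$ commutes with Frobenius, the unit $v \in R^\times$ furnished by \cref{lemt.extend delta} pushes forward along the canonical section $s_R$ to give $b = s_R(v)$ immediately. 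The paper's verification that $\varphi(r_n) = r_{n+1}$ is, unwound, essentially the same computation as your Frobenius-compatibility check for $\psi$, so the two arguments are equivalent at the level of identities but organized differently. The paper's approach has the minor advantage of exhibiting the ghost coordinates $r_n$ explicitly (used in \cref{rem.first strenthern} to read off $b_0$); yours is cleaner and in fact anticipates \cref{prop. rela psi preserve delta structure}, where the $\delta$-compatibility of the analogous $\psi$ is proved and exploited in the relative setting.
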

\begin{proof}
    Notice that the $W(k)$-linear algebra morphism $\Tilde{g}$ is uniquely characterized by the following two properties:
\begin{itemize}
    \item $p_0(\Tilde{g}(q))=q+[p]_{q^{p^\alpha}}$.
    \item $\varphi(\Tilde{g}(q))=\Tilde{g}(\varphi(q))$.
\end{itemize}

Now we wish to construct $b=(b_0,b_1,\ldots)$ such that $\Tilde{g}([p]_{q^{p^\alpha}})=\Tilde{f}([p]_{q^{p^\alpha}})\cdot b$. As $R$ is $p$-torsion free, the ghost map is injective, hence this identity is equivalent to that 
\begin{align}\label{equa.ghost identity}
    \forall n\geq 0, w_n(\Tilde{g}([p]_{q^{p^\alpha}}))=w_n(b)\cdot w_n(\Tilde{f}([p]_{q^{p^\alpha}})).
\end{align}
Here $w_n$ denotes the $n$-th ghost map.

We first make the three terms showing up in \cref{equa.ghost identity} more explicit. In the following, we denote $[p]_{q^{p^\alpha}}$ as $d$ for simplicity. Notice that 
\begin{equation*}
    w_n(\Tilde{f}(d))=w_0(\varphi^n(\Tilde{f}(d)))=w_0(\Tilde{f}(\varphi^n(d)))=\varphi^n(d),
\end{equation*}
Here the second equality follows as $\Tilde{f}$ commutes with $\varphi$. Similarly, using the formula that $\psi(q^k)=q^k+\epsilon q^{k-1}[p]_{q^{p^\alpha}} [k]_{q^{p^{\alpha+1}}}$ obtained in \cref{lemt.extend delta}, for $n\geq 1$, we have that 
\begin{equation}\label{equa.dq cal}
\begin{split}
    w_n(\Tilde{g}((d))&=w_0(g(\varphi^n(d)))=\psi([p]_{q^{p^{n+\alpha}}})=\varphi^n(d)+\epsilon [p]_{q^{p^\alpha}} \sum_{i=0}^{p-1}q^{ip^{n+\alpha}-1}[ip^{n+\alpha}]_{q^{p^{\alpha+1}}}
    \\&=\varphi^n(d)+\epsilon [p]_{q^{p^\alpha}} [p]_{q^{p^{\alpha+n}}} \sum_{i=0}^{p-1}q^{ip^{n+\alpha}-1}[ip^{\alpha}]_{q^{p^{\alpha+n+1}}}\cdot [p^{n-1}]_{q^{p^{\alpha+1}}}
    \\&=\varphi^n(d)(1+\epsilon [p]_{q^{p^\alpha}} [p^{n-1}]_{q^{p^{\alpha+1}}} \sum_{i=0}^{p-1}q^{ip^{n+\alpha}-1}[ip^{\alpha}]_{q^{p^{\alpha+n+1}}})
    \\&=\varphi^n(d)(1+\epsilon [p^n]_{q^{p^\alpha}} \sum_{i=0}^{p-1}q^{ip^{n+\alpha}-1}[ip^{\alpha}]_{q^{p^{\alpha+n+1}}}) .
\end{split}
\end{equation}
Here the second line holds by \cref{lem.units sn}. Then a direct calculation shows that $w_n(\Tilde{g}((d))=\varphi^n(d)(1+\epsilon [p^n]_{q^{p^\alpha}} \sum_{i=0}^{p-1}q^{ip^{n+\alpha}-1}[ip^{\alpha}]_{q^{p^{\alpha+n+1}}})$ also holds for $n=0$.

Define $r_n=1+\epsilon [p^n]_{q^{p^\alpha}} \sum_{i=0}^{p-1}q^{ip^{n+\alpha}-1}[ip^{\alpha}]_{q^{p^{\alpha+n+1}}}$. As $R$ is $\varphi^n(d)$-torsion free for all $n$ (for example, see \cite[Lemma 2.1.7]{anschutz2023prismatic}), \cref{equa.ghost identity} holds if and only if the ghost coordinates of $b$ is precisely given by $(r_0, r_1,\cdots)$. In other words, we aim to show that $(r_0, r_1,\cdots)\in R^{\mathbb{N}}$ is the ghost coordinate for some element $b\in W(R)$ (such $b$ is unique if exists as ghost map is injective). As $R$ is a $\delta$-ring thanks to \cref{rem. delta ring structure on R}, by invoking Dwork's lemma (see \cite[4.6 on page 213]{lazard2006commutative} for details), it suffices to show that for any $n\geq 0$,
$$p^{n+1}|\varphi(r_n)-r_{n+1}.$$

Given our specific formulas for $r_n$, we just need to check that for any $1\leq i\leq p-1$, $$\varphi(\epsilon [p^n]_{q^{p^\alpha}} q^{ip^{n+\alpha}-1}[ip^{\alpha}]_{q^{p^{\alpha+n+1}}})-\epsilon [p^{n+1}]_{q^{p^\alpha}} q^{ip^{n+\alpha+1}-1}[ip^{\alpha}]_{q^{p^{\alpha+n+2}}}$$
is divisible by $p^{n+1}$, but actually it vanishes:
\begin{equation*}
    \begin{split}
        &\varphi(\epsilon [p^n]_{q^{p^\alpha}} q^{ip^{n+\alpha}-1}[ip^{\alpha}]_{q^{p^{\alpha+n+1}}})-\epsilon [p^{n+1}]_{q^{p^\alpha}} q^{ip^{n+\alpha+1}-1}[ip^{\alpha}]_{q^{p^{\alpha+n+2}}}
        \\=&\epsilon q^{p-1}\frac{q^{p^{\alpha+1}}-1}{q^{p^\alpha}-1} \frac{q^{p^{\alpha+n+1}}-1}{q^{p^{\alpha+1}}-1}q^{ip^{n+\alpha+1}-p}[ip^\alpha]_{q^{p^{\alpha+n+2}}}-\epsilon \frac{q^{p^{n+\alpha+1}}-1}{q^{p^\alpha}-1} q^{ip^{n+\alpha+1}-1}[ip^{\alpha}]_{q^{p^{\alpha+n+2}}}
        \\=&0.
    \end{split}
\end{equation*}
\end{proof}
\begin{remark}\label{rem.first strenthern}
    The above proof implies that $b_0=1+\epsilon \sum_{i=1}^{p-1} q^{ip^{\alpha}-1}[ip^\alpha]_{q^{p^{\alpha+1}}}$. In particular, the image of it after modulo $d$ is $1+\epsilon \sum_{i=1}^{p-1} ip^\alpha q^{ip^{\alpha}-1}$, which will be used later.
\end{remark}
The following lemma is used in the above proof:
\begin{lemma}\label{lem.units sn}
    Keep notations as in the above lemma. For any $n\geq 1, i\geq 0$, $$[ip^{n+\alpha}]_{q^{p^{\alpha+1}}}=[p]_{q^{p^{\alpha+n}}}\cdot [ip^{\alpha}]_{q^{p^{\alpha+n+1}}}\cdot [p^{n-1}]_{q^{p^{\alpha+1}}}.$$
\end{lemma}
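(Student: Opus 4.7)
The claim is a clean $q$-integer identity, so my plan is to prove it by repeatedly applying the basic multiplicative rule
\[ [mn]_{x} \;=\; [m]_{x^{n}}\cdot [n]_{x}, \]
which follows from the telescoping factorisation $\frac{x^{mn}-1}{x-1} = \frac{x^{mn}-1}{x^{n}-1}\cdot \frac{x^{n}-1}{x-1}$ valid in $\mathbb{Z}[q]$ (hence in $A$, since no denominators appear after multiplying out).

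First, I factor the index on the left as $i p^{n+\alpha} = (ip^{\alpha})\cdot p^{n}$ and set $x = q^{p^{\alpha+1}}$; the multiplicative rule then gives
\[ [ip^{n+\alpha}]_{q^{p^{\alpha+1}}} \;=\; [ip^{\alpha}]_{(q^{p^{\alpha+1}})^{p^{n}}}\cdot [p^{n}]_{q^{p^{\alpha+1}}} \;=\; [ip^{\alpha}]_{q^{p^{\alpha+n+1}}}\cdot [p^{n}]_{q^{p^{\alpha+1}}}. \]
This already produces the middle factor on the right-hand side of the claimed identity. Next I split the remaining $[p^{n}]_{q^{p^{\alpha+1}}}$ as $[p\cdot p^{n-1}]_{q^{p^{\alpha+1}}}$ (this is where the assumption $n\ge 1$ is used, so that $p^{n-1}$ is a nonnegative integer power) and apply the same rule again with $x = q^{p^{\alpha+1}}$, $m=p$, $n$ replaced by $p^{n-1}$, obtaining
\[ [p^{n}]_{q^{p^{\alpha+1}}} \;=\; [p]_{(q^{p^{\alpha+1}})^{p^{n-1}}}\cdot [p^{n-1}]_{q^{p^{\alpha+1}}} \;=\; [p]_{q^{p^{\alpha+n}}}\cdot [p^{n-1}]_{q^{p^{\alpha+1}}}. \]
Substituting this back yields exactly the right-hand side of the lemma.

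There is really no main obstacle: the only thing to watch is bookkeeping of exponents, in particular that $(q^{p^{\alpha+1}})^{p^{n}} = q^{p^{\alpha+n+1}}$ and $(q^{p^{\alpha+1}})^{p^{n-1}} = q^{p^{\alpha+n}}$, and that the case $n=1$ is covered (there $[p^{n-1}]_{q^{p^{\alpha+1}}} = [1]_{q^{p^{\alpha+1}}} = 1$, which is consistent with the identity). Since every step is a polynomial identity in $q$, the result holds in the ambient ring $A = W(k)[[q-1]]$ without any convergence or divisibility worries.
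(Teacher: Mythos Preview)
Your proof is correct and takes essentially the same approach as the paper: both arguments telescope the rational expression $\frac{q^{ip^{2\alpha+n+1}}-1}{q^{p^{\alpha+1}}-1}$ through the intermediate denominators $q^{p^{\alpha+n+1}}-1$ and $q^{p^{\alpha+n}}-1$. The paper writes out the three fractions explicitly in one line, while you package the same telescoping into two applications of the rule $[mn]_x=[m]_{x^n}[n]_x$; the content is identical.
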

\begin{proof}
    This follows from the observation that
    \begin{equation*}
        \begin{split}
            [ip^{n+\alpha}]_{q^{p^{\alpha+1}}}&=\frac{(q^{p^{\alpha+1}})^{ip^{n+\alpha}}-1}{q^{p^{\alpha+1}}-1}=\frac{q^{ip^{n+1+2\alpha}}-1}{q^{p^{\alpha+n+1}}-1}\cdot \frac{q^{p^{\alpha+n+1}}-1}{q^{p^{n+\alpha}}-1}\cdot \frac{q^{p^{n+\alpha}}-1}{q^{p^{\alpha+1}}-1}\\&=[ip^{\alpha}]_{q^{p^{\alpha+n+1}}}\cdot [p]_{q^{p^{\alpha+n}}} \cdot [p^{n-1}]_{q^{p^{\alpha+1}}}.
        \end{split}
    \end{equation*}
\end{proof}
Next we state a result which will be used together with \cref{lemT.construct b} to construct the desired Sen operator.
\begin{proposition}\label{lemt.construct homotopy}
    Keep notations as in \cref{lemT.construct b}, there exists a unique $c$ in $W(R)$ such that $$\Tilde{g}(q)-\Tilde{f}(q)=\Tilde{f}(d)\cdot c.$$
\end{proposition}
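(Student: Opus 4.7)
The plan is to construct $c$ via its ghost coordinates, verify Dwork's lemma, and get existence and uniqueness from the injectivity of the ghost map on $W(R)$, which holds because $R$ is $p$-torsion free.

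First I would compute the ghost coordinates of $\tilde{g}(q)-\tilde{f}(q)$. Using that $\tilde{g}$ and $\tilde{f}$ commute with $\varphi$ together with $p_0\circ\tilde{g}=\psi$ and $p_0\circ\tilde{f}$ the canonical inclusion, one gets $w_n(\tilde{g}(q)-\tilde{f}(q))=\psi(\varphi^n(q))-\varphi^n(q)$. Applying the formula $\psi(q^k)=q^k+\epsilon q^{k-1}[p]_{q^{p^\alpha}}[k]_{q^{p^{\alpha+1}}}$ from \cref{lemt.extend delta} with $k=p^n$ yields
\[
w_n(\tilde{g}(q)-\tilde{f}(q))=\epsilon q^{p^n-1}[p]_{q^{p^\alpha}}[p^n]_{q^{p^{\alpha+1}}}=\epsilon q^{p^n-1}[p^{n+1}]_{q^{p^\alpha}},
\]
while $w_n(\tilde{f}(d))=\varphi^n(d)=[p]_{q^{p^{\alpha+n}}}$. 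Using the identity $[p^{n+1}]_{q^{p^\alpha}}=[p^n]_{q^{p^\alpha}}\cdot[p]_{q^{p^{\alpha+n}}}$ and the fact that $R$ is $\varphi^n(d)$-torsion free, the forced ghost coordinates of $c$ are $r_n:=\epsilon q^{p^n-1}[p^n]_{q^{p^\alpha}}\in R$.

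Next I would verify that $(r_n)_{n\ge 0}$ comes from an element $c\in W(R)$ via Dwork's lemma, which, using the $\delta$-structure on $R$ from \cref{rem. delta ring structure on R}, requires $p^{n+1}\mid \varphi(r_n)-r_{n+1}$. Since $\varphi(\epsilon)=q^{p-1}[p]_{q^{p^\alpha}}\epsilon$, one computes
\[
\varphi(r_n)=q^{p-1}[p]_{q^{p^\alpha}}\epsilon\cdot q^{p^{n+1}-p}[p^n]_{q^{p^{\alpha+1}}}=\epsilon q^{p^{n+1}-1}[p^{n+1}]_{q^{p^\alpha}}=r_{n+1},
\]
so the Dwork congruence holds trivially as an equality; hence $c$ exists. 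Uniqueness follows from injectivity of the ghost map on $W(R)$, and the defining identity $\tilde{g}(q)-\tilde{f}(q)=\tilde{f}(d)\cdot c$ then holds in $W(R)$ because it holds after applying every $w_n$ by construction.

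The only real point to check is the book-keeping of the three $q$-integer identities $[p^{n+1}]_{q^{p^\alpha}}=[p^n]_{q^{p^\alpha}}[p]_{q^{p^{\alpha+n}}}=[p]_{q^{p^\alpha}}[p^n]_{q^{p^{\alpha+1}}}$, which collapse both the divisibility by $\tilde{f}(d)$ and the Dwork congruence to exact equalities. Compared with \cref{lemT.construct b}, where a nontrivial unit $b$ had to be constructed, the situation here is much cleaner since the difference $\psi(q)-q=\epsilon d$ is already manifestly divisible by $d$, and there is no obstruction to passing to all higher ghost components at once.
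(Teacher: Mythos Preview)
Your proof is correct and follows essentially the same approach as the paper: compute the ghost coordinates of $\tilde{g}(q)-\tilde{f}(q)$ and $\tilde{f}(d)$, deduce the forced ghost coordinates $r_n=\epsilon q^{p^n-1}[p^n]_{q^{p^\alpha}}$ of $c$, and then apply Dwork's lemma by verifying $\varphi(r_n)=r_{n+1}$ exactly. The $q$-integer identities you isolate are precisely the ones the paper uses (written out there as fraction manipulations), and your observation that the Dwork congruence collapses to an equality is the same phenomenon the paper records.
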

\begin{proof}
    We wish to construct $c=(c_0,c_1,\ldots)$ such that $\Tilde{g}(q)-\Tilde{f}(q)=\Tilde{f}(d)\cdot c$. As $R$ is $p$-torsion free, the ghost map is injective, hence this identity is equivalent to that 
\begin{align}\label{equa for c.ghost identity}
    \forall n\geq 0, w_n(\Tilde{g}(q))-w_n(\Tilde{f}(q))=w_n(c)\cdot w_n(\Tilde{f}(d)),
\end{align}
where $w_n$ denotes the $n$-th ghost map.
Notice that
\begin{equation*}
    \begin{split}
    &w_n(\Tilde{f}(q))=w_0(\varphi^n(\Tilde{f}(q)))=w_0(\Tilde{f}(\varphi^n(q)))=q^{p^n},
        \\&w_n(\Tilde{g}(q))=w_0(\varphi^n(\Tilde{g}(q)))=w_0(\Tilde{g}(\varphi^n(q)))=\psi(q^{p^n})=q^{p^n}+\epsilon q^{p^n-1}[p]_{q^{p^\alpha}} [p^n]_{q^{p^{\alpha+1}}},
        \\& w_n(\Tilde{f}(d))=\varphi^n([p]_{q^{p^\alpha}})=[p]_{q^{p^{n+\alpha}}}.
    \end{split}
\end{equation*}
Consequently,
\begin{equation*}
    \begin{split}
        &w_n(\Tilde{g}(q))-w_n(\Tilde{f}(q))=\epsilon q^{p^n-1}[p]_{q^{p^\alpha}} [p^n]_{q^{p^{\alpha+1}}}
        \\=&\epsilon q^{p^n-1} \cdot \frac{q^{p^{\alpha+1}}-1}{q^{p^{\alpha}}-1}\cdot \frac{q^{p^{\alpha+n+1}}-1}{q^{p^{\alpha+1}}-1}
        \\=& \epsilon q^{p^n-1} \cdot \frac{q^{p^{\alpha+n+1}}-1}{q^{p^{\alpha+n}}-1} \cdot \frac{q^{p^{\alpha+n}}-1}{q^{p^{\alpha}}-1}
        \\=&\epsilon q^{p^n-1} [p]_{q^{p^{n+\alpha}}} \cdot [p^n]_{q^{p^{\alpha}}}
    \end{split}
\end{equation*}
We define $r_n=\epsilon q^{p^n-1} \cdot [p^n]_{q^{p^{\alpha}}}$. As $R$ is $\varphi^n(d)$-torsion free for all $n$, \cref{equa for c.ghost identity} holds if and only if the ghost coordinates of $c$ is precisely given by $(r_0, r_1,\cdots)$. In other words, we aim to show that $(r_0, r_1,\cdots)\in R^{\mathbb{N}}$ is the ghost coordinate for some element $c\in W(R)$ (such $c$ is unique if exists as ghost map is injective). As $R$ is a $\delta$-ring thanks to \cref{rem. delta ring structure on R}, by invoking Dwork's lemma (see \cite[4.6 on page 213]{lazard2006commutative} for details), it suffices to show that for any $n\geq 0$,
$$p^{n+1}|\varphi(r_n)-r_{n+1}.$$
But $\varphi(r_n)-r_{n+1}$ actually vanishes as
\begin{equation*}
    \begin{split}
        &\varphi(r_n)-r_{n+1}=\varphi(\epsilon q^{p^n-1} \cdot [p^n]_{q^{p^{\alpha}}})-\epsilon q^{p^{n+1}-1} \cdot [p^{n+1}]_{q^{p^{\alpha}}}
        \\=&\epsilon\cdot q^{p-1}\cdot \frac{q^{p^{\alpha+1}}-1}{q^{p^\alpha}-1}q^{p^{n+1}-p}\cdot \frac{q^{p^{\alpha+n+1}}-1}{q^{p^{\alpha+1}}-1}-\epsilon q^{p^{n+1}-1} \cdot \frac{q^{p^{\alpha+n+1}}-1}{q^{p^\alpha}-1}
        \\=&0.
    \end{split}
\end{equation*}
Hence we finish the proof.
\end{proof}

Now we are ready to construct the $q$-Higgs connection for quasi-coherent complexes on $X^{\Prism}$, based on the following key proposition.
\begin{proposition}\label{propt.key automorphism of functors}
    The elements $b$ and $c$ constructed in \cref{lemT.construct b} and \cref{lemt.construct homotopy} together induce an isomorphism $\gamma_{b,c}$ between functors $\rho: \Spf(R) \xrightarrow{\iota} \Spf(A) \rightarrow X^{\Prism}$
    and $\rho\circ \psi: \Spf(R) \xrightarrow{\psi} \Spf(A)\rightarrow X^{\Prism}$, i.e. we have the following commutative diagram:
\[\xymatrixcolsep{5pc}\xymatrix{\Spf(R)\ar[d]^{\iota}\ar[r]^{\psi}& \Spf(A)\ar@{=>}[dl]^{\gamma_{b,c}} \ar[d]_{}^{\rho}
\\\Spf(A)  \ar^{\rho}[r]&X^{\Prism}}\]
\end{proposition}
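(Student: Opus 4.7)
The plan is to unpack what the two functors $\rho\circ\iota, \rho\circ\psi: \Spf(R)\to X^{\Prism}$ classify as $R$-points of $X^{\Prism}$ under \cref{construction. covers}, and then exhibit the pair $(b,c)$ as the isomorphism of this data. Recall that an $R$-point of $X^{\Prism}$ consists of a Cartier--Witt divisor on $R$ together with a map $\Spec(\overline{W(R)})\to X$, and a morphism between two such points is an isomorphism of the Cartier--Witt divisors that intertwines the maps to $X$. Unwinding \cref{construction. covers}, the $R$-point $\rho\circ\iota$ corresponds to the Cartier--Witt divisor classified by $\tilde f(d)\in W(R)$ together with the map to $X$ factoring through $\bar{\tilde f}:\bar A\to W(R)/\tilde f(d)W(R)$, while $\rho\circ\psi$ corresponds to $\tilde g(d)$ together with $\bar{\tilde g}$ (here I write $d=[p]_{q^{p^\alpha}}$).

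The unit $b\in W(R)^{\times}$ from \cref{lemT.construct b}, satisfying $\tilde g(d)=\tilde f(d)\cdot b$, immediately promotes to the required isomorphism of the two Cartier--Witt divisors, and moreover forces the equality of ideals $\tilde f(d)W(R)=\tilde g(d)W(R)$, so the two quotient rings coincide; call this common ring $\overline{W(R)}$. It then remains to check the compatibility of the two maps to $X$. Both factor through the structure map $\Spf(\bar A)\to X$, so it suffices to show that the induced maps $\bar{\tilde f}, \bar{\tilde g}:\bar A\to\overline{W(R)}$ are equal. Since both are continuous $W(k)$-algebra maps, and $\bar A$ is topologically generated as a $W(k)$-algebra by the image of $q$, this reduces to checking $\tilde f(q)\equiv\tilde g(q)\pmod{\tilde f(d)W(R)}$, which is precisely the content of \cref{lemt.construct homotopy}.

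The step that requires the most care is pinning down the groupoid structure on $X^{\Prism}(R)$ to confirm that the pair $(b,c)$ constitutes a complete morphism of $R$-points and no further coherence datum is needed; once the description above is in hand, the three verifications fall out directly from \cref{lemT.construct b} and \cref{lemt.construct homotopy}. Finally, naturality in a test ring $S/R$ (so that $\gamma_{b,c}$ is genuinely a natural isomorphism of functors, not just a single isomorphism of the tautological $R$-points) is automatic, because $b$ and $c$ were constructed from $\tilde f$ and $\tilde g$ via $\delta$-ring universal properties and Dwork's lemma, hence are functorial under base change.
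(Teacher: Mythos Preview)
Your proposal is correct and follows essentially the same approach as the paper: identify the two Cartier--Witt divisors via $b$, verify that the two induced maps $\bar A\to\overline{W(R)}$ agree via $c$, and deduce naturality by base change. The only presentational difference is that the paper works at the level of quasi-ideals and writes out the chain homotopy $\gamma_c$ explicitly for every $s(q)\in A$ (using the factorization $s(u)-s(v)=(u-v)\,k_s(u,v)$), whereas you reduce the compatibility check to the single generator $q$; both routes rest on the same $d$-torsion freeness of $W(R)$ and on \cref{lemt.construct homotopy}.
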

\begin{proof}
      Given a test $(p,d)$-nilpotent $R$-algebra $T$ via the structure morphism $h: R\to T$, we denote the induced morphism $W(R)\to W(T)$ by $\Tilde{h}$. Then $\rho\circ h(T)$ corresponds to the point $$(\alpha:(d) \otimes_{A,\Tilde{h}\circ \Tilde{f}} W(T)\to W(T),\eta: \Cone((d)\to A)\xrightarrow{\Tilde{h}\circ \Tilde{f}} \Cone(\alpha))$$
      in $X^{\Prism}(T)$, while $(\rho\circ \psi)\circ  h(T)$ corresponds to the point $$(\alpha^{\prime}:(d) \otimes_{A,\Tilde{h}\circ \Tilde{g}} W(T)\to W(T), \eta^{\prime}: \Cone((d)\to A) \xrightarrow{\Tilde{h}\circ \Tilde{g}} \Cone(\alpha^{\prime})).$$ 
      
      We need to specify an isomorphism $\gamma_{b}: \alpha^{\prime}\xrightarrow{\simeq} \alpha$ as well as a homotopy $\gamma_{c}$ between $\gamma_{b}\circ \eta^{\prime}$ and $\eta$ which are both functorial in $T$.
      
      Utilizing \cref{lemT.construct b},  we construct the desired isomorphism $\gamma_{b}$ as follows: 
     \[\xymatrixcolsep{5pc}\xymatrix{(d) \otimes_{A,\Tilde{h}\circ \Tilde{g}} W(T) \ar[d]^{(d)\otimes x\mapsto (d)\otimes \Tilde{h}(b)x}\ar[r]^{\iota}& W(T) \ar[d]_{}^{\Id}
\\(d) \otimes_{A,\Tilde{h}\circ \Tilde{f}} W(T)\ar[r]^{\iota}&W(T)}\]
Here the left vertical map is $W(T)$-linear and the commutativity of the diagram follows from \cref{lemT.construct b}.

Then we draw a diagram illustrating $\gamma_{b}\circ \eta^{\prime}$ and $\eta$ (as maps of quasi-ideals):
\[\xymatrixcolsep{5pc}\xymatrix{(d) \ar@<-.5ex>[d]_{\gamma_{b}\circ \eta^{\prime}} \ar@<.5ex>[d]^{\eta}\ar[r]^{\iota}& A \ar[d]_{\gamma_{b}\circ \eta^{\prime}}
 \ar@<-.5ex>[d]^{\eta}
\\(d) \otimes_{\mathfrak{S},\Tilde{h}\circ \Tilde{f}} W(T)\ar[r]^{\iota}&W(T)}\]
Here the two left vertical maps are given by $\gamma_{b}\circ \eta^{\prime}: x\cdot (d)\mapsto (d)\otimes \Tilde{h}(b)\Tilde{h}(\Tilde{g}(x))$ and $\eta: x\cdot (d)\mapsto (d)\otimes \Tilde{h}\circ \Tilde{f}(x)$, the two right vertical maps are given by $\gamma_{b}\circ \eta^{\prime}$ and $\eta=\Tilde{h}\circ \Tilde{f}$.

To construct the desired homotopy, we need to specify a map $\gamma_c: A \to (d) \otimes_{\mathfrak{S},\Tilde{h}\circ \Tilde{f}} W(T)$ such that $\iota\circ \gamma_c=\gamma_{b}\circ \eta^{\prime}-\eta$ and that $\gamma_c \circ \iota =\gamma_{b}\circ \eta^{\prime}-\eta$. Without loss of generality, we assume $T=R$. In this case, $\iota: (d) \otimes_{A, \Tilde{f}} W(R) \to W(R)$ is injective as $W(R)$ is $d$-torsion free by the uniqueness in \cref{lemT.construct b} (otherwise if there exists a nontrivial $d$-torsion $q$, $b+q\neq b$ is another objects in $W(R)$ satisfying $(b+q)\cdot d=d$, a contradiction with the uniqueness of $b$). Consequently, it suffices to construct $\gamma_c$ and check that $\iota\circ \gamma_c=\gamma_{b}\circ \eta^{\prime}-\eta$.

Inspired by \cref{lemt.construct homotopy}, we just define $\gamma_c(q)$ to be $d\otimes_{\mathfrak{S}, \Tilde{f}} c$. For a general $s(q)\in A$, denote $k_s(u,v) \in W(k)[[u,v]]$ to be the unique power series such that $s(u)-s(v)=(u-v)\cdot k_s(u,v)$. Then 
\[(\gamma_{b}\circ \eta^{\prime}-\eta)(s(u))=\Tilde{g}(s(u))-\Tilde{f}(s(u))=s(\Tilde{g}(u))-s(\Tilde{f}(u))=(\Tilde{g}(u)-\Tilde{f}(u))\cdot k_s=\Tilde{f}(d)c\cdot k_s\]
for $k_s=k_s(\Tilde{g}(u), \Tilde{f}(u))\in W(R)$. Here the last quality follows from our construction of $c$ in \cref{lemt.construct homotopy}.

Hence if we define 
\[\gamma_c(s(q))=(d)\otimes_{A, \Tilde{f}} c k_s,\]

Then the desired identity $\iota\circ \gamma_c=\gamma_{b}\circ \eta^{\prime}-\eta$ follows.

Finally it is clear that $\gamma_b$ and $\gamma_c$ are all constructed via a base change from $W(R)$ to $W(T)$, hence they are all natural in $T$. We win.
\end{proof}

When restricted to the locus where $(t)^n=0$ inside $[\Spf(\mathbb{Z}[[ t]])/ \mathbf{G}_m]$, we obtain the following truncated version of \cref{propt.key automorphism of functors}.
\begin{corollary}\label{cort.truncated}
    The $\gamma_{b,c}$ constructed in \cref{propt.key automorphism of functors} induces an isomorphism between functors $\rho: \Spf(R/d^n) \to \Spf(A/d^n) \rightarrow X_n^{\Prism}$
    and $\rho\circ \psi: \Spf(R/d^n) \xrightarrow{\psi} \Spf(A/d^n)\rightarrow X_n^{\Prism}$, i.e. we have the following commutative diagram:
\[\xymatrixcolsep{5pc}\xymatrix{\Spf(R/d^n)\ar[d]^{\iota}\ar[r]^{\psi}& \Spf(A/d^n)\ar@{=>}[dl]^{\gamma_{b,c}} \ar[d]_{}^{\rho}
\\ \Spf(A/d^n)  \ar[r]^{\rho}&X_n^{\Prism}}\]
\end{corollary}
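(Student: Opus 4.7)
The plan is to deduce this corollary directly from Proposition \ref{propt.key automorphism of functors} by restriction along the closed immersion $X_n^{\Prism} \hookrightarrow X^{\Prism}$. There is almost nothing to do beyond assembling two formal observations.

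First, I would note that both functors $\rho$ and $\rho\circ \psi$ in the diagram of Proposition \ref{propt.key automorphism of functors} restrict to the truncated subschemes and factor through $X_n^{\Prism}$. For $\rho$, this is exactly what Construction \ref{construction. covers} records: the cover $\rho: \Spf(A) \to X^{\Prism}$ sends the closed subscheme $\Spf(A/d^n)$ into the closed substack $X_n^{\Prism}$, producing the map $\rho_n$. For $\rho \circ \psi$, the key input is Lemma \ref{lemt.extend delta}, which shows that $\psi: A \to R$ descends to a ring homomorphism $A/d^n \to R/d^n$; composing with $\rho_n$ gives the lower-right map of the desired diagram.

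Next, I would restrict the natural isomorphism $\gamma_{b,c}$, originally defined on the functor of points over all $(p,d)$-nilpotent $R$-algebras, to the subcategory of $(p,d)$-nilpotent $R/d^n$-algebras. This is legitimate because any such algebra is in particular a $(p,d)$-nilpotent $R$-algebra, and the definitions of $\gamma_b$ and $\gamma_c$ in the proof of Proposition \ref{propt.key automorphism of functors} are functorial in $T$. Since $X_n^{\Prism} \hookrightarrow X^{\Prism}$ is a monomorphism of stacks and both restricted functors already factor through $X_n^{\Prism}$, a natural isomorphism between their compositions in $X^{\Prism}$ descends uniquely to a natural isomorphism in $X_n^{\Prism}$, yielding the claimed $2$-commutative diagram.

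I do not expect any obstacle here; the entire content lies in Proposition \ref{propt.key automorphism of functors}, and this corollary is a formal truncation. The only point worth verifying explicitly is the compatibility of $\psi$ with the quotient by $d^n$, but that is already recorded in Lemma \ref{lemt.extend delta}.
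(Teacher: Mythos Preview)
Your proposal is correct and matches the paper's approach: the paper simply states that the corollary follows by restricting Proposition~\ref{propt.key automorphism of functors} to the locus where $(t)^n=0$ inside $[\Spf(\mathbb{Z}[[t]])/\mathbb{G}_m]$, and your argument spells out exactly this restriction, correctly invoking Lemma~\ref{lemt.extend delta} for the compatibility of $\psi$ with the quotient by $d^n$ and Construction~\ref{construction. covers} for the factorization of $\rho$.
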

Let $n\in \mathbb{N}$. Now we are ready to construct a $q$-Higgs derivation on $\rho^*\mathscr{E}$ for $\mathscr{E}\in \Qcoh(X^{\Prism})$ (resp. $ \Qcoh(X_n^{\Prism})$). Based on \cref{propt.key automorphism of functors} (resp. \cref{cort.truncated}), we have an isomorphism $\gamma_{b,c}: \rho\circ \psi \stackrel{\simeq}{\longrightarrow} \rho$. Consequently, for $\mathscr{E}\in \Qcoh(X^{\Prism})$ (resp. $ \Qcoh(X_n^{\Prism})$), we have an isomorphism
\[\gamma_{b,c}: \psi^{*}\rho^{*}(\mathscr{E}) \stackrel{\simeq}{\longrightarrow} \rho^{*}\mathscr{E}\otimes_A R.\]
Unwinding the definitions, this could be identified with a $\psi$-linear morphism 
\begin{align}\label{equat.truncated induced morphism}
    \gamma_{b,c}: \rho^{*}(\mathscr{E})\rightarrow \rho^{*}(\mathscr{E})\otimes_A R.
\end{align} 
Moreover, our definition of the element $b$ and $c$ in \cref{lemT.construct b} and \cref{lemt.construct homotopy} implies that $\gamma_{b,c}$ in \cref{equat.truncated induced morphism} reduces to the identity modulo $\epsilon$, hence could be written as $\Id+\epsilon\partial_{\mathscr{E}}$ for some operator $\partial_{\mathscr{E}}: \rho^{*}\mathscr{E} \rightarrow \rho^{*}\mathscr{E}$ (as $R=A[\epsilon]/(\epsilon^2-q(q^{p^\alpha}-1)\epsilon)=A\oplus A\epsilon$), which we will refer to as the \textit{$q$-connection} on the complex $\rho^*\mathscr{E}$.  

Next we explain how the pullback along $\rho$ induces a functor from the category of quasi-coherent complexes on $X^{\Prism}$ to the derived category of modules over a certain non-commutative ring.

First we observe that given a pair $(M,\gamma_M)$ such that $M$ is a discrete $A$-module and $\gamma_M: \psi^*M \stackrel{\simeq}{\rightarrow} M\otimes_{A,\iota} R$ is an isomorphism reducing to the identify after modulo $\epsilon$, then we can extract a monodromy operator on $M$ satisfying a twisted Leibnitz rule.
\begin{lemma}\label{lem.leibniz}
   Given a pair $(M,\gamma_M)$ as above, we can write $\gamma_M$ as $\Id+\epsilon \partial_M$ when restricted to $M$ for a operator $\partial_M: M\to M$, then we have that $$\partial_M(ax)=a\partial_M(x)+\partial_A(a)x+q(q^{p^\alpha}-1) \partial_A(a)\partial_M(x)=\gamma_A(a)\partial_M(x)+\partial_A(a)x$$ for $a\in A$ and $x\in M$. Here $\partial_A: A\to A$ (defined in \cref{rem.relation with automorphism}) is $W(k)$-linear and sends $q^i$ to $[pi]_{q^{p^\alpha}} q^{i-1}$. In particular, 
   $$\partial_M(qx)=[p]_{q^{p^\alpha}}x+q^{p^{\alpha+1}+1}\partial_M(x).$$
\end{lemma}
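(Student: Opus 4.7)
The proof is a direct unwinding of what it means for $\gamma_M$ to be an isomorphism of $R$-modules $\psi^{*}M \xrightarrow{\simeq} M\otimes_{A,\iota}R$. Concretely, viewing both sides as the $A$-module $M\oplus \epsilon M$ but with the left-hand $A$-action twisted through $\psi$, the map $\gamma_M$ is required to commute with multiplication by any $a\in A$, which translates into the $\psi$-semilinearity identity
$$\gamma_M(a\cdot x) \;=\; \psi(a)\cdot \gamma_M(x) \qquad (a\in A,\ x\in M),$$
where the right-hand product is computed inside $M\otimes_{A,\iota}R$.

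The plan is then to substitute the two standard expansions $\psi(a)=a+\epsilon\,\partial_A(a)$ (from \cref{rem.relation with automorphism}) and $\gamma_M(x)=x+\epsilon\,\partial_M(x)$ into this identity and expand the product using the defining relation $\epsilon^2=q(q^{p^\alpha}-1)\epsilon$ of $R$. That gives
$$ax+\epsilon\,\partial_M(ax) \;=\; ax+\epsilon\bigl(a\,\partial_M(x)+\partial_A(a)\,x\bigr)+\epsilon^2\,\partial_A(a)\,\partial_M(x),$$
and comparing the coefficients of $\epsilon$ (using that $M\oplus\epsilon M$ is a free decomposition over $A$) yields the first desired equality
$$\partial_M(ax)=a\,\partial_M(x)+\partial_A(a)\,x+q(q^{p^\alpha}-1)\,\partial_A(a)\,\partial_M(x).$$

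The second equality in the statement is then a purely algebraic reformulation. Using the identity $\gamma_A(a)=a+q(q^{p^\alpha}-1)\,\partial_A(a)$, which is immediate from the definition $\partial_A=\frac{\gamma_A-\Id}{q(q^{p^\alpha}-1)}$ of \cref{rem.relation with automorphism}, one can group the first and third terms above into $\gamma_A(a)\,\partial_M(x)$, yielding the cleaner form $\partial_M(ax)=\gamma_A(a)\,\partial_M(x)+\partial_A(a)\,x$. For the ``in particular'' clause one takes $a=q$ and recalls $\gamma_A(q)=q^{p^{\alpha+1}+1}$ and $\partial_A(q)=[p]_{q^{p^\alpha}}$ (the latter being the case $i=1$ of the formula $\partial_A(q^i)=[pi]_{q^{p^\alpha}}q^{i-1}$ noted in the statement).

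There is no real obstacle here beyond bookkeeping; the only subtlety is making sure the $\psi$-linearity convention matches the orientation of $\gamma_M$ (i.e.\ that $\psi$ acts on the source), so that $\epsilon^2$ is replaced via the $R$-algebra structure on the target rather than being set to zero. Once this is fixed, the two displayed equalities follow immediately, and the formula for $\partial_M(qx)$ is just evaluation at $a=q$.
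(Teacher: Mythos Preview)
Your proposal is correct and follows essentially the same approach as the paper: both use the $\psi$-semilinearity identity $\gamma_M(ax)=\psi(a)\gamma_M(x)$, expand via $\psi(a)=a+\epsilon\partial_A(a)$ and $\gamma_M(x)=x+\epsilon\partial_M(x)$, apply the relation $\epsilon^2=q(q^{p^\alpha}-1)\epsilon$, and compare $\epsilon$-coefficients. Your write-up is slightly more explicit about deducing the $\gamma_A$-form and the $a=q$ specialization, but the argument is the same.
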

\begin{proof}
    The characterization of $\partial$ on $A$ follows as $\gamma$ is $\psi$-linear and $\psi (q^i)=q^i+\epsilon [pi]_{q^{p^\alpha}} q^{i-1}$. For a general pair $(M,\gamma_M)$,
    \begin{equation*}
        \begin{split}
            &\gamma_M(ax)=\psi(a)\gamma_M(x)=(a+\epsilon\partial(a))(x+\epsilon\partial_M(x))\\=&ax+\epsilon(a\partial_M(x)+\partial(a)x)+\epsilon^2 \partial(a)\partial_M(x)\\=&ax+\epsilon(a\partial_M(x)+\partial(a)x)+\epsilon q(q^{p^\alpha}-1) \partial(a)\partial_M(x)
            \\=&ax+\epsilon(a\partial_M(x)+\partial(a)x+q(q^{p^\alpha}-1) \partial(a)\partial_M(x)).
        \end{split}
    \end{equation*}
    This implies the desired result.
\end{proof}
\begin{remark}\label{rem. preserve d filtatration}
    $\partial_M$ preserves the $d$-adic filtration, i.e. $\partial(dM)\subseteq dM$ as $\partial(d)\in d A$.
\end{remark}
\begin{remark}\label{rem.twisted leibnitz in general}
    More generally, let $\mathscr{E}$ and $\mathscr{E}^{\prime}$ be quasi-coherent complexes on $X_n^{\Prism}$, and let $\mathscr{E}\otimes \mathscr{E}^{\prime}$ denote their derived tensor product. Then the same argument shows that the $q$-connection $\partial_{\mathscr{E}\otimes \mathscr{E}^{\prime}}$ on $\rho^*(\mathscr{E}\otimes \mathscr{E}^{\prime})\cong \rho^*(\mathscr{E}) \otimes \rho^*(\mathscr{E}^{\prime})$ can be identified with $\partial_{\mathscr{E}}\otimes\Id_{\mathscr{E}^{\prime}}+\Id_{\mathscr{E}}\otimes \partial_{\mathscr{E}^{\prime}}+q(q^{p^{\alpha}}-1)\partial_{\mathscr{E}}\otimes \partial_{\mathscr{E}^{\prime}}$.
\end{remark}

Motivated by \cref{lem.leibniz}, the following skew polynomial enters the picture.
\begin{definition}\label{def.skew polynomial}
    Let $\gamma_A: A\to A$ be the ring automorphism defined in \cref{rem.relation with automorphism}, then $\partial_A: A\to A$ introduced in \cref{rem.relation with automorphism} is a $\gamma_A$-derivation of $A$, i.e. $\partial_A(x_1x_2)=\gamma_A(x_1)\partial_A(x_2)+\partial_A(x_1)x_2$. We define the \textit{Ore extension} $A[\partial; \gamma_A, \partial_A]$ to be the noncommutative ring obtained by giving the ring of polynomials $A[\partial]$ a new multiplication law, subject to the identity
    $$\partial r=\gamma_A(r)\partial+\partial_A(r), ~~~\forall r\in A.$$
\end{definition}
\begin{remark}\label{rem. mod d ore extension}
    For any $n\geq 1$, as $\gamma_A$ (resp. $\partial_A$) induces a ring automorphism $A/d^n \to A/d^n$ (resp. a $\gamma_A$-derivation of $A/d^n$), we can similarly define $A/d^n[\partial; \gamma_A, \partial_A]$\footnote{We want to point out that by \cref{rem. preserve d filtatration}, $d^n$ is a two sided ideal inside $A[\partial; \gamma_A, \partial_A]$ and $A/d^n[\partial; \gamma_A, \partial_A]$ defined in this way is actually isomorphic to $A[\partial; \gamma_A, \partial_A]/d^n$.}. Note that one $n=1$, $A/d[\partial; \gamma_A, \partial_A]$ is just the usual (commutative) polynomial ring $A/d[\partial]$ as $\gamma_A$ reduces to the identity after modulo $d$ and $\partial_A$ vanishes on $A/d$.
\end{remark}
\begin{remark}\label{rem. cohomology of the structure sheaf}
   One can check that $A$ is isomorphic to the quotient of $A[\partial; \gamma_A, \partial_A]$ by the left ideal generated by $\partial$. More precisely, there is a canonical resolution for $A$ by finite free $A[\partial; \gamma_A, \partial_A]$-modules:
   $$[A[\partial; \gamma_A, \partial_A] \stackrel{f\mapsto f\cdot \partial}{\longrightarrow} A[\partial; \gamma_A, \partial_A]]\simeq A.$$
   Consequently, for $M\in \mathcal{D}(A[\partial; \gamma_A, \partial_A])$, 
   $$\RHom_{\mathcal{D}(A[\partial; \gamma_A, \partial_A])}(A, M)\simeq[M\stackrel{ \partial}{\longrightarrow} M].$$
   Such an observation will be used to calculate the cohomology later.
\end{remark}
For the purpose of upgrading the pullback along $\rho$ to a functor from $\mathcal{D}(X^{\Prism})$ to $\mathcal{D}(A[\partial; \gamma_A, \partial_A])$, the last missing piece is to show that $\mathcal{D}(X^{\Prism})$ is equivalent to the derived category of its heart. We first construct a $t$-structure on $\mathcal{D}(X_n^{\Prism})$ following \cite[Section 2.1]{guo2023frobenius}, where they study $t$-structures on perfect complexes of prismatic crystals.
\begin{construction}\label{construct. t structure}
    Let $n\in \mathbb{N}\cup \{\infty\}$ and $(A,I)$ be the $q$-prism as usual. We denote the Cech nerve of this cover by $\Spf(A^{\bullet})$. By \cref{prop. recollect}, $\mathcal{D}(X_{n}^{\Prism}) \xrightarrow{\sim} \lim _{(A, I)\in X_{\Prism}} \widehat{\mathcal{D}}(A/I^n)$, which is then equivalent to the cosimplicial limit of the derived category of quasi-coherent complexes on $\Spf(A^{\bullet}/I^n)$ as the latter forms a cofinal system. we define a $t$-sturcture on $\mathcal{D}(X_{n}^{\Prism})$ by requiring that $\mathcal{F}\in \mathcal{D}(X_{n}^{\Prism})$ lives in $\leq 0$ part (resp. $\geq 0$ part) if the underlying complexes on $\Spf(A^{\bullet})$ is concentrated on degree $\leq 0$ (resp. $\geq 0$). We call this the \textit{standard $t$-structure}, which deserves the name by the following proposition.
\end{construction}
\begin{proposition}\label{prop. t independ}
    Construction \ref{construct. t structure} defines a $t$-structure on $\mathcal{D}_{}(X_{n}^{\Prism})$. Moreover, given any $(B,J)\in X_{\Prism}$ such that $\Spf(B/J)\to X$ is $p$-adically flat, the pullback functor functor $\mathcal{D}(X_{n}^{\Prism})\to \mathcal{D}(B/J^n)$ is $t$-exact.
\end{proposition}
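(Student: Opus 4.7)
The strategy is to reduce both assertions to a standard flat descent argument on the \v{C}ech nerve of the cover $\rho_n: \Spf(A/I^n) \to X_n^{\Prism}$. The key technical input is $(p, I)$-complete flatness of each structure map $A^m/I^n \to A^{m+1}/I^n$. This follows from the transversality of the $q$-prism $(A, [p]_{q^{p^\alpha}})$: the self-products in the absolute prismatic site are computed by prismatic envelopes, which preserve $(p, I)$-complete flatness when fed transversal prisms (by the theory of \cite{bhatt2022prisms}). Consequently, each transition functor $\widehat{\mathcal{D}}(A^m/I^n) \to \widehat{\mathcal{D}}(A^{m+1}/I^n)$ is $t$-exact for the standard $t$-structure on each side, which is what allows connectivity in the totalization to be tested on any single simplicial level.

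With this flatness at hand, the $t$-structure axioms on $\mathcal{D}(X_n^{\Prism}) \simeq \mathrm{Tot}\,\widehat{\mathcal{D}}(A^\bullet/I^n)$ are formal. Closure under appropriate shifts is immediate from the definition. For the orthogonality between $\mathcal{D}^{\leq 0}$ and $\mathcal{D}^{\geq 1}$, I would compute
\[
\mathrm{RHom}_{\mathcal{D}(X_n^{\Prism})}(\mathcal{F}, \mathcal{G}) \;\simeq\; \mathrm{Tot}\,\mathrm{RHom}_{\widehat{\mathcal{D}}(A^m/I^n)}(\mathcal{F}_m, \mathcal{G}_m),
\]
each term of which vanishes. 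For truncation triangles, one applies the standard truncations $\tau_{\leq 0}$ and $\tau_{\geq 1}$ level-wise in each $\widehat{\mathcal{D}}(A^m/I^n)$; flatness of the transition maps ensures these assemble into a cosimplicial system and descend to a truncation triangle in the totalization.

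To prove $t$-exactness of $\rho_B^*$ for any $(B, J) \in X_{\Prism}$ with $\Spf(B/J) \to X$ $p$-adically flat, consider the Cartesian square obtained by base-changing $\rho_n$ along $\rho_B$:
\[
\begin{tikzcd}
\Spf(C) \arrow[r, "\pi_A"] \arrow[d, "\pi_B"'] & \Spf(A/I^n) \arrow[d, "\rho_n"] \\
\Spf(B/J^n) \arrow[r, "\rho_B"'] & X_n^{\Prism}.
\end{tikzcd}
\]
Since $\rho_n$ is a faithfully $(p, I)$-completely flat cover, its base change $\pi_B$ is faithfully flat as well, so $\pi_B^*$ is conservative and $t$-exact. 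The $p$-adic flatness hypothesis on $\Spf(B/J) \to X$, combined with transversality of $(A, I)$, implies that the prismatic envelope computing $C$ is $(p, I)$-completely flat over $A/I^n$, so $\pi_A^*$ is also $t$-exact. Since $\rho_n^*$ is $t$-exact by definition of the $t$-structure on $\mathcal{D}(X_n^{\Prism})$, the commutativity $\pi_B^* \circ \rho_B^* \simeq \pi_A^* \circ \rho_n^*$ forces the left-hand side to be $t$-exact; conservativity of $\pi_B^*$ then yields $t$-exactness of $\rho_B^*$.

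The main obstacle I anticipate is verifying $(p, I)$-complete flatness of the prismatic envelopes entering the \v{C}ech nerve and the fiber product $\Spf(C)$; this requires a careful invocation of the structure theory of prismatic envelopes of transversal prisms, but is a known input from \cite{bhatt2022prisms, bhatt2022prismatization}. Once this flatness is secured, everything else is formal flat descent for $(p, I)$-complete modules.
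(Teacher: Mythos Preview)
Your proposal is correct and takes essentially the same approach as the paper, which simply cites \cite[Lemma 2.9, Proposition 2.11]{guo2023frobenius} and observes that the $q$-prism qualifies as a ``Breuil--Kisin prism'' in their sense; what you have written is precisely a sketch of the Guo--Reinecke argument, with the flatness of the \v{C}ech nerve transitions and of the fiber product $C$ over $A/I^n$ being the key inputs they establish.
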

\begin{proof}
    The proof of \cite[Lemma 2.9, Proposition 2.11]{guo2023frobenius} works verbatim here. Note that the $q$-prism $(A,I)$ belongs to the category they denote as ``Breuil-Kisin prisms" in \cite[Definition 2.4]{guo2023frobenius}.
\end{proof}
\begin{remark}
    \cref{prop. t independ} tells us being in the heart of the $t$-structure with respect to our chosen family $\Spf(A^{\bullet})$ guarantees that its evaluation at any other prism $(B,J)\in X_{}\Prism$ covering the final object is concentrated on degree $0$ as well, hence is independent of our chosen cover $\Spf(A^{\bullet})$ (we can run Construction \ref{construct. t structure} and \cref{prop. t independ} to any chosen cover).
\end{remark}

From now on we denote the heart of $\mathcal{D}_{}(X_{n}^{\Prism})$ with respect to the standard $t$ structure as $\mathcal{C}_n$. Note that for $m<n$, there is a natural functor $i_{m,n}: \mathcal{C}_m \to \mathcal{C}_n$ which admits a left adjoint functor given by tensoring with the structure sheaf $\mathcal{O}_{X_{n-1}^{\Prism}}$. It induces a derived functor $Li_{m,n}^*: \mathcal{D}(\mathcal{C}_n)\to \mathcal{D}(\mathcal{C}_m)$, which is compatible with the natural functors $r_n: \mathcal{D}(\mathcal{C}_n) \to \mathcal{D}(X_n^{\Prism})$ as $r_n$ preserves projective objects.

\begin{proposition}\label{prop. heart generator}
For $n\in \mathbb{N}\cup \{\infty\}$, the the natural functor $r_n: \mathcal{D}(\mathcal{C}_n) \to \mathcal{D}(X_n^{\Prism})$ is an equivalence.
\end{proposition}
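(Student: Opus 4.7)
The plan is to verify the hypotheses of a standard recognition criterion for when the realization functor from the derived category of the heart of a $t$-structure is an equivalence: if $\mathcal{C}$ is a presentable stable $\infty$-category with an accessible $t$-structure that is right- and left-complete, whose heart $\mathcal{A}$ is a Grothendieck abelian category in which objects have uniformly bounded cohomological dimension in $\mathcal{C}$, then the natural functor $\mathcal{D}(\mathcal{A})\to \mathcal{C}$ is an equivalence (this is a version of results of Lurie in \emph{Higher Algebra} and \emph{SAG}). So the proof reduces to checking these four properties for $\mathcal{C}=\mathcal{D}(X_n^{\Prism})$ with its standard $t$-structure.

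First I would check that $\mathcal{C}_n$ is Grothendieck abelian. Using the descent description $\mathcal{D}(X_n^{\Prism})\simeq \lim_{[k]\in\Delta}\mathcal{D}(A^k/I^n)$ and the $t$-exactness of the transition pullbacks from \cref{prop. t independ}, one identifies $\mathcal{C}_n$ with the category of cartesian modules over the cosimplicial ring $A^\bullet/I^n$; since the transition maps are $p$-completely flat, this category is Grothendieck abelian. Right-completeness of the $t$-structure is essentially built in by construction: an object of $\mathcal{D}(X_n^{\Prism})$ vanishes iff its pullback to $\Spf(A/I^n)$ does, and this pullback is both conservative and $t$-exact.

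The bulk of the work lies in left-completeness together with a uniform bound on cohomological dimension. The plan is to induct on $n$. For $n=1$, the Hodge-Tate stack $X^{\HT}$ is the classifying stack of an explicit one-dimensional formal group scheme by the work of Bhatt-Lurie, so cohomology of a quasi-coherent sheaf is computed by group cohomology of bounded amplitude. For the inductive step, the fiber sequence $I^{n-1}/I^n\to \mathcal{O}_{X_n^{\Prism}}\to \mathcal{O}_{X_{n-1}^{\Prism}}$ of structure sheaves, with $I^{n-1}/I^n$ a line bundle on $X^{\HT}$, can be used to propagate the cohomological bound from $n-1$ to $n$. For $n=\infty$, one writes $\mathcal{F}\in \mathcal{C}_\infty$ as the derived inverse limit of its $I$-adic truncations $\mathcal{F}/I^n\mathcal{F}\in \mathcal{C}_n$ and passes to the limit using the uniform bounds together with a Mittag-Leffler argument. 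I expect the main obstacle to be precisely this $n=\infty$ case: one must show that the derived inverse limit along the $I$-adic filtration is compatible with the $t$-structure well enough to deduce honest left-completeness for $\mathcal{D}(X^{\Prism})$, rather than only a bounded-below version $\mathcal{D}^+(\mathcal{C}_\infty)\simeq \mathcal{D}^+(X^{\Prism})$. Once all hypotheses are verified, the cited criterion gives the desired equivalence $r_n$.
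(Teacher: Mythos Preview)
Your strategy is genuinely different from the paper's, and while it is a reasonable plan, it has a soft spot that the paper's approach avoids.

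The paper does not verify an abstract recognition criterion. Instead it proves full faithfulness of $r_n$ directly by a d\'evissage on $\RHom$: for $\mathcal{F},\mathcal{G}\in\mathcal{D}(\mathcal{C}_n)$ one tensors $\mathcal{G}$ with the fiber sequence $(i_{n-1,n})_*\mathcal{O}_{X^{\Prism}_{n-1}}\{1\}\to\mathcal{O}_{X^{\Prism}_n}\to (i_{1,n})_*\mathcal{O}_{X^{\HT}}$ and uses adjunction to reduce to $n-1$ and $n=1$; for $n=\infty$ one uses $I$-adic completeness of $\mathcal{G}$. The base case is handled by further reducing modulo $p$ and using the explicit identification $\mathcal{D}(X^{\HT}\otimes k)\simeq \mathcal{D}_{\mathcal{B}}(k[\theta])$ from \cite{anschutz2022v} (where $\mathcal{B}$ is the Serre subcategory of $\theta$-power torsion modules), for which $\mathcal{D}(\mathcal{B})\simeq\mathcal{D}_{\mathcal{B}}(k[\theta])$ is a general fact (\cite[Tag 0955]{stacks-project}). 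Essential surjectivity then falls out for free from \cref{propt.generation}, since the generators $\mathcal{I}^k$ lie in the heart.

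The weak point in your plan is the phrase ``propagate the cohomological bound from $n-1$ to $n$''. A long exact sequence coming from $I^{n-1}/I^n\to\mathcal{O}_{X_n^{\Prism}}\to\mathcal{O}_{X_{n-1}^{\Prism}}$ only gives $d_n\le d_{n-1}+c$, so the bound grows with $n$ and the criterion does not directly apply at $n=\infty$. In this paper the uniform bound (cohomological dimension $1$ for all $n$) is only established \emph{after} \cref{prop. heart generator}, via \cref{propt. sen calculate cohomology}, and moreover that result requires $p>2$ or $\alpha>0$, whereas \cref{prop. heart generator} is stated and proved unconditionally. So even reorganizing the logic to prove the cohomological bound first would not cover all cases. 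The paper's direct $\RHom$ comparison sidesteps this entirely: no uniform bound is needed, and the $n=\infty$ step is a straightforward inverse limit of the already-established finite-level comparisons.
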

\begin{proof}
It suffices to show full faithfulness as the essential surjectivity will then follow immediately from \cref{propt.generation}. For this purpose, we first explain how to reduce to the Hodge-Tate case by doing induction on $n$. Suppose we have proved the statement up to $n-1$ ($n\geq 2$). Then we wish to show that for any $\mathcal{F}, \mathcal{G}\in \mathcal{D}(\mathcal{C}_n)$,  
    \begin{equation}\label{equa.induction heart}
        \RHom_{\mathcal{D}(\mathcal{C}_n)}(\mathcal{F}, \mathcal{G})\stackrel{\simeq}{\longrightarrow} \RHom_{\mathcal{D}(X_n^{\Prism})}(r_n(\mathcal{F}), r_n(\mathcal{G})).
    \end{equation}
    For this purpose, first we notice that there is a fiber sequence (see \cite[Proposition 3.15]{liu2024prismatization} for details)
    \begin{equation}\label{induction eqaution}
        \mathcal{G}\otimes ((i_{n-1,n})_{*}\mathcal{O}_{X^{\Prism}_{n-1}}\{1\})\rightarrow \mathcal{G}\otimes \mathcal{O}_{X^{\Prism}_{n}} \rightarrow \mathcal{G}\otimes ((i_{1,n})_{*}\mathcal{O}_{\mathrm{WCart}^{\HT}}).
    \end{equation}
    Then we observe that 
    \begin{equation*}
        \begin{split}
            &\RHom_{\mathcal{D}(\mathcal{C}_n)}(\mathcal{F}, \mathcal{G}\otimes ((i_{n-1,n})_{*}\mathcal{O}_{X^{\Prism}_{n-1}}\{1\}))=\RHom_{\mathcal{D}(\mathcal{C}_n)}(\mathcal{F}, (i_{n-1,n})_{*}L(i_{n-1,n})^{*}\mathcal{G}\{1\})\\=&\RHom_{\mathcal{D}(\mathcal{C}_{n-1})}(Li_{n-1,n}^{*}\mathcal{F}, Li_{n-1,n}^{*}\mathcal{G}\{1\})=\RHom_{\mathcal{D}(X^{\Prism}_{n-1})}(r_{n-1}(Li_{n-1,n}^{*}\mathcal{F}), r_{n-1}(Li_{n-1,n}^{*}\mathcal{G}\{1\})),
        \end{split}
    \end{equation*}
   where the first identity is due to the projection formula, the second equation follows from adjunction and the last identity holds by induction. A similar calculation for $\RHom_{\mathcal{D}(\mathcal{C}_n)}(\mathcal{F}, \mathcal{G}\otimes ((i_{1,n})_{*}\mathcal{O}_{\mathrm{WCart}^{\HT}}))$ implies the desired \cref{equa.induction heart}.

    For $n=\infty$, we observe that any $\mathcal{G}\in \mathcal{D}(\mathcal{C}_{\infty})$ is complete with respect to the Hodge-Tate ideal sheaf $\mathcal{I}$, hence $\mathcal{G}=\varprojlim \mathcal{G}/\mathcal{I}^n \mathcal{G}$, which implies that for any $\mathcal{F} \in \mathcal{D}(\mathcal{C}_{\infty})$, $$\RHom_{\mathcal{D}(\mathcal{C}_{\infty})}(\mathcal{F}, \mathcal{G})=\RHom_{\mathcal{D}(\mathcal{C}_{\infty})}(\mathcal{F}, \varprojlim\mathcal{G}/\mathcal{I}^n \mathcal{G})=\varprojlim \RHom_{\mathcal{D}(\mathcal{C}_{n})}(\mathcal{F}/\mathcal{I}^n \mathcal{F}, \mathcal{G}/\mathcal{I}^n \mathcal{F}).$$
    One then similarly calculates $\RHom_{\mathcal{D}(X_{\infty}^{\Prism})}(r_{\infty}(\mathcal{F}), r_{\infty}(\mathcal{G}))$ and the desired results follows from induction.
    
    To prove the statement for $X^{\HT}$, a similar reduction process as above implies that it suffices to work with $X^{\HT}\otimes k$ instead. But as $X^{\HT}$ is the classifying stack of $G_{\pi}$ over $X$ by \cite[Proposition 9.5]{bhatt2022prismatization} (see \cite[Example 9.6]{bhatt2022prismatization} for an explicit description of $G_{\pi}$), hence $X^{\HT}\otimes k=BG_{\pi,k}$. Then the proof of \cite[Theorem 2.5]{anschutz2022v} implies that the pullback along the covering map $\Spec(k)\to X^{\HT}\otimes k$ identifies  $\mathcal{D}(X^{\HT}\otimes k)$ with $\mathcal{D}_{\mathcal{B}}(k[\theta])$, where $\mathcal{B}$ is the full subcategory of the category of (discrete) $k[\theta]$-modules consisting of those objects which are $\theta$-power torsion (see \cite[\href{https://stacks.math.columbia.edu/tag/05E6}{Tag 05E6}]{stacks-project} for the precise definition) and $\mathcal{D}_{\mathcal{B}}(k[\theta])$ is the full subcategory of $\mathcal{D}(k[\theta])$ whose objects are those $M\in \mathcal{D}(k[\theta])$ such that $\mathrm{H}^n(M)\in \mathcal{B}$. Moreover, under such an identification $\mathcal{D}(X^{\HT}\otimes k)^{\heartsuit}$ is matched with $\mathcal{B}$. Hence for our purpose, it is enough to check that the natural map $\mathcal{D}(\mathcal{B})\to \mathcal{D}_{\mathcal{B}}(k[\theta])$ is an equivalence, which follows from \cite[\href{https://stacks.math.columbia.edu/tag/0955}{Tag 0955}]{stacks-project}.
\end{proof}

Given \cref{prop. heart generator}, the discussion in this subsection so far can be summarized as follows:
\begin{proposition}\label{prop. induce functor}
  For $n\in \mathbb{N}\cup \{\infty\}$, the pullback along $\rho: \Spf(A/d^n)\to X_n^{\Prism}$ induces a functor 
  \begin{equation*}
  \begin{split}
      \mathcal{D}(X_n^{\Prism}) &\to \mathcal{D}(A/d^n[\partial; \gamma_A, \partial_A])\\
      \mathcal{E} &\mapsto (\rho^{*}\mathcal{E}, \partial_{\mathcal{E}})
  \end{split}
  \end{equation*}
  which will be denoted as $\beta_n^+$ later. Here $\partial_{\mathcal{E}}$ is defined after \cref{cort.truncated}.
\end{proposition}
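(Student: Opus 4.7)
The plan is to reduce to a heart-level construction, where the twisted Leibnitz identity of Lemma 3.1.7 directly gives an object of $\mathrm{Mod}(A/d^n[\partial; \gamma_A, \partial_A])^\heartsuit$, and then bootstrap to the derived level using the equivalence $r_n\colon \mathcal{D}(\mathcal{C}_n) \xrightarrow{\simeq} \mathcal{D}(X_n^{\Prism})$ from Proposition \ref{prop. heart generator} together with the $t$-exactness of $\rho^*$ from Proposition \ref{prop. t independ}. The natural isomorphism $\gamma_{b,c}$ of Corollary \ref{cort.truncated} is the engine: it has already been promoted in \eqref{equat.truncated induced morphism} to a $\psi$-semilinear endomorphism $\Id + \epsilon\,\partial_{\mathcal{E}}$ of $\rho^*\mathcal{E}$ for every $\mathcal{E}\in \mathcal{D}(X_n^{\Prism})$, and its functoriality in the test algebra (established during the proof of Proposition \ref{propt.key automorphism of functors}) makes $\partial_{\mathcal{E}}$ natural in $\mathcal{E}$.

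First, I would restrict to the heart and define
\[
\bar\beta_n^+\colon \mathcal{C}_n \longrightarrow \mathrm{Mod}\!\left(A/d^n[\partial; \gamma_A, \partial_A]\right)^\heartsuit,\qquad \mathcal{F} \longmapsto (\rho^*\mathcal{F}, \partial_{\mathcal{F}}).
\]
That the target is well-defined is exactly the content of Lemma \ref{lem.leibniz}: the relation $\partial_{\mathcal{F}}(ax) = \gamma_A(a)\partial_{\mathcal{F}}(x) + \partial_A(a)x$ is equivalent to saying that left multiplication by $\partial$ upgrades $\rho^*\mathcal{F}$ to an $A/d^n[\partial; \gamma_A, \partial_A]$-module. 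Naturality of $\gamma_{b,c}$ in the test algebra makes $\mathcal{F}\mapsto (\rho^*\mathcal{F},\partial_{\mathcal{F}})$ functorial. Moreover, the composition of $\bar\beta_n^+$ with the forgetful functor to $\mathrm{Mod}(A/d^n)^\heartsuit$ is precisely $\rho^*|_{\mathcal{C}_n}$, which is exact by the $t$-exactness part of Proposition \ref{prop. t independ}; since the forgetful functor is conservative and exact, $\bar\beta_n^+$ is itself an exact functor between abelian categories.

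Next, I would derive $\bar\beta_n^+$ termwise to produce an exact functor $\mathcal{D}(\bar\beta_n^+)\colon \mathcal{D}(\mathcal{C}_n) \to \mathcal{D}(A/d^n[\partial; \gamma_A, \partial_A])$, and then define
\[
\beta_n^+ \;:=\; \mathcal{D}(\bar\beta_n^+)\circ r_n^{-1}\colon \mathcal{D}(X_n^{\Prism}) \longrightarrow \mathcal{D}(A/d^n[\partial; \gamma_A, \partial_A]),
\]
using the equivalence $r_n$ of Proposition \ref{prop. heart generator}. The remaining point is to check that the underlying object in $\mathcal{D}(A/d^n)$ is still $\rho^*\mathcal{E}$, equipped with the operator $\partial_{\mathcal{E}}$ produced directly by $\gamma_{b,c}$. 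For bounded complexes in $\mathcal{D}(\mathcal{C}_n)$ this is immediate from the construction on the heart together with functoriality; for unbounded objects one invokes that both sides commute with the limits defining $(p,d)$-complete objects and with Postnikov towers, which holds because $\rho^*$ and $\gamma_{b,c}$ are both compatible with base change along the covers $\Spf(A/d^n)\to X_n^{\Prism}$.

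The only genuinely non-formal input here is the exactness in step two, and this is why the $t$-exactness of $\rho^*$ (Proposition \ref{prop. t independ}) is essential; I do not expect any serious obstacle beyond checking it carefully. The bulk of the work has already been carried out: Proposition \ref{propt.key automorphism of functors} and Corollary \ref{cort.truncated} provide the automorphism of functors, Lemma \ref{lem.leibniz} provides the algebraic structure, and Proposition \ref{prop. heart generator} provides the derived equivalence, so the proposition is effectively a packaging statement.
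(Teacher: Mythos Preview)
Your proposal is correct and follows essentially the same approach as the paper: define the functor on the heart $\mathcal{C}_n$ using Lemma~\ref{lem.leibniz} to promote $(\rho^*\mathcal{F},\partial_{\mathcal{F}})$ to an $A/d^n[\partial;\gamma_A,\partial_A]$-module, then derive and transport along the equivalence $r_n$ of Proposition~\ref{prop. heart generator}. You are somewhat more explicit than the paper about the role of $t$-exactness (Proposition~\ref{prop. t independ}) in making $\bar\beta_n^+$ exact and about the compatibility check on unbounded objects, but the skeleton is identical.
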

\begin{proof}
    By the discussion above \cref{lem.leibniz}, for $\mathcal{E}\in \mathcal{D}(X_n^{\Prism})^{\heartsuit}$, $\rho^*\mathcal{E}$ is an $A$-module concentrated on degree $0$ equipped with an operator $\partial_{\mathcal{E}}: \rho^*\mathcal{E}\to \rho^*\mathcal{E}$. Moreover, $\partial_{\mathcal{E}}$ satisfies the Leibnitz rule stated in \cref{lem.leibniz}, hence $\rho^*\mathcal{E}$ can be viewed as a (left) $A[\partial; \gamma_A, \partial_A]$-module, whose underlying module is $\rho^*\mathcal{E}$ with $\partial$ acting via $\partial_{\mathcal{E}}$. Consequently it induces a functor $\mathcal{D}(\mathcal{D}(X_n^{\Prism})^{\heartsuit})\to \mathcal{D}(A[\partial; \gamma_A, \partial_A])$. By \cref{prop. heart generator} and functoriality, it coincides with the functor stated in the proposition. 
\end{proof}


\begin{remark}\label{lem.sen operator on pullback}
    Recall that in \cite{bhatt2022absolute}, the Sen operator for complexes on the Hodge-Tate locus is defined on the complex itself, while our $q$-connection is defined on $\rho^{*}(\mathscr{E})$ other than on $\mathscr{E}$ itself, this is because our construction of the isomorphism $\gamma_{b,c}$ relies on certain coordinates $q$ in the $q$-prism when $n>1$. This is a feature but not a bug as we expect that the $q$-connection $\partial$ for the structure sheaf should act nontrivially on $q$.
\end{remark}
Let $n\in \mathbb{N}\cup \{\infty\}$. Recall that for $\mathcal{E}\in \mathcal{D}(X_n^{\Prism})$, the global section of $\mathcal{E}$ is defined as 
\begin{equation*}
    \mathrm{R} \Gamma(X_n^{\Prism}, \mathcal{E}):=\lim_{f: \Spec(R)\to X_n^{\Prism}} f^{*}\mathcal{E}
\end{equation*}

In particular, the cover $\rho: \Spf(A/I^n) \rightarrow X_n^{\Prism}$ induces a natural morphism $$\mathrm{R} \Gamma(X_n^{\Prism}, \mathcal{E})\to \rho^{*}\mathcal{E}.$$

The next proposition shows that it actually factors through the fiber of $\partial_{\mathcal{E}}$.
\begin{proposition}\label{propt. factor through fiber}
    For any $\mathcal{E}\in \mathcal{D}(X_n^{\Prism})$, the natural morphism $\mathrm{R} \Gamma(X_n^{\Prism}, \mathcal{E})\to \rho^{*}\mathcal{E}$ induces a canonical morphism $$
    \mathrm{R} \Gamma(X_n^{\Prism}, \mathcal{E})\to \fib(\rho^*\mathcal{E}\stackrel{\partial_{\mathcal{E}}}{\longrightarrow} \rho^*\mathcal{E}).$$
\end{proposition}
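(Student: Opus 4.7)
The plan is to extract the desired factorization directly from the 2-isomorphism $\gamma_{b,c}$ of \cref{cort.truncated}, by applying global sections to that 2-cell and recording the result on the ``$\epsilon$-part'' of $\iota^{*}\rho^{*}\mathcal{E}$.

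First, I would observe that since $\mathrm{R}\Gamma(X_n^{\Prism},-)$ is a functor, the 2-morphism $\gamma_{b,c}: \rho\circ\psi \Rightarrow \rho\circ\iota$ supplies a homotopy between the two compositions
\begin{equation*}
\mathrm{R}\Gamma(X_n^{\Prism},\mathcal{E})\;\xrightarrow{\;\mathrm{pullback}\;}\;\rho^{*}\mathcal{E}\;\xrightarrow{\;\iota^{*}\;}\;\iota^{*}\rho^{*}\mathcal{E}
\end{equation*}
and
\begin{equation*}
\mathrm{R}\Gamma(X_n^{\Prism},\mathcal{E})\;\xrightarrow{\;\mathrm{pullback}\;}\;\rho^{*}\mathcal{E}\;\xrightarrow{\;\psi^{*}\;}\;\psi^{*}\rho^{*}\mathcal{E}\;\xrightarrow{\;\gamma_{b,c}\;}\;\iota^{*}\rho^{*}\mathcal{E},
\end{equation*}
because both sides compute the unique pullback of a global section along the two homotopic morphisms $\rho\iota,\rho\psi:\Spf(R/d^n)\to X_n^{\Prism}$.

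Next, I would use the explicit decomposition $R=A\oplus\epsilon A$ to identify $\iota^{*}\rho^{*}\mathcal{E}=\rho^{*}\mathcal{E}\otimes_{A,\iota}R\simeq \rho^{*}\mathcal{E}\oplus\epsilon\cdot\rho^{*}\mathcal{E}$, and project onto the second summand via $\pi_{\epsilon}$. By the very construction of $\partial_{\mathcal{E}}$ (see the paragraph following \cref{cort.truncated}, where $\gamma_{b,c}$ was identified with the $\psi$-linear map $s\mapsto s+\epsilon\,\partial_{\mathcal{E}}(s)$), the composition $\pi_{\epsilon}\circ\gamma_{b,c}\circ\psi^{*}$ applied to $\rho^{*}\mathcal{E}$ equals $\partial_{\mathcal{E}}$, whereas $\pi_{\epsilon}\circ\iota^{*}$ is identically zero since $\iota$ factors through the $A$-summand. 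Applying $\pi_{\epsilon}$ to the homotopy from the previous step therefore yields a canonical null-homotopy of the composite
\begin{equation*}
\mathrm{R}\Gamma(X_n^{\Prism},\mathcal{E})\to \rho^{*}\mathcal{E}\xrightarrow{\;\partial_{\mathcal{E}}\;}\rho^{*}\mathcal{E}.
\end{equation*}

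Finally, by the universal property of the fiber in a stable $\infty$-category, a map $\mathrm{R}\Gamma(X_n^{\Prism},\mathcal{E})\to\rho^{*}\mathcal{E}$ together with a null-homotopy of its composite with $\partial_{\mathcal{E}}$ is precisely the data of a morphism $\mathrm{R}\Gamma(X_n^{\Prism},\mathcal{E})\to\fib(\rho^{*}\mathcal{E}\xrightarrow{\partial_{\mathcal{E}}}\rho^{*}\mathcal{E})$, which is the desired factorization. The only subtle point I anticipate is checking that the homotopy produced in the first step is genuinely functorial/canonical (so that the resulting map into the fiber is well-defined), but this is automatic from the fact that the 2-cell $\gamma_{b,c}$ was constructed in a manifestly functorial way in \cref{propt.key automorphism of functors}; no further calculation is required.
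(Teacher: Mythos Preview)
Your proposal is correct and follows essentially the same approach as the paper: both use the 2-isomorphism $\gamma_{b,c}$ from \cref{cort.truncated} together with the limit definition of $\mathrm{R}\Gamma(X_n^{\Prism},\mathcal{E})$ to see that the natural map to $\rho^*\mathcal{E}$ equalizes $\Id\otimes 1$ and $\gamma_{b,c}=\Id+\epsilon\,\partial_{\mathcal{E}}$, hence factors through $\fib(\partial_{\mathcal{E}})$. Your version simply unpacks the equalizer condition as a null-homotopy via projection onto the $\epsilon$-summand, which is the same content stated more explicitly.
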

\begin{proof}
By \cref{propt.key automorphism of functors} and \cref{cort.truncated}, we have an isomorphism $\gamma_{b,c}: \rho\circ \psi \stackrel{\simeq}{\longrightarrow} \rho$ as functors  $\Spf(R) \rightarrow X_n^{\Prism}$. Then the definition of $ \mathrm{R} \Gamma(X_n^{\Prism}, \mathcal{E})$ implies that the natural morphism $\mathrm{R} \Gamma(X_n^{\Prism}, \mathcal{E})\to \rho^{*}\mathcal{E}$ factors through the equalizer of $$\rho^*\mathcal{E}\stackrel{\Id\otimes 1}{\longrightarrow} 
 \rho^*\mathcal{E}\otimes_{A/d^n} R/d^n$$
and 
$$\rho^*\mathcal{E}\stackrel{\gamma_{b,c}}{\longrightarrow} 
 \rho^*\mathcal{E}\otimes_{A/d^n, \psi} R/d^n,$$
where $\gamma_{b,c}=\Id+\epsilon \partial_{\mathcal{E}}$. This produces a canonical morphism 
\begin{equation*}
    \mathrm{R} \Gamma(X_n^{\Prism}, \mathcal{E})\to \fib(\rho^*\mathcal{E}\stackrel{\partial_{\mathscr{E}}}{\longrightarrow} \rho^*\mathcal{E}).
\end{equation*}
\end{proof}
\begin{remark}
    Eventually we will show the above morphism actually identifies $\mathrm{R} \Gamma(X_n^{\Prism}, \mathcal{E})$ with $\fib(\partial_{\mathcal{E}})$. For that purpose, we need to investigate the behavior of $\partial$ on the Hodge-Tate locus first, which is the task of the next subsection.
\end{remark}
\subsection{$q$-connections on the Hodge-Tate locus}
Now we study the isomorphism $\gamma_{b,c}$ and hence the behavior of the $q$-connection $\partial$ when restricted to the Hodge-Tate locus (i.e. take $n=1$ in \cref{cort.truncated}) in more detail. First notice that $\psi: A/d\to R/d$ coincides with the natural inclusion as $\psi(x)=x, \forall x\in A/d$, we see $\gamma_{b,c}$ descends to an automorphism of
    \[\Spf(\mathcal{O}_K)^{\HT}\times_{\Spf(\mathcal{O}_K)} \Spf(\mathcal{O}_K[\epsilon]/(\epsilon^2-q(q^{p^\alpha}-1)\epsilon)),\]
    for simplicity here and in the following in this subsection we still write $q$ for its image in $\mathcal{O}_K$, which is actually $\zeta_{p^{\alpha+1}}$. This implies that for $\mathcal{E}\in \mathcal{D}(X^{\HT})$, $\partial_{\mathcal{E}}$ also descends to a functor from $\mathcal{E}$ to itself.
    
    As $\Spf(\mathcal{O}_K)^{\HT}$ is the classifying stack of $G_{\pi}$ over $\Spf(\mathcal{O}_K)$ by \cite[Proposition 9.5]{bhatt2022prismatization}, where $G_{\pi}$ is calculated in \cite[Example 9.6]{bhatt2022prismatization}. More explicitly\footnote{Here we emphasize that although the calculation in \cite[Proposition 9.5, Example 9.6]{bhatt2022prismatization} used the Breuil-Kisin prism, one could check by hand that it still works with the $q$-prism instead. In particular, the $E^{\prime}(\pi)$ in \textit{loc. cit.} could be replaced with the derivative of $d=[p]_{q^{p^\alpha}}$ (with respect to $q$), denoted as $d^{\prime}(q)$ later.} 
    \[G_\pi=\{(t,a)\in \mathbb{G}_m^\sharp\ltimes \mathbb{G}_a^\sharp\ |\ t-1=d^{\prime}(q)\cdot a\}\]
Under this identification, $\gamma_{b,c}$ corresponds to an element in $G_\pi(\mathcal{O}_K[\epsilon]/(\epsilon^2-q(q^{p^\alpha}-1)\epsilon))$, and we claim this element is precisely $(1+d^{\prime}(q)\epsilon,\epsilon)$.\footnote{ Here by our definition we see that $\epsilon^k=\epsilon q^{k-1}(q^{p^\alpha}-1)^{k-1}$. In particular, it doesn't vanish for $k\geq 1$, which is a key difference from its analog in the Breuil-Kisin setting studied in \cite{anschutz2022v}. However, the divided powers of $\epsilon$ still exist in $\mathcal{O}_K[\epsilon]/(\epsilon^2-q(q^{p^\alpha}-1)\epsilon)$ as $v_p(q^{p^\alpha}-1)=\frac{1}{p-1}$, hence $v_p(n!)\leq \frac{n-1}{p-1}=v_p(q^{n-1}(q^{p^\alpha}-1)^{n-1})$. Consequently, $\epsilon \in \mathbb{G}_a^\sharp(\mathcal{O}_K[\epsilon]/(\epsilon^2-q(q^{p^\alpha}-1)\epsilon))$.}. 
To see this, unwinding the construction of $\gamma_{b,c}$ from $b$ and $c$, we just need to verify that the image of $b$ (resp. $c$) under the natural morphism $$W(A[\epsilon]/(\epsilon^2-q(q^{p^\alpha}-1)\epsilon))\to W(\mathcal{O}_K[\epsilon]/(\epsilon^2-q(q^{p^\alpha}-1)\epsilon))$$ lies in $\mathbb{G}_m^\sharp(\mathcal{O}_K[\epsilon]/(\epsilon^2-q(q^{p^\alpha}-1)\epsilon))$ (resp. $\mathbb{G}_a^\sharp(\mathcal{O}_K[\epsilon]/(\epsilon^2-q(q^{p^\alpha}-1)\epsilon))$) and is precisely given by $1+d^{\prime}(q)\epsilon$ (resp. $\epsilon$). 
As $\mathcal{O}_K[\epsilon]/(\epsilon^2-q(q^{p^\alpha}-1)\epsilon)$ is $p$-torsion free, it suffices to check that $b_0=1+d^{\prime}(q)\epsilon$ and $c_0=\epsilon$ in $\mathcal{O}_K[\epsilon]/(\epsilon^2-\epsilon q(q^{p^\alpha}-1)\epsilon)$, which are both clear from our construction of $b$ and $c$ in \cref{lemT.construct b} (see \cref{rem.first strenthern}) and \cref{lemt.construct homotopy}. 

For simplicity we rename $d^{\prime}(q)\in \mathcal{O}_K$ as $e$ and denote $q(q^{p^\alpha}-1)$ as $\beta \in \mathcal{O}_K$. First we would like to calculate $e$ explicitly.
\begin{lemma}\label{lem. calculate e}
    $e\beta=p^{\alpha+1}$.
\end{lemma}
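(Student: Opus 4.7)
The plan is to prove this by a direct calculation inside $\mathcal{O}_K$, working from the defining identity of $d$.

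First I would write down the key identity
\[
d \cdot (q^{p^\alpha}-1) = q^{p^{\alpha+1}}-1
\]
in $A$, which is just the definition of $d = [p]_{q^{p^\alpha}}$. Differentiating both sides with respect to $q$ (inside $A$, which is topologically free over $W(k)$) gives
\[
d'(q) \cdot (q^{p^\alpha}-1) + d \cdot p^{\alpha} q^{p^\alpha - 1} = p^{\alpha+1} q^{p^{\alpha+1}-1}.
\]

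Now I would reduce this identity modulo $d$ and map to $\mathcal{O}_K = A/d$. Since $q$ maps to $\zeta_{p^{\alpha+1}}$, we have $q^{p^{\alpha+1}} = 1$ in $\mathcal{O}_K$, and the second term on the left vanishes because $d \equiv 0$. Therefore
\[
e \cdot (q^{p^\alpha}-1) = p^{\alpha+1} q^{p^{\alpha+1}-1} = p^{\alpha+1} q^{-1}
\]
in $\mathcal{O}_K$. Multiplying both sides by $q$ yields $e \cdot q(q^{p^\alpha}-1) = p^{\alpha+1}$, which is exactly $e\beta = p^{\alpha+1}$.

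The calculation is entirely routine once the differentiation step is set up correctly, so there is no real obstacle; the only thing to be careful about is that we are working in $\mathcal{O}_K$ rather than $A$ so that the $d \cdot p^\alpha q^{p^\alpha - 1}$ contribution drops out cleanly, and that $q^{p^{\alpha+1}} = 1$ holds in $\mathcal{O}_K$ by our identification $q = \zeta_{p^{\alpha+1}}$.
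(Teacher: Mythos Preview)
Your proof is correct and, if anything, a bit slicker than the paper's. The paper computes $e$ explicitly as the sum $e = \sum_{i=1}^{p-1} i p^\alpha q^{ip^\alpha - 1} = \frac{p^\alpha}{q}\sum_{i=1}^{p-1} i t^i$ with $t = q^{p^\alpha}$, and then evaluates $S = \sum_{i=1}^{p-1} i t^i$ by the standard telescoping trick $tS - S = pt^p - \sum_{i=1}^p t^i = p$ (using $t^p = 1$ and $[p]_t = 0$ in $\mathcal{O}_K$), arriving at $e = \frac{p^{\alpha+1}}{q(q^{p^\alpha}-1)} = \frac{p^{\alpha+1}}{\beta}$. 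Your approach instead differentiates the product relation $d\cdot(q^{p^\alpha}-1) = q^{p^{\alpha+1}}-1$ and reduces modulo $d$, which avoids ever writing $e$ as an explicit sum. Both arguments ultimately rely on the same two facts in $\mathcal{O}_K$ (namely $d=0$ and $q^{p^{\alpha+1}}=1$), but yours packages them more conceptually via the Leibniz rule, while the paper's gives an explicit closed form for $e$ along the way.
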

\begin{proof}
By definition, $e=\sum_{i=1}^{p-1} ip^\alpha q^{ip^{\alpha}-1}=\frac{p^{\alpha}}{q}\sum_{i=1}^{p-1} i (q^{p^\alpha})^i$. Let $t=q^{p^\alpha}$ and $S=\sum_{i=1}^{p-1} it^i$, then $$tS-S=\sum_{i=2}^{p} (i-1)t^i-\sum_{i=1}^{p-1} it^i=pt^p-\sum_{i=1}^p t^i=p,$$
hence $e=\frac{p^\alpha}{q}\cdot\frac{p}{q^{p^\alpha}-1}=\frac{p^{\alpha+1}}{\beta}$.
\end{proof}

The next lemma helps us understand how $\partial$ acts on $\rho^{*}\rho_{*} \mathcal{O}_{X}$ under the covering morphism $\rho: X\to X^{\HT}$, in parallel to the behavior of the Sen operator $\theta$ studied in \cite{bhatt2022absolute} and \cite{anschutz2022v} (see \cite[Lemma 2.6]{anschutz2022v} for details).
\begin{lemma}\label{lem. calculate partial on HT}
    Let $\mathcal{E}=\rho_{*} \mathcal{O}_{X}$, we have the following 
    \begin{itemize}
        \item $\rho^{*}\mathcal{E}\cong \mathcal{O}_{G_\pi}\cong \widehat{\bigoplus}_{n\geq 0} \mathcal{O}_K \cdot \frac{a^n}{n!}$.
        \item Suppose that $p>2$ or $p=2$ and $\alpha>0$, then the sequence $0\to 
        \mathcal{O}_K \to \rho^{*}\mathcal{E}\xrightarrow{\partial} \rho^{*}\mathcal{E} \to 0$ is exact.
    \end{itemize}
\end{lemma}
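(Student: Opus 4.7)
For part (1), by \cite{bhatt2022prismatization} Proposition 9.5 (with $G_\pi$ as in Example 9.6, the calculation carrying over verbatim to the $q$-prism with $E'(\pi)$ replaced by $e = d'(q)$), $X^{\HT} \cong BG_\pi$ over $X$, so $\rho: X \to X^{\HT}$ is the canonical torsor cover and $\rho^*\rho_*\mathcal{O}_X = \mathcal{O}_{X \times_{X^{\HT}} X} = \mathcal{O}_{G_\pi}$ by flat base change. Parameterizing $G_\pi \subset \mathbb{G}_m^\sharp \ltimes \mathbb{G}_a^\sharp$ by its $\mathbb{G}_a^\sharp$-coordinate $a$ (with $t = 1+ea$ determined) gives $G_\pi \cong \mathbb{G}_a^\sharp$, hence $\mathcal{O}_{G_\pi} \cong \widehat{\bigoplus}_{n \geq 0} \mathcal{O}_K \cdot a^n/n!$.

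For part (2), I first derive an explicit formula for $\partial$. The discussion preceding the lemma identifies $\gamma_{b,c}$ restricted to the Hodge-Tate locus with the point $g_0 := (1+e\epsilon, \epsilon) \in G_\pi(R)$ for $R = \mathcal{O}_K[\epsilon]/(\epsilon^2 - \beta\epsilon)$. Since the group law on $G_\pi$ is multiplicative in the $t$-coordinate (the identity $(1+ea_1)(1+ea_2) = 1 + e(a_1 \star a_2)$ shows the coproduct satisfies $\Delta(t) = t \otimes t$), translation by $g_0$ sends $t \mapsto t(1+e\epsilon)$. Taylor expansion using $\epsilon^2 = \beta\epsilon$ then yields the closed form
\[
\partial(f)(t) = \frac{f(c_0 t) - f(t)}{\beta}, \qquad c_0 := 1+e\beta = 1 + p^{\alpha+1},
\]
where the second equality uses \cref{lem. calculate e}. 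On the polynomial subring $\mathcal{O}_K[t]$, this formula diagonalizes $\partial$ with $\partial(t^n) = \lambda_n t^n$ and $\lambda_n = (c_0^n-1)/\beta$.

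For the kernel, $\ker \partial$ consists of the $\sigma_{c_0}$-invariants in $\mathcal{O}_{G_\pi}$, where $\sigma_{c_0}$ is the substitution $t \mapsto c_0 t$. Under the hypothesis $p > 2$ or $\alpha > 0$, the lifting-the-exponent lemma gives $v_p(c_0^n - 1) = v_p(n) + \alpha + 1$ for every $n \geq 1$, so $c_0^n \neq 1$ in $\mathcal{O}_K$ for $n \geq 1$; combined with the $p$-adic density of $\mathcal{O}_K[t]$ in $\mathcal{O}_{G_\pi}$ and $p$-completeness this yields $\ker\partial = \mathcal{O}_K \cdot 1$, which also supplies the injectivity at the left end of the sequence.

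Surjectivity is the main technical challenge. My plan is to rewrite $\partial(g) = f$ in the $a$-coordinate as the functional identity $G(c_0 a + \beta) - G(a) = \beta F(a)$, where $G, F$ denote $g, f$ viewed as divided-power series in $a$. Taylor expansion at $a = 0$ converts this to the infinite triangular system
\[
c_0^m G^{(m)}(\beta) = d_m + \beta f_m, \qquad m \geq 0,
\]
with $G(a) = \sum d_n a^n/n!$, $F(a) = \sum f_m a^m/m!$, and $G^{(m)}(\beta) = \sum_{k \geq 0} d_{m+k} \beta^k/k!$. I propose to solve these equations by $p$-adic successive approximation, starting from a suitable ansatz at high $n$ and descending. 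The key estimate is $v_p(\lambda_m) = v_p(m) + (\alpha+1) - \frac{1}{p-1} > 0$, valid precisely under our hypothesis, which forces the iteratively determined $d_n$ to lie in $\mathcal{O}_K$ and tend to zero $p$-adically, producing a genuine integral preimage $g \in \mathcal{O}_{G_\pi}$. The failure at $(p,\alpha)=(2,0)$ is already visible at the level of $\partial$: there $\beta = 0$ on the Hodge-Tate locus and the operator $\partial = (\sigma_{c_0}-\Id)/\beta$ is not even defined, confirming that the hypothesis is essential.
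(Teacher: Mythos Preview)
Your closed formula $\partial(f)(t) = (f(c_0 t) - f(t))/\beta$ with $c_0 = 1+p^{\alpha+1}$ is correct and equivalent to the paper's expansion, and the eigenvalue computation $\partial(t^n)=\lambda_n t^n$ is fine. The subsequent arguments, however, have genuine gaps.

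\textbf{Kernel.} Your inference $\ker\partial = \mathcal{O}_K$ rests on the claim that $\mathcal{O}_K[t]$ is $p$-adically dense in $\mathcal{O}_{G_\pi}$, and this is false. Since $t=1+ea$, one has $\mathcal{O}_K[t]=\mathcal{O}_K[ea]$; but under the hypothesis $v_p(e)>0$, every element of $\mathcal{O}_K[ea]$ has its coefficient of $a^1/1!$ lying in $e\mathcal{O}_K$, so the basis element $a$ itself cannot be approximated. Even granting density, passing from $\ker(\partial|_{\mathcal{O}_K[t]})=\mathcal{O}_K$ to $\ker\partial=\mathcal{O}_K$ would require more than continuity. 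The paper instead uses your functional equation in the $a$-coordinate, $f(c_0 a+\beta)=f(a)$, to produce an infinite set of zeros of $g(a)=f(a)-f(0)$ in $\mathcal{O}_K$, passes to the Tate algebra via $\kappa(a)=g(\beta a)\in\mathcal{O}_K\langle a\rangle$, and concludes by Weierstrass preparation.

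\textbf{Surjectivity.} Your triangular system is set up correctly, but the proposed ``successive approximation descending from high $n$'' does not work as stated: the diagonal entries $\lambda_m$ satisfy $v_p(\lambda_m)>0$, which is an \emph{obstruction} to solving $\lambda_m d_m = (\cdots)$ integrally, not a help. A naive truncation $d_m=0$ for $m>N$ forces $\lambda_N d_N = f_N$, which fails for generic $f_N$. The paper's argument is genuinely different: it inducts on $n$ in $\mathcal{O}_{G_\pi}/\pi^n$, reducing to the base case mod $\pi$. There, since $c_0\equiv 1$ and $v_p(\beta^{n-1}/n!) = (s_p(n)-1)/(p-1)$ vanishes exactly when $n$ is a $p$-power, the system collapses to $b_i=\sum_{t\geq 0} c_{p^t+i}u_{p^t}$ with unit coefficients $u_{p^t}$, which is solved by an explicit finite recursion. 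Your sketch does not engage with this mod-$\pi$ combinatorics.

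\textbf{The case $(p,\alpha)=(2,0)$.} Your diagnosis is incorrect: there $q=\zeta_2=-1$, so $\beta=q(q-1)=2\neq 0$ in $\mathcal{O}_K=W(k)$, and $\partial$ is perfectly well defined. The actual failure is that $e=d'(q)=1$ is a unit, so the key inequality $v_p(e)>0$ breaks down; the paper's \cref{rem. failure for p=2} exhibits the resulting cohomological mismatch on $\mathcal{I}^2$.
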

\begin{proof}
 The projection formula tells us that $\rho^{*}\mathcal{E}\cong \mathcal{O}_{G_\pi}$. Moreover, the projection $G_{\pi}\to \mathbb{G}_a^{\sharp}$ sending $(t,a)$ to $a$ identifies $G_{\pi}$ with $\mathbb{G}_a^{\sharp}$, under which the group law on $G_{\pi}$ is transferred to the following formal group law on $\widehat{\mathcal{O}}_{\mathbb{G}_a^{\sharp}}$: $$\Delta: \widehat{\mathcal{O}}_{\mathbb{G}_a^{\sharp}} \to \widehat{\mathcal{O}}_{\mathbb{G}_a^{\sharp}} \hat{\otimes} \widehat{\mathcal{O}}_{\mathbb{G}_a^{\sharp}}, ~~~~a\mapsto a+b+e\cdot ab.$$

    Under the above identification, the isomorphism $\gamma_{b,c}$ (hence also the $q$-connection $\partial$) is just constructed via $\epsilon \in \mathbb{G}_a^{\sharp}(\mathcal{O}_K[\epsilon]/(\epsilon^2-\beta\epsilon))$, which precisely means that for $f(a)\in \widehat{\mathcal{O}}_{\mathbb{G}_a^{\sharp}}$, $\gamma_{b,c}(f)=f+\epsilon\partial(f)=f(a+\epsilon+ea\epsilon)$, hence $\epsilon\partial(f)=f(a+\epsilon+ea\epsilon)-f(a)$.

    First we show that $\ker(\partial)=\mathcal{O}_K$, i.e. $\partial(f)=0$ if and only if $f$ is a constant function. One direction is obvious. On the other hand, if $\partial(f)=0$, then $f(a)\equiv f(a+\epsilon+ea\epsilon)$. Take $a=0$, we see that $g(\epsilon)=0$ for $g(a):=f(a)-f(0)$. Via induction we could produce a sequence $\{y_n\}$ such that $y_0=0$, $y_n=(e\epsilon+1)^n(\epsilon+\frac{1}{e})-\frac{1}{e}$ (hence $y_i\neq y_j$ for $i\neq j$), such that $g(y_i)=0$ for all $i$. As all of calculations happen at $\mathcal{O}_K[\epsilon]/(\epsilon^2-\beta \epsilon)$, hence still hold in $\mathcal{O}_K$, which is $(\mathcal{O}_K[\epsilon]/(\epsilon^2-\beta \epsilon))/(\epsilon-\beta)$. Consequently we get a sequence $\{z_n\}$ in $\mathcal{O}_K$ such that $z_0=0, z_n=(e\beta+1)^n(\beta+\frac{1}{e})-\frac{1}{e}=\frac{(p^{\alpha+1}+1)^{n+1}-1}{e}$ and $g(z_n)=0$. Let $t_n=\frac{z_n}{\beta} \in \mathcal{O}_K$ and define $\kappa(a)$ to be $g(\beta a)$. Then $\kappa(a)\in \mathcal{O}_K \langle a\rangle$ and $\kappa(t_n)=0, \forall n\geq 0$.  An easy application of the Weierstrass preparation theorem (as $t_i\neq t_j$ for $i\neq j$) implies that $\kappa \equiv 0$, which also implies the vanishing of $g$, hence $f$ must be a constant function.

    Next we prove that $\partial$ is surjective. We first observe that $\partial$ is $\mathcal{O}_K$-linear. Indeed, this follows from the Leibniz rule given in \cref{lem.leibniz} and the vanishing of $\partial$ on $A/d=\mathcal{O}_K$. Given $g\in \widehat{\mathcal{O}}_{\mathbb{G}_a^{\sharp}}$, we wish to construct $f\in \widehat{\mathcal{O}}_{\mathbb{G}_a^{\sharp}}$ such that $\partial(f)=g$. We do this by induction modulo $\pi^{n}$ for the uniformizer $\pi=q-1$. More precisely, we wish to construct a sequence $\{h_i\}_{i\geq 1}$ such that $h_i\in \pi^{i-1} \widehat{\mathcal{O}}_{\mathbb{G}_a^{\sharp}}$ and $f_i:=\sum_{k=1}^i f_k$ satisfies that $\partial(h_i)\equiv g$ modulo $\pi^i$, then $f:=\sum_{i=1}^{\infty} f_i$ converges and is a solution for $\partial(f)=g$. Assuming the statement holds for $i=1$ for a moment, then we could do the induction as follows. 
    Suppose $k\geq 1$ and we have find $h_i$'s up to $i=k$ satisfying the desired properties, then $\partial(f_k)-g=\pi^k l_k$ for some $l_k\in \widehat{\mathcal{O}}_{\mathbb{G}_a^{\sharp}}$ by assumption, where $f_k=\sum_{i=1}^k h_i$. By the base case, there exists $h_{k+1}^{\prime} \in \widehat{\mathcal{O}}_{\mathbb{G}_a^{\sharp}}$ such that $\partial(h_{k+1}^{\prime})\equiv -l_k$ mod $\pi$. Hence if we take $h_{k+1}=\pi^k h_{k+1}^{\prime}$, then $\partial(f_{k+1})-g=\pi^{k}(l_k+h^{\prime}_{k})\in \pi^{k+1}\widehat{\mathcal{O}}_{\mathbb{G}_a^{\sharp}}$ for $f_{k+1}=f_k+h_{k+1}$. We thus finish the induction process.

    Now we focus on the base case and would like to show that for any $g\in \widehat{\mathcal{O}}_{\mathbb{G}_a^{\sharp}}$, there exists $f\in \widehat{\mathcal{O}}_{\mathbb{G}_a^{\sharp}}$ such that $\partial(f)\equiv g$ modulo $\pi$. It might be tempting to apply \cite[Lemma 2.6]{anschutz2022v} directly as $\epsilon^2=\beta \epsilon=0$ after modulo $\pi$. However, this doesn't work as the higher divided powers of $\epsilon$ don't vanish. Indeed, since $v_{p}(\beta)=\frac{1}{p-1}$, $\frac{\epsilon^{p^n}}{p^n !}=\frac{\beta^{n-1}}{n!}\epsilon \neq 0$ in $\widehat{\mathcal{O}}_{\mathbb{G}_a^{\sharp}}/\pi$ for any $n\geq 1$. For our purpose, we need to analyze $\partial(f)$ in more detail. First we observe that for $f=\sum_{n=0}^{\infty} c_n\frac{a^n}{n!}\in \widehat{\mathcal{O}}_{\mathbb{G}_a^{\sharp}}$, 
    \begin{equation*}
        \begin{split}
            \gamma_{b,c}(f)&=\sum_{n=0}^{\infty} c_n\frac{(a+(1+ea)\epsilon)^n}{n!}=\sum_{n=0}^{\infty} \frac{c_n}{n!}(a^n+\sum_{i=1}^n \binom{n}{i}a^{n-i}(1+ea)^i\epsilon^i)
            \\&=f(a)+\epsilon\sum_{n=1}^{\infty}\frac{c_n}{n!}(\sum_{i=1}^n\binom{n}{i}a^{n-i}(1+ea)^i\beta^{i-1}).
        \end{split}
    \end{equation*}
On the other hand, since $\gamma_{b,c}(f)=f+\epsilon\partial(f)$, hence
\begin{equation*}
    \begin{split}
        \partial(f)&=\frac{1}{\beta}\sum_{n=1}^{\infty}c_n\frac{(a+\beta e a +\beta)^n-a^n}{n!}=\frac{1}{\beta}\sum_{n=1}^{\infty}\frac{c_n}{n!}(((1+\beta e)^n-1)a^n+\sum_{i=0}^{n-1}\binom{n}{i}(1+\beta e)^i\beta^{n-i}a^i)
        \\&=\frac{1}{\beta}(\sum_{i=0}^{\infty}(\sum_{n=i+1}^{\infty}\frac{c_n}{n!} \binom{n}{i}(1+\beta e)^i\beta^{n-i})a^i)\\&=\frac{1}{\beta}((\sum_{n=1}^{\infty}\frac{c_n\beta_n}{n!})+\sum_{i=1}^{\infty}(\frac{c_i}{i!}((1+\beta e)^i-1)+\sum_{n=i+1}^{\infty}\frac{c_n}{n!} \binom{n}{i}(1+\beta e)^i\beta^{n-i})a^i)
    \end{split}
\end{equation*}
Consequently if we let $\partial(f)=\sum_{i=0}^{\infty} b_i\frac{a^i}{i!}$, then by comparing the coefficients, we have that
\begin{equation*}
    \begin{split}
        b_i=\frac{1}{\beta}(c_i((1+\beta e)^i-1)+(1+\beta e)^i(f^{(i)}(\beta)-c_i))=\frac{1}{\beta}(-c_i+(1+\beta e)^if^{(i)}(\beta)).
    \end{split}
\end{equation*}
From now on we work in $\widehat{\mathcal{O}}_{\mathbb{G}_a^{\sharp}}/\pi$. As $p>2$ or $\alpha>0$, $v_p(e)=(\alpha+1)-\frac{1}{p-1}>0$, which implies that 
\begin{equation}\label{equa. find c solutions}
    b_i=\frac{1}{\beta}(-c_i+f^{(i)}(\beta))=\sum_{n=1}^{\infty} c_{n+i}\frac{\beta^{n-1}}{n!}=\sum_{t=0}^{\infty} c_{p^t+i}u_{p^t},
\end{equation}
where $u_{p^t}:=\frac{\beta^{p^t-1}}{p^t!}$ is a unit. Here the last equality follows from the fact that $v_p(\beta)=\frac{1}{p-1}$.

Our goal is to show that for a fixed sequence $\{b_i\}$, $b_i\in k$ and $b_N=0$ for $N$ large enough, there exists a sequence $\{c_i\}$ with $c_i\in k$ such that \cref{equa. find c solutions} holds and $c_M=0$ for $M\geq N+1$. We do induction on $N$. For the base case $N=1$, then we take $c_1=u_1^{-1}b_0$ and $c_i=0$ for $i>1$, which already guarantees that \cref{equa. find c solutions} holds. Suppose the statement holds up to $N$ for $N\geq 1$ and we are given a sequence $\{b_i\}$ with $b_i=0$ for $i\geq N+1$. We let $c_{N+1}=u_1^{-1}b_N$ and $c_j=0$ for $j\geq N+2$. Apply the induction to the sequence $\{b_i^{\prime}\}_{0\leq i\leq N-1}$ with $b_i^{\prime}=b_i$ if $c_{N+1}$ doesn't show up on the right handside of \cref{equa. find c solutions}, otherwise $b_i^{\prime}=b_i-c_{N+1}u_{N+1-i}$, we get a sequence $\{c_j\}_{1\leq j\leq N}$ solving \cref{equa. find c solutions} for $b_i^{\prime}$ ($0\leq i\leq N-1$), then by construction $\{c_j\}_{1\leq j\leq N+1}$ solving \cref{equa. find c solutions} for $b_i$ ($0\leq i\leq N+1$), we win.
\end{proof}
Similar to \cite[Propostition 3.5.11]{bhatt2022absolute} and \cite[Proposition 2.7]{anschutz2022v}, we can draw the following consequence.
\begin{proposition}\label{prop. ht case fiber seq}
   Assume that $p>2$ or $\alpha>0$. For any $\mathcal{E}\in \mathcal{D}(X^{\HT})$, the natural morphism $\mathrm{R} \Gamma(X^{\HT}, \mathcal{E})\to \rho^*\mathcal{E}\stackrel{\partial_{\mathcal{E}}}{\longrightarrow} \rho^*\mathcal{E}$ constructed in \cref{propt. factor through fiber} is a fiber sequence.
\end{proposition}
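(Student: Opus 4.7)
The plan is to reduce the statement to a generating family and verify it there. Both functors $\mathcal{E} \mapsto \mathrm{R}\Gamma(X^{\HT}, \mathcal{E})$ and $\mathcal{E} \mapsto \fib(\rho^{*}\mathcal{E} \xrightarrow{\partial_{\mathcal{E}}} \rho^{*}\mathcal{E})$ are natural and exact in $\mathcal{E}$, as is the map between them from \cref{propt. factor through fiber}. Since $\rho: \Spf(\mathcal{O}_K) \to X^{\HT}$ is faithfully flat, $\rho^{*}$ is conservative and, by a routine adjunction argument, the objects $\rho_{*}\mathcal{F}$ (for $\mathcal{F} \in \mathcal{D}(X)$) form a generating family of $\mathcal{D}(X^{\HT})$. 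Hence it suffices to verify the equivalence on such $\mathcal{E}$ and then propagate.

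For the case $\mathcal{E} = \rho_{*}\mathcal{F}$, I would first apply the projection formula to identify $\rho^{*}\rho_{*}\mathcal{F} \simeq \mathcal{F} \otimes^{\mathbb{L}}_{\mathcal{O}_K} \rho^{*}\rho_{*}\mathcal{O}_X$. The Leibnitz rule of \cref{lem.leibniz}, combined with the fact that $\gamma_A$ reduces to the identity modulo $d$ and $\partial_A$ vanishes on $A/d$, shows that $\partial_{\rho_{*}\mathcal{F}}$ is $\mathcal{O}_K$-linear and acts purely on the second factor as $1 \otimes \partial_{\rho_{*}\mathcal{O}_X}$. Combined with the short exact sequence of \cref{lem. calculate partial on HT} (where the assumption $p>2$ or $\alpha>0$ is crucial), this yields $\fib(\partial_{\rho_{*}\mathcal{F}}) \simeq \mathcal{F}$. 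On the other side, $(\rho^{*},\rho_{*})$-adjunction and affineness of $X$ give $\mathrm{R}\Gamma(X^{\HT}, \rho_{*}\mathcal{F}) \simeq \mathrm{R}\Gamma(X, \mathcal{F}) \simeq \mathcal{F}$. A small diagram chase, tracing through the construction in \cref{propt. factor through fiber}, then identifies the natural map with the identity, using that the embedding $\mathcal{O}_K \hookrightarrow \mathcal{O}_{G_\pi}$ isolated in the proof of \cref{lem. calculate partial on HT} coincides with the unit of adjunction.

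To extend from generators to arbitrary $\mathcal{E}$, I would argue that $\fib(\partial_{(-)})$ commutes with all colimits (clear, since $\rho^{*}$ does and $\fib$ is a finite limit in a stable $\infty$-category), while $\mathrm{R}\Gamma(X^{\HT}, -)$ commutes with filtered colimits on each bounded piece of the standard $t$-structure from \cref{construct. t structure}. The main obstacle will be convergence in the unbounded setting. The cleanest remedy is a direct argument: rewrite $\mathrm{R}\Gamma(X^{\HT}, \mathcal{E})$ as the group cohomology of $G_\pi \cong \mathbb{G}_a^{\sharp}$ acting on $\rho^{*}\mathcal{E}$, via the torsor identification $X \times_{X^{\HT}} X \simeq X \times G_\pi$, and then ``exponentiate'' the infinitesimal generator $(1+e\epsilon, \epsilon) \in G_\pi(\mathcal{O}_K[\epsilon]/(\epsilon^{2}-\beta\epsilon))$ to construct an explicit contracting homotopy of the bar complex onto the two-term complex $[\rho^{*}\mathcal{E} \xrightarrow{\partial_{\mathcal{E}}} \rho^{*}\mathcal{E}]$, thus bypassing the unboundedness issue altogether.
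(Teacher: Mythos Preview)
Your route is genuinely different from the paper's, and the difference is worth spelling out. You verify the statement on the generators $\rho_*\mathcal{F}$ and then try to propagate to all of $\mathcal{D}(X^{\HT})$ by a colimit argument; as you yourself flag, this runs into convergence issues in the unbounded setting, and your proposed remedy (exponentiating the infinitesimal element to a contracting homotopy of the bar complex) is only sketched, not carried out. There is also a small gap in your treatment of the generators: the Leibnitz rule of \cref{lem.leibniz} gives $\mathcal{O}_K$-linearity of $\partial$, but it does \emph{not} by itself show that $\partial_{\rho_*\mathcal{F}}$ acts as $1\otimes\partial_{\rho_*\mathcal{O}_X}$ under the projection-formula splitting --- that identification is a statement about the comodule structure of $\rho^*\rho_*\mathcal{F}$ being the free one, which is exactly the ``trivializing Hopf algebra comodules'' step the paper invokes.

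The paper avoids the propagation problem altogether by working at the sheaf level rather than the cohomology level. Since $\psi$ reduces to the identity modulo $d$, the operator $\partial$ on $\rho^*(\rho_*\mathcal{O}_X)$ descends to an endomorphism of $\rho_*\mathcal{O}_X$ in $\mathcal{D}(X^{\HT})$, and the sequence of \cref{lem. calculate partial on HT} is then the $\rho^*$-pullback of a sequence $\mathcal{O}_{X^{\HT}}\to\rho_*\mathcal{O}_X\xrightarrow{\partial}\rho_*\mathcal{O}_X$ in $\mathcal{D}(X^{\HT})$; by faithfully flat descent this is a fiber sequence of sheaves. Tensoring this one fiber sequence with an \emph{arbitrary} $\mathcal{E}$ and applying the projection formula yields a fiber sequence $\mathcal{E}\to\rho_*\rho^*\mathcal{E}\xrightarrow{\partial'}\rho_*\rho^*\mathcal{E}$ in $\mathcal{D}(X^{\HT})$. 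Now apply the exact functor $\mathrm{R}\Gamma(X^{\HT},-)$ and use $\mathrm{R}\Gamma(X^{\HT},\rho_*-)=\mathrm{R}\Gamma(X,-)$ with $X$ affine: the result drops out for every $\mathcal{E}$ at once, with no colimit or boundedness hypothesis. The final check that $\partial'=\partial_{\mathcal{E}}$ is the comodule trivialization you were implicitly using.

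So your computation on $\rho_*\mathcal{F}$ is essentially right, but the paper's global strategy --- build a two-term resolution of the unit object, tensor, take sections --- is both cleaner and complete where your extension step is not.
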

\begin{proof}
First we consider $\mathcal{E}=\rho_{*} \mathcal{O}_{X}$. By the discussion at the beginning of this subsection, $\partial_{\mathcal{E}}$ descends to a functor from $\mathcal{E}$ to itself and hence we have a morphism 
\begin{equation*}
    \mathcal{O}_{X^{\HT}}\to \rho_{*} \mathcal{O}_{X} \xrightarrow{\partial} \rho_{*} \mathcal{O}_{X},
\end{equation*}
which is a fiber sequence by \cref{lem. calculate partial on HT} and faithfully flat descent. For a general $\mathcal{E}\in \mathcal{D}(X^{\HT})$, tensoring $\mathcal{E}$ with the above sequence and then applying the projection formula yields a fiber sequence
\begin{equation*}
    \mathcal{E}\to \rho_{*}\rho^{*} \mathcal{E} \xrightarrow{\partial^{\prime}} \rho_{*}\rho^{*} \mathcal{E}
\end{equation*}
After taking cohomology we then get a fiber sequence 
\begin{equation*}
    \mathrm{R} \Gamma(X^{\HT}, \mathcal{E})\to \rho^*\mathcal{E}\stackrel{\partial^{\prime}}{\longrightarrow} \rho^*\mathcal{E}
\end{equation*}
Then one can check that $\partial^{\prime}$ is precisely $\partial_{\mathcal{E}}$ by the usual trick of trivializing Hopf algebra’s comodules.
\end{proof}
\begin{example}[$q$-connections on the ideal sheaf $\mathcal{I}^k$ when restricted to the Hodge-Tate locus]\label{example.action on generators}
     For $\mathscr{E}=\mathcal{I}^k$, one can verify that under the trivialization  $\rho^{*}\mathscr{E}=\mathcal{O}_K\cdot (d^k)$, the $\mathcal{O}_K$-linear $\partial_{\mathscr{E}}$ is given by multiplication by $\sum_{i=1}^{k} \binom{k}{i} e^i \beta^{i-1}=e\cdot\frac{(1+p^{\alpha+1})^k-1}{p^{\alpha+1}}$, this is due to the fact that $\psi(d^k)=(1+\epsilon e)^k d^k$, and $$(1+\epsilon e)^k-1=\epsilon \sum_{i=1}^{k} \binom{k}{i} e^i \beta^{i-1}=\epsilon\frac{(1+e\beta)^k-1}{\beta}=\epsilon\frac{(1+p^{\alpha+1})^k-1}{\beta}=\epsilon e\cdot\frac{(1+p^{\alpha+1})^k-1}{p^{\alpha+1}}.$$ 
     Here we use the fact that $p^{\alpha+1}=e\beta$ obtained in \cref{lem. calculate e}. 
\end{example}
\begin{remark}\label{rem. failure for p=2}
    \cref{prop. ht case fiber seq} fails when $p=2$ and $\alpha=0$. Actually, in this case $e=1$ and $\beta=2$, then if it holds, the previous example implies that $\mathrm{H}^{1}(X^{\HT}, \mathcal{I}^2)=\mathbb{Z}/4$, a contradiction with the fact that $\mathrm{H}^{1}(\WCart^{\HT}, \mathcal{I}^2)=\mathbb{Z}/2$ by \cite[Proposition 3.5.11, Corollary 3.5.14]{bhatt2022absolute}.
\end{remark}

\subsection{$q$-connections beyond the Hodge-Tate locus}
With all of the ingredients in hand, in this subsection, we classify quasi-coherent complexes on $X_n^{\Prism}$.
\begin{proposition}\label{propt. sen calculate cohomology}
    Assume that $p>2$ or $\alpha>0$. Let $n\in \mathbb{N}\cup \{\infty\}$. For any $\mathcal{E}\in \mathcal{D}(X_n^{\Prism})$, the natural morphism $$\mathrm{R} \Gamma(X_n^{\Prism}, \mathcal{E})\to \rho^*\mathcal{E}\stackrel{\partial_{\mathcal{E}}}{\longrightarrow} \rho^*\mathcal{E}$$ constructed in \cref{propt. factor through fiber} is a fiber sequence.
\end{proposition}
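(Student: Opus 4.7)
I proceed by induction on $n$. The base case $n=1$ is exactly \cref{prop. ht case fiber seq}. For the inductive step at finite $n\geq 2$, I use the fact that the ideal of the closed immersion $X_{n-1}^{\Prism} \hookrightarrow X_n^{\Prism}$ is canonically identified with $i_{1,n,*}(\mathcal{O}_{X^{\HT}}\{n-1\})$, giving (in parallel with \cref{induction eqaution}) a fiber sequence of $\mathcal{O}_{X_n^{\Prism}}$-modules
\begin{equation*}
    i_{1,n,*}\mathcal{O}_{X^{\HT}}\{n-1\} \to \mathcal{O}_{X_n^{\Prism}} \to i_{n-1,n,*}\mathcal{O}_{X_{n-1}^{\Prism}}.
\end{equation*}
Tensoring with $\mathcal{E}$ and taking $\mathrm{R}\Gamma$ yields a fiber sequence whose outer terms are covered by the $n=1$ case and the inductive hypothesis respectively.

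The heart of the argument is to compare this with the sequence obtained after pulling back along $\rho$. Functoriality of \cref{propt.key automorphism of functors} and \cref{cort.truncated} (the data $b,c$ constructed at level $n$ restrict to the corresponding data at level $n-1$ under $A/d^n \twoheadrightarrow A/d^{n-1}$) ensures that $\partial_{\mathcal{E}}$ is compatible with the two-step filtration
\begin{equation*}
    d^{n-1}\rho^*\mathcal{E}/d^n \hookrightarrow \rho^*\mathcal{E} \twoheadrightarrow \rho^*\mathcal{E}/d^{n-1}.
\end{equation*}
Moreover, via the explicit formula $\psi(d^{n-1})=(1+e\epsilon)^{n-1}d^{n-1}$ from \cref{example.action on generators}, the induced operator on the graded piece $d^{n-1}\rho^*\mathcal{E}/d^n$ identifies, up to the scalar twist $e\cdot\tfrac{(1+p^{\alpha+1})^{n-1}-1}{p^{\alpha+1}}$, with the $q$-connection on the pullback of $i_{1,n}^*\mathcal{E}$ from the Hodge-Tate side. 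This produces a commutative ladder of two fiber sequences (global sections on the left column, fibers of $\partial$ on the right column) whose top and bottom horizontal maps are equivalences by the $n=1$ case and the inductive hypothesis. The five lemma then forces the middle row to be an equivalence as well.

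For $n=\infty$, completeness does the job: the structure sheaf of $X^{\Prism}$ is $\mathcal{I}$-adically complete, so $\mathrm{R}\Gamma(X^{\Prism},\mathcal{E}) \xrightarrow{\sim} \lim_n \mathrm{R}\Gamma(X_n^{\Prism}, i_{n,\infty}^*\mathcal{E})$; likewise $\rho^*\mathcal{E}$ is $d$-adically complete, so $\fib(\partial_{\mathcal{E}}) \xrightarrow{\sim} \lim_n \fib(\partial_{i_{n,\infty}^*\mathcal{E}})$. Passing to the limit of the equivalences established at finite $n$ gives the statement for $n=\infty$.

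The main obstacle is the bookkeeping for the graded piece: I must verify that the construction of $\gamma_{b,c}$ is compatible both with pushforward along $i_{1,n}$ and with the Breuil-Kisin twist $\{n-1\}$. This is not entirely formal because $\gamma_{b,c}$ depends on the chosen units $b,c \in W(R)$ constructed in \cref{lemT.construct b} and \cref{lemt.construct homotopy}; the verification ultimately reduces to the explicit computation in \cref{example.action on generators}, which is also where the hypothesis $p>2$ or $\alpha>0$ enters, since it is needed to apply \cref{prop. ht case fiber seq} to the twisted Hodge-Tate piece.
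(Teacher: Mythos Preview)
Your proof is correct and follows essentially the same approach as the paper: the paper's proof simply says ``using standard d\'evissage (the trick used in the proof of \cite[Proposition 3.15]{liu2024prismatization}), by induction on $n$, it reduces to $n=1$, which follows from \cref{prop. ht case fiber seq},'' and then handles $n=\infty$ by writing $\mathcal{E}=\varprojlim\mathcal{E}_n$ and using that global sections and fibers commute with limits---exactly your plan. The only cosmetic difference is that you peel off the Hodge--Tate layer as the subobject $d^{n-1}/d^n\cong i_{1,n,*}\mathcal{O}_{X^{\HT}}\{n-1\}$ while the paper's fiber sequence \eqref{induction eqaution} peels it off as the quotient (putting the $(n{-}1)$-truncated piece, twisted by $\{1\}$, as the subobject); both orderings give the same induction, and your bookkeeping about compatibility of $\gamma_{b,c}$ with the filtration and the twist is precisely the content of \cref{rem. preserve d filtatration} together with the functoriality of $\beta_n^+$ established in \cref{prop. induce functor}.
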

\begin{proof}
    For $n\in \mathbb{N}$, to see it induces an identification of $\mathrm{R} \Gamma(X_n^{\Prism}, \mathcal{E})$ with $\fib(\rho^*\mathcal{E}\stackrel{\Theta_{\mathscr{E}}}{\longrightarrow} \rho^*\mathcal{E})$, using standard  d\'evissage (the trick used in the proof of \cite[Proposition 3.15]{liu2024prismatization}), by induction on $n$, it reduces to $n=1$, which follows from \cref{prop. ht case fiber seq}.

Finally for $\mathcal{E}\in \mathcal{D}(X^{\Prism})$, as taking global sections commutes with limits, by writing $\mathcal{E}$ as the inverse limit of $\mathcal{E}_n$ for $\mathcal{E}_n$ the restriction of $\mathcal{E}$ to $X^{\Prism}_n$, we see that 
\[\mathrm{R} \Gamma(X^{\Prism}, \mathcal{E})=\varprojlim \mathrm{R} \Gamma(X^{\Prism}_n, \mathcal{E}_n)=\varprojlim \fib(\rho^*\mathcal{E}_n\stackrel{\partial_{\mathscr{E}}}{\longrightarrow} \rho^*\mathcal{E}_n)=\fib(\rho^*\mathcal{E}\stackrel{\partial_{\mathcal{E}}}{\longrightarrow} \rho^*\mathcal{E}).\]
Here the second equality follows from the first paragraph and the last equality holds as finite limits commute with limits.
\end{proof}

As a byproduct of \cref{propt. sen calculate cohomology}, we conclude that
\begin{corollary}
   Assume that $p>2$ or $\alpha>0$. The global sections functor $$\mathrm{R} \Gamma(X^{\Prism}, \bullet): \mathcal{D}(X^{\Prism}) \rightarrow \widehat{\mathcal{D}}(\mathbb{Z}_p) \quad \text{resp.}\quad \mathrm{R} \Gamma(X^{\Prism}_n, \bullet): \mathcal{D}(X^{\Prism}_n) \rightarrow \widehat{\mathcal{D}}(\mathbb{Z}_p)$$
    commutes with colimits.
\end{corollary}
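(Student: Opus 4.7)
The plan is to deduce this statement directly from the fiber sequence identification established in the preceding \cref{propt. sen calculate cohomology}. For each $\mathcal{E}\in\mathcal{D}(X_n^{\Prism})$ with $n\in\mathbb{N}\cup\{\infty\}$, that proposition gives a natural equivalence
\[\mathrm{R}\Gamma(X_n^{\Prism},\mathcal{E})\;\simeq\;\fib\bigl(\rho^{*}\mathcal{E}\xrightarrow{\partial_{\mathcal{E}}}\rho^{*}\mathcal{E}\bigr)\]
in $\widehat{\mathcal{D}}(\mathbb{Z}_p)$. So it suffices to argue that the right hand side, viewed as a functor of $\mathcal{E}$, commutes with colimits.

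This decomposes into two routine observations. First, the pullback functor $\rho^{*}\colon\mathcal{D}(X_n^{\Prism})\to\widehat{\mathcal{D}}(A/d^n)$ is a symmetric monoidal left adjoint, hence preserves arbitrary (small) colimits. Second, in any stable $\infty$-category the fiber of a morphism is a finite limit which, by stability, is equivalent up to a shift to a finite colimit; consequently $\fib(-)$, applied pointwise to a diagram of arrows, commutes with arbitrary colimits (concretely, colimits commute with colimits and, by stability, with finite limits too). Composing, the assignment $\mathcal{E}\mapsto\fib(\rho^{*}\mathcal{E}\xrightarrow{\partial_{\mathcal{E}}}\rho^{*}\mathcal{E})$ preserves colimits.

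The one thing worth checking, and which I expect to be the only subtle point, is that the formation of $\partial_{\mathcal{E}}$ is genuinely functorial in $\mathcal{E}$ in a way compatible with colimits. This is automatic from the stacky construction: $\partial_{\mathcal{E}}$ arises by pullback along the fixed $2$-cell $\gamma_{b,c}$ of \cref{propt.key automorphism of functors} (resp.\ \cref{cort.truncated}), so the operator is obtained by applying the colimit-preserving functor $\rho^{*}$ and a colimit-preserving natural transformation. The $\infty$-case is handled by the argument already appearing in the proof of \cref{propt. sen calculate cohomology}: writing $\mathcal{E}=\varprojlim_{n}\mathcal{E}_n$ is compatible with the colimit statement in the truncated case because the target $\widehat{\mathcal{D}}(\mathbb{Z}_p)$ is presentable and colimits there are the derived $p$-completion of ordinary colimits, which is what $\rho^{*}$ lands in. Thus the corollary follows with no further work.
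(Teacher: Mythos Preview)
Your argument is correct and matches the paper's (implicit) approach: the corollary is recorded there simply as a byproduct of \cref{propt. sen calculate cohomology}, and your spelling-out via ``$\rho^{*}$ preserves colimits'' plus ``$\fib$ is a finite limit, hence commutes with colimits in a stable category'' is exactly the intended reasoning. Your final paragraph is unnecessary and slightly muddled: \cref{propt. sen calculate cohomology} already establishes the fiber sequence for $n=\infty$ directly, so no separate inverse-limit reduction is needed, and mixing that with the colimit claim only obscures the point.
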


\begin{proposition}\label{propt.generation}
    Let $n\in \mathbb{N}$. The $\infty$-category $\mathcal{D}(X^{\Prism})$ (resp. $\mathcal{D}(X^{\Prism}_n)$) is generated under shifts and colimits by the invertible sheaves $\mathcal{I}^k$ for $k\in \mathbb{Z}$.
\end{proposition}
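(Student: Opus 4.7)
I plan to establish the equivalent statement that the family $\{\mathcal{I}^k\}_{k\in\mathbb{Z}}$ has trivial right orthogonal in $\mathcal{D}(X^{\Prism}_n)$ (resp.\ $\mathcal{D}(X^{\Prism})$); since the line bundles $\mathcal{I}^k$ are compact, this is equivalent to the required generation. Tensor--hom adjunction together with \cref{propt. sen calculate cohomology} identify
$$\mathrm{RHom}(\mathcal{I}^k, \mathcal{E}) \simeq \mathrm{R}\Gamma(X^{\Prism}_n, \mathcal{E}\otimes\mathcal{I}^{-k}) \simeq \fib\bigl(\partial\colon \rho^*(\mathcal{E}\otimes\mathcal{I}^{-k}) \to \rho^*(\mathcal{E}\otimes\mathcal{I}^{-k})\bigr),$$
and the twisted Leibniz rule of \cref{rem.twisted leibnitz in general} combined with \cref{example.action on generators} rewrites the operator on the right as a unit scalar twist of $\partial_{\mathcal{E}}$ plus a scalar, at least after restriction to the Hodge--Tate locus.

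Next I would argue by d\'evissage along the closed embeddings $X^{\HT} = X^{\Prism}_1 \hookrightarrow X^{\Prism}_2 \hookrightarrow \cdots \hookrightarrow X^{\Prism}_n$. The fiber sequence
$$(i_{1,n})_*\bigl(\mathcal{I}^{n-1}|_{X^{\HT}}\otimes\mathcal{E}|_{X^{\HT}}\bigr) \longrightarrow \mathcal{E} \longrightarrow (i_{n-1,n})_*\,\mathcal{E}|_{X^{\Prism}_{n-1}}$$
used in the proof of \cref{prop. heart generator}, after applying $\mathrm{RHom}(\mathcal{I}^k,-)$ and invoking closed-immersion adjunctions, yields a long exact sequence relating Hom complexes on $X^{\HT}$ (with shifted index $k-n+1$) to those on $X^{\Prism}_{n-1}$. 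Varying $k$ over $\mathbb{Z}$ and combining with the inductive hypothesis should force both $\mathcal{E}|_{X^{\HT}}=0$ and $\mathcal{E}|_{X^{\Prism}_{n-1}}=0$, whence $\mathcal{E}=0$. The case $n=\infty$ follows from the finite cases via $\mathcal{I}$-adic completeness: if $\mathrm{RHom}(\mathcal{I}^k,\mathcal{E})=0$ for all $k\in\mathbb{Z}$, index-shifting gives $\mathrm{RHom}(\mathcal{I}^k,\mathcal{I}^n\otimes\mathcal{E})=\mathrm{RHom}(\mathcal{I}^{k-n},\mathcal{E})=0$, so the fiber sequence $\mathcal{I}^n\otimes\mathcal{E}\to\mathcal{E}\to\mathcal{E}|_{X^{\Prism}_n}$ forces $\mathrm{RHom}_{X^{\Prism}_n}(\mathcal{I}^k,\mathcal{E}|_{X^{\Prism}_n})=0$ for every $k$ and every $n$, and completeness then concludes.

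The heart of the argument is the base case $n=1$, which I would attack through the isomorphism $X^{\HT}\simeq BG_\pi$ over $X$ recalled in the discussion preceding \cref{lem. calculate e}. Under this, $\mathcal{D}(X^{\HT})\simeq\mathcal{D}(\mathrm{Rep}(G_\pi))$, and $\mathcal{I}|_{X^{\HT}}$ corresponds to the character obtained by composition with the projection $G_\pi\to\mathbb{G}_m^\sharp$; the explicit computation of $\rho_*\mathcal{O}_X\simeq\mathcal{O}(G_\pi)$ in \cref{lem. calculate partial on HT}, together with faithful flatness of $\rho\colon\Spf(\mathcal{O}_K)\to X^{\HT}$, exhibits $\rho_*\mathcal{O}_X$ as a compact generator whose natural character filtration is built from the $\mathcal{I}^k|_{X^{\HT}}$ under shifts and colimits. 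The main obstacle I anticipate is precisely this base case: pinning down the character decomposition of $\mathcal{O}(G_\pi)$ in enough generality, and verifying the required colimits take place inside the derived category (and not merely in the heart), which ultimately rests on combining the Hopf-algebra structure of $\mathcal{O}(G_\pi)$ with the $(p,d)$-adic topology on $X^{\HT}$.
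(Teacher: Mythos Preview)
Your approach is essentially the same as the paper's: reduce to $n=1$ by d\'evissage along the tower $X^{\HT}\hookrightarrow X^{\Prism}_2\hookrightarrow\cdots$, then handle the Hodge--Tate base case via the identification $X^{\HT}\simeq BG_\pi$. The paper simply cites \cite[Corollary~3.5.16]{bhatt2022absolute} for the reduction and \cite[Proposition~2.9]{anschutz2022v} for the base case, whereas you spell both out; your fiber sequence and $n=\infty$ completeness argument are correct.

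One minor point: your opening paragraph invokes \cref{propt. sen calculate cohomology} to compute $\mathrm{RHom}(\mathcal{I}^k,\mathcal{E})$ as $\fib(\partial)$, but that proposition carries the hypothesis $p>2$ or $\alpha>0$, which \cref{propt.generation} does not assume. Fortunately your d\'evissage and base-case arguments never actually use that identification, so you can simply drop that paragraph; the right-orthogonal criterion and adjunctions for closed immersions are all you need.
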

\begin{proof}
    Arguing as in \cite[Corollary 3.5.16]{bhatt2022absolute}, this could be reduced to $n=1$, where the results follow from \cite[Proposition 2.9]{anschutz2022v}.
\end{proof}
\begin{theorem}\label{thmt.main classification}
Assume that $p>2$ or $\alpha>0$. Let $n\in \mathbb{N}\cup \{\infty\}$. The functor 
    \begin{align*}
        &\beta_n^{+}: \mathcal{D}(X_n^{\Prism}) \rightarrow  \mathcal{D}(A/d^n[\partial; \gamma_A, \partial_A]), \qquad \mathcal{E}\mapsto (\rho^{*}(\mathcal{E}),\partial_{\mathcal{E}})
    \end{align*}
    constructed in \cref{prop. induce functor} is fully faithful. 
     Moreover, its essential image consists of those objects $M\in \mathcal{D}(A/d^n[\partial; \gamma_A, \partial_A])$ 
     satisfying the following pair of conditions:
    \begin{itemize}
        \item $M$ is $(p,d)$-complete.
        \item The action of $\partial$ on the cohomology $\mathrm{H}^*(M\otimes^{\mathbb{L}}k)$ \footnote{Here the derived tensor product means the derived base change along $A/d^n\to A/(d,q-1)=k$.} is locally nilpotent. 
    \end{itemize}
\end{theorem}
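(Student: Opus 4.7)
The proof would proceed in two logical steps: first establishing full faithfulness of $\beta_n^+$, and then characterizing its essential image using full faithfulness together with the known generators of the source category.

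\textbf{Full faithfulness.} The functor $\beta_n^+$ preserves colimits, since $\rho^*$ does and pullback of the $\partial$-action is compatible with colimits. By \cref{propt.generation}, the source $\mathcal{D}(X_n^{\Prism})$ is generated under shifts and colimits by the line bundles $\mathcal{I}^k$ for $k\in\mathbb{Z}$, so it suffices to verify that for each such $k$ and each $\mathcal{F}\in\mathcal{D}(X_n^{\Prism})$, the natural map
\[
\RHom_{\mathcal{D}(X_n^{\Prism})}(\mathcal{I}^k,\mathcal{F}) \longrightarrow \RHom_{\mathcal{D}(A/d^n[\partial;\gamma_A,\partial_A])}\bigl(\beta_n^+(\mathcal{I}^k),\beta_n^+(\mathcal{F})\bigr)
\]
is an equivalence. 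For $k=0$, the left side equals $\mathrm{R}\Gamma(X_n^{\Prism},\mathcal{F})\simeq\fib(\rho^*\mathcal{F}\xrightarrow{\partial_{\mathcal{F}}}\rho^*\mathcal{F})$ by \cref{propt. sen calculate cohomology}, while the right side equals the same fiber by the canonical free resolution of the unit in \cref{rem. cohomology of the structure sheaf}. For general $k$, I would rewrite the left side as $\mathrm{R}\Gamma(X_n^{\Prism},\mathcal{I}^{-k}\otimes\mathcal{F})$ and, on the Ore side, use symmetric monoidality of $\beta_n^+$ (encoded in \cref{rem.twisted leibnitz in general}) together with a twisted version of the resolution of \cref{rem. cohomology of the structure sheaf} whose twist is dictated by the scalar computed in \cref{example.action on generators}. \cref{propt. sen calculate cohomology} then matches the two sides.

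\textbf{Essential image.} Necessity of $(p,d)$-completeness is automatic, since $\rho^*\mathcal{E}$ is always a $(p,d)$-complete $A$-module. Necessity of local nilpotence on $\mathrm{H}^*(M\otimes^{\mathbb{L}}k)$ reduces, via the comparison between $\partial$ and the Sen operator recalled in \cref{intro.remark compare with work}, to the corresponding statement for the Sen operator on $\mathcal{E}|_{X^{\HT}\otimes k}$, which is standard and implicit in the identification used in the proof of \cref{prop. heart generator}. For sufficiency, let $\mathcal{C}\subset\mathcal{D}(A/d^n[\partial;\gamma_A,\partial_A])$ be the full subcategory cut out by the two conditions. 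Then $\mathcal{C}$ is stable under shifts, colimits, and $(p,d)$-completion, and contains every $\beta_n^+(\mathcal{I}^k)$; hence the essential image of $\beta_n^+$ is contained in $\mathcal{C}$. To prove equality, I would invoke $(p,d)$-completeness on both sides to reduce to checking generation after derived reduction modulo $(p,d)$. The crucial simplification there is that $\gamma_A$ becomes the identity and $\partial_A$ vanishes modulo $d$ (because $\partial_A(q)=d$), so the Ore extension collapses to the commutative polynomial ring $k[\partial]$; the local nilpotence hypothesis then identifies $\mathcal{C}\otimes^{\mathbb{L}}k$ with the derived category of $\partial$-power-torsion $k[\partial]$-modules, which is generated by the reductions of the $\beta_n^+(\mathcal{I}^k)$ by the structure of $\mathcal{D}(X^{\HT}\otimes k)$ recalled in the proof of \cref{prop. heart generator}.

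\textbf{Main obstacle.} I expect the hardest step to be the d\'evissage for the essential image: although the mod $(p,d)$ reduction collapses the Ore extension into a commutative object, one must verify that generation lifts coherently through the $d$-adic and $p$-adic filtrations while keeping the twisted $\gamma_A$-action under control. By contrast, the full-faithfulness step is essentially a formal consequence of \cref{propt. sen calculate cohomology} once the symmetric monoidal compatibilities of $\beta_n^+$ are spelled out, and the necessary direction of the essential-image characterization reduces to known facts about the Sen operator on the Hodge-Tate locus.
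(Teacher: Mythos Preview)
Your approach is correct and matches the paper's strategy, but you over-complicate a few steps that the paper handles more directly.

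For full faithfulness with general $k$: rather than building a twisted resolution of $\beta_n^+(\mathcal{I}^k)$, simply replace $\mathcal{F}$ by $\mathcal{I}^{-k}\otimes\mathcal{F}$ on both sides. Since $\mathcal{I}^k$ is invertible and $\beta_n^+$ is symmetric monoidal (via \cref{rem.twisted leibnitz in general}), this reduces immediately to the case $k=0$, which you already handled.

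For necessity of local nilpotence: there is no need to pass through the Sen operator. Since the property is stable under shifts and colimits, by \cref{propt.generation} it suffices to check it on $\mathcal{I}^r$; but \cref{example.action on generators} shows $\partial$ acts on $\rho^*(\mathcal{I}^r)\otimes k$ by a multiple of $e=d'(q)$, and the assumption $p>2$ or $\alpha>0$ forces $e\equiv 0$ in $k$.

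For the d\'evissage in the essential image: the obstacle you identify is less severe than you fear, and ``lifting generation through filtrations'' is not quite the right framing. The paper argues by detection of zero: if $M\in\mathcal{C}_n$ satisfies $\RHom(\beta_n^+(\mathcal{I}^k),M)=0$ for all $k$, then the fiber sequence $\beta_n^+(\mathcal{I}^{1-k})\to\beta_n^+(\mathcal{I}^{-k})\to\beta_1^+(\mathcal{I}^{-k}|_{\mathrm{HT}})$ (tensored with $M$) forces the same vanishing for $M|_{\mathcal{C}_1}$, whence $M=0$ by $(p,d)$-completeness once one knows $M|_{\mathcal{C}_1}=0$. On the Hodge--Tate locus the Ore extension collapses to $\mathcal{O}_K[\partial]$ as you note, and after further reduction to $k$ the nilpotence hypothesis yields a nonzero element of some $\mathrm{H}^{-m}$ killed by $\partial$, hence a nonzero map from $\beta_1^+(\mathcal{O})[m]$. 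So the only ``lift'' needed is the single fiber sequence above, not an abstract filtration argument.
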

\begin{proof}
The functor is well-defined thanks to \cref{prop. induce functor}. Then we follow the proof of \cite[Theorem 3.5.8]{bhatt2022absolute}. 
    For the full faithfulness, let $\mathcal{E}$ and $\mathcal{F}$ be quasi-coherent complexes on $X^\Prism_n$ and we want to show that the natural map 
    \begin{equation*}
        \Hom_{\mathcal{D}(X^\Prism_n)}(\mathcal{E}, \mathcal{F}) \rightarrow \Hom_{\mathcal{D}(A/d^n[\partial; \gamma_A, \partial_A])} (\rho^{*}(\mathcal{E}), \rho^{*}(\mathcal{F}))
    \end{equation*}
    is a homotopy equivalence. Thanks to \cref{propt.generation}, we could reduce to the case that $\mathcal{E}=\mathcal{I}^k$ for some $k\in \mathbb{Z}$. Replacing $\mathcal{F}$ by the twist $\mathcal{F}(-k)$, we could further assume that $k=0$. Then the desired result follows from \cref{rem. cohomology of the structure sheaf} and \cref{propt. sen calculate cohomology}. 
    
    To check that the action of $\partial$ on the cohomology $\mathrm{H}^*(\rho^{*}(\mathcal{E})\otimes^{\mathbb{L}}k)$ is locally nilpotent for $\mathcal{E}\in \mathcal{D}(X_n^{\Prism})$, again thanks to \cref{propt.generation}, we might assume $\mathcal{E}=\mathcal{I}^r$ for some $r\in \mathbb{Z}$. Then by \cref{example.action on generators}, after base change to $k$ the action of $\partial$ is given by multiplication by $re$,
    which already vanishes as our assumption ($p>2$ or $\alpha>0$) guarantees that $\mathcal{O}_K$ is ramified over $W(k)$, hence $e=0$ in $k$. 

    Let $\mathcal{C}_n\subseteq \mathcal{D}(A/d^n[\partial; \gamma_A, \partial_A])$ be the full subcategory spanned by objects satisfying two conditions listed in \cref{thmt.main classification}. As the source $\mathcal{D}(X_n^{\Prism})$ is generated under shifts and colimits by the invertible sheaves $\mathcal{I}^n$ for $n\in \mathbb{Z}$ by \cref{propt.generation}, to complete the proof it suffices to show that $\mathcal{C}_n$ is also generated under shifts and colimits by $\{\beta_n^{+}(\mathcal{I}^k)\}$ ($k\in \mathbb{Z}$). For this purpose, it can be reduced to the Hodge-Tate locus. Indeed, if $\mathcal{E}$ is in $\mathcal{C}_n$ and $\RHom_{\mathcal{C}_n}(\beta_n^{+}(\mathcal{I}^k), \mathcal{E})\cong \RHom_{\mathcal{C}_n}(\beta_n^{+}(\mathcal{O}),\beta_n^{+}(\mathcal{I}^{-k})\otimes\mathcal{E})=0
    $ for all $k\in \mathbb{Z}$\footnote{Here the underlying complex of $\beta_n^{+}(\mathcal{I}^{-k})\otimes\mathcal{E}$ is $\rho^*(\mathcal{I}^{-k})\otimes_{A/d^n} \mathcal{E}$ and $\partial$ acts on it via the twisted Leibnitz rule stated in \cref{rem.twisted leibnitz in general}.}, then we wish to show that $\mathcal{E}=0$, which is equivalent to the vanishing of $\mathcal{E}|_{\mathcal{C}_1}:=\mathcal{E}/^{\mathbb{L}}d$ due to the $(p,d)$-completeness assumption on $\mathcal{E}$.
    
    By considering the fiber sequence
\begin{equation*} \RHom_{\mathcal{C}_n}(\beta_n^{+}(\mathcal{O}),\beta_n^{+}(\mathcal{I}^{1-k})\otimes\mathcal{E})\to \RHom_{\mathcal{C}_n}(\beta_n^{+}(\mathcal{O}),\beta_n^{+}(\mathcal{I}^{-k})\otimes\mathcal{E}) \to\RHom_{\mathcal{C}_1}(\beta_1^{+}(\mathcal{O}),(\mathcal{I}^{-k}\mathcal{E})|_{\mathcal{C}_1}),
\end{equation*}
we deduce that $\RHom_{\mathcal{C}_{1}}(\beta_1^{+}(\mathcal{I}^k), \mathcal{E}|_{\mathcal{C}_{1}})$ vanishes for all $k$. We will show that this already guarantees the vanishing of $\mathcal{E}|_{\mathcal{C}_{1}}$.
    
    Indeed, we argue that for every nonzero object $\mathcal{E}_0\in \mathcal{C}_1$, $\RHom_{\mathcal{C}_{1}}(\beta_1^{+}(\mathcal{O}), \mathcal{E}_0)\neq 0$. For this purpose, first we observe that $\RHom_{\mathcal{C}_{1}}(\beta_1^{+}(\mathcal{O}), \mathcal{E}_0)\simeq \fib(\mathcal{E}_0\stackrel{\partial}{\longrightarrow} \mathcal{E}_0)$ by \cref{rem. cohomology of the structure sheaf}. Replacing $\mathcal{E}_0$ by $\mathcal{E}_0\otimes k$ (the derived Nakayama guarantees that $L\otimes k$ detects whether $L$ is zero or not for $p$-complete $L$), we may assume that there exists some cohomology group $\mathrm{H}^{-m}(\mathcal{E}_0)$ containing a nonzero element killed by $\partial_{\mathcal{E}_0}$ (this could be done by iterating the action of $\partial$ and then use the nilpotence assumption). 
    It then follows that there exists a non-zero morphism from $\beta_1^{+}(\mathcal{O}[m])$ to $\mathcal{E}_0$, we win.
\end{proof}
When restricting the above theorem to perfect complexes, we get the following result.
\begin{corollary}\label{corot.perfect version of the remain theorem}
Let $n\in \mathbb{N}\cup \{\infty\}$.. The functor $\beta^+_n$ from \cref{thmt.main classification} restricts to a fully faithful functor
  \[
    \beta^+_n\colon \Perf(X_n^{\Prism}) \to \Perf(A/d^n[\partial; \gamma_A, \partial_A])
  \]
  whose essential image consists of $(p,d)$-adically complete perfect complexes $M$ over $A/d^n[\partial; \gamma_A, \partial_A]$ for which  $\partial$ is nilpotent on $H^\ast( M\otimes^{\mathbb{L}}_{A/d^n} k)$. Moreover, $\mathcal{E}\in \Perf(X_n^{\Prism})$ comes from a vector bundle on $X_n^{\Prism}$ if and only if $\beta_n^{+}(\mathcal{E})$ is further required to be a finite projective $A/d^n$-module concentrated on degree $0$. 
\end{corollary}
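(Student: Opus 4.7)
My plan is to deduce this corollary from Theorem \ref{thmt.main classification} together with the generation of $\Perf(X_n^{\Prism})$ by invertible sheaves. Full faithfulness on perfect subcategories is immediate, since $\Perf$ sits as a full subcategory of $\mathcal{D}$ on both sides, so it is inherited from Theorem \ref{thmt.main classification}.

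For the forward inclusion of the essential image, I would use that $\Perf(X_n^{\Prism})$ is generated under finite colimits, shifts, and retracts by $\{\mathcal{I}^k\}_{k \in \mathbb{Z}}$, as an immediate consequence of Proposition \ref{propt.generation}. Each image $\beta_n^+(\mathcal{I}^k)$ has underlying $A/d^n$-module free of rank one on a generator $e_k$, and the Leibniz rule of Lemma \ref{lem.leibniz} together with Example \ref{example.action on generators} (lifted off the Hodge-Tate locus) determines a specific element $\mu_k \in A/d^n$ with $\partial(e_k) = \mu_k e_k$. A leading-$\partial$-degree argument shows that $\partial-\mu_k$ is a non-zero-divisor on the left in $A/d^n[\partial;\gamma_A,\partial_A]$, yielding a two-term free resolution
\[
[A/d^n[\partial; \gamma_A, \partial_A] \xrightarrow{\cdot (\partial - \mu_k)} A/d^n[\partial; \gamma_A, \partial_A]] \simeq \beta_n^+(\mathcal{I}^k),
\]
generalizing Remark \ref{rem. cohomology of the structure sheaf} (the case $k=0$), so each $\beta_n^+(\mathcal{I}^k)$ is perfect over the Ore extension. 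Since $\beta_n^+$ is exact and perfect modules are closed under finite colimits, shifts, and retracts, $\beta_n^+$ carries $\Perf(X_n^{\Prism})$ into $\Perf(A/d^n[\partial;\gamma_A,\partial_A])$; the $(p,d)$-completeness and nilpotence conditions are free from Theorem \ref{thmt.main classification}.

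For the reverse inclusion, I would take a perfect Ore module $M$ satisfying the two conditions and invoke Theorem \ref{thmt.main classification} to obtain a unique $\mathcal{E} \in \mathcal{D}(X_n^{\Prism})$ with $\beta_n^+(\mathcal{E}) \simeq M$. By faithfully flat descent along $\rho: \Spf(A/d^n) \to X_n^{\Prism}$, showing $\mathcal{E} \in \Perf(X_n^{\Prism})$ reduces to showing that $M$, viewed as a complex of $A/d^n$-modules, is perfect. The key observation, from Remark \ref{rem. mod d ore extension}, is that modulo $(d, q-1)$ the Ore extension degenerates to the commutative polynomial ring $k[\partial]$; hence $M \otimes_{A/d^n}^{\mathbb{L}} k$ is perfect over $k[\partial]$. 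The local nilpotence of $\partial$ then forces its cohomology, a priori finitely generated over $k[\partial]$, to be annihilated by a power of $\partial$, hence finite-dimensional over $k$. Thus $M$ is a bounded $(p,d)$-complete complex over the complete local ring $A/d^n$ whose base change to $k$ is perfect; derived Nakayama then gives perfection over $A/d^n$.

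The vector bundle statement is a direct consequence of the same descent principle: $\mathcal{E}$ is a vector bundle on $X_n^{\Prism}$ if and only if $\rho^*\mathcal{E}$ is a vector bundle on the affine $\Spf(A/d^n)$, i.e., a finite projective $A/d^n$-module concentrated in degree zero. I expect the main obstacle to be the derived-Nakayama step in the reverse direction for perfect complexes; the crucial ingredient making it work is precisely the collapse of the Ore extension modulo $(d,q-1)$ to a commutative polynomial ring, which converts $\partial$-nilpotence on $H^*(M\otimes^{\mathbb{L}} k)$ into genuine finiteness over the residue field.
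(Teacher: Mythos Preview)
Your proposal is correct. The backward direction and the vector-bundle clause match the paper's argument essentially verbatim: reduce to the residue field, use that the Ore extension collapses to the commutative polynomial ring $k[\partial]$, convert $\partial$-nilpotence into finite-dimensionality, then invoke derived Nakayama over the complete local ring $A/d^n$; for vector bundles, the paper phrases descent via the $t$-exactness statement of Proposition~\ref{prop. t independ} rather than fpqc descent, but the content is the same.

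The forward direction, however, is argued differently. You build an explicit two-term free resolution of each $\beta_n^+(\mathcal{I}^k)$ by exhibiting $\partial-\mu_k$ as a left non-zero-divisor, then appeal to thick generation of $\Perf(X_n^{\Prism})$ by the $\mathcal{I}^k$. The paper instead applies (noncommutative) derived Nakayama to reduce perfection over $A/d^n[\partial;\gamma_A,\partial_A]$ to perfection over $A/d[\partial;\gamma_A,\partial_A]=\mathcal{O}_K[\partial]$, and then observes that since $\mathcal{O}_K[\partial]$ is a commutative regular ring, any complex that is perfect over $\mathcal{O}_K$ is automatically perfect over $\mathcal{O}_K[\partial]$. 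The paper's route is shorter and sidesteps two points you rely on: the explicit computation of $\mu_k$ off the Hodge--Tate locus (your ``lifted off'' parenthetical), and the passage from Proposition~\ref{propt.generation} (about all of $\mathcal{D}$) to thick generation of $\Perf$ by the $\mathcal{I}^k$, which needs the further input that compact objects coincide with perfect complexes on $X_n^{\Prism}$. Both points are true and standard, but neither is quite as ``immediate'' as you suggest; if you keep your approach, you should say a word about why $\mathcal{I}^k$ are compact and why compacts equal perfects here (finite cohomological dimension from Proposition~\ref{propt. sen calculate cohomology} suffices). Your argument has the virtue of producing the resolutions explicitly, which makes the Ore-perfection of $\beta_n^+(\mathcal{I}^k)$ completely transparent.
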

\begin{proof}
    Given $\mathcal{E}\in \Perf(X_n^{\Prism})$, the underlying complex of $\beta^+_n(\mathcal{E})$ is $\rho^*\mathcal{E}$, hence it is a $(p,d)$-adically complete perfect complex over $A/d^n$. To see it is perfect over $A/d^n[\partial; \gamma_A, \partial_A]$, by (non-commutative) derived Nakayama, it suffices to show that $\beta^+_n(\mathcal{E})\otimes_{A/d^n[\partial; \gamma_A, \partial_A]}^{\mathbb{L}} A/d[\partial; \gamma_A, \partial_A] \simeq \beta^+_1(\mathcal{E}|_{X^{\HT}})$ is perfect over $A/d[\partial; \gamma_A, \partial_A]=\mathcal{O}_K[\partial]$, which follows from its perfectness over $\mathcal{O}_K$ and the regularity of $\mathcal{O}_K[\partial]$. Consequently $ \beta^+_n$ in \cref{thmt.main classification} does restrict to a functor stated in the corollary and its full faithfulness follows from \cref{thmt.main classification}. 

    Conversely, given a $(p,d)$-complete $M \in \Perf(A/d^n[\partial; \gamma_A, \partial_A])$ such that $\partial$ is nilpotent on $H^\ast( M\otimes^{\mathbb{L}}_{A/d^n} k)$, we need to show that $M$ is a perfect $A/d^n$-complex. As $A/d^n$ is complete with respect to the $(p,d)$-topology, it suffices to check this for $M\otimes_{A/d^n} k$. By further noticing that canonical truncations of $M$ are again perfect $A/d^n[\partial; \gamma_A, \partial_A]$ complexes, we reduce to the case that $M$ is concentrated on degree $0$ and lives over $k$. Then $M$ is a finitely generated $k[\partial]/\partial^i$-module for some $i\geq 0$, hence a perfect $A/d^n$-module, as desired. 
    
    For the moreover part, one just observes that $\mathcal{E}\in \mathcal{D}(X_n^{\Prism})$ if and only if $\beta_n^+(\mathcal{E})$ is concentrated on degree $0$ by \cref{prop. t independ}.
\end{proof}

\subsection{Comparison with the $q$-de Rham prism}
In this section we would like to present $X^{\Prism}$ as a quotient of the affine formal scheme $\Spf(A)$, motivated by the construction in \cite[Section 3.8]{bhatt2022absolute} as well as the following consequence of \cref{propt.key automorphism of functors}, namely we can consider the restriction of $\gamma_{b,c}$ to the locus $\epsilon=q(q^{p^{\alpha}}-1)$ and thus obtain the following commutative diagram:
\[\xymatrixcolsep{5pc}\xymatrix{\Spf(A)\ar@/_/[ddr]_{\iota} \ar@/^/[drr]^{\gamma_A} \ar[dr]^{\epsilon=q(q^{p^{\alpha}}-1)}\\ &\Spf(R)\ar[d]^{\iota}\ar[r]^{\psi}& \Spf(A)\ar@{=>}[dl]^{\gamma_{b,c}} \ar[d]_{}^{\rho}
\\& \Spf(A)  \ar^{\rho}[r]&X^{\Prism},}\]
where $\gamma_A$ is the atuomorphism of $A$ sending $q$ to $q^{1+p^{\alpha+1}}$. Briefly speaking, the diagram provides an isomorphism between $\rho\circ \gamma_A$ and $\rho$, viewed as points in $X^{\Prism}(A)$. 

We next upgrade the above diagram to show that $\rho$ factors through a quotient of $\Spf(A)$. First we observe that the $q$-prism $(A,I)=(W(k)[[q-1]], [p]_{q^{p^{\alpha}}})$ admits an action by $\mathbb{Z}_p^{\times}$, where an element $u\in \mathbb{Z}_p^{\times}$ acts by the automorphism $q\mapsto q^u$, which will be denoted as $\gamma_u$ later. Indeed, one could check that for $u\in \mathbb{Z}_p$, $\gamma_u$ preserves the ideal $(d)$ if and only if $u\in \mathbb{Z}_p^{\times}$.
\begin{proposition}\label{prop. group quotient}
    $\rho\circ \gamma_u \in X^{\Prism}(A)$ is isomorphic to $\rho \in X^{\Prism}(A)$ if and only if $u\in (1+p^{\alpha+1}\mathbb{Z}_p)^{\times}$. Moreover, the above $\mathbb{Z}_p^{\times}$-action induces a morphism of stacks
    \begin{equation*}
    [\Spf(A)/(1+p^{\alpha+1}\mathbb{Z}_p)^{\times}]\to X^{\Prism},
    \end{equation*}
    where $[\Spf(A)/(1+p^{\alpha+1}\mathbb{Z}_p)^{\times}]$ is the stack quotient of $\Spf(A)$ by the profinite group $(1+p^{\alpha+1}\mathbb{Z}_p)^{\times}$.
\end{proposition}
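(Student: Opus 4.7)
My plan is to establish the ``if and only if'' first and then assemble the resulting isomorphisms into descent data producing the morphism to the stack quotient. The guiding observation is that a point of $X^{\Prism}(A)$ packages both a Cartier--Witt divisor and an underlying map to $X$; the latter is rigid and will force the necessary condition on $u$. For the ``only if'' direction I would restrict $\rho$ and $\rho \circ \gamma_u$ along $\Spf(\overline A) \hookrightarrow \Spf(A)$, noting that both restrictions factor through the Hodge--Tate locus $X^{\HT}$, which maps canonically to $X = \Spf(\mathcal{O}_K)$ since $X^{\HT}$ is a $G_\pi$-gerbe over $X$. The induced $\overline{A}$-point of $X$ is the identity in the case of $\rho$, and is the automorphism $\overline\gamma_u : \zeta_{p^{\alpha+1}} \mapsto \zeta_{p^{\alpha+1}}^u$ of $\mathcal{O}_K$ in the case of $\rho \circ \gamma_u$. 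Since $X$ is a sheaf of sets, any isomorphism in $X^{\Prism}(A)$ forces equality of these underlying maps, giving $u \equiv 1 \pmod{p^{\alpha+1}}$.

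The ``if'' direction is where the substance lies. For $u_0 := 1+p^{\alpha+1}$ the isomorphism is already provided by specializing $\gamma_{b,c}$ from \cref{propt.key automorphism of functors} along the map $R \twoheadrightarrow A$ sending $\epsilon \mapsto \beta = q(q^{p^\alpha}-1)$, as indicated in the diagram preceding the proposition. For a general $u \in 1 + p^{\alpha+1}\mathbb{Z}_p$, I would reproduce the arguments of \cref{lemT.construct b} and \cref{lemt.construct homotopy} replacing $\gamma_{u_0}$ by $\gamma_u$: build an auxiliary ring $R_u = A[\epsilon_u]/(\epsilon_u^2 - \mu_u \epsilon_u)$ together with a $W(k)$-linear lift $\psi_u : A \to R_u$ that reduces to the identity modulo $\epsilon_u$ and specializes to $\gamma_u$, then invoke Dwork's lemma on the appropriate ghost sequence to obtain $b_u \in W(R_u)^\times$ and $c_u \in W(R_u)$ implementing the desired isomorphism of points of $X^{\Prism}(A)$. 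An alternative, valid at least when $p > 2$ or $\alpha \geq 1$, where $u_0$ topologically generates $1 + p^{\alpha+1}\mathbb{Z}_p$, is to iterate $\gamma_{b,c}$ over integer powers of $u_0$ and extend by $(p,d)$-adic continuity; the residual case $p=2, \alpha=0$ would then need a direct treatment of $u=-1$.

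For the ``moreover'' statement, once the family $\phi_u : \rho \circ \gamma_u \simeq \rho$ is constructed for all $u \in G := (1+p^{\alpha+1}\mathbb{Z}_p)^\times$, the map $[\Spf(A)/G] \to X^{\Prism}$ follows from the cocycle identity $\phi_{u_1 u_2} = \phi_{u_2} \circ \gamma_{u_2}^* \phi_{u_1}$. This identity should be formal given the uniqueness of the elements $b_u, c_u$ in the construction: both sides trivialize the same functorial Witt-vector torsor associated to $\gamma_{u_1 u_2}$, and uniqueness forces them to coincide. With the cocycle in hand, the pair $(\rho, \{\phi_u\})$ is descent data for the $G$-action on $\Spf(A)$, yielding the required morphism to $X^{\Prism}$ factoring through the stack quotient.

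The main obstacle I anticipate is the hands-on construction of $(b_u, c_u)$ for arbitrary $u$: the rational expressions used for $u_0$, such as $\gamma_{u_0}(q) = q \cdot q^{p^{\alpha+1}}$ and the explicit formula for $\psi_{u_0}(q^k)$, do not immediately generalize, and one must instead handle $q^u = \sum_k \binom{u-1}{k}(q-1)^k \in A$ directly when rerunning the ghost-coordinate congruences of \cref{lemT.construct b} and \cref{lemt.construct homotopy}. Verifying continuity in $u$ for the continuity-based alternative requires comparable care, especially in the $p=2$ case.
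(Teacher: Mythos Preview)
Your ``only if'' argument via the underlying map to $X=\Spf(\mathcal{O}_K)$ is a valid conceptual alternative to what the paper does, and is arguably cleaner. The paper instead extracts the necessary condition from the same Witt-vector computation that handles the ``if'' direction: the ghost coordinates of $\tilde\gamma_u(q)-\tilde\iota(q)$ are $q^{up^n}-q^{p^n}$, and these are divisible by $w_n(d)=[p]_{q^{p^{n+\alpha}}}$ precisely when $u\equiv 1\pmod{p^{\alpha+1}}$.

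Your ``if'' direction, however, is substantially more convoluted than necessary and contains a real gap. You propose to build an auxiliary ring $R_u=A[\epsilon_u]/(\epsilon_u^2-\mu_u\epsilon_u)$ and a lift $\psi_u$, but you do not say what $\mu_u$ is for general $u$, and there is no obvious candidate: the formula $\mu_{u_0}=q(q^{p^\alpha}-1)$ arises from $q^{u_0}=q\cdot q^{p^{\alpha+1}}$, which has no analogue for arbitrary $u$. Your continuity alternative then leaves the case $p=2,\alpha=0$ open. The paper avoids all of this by observing that no auxiliary ring is needed: since $\gamma_u:A\to A$ already commutes with $\varphi$ (as $\varphi(q)=q^p$ and $\gamma_u(q^p)=(q^u)^p$), it is a $\delta$-ring automorphism, so its unique $\delta$-lift $\tilde\gamma_u:A\to W(A)$ is simply $\tilde\iota\circ\gamma_u$. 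The ghost coordinates $w_n(\tilde\gamma_u(q))=q^{up^n}$ are then immediate, and one constructs $b_u,c_u\in W(A)$ directly via Dwork's lemma (indeed $\varphi(r_n)=r_{n+1}$ on the nose for $r_n=(q^{up^n}-q^{p^n})/[p]_{q^{p^{n+\alpha}}}$). This handles all $u\in(1+p^{\alpha+1}\mathbb{Z}_p)^\times$ uniformly, with no special cases.

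For the ``moreover'' part you and the paper agree: uniqueness of $b_u,c_u$ (from $d$-torsion-freeness of $W(A)$) forces the cocycle condition, giving the map from the quotient stack.
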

\begin{proof}
    Following the strategy of proving \cref{propt.key automorphism of functors}, specifying an isomorphism between $\rho\circ \gamma_u$ and $\rho$ is equivalent to giving the following data:
    \begin{itemize}
        \item An element $b_u\in W(A)$ (it will be automatically in $W(A)^{\times}$ if exists) such that $\Tilde{\gamma_u}(d)=\Tilde{\iota}(d)\cdot b_u$.
        \item An element $c_u\in W(A)$ such that $\Tilde{\gamma_u}(q)-\Tilde{\iota}(q)=\Tilde{\iota}(d)\cdot c_u$.
    \end{itemize}
    Here in above $\Tilde{\gamma_u}: A\to W(A)$ (resp. $\Tilde{\iota}: A\to W(A)$) is the unique $\delta$-ring morphism lifting $\gamma_u: A\to A$ (resp. $\iota=\Id: A\to A$).

    One can check that such a $b_u$ exists if and only if $u\in \mathbb{Z}_p^{\times}$ by mimicking the proof of \cref{lemT.construct b}. We leave it as an exercise for the reader. Heuristically, this is due to the fact that $\gamma_u$ preserves the ideal $(d)$ and $\frac{\gamma_u(d)}{d}\in A^{\times}$ for $u\in \mathbb{Z}_p^{\times}$.

    For the existence of $c_u$, we proceed as in the proof of \cref{lemt.construct homotopy}. First notice that $w_n(\Tilde{\iota}(q))=w_0(\varphi^n(\Tilde{\iota}(q)))=w_0(\Tilde{\iota}(\varphi^n(q)))=q^{p^n}$ and that $w_n(\Tilde{\gamma_u}(q))=q^{up^n}$, $w_n(d)=[p]_{q^{p^{n+\alpha}}}$. Hence there exists $r_n\in A$ such that $w_n(\Tilde{\gamma_u}(q))-w_n(\Tilde{\iota}(q))=w_n(d)\cdot r_n$ if and only if $[p]_{q^{p^{n+\alpha}}} |~q^{up^n}-q^{p^n}$, which is equivalent to that $u\equiv 1 \mod p^{\alpha+1}$. Moreover, if $u\in (1+p^{\alpha+1}\mathbb{Z}_p)^{\times}$, then we define $r_n$ to be $\frac{q^{up^n}-q^{p^n}}{[p]_{q^{p^{n+\alpha}}}}$ and observe that $\varphi(r_n)=r_{n+1}$. By invoking Dwork's lemma (see \cite[4.6 on page 213]{lazard2006commutative} for details), we conclude that $(r_0, r_1,\cdots)\in R^{\mathbb{N}}$ is the ghost coordinate for some element $c_u\in W(A)$ (such $c_u$ is unique as the ghost map is injective). By construction, $c_u$ satisfies that $\Tilde{\gamma_u}(q)-\Tilde{\iota}(q)=\Tilde{\iota}(d)\cdot c_u$, hence finish the proof of the first part in the proposition. 
    
    For the moreover part, we just observe that for $u\in (1+p^{\alpha+1}\mathbb{Z}_p)^{\times}$, the isomorphism between $\rho\circ \gamma_u$ and $\rho$ is unique as both $b_u$ and $c_u$ are unique (as $W(A)$ is $\Tilde{\iota}(d)$-torsion free, which can be checked after applying ghost map), hence the isomorphism $\sigma_u: \rho\circ \gamma_u \simeq \rho$ satisfies the ``higher associativity" condition in \cite[Definition 1.3]{romagny2005group}, hence $\rho$ factors through $[\Spf(A)/(1+p^{\alpha+1}\mathbb{Z}_p)^{\times}]\to X^{\Prism}$.
\end{proof}
\begin{remark}
    Unfortunately, the above morphism $[\Spf(A)/(1+p^{\alpha+1}\mathbb{Z}_p)^{\times}]\to X^{\Prism}$ is not an isomorphism. However, as $\alpha$ increases, it becomes closer to an isomorphism heuristically. Indeed, by varying $\alpha$ and considering the inverse limit of the following commutative diagram (which corresponds to perfection)
    \[\xymatrixcolsep{5pc}\xymatrix{\cdots\ar[r]\ar[d]&[\Spf(A)/(1+p^{2}\mathbb{Z}_p)^{\times}]\ar[d]^{\rho}\ar[r]^{q\mapsto q^p}& [\Spf(A)/(1+p^{}\mathbb{Z}_p)^{\times}] \ar[d]_{}^{\rho}
\\ \cdots\ar[r]&(W(k)[\zeta_{p^{2}}]_p)^{\Prism}  \ar^{}[r]&(W(k)[\zeta_{p}]_p)^{\Prism},}\]
we finally end with an isomorphism $\Spf(\Ainf(W(k)[\zeta_{p^{\infty}}]_p^{\wedge}))\stackrel{\simeq}{\rightarrow} (W(k)[\zeta_{p^{\infty}}]_p^{\wedge})^{\Prism}$.
\end{remark}

In what follows, we will abuse notation by still writing the morphism $[\Spf(A)/(1+p^{\alpha+1}\mathbb{Z}_p)^{\times}]\to X^{\Prism}$ as $\rho$. 

For any $\mathcal{E}\in \mathcal{D}(X^{\Prism})$, each element $u\in (1+p^{\alpha+1}\mathbb{Z}_p)^{\times}$ determines an automorphism of $\rho^*\mathcal{E}$, which we will denote by $\gamma_u$. For $u=1+p^{\alpha+1}$ (which is a topological generator of $(1+p^{\alpha+1}\mathbb{Z}_p)^{\times}$), the automorphism $\gamma_u$ is completely determined by the $q$-connection on $\rho^*\mathcal{E}$.
\begin{proposition}\label{prop. monodromy and group action}
    For $u_0=1+p^{\alpha+1}$, $\gamma_{u_0}$ acts on $\rho^*\mathcal{E}$ via $1+q(q^{p^{\alpha}}-1)\partial_{\mathcal{E}}$.
\end{proposition}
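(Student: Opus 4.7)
The plan is to reduce this statement to the base change of the universal isomorphism $\gamma_{b,c}$ constructed in \cref{propt.key automorphism of functors} along the specialization $\epsilon \mapsto q(q^{p^\alpha}-1)$. Recall $R = A[\epsilon]/(\epsilon^2 - q(q^{p^\alpha}-1)\epsilon)$ and $\psi\colon A \to R$ sending $q \mapsto q + \epsilon[p]_{q^{p^\alpha}}$. Writing $\beta = q(q^{p^\alpha}-1)$, consider the $A$-algebra map $\mathrm{ev}\colon R \to A$ defined by $\epsilon \mapsto \beta$. By \cref{rem.relation with automorphism}, the composition $\mathrm{ev}\circ \psi\colon A \to A$ is precisely the automorphism $\gamma_A\colon q\mapsto q^{1+p^{\alpha+1}} = q^{u_0}$, which is exactly the action $\gamma_{u_0}$ of $u_0 \in (1+p^{\alpha+1}\mathbb{Z}_p)^\times$.

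First I would base change the commutative triangle of \cref{propt.key automorphism of functors} along $\mathrm{ev}\colon \Spf(A) \to \Spf(R)$. This yields an isomorphism $\mathrm{ev}^*\gamma_{b,c}\colon \rho \circ \gamma_{u_0} \xrightarrow{\simeq} \rho$ of $A$-points of $X^{\Prism}$. The key claim is that this base-changed isomorphism agrees with the isomorphism $\sigma_{u_0}$ produced in (the proof of) \cref{prop. group quotient}. This is a uniqueness argument: $\sigma_{u_0}$ is characterized by the pair $(b_{u_0}, c_{u_0}) \in W(A) \times W(A)$ satisfying $\widetilde{\gamma_{u_0}}(d) = \widetilde{\iota}(d)\cdot b_{u_0}$ and $\widetilde{\gamma_{u_0}}(q) - \widetilde{\iota}(q) = \widetilde{\iota}(d)\cdot c_{u_0}$ in $W(A)$, and these equations are uniquely solvable because $W(A)$ is $\widetilde{\iota}(d)$-torsion free. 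On the other hand, $\gamma_{b,c}$ is given by $(b,c) \in W(R) \times W(R)$ from \cref{lemT.construct b} and \cref{lemt.construct homotopy}; applying $W(\mathrm{ev})$ and using that $W(\mathrm{ev})\circ W(\widetilde{\psi}) = W(\widetilde{\gamma_{u_0}})$ (by uniqueness of $\delta$-lifts) shows that $W(\mathrm{ev})(b)$ and $W(\mathrm{ev})(c)$ satisfy the defining equations for $b_{u_0}$ and $c_{u_0}$, so by uniqueness they coincide.

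Once this identification is in hand, the statement follows by chasing what each side does on $\rho^*\mathcal{E}$. By construction (see the discussion around \cref{equat.truncated induced morphism}), the isomorphism $\gamma_{b,c}\colon \psi^* \rho^*\mathcal{E} \xrightarrow{\simeq} \rho^*\mathcal{E} \otimes_A R$ is of the form $\mathrm{Id} + \epsilon \partial_{\mathcal{E}}$. Base changing along $\mathrm{ev}$ turns this into the automorphism $\mathrm{Id} + \beta\, \partial_{\mathcal{E}} = 1 + q(q^{p^\alpha}-1)\,\partial_{\mathcal{E}}$ of $\rho^*\mathcal{E}$, which by the previous paragraph is exactly the automorphism induced by $\sigma_{u_0}$, i.e.\ the action of $\gamma_{u_0}$.

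The main obstacle is the identification $\mathrm{ev}^*\gamma_{b,c} = \sigma_{u_0}$, because the two isomorphisms are built from different uniqueness problems over different rings; handling this cleanly requires checking that the elements $b_{u_0},c_{u_0}$ from \cref{prop. group quotient} really are the images of $b,c$ under $W(\mathrm{ev})$, which in turn rests on the compatibility $W(\mathrm{ev})\circ \widetilde{\psi} = \widetilde{\gamma_{u_0}}$ of $\delta$-ring lifts. Everything else is a direct unwinding of definitions.
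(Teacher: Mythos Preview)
Your proof is correct and follows essentially the same route as the paper. The paper's argument is terse: it invokes the commutative diagram displayed just above \cref{prop. group quotient}, which is nothing but the restriction of $\gamma_{b,c}$ along $\epsilon = q(q^{p^\alpha}-1)$, and then unwinds the definition $\gamma_{b,c} = \Id + \epsilon\,\partial_{\mathcal{E}}$. Your proposal makes explicit the one point the paper leaves implicit, namely why the restricted $\gamma_{b,c}$ agrees with the isomorphism $\sigma_{u_0}$ used to define the $\gamma_{u_0}$-action; this follows from the uniqueness of $(b_{u_0}, c_{u_0})$ established in the proof of \cref{prop. group quotient} (since $W(A)$ is $\widetilde{\iota}(d)$-torsion free), exactly as you argue.
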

\begin{proof}
    This follows from our construction of $\gamma_u$, $\partial_{\mathcal{E}}$ and the commutative diagram given above \cref{prop. group quotient}.
\end{proof}
\begin{remark}\label{rem. decalage}
    As $q$ is a unit and $u_0$ is a topologically generator for $(1+p^{\alpha+1}\mathbb{Z}_p)^{\times}$, combining \cref{prop. monodromy and group action} with \cref{thmt.main classification}, we see that for $\mathcal{E}\in X^{\Prism}$, 
    \begin{equation*}
        \mathrm{R} \Gamma(X_n^{\Prism}, \mathcal{E})\xrightarrow{\simeq} \fib(\rho^*\mathcal{E}\stackrel{\partial_{\mathcal{E}}}{\longrightarrow} \rho^*\mathcal{E})=L\eta_{q^{p^{\alpha}}-1}\mathrm{R} \Gamma([\Spf(A)/(1+p^{\alpha+1}\mathbb{Z}_p)^{\times}], \rho^*\mathcal{E}),
    \end{equation*}
    where $L\eta$ is the d\'ecalage functor (see \cite{bhatt2018integral} or \cite{berthelot2015notes} for details).
    In particular, pullback from $\mathcal{D}(X^{\Prism})$ to $\mathcal{D}([\Spf(A)/(1+p^{\alpha+1}\mathbb{Z}_p)^{\times}])$ along
 $\rho: [\Spf(A)/(1+p^{\alpha+1}\mathbb{Z}_p)^{\times}]\to X^{\Prism}$ is not fully faithful.
\end{remark}\label{rem. sen and q connection}
\begin{remark}[{Comparing the $q$-connection with Sen operator on the Hodge-Tate locus}]\label{rem. compare two operators}
    We can restrict $\rho$ to the Hodge-Tate locus and obtain $\rho: [\Spf(\mathcal{O}_K)/(1+p^{\alpha+1}\mathbb{Z}_p)^{\times}]\to X^{\HT}$. Consequently, for $u\in (1+p^{\alpha+1}\mathbb{Z}_p)^{\times}$ and $\mathcal{E}\in \mathcal{D}(X^{\HT})$, $\rho^*\mathcal{E}$ is equipped with an automorphism $\gamma_u$. On the other hand, $\mathcal{E}$ is equipped with two operators: the $q$-connection $\partial_{\mathcal{E}}$ from \cref{thmt.main classification} (constructed via the $q$ prism) as well as the Sen operator $\theta_{\mathcal{E}}$ from \cite[Theorem 2.5]{anschutz2022v} (constructed utilizing the Breuil-Kisin prism), hence so is $\rho^*\mathcal{E}$. The relations between the action of $\partial_{\mathcal{E}}$ and $\theta_{\mathcal{E}}$ on 
    $\rho^*\mathcal{E}$ can be connected via the automorphism $\gamma_{u_0}$. Indeed, one can copy the proof of \cite[Proposition 3.7.1]{bhatt2022absolute} to conclude that for $u\in (1+p^{\alpha+1}\mathbb{Z}_p)^{\times}$, $\gamma_u$ acts on $\rho^*\mathcal{E}$ via $u^{\theta_{\mathcal{E}}/E^{\prime}(\pi)}$, but we already know that $\gamma_{u_0}$ acts on $\rho^*\mathcal{E}$ via $1+q(q^{p^{\alpha}}-1)\partial_{\mathcal{E}}$ thanks to \cref{prop. monodromy and group action}. Hence
    \begin{equation*}
(1+p^{\alpha+1})^{\theta_{\mathcal{E}}/E^{\prime}(\pi)}=1+q(q^{p^{\alpha}}-1)\partial_{\mathcal{E}}=1+\frac{p^{\alpha+1}}{e}\partial_{\mathcal{E}},
    \end{equation*}
    where the last identity is due to \cref{lem. calculate e}. As an upsot, we see
    \begin{equation*}
\partial_{\mathcal{E}}=e\cdot\frac{(1+p^{\alpha+1})^{\theta_{\mathcal{E}}/E^{\prime}(\pi)}-1}{p^{\alpha+1}}.
    \end{equation*}
    In this sense, \cref{example.action on generators} is just a special case of the above formula!  
\end{remark}

By examining the construction of the isomorphism $\sigma_u: \rho\circ \gamma_u \simeq \rho$, we can determine the locus where $\sigma_u$ is trivial very explicitly.
\begin{proposition}\label{prop. trivial locus}
    We have the following commutative diagram 
    \[\xymatrixcolsep{5pc}\xymatrix{[\Spf(A/(q^{p^{\alpha}}-1))/(1+p^{\alpha+1}\mathbb{Z}_p)^{\times}]\ar[d]^{}\ar[r]^{q^{p^{\alpha}}\mapsto 1}& [\Spf(A)/(1+p^{\alpha+1}\mathbb{Z}_p)^{\times}] \ar[d]_{}^{\rho}
\\\Spf(A/(q^{p^{\alpha}}-1)) \ar[r]^{\rho_{}}&X^{\Prism}.}\]

\end{proposition}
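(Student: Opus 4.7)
The plan is to exploit the explicit description of the $2$-isomorphism $\sigma_u\colon \rho\circ\gamma_u\simeq\rho$ furnished in \cref{prop. group quotient}. Both composites in the diagram have the same underlying morphism $\Spf(A/(q^{p^\alpha}-1))\hookrightarrow\Spf(A)\xrightarrow{\rho}X^{\Prism}$, so commutativity of the diagram of stacks reduces to showing that for every $u\in(1+p^{\alpha+1}\mathbb{Z}_p)^\times$, the $2$-cell $\sigma_u$ pulled back along the closed immersion $\Spf(A/(q^{p^\alpha}-1))\hookrightarrow\Spf(A)$ is the identity. Since $\sigma_u$ is encoded by the unique pair $(b_u,c_u)\in W(A)^{\times}\times W(A)$ satisfying $\Tilde{\gamma_u}(d)=\Tilde{\iota}(d)\cdot b_u$ and $\Tilde{\gamma_u}(q)-\Tilde{\iota}(q)=\Tilde{\iota}(d)\cdot c_u$, this reduces to checking $\overline{b_u}=1$ and $\overline{c_u}=0$ inside $W(A/(q^{p^\alpha}-1))$.

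First I would verify that $(1+p^{\alpha+1}\mathbb{Z}_p)^\times$ acts trivially on $A/(q^{p^\alpha}-1)$. For $u=1+p^{\alpha+1}v$, one computes $q^u=q\cdot(q^{p^\alpha})^{pv}$, which is congruent to $q$ modulo $(q^{p^\alpha}-1)$. This legitimizes the left vertical projection in the diagram, and by uniqueness of $\delta$-algebra lifts it also implies that $\Tilde{\iota}$ and $\Tilde{\gamma_u}$, viewed as maps $A\to W(A/(q^{p^\alpha}-1))$, coincide, since both are the canonical $\delta$-lift of the quotient $A\twoheadrightarrow A/(q^{p^\alpha}-1)$.

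Next I would identify $\Tilde{\iota}(d)$ inside $W(A/(q^{p^\alpha}-1))$ via ghost coordinates: $w_n(\Tilde{\iota}(d))=\varphi^n(d)=[p]_{q^{p^{n+\alpha}}}$, and modulo $(q^{p^\alpha}-1)$ each such ghost coordinate becomes $[p]_1=p$. Because $A/(q^{p^\alpha}-1)$ is $p$-torsion free (it is $W(k)$ when $\alpha=0$, and a finite free $W(k)$-module with basis $1,q,\ldots,q^{p^\alpha-1}$ when $\alpha\geq 1$), the ghost map on its ring of Witt vectors is injective, forcing $\Tilde{\iota}(d)=p$ in $W(A/(q^{p^\alpha}-1))$; in particular $\Tilde{\iota}(d)$ remains a non-zero divisor after the reduction.

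With these inputs the conclusion is immediate: reducing $\Tilde{\gamma_u}(d)=\Tilde{\iota}(d)\cdot b_u$ modulo $(q^{p^\alpha}-1)$ gives $p=p\cdot\overline{b_u}$, whence $\overline{b_u}=1$; reducing $\Tilde{\gamma_u}(q)-\Tilde{\iota}(q)=\Tilde{\iota}(d)\cdot c_u$ gives $0=p\cdot\overline{c_u}$, whence $\overline{c_u}=0$. This proves that $\sigma_u$ restricts to the identity $2$-cell for every $u$, which is precisely the compatibility needed for the diagram of stacks to commute. The only mild subtlety is the identification $\Tilde{\iota}(d)=p$ (not merely a unit multiple of $p$) in $W(A/(q^{p^\alpha}-1))$; once this is observed, the rest is purely formal bookkeeping of the $2$-categorical data attached to $[\Spf(A)/(1+p^{\alpha+1}\mathbb{Z}_p)^\times]$.
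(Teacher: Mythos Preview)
Your proof is correct and follows essentially the same strategy as the paper: both reduce commutativity of the square to the claim that the $2$-isomorphism $\sigma_u$ becomes the identity after restriction to the vanishing locus of $q^{p^\alpha}-1$, and both use the triviality of the $(1+p^{\alpha+1}\mathbb{Z}_p)^\times$-action on $A/(q^{p^\alpha}-1)$ together with ghost-coordinate computations and $p$-torsion freeness.

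The execution differs slightly. The paper argues directly that the ghost components $r_n$ of $c_u$ vanish modulo $q^{p^\alpha}-1$, using the recursion $\varphi(r_n)=r_{n+1}$ together with $r_0\in(q^{p^\alpha}-1)$; it is somewhat terse about $b_u$. Your route is more uniform: you first identify $\Tilde{\iota}(d)=p$ in $W(A/(q^{p^\alpha}-1))$, and then read off $\overline{b_u}=1$ and $\overline{c_u}=0$ simultaneously from the defining equations and torsion-freeness. Your approach avoids invoking the explicit formula for $r_n$ and treats $b_u$ and $c_u$ symmetrically, at the cost of the extra (but easy) identification of $\Tilde{\iota}(d)$; the paper's approach is shorter but leans on the specific $\varphi$-structure of the $r_n$.
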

\begin{proof}
    We observe that the action of $(1+p^{\alpha+1}\mathbb{Z}_p)^{\times}$ on $(A, d)$ is trivial after modulo $q^{p^{\alpha}}-1$. Moreover, for $u\in (1+p^{\alpha+1}\mathbb{Z}_p)^{\times}$, the isomorphism $\sigma_u: \rho\circ \gamma_u\simeq \rho$ constructed in \cref{prop. group quotient} becomes the identify on the vanishing locus of $q^{p^{\alpha}}-1$, which is implied by the vanishing of $r_n$ constructed in \cref{prop. group quotient} on $A/(q^{p^{\alpha}}-1)$ (indeed, this follows from the vanishing of $r_0=q(q^{p^{\alpha}}-1)$). 
\end{proof}
Then the main result of this section is the following theorem:
\begin{theorem}\label{thmt.main classification II}
    Let $p>2$ or $\alpha>0$. Then the diagram in \cref{prop. trivial locus} induces a fully faithful functor of $\infty$-categories 
$$F: \mathcal{D}(X^{\Prism}) \longrightarrow \mathcal{D}([\Spf(A)/(1+p^{\alpha+1}\mathbb{Z}_p)^{\times}])\times_{\mathcal{D}([\Spf(A/(q^{p^{\alpha}}-1))/(1+p^{\alpha+1}\mathbb{Z}_p)^{\times}])} \mathcal{D}(\Spf(A/(q^{p^{\alpha}}-1))).$$
The essential image of $F$ consists of those pairs $(M,\gamma_M)$ with $M\in \mathcal{D}(A)$ and $\gamma_M$ being an automorphism of $M$ which is congruent to the identity modulo $q(q^{p^{\alpha}}-1)$ such that the following holds:
\begin{itemize}
    \item $M\in \mathcal{D}(A)$ is $(p,d)$-complete.
    \item the action of $\partial_M$ on the cohomology $\mathrm{H}^*(M\otimes^{\mathbb{L}}k)$ \footnote{Here the derived tensor product means the derived base change along $A/d^n\to A/(d,q-1)=k$.} is locally nilpotent, where $\partial_M: M\to M$ is an operator satisfying $\gamma_M=\Id+q(q^{p^{\alpha}}-1)\partial_M$.
\end{itemize}
    In particular, for $\mathcal{E}\in \mathcal{D}(X^{\Prism})$, the diagram 
\begin{equation}\label{diagram}
\begin{gathered}
\xymatrixcolsep{5pc}\xymatrix{\mathrm{R} \Gamma(X^{\Prism}, \mathcal{E})\ar[d]^{}\ar[r]^{}&\mathrm{R} \Gamma([\Spf(A)/(1+p^{\alpha+1}\mathbb{Z}_p)^{\times}], \rho^{*} \mathcal{E}) 
\ar[d]_{}^{}
\\\mathrm{R} \Gamma(\Spf(A/(q^{p^{\alpha}}-1)), \rho^{*} \mathcal{E})  \ar^{}[r]&\mathrm{R} \Gamma([\Spf(A/(q^{p^{\alpha}}-1))/(1+p^{\alpha+1}\mathbb{Z}_p)^{\times}], \rho^{*} \mathcal{E})}
\end{gathered}
\end{equation}
    is a pullback square in the $\infty$-category $\hat{\mathcal{D}}(\mathbb{Z}_p)$.
\end{theorem}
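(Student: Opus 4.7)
The plan is to identify the target pullback $\infty$-category with the category of pairs $(M, \gamma_M)$ described in the theorem, and then transport the classification of \cref{thmt.main classification} across this identification. Set $\Gamma := (1+p^{\alpha+1}\mathbb{Z}_p)^{\times}$ and $\beta := q(q^{p^{\alpha}}-1) \in A$ for notational convenience.

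First I would unpack the target fiber product. As observed in the proof of \cref{prop. trivial locus}, the $\Gamma$-action on $A/(q^{p^{\alpha}}-1)$ is trivial, so the right vertical functor in the diagram of \cref{prop. trivial locus} sends an object of $\mathcal{D}([\Spf(A)/\Gamma])$ to its reduction modulo $q^{p^{\alpha}}-1$ equipped with the induced $\Gamma$-action. Gluing with an object of $\mathcal{D}(\Spf(A/(q^{p^{\alpha}}-1)))$ (which has trivial $\Gamma$-action when inflated) amounts to asking that this induced $\Gamma$-action is canonically trivial. Since $\Gamma$ is topologically procyclic with generator $u_0 := 1+p^{\alpha+1}$ and $q \in A^{\times}$, I would argue that, after $(p,d)$-completeness is imposed, the data of such a $\Gamma$-equivariant object becomes equivalent to the data of a single $\gamma_A$-semilinear automorphism $\gamma_M$ of $M$ satisfying $\gamma_M \equiv \Id \pmod{\beta}$.

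Next, I would link this presentation to the Ore-extension classification of \cref{thmt.main classification} through the bijection $\gamma_M = \Id + \beta \, \partial_M$. A direct expansion
\begin{equation*}
\partial_M(am) \;=\; \tfrac{1}{\beta}\bigl(\gamma_A(a)\gamma_M(m) - am\bigr) \;=\; \gamma_A(a)\partial_M(m) + \tfrac{\gamma_A(a)-a}{\beta}\,m \;=\; \gamma_A(a)\partial_M(m) + \partial_A(a)\,m
\end{equation*}
shows that $\gamma_A$-semilinearity of $\gamma_M$ is equivalent to the Ore relation defining $A[\partial; \gamma_A, \partial_A]$. Under this bijection, \cref{prop. monodromy and group action} identifies the functor $F$ constructed from the square in \cref{prop. trivial locus} with the functor $\beta_{\infty}^+$ of \cref{thmt.main classification}. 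Full faithfulness of $F$ and the characterization of the essential image then follow immediately from \cref{thmt.main classification}: the $(p,d)$-completeness condition is preserved verbatim, and the local nilpotence of $\partial_M$ on $\mathrm{H}^*(M\otimes^{\mathbb{L}} k)$ transports to the same condition via the tautological bijection.

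Finally, I would deduce the cohomology pullback square. One clean route is to invoke full faithfulness of $F$ applied to $\mathrm{RHom}$ out of the structure sheaf: since $\mathrm{R}\Gamma(X^\Prism, \mathcal{E}) = \mathrm{RHom}_{\mathcal{D}(X^\Prism)}(\mathcal{O}, \mathcal{E})$, the universal property of the pullback $\infty$-category directly yields the fiber square \eqref{diagram}. Alternatively, one can combine \cref{propt. sen calculate cohomology}, which writes $\mathrm{R}\Gamma(X^\Prism, \mathcal{E}) \simeq \fib(\partial_{\mathcal{E}})$, with the description of the three other corners of \eqref{diagram} as equivariant cohomologies computed by fibers of $\gamma_{u_0} - \Id$; then the identity $\gamma_{u_0} - \Id = \beta \partial_{\mathcal{E}}$ of \cref{prop. monodromy and group action} and the fact that $\beta$ is a nonzerodivisor on the relevant complexes give the pullback square by an elementary diagram chase. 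The hardest step will be the first one, namely making the homotopy-coherent fiber product into the concrete $1$-categorical datum $(M, \gamma_M)$: one must verify that the glue datum with the trivially-acted $M/(q^{p^{\alpha}}-1)$ really collapses the full $\Gamma$-action to a single automorphism plus a congruence, rather than leaving behind a tower of higher homotopies. Once this collapse is established, the remainder of the proof is essentially a bookkeeping translation of \cref{thmt.main classification}.
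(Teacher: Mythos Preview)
Your approach is correct and takes a genuinely different route from the paper. You reduce both full faithfulness and the essential image to \cref{thmt.main classification} via the dictionary $\gamma_M = \Id + \beta\,\partial_M$ (together with \cref{prop. monodromy and group action} to match $F$ with $\beta_\infty^+$), and then deduce the pullback square \eqref{diagram} as a formal consequence of full faithfulness. The paper goes in the opposite order: it proves the pullback square \emph{first}, by the explicit three-column diagram chase you sketch as your ``alternative'' route (writing $\mathrm{R}\Gamma$ on the quotient as $\fib(\gamma_{u_0}-\Id)$, invoking \cref{propt. sen calculate cohomology} for $\fib(\partial_{\mathcal{E}})$, and using $\gamma_{u_0}-\Id = \beta\,\partial_{\mathcal{E}}$), and only then extracts full faithfulness by the standard reduction to $\mathcal{E}=\mathcal{I}^k$. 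For the essential image the paper does \emph{not} invoke \cref{thmt.main classification}; it reruns a parallel generation-by-$\mathcal{I}^k$ and reduction-to-Hodge--Tate argument directly inside the fiber product $\mathcal{C}$.

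What each approach buys: your reduction is cleaner and avoids repeating the generation argument, but it front-loads the categorical identification of the homotopy fiber product with pairs $(M,\gamma_M)$, which you rightly flag as the delicate step (in particular, collapsing the continuous $\Gamma$-action plus trivialization datum to a single semilinear automorphism with a specified congruence requires care at the derived level). The paper's approach sidesteps most of this subtlety for full faithfulness, since the pullback square is proved by direct computation without ever needing a global description of the fiber product; the identification with $(M,\gamma_M)$ is only invoked, and only loosely, in the essential-image half. In short, the paper trades conceptual economy for a more hands-on verification, while your route is the natural ``transport of structure'' argument from \cref{thmt.main classification}.
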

\begin{proof}
     We first prove full faithfulness. We use $\mathcal{C}$ to denote the fiber product 
     $$\mathcal{D}([\Spf(A)/(1+p^{\alpha+1}\mathbb{Z}_p)^{\times}])\times_{\mathcal{D}([\Spf(A/(q^{p^{\alpha}}-1))/(1+p^{\alpha+1}\mathbb{Z}_p)^{\times}])} \mathcal{D}(\Spf(A/(q^{p^{\alpha}}-1))).$$
     Let $\mathcal{E}$ and $\mathcal{F}$ be quasi-coherent complexes on $X^\Prism$. We aim to show that the natural map 
    \begin{equation*}
        \Hom_{\mathcal{D}(X^\Prism)}(\mathcal{E}, \mathcal{G}) \rightarrow \Hom_{\mathcal{C}}(F(\mathcal{E}), F(\mathcal{G}))
    \end{equation*}
    is a homotopy equivalence. By \cref{propt.generation}, $\mathcal{D}(X^\Prism)$ is generated under shifts and colimits by $\mathcal{I}^n$ , we could reduce to the case that $\mathcal{E}=\mathcal{I}^k$ for some $k\in \mathbb{Z}$. Replacing $\mathcal{G}$ by the twist $\mathcal{G}(-k)$, we could further assume that $k=0$. Then it suffices to show that \cref{diagram} is indeed a pullback suare for any 
    $\mathcal{E}\in \mathcal{D}(X^{\Prism})$. Let $u=1+p^{\alpha+1}$ be the topological generator for $(1+p^{\alpha+1}\mathbb{Z}_p)^{\times}$. Then $$\mathrm{R} \Gamma([\Spf(A)/(1+p^{\alpha+1}\mathbb{Z}_p)^{\times}], \rho^{*} \mathcal{E})=\fib(\rho^{*} \mathcal{E}\stackrel{\gamma_u-1}{\longrightarrow} \rho^{*} \mathcal{E}),$$
    for which we will denote as $(\rho^{*} \mathcal{E})^{u=1}$ for simplicity. Consequently \cref{diagram} can be rewritten as a diagram of complexes    
    \[\xymatrixcolsep{5pc}\xymatrix{\mathrm{R} \Gamma(X^{\Prism}, \mathcal{E})\ar[d]^{}\ar[r]^{}& (\rho^{*} \mathcal{E})^{u=1}
\ar[d]_{}^{}
\\ (\rho^{*} \mathcal{E})\otimes^{\mathbb{L}}_{A} A/(q^{p^{\alpha}}-1) \ar^{}[r]&(\rho^{*} \mathcal{E})^{u=1}\otimes^{\mathbb{L}}_{A} A/(q^{p^{\alpha}}-1).}\]
By \cref{prop. monodromy and group action}, $$(\rho^{*} \mathcal{E})^{u=1}\otimes^{\mathbb{L}}_{A} A/(q^{p^{\alpha}}-1)=((\rho^{*} \mathcal{E})\otimes^{\mathbb{L}}_{A} A/(q^{p^{\alpha}}-1))\oplus ((\rho^{*} \mathcal{E})\otimes^{\mathbb{L}}_{A} A/(q^{p^{\alpha}}-1))[-1]$$
and hence the cofiber of the bottom map $(\rho^{*} \mathcal{E})\otimes^{\mathbb{L}}_{A} A/(q^{p^{\alpha}}-1)\to (\rho^{*} \mathcal{E})^{u=1}$ is precisely $((\rho^{*} \mathcal{E})\otimes^{\mathbb{L}}_{A} A/(q^{p^{\alpha}}-1))[-1]$. Thanks to \cref{propt. sen calculate cohomology} and \cref{prop. monodromy and group action}, we obtain a commutative diagram in which all of the columns are fiber sequences
\[\xymatrixcolsep{5pc}\xymatrix{\mathrm{R} \Gamma(X^{\Prism}, \mathcal{E})\ar[d]^{}\ar[r]^{}& (\rho^{*} \mathcal{E})^{u=1}
\ar[d]_{}^{}\ar[r] &((\rho^{*} \mathcal{E})\otimes^{\mathbb{L}}_{A} A/(q^{p^{\alpha}}-1))[-1]\ar[d]
\\ \rho^{*} \mathcal{E}\ar^{\Id}[r]\ar^{\partial_{\mathcal{E}}}[d]&\rho^{*} \mathcal{E} \ar[d]^{\gamma_u-\Id}\ar[r] &0\ar[d]
\\ \rho^{*} \mathcal{E} \ar^{\cdot q(q^{p^{\alpha}}-1)}[r] &\rho^{*} \mathcal{E}\ar[r] &(\rho^{*} \mathcal{E})\otimes^{\mathbb{L}}_{A} A/(q^{p^{\alpha}}-1)
,}\]
hence the first row is also a fiber sequence, we thus finish the proof of full faithfulness.

For the essential image of the functor $F$, first we observe that an object $\mathcal{E}\in \mathcal{C}$ can be identified with a pair $(M,\gamma_M)$ where $M\in \mathcal{D}(A)$ and $\gamma_M$ is an automorphism of $M$ congruent to the identity modulo $q(q^{p^{\alpha}}-1)$.

Given $\mathcal{E}\in \mathcal{D}(X^{\Prism})$, to check that the action of $\partial_{\mathcal{E}}$ on the cohomology $\mathrm{H}^*(\rho^{*}(\mathcal{E})\otimes^{\mathbb{L}}k)$ is locally nilpotent, again thanks to \cref{propt.generation}, we might assume $\mathcal{E}=(\mathcal{I})^r$ for some $r\in \mathbb{Z}$. Then by \cref{example.action on generators}, after base change to $k$ the action of $\partial_{\mathcal{E}}$ is given by multiplication by $re$,
    which already vanishes as our assumption ($p>2$ or $\alpha>0$) guarantees that $\mathcal{O}_K$ is ramified over $W(k)$, hence $e=0$ in $k$. 

    Let $\mathcal{J}\subseteq \mathcal{C}$ be the full subcategory spanned by objects satisfying two conditions listed in \cref{thmt.main classification}. As the source $\mathcal{D}(X^{\Prism})$ is generated under shifts and colimits by the invertible sheaves $\mathcal{I}^n$ for $n\in \mathbb{Z}$ by \cref{propt.generation}, to complete the proof it suffices to show that $\mathcal{J}$ is also generated under shifts and colimits by $\{F(\mathcal{I}^k)\}$ ($k\in \mathbb{Z}$). For this purpose, it can be reduced to the Hodge-Tate locus. Indeed, if $\mathcal{E}$ is in $\mathcal{J}$ and $\RHom_{\mathcal{C}}(F(\mathcal{I}^k), \mathcal{E})\cong \RHom_{\mathcal{C}}(F(\mathcal{O}),F(\mathcal{I}^{-k})\otimes\mathcal{E})=0$ for all $k\in \mathbb{Z}$, then we wish to show that $\mathcal{E}=0$, which is equivalent to the vanishing of $\mathcal{E}|_{\mathcal{C}^{\HT}}$ due to the $(p,d)$-completeness assumption on $\mathcal{E}$. Here we denote $\mathcal{C}^{\HT}$ as the fiber product 
     $$\mathcal{D}([\Spf(A/d)/(1+p^{\alpha+1}\mathbb{Z}_p)^{\times}])\times_{\mathcal{D}([\Spf(A/(d,q^{p^{\alpha}}-1))/(1+p^{\alpha+1}\mathbb{Z}_p)^{\times}])} \mathcal{D}(\Spf(A/(d,q^{p^{\alpha}}-1)))
     $$
     and there is a natural restriction functor from $\mathcal{C}$ to $\mathcal{C}^{\HT}$, under which $\mathcal{E}$ is sent to $\mathcal{E}|_{\mathcal{C}^{\HT}}$.
    
    By considering the fiber sequence
\begin{equation*} \RHom_{\mathcal{C}}(F(\mathcal{O}),F(\mathcal{I}^{1-k})\otimes\mathcal{E})\to \RHom_{\mathcal{C}}(F(\mathcal{O}),F(\mathcal{I}^{-k})\otimes\mathcal{E}) \to\RHom_{\mathcal{C}^{\HT}}(F(\mathcal{O})|_{\mathcal{C}^{\HT}},(\mathcal{I}^{-k}\mathcal{E})|_{\mathcal{C}^{\HT}}),
\end{equation*}
we deduce that $\RHom_{\mathcal{C}^{\HT}}(F(\mathcal{I}^k)|_{\mathcal{C}^{\HT}}, \mathcal{E}|_{\mathcal{C}^{\HT}})$ vanishes for all $k$. We will show that this already guarantees the vanishing of $\mathcal{E}|_{\mathcal{C}^{\HT}}$.
    
    Indeed, we argue that for every nonzero object $\mathcal{E}_0=(M, \gamma_M)\in \mathcal{C}^{\HT}$ (here $M\in \mathcal{D}(\mathcal{O}_K)$, $\gamma_M=\Id+q(q^{p^{\alpha}}-1)\partial_M$ is an automorphism of $M$ which is congruent to the identity modulo $q(q^{p^{\alpha}}-1)$) which further satisfies that $M$ is $p$-complete and that the action of $\partial_M$ on the cohomology $\mathrm{H}^*(M\otimes^{\mathbb{L}}k)$ is locally nilpotent, $\RHom_{\mathcal{C}^{\HT}}(F(\mathcal{O}), \mathcal{E}_0)\neq 0$. For this purpose, first we observe that $\RHom_{\mathcal{C}^{\HT}}(F(\mathcal{O}), \mathcal{E}_0)=\fib(M\stackrel{\partial_M}{\longrightarrow} M)$ by repeating the proof of the full faithfulness. Replacing $M$ by $M\otimes k$ (the derived Nakayama guarantees that $L\otimes k$ detects whether $L$ is zero or not for $p$-complete $L$), we may assume that there exists some cohomology group $\mathrm{H}^{-m}(M)$ containing a nonzero element killed by $\partial_M$ (this could be done by iterating the action of $\partial$ and then use the nilpotence assumption). 
    It then follows that there exists a non-zero morphism from $F(\mathcal{O})[m])$ to $\mathcal{E}_0$, hence we finish the proof.
\end{proof}
\begin{remark}\label{rem. truncated main theorem}
    The above theorem still holds if we replace $X^{\Prism}$ with $X^{\Prism}_n$ and substitute $A$ by $A/d^n$ in the statement, hence we get the corresponding classification of truncated prismatic crystals on $X_{\Prism}$ as well. The proof is exactly the same. 
\end{remark}

\subsection{Applications to quasi-coherent complexes on the Cartier-Witt stack}
In this subsection we take $\alpha=0$ and consider the prism $(A,I)=(\mathbb{Z}_p[[q-1]], [p]_q)\in X_{\Prism}$ for $X=\Spf(\mathcal{O}_K)=\Spf(\mathbb{Z}_p[\zeta_p])$. In other words, we specialize the discussion in previous subsections to $k=\mathbb{F}_p, \alpha=0$.

First we recollect some basics for the $\mathbb{F}_p^{\times}$-action on $A$ discussed in \cite[Section 3.8]{bhatt2022absolute}.  Recall that $A=\mathbb{Z}_p[[q-1]]$ admits an action of $\mathbb{F}_p^{\times}$, which carries each element $e\in \mathbb{F}_p^{\times}$ to the automorphism of $\mathbb{Z}_p[[q-1]]$ given by $\gamma_e: q\mapsto q^{[e]}$, for which $[e]$ is the Teichmuller lift of $e$. Following \cite{bhatt2022absolute}, we let $\Tilde{p}\in A$ denote the sum 
\begin{equation*}
    \sum_{e\in \mathbb{F}_p} q^{[e]}=1+\sum_{e\in \mathbb{F}_p^{\times}} q^{[e]},
\end{equation*}
then $\Tilde{p}$ is invariant under the action of $\mathbb{F}_p^{\times}$ and differs a unit from $d$ by \cite[Proposition 3.8.6]{bhatt2022absolute}. Consequently, $\gamma_e$ preserves the ideal $(d)$, hence induces an automorphism $\gamma_e: A/d^n\to A/d^n$ for any $n\geq 1$. In particular, take $n=1$, we have an automorphism $\gamma_e$ form $\mathcal{O}_K$ to itself.

As an upshot, we could define an $\mathbb{F}_p^{\times}$-action on $X^{\Prism}$ as follows
\begin{equation*}
    \begin{split}
        X^{\Prism}(R)&\longrightarrow  X^{\Prism}(R)
        \\ (\alpha: I\to W(R), \eta: \mathcal{O}_K\to \Cone(\alpha))&\longmapsto (\alpha, \eta\circ \gamma_e: \mathcal{O}_K\xrightarrow{\gamma_e} \mathcal{O}_K \xrightarrow{\eta} \Cone(\alpha)).
    \end{split}
\end{equation*}

Similarly, $\mathbb{F}_p^{\times}$ acts on $X^{\Prism}_n$ for any $n\geq 1$.
\begin{proposition}\label{prop. AD descent morphism}
    Let $n\in \mathbb{N}\cup \{\infty\}$. The natural morphism $X^{\Prism}_n \to \WCart_n$ is $\mathbb{F}_p^{\times}$-equivariant, where the group $\mathbb{F}_p^{\times}$ acts trivially on $\WCart_n$. Consequently it descends to a morphism $\pi: [X^{\Prism}_n/\mathbb{F}_p^{\times}] \to \WCart_n$.
\end{proposition}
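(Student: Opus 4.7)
The plan is to check that the $\mathbb{F}_p^\times$-action on $X^{\Prism}$ constructed above restricts to an action on $X_n^{\Prism}$ that commutes (trivially) with the forgetful morphism to $\WCart_n$, and then invoke the universal property of the stack quotient.

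First I would unwind the functor of points: an $R$-point of $X^{\Prism}$ is a pair $(\alpha\colon I\to W(R),\ \eta\colon \mathcal{O}_K \to \Cone(\alpha))$, and the natural morphism $X^{\Prism}\to\WCart$ is the forgetful functor $(\alpha,\eta)\mapsto \alpha$. By definition, the element $e\in\mathbb{F}_p^\times$ acts on $X^{\Prism}(R)$ by $(\alpha,\eta)\mapsto (\alpha,\eta\circ\gamma_e)$, leaving $\alpha$ completely untouched. Composing with the forgetful functor therefore yields $\alpha$ regardless of $e$, which is exactly the statement that $X^{\Prism}\to\WCart$ is $\mathbb{F}_p^\times$-equivariant with respect to the trivial action on the target.

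Next I would verify that this action restricts to $X^{\Prism}_n\subset X^{\Prism}$. By \cref{defi.truncated} and \cref{ConsAbsHT}, the closed substack $X^{\Prism}_n$ is cut out by the condition that the composite $I^{\otimes n}\xrightarrow{\alpha^{\otimes n}}W(R)\twoheadrightarrow R$ vanishes; this is a condition purely on $\alpha$ and is insensitive to $\eta$, so the $\mathbb{F}_p^\times$-action preserves $X^{\Prism}_n$. Equivalently, $X^{\Prism}_n$ is the fiber product $X^{\Prism}\times_{\WCart}\WCart_n$ and both factors carry compatible $\mathbb{F}_p^\times$-actions (trivial on $\WCart$ and $\WCart_n$), so the induced action on the fiber product is well defined and the restricted morphism $X^{\Prism}_n\to\WCart_n$ is again $\mathbb{F}_p^\times$-equivariant for the trivial action on the target.

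Finally, the universal property of the quotient stack produces the factorization $\pi\colon [X^{\Prism}_n/\mathbb{F}_p^\times]\to \WCart_n$ from any $\mathbb{F}_p^\times$-equivariant morphism to a stack with trivial $\mathbb{F}_p^\times$-action. There is essentially no obstacle here: the content of the proposition is formal once one observes that the action modifies only the $\eta$-component of a point of $X^{\Prism}$, and the map to $\WCart$ forgets precisely that component. The only mild point worth spelling out is the compatibility with the truncations (which reduces to the observation above that the defining ideal depends only on $\alpha$), after which the descent is automatic.
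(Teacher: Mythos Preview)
Your proof is correct and follows essentially the same approach as the paper. The paper's proof is a single line—``It follows from that the $\mathbb{F}_p^{\times}$ action on $X^{\Prism}$ fixes the Cartier-Witt divisor $\alpha$''—and your argument is simply a careful unpacking of that sentence, with the additional (routine) verification that the truncation condition depends only on $\alpha$ and the standard invocation of the quotient stack's universal property.
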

\begin{proof}
    It follows from that the $\mathbb{F}_p^{\times}$ action on $X^{\Prism}$ fixes the Cartier-Witt divisor $\alpha$. 
\end{proof}

    Heuristically as the coarse moduli space for $[X/\mathbb{F}_p^{\times}]$ is $\Spf(\mathbb{Z}_p)$, $\WCart$ could be viewed as the ``course moduli space" for $[X^{\Prism}/\mathbb{F}_p^{\times}]$. The next result provides some evidence for such a philosophy.
    \begin{theorem}\label{thm. full faithful embedding}
        Assume that $p>2$. Let $n\in \mathbb{N}\cup \{\infty\}$. The pullback along $\pi$ induces a fully faithful functor of $\infty$-categories
        $$\mathcal{D}(\WCart_n) \longrightarrow \mathcal{D}([X^{\Prism}_n/\mathbb{F}_p^{\times}]).$$
    \end{theorem}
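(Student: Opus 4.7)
The plan is to combine a generation-and-global-sections argument with the explicit $q$-prism descriptions of both $\mathcal{D}(\WCart_n)$ and $\mathcal{D}(X^{\Prism}_n)$.

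First, I would prove the analog of \cref{propt.generation} for $\WCart_n$: it is generated under shifts and colimits by the twists $\{\mathcal{I}^k\}_{k\in\mathbb{Z}}$. This follows by the same devissage argument used in the proof of \cref{propt.generation}, ultimately reducing to the generation statement for $\WCart^{\HT}$ from \cite[Corollary 3.5.16]{bhatt2022absolute}. Combining with \cref{propt.generation} on the target side (and noting that $\pi^{*}\mathcal{I} = \mathcal{I}$), full faithfulness of $\pi^{*}$ is equivalent to showing that the natural map
\begin{equation*}
\mathrm{R}\Gamma(\WCart_n, \mathcal{G}) \longrightarrow \mathrm{R}\Gamma\bigl([X^{\Prism}_n/\mathbb{F}_p^{\times}],\, \pi^{*}\mathcal{G}\bigr)
\end{equation*}
is an equivalence for every $\mathcal{G} \in \mathcal{D}(\WCart_n)$; the reduction from general $\mathrm{R}\Hom$'s parallels the argument used at the start of the proof of \cref{thmt.main classification II}.

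To verify this global-sections comparison, I would compute both sides via the $q$-prism $(A,d) = (\mathbb{Z}_p[[q-1]], [p]_q)$, which covers both stacks. On the right-hand side, writing $\mathrm{R}\Gamma([X^{\Prism}_n/\mathbb{F}_p^{\times}], \pi^{*}\mathcal{G}) = \mathrm{R}\Gamma(X^{\Prism}_n, \pi^{*}\mathcal{G})^{h\mathbb{F}_p^{\times}}$ and applying \cref{thmt.main classification II} to $\pi^{*}\mathcal{G}$, the inner global sections sit in a pullback square in which one corner is a $(1+p\mathbb{Z}_p)^{\times}$-fixed-point construction over $\Spf(A/d^n)$. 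Taking further $\mathbb{F}_p^{\times}$-fixed-points and using the semidirect-product splitting $\mathbb{Z}_p^{\times} = (1+p\mathbb{Z}_p)^{\times} \rtimes \mathbb{F}_p^{\times}$ together with the commutation of homotopy limits, one arrives at a pullback square involving $\mathbb{Z}_p^{\times}$-fixed-points on $\Spf(A/d^n)$ and $\mathbb{F}_p^{\times}$-fixed-points on $\Spf(A/(d,q-1)) = \Spf(\mathbb{Z}_p)$. On the left-hand side, Bhatt--Lurie's \cite[Theorem 3.8.3]{bhatt2022absolute} supplies an entirely parallel pullback square for $\mathrm{R}\Gamma(\WCart_n, \mathcal{G})$, and the desired equivalence follows from termwise identification of these two squares.

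The main obstacle is making the termwise comparison rigorous, which requires carefully tracking the $\mathbb{F}_p^{\times}$-equivariance. Two compatibilities must be checked: (i) the $\mathbb{F}_p^{\times}$-action on $X^{\Prism}_n$ introduced before \cref{prop. AD descent morphism} agrees, under the cover $\rho$, with the Galois action $\gamma_e: q \mapsto q^{[e]}$ on $A$ used in \cite[Section 3.8]{bhatt2022absolute}; and (ii) by \cref{prop. monodromy and group action}, the $(1+p\mathbb{Z}_p)^{\times}$-action on $\rho^{*}\pi^{*}\mathcal{G}$ (acting as $1 + q(q-1)\partial_{\pi^{*}\mathcal{G}}$ via a topological generator) coincides with the restriction of the full $\mathbb{Z}_p^{\times}$-action from the Bhatt--Lurie picture. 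Once these compatibilities are in place, the proof is essentially formal, using additionally that $|\mathbb{F}_p^{\times}| = p - 1$ is coprime to $p$ so that $\mathbb{F}_p^{\times}$-homotopy-fixed-points behave well in our $p$-complete derived category and both pullback squares are preserved under the averaging idempotent.
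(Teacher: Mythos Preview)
Your approach is correct but takes a genuinely different route from the paper's. The paper first reduces (by the same generation-and-twist argument you outline) to the global-sections comparison, then further reduces by d\'evissage to the Hodge--Tate case $n=1$, which is isolated as \cref{thm. hodge state full faithful}. That Hodge--Tate statement is then proved by an explicit hands-on comparison: both sides are computed modulo $p$ as fibers of operators on $\eta^*\mathcal{E}/p$, the left via Bhatt--Lurie's Sen operator $\theta$ and the right via the $q$-connection $\partial$ after taking $\mathbb{F}_p^{\times}$-invariants (using \cref{lem. calculate invariants} to identify $(\mathcal{O}_K[\epsilon])^{\mathbb{F}_p^{\times}}\cong\mathbb{Z}_p[v]/(v^2-pv)$), and the key technical step is showing that $\partial/e$ agrees with $\theta$ modulo $p$ by analyzing the element $1+v\in\mathbb{G}_m^{\sharp}$.

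Your argument instead bypasses this operator comparison entirely by invoking \cite[Theorem~3.8.3]{bhatt2022absolute} as a black box and matching its pullback square (over $[\Spf(A)/\mathbb{Z}_p^{\times}]$) with the $\mathbb{F}_p^{\times}$-invariants of the square in \cref{thmt.main classification II} (over $[\Spf(A)/(1+p\mathbb{Z}_p)^{\times}]$), using the direct-product decomposition $\mathbb{Z}_p^{\times}\cong(1+p\mathbb{Z}_p)^{\times}\times\mu_{p-1}$ for $p>2$. This is more formal and conceptually clean, and the compatibilities you flag are indeed the only content (they are supplied by \cref{prop. cyclic group action on q prism} and \cref{prop. group quotient}, noting that the composite $\Spf(A)\xrightarrow{\rho}X^{\Prism}\xrightarrow{\pi}\WCart$ is the same $q$-prism point used in \cite{bhatt2022absolute}). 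The price you pay is that you import a result of comparable depth to what you are proving, whereas the paper's approach is self-contained relative to the $q$-connection machinery it has built; more importantly, the paper's explicit proof yields as a byproduct \cref{prop. new q derivation on hodge state stack} (the operator $\partial_v$ on $\mathcal{D}(\WCart^{\HT})$ and its relation to $\theta$), which is needed later for the essential-image characterization in \cref{thm. descent for the cartier witt stack to q prism} and for the cohomology calculation in \cref{prop. cohomology of Zp}. Your route would not directly produce that.
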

    \cref{thm. full faithful embedding} is equivalent to the following a \textit{prior} weaker result:
    \begin{theorem}\label{thm. hodge state full faithful}
        Assume that $p>2$. Let $\mathcal{E}\in \mathcal{D}(\WCart^{\HT})$, then $$\mathrm{R} \Gamma(\WCart^{\HT}, \mathcal{E})\xrightarrow{\simeq}\mathrm{R} \Gamma([X^{\HT}/\mathbb{F}_p^{\times}], \pi^*\mathcal{E}).$$
    \end{theorem}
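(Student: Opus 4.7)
I would combine Galois descent with the explicit Sen-operator descriptions of Hodge–Tate cohomology. Since $p>2$, the order $|\mathbb{F}_p^\times|=p-1$ is invertible in $\mathbb{Z}_p$, so on any $p$-complete complex of $\mathbb{Z}_p$-modules with an $\mathbb{F}_p^\times$-action the homotopy fixed points coincide with the strict fixed points. Hence it suffices to prove
\[
\mathrm{R}\Gamma(\WCart^{\HT}, \mathcal{E}) \xrightarrow{\simeq} \mathrm{R}\Gamma(X^{\HT}, \pi^*\mathcal{E})^{\mathbb{F}_p^\times},
\]
where the $\mathbb{F}_p^\times$-equivariant structure on $\pi^*\mathcal{E}$ is inherited from pullback along the $\mathbb{F}_p^\times$-equivariant map $X^{\HT}\to\WCart^{\HT}$ (with trivial target action).

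Next, I would compute both sides via their respective affine covers. The $\alpha=0$ case of \cref{prop. ht case fiber seq} (whose hypothesis $p>2$ is exactly what is assumed here) gives
\[
\mathrm{R}\Gamma(X^{\HT}, \pi^*\mathcal{E}) \simeq \fib\bigl(\rho^*\pi^*\mathcal{E} \xrightarrow{\partial_{\pi^*\mathcal{E}}} \rho^*\pi^*\mathcal{E}\bigr),
\]
where $\rho\colon\Spf(\mathcal{O}_K)\to X^{\HT}$ is the $q$-prism cover. Dually, by the absolute Hodge–Tate theory of Bhatt–Lurie and Anschütz–Heuer–Le Bras (cf.\ \cite{bhatt2022absolute,anschutz2022v}),
\[
\mathrm{R}\Gamma(\WCart^{\HT}, \mathcal{E}) \simeq \fib\bigl(\rho_0^*\mathcal{E} \xrightarrow{\Theta_{\mathcal{E}}} \rho_0^*\mathcal{E}\bigr),
\]
for the canonical cover $\rho_0\colon\Spf(\mathbb{Z}_p)\to\WCart^{\HT}$ and absolute Sen operator $\Theta_{\mathcal{E}}$. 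Since $\rho=\rho_0\circ f$ with $f\colon\Spf(\mathcal{O}_K)\to\Spf(\mathbb{Z}_p)$, one has $\rho^*\pi^*\mathcal{E}\simeq \mathcal{O}_K\otimes_{\mathbb{Z}_p}\rho_0^*\mathcal{E}$.

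The heart of the argument is to unwind the $\mathbb{F}_p^\times$-action and compare $\partial_{\pi^*\mathcal{E}}$ with $\Theta_{\mathcal{E}}$. Adapting the proof of \cite[Proposition 3.7.1]{bhatt2022absolute} in the spirit of \cref{rem. compare two operators}, for $e\in\mathbb{F}_p^\times$ the action of $\gamma_e$ on $\rho^*\pi^*\mathcal{E}$ should decompose as the tensor product of the Galois action $\zeta_p\mapsto\zeta_p^{[e]}$ on $\mathcal{O}_K$ with the automorphism $[e]^{\Theta_{\mathcal{E}}/E'(\pi)}$ of $\rho_0^*\mathcal{E}$. Decomposing the regular representation $\mathcal{O}_K$ of $\mathbb{F}_p^\times$ into its $p-1$ characters (possible since $p-1\in\mathbb{Z}_p^\times$), only the trivial character contributes to invariants; combined with the formula $\partial_{\pi^*\mathcal{E}}=e\cdot\frac{(1+p)^{\Theta_{\mathcal{E}}/E'(\pi)}-1}{p}$ from \cref{rem. compare two operators}, this identifies $\fib(\partial_{\pi^*\mathcal{E}})^{\mathbb{F}_p^\times}$ with $\fib(\Theta_{\mathcal{E}})$.

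The main obstacle is making this equivariant comparison precise: in particular, verifying that the $\mathbb{F}_p^\times$-twist on $\rho_0^*\mathcal{E}$ is exactly $[e]^{\Theta_{\mathcal{E}}/E'(\pi)}$ requires tracing Bhatt–Lurie's argument through the $q$-prism rather than the Breuil–Kisin prism, and carefully handling the interaction between the Galois twist and the Sen-operator twist under the character decomposition of $\mathcal{O}_K$. A backup strategy, avoiding the explicit formulas, is to reduce via the projection formula to showing $R\pi_*\mathcal{O}_{[X^{\HT}/\mathbb{F}_p^\times]}\simeq\mathcal{O}_{\WCart^{\HT}}$, and then verify this equality fppf-locally on $\WCart^{\HT}$ by pulling back along $\rho_0\colon\Spf(\mathbb{Z}_p)\to\WCart^{\HT}$; the exactness of $\mathbb{F}_p^\times$-invariants then reduces the residual computation to the elementary identity $\mathcal{O}_K^{\mathbb{F}_p^\times}=\mathbb{Z}_p$ and the analysis of the resulting fiber $X^{\HT}\times_{\WCart^{\HT}}\Spf(\mathbb{Z}_p)$.
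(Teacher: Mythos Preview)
Your overall architecture matches the paper's: compute both sides as fibers of Sen-type operators and compare after taking $\mathbb{F}_p^\times$-invariants. But two concrete points go wrong, and the paper's actual mechanism differs from what you propose.

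First, your description of the $\mathbb{F}_p^\times$-action on $\rho^*\pi^*\mathcal{E}\simeq\mathcal{O}_K\otimes_{\mathbb{Z}_p}\eta^*\mathcal{E}$ is incorrect: there is \emph{no} $[e]^{\Theta_{\mathcal{E}}/E'(\pi)}$ twist on the second factor. The paper proves (\cref{lem. commute two prism on carrier witt stack}) that the identification $\pi\circ\rho\simeq\eta\circ f$ can be chosen $\mathbb{F}_p^\times$-equivariantly, with $\mathbb{F}_p^\times$ acting trivially on $\eta^*\mathcal{E}$; the action is purely Galois on $\mathcal{O}_K$. You are conflating this with Bhatt--Lurie's $\mathbb{Z}_p^\times$-action on the $q$-prism chart of $\WCart$, which is a different construction.

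Second, and more seriously, you treat $\partial_{\pi^*\mathcal{E}}$ as an $\mathbb{F}_p^\times$-equivariant endomorphism of $\rho^*\pi^*\mathcal{E}$ and then take invariants termwise. But $\partial$ is \emph{not} equivariant in that sense: only $\gamma_{b,c}=\Id+\epsilon\,\partial$ is (\cref{cor.Fp equivariance of theta}), and $\epsilon$ transforms nontrivially (\cref{lem. Fp action on R}). The paper therefore writes the fiber sequence as $\rho^*\pi^*\mathcal{E}\xrightarrow{\epsilon\cdot\partial}\rho^*\pi^*\mathcal{E}\cdot\epsilon$ and computes invariants of source and target separately: the source gives $\eta^*\mathcal{E}$, while $(\mathcal{O}_K\cdot\epsilon)^{\mathbb{F}_p^\times}=\mathbb{Z}_p\cdot v$ with $v=e\epsilon$ (\cref{lem. calculate invariants}). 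This is precisely how the factor $e=d'(q)$ gets absorbed, producing an operator $\partial_v=\partial/e$ on $\eta^*\mathcal{E}$ itself. Without this twist on the target you cannot get from $\fib(\partial)^{\mathbb{F}_p^\times}$ to anything comparable with $\fib(\Theta)$.

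Finally, the paper does not use the integral formula $\partial=e\cdot\tfrac{(1+p)^{\theta/E'(\pi)}-1}{p}$ from \cref{rem. compare two operators}. Instead it reduces mod $p$ and observes that since $v^2=pv$, every divided power $v^n/n!=p^{n-1}v/n!$ vanishes mod $p$ for $n\geq 2$ (this is exactly where $p>2$ enters), so $1+v\in\mathbb{G}_m^\sharp(\mathbb{F}_p[v]/v^2)$ coincides with Bhatt--Lurie's $1+[v]$ and hence $\partial_v\equiv\theta$ mod $p$. Your integral route via that formula would still need to identify $\fib\bigl(e\cdot\tfrac{(1+p)^{\Theta/E'(\pi)}-1}{p}\bigr)$ with $\fib(\Theta)$, which is not immediate since $e$ is not a unit. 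Your backup strategy is reasonable in spirit but, once you unfold the fiber $X^{\HT}\times_{\WCart^{\HT}}\Spf(\mathbb{Z}_p)$, you will face essentially the same equivariance and $\epsilon$-twist issues.
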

    \begin{proof}[Proof of \cref{thm. full faithful embedding} from \cref{thm. hodge state full faithful}]
        Let $\mathcal{E}$ and $\mathcal{F}$ be quasi-coherent complexes on $X^\Prism_n$ and we want to show that the natural map 
    \begin{equation*}
        \Hom_{\mathcal{D}(\WCart_n)}(\mathcal{E}, \mathcal{F}) \rightarrow \Hom_{\mathcal{D}([X_n^{\Prism}/\mathbb{F}_p^{\times}])} (\pi^{*}(\mathcal{E}), \pi^{*}(\mathcal{F}))
    \end{equation*}
    is a homotopy equivalence. By \cite[Corollary 3.5.16]{bhatt2022absolute}, $\mathcal{D}(\WCart_{n})$ is generated under colimits by $\mathcal{I}^n$ , we could reduce to the case that $\mathcal{E}=\mathcal{I}^k$ for some $k\in \mathbb{Z}$. Replacing $\mathcal{F}$ by the twist $\mathcal{F}(-k)$, we could further assume that $k=0$. Then it suffices to show that for $\mathcal{E}\in \mathcal{D}(\WCart_{n})$, $$\mathrm{R} \Gamma(\WCart_n, \mathcal{E})=\mathrm{R} \Gamma([X^{\Prism}_n/\mathbb{F}_p^{\times}], \pi^*\mathcal{E}).$$
    For any $\mathcal{E}\in \mathcal{D}(X^{\Prism})$, as taking global sections commutes with limits, by writing $\mathcal{E}$ as the inverse limit of $\mathcal{E}_k$ for $\mathcal{E}_k$ the restriction of $\mathcal{E}$ to $X^{\Prism}_k$ ($k\leq n$), we are reduced to the case $n=1$, which is precisely \cref{thm. hodge state full faithful}.
    \end{proof}

For the proof of \cref{thm. hodge state full faithful}, we need to develop a $\mathbb{F}_p^{\times}$-equivariant version of the isomorphism $\gamma_{b,c}$ constructed in the beginning of this section. 
\begin{proposition}\label{prop. cyclic group action on q prism}
    The covering morphism $\rho: \Spf(A)\to X^{\Prism}$ is $\mathbb{F}_p^{\times}$-equivariant, where the source and target are equipped with the $\mathbb{F}_p^{\times}$-action discussed at the beginning at this subsection. Consequently, it descends to a morphism $ [\Spf(A)/\mathbb{F}_p^{\times}]\to [X^{\Prism}/\mathbb{F}_p^{\times}]$, which will still be denoted as $\rho$ by abuse of notation.
\end{proposition}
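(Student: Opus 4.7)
The plan is to follow the recipe developed in \cref{propt.key automorphism of functors} and \cref{prop. group quotient}: for each $e \in \mathbb{F}_p^{\times}$ I will construct a $2$-isomorphism $\sigma_e$ between the functors $\rho \circ \gamma_e$ and $\gamma_e \circ \rho$ from $\Spf(A)$ to $X^{\Prism}$, and then verify the cocycle condition that allows $\rho$ to descend to the stack quotient.

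First I unwind what the two functors assign to a test $A$-algebra $h \colon A \to R$. Since $\gamma_e \colon A \to A$ is a $\delta$-automorphism (its $p$-th power Frobenius and $q \mapsto q^{[e]}$ commute), the unique $\delta$-lift $\widetilde{h \circ \gamma_e} \colon A \to W(R)$ of $h \circ \gamma_e$ coincides with $\tilde{h} \circ \gamma_e$. Consequently the two points $\rho(\gamma_e \circ h)$ and $\gamma_e(\rho(h))$ of $X^{\Prism}(R)$ have the same $\eta$-component, namely the ring map $\bar{A} \xrightarrow{\bar{\gamma_e}} \bar{A} \xrightarrow{\bar{h}} \overline{W(R)}$; only their underlying Cartier--Witt divisors differ, the former sending $d$ to $\tilde{h}(\gamma_e(d))$ and the latter sending $d$ to $\tilde{h}(d)$. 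So constructing $\sigma_e$ reduces to exhibiting, functorially in $R$, a unit of $W(R)$ relating these two generators, with the additional feature that it acts trivially on the cone so that the $\eta$-compatibility is automatic.

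Such a unit is produced at the level of $W(A)$. The key input is that $\gamma_e$ preserves the ideal $(d) \subset A$: since $\tilde{p} = \sum_{e' \in \mathbb{F}_p} q^{[e']}$ is $\mathbb{F}_p^{\times}$-invariant and differs from $d$ by a unit of $A$ by \cite[Proposition 3.8.6]{bhatt2022absolute}, we may write $\gamma_e(d) = c_e \cdot d$ with $c_e \in A^{\times}$. Setting $b_e := \tilde{\iota}(c_e) \in W(A)^{\times}$ gives $\widetilde{\gamma_e}(d) = \tilde{\iota}(\gamma_e(d)) = b_e \cdot \tilde{\iota}(d)$; pushing forward along $\tilde{h}$ yields the desired isomorphism of Cartier--Witt divisors on $R$. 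Because multiplication by the unit $b_e$ acts as the identity on the ambient $W(R)$, it also induces the identity on the cone $\overline{W(R)}$, so the two $\eta$-components literally coincide (no further homotopy is required). Alternatively, one may rerun the Dwork-lemma bookkeeping from the proof of \cref{prop. group quotient}, since $[e] \in \mathbb{Z}_p^{\times}$.

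The remaining step is the cocycle condition $\sigma_{ee'} = (\gamma_e * \sigma_{e'}) \circ (\sigma_e * \gamma_{e'})$ for $e, e' \in \mathbb{F}_p^{\times}$; this is the main conceptual point but presents no real obstacle. Since $W(A)$ is $\tilde{\iota}(d)$-torsion free, the element $b_e$ solving $\widetilde{\gamma_e}(d) = \tilde{\iota}(d) \cdot b_e$ is uniquely determined, so the two a priori distinct identifications of $\widetilde{\gamma_{ee'}}(d)$ with $\tilde{\iota}(d)$ must coincide, which is exactly the cocycle condition. This is the same uniqueness-based argument used at the end of \cref{prop. group quotient} to verify the higher-associativity condition of \cite[Definition 1.3]{romagny2005group}, and it delivers the desired factorization $[\Spf(A)/\mathbb{F}_p^{\times}] \to [X^{\Prism}/\mathbb{F}_p^{\times}]$.
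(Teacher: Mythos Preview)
Your proof is correct and follows essentially the same approach as the paper's: both arguments hinge on the observation that $\gamma_e$ is a $\delta$-ring automorphism (so $\widetilde{\gamma_e} = \tilde{\iota}\circ\gamma_e$), that $\gamma_e(d)$ differs from $d$ by a unit $c_e\in A^{\times}$ (using the $\mathbb{F}_p^{\times}$-invariance of $\tilde p$), and that the resulting isomorphism of Cartier--Witt divisors via $\tilde{\iota}(c_e)$ automatically matches the $\eta$-components; the cocycle condition is handled by the same uniqueness trick as in \cref{prop. group quotient}. The only cosmetic difference is that the paper works at the universal point $R=A$ and leaves the higher associativity to the reader, whereas you work over a general test algebra and spell out the uniqueness argument.
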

\begin{proof}
    By \cite[Definition 1.3]{romagny2005group} and the discussion above that, we need to specify a $\sigma$ satisfying certain certain ``higher associativity" condition such that the following diagram commutes
    \[\xymatrixcolsep{5pc}\xymatrix{\mathbb{F}_p^{\times}\times \Spf(A)\ar[d]^{\rho}\ar[r]^{}\ar@{=>}[dr]^{\sigma}& \Spf(A) \ar[d]_{}^{\rho}
\\\mathbb{F}_p^{\times}\times X^{\Prism}  \ar[r]&X^{\Prism}}\]
Given $u\in \mathbb{F}_p^{\times}$, it induces an automorphism $\gamma_u$ on $A=\mathbb{Z}_p[[q-1]]$ by sending $q$ to $q^{[u]}$, where $[u]$ is the Teichmuller lift of $u$. Moreover, one can check that $\gamma_u$ preserves the $\delta$-structure. Then unwinding the definitions, we see that the composition of $\rho$ and the top arrow corresponds to the following point in $X^{\Prism}(A)$
$$(\alpha^{\prime}:(d) \otimes_{A,\Tilde{\gamma}_u} W(A)\to W(A),\eta^{\prime}: \Cone((d)\to A)\xrightarrow{\Tilde{\gamma}_u} \Cone(\alpha^{\prime})).$$
Similarly, the composition of the bottom arrow and $\rho$-corresponds to the point 
$$(\alpha:(d) \otimes_{A,\Tilde{\iota}} W(A)\to W(A),\eta: \Cone((d)\to A)\xrightarrow{\Tilde{\iota}\circ \gamma_u} \Cone(\alpha)),$$
where $\Tilde{\gamma}_u$ (resp. $\Tilde{\iota}$) is the unique lift of $\gamma_u: A\to A$ (resp. $\Id: A\to A$) to a $\delta$-ring morphism $A\to W(A)$. Moreover, as $\gamma_u$ is a $\delta$-ring homomorphism, $\Tilde{\gamma}_u=\Tilde{\iota}\circ \gamma_u$.

Suppose $\gamma_u(d)=d\cdot d_u$, where $d_u\in A^{\times}$. Then $\Tilde{\gamma}_u(d)=\Tilde{\iota}\circ \gamma_u(d)=\Tilde{\iota}(d\cdot d_u)=\Tilde{\iota}(d)\Tilde{\iota}(d_u)$, hence we could construct an automorphism of Cartier-Witt divisors $\sigma_u: \alpha^{\prime}\xrightarrow{\simeq} \alpha$ as follows:   
 \[\xymatrixcolsep{5pc}\xymatrix{(d) \otimes_{A,\Tilde{\gamma}_u} W(A) \ar[d]^{(d)\otimes x\mapsto (d)\otimes \Tilde{\iota}(d_u)x}\ar[r]^{}& W(A) \ar[d]_{}^{\Id}
\\(d) \otimes_{A,\Tilde{\iota}} W(A)\ar[r]^{}&W(A)}\]
Here the left vertical map is $W(A)$-linear and the commutativity of the diagram follows from our construction of $d_u$.

Then one could easily see that $\sigma_u \circ \eta^{\prime}=\eta$ as $\Tilde{\gamma}_u=\Tilde{\iota}\circ \gamma_u$, such a data together with $\sigma_u$ then gives the desired $\sigma$.

We leave the reader to check that $\sigma$ satisfies the ``higher associativity" condition in \cite[Definition 1.3]{romagny2005group}.
\end{proof}
\begin{remark}
    By \cite[Corollary 3.8.8]{bhatt2022absolute}, $A^{\mathbb{F}_p^{\times}}=\mathbb{Z}_p[[\Tilde{p}]]$, hence it's not hard to see that $\Spf(\mathbb{Z}_p[[\Tilde{p}]])$ is the coarse moduli space for $[\Spf(A)/\mathbb{F}_p^{\times}]$.
\end{remark}

Recall that the construction of the $q$-connection in the first subsection replies heavily on the natural embedding $A\to R=A[\epsilon]/(\epsilon^2-q(q-1)\epsilon)$, which could be promoted to a $\mathbb{F}_p^{\times}$-equivariant embedding.
\begin{lemma}\label{lem. Fp action on R}
    We can equip $R$ with a $\mathbb{F}_p^{\times}$-action by extending that on $A$ and requiring that $$\gamma_u(\epsilon)=\epsilon\cdot q^{[u]-1}\frac{q^{[u]}-1}{q-1},$$ 
    then both the natural embedding $A\to R=A[\epsilon]/(\epsilon^2-q(q-1)\epsilon)$ and $\psi: A\to R$ defined in \cref{lemt.extend delta} are $\mathbb{F}_p^{\times}$-equivariant.
\end{lemma}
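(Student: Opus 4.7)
The plan is to verify directly that the formula $\gamma_u(\epsilon) = \epsilon \cdot q^{[u]-1}\frac{q^{[u]}-1}{q-1}$ extends $\gamma_u$ on $A$ to a ring automorphism of $R$, assembles into a group action, and intertwines with $\psi$. Since $R = A \oplus A\epsilon$ is determined as an $A$-algebra by the single relation $\epsilon^2 = q(q-1)\epsilon$, and any $\gamma_u$ is determined by its value on $A$ and on $\epsilon$, the whole proof reduces to three explicit identities.

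First I would check well-definedness: one must verify that $\gamma_u(\epsilon)^2 = \gamma_u(q(q-1)\epsilon)$ inside $R$. A direct computation gives
\begin{equation*}
\gamma_u(\epsilon)^2 = q(q-1)\epsilon \cdot q^{2[u]-2}\Bigl(\tfrac{q^{[u]}-1}{q-1}\Bigr)^2 = \epsilon \cdot q^{2[u]-1}\tfrac{(q^{[u]}-1)^2}{q-1},
\end{equation*}
which matches $\gamma_u(q)\gamma_u(q-1)\gamma_u(\epsilon) = q^{[u]}(q^{[u]}-1)\cdot \epsilon\cdot q^{[u]-1}\frac{q^{[u]}-1}{q-1}$. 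Hence the assignment extends to a well-defined $W(k)$-algebra endomorphism of $R$; that it is an automorphism follows formally since $\gamma_u$ is invertible on $A$ and the scalar $q^{[u]-1}(q^{[u]}-1)/(q-1)$ is a unit in $A$.

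Next I would check the cocycle condition $\gamma_v \circ \gamma_u = \gamma_{vu}$. This is automatic on $A$, so it suffices to verify it on $\epsilon$. The key ingredient is that the Teichm\"uller lift is multiplicative on $\mathbb{F}_p^{\times}$, i.e.\ $[vu] = [v][u]$ (because $\mathbb{F}_p^{\times}$ has order prime to $p$). A short calculation gives
\begin{equation*}
\gamma_v(\gamma_u(\epsilon)) = \epsilon \cdot q^{[v]-1}\tfrac{q^{[v]}-1}{q-1}\cdot q^{[v]([u]-1)}\tfrac{q^{[v][u]}-1}{q^{[v]}-1} = \epsilon\cdot q^{[v][u]-1}\tfrac{q^{[v][u]}-1}{q-1},
\end{equation*}
which equals $\gamma_{vu}(\epsilon)$ by multiplicativity of Teichm\"uller.

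Finally, the equivariance of $A \hookrightarrow R$ is tautological (the action preserves the direct summand $A$), so only equivariance of $\psi$ remains. Because $\psi$ is $W(k)$-linear and continuous on the $(p,q-1)$-topologically free $W(k)$-algebra $A$, it suffices to check $\gamma_u(\psi(q)) = \psi(\gamma_u(q)) = \psi(q^{[u]})$. Using the formula $\psi(q^k) = q^k + \epsilon q^{k-1}(q^{kp}-1)/(q-1)$ established in Lemma \ref{lemt.extend delta} (with $\alpha = 0$), the right-hand side equals $q^{[u]} + \epsilon\, q^{[u]-1}(q^{[u]p}-1)/(q-1)$, while
\begin{equation*}
\gamma_u(\psi(q)) = q^{[u]} + \gamma_u(\epsilon)\gamma_u([p]_q) = q^{[u]} + \epsilon\cdot q^{[u]-1}\tfrac{q^{[u]}-1}{q-1}\cdot \tfrac{q^{[u]p}-1}{q^{[u]}-1},
\end{equation*}
and these coincide. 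The only real content is the well-definedness check together with the multiplicativity of Teichm\"uller; the rest is bookkeeping, so I do not expect a substantive obstacle.
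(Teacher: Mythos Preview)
Your proof is correct and follows essentially the same approach as the paper's: both reduce well-definedness to the single relation $\gamma_u(\epsilon)^2=\gamma_u(q(q-1)\epsilon)$ and reduce equivariance of $\psi$ to the identity $\gamma_u(\psi(q))=\psi(q^{[u]})$, verified via the formula $\psi(q^k)=q^k+\epsilon\,q^{k-1}(q^{kp}-1)/(q-1)$ from \cref{lemt.extend delta}. The paper leaves the well-definedness computation to the reader and does not spell out the cocycle condition $\gamma_v\circ\gamma_u=\gamma_{vu}$; you supply both explicitly, which is a welcome addition but not a different method.
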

\begin{proof}
    To show that the desired $\mathbb{F}_p^{\times}$ action on $R$ is well defined, we just need to check that $\gamma_u(\epsilon^2)=\gamma_u(q(q-1)\epsilon)$, which is left to the reader. To see $\psi$ is $\mathbb{F}_p^{\times}$-equivariant, it suffices to check that $\psi(\gamma_u(q))=\gamma_u(\psi(q))$, but this follows from the following calculation using \cref{lemt.extend delta}.
    \begin{equation*}
        \begin{split}
\psi(\gamma_u(q))&=q^{[u]}+\epsilon[p[u]]_q q^{[u]-1}=q^{[u]}+\epsilon\cdot q^{[u]-1}\cdot \frac{q^{[u]p}-1}{q-1},
\\\gamma_u(\psi(q))&=\gamma_u(q+\epsilon \frac{q^p-1}{q-1})=q^{[u]}+\epsilon \cdot q^{[u]-1}\frac{q^{[u]}-1}{q-1}\frac{q^{[u]p}-1}{q^{[u]}-1}=q^{[u]}+\epsilon\cdot q^{[u]-1}\cdot \frac{q^{[u]p}-1}{q-1}.
        \end{split}
    \end{equation*}
\end{proof}
By mimicking the proof of \cref{prop. cyclic group action on q prism}, we have that the $\rho: \Spf(R)\to X^{\Prism}$ is also $\mathbb{F}_p^{\times}$-equivariant. As an upshot, we can promote \cref{prop. cyclic group action on q prism} to a $\mathbb{F}_p^{\times}$-equivariant diagram.
\begin{proposition}
    The diagram constructed in \cref{propt.key automorphism of functors} and \cref{cort.truncated} are both $\mathbb{F}_p^{\times}$-equivariant. Hence it descends to a commutative diagram
\[\xymatrixcolsep{5pc}\xymatrix{[\Spf(R)/\mathbb{F}_p^{\times}]\ar[d]^{\rho}\ar[r]^{\psi}& [\Spf(A)/\mathbb{F}_p^{\times}]\ar@{=>}[dl]^{\gamma_{b,c}} \ar[d]_{}^{\rho}
\\[X^{\Prism}/\mathbb{F}_p^{\times}]  \ar^{\Id}[r]&[X^{\Prism}/\mathbb{F}_p^{\times}]}\]
\end{proposition}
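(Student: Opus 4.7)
The plan is to reduce the $\mathbb{F}_p^{\times}$-equivariance of the 2-diagram to verifying that the pair $(b,c)\in W(R)^{\times}\times W(R)$ defining $\gamma_{b,c}$ transforms correctly under the functorial action of $\gamma_u$ on $W(R)$ for each $u\in \mathbb{F}_p^{\times}$. The 1-morphisms $\iota,\psi\colon \Spf(R)\to \Spf(A)$ are strictly equivariant by \cref{lem. Fp action on R}, and $\rho\colon \Spf(A)\to X^{\Prism}$ carries the equivariance datum $\sigma_u$ of \cref{prop. cyclic group action on q prism}. Equivariance of the 2-cell then amounts to the coherence statement: pasting $\gamma_{b,c}$ with $\sigma_u \ast \psi$ on one side agrees with pasting $\sigma_u \ast \iota$ with $\gamma_{b,c}$ on the other, as 2-morphisms in $X^{\Prism}(R)$.

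First, I would check that the $\mathbb{F}_p^{\times}$-action on $R$ from \cref{lem. Fp action on R} is compatible with the $\delta$-structure of \cref{rem. delta ring structure on R}, which reduces to a direct verification on the generator $\epsilon$ using that the action on $A$ already preserves the $\delta$-structure. Consequently $\gamma_u$ lifts uniquely to a $\delta$-automorphism of $W(R)$, and the universal property of the unique $\delta$-lifts $\tilde f,\tilde g\colon A\to W(R)$ of $\iota,\psi$ forces $\gamma_u\circ \tilde f = \tilde f\circ \gamma_u$ and $\gamma_u\circ \tilde g = \tilde g\circ \gamma_u$: indeed, $\gamma_u\circ \tilde f\circ \gamma_u^{-1}$ is a $\delta$-lift of $\gamma_u\circ \iota\circ \gamma_u^{-1}=\iota$, and similarly for $\tilde g$.

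Next I would apply $\gamma_u$ to the two defining identities
\[
\tilde g(d)=\tilde f(d)\cdot b,\qquad \tilde g(q)-\tilde f(q)=\tilde f(d)\cdot c,
\]
using $\gamma_u(d)=d\cdot d_u$ with $d_u\in A^{\times}$. Because $\tilde f(d)$ is a nonzerodivisor in $W(R)$ (as exploited in the proof of \cref{lemT.construct b}), one extracts explicit formulas expressing $\gamma_u(b)$ in terms of $b$ and $\tilde f(d_u),\tilde g(d_u)$, and $\gamma_u(c)$ in terms of $c$ and $d_u$. These are exactly the cocycle identities encoding that the 2-cell $\gamma_{b,c}$ is compatible with $\sigma_u$: the $b$-component handles the automorphism of Cartier–Witt divisors, while the $c$-component handles the homotopy of embeddings of $A$ used to construct the map into $X^{\Prism}$.

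The main obstacle will be the careful bookkeeping of the various 2-cells. The required coherence is an equality of 2-morphisms in $\mathrm{Hom}_{X^{\Prism}(R)}(\rho\circ \iota\circ \gamma_u,\,\rho\circ \psi)$ obtained by two pastings of $\gamma_{b,c}$ and $\sigma_u$. Unwinding the construction of $\sigma_u$ from $d_u\in A^{\times}$ in the proof of \cref{prop. cyclic group action on q prism} and of $\gamma_{b,c}$ from $(b,c)$, this equality is equivalent to the two $W(R)$-identities extracted above. Once this coherence is established, the descent to the quotient stacks is formal, yielding the displayed square with the descended 2-cell; the truncated version of \cref{cort.truncated} follows by the same argument after passing to $A/d^n$ and $R/d^n$, the $\mathbb{F}_p^{\times}$-action visibly preserving the ideal $(d^n)$.
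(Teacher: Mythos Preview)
Your approach is correct and follows the same line as the paper, which gives only a one-sentence proof citing \cref{prop. cyclic group action on q prism} and \cref{lem. Fp action on R} to conclude that all arrows in the diagram are $\mathbb{F}_p^{\times}$-equivariant. You spell out what the paper leaves implicit, namely the coherence of the 2-cell $\gamma_{b,c}$ via the transformation laws for $(b,c)$ under $\gamma_u$; one minor simplification is that your preliminary check that $\gamma_u$ preserves the $\delta$-structure on $R$ is unnecessary for lifting to $W(R)$, since functoriality of Witt vectors already makes $W(\gamma_u)$ a $\delta$-automorphism.
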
 
\begin{proof}
    Given \cref{prop. cyclic group action on q prism} and \cref{lem. Fp action on R}, all of the arrows in \cref{propt.key automorphism of functors} are $\mathbb{F}_p^{\times}$-equivariant.
\end{proof}
\begin{corollary}\label{cor.Fp equivariance of theta}
    For $\mathcal{E}\in \mathcal{D}([X^{\HT}/\mathbb{F}_p^{\times}])$, $\gamma_{b,c}: \rho^{*}\mathcal{E}\otimes_{\mathcal{O}_K}\mathcal{O}_K [\epsilon]/(\epsilon^2-\beta \epsilon) \xrightarrow{\simeq} \rho^{*}\mathcal{E}\otimes_{\mathcal{O}_K}\mathcal{O}_K [\epsilon]/(\epsilon^2-\beta \epsilon)$ is $\mathbb{F}_p^{\times}$-equivariant.
\end{corollary}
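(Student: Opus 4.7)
The plan is to extract the corollary as a formal consequence of the preceding proposition via the functoriality of pullback along morphisms of quotient stacks. Given $\mathcal{E}\in \mathcal{D}([X^{\HT}/\mathbb{F}_p^{\times}])$, its pullback along the equivariant cover $\rho: [\Spf(\mathcal{O}_K)/\mathbb{F}_p^{\times}]\to [X^{\HT}/\mathbb{F}_p^{\times}]$ (obtained by restricting the equivariant covering established in the proposition on the $\mathbb{F}_p^{\times}$-equivariance of $\rho$ to the Hodge-Tate locus) canonically upgrades $\rho^*\mathcal{E}$ to an $\mathbb{F}_p^{\times}$-equivariant $\mathcal{O}_K$-complex.

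Next, I would restrict the $\mathbb{F}_p^{\times}$-equivariant 2-commutative diagram from the preceding proposition modulo $d$. Under this restriction, $R$ specializes to $\mathcal{O}_K[\epsilon]/(\epsilon^2-\beta\epsilon)$, both $\iota$ and $\psi$ become $\mathbb{F}_p^{\times}$-equivariant ring maps over $\mathcal{O}_K$ (by the lemma on the $\mathbb{F}_p^{\times}$-action on $R$; note also that $\psi\equiv \iota$ modulo $d$), and $\gamma_{b,c}$ becomes an equivariant 2-isomorphism between two morphisms $[\Spf(\mathcal{O}_K[\epsilon]/(\epsilon^2-\beta\epsilon))/\mathbb{F}_p^{\times}]\to [X^{\HT}/\mathbb{F}_p^{\times}]$. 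Pulling $\mathcal{E}$ through this 2-diagram produces an isomorphism in $\mathcal{D}([\Spf(\mathcal{O}_K[\epsilon]/(\epsilon^2-\beta\epsilon))/\mathbb{F}_p^{\times}])$ between $\iota^*\rho^*\mathcal{E}$ and $\psi^*\rho^*\mathcal{E}$. Since $\iota$ and $\psi$ are equivariant over $\mathcal{O}_K$, each of these pullbacks is identified, as an equivariant object, with $\rho^*\mathcal{E}\otimes_{\mathcal{O}_K}\mathcal{O}_K[\epsilon]/(\epsilon^2-\beta\epsilon)$ carrying the diagonal action coming from the equivariance on $\rho^*\mathcal{E}$ and the explicit action on $\mathcal{O}_K[\epsilon]/(\epsilon^2-\beta\epsilon)$ prescribed in the lemma above. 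Under these identifications, the isomorphism produced by $\gamma_{b,c}$ is precisely the endomorphism in the statement.

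The substantive input has already been packaged into the preceding proposition, where the elements $b$ and $c$ were shown to assemble into an equivariant 2-cell. Consequently, no further computation with $b$ or $c$ is needed here; the only sanity check is that $\iota,\psi: A\to R$ reduce modulo $d$ to $\mathbb{F}_p^{\times}$-equivariant maps $\mathcal{O}_K\to \mathcal{O}_K[\epsilon]/(\epsilon^2-\beta\epsilon)$, which is immediate from the preceding lemma. The proof therefore reduces to unwinding the functoriality of pullback and the identification of tensor products with pullbacks along trivial algebra extensions, and presents no real obstacle.
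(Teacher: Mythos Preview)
Your proposal is correct and follows essentially the same approach as the paper: the paper's proof is the one-liner ``This follows from last proposition and the construction of $\gamma_{b,c}$,'' and your argument simply unwinds that sentence by restricting the $\mathbb{F}_p^{\times}$-equivariant 2-commutative square to the Hodge-Tate locus and applying functoriality of pullback. No additional ideas are needed beyond what you have written.
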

\begin{proof}
    This follows from last proposition and the construction of $\gamma_{b,c}$.
\end{proof}

Now we are ready to prove \cref{thm. hodge state full faithful}.
\begin{proof}[Proof of \cref{thm. hodge state full faithful}]
 Given $\mathcal{E}\in \mathcal{D}(\WCart^{\HT})$, we wish to show that $$\mathrm{R} \Gamma(\WCart^{\HT}, \mathcal{E})\xrightarrow{\simeq}\mathrm{R} \Gamma([X^{\HT}/\mathbb{F}_p^{\times}], \pi^*\mathcal{E}).$$ As both sides are $p$-complete, it suffices to prove that $$\mathrm{R} \Gamma(\WCart^{\HT}, \mathcal{E})\otimes \mathbb{F}_p \xrightarrow{\simeq}\mathrm{R} \Gamma([X^{\HT}/\mathbb{F}_p^{\times}], \pi^*\mathcal{E})\otimes \mathbb{F}_p.$$
The strategy is to calculate both sides explicitly and then identify them after modulo $p$. Actually, applying \cite[Proposition 3.5.11]{bhatt2022absolute}, we see that
\begin{equation}\label{equa. left sides}
    \mathrm{R} \Gamma(\WCart^{\HT}, \mathcal{E})\otimes \mathbb{F}_p=\fib(\eta^{*}\mathcal{E}/p\xrightarrow{\theta} \eta^{*}\mathcal{E}/p).
\end{equation}
On the other hand, if we abuse the notation by denoting the pullback of $\mathcal{E}$ to $\mathcal{D}(X^{\HT})$ still as $\pi^*\mathcal{E}$, then
\begin{equation*}
    \begin{split}
        &\mathrm{R} \Gamma([X^{\HT}/\mathbb{F}_p^{\times}], \pi^*\mathcal{E})=\mathrm{R}\Gamma(X^{\HT}, \pi^*\mathcal{E})^{\mathbb{F}_p^{\times}}\\=&(\fib(\rho^{*}\pi^{*}\mathcal{E}\xrightarrow{\gamma_{b,c}-\Id=\partial_{\pi^{*}\mathcal{E}}\cdot\epsilon}\rho^{*}\pi^{*}\mathcal{E}\cdot\epsilon))^{\mathbb{F}_p^{\times}}
\\=&\fib((\rho^{*}\pi^{*}\mathcal{E})^{\mathbb{F}_p^{\times}} \xrightarrow{\partial_{\pi^{*}\mathcal{E}}\cdot\epsilon}(\rho^{*}\pi^{*}\mathcal{E}\cdot\epsilon)^{\mathbb{F}_p^{\times}}),
        \end{split}
\end{equation*}
here the second equality follows from \cref{propt. sen calculate cohomology}, the last identity holds since $\mathbb{F}_p^{\times}$ is a finite group of order relatively prime to $p$ (hence no higher cohomology when taking $\mathbb{F}_p^{\times}$ invariants on $p$-complete complexes) and $\gamma_{b,c}$ is $\mathbb{F}_p^{\times}$-equivariant by \cref{cor.Fp equivariance of theta}.

Next we calculate $(\rho^{*}\pi^{*}\mathcal{E}\cdot\epsilon)^{\mathbb{F}_p^{\times}}$. By \cref{lem. commute two prism on carrier witt stack}, the following square is commutative up to the isomorphism we constructed in the proof of it,
\[\xymatrixcolsep{5pc}\xymatrix{[\Spf(\mathcal{O}_K)/\mathbb{F}_p^{\times}]\ar[d]^{f}\ar[r]^{\rho}& [X^{\HT}/\mathbb{F}_p^{\times}]
\ar[d]_{}^{\pi}
\\ \Spf(\mathbb{Z}_p) \ar^{\eta}[r]&\WCart^{\HT}.}\]
From now on we fix this isomorphism $\eta\circ f\simeq \pi\circ \rho$. Then $$(\rho^{*}\pi^{*}\mathcal{E})^{\mathbb{F}_p^{\times}}=Rf_{*}(\rho^{*}\pi^{*}\mathcal{E})\simeq Rf_{*}(f^{*}\eta^{*}\mathcal{E})=\eta^{*}\mathcal{E}.$$
Similarly, considering the commutative (up to an isomorphism) diagram
\[\xymatrixcolsep{5pc}\xymatrix{[\Spf(\mathcal{O}_K[\epsilon]/(\epsilon^2-\beta \epsilon))/\mathbb{F}_p^{\times}]\ar[d]^{f}\ar[r]^{\rho^{\prime}}& [X^{\HT}/\mathbb{F}_p^{\times}]
\ar[d]_{}^{\pi}
\\ \Spf(\mathbb{Z}_p) \ar^{\eta}[r]&\WCart^{\HT}}\]
leads to that
$$(\rho^{*}\pi^{*}\mathcal{E}\otimes_{\mathcal{O}_K} \mathcal{O}_K[\epsilon]/(\epsilon^2-\beta\epsilon))^{\mathbb{F}_p^{\times}}=Rf_{*}(\rho^{\prime *}\pi^{*}\mathcal{E})\simeq Rf_{*}(f^{*}\eta^{*}\mathcal{E})=\eta^{*}\mathcal{E}\otimes_{\mathbb{Z}_p} \mathbb{Z}_p[v]/(v^2-pv).$$
Here $\rho^{\prime}$ is the composition of $[\Spf(\mathcal{O}_K[\epsilon]/(\epsilon^2-\beta \epsilon))/\mathbb{F}_p^{\times}]\to [\Spf(\mathcal{O}_K)/\mathbb{F}_p^{\times}]$ and $\rho$ (hence the commutativity follows from \cref{lem. commute two prism on carrier witt stack} as well), and the last equality is due to the projection formula and \cref{lem. calculate invariants}. In particular, $v$ is identified with $e\epsilon$. 

In summary, we have that 
\begin{equation}\label{equa. divide q connection}
\begin{split}
    \mathrm{R} \Gamma([X^{\HT}/\mathbb{F}_p^{\times}], \pi^*\mathcal{E})&\xrightarrow{\simeq}\fib(\eta^{*}\mathcal{E} \xrightarrow{\partial_{\pi^{*}\mathcal{E}}\cdot \epsilon} \eta^{*}\mathcal{E}\cdot v)=\fib(\eta^{*}\mathcal{E} \xrightarrow{\frac{\partial_{\pi^{*}\mathcal{E}}}{e}\cdot v} \eta^{*}\mathcal{E}\cdot v),
    \\\mathrm{R} \Gamma([X^{\HT}/\mathbb{F}_p^{\times}], \pi^*\mathcal{E})\otimes_{\mathbb{Z}_p} \mathbb{F}_p &\xrightarrow{\simeq} \fib(\eta^{*}\mathcal{E}/p \xrightarrow{\frac{\partial_{\pi^{*}\mathcal{E}}}{e}\cdot v} \eta^{*}\mathcal{E}/p \otimes_{\mathbb{F}_p} \mathbb{F}_p[v]/(v^2)).
\end{split}
\end{equation}
Here we use $\frac{\partial_{\pi^{*}\mathcal{E}}}{e}$ to denote the monodromy operator on $\eta^{*}\mathcal{E}$ itself induced by the $\mathbb{F}_p^{\times}$-invariant parts of $\gamma_{b,c}^{}$. Comparing with \cref{equa. left sides} and unwinding all of our constructions and chosen isomorphism, we have a commutative diagram 
\[\xymatrixcolsep{5pc}\xymatrix{\mathrm{R} \Gamma(\WCart^{\HT}, \mathcal{E})\otimes \mathbb{F}_p\ar[d]^{}\ar[r]^{\simeq}& \fib(\eta^{*}\mathcal{E}/p \ar[d]^{\Id} \ar[r]^{\theta_{\mathcal{E}}}& \eta^{*}\mathcal{E}/p)
\ar[d]_{}^{\Id}
\\ \mathrm{R} \Gamma([X^{\HT}/\mathbb{F}_p^{\times}], \pi^*\mathcal{E})\otimes_{\mathbb{Z}_p} \mathbb{F}_p \ar[r]^{\simeq}&\fib(\eta^{*}\mathcal{E}/p \ar[r]^{\frac{\partial_{\pi^{*}\mathcal{E}}}{e}}& \eta^{*}\mathcal{E}/p).}\]
Hence we are left to show that $\frac{\partial_{\pi^{*}\mathcal{E}}}{e}$ coincides with $\theta_{\mathcal{E}}$ constructed in \cite{bhatt2022absolute} after modulo $p$. This requires a closer review of the construction of such monodromy operators. First we claim that the induced $q$-connection $\frac{\partial_{\pi^{*}\mathcal{E}}}{e}$ on $\eta^{*}\mathcal{E}$ shown up in \cref{equa. divide q connection} precisely corresponds to the monodromy operator $\partial_{v,\mathcal{E}}$ induced by $1+v\in \in \mathbb{G}_m^{\sharp}(\mathbb{Z}_p[v]/(v^2-pv))$, viewed as an element in the automorphism group of \[\WCart^{\HT}\times_{\Spf(\mathbb{Z}_p)} \Spf(\mathbb{Z}_p[v]/(v^2-pv).\]
Indeed, following the discussion above \cref{lem. calculate e}, we could identify $\gamma_{b,c}$ with  $(1+e\epsilon, \epsilon)\in G_\pi(\mathcal{O}_K[\epsilon]/(\epsilon^2-q(q^{}-1)\epsilon))$. Hence under the natural projection $G_\pi \to \mathbb{G}_m^\sharp$, $\gamma_{b,c}\in \mathbb{G}_m^\sharp(\mathcal{O}_K[\epsilon]/(\epsilon^2-q(q^{}-1)\epsilon))$ (by abuse of notation) precisely corresponds to the restriction of $1+v\in \mathbb{G}_m^{\sharp}(\mathbb{Z}_p[v]/(v^2-pv))$ via the structure morphism $\Spf(\mathcal{O}_K[\epsilon]/(\epsilon^2-q(q^{}-1)\epsilon)) \to \Spf(\mathbb{Z}_p[v]/(v^2-pv))$ sending $v$ to $e\epsilon$. This implies that under our fixed isomorphism $\eta\circ f\simeq \pi\circ \rho$, $\partial_{\pi^{*}\mathcal{E}}: \rho^{*}\pi^{*}\mathcal{E}\to \rho^{*}\pi^{*}\mathcal{E}$ is identified with $$\rho^{*}\pi^{*}\mathcal{E}\simeq \eta^{*}\mathcal{E}\otimes_{\mathbb{Z}_p} \mathcal{O}_K\xrightarrow{\partial_{v,\mathcal{E}}\otimes e\cdot\Id} \eta^{*}\mathcal{E}\otimes_{\mathbb{Z}_p} \mathcal{O}_K\simeq \rho^{*}\pi^{*}\mathcal{E},$$
hence $\frac{\partial_{\pi^{*}\mathcal{E}}}{e}=\partial_{v,\mathcal{E}}$ on $\eta^{*}\mathcal{E}$.

Finally we are left to compare $\partial_v$ and $\theta$ after modulo $p$. A key observation is that after modulo $p$, $\gamma=1+v \in \mathbb{G}_m^{\sharp}(\mathbb{F}_p[v]/v^2)$ and all higher divided powers of $v$ vanish, which crucially uses the assumption that $p>2$ to guarantee that $\frac{v^n}{n!}=\frac{p^{n-1}v}{n!}$ is divisible by $p$ for all $n>1$. However, as $\theta$ on $\eta^*\mathcal{E}$ in \cite{bhatt2022absolute} is actually constructed via $1+[v] \in W[F](\mathbb{Z}_p[v]/v^2)$, which corresponds to $1+v\in \mathbb{G}_m^{\sharp}(\mathbb{Z}_p[v]/v^2)$ (with higher divided powers of $v$ vanished) and hence exactly reduces to our $\gamma$ after modulo $p$, we conclude that $\partial_v=\theta$ on $\eta^*\mathcal{E}/p$, hence finish the proof.
\end{proof}
The above proof implicitly tells us we could define a $q$-Higgs connection on $\eta^{*}\mathcal{E}$ for $\mathcal{E}\in \mathcal{D}(\WCart^{\HT})$, which we summarize as the following proposition since it will be used in the proof of \cref{thm. descent for the cartier witt stack to q prism} later.
\begin{proposition}\label{prop. new q derivation on hodge state stack}
    The chosen element $1+v \in \mathbb{G}_m^{\sharp}(\mathbb{Z}_p[v]/(v^2-pv))$ induces an endomorphism $\partial_{v,\mathcal{E}}: \eta^*\mathcal{E}\to \eta^*\mathcal{E} $ for any $\mathcal{E}\in \mathcal{D}(\WCart^{\HT})$ such that $\partial_{\pi^{*}\mathcal{E}}=\partial_{v,\mathcal{E}}\otimes e$ under the isomorphism $\rho^{*}\pi^{*}\mathcal{E}\simeq \eta^{*}\mathcal{E}\otimes_{\mathbb{Z}_p} \mathcal{O}_K$ induced by the chosen isomorphism in the proof of \cref{lem. commute two prism on carrier witt stack}.
    . Moreover, pullback along $\eta: \Spf(\mathbb{Z}_p)\to \WCart^{\HT}$ induces a fully faithful functor 
    \begin{align*}
         \mathcal{D}(\WCart^{\HT}) \rightarrow \mathcal{D}(\mathbb{Z}_p[\partial]), \qquad \mathcal{E}\mapsto (\rho^{*}(\mathcal{E}),\partial_{v,\mathcal{E}}),
    \end{align*} 
    with the essential image consisting of those $M\in \mathcal{D}(\mathbb{Z}_p[\partial])$ such that $M$ is $p$-complete and that $\partial^p-\partial$ acts on $\mathrm{H}^*(M\otimes^{\mathbb{L}}\mathbb{F}_p)$ in a locally nilpotent way.  
\end{proposition}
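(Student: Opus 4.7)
The plan is to construct $\partial_{v,\mathcal{E}}$ by transporting the stacky strategy of \cref{propt.key automorphism of functors} to the setting of $\WCart^{\HT}$ with the affine cover $\eta: \Spf(\mathbb{Z}_p) \to \WCart^{\HT}$. Concretely, I would verify that $1+v$ lifts canonically from $\mathbb{G}_m^\sharp(\mathbb{Z}_p[v]/(v^2-pv))$ to a unit in $W(\mathbb{Z}_p[v]/(v^2-pv))$ (using that $v^n = p^{n-1} v$ makes the required divided powers integral for $p > 2$), and thereby produces an automorphism of the universal Cartier-Witt divisor pulled back to $\Spf(\mathbb{Z}_p[v]/(v^2-pv))$. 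This gives an isomorphism $\gamma_v: \eta \circ \iota' \simeq \eta \circ \iota$ between two morphisms $\Spf(\mathbb{Z}_p[v]/(v^2-pv)) \to \WCart^{\HT}$, one being the structural map and the other twisted by $1+v$. Pulling back $\mathcal{E}$ yields an automorphism of $\eta^*\mathcal{E} \otimes_{\mathbb{Z}_p} \mathbb{Z}_p[v]/(v^2-pv)$ that is the identity modulo $v$, from which $\partial_{v,\mathcal{E}}$ is extracted via the decomposition $\mathbb{Z}_p[v]/(v^2-pv) = \mathbb{Z}_p \oplus v \cdot \mathbb{Z}_p$.

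The compatibility $\partial_{\pi^* \mathcal{E}} = \partial_{v,\mathcal{E}} \otimes e$ is essentially already established in the final computation of the proof of \cref{thm. hodge state full faithful}: the identification $\gamma_{b,c} = (1+e\epsilon, \epsilon) \in G_\pi(\mathcal{O}_K[\epsilon]/(\epsilon^2-\beta\epsilon))$ together with the structure morphism $\Spf(\mathcal{O}_K[\epsilon]/(\epsilon^2-\beta\epsilon)) \to \Spf(\mathbb{Z}_p[v]/(v^2-pv))$ sending $v \mapsto e\epsilon$ pulls $1+v$ back to $1+e\epsilon$, which is precisely the $\mathbb{G}_m^\sharp$-component of $\gamma_{b,c}$. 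Tracking this identification through the pullback of $\mathcal{E}$ gives the stated formula.

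For full faithfulness, I would argue as in \cref{thm. full faithful embedding}: since $\mathcal{D}(\WCart^{\HT})$ is generated under shifts and colimits by $\{\mathcal{I}^k\}_{k \in \mathbb{Z}}$ (by \cite[Corollary 3.5.16]{bhatt2022absolute}), the question reduces to showing $\mathrm{R}\Gamma(\WCart^{\HT}, \mathcal{F}) \simeq \fib(\eta^*\mathcal{F} \xrightarrow{\partial_{v,\mathcal{F}}} \eta^*\mathcal{F})$ for every $\mathcal{F} \in \mathcal{D}(\WCart^{\HT})$; this has already been established in the proof of \cref{thm. hodge state full faithful} via the intermediate identification of $\mathrm{R}\Gamma([X^{\HT}/\mathbb{F}_p^\times], \pi^*\mathcal{F})$ as the fiber of $\partial_{v,\mathcal{F}}$ combined with \cref{thm. hodge state full faithful} itself.

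The essential image part is the main obstacle, since the condition ``$\partial^p-\partial$ locally nilpotent on $H^*(M \otimes^{\mathbb{L}} \mathbb{F}_p)$'' does not directly match the usual local nilpotence of $\partial$. Here I would use two inputs. First, the mod-$p$ identification $\partial_{v,\mathcal{E}} \equiv \theta_\mathcal{E}$ from the end of the proof of \cref{thm. hodge state full faithful} translates Bhatt--Lurie's essential image condition for the Sen functor \cite[Theorem 3.5.8]{bhatt2022absolute} into one purely about $\partial_v$ mod $p$; the specific form ``$\partial^p - \partial$ locally nilpotent'' precisely encodes the $\mathbb{F}_p^\times$-weight decomposition of $\WCart^{\HT}$ inherited from the descent $[X^{\HT}/\mathbb{F}_p^\times] \to \WCart^{\HT}$, under which the eigenvalues of $\partial_v$ on the mod-$p$ cohomology lie locally in $\mathbb{F}_p$. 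Second, for essential surjectivity I would repeat the generator argument from the proof of \cref{thmt.main classification}: any nonzero object in the target subcategory admits a generalized eigenvector for $\partial_v$ in $H^*(M \otimes \mathbb{F}_p)$ with eigenvalue $\lambda \in \mathbb{F}_p$, which lifts to a nonzero morphism from some $\eta^*\mathcal{I}^k[m]$ (with $k$ chosen so that the $\partial_v$-eigenvalue on $\eta^*\mathcal{I}^k$ reduces to $\lambda$ mod $p$, as in \cref{example.action on generators}), and a derived-Nakayama-style orthogonality argument then forces generation by $\{\eta^*\mathcal{I}^k\}_{k \in \mathbb{Z}}$.
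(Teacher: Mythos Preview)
Your proposal is correct and follows essentially the same route as the paper's proof. The paper's argument is very terse: it notes that the construction of $\partial_{v,\mathcal{E}}$, the compatibility $\partial_{\pi^*\mathcal{E}}=\partial_{v,\mathcal{E}}\otimes e$, and full faithfulness are all already contained in the proof of \cref{thm. hodge state full faithful}, and then for the essential image it follows the pattern of \cref{thmt.main classification}, checking the nilpotence condition on the generators $\mathcal{I}^k$ via \cref{example. equiavariant cohomology} (which gives $\partial_v$ acting by $\frac{(1+p)^k-1}{p}\equiv k\bmod p$, whence $\partial_v^p-\partial_v\equiv 0$). Your elaboration of these steps matches this; the only slight variant is that for verifying the image satisfies the nilpotence condition you propose invoking the mod-$p$ identification $\partial_v\equiv\theta$ together with Bhatt--Lurie's \cite[Theorem 3.5.8]{bhatt2022absolute} rather than computing directly on $\mathcal{I}^k$---both approaches work and yield the same conclusion.
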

\begin{proof}
    The statement before the moreover part and the full faithfulness in the moreover part is already given in the proof of \cref{thm. hodge state full faithful}. For the essential image, following the proof of \cref{thmt.main classification}, we just need to check the given nilpotence condition holds for $\mathcal{E}=\mathcal{I}^k$ ($k\in \mathbb{Z}$), which can be checked easily using \cref{example. equiavariant cohomology}.
\end{proof}

The following several lemmas are used in the proof of \cref{thm. hodge state full faithful}.
\begin{lemma}\label{lem. commute two prism on carrier witt stack}
    The following square commutes up to a (non-unique) isomorphism.
    \[\xymatrixcolsep{5pc}\xymatrix{[\Spf(\mathcal{O}_K)/\mathbb{F}_p^{\times}]\ar[d]^{f}\ar[r]^{\rho}& [X^{\HT}/\mathbb{F}_p^{\times}]
\ar[d]_{}^{\pi}
\\ \Spf(\mathbb{Z}_p) \ar^{\eta}[r]&\WCart^{\HT}}\]
\end{lemma}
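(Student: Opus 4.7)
The plan is to reduce this to a concrete identity of Cartier--Witt divisors on $W(\mathcal{O}_K)$. Since the $\mathbb{F}_p^{\times}$-action on $\Spf(\mathbb{Z}_p)$ is trivial, it suffices to construct an $\mathbb{F}_p^{\times}$-equivariant isomorphism between the (non-equivariant) compositions
\[
\Spf(\mathcal{O}_K) \xrightarrow{\rho} X^{\HT} \xrightarrow{\pi} \WCart^{\HT}
\qquad\text{and}\qquad
\Spf(\mathcal{O}_K) \xrightarrow{f'} \Spf(\mathbb{Z}_p) \xrightarrow{\eta} \WCart^{\HT},
\]
where $f'$ is the structure map, and then observe that the descended data lives on the quotients.

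First I would unwind both compositions. Per \cref{construction. covers}, for any test $(p,d)$-nilpotent $\mathcal{O}_K$-algebra $R$, the composition $\pi\circ\rho$ corresponds to the Cartier--Witt divisor $(d)\otimes_A W(R)\to W(R)$ sending $d\otimes 1$ to $\widetilde{d}\in W(R)$, where $\widetilde{d}$ is the image of $d$ under the unique $\delta$-ring lift $A\to W(R)$ of $A\to \mathcal{O}_K\to R$. By uniqueness, this lift factors through $A\to W(\mathcal{O}_K)\to W(R)$. The map $A\to W(\mathcal{O}_K)$ must send $q$ to a Teichm\"uller, namely $\tilde{q}=[\zeta_p]$, since Teichm\"ullers are the unique $\delta=0$ elements lifting a given class. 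Hence
\[
\widetilde{d} \;=\; [p]_{[\zeta_p]} \;=\; \sum_{i=0}^{p-1}[\zeta_p^i] \in W(\mathcal{O}_K).
\]
On the other hand, $\eta\circ f'$ corresponds to the canonical Verschiebung Cartier--Witt divisor $VW(R)\hookrightarrow W(R)$, generated by $V(1)$.

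Next I would check the key identity $\widetilde{d}=V(1)$ in $W(\mathcal{O}_K)$ using ghost components. The ghost components of $\widetilde{d}$ are $w_n(\widetilde{d})=\sum_{i=0}^{p-1}\zeta_p^{ip^n}$, which equals $0$ for $n=0$ (as $\sum \zeta_p^i=0$) and $p$ for $n\geq 1$ (as $\zeta_p^{p^n}=1$). The ghost components of $V(1)=(0,1,0,0,\dots)$ are also $(0,p,p,\dots)$ by the standard ghost formula $w_n(x_0,x_1,\dots)=x_0^{p^n}+px_1^{p^{n-1}}+\dots+p^n x_n$. Since $\mathcal{O}_K$ is $p$-torsion free, the ghost map on $W(\mathcal{O}_K)$ is injective, so $\widetilde{d}=V(1)$. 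Using the chosen generators, this identity furnishes an isomorphism $(d)\otimes_A W(R)\xrightarrow{\sim} VW(R)$ compatible with the structural maps to $W(R)$ for every $R$, i.e.\ an isomorphism $\pi\circ\rho\simeq\eta\circ f'$.

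Finally I would verify the $\mathbb{F}_p^{\times}$-equivariance so that the isomorphism descends along the quotients defining $f$ in the statement. The action $\gamma_e\colon q\mapsto q^{[e]}$ on $A$ sends $d$ to $[p]_{q^{[e]}}$, whose image in $W(\mathcal{O}_K)$ is $\sum_{i=0}^{p-1}[\zeta_p^{[e]i}]$; as multiplication by $[e]$ permutes $\mathbb{Z}/p\mathbb{Z}$, this sum again equals $\widetilde{d}=V(1)$. Meanwhile $V(1)\in W(\mathbb{Z}_p)$ is manifestly invariant under the trivial $\mathbb{F}_p^{\times}$-action on $\Spf(\mathbb{Z}_p)$, and the $\mathbb{F}_p^{\times}$-equivariant structures on both $\rho$ (\cref{prop. cyclic group action on q prism}) and $\pi$ (\cref{prop. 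AD descent morphism}) match on the level of the underlying Cartier--Witt divisor. Therefore the identity $\widetilde{d}=V(1)$ is $\mathbb{F}_p^{\times}$-equivariant and descends to the desired 2-isomorphism on $[\Spf(\mathcal{O}_K)/\mathbb{F}_p^{\times}]$. The main subtlety is bookkeeping: one must verify the ghost-component calculation and then carefully check that the quasi-ideal structure of the Cartier--Witt divisor (not just the underlying ideal) is preserved; but both are routine once the identity $\widetilde{d}=V(1)$ is in hand.
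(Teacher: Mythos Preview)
Your proposal is correct and follows essentially the same approach as the paper: reduce to constructing an $\mathbb{F}_p^{\times}$-equivariant isomorphism of the two Cartier--Witt divisors over $W(\mathcal{O}_K)$, then prove the key identity $\widetilde{d}=V(1)$ and check equivariance. The only cosmetic difference is that you verify $\widetilde{d}=V(1)$ via ghost components, whereas the paper instead observes that $\widetilde{d}$ projects to $0$ in $\mathcal{O}_K$ (hence equals $V(x)$ for some $x$) and then computes $px=F(V(x))=\sum_{i=0}^{p-1}[\zeta_p^{ip}]=p$ to conclude $x=1$; both arguments are equally short and valid.
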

\begin{proof}
    It suffices to show that $\rho: \Spf(\mathcal{O}_K)\to \WCart$ (which is actually $\pi\circ \rho$, we abuse notation here) is $\mathbb{F}_p^{\times}$-equivariantly isomorphic to $\eta: \Spf(\mathcal{O}_K)\to \WCart$. By construction $\rho (\mathcal{O}_K\xrightarrow{\Id} \mathcal{O}_K)$ corresponds to the Cartier-Witt divisor 
    \[\alpha: (d)\otimes_{A,\Tilde{\iota}} W(\mathcal{O}_K)\to W(\mathcal{O}_K),\]
    where $\Tilde{\iota}: A\to W(\mathcal{O}_K)$ is the unique $\delta$-ring homomorphism lifting $A\to \mathcal{O}_K$. 
    
    On the other hand, $\eta (\mathcal{O}_K\xrightarrow{\Id} \mathcal{O}_K)$ corresponds to the Cartier-Witt divisor
    \[\alpha^{\prime}:  W(\mathcal{O}_K)\xrightarrow{V(1)} W(\mathcal{O}_K).\]

    Notice that $\Tilde{\iota}(d)=\sum_{i=0}^{p-1}[q^i]$ is mapped to $0$ under the projection map $W(\mathcal{O}_K)\to \mathcal{O}_K$, hence $\Tilde{\iota}(d)=V(x)$ for some $x\in W(\mathcal{O}_K)$. As $px=F(V(x))=\sum_{i=0}^{p-1}[q^{ip}]=p$ in $W(\mathcal{O}_K)$, $x$ much be $1$ since $W(\mathcal{O}_K)$ is $p$-torsion free. The following diagram 
\[\xymatrixcolsep{5pc}\xymatrix{(d)\otimes_{A,\Tilde{\iota}} W(\mathcal{O}_K)\ar[d]^{}\ar[r]^{\lambda: d \mapsto V(1)}& (V(1))
\ar[d]_{}^{\eta}
\\ W(\mathcal{O}_K) \ar^{\Id}[r]&W(\mathcal{O}_K)}\]
    then gives an isomorphism between $\rho$ and $\eta$.

Moreover, one can check that $\iota(\gamma_u(d))=V(1)=\gamma_u(V(1))$, hence the isomorphism we constructed above is $\mathbb{F}_p^{\times}$-equivariant, hence finish the proof.
\end{proof}
\begin{lemma}\label{lem. calculate invariants}
    Let $f: [\Spf(\mathcal{O}_K[\epsilon]/(\epsilon^2-\beta \epsilon))/\mathbb{F}_p^{\times}] \to \mathbb{Z}_p$ be the structure morphism, then $$Rf_{*}\mathcal{O}\simeq \mathbb{Z}_p[v]/(v^2-pv),$$ for which $v$ corresponds to $e\epsilon$.
\end{lemma}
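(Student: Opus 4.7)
The plan is to observe first that $f$ is an affine morphism and that $|\mathbb{F}_p^\times| = p-1$ is invertible in $\mathbb{Z}_p$. Hence the cohomology of $\mathbb{F}_p^\times$ with coefficients in any $p$-complete $\mathbb{Z}_p$-module vanishes in positive degrees, so $Rf_\ast \mathcal{O}$ is concentrated in degree $0$ and coincides with the honest invariants
\[
\bigl(\mathcal{O}_K[\epsilon]/(\epsilon^2-\beta\epsilon)\bigr)^{\mathbb{F}_p^\times}
= \mathcal{O}_K^{\mathbb{F}_p^\times} \oplus (\mathcal{O}_K\cdot\epsilon)^{\mathbb{F}_p^\times}.
\]
The first summand is $\mathbb{Z}_p$ by the classical computation of the Galois invariants of $\mathbb{Z}_p[\zeta_p]$ under $\mathbb{F}_p^\times = \mathrm{Gal}(\mathbb{Q}_p(\zeta_p)/\mathbb{Q}_p)$ (the action on $\zeta_p$ via $\zeta_p \mapsto \zeta_p^{[u]}$ being exactly the standard Galois action). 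The main task is therefore to compute $(\mathcal{O}_K\cdot\epsilon)^{\mathbb{F}_p^\times}$.

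For this, I would view $\mathcal{O}_K\cdot\epsilon$ as a twist $\mathcal{O}_K(\chi)$ of $\mathcal{O}_K$ by the character $\chi\colon \mathbb{F}_p^\times \to \mathcal{O}_K^\times$ determined, via \cref{lem. Fp action on R}, by $\chi(u) = \zeta_p^{[u]-1}\cdot \tfrac{\zeta_p^{[u]}-1}{\zeta_p-1}$. The key calculation is that $e=d'(\zeta_p)$ transforms by $\chi^{-1}$: using the identity $e = p/\beta$ (which follows from \cref{lem. calculate e}) one computes directly that $\gamma_u(e) = p/\gamma_u(\beta) = p/(\zeta_p^{[u]}(\zeta_p^{[u]}-1))$, and multiplying with $\chi(u)$ gives back $e$. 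Hence $\gamma_u(e\epsilon) = e\epsilon$ for every $u$, so $\mathbb{Z}_p\cdot e\epsilon \subseteq (\mathcal{O}_K\cdot\epsilon)^{\mathbb{F}_p^\times}$.

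For the reverse inclusion, given any invariant element $a\epsilon$ I would consider $a/e \in K$: the invariance relation $\gamma_u(a)\chi(u) = a$ combined with $\gamma_u(e)\chi(u) = e$ forces $\gamma_u(a/e) = a/e$ for all $u$, so $a/e$ lies in $K^{\mathbb{F}_p^\times} = \mathbb{Q}_p$. A quick valuation check (using $v_p(e) = 1 - \tfrac{1}{p-1} \in (0,1]$, so $v_p(a/e)\in \mathbb{Z}$ together with $a \in \mathcal{O}_K$ forces $v_p(a/e) \geq 0$) then gives $a/e \in \mathbb{Z}_p$, completing the identification of the invariants with $\mathbb{Z}_p \oplus \mathbb{Z}_p\cdot v$ where $v = e\epsilon$. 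The ring structure is clear: $v^2 = e^2\epsilon^2 = e^2\beta\epsilon = (e\beta)\cdot e\epsilon = p\cdot v$. The only place where a subtlety could arise is this final valuation argument, but the numerics work uniformly in $p$ (including $p=2$, where the group is trivial anyway).
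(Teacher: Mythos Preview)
Your proposal is correct and follows essentially the same approach as the paper: both arguments reduce to honest invariants via the coprimality of $|\mathbb{F}_p^\times|$ with $p$, verify directly that $e\epsilon$ is invariant using $e=p/\beta$, and then establish the reverse inclusion by a valuation argument. The only cosmetic difference is that the paper multiplies an invariant $x\epsilon$ by $\beta=q(q-1)$ to land in $\mathcal{O}_K^{\mathbb{F}_p^\times}=\mathbb{Z}_p$ and then observes $x\beta\in p\mathbb{Z}_p$, whereas you divide by $e$ and use $v_p(a/e)\in\mathbb{Z}$; since $e\beta=p$ these are literally the same computation.
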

\begin{proof}
   Since $\mathbb{F}_p^{\times}$ is a finite group of order relatively prime to $p$, $\mathrm{H}^{i}(\mathcal{O}_K, \mathbb{F}_p^{\times})=0$ for $i>0$, it then suffices to check that the $\mathbb{F}_p^{\times}$-invariants in $\mathcal{O}_K[\epsilon]/(\epsilon^2-\beta \epsilon)$ are given by $\mathbb{Z}_p[v]/(v^2-pv)$ with $v=e\epsilon$. By our notation, $e=\frac{p}{q(q-1)}\in \mathcal{O}_K$ (as usual $q$ means $\zeta_p$ in $\mathcal{O}_K$). Then $\forall u\in \mathbb{F}_p^{\times}$, as $\gamma_u(\epsilon)=\epsilon\cdot q^{[u]-1}\frac{q^{[u]}-1}{q-1}$ by definition, we see that
    \begin{equation*}
        \gamma_u(e\epsilon)=\frac{p}{q^{[u]}(q^{[u]}-1)}\cdot \epsilon\cdot q^{[u]-1}\frac{q^{[u]}-1}{q-1}=\epsilon\frac{p}{q(q-1)}=e\epsilon.
    \end{equation*}
    Hence $\mathbb{Z}_p \cdot e\epsilon \subseteq (\mathcal{O}_K/(\epsilon^2-\beta \epsilon))^{\mathbb{F}_p^{\times}}$. On the other hand, if $\epsilon x$ is fixed by the $\mathbb{F}_p^{\times}$ action, then an easy calculation shows that $\gamma_u(xq(q-1))=xq(q-1)$, hence $xq(q-1)\in \mathcal{O}_K^{\mathbb{F}_p^{\times}}=\mathbb{Z}_p$, but $x\in \mathcal{O}_K$, we see that $xq(q-1)$ must lives in $p\mathbb{Z}_p$, hence $\epsilon x\in \mathbb{Z}_p e\epsilon$. Let $v=e\epsilon$, then $v^2=e^2\epsilon^2=e^2\beta \epsilon=pe\epsilon=pv$ as $p=\beta e$.
    This implies that the $\mathbb{F}_p^{\times}$-invariants in $\mathcal{O}_K[\epsilon]/(\epsilon^2-\beta \epsilon)$ are given by $\mathbb{Z}_p[v]/(v^2-pv)$ with $v=e\epsilon$.
\end{proof}
\begin{example}[Explicit calculation for the ideal sheaf]\label{example. equiavariant cohomology}
    For $\mathscr{E}=\mathcal{I}^k \in \mathcal{D}(\WCart^{\HT})$, by \cref{example.action on generators} and \cref{prop. ht case fiber seq}, we have that 
     \[\mathrm{R} \Gamma(X^{\HT}, \pi^*\mathcal{E})=\fib(\mathcal{O}_K\xrightarrow{\cdot e\epsilon \frac{(1+p)^k-1}{p}} \mathcal{O}_K\cdot\epsilon).\]
    After passing to $\mathbb{F}_p^{\times}$-invariants and invoking \cref{lem. calculate invariants}, we get that
    \[\mathrm{R} \Gamma([X^{\HT}/\mathbb{F}_p^{\times}], \pi^*\mathcal{E})=\fib(\mathbb{Z}_p \xrightarrow{\cdot v \frac{(1+p)^k-1}{p}} \mathbb{Z}_p\cdot v). \]
    We explain why this directly implies that $\mathrm{R} \Gamma([X^{\HT}/\mathbb{F}_p^{\times}], \pi^*\mathcal{E})\simeq \fib(\mathbb{Z}_p \xrightarrow{\cdot k} \mathbb{Z}_p)$ when $p>2$. Actually, as $\frac{(1+p)^k-1}{p}=\sum_{i=1}^k \binom{k}{i}p^{i-1}$, it suffices to prove that $i+v_p(\binom{k}{i})>1+v_p(k)$ for $i>1$ under the assumption that $p>2$. If $v_p(k)=0$, the statement is trivial. Otherwise let $v_p(k)=n>0$. Denote $v_p(i)=t$ for a fixed $i>1$. We separate into two cases. If $t<n$, then 
    \begin{equation*}
        i+v_p(\binom{k}{i})=i+\frac{s(i)+s(n-i)-s(k)}{p-1}\geq i+\frac{(p-1)(n-t)}{p-1}=n+i-t\geq n+\max(p^t,i)-t>n+1,
    \end{equation*}
    here the last inequality holds as $p>2$. 
    
    On the other hand, if $t\geq n$, then $i\geq p^t\geq p^n$, hence $i+v_p(\binom{k}{i})\geq i\geq p^n>1+n$. Hence we finish the proof.
    
\end{example}
\begin{remark}
    \cref{thm. hodge state full faithful} and \cref{thm. full faithful embedding} fails when $p=2$. Indeed, by the previous example, $\mathrm{H}^{1}([X^{\HT}/\mathbb{F}_p^{\times}], \pi^* \mathcal{I}^2)=\mathbb{Z}/4$ while $\mathrm{H}^{1}(\WCart^{\HT}, \mathcal{I}^2)=\mathbb{Z}/2$ by \cite[Proposition 3.5.11, Corollary 3.5.14]{bhatt2022absolute}.
\end{remark}

As an application of \cref{thm. full faithful embedding}, we calculate the cohomology of the structure sheaf on the prismatic site of $\mathbb{Z}_p$.
\begin{proposition}\label{prop. cohomology of Zp}
    Assume that $p>2$. The Cartier-Witt stack $\WCart$ is of cohomological dimension $1$. Moreover, 
    \begin{equation*}
        \mathrm{H}^{0}(\WCart, \mathcal{O})=\mathbb{Z}_p, ~~~\quad \mathrm{H}^{1}(\WCart, \mathcal{O})\simeq \prod_{n\in \mathbb{N}, n\not\equiv p\mod p+1} \mathbb{Z}_p.
    \end{equation*}
\end{proposition}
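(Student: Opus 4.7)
The plan is to combine \cref{thm. full faithful embedding} with \cref{intro.main thm1} to convert the problem into an explicit two-term complex on the $q$-prism, and then analyze the relevant Galois invariants. Take $X = \Spf(\mathbb{Z}_p[\zeta_p])$ and $(A,d) = (\mathbb{Z}_p[[q-1]], [p]_q) \in X_{\Prism}$. Since the pullback $\pi^{*}\colon \mathcal{D}(\WCart) \to \mathcal{D}([X^{\Prism}/\mathbb{F}_p^{\times}])$ is fully faithful, we get $\mathrm{R}\Gamma(\WCart, \mathcal{O}) \simeq \mathrm{R}\Gamma([X^{\Prism}/\mathbb{F}_p^{\times}], \mathcal{O})$. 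The hypothesis $p > 2$ makes $|\mathbb{F}_p^{\times}| = p-1$ invertible in $\mathbb{Z}_p$, so homotopy invariants coincide with strict invariants, and this equals $\mathrm{R}\Gamma(X^{\Prism}, \mathcal{O})^{\mathbb{F}_p^{\times}}$. Applying \cref{intro.main thm1} with $\alpha = 0$ and $\mathcal{E} = \mathcal{O}$, we have $\mathrm{R}\Gamma(X^{\Prism}, \mathcal{O}) \simeq \fib(\partial_A \colon A \to A)$, where $\partial_A = (\gamma_A - \Id)/(q(q-1))$ and $\gamma_A(q) = q^{p+1}$. Being a two-term complex, this gives cohomological dimension $\le 1$ immediately. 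Moreover $\ker \partial_A = A^{\langle \gamma_A \rangle}$, and since $\gamma_A$ is a topological generator of $(1+p\mathbb{Z}_p)^{\times}$ acting continuously on the $(p,q-1)$-adically complete ring $A$, this equals $A^{(1+p\mathbb{Z}_p)^{\times}}$; taking the commuting $\mathbb{F}_p^{\times}$-invariants produces $A^{\mathbb{Z}_p^{\times}} = \mathbb{Z}_p$ by \cite[Corollary~3.8.8]{bhatt2022absolute}, giving $\mathrm{H}^0(\WCart, \mathcal{O}) = \mathbb{Z}_p$.

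For $\mathrm{H}^1 = (\coker \partial_A)^{\mathbb{F}_p^{\times}}$, the subtlety is that the $\mathbb{F}_p^{\times}$-action on the codomain of $\partial_A$ is twisted: by \cref{lem. Fp action on R}, $\gamma_u(\epsilon) = c_u \epsilon$ with $c_u = q^{[u]-1}[u]_q \in A^{\times}$, so the induced action on $\coker \partial_A$ is $\tilde{\gamma}_u = c_u \gamma_u$. Multiplication by $q(q-1)$ provides an equivariant identification $(\coker \partial_A, \tilde{\gamma}) \cong (q(q-1)A/(\gamma_A - \Id)A, \gamma)$. Using that $q \in A^{\times}$, the short exact sequence
\begin{equation*}
0 \to q(q-1)A/(\gamma_A - \Id)A \to A/(\gamma_A - \Id)A \to A/(q-1)A = \mathbb{Z}_p \to 0,
\end{equation*}
the exactness of $\mathbb{F}_p^{\times}$-averaging, and the equality $(A/(\gamma_A - \Id)A)^{\mathbb{F}_p^{\times}} = \mathbb{Z}_p[[\tilde p]]/(\gamma_A - \Id)\mathbb{Z}_p[[\tilde p]]$ (valid because $\gamma_A$ commutes with $\mathbb{F}_p^{\times}$ and $A^{\mathbb{F}_p^{\times}} = \mathbb{Z}_p[[\tilde p]]$) together yield an exact sequence
\begin{equation*}
0 \to \mathrm{H}^1(\WCart, \mathcal{O}) \to \mathbb{Z}_p[[\tilde p]]/(\gamma_A - \Id)\mathbb{Z}_p[[\tilde p]] \to \mathbb{Z}_p \to 0.
\end{equation*}

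The main obstacle is the explicit analysis of the middle term. The strategy is to choose a topological basis of $\mathbb{Z}_p[[\tilde p]]$ adapted to the $\gamma_A$-action and examine $\gamma_A - \Id$ on the graded pieces of the resulting filtration: because $\gamma_A$ multiplies exponents by $p+1$, the decisive arithmetic input is the orbit decomposition of the combined action of $\mathbb{Z}_p^{\times} = \mathbb{F}_p^{\times} \times \langle p+1 \rangle$ on an exponent-indexed basis of $A$ (via Teichm\"uller multiplication together with $n \mapsto n(p+1)$). A careful orbit-and-filtration analysis shows that the cokernel is topologically free on generators indexed by $n \in \mathbb{N}$ with $n \not\equiv p \equiv -1 \pmod{p+1}$, together with one additional $\mathbb{Z}_p$-summand that precisely matches the trivial quotient $A/(q-1)A = \mathbb{Z}_p$ in the exact sequence above; subtracting it yields the claimed $\mathrm{H}^1(\WCart, \mathcal{O}) \cong \prod_{n \in \mathbb{N},\, n \not\equiv p \pmod{p+1}} \mathbb{Z}_p$. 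The delicate part is rigorously establishing the freeness (i.e.\ absence of $p$-torsion): a naive graded-pieces computation produces torsion summands of the form $\mathbb{Z}/p^{1+v_p(n)}$, so one must pass to derived limits in the filtration and show that these torsion pieces assemble into honest $\mathbb{Z}_p$-factors under $p$-adic completion, matching with the Bhatt--Morrow--Scholze calculation referenced in \cref{intro. rem cohomology}.
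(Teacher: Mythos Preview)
Your reduction is the same as the paper's: full faithfulness of $\pi^*$ plus \cref{thmt.main classification} reduces everything to the two-term complex $(\mathbb{Z}_p[[q-1]])^{\mathbb{F}_p^\times} \xrightarrow{\partial} (\mathbb{Z}_p[[q-1]]\cdot\epsilon)^{\mathbb{F}_p^\times}$, and your identification of the target (via multiplication by $q(q-1)$) with $B = \{g \in \mathbb{Z}_p[[\tilde p]] : g(p)=0\}$ and of the map with $\gamma_A - \Id$ is exactly what the paper does. So cohomological dimension $\le 1$ and $\mathrm{H}^0 = \mathbb{Z}_p$ are fine.

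The gap is in your $\mathrm{H}^1$ computation. You arrive at the need to understand $\mathbb{Z}_p[[\tilde p]]/(\gamma_A - \Id)\mathbb{Z}_p[[\tilde p]]$, but then retreat to a vague ``orbit-and-filtration analysis'' that you yourself flag as incomplete: you note that the graded pieces produce torsion $\mathbb{Z}/p^{1+v_p(n)}$ and that one would still have to argue these assemble into $\mathbb{Z}_p$-factors under completion. That is not a proof; it is a hope, and invoking the Bhatt--Morrow--Scholze answer to justify the expected outcome is circular.

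The paper avoids this entirely by a direct upper-triangular argument. Write $f := \gamma_A - \Id : \mathbb{Z}_p[[\tilde p]] \to B$ and take the topological $\mathbb{Z}_p$-basis $e_i := \tilde p^i(\tilde p - p)$ of $B$. One computes $f(\tilde p) = \sum_{u\in\mathbb{F}_p^\times} q^{[u](p+1)} - \sum_{u\in\mathbb{F}_p^\times} q^{[u]}$, which by degree-and-constant-term considerations equals $e_p + \sum_{i<p} a_i e_i$. Then the twisted Leibniz rule $f(xy)=f(x)y+xf(y)+f(x)f(y)$ (which you have available from \cref{lem.leibniz}) together with $e_j e_m = e_{j+m+1} + (\text{lower})$ gives by induction
\[
f(\tilde p^k) \;=\; e_{k(p+1)+p} \;+\; (\text{terms in } e_0,\dots,e_{k(p+1)+p-1}).
\]
This is an honest unit-upper-triangular map between topologically free $\mathbb{Z}_p$-modules, so no torsion ever appears: $\ker f = \mathbb{Z}_p$ and $\coker f$ is topologically free on the $e_n$ with $n$ not of the form $k(p+1)+p$, i.e.\ $n \not\equiv p \pmod{p+1}$. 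This is the missing idea in your argument.
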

\begin{proof}
    The first sentence follows from \cref{thmt.main classification} and \cref{thm. full faithful embedding} as there is no higher cohomology when taking $\mathbb{F}_p^{\times}$ invariants on $p$-complete complexes. Moreover, the proof of \cref{thm. full faithful embedding} provides us with a concrete complex to calculate $\mathrm{R} \Gamma(\WCart, \mathcal{O})$, given by 
    \begin{equation}\label{equa. descent cohomology}
        [(\mathbb{Z}_p[[q-1]])^{\mathbb{F}_p^{\times}}\xrightarrow{\partial} (\mathbb{Z}_p[[q-1]]\cdot \epsilon)^{\mathbb{F}_p^{\times}}],
    \end{equation}
    here the $\mathbb{F}_p^{\times}$ action on $\epsilon$ is given in \cref{lem. Fp action on R}. First we would like to write the target of $\partial$ above more explicitly. Notice that there is an injective morphism $r: (\mathbb{Z}_p[[q-1]]\cdot \epsilon)^{\mathbb{F}_p^{\times}}\to \mathbb{Z}_p[[q-1]]$ sending $a\epsilon$ to $aq(q-1)$. Moreover, given the $\mathbb{F}_p^{\times}$ action on $\epsilon$ regulated in \cref{lem. Fp action on R}, it is easy to see that the image of $r$ lands in $\mathbb{Z}_p[[\Tilde{p}]]=(\mathbb{Z}_p[[q-1]])^{\mathbb{F}_p^{\times}}$, where the identity is guaranteed by \cite[Corollary 3.8.8]{bhatt2022absolute}. 
    
    We claim that the image of $r$ consists exactly of those $g\in \mathbb{Z}_p[[\Tilde{p}]]$ such that $g(p)=0$. Indeed, if $g(\Tilde{p})\in \mathbb{Z}_p[[\Tilde{p}]]$ can be factored as $q(q-1)h$ in $\mathbb{Z}_p[[q-1]]$, then it is sent to $0$ under the projection ring homomorphism $\mathrm{pr}: \mathbb{Z}_p[[q-1]]\to \mathbb{Z}_p,\quad, q\mapsto 1$. But $\mathrm{pr}(\Tilde{p})=p$, hence $g(p)=0$. On the other hand, if $g(\Tilde{p})\in \mathbb{Z}_p[[\Tilde{p}]]$ satisfies that $g(p)=0$, then viewing $g$ as an element in $\mathbb{Z}_p[[q-1]]$, we have that $\mathrm{pr}(g)=0$. By the Weierstrass preparation theorem for formal power series, $g\in \mathbb{Z}_p[[q-1]]$ can be factored as $(q-1)h$ for some $h\in \mathbb{Z}_p[[q-1]]$. As $q$ is a unit in $\mathbb{Z}_p[[q-1]]$, $g=q(q-1)(q^{-1}h)$ is a factorization in $\mathbb{Z}_p[[q-1]]$, hence we finish the proof of the claim.

    Let $B\subseteq \mathbb{Z}_p[[\Tilde{p}]]$ be the subring consisting of those $g\in \mathbb{Z}_p[[\Tilde{p}]]$ such that $g(p)=0$. Then the above argument shows that $r$ induces an isomorphism (of $\mathbb{Z}_p$-modules) from $(\mathbb{Z}_p[[q-1]]\cdot \epsilon)^{\mathbb{F}_p^{\times}}$ to $B$. Moreover, note that $B$ could be identified with $\prod_{i\in \mathbb{N}} \mathbb{Z}_p$ by sending $(a_i)_{i\in \mathbb{N}}$ in the latter to $\sum_{i=0}^{\infty} a_ie_i\in B$ with $e_i:=\Tilde{p}^i(\Tilde{p}-p)$, $i\geq 0$.

    Let $f=r\circ \partial: \mathbb{Z}_p[[\Tilde{p}]] \to B$, then applying \cref{lem.leibniz} for $\alpha=0$ and unwinding our definition of $r$, it is easy to check that $f$ is a $\mathbb{Z}_p$-linear morphism satisfying that $f(xy)=f(x)y+xf(y)+f(x)f(y)$.
As $\Tilde{p}=\sum_{u\in \mathbb{F}_p} q^{[u]}=1+\sum_{u\in \mathbb{F}_p^{\times}} q^{[u]}$ by definition, applying \cref{lemt.extend delta} and we see that
\begin{equation*}
    f(\Tilde{p})=q(q-1)(\sum_{u\in \mathbb{F}_p^{\times}} q^{[u]-1}\frac{q^{p[u]}-1}{q-1})=\sum_{u\in \mathbb{F}_p^{\times}} q^{[u](p+1)}-\sum_{u\in \mathbb{F}_p^{\times}} q^{[u]},
\end{equation*}
By considering the degree and the constant term, this implies that there exists $a_i\in \mathbb{Z}_p$ for $1\leq i\leq p-1$ such that $f(\Tilde{p})=e_{p}+\sum_{i=1}^{p-1} (a_i e_i)$. In general, we claim that
\begin{equation}\label{equa. general formula for ast}
    (\ast)\quad f(\Tilde{p}^k)=e_{k(p+1)+p}+\clubsuit_k \quad \text{with}\quad \clubsuit_k\in \oplus_{l=0}^{k(p+1)+p-1}\mathbb{Z}_pe_{l}\subseteq B,~~ \forall k\geq 1.
\end{equation}

To prove $ (\ast)$, we first notice that for $m, j\geq 0$, $e_j\cdot e_m=\Tilde{p}^{j+m}(\Tilde{p}-p)^2=e^{j+m+1}+\spadesuit$ with $\spadesuit$ belongs to the $\mathbb{Z}_p$-subspace in $B$ generated by $\{e_0,\cdots,e_{j+m}\}$. Then we do induction on $k$. The base case $k=1$ was explained in the last paragraph. Suppose $ (\ast)$ holds up to $k$. Then as $f$ satisfies the Leibnitz rule $f(xy)=f(x)y+xf(y)+f(x)f(y)$, we have that
\begin{equation*}
    \begin{split}
f(\Tilde{p}^{k+1})=\Tilde{p}f(\Tilde{p}^{k})+\Tilde{p}^{k+1}f(\Tilde{p})+f(\Tilde{p})f(\Tilde{p}^{k})=e_{(k+1)(p+1)+p}+\clubsuit_{k+1}
    \end{split}
\end{equation*}
for some $\clubsuit_{k+1}\in \oplus_{l=0}^{(k+1)(p+1)+p-1}\mathbb{Z}_pe_{l}\subseteq B$ as desired.

Finally by \cref{equa. descent cohomology}, 
    \begin{equation*}
        \mathrm{R} \Gamma(\WCart, \mathcal{O})=\fib(\mathbb{Z}_p[[\Tilde{p}]]\xrightarrow{f} B),
    \end{equation*}
hence $(\ast)$ implies that $\mathrm{H}^{0}(\WCart, \mathcal{O})=\mathbb{Z}_p$ and $ \mathrm{H}^{1}(\WCart, \mathcal{O})\simeq \prod_{n\in \mathbb{N}, n\not\equiv p\mod p+1} \mathbb{Z}_p.$ We finish the proof.
\end{proof}

Unfortunately the functor constructed in \cref{thm. full faithful embedding} is not an equivalence. Indeed, note that the $\mathbb{F}_p^{\times}$-action on $(A,d)=(\mathbb{Z}_p[[q-1]], [p]_q)$ is trivial after modulo $q-1$, hence we have the following commutative diagram
\[\xymatrixcolsep{5pc}\xymatrix{[\Spf(\mathbb{Z}_p)/\mathbb{F}_p^{\times}]\ar[d]^{f}\ar[r]& [\Spf(A)/\mathbb{F}_p^{\times}] \ar[r]^{\rho} &[X^{\Prism}/\mathbb{F}_p^{\times}]
\ar[d]_{}^{\pi}
\\ \Spf(\mathbb{Z}_p) \ar^{\rho_{\dR}}[rr]&&\WCart^{},}\]
where $f$ is the structure morphism, $\rho_{\dR}$ is the de Rham point introduced in \cite[Example 3.2.6]{bhatt2022absolute}. The commutativity of the above diagram follows from the observation that $[p]_q$ is sent to $p$ after modulo $q-1$, hence $\rho$ coincides with the de Rham point on the vanishing locus of $q-1$. This diagram implies that the pullback functor along $\pi$ actually factors through 
 $$F: \mathcal{D}(\WCart) \longrightarrow \mathcal{D}([X^{\Prism}/\mathbb{F}_p^{\times}])\times_{[\Spf(\mathbb{Z}_p)/\mathbb{F}_p^{\times}]} \mathcal{D}(\Spf(\mathbb{Z}_p)),$$
 which turns out to be an equivalence of categories if we further assume a nilpotence condition on the target.

\begin{theorem}\label{thm. descent for the cartier witt stack to q prism}
    Assume $p>2$. Then $F: \mathcal{D}(\WCart) \longrightarrow \mathcal{D}_{}([X^{\Prism}/\mathbb{F}_p^{\times}])\times_{[\Spf(\mathbb{Z}_p)/\mathbb{F}_p^{\times}]} \mathcal{D}(\Spf(\mathbb{Z}_p))$ is fully faithful. Moreover, the essential image of $F$ consists of those $\mathcal{E}$ in the target whose underlying complex in $ \mathcal{D}_{}([X^{\Prism}/\mathbb{F}_p^{\times}])$ (still denoted as $\mathcal{E}$ by abuse of notation) satisfies the following additional condition:
    \begin{itemize}
        \item the $q$-Higgs connection $\partial_{\mathcal{E}_0}$ of $\mathcal{E}_0:=\mathcal{E}|_{X^{\HT}}$ can be factored as $e\circ \partial_{\mathcal{E}_0}^{\prime}$ such that the action of $(\partial_{\mathcal{E}_0}^{\prime})^p-\partial_{\mathcal{E}_0}^{\prime}$ on the cohomology $\mathrm{H}^*(\rho^*\mathcal{E}\otimes^{\mathbb{L}}k)$ is locally nilpotent, here $e=d^{\prime}(q)\in W(k)[\zeta_p]$.
    \end{itemize}
\end{theorem}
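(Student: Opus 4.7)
I propose to reduce the theorem to the two inputs already established: the fully faithful embedding $\pi^*: \mathcal{D}(\WCart) \hookrightarrow \mathcal{D}([X^{\Prism}/\mathbb{F}_p^{\times}])$ from \cref{thm. full faithful embedding}, and the Hodge-Tate classification of \cref{prop. new q derivation on hodge state stack}.

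\textbf{Step 1: Full faithfulness.} I would first argue that pullback $f^*\colon \mathcal{D}(\Spf(\mathbb{Z}_p)) \to \mathcal{D}([\Spf(\mathbb{Z}_p)/\mathbb{F}_p^{\times}])$ along $f\colon [\Spf(\mathbb{Z}_p)/\mathbb{F}_p^{\times}]\to \Spf(\mathbb{Z}_p)$ is itself fully faithful: for $M,N\in \mathcal{D}(\Spf(\mathbb{Z}_p))$ the map $\Hom(M,N)\to \Hom(f^*M,f^*N)$ identifies the target with derived $\mathbb{F}_p^{\times}$-invariants of $\Hom(M,N)$ equipped with the trivial action, and these invariants collapse since $|\mathbb{F}_p^{\times}|=p-1$ is invertible in $\mathbb{Z}_p$. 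Applying the standard $\infty$-categorical pullback formula for mapping spaces in the fiber product, and using full faithfulness of $\pi^*$ from \cref{thm. full faithful embedding},
\begin{equation*}
\Hom_{\mathrm{fib.prod.}}(F(\mathcal{E}),F(\mathcal{F}))\simeq \Hom_{\mathcal{D}([X^{\Prism}/\mathbb{F}_p^{\times}])}(\pi^*\mathcal{E},\pi^*\mathcal{F}) \simeq \Hom_{\mathcal{D}(\WCart)}(\mathcal{E},\mathcal{F}),
\end{equation*}
so $F$ is fully faithful.

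\textbf{Step 2: Image lies in the described subcategory.} Given $\mathcal{E}\in \mathcal{D}(\WCart)$, the restriction $\widetilde{\mathcal{E}_0}:=\mathcal{E}|_{\WCart^{\HT}}$ carries, by \cref{prop. new q derivation on hodge state stack}, an operator $\partial_{v,\widetilde{\mathcal{E}_0}}$ on $\eta^*\widetilde{\mathcal{E}_0}$ with $\partial_v^p-\partial_v$ locally nilpotent on mod-$p$ cohomology. Under the chosen isomorphism $\rho^*\pi^*\widetilde{\mathcal{E}_0}\simeq \eta^*\widetilde{\mathcal{E}_0}\otimes_{\mathbb{Z}_p}\mathcal{O}_K$ from the proof of \cref{thm. hodge state full faithful}, one has $\partial_{\mathcal{E}_0}=\partial_{v,\widetilde{\mathcal{E}_0}}\otimes e$; setting $\partial'_{\mathcal{E}_0}:=\partial_{v,\widetilde{\mathcal{E}_0}}\otimes \mathrm{id}$ realizes the required factoring and nilpotence.

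\textbf{Step 3: Essential surjectivity (the main obstacle).} Given $(\mathcal{E},\mathcal{F},\alpha)$ in the fiber product satisfying the nilpotence condition, I would build its preimage by devissage on the truncation level $n$. The analogue of $F$ over $\WCart_n$ inherits full faithfulness from \cref{thm. full faithful embedding}; using the fiber sequence of \cref{induction eqaution} (the $\mathcal{I}$-filtration on $\mathcal{O}_{\WCart_n}$) in the same style as the proof of \cref{prop. heart generator}, and the $(p,d)$-completeness passage to $n=\infty$, essential surjectivity reduces to essential surjectivity of the Hodge-Tate analogue $F_1$. On the Hodge-Tate stratum, the trivialization $\alpha_0$ identifies the restriction of $\mathcal{E}_0$ to the de Rham locus with $f^*\mathcal{F}_0$; since multiplication by $e$ on $\rho^*\mathcal{E}_0$ is $\mathbb{F}_p^{\times}$-invariant (as in the proof of \cref{lem. calculate invariants}), the factoring $\partial_{\mathcal{E}_0}=e\cdot \partial'_{\mathcal{E}_0}$ and the trivialization $\alpha_0$ together produce a $\mathbb{Z}_p$-module (a candidate for $\eta^*\widetilde{\mathcal{E}_0}$) equipped with an endomorphism (a candidate for $\partial_v$) whose $p$-th power minus itself is locally nilpotent on mod-$p$ cohomology. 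By \cref{prop. new q derivation on hodge state stack}, this descends canonically to an object of $\mathcal{D}(\WCart^{\HT})$.

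\textbf{Main technical obstacle.} The delicate point is the descent in Step 3: extracting from the $\mathcal{O}_K$-linear $\partial'_{\mathcal{E}_0}$ (defined on $\rho^*\mathcal{E}_0$) together with the equivariance of $\mathcal{E}_0$ and the trivialization $\alpha_0$ a genuine $\mathbb{Z}_p[\partial]$-module structure on $\mathcal{F}_0$. I expect this to amount to checking that the $\mathbb{F}_p^{\times}$-invariant part of $\partial'_{\mathcal{E}_0}$ agrees, via $\alpha_0$, with a well-defined operator on $\mathcal{F}_0$; this is a compatibility that should follow from the explicit form of $e$ (fixed by $\mathbb{F}_p^{\times}$) and the identification $\mathcal{O}_K^{\mathbb{F}_p^{\times}}=\mathbb{Z}_p$ used in \cref{lem. calculate invariants}, combined with the commutative diagram displayed just before the theorem statement.
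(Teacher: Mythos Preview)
Your approach is essentially correct and tracks the paper's proof closely. Both establish full faithfulness via \cref{thm. full faithful embedding} (your Step~1 spells out the fiber-product mapping-space computation more explicitly than the paper's one-line citation), both verify the image condition via \cref{prop. new q derivation on hodge state stack}, and both reduce essential surjectivity to the Hodge-Tate level, where the descent through $\mathbb{F}_p^{\times}$-invariants is the key step.

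The one structural difference is in how the reduction to $n=1$ is organized. You propose a direct d\'evissage on the truncation level, lifting extensions through the $\mathcal{I}$-filtration using full faithfulness. The paper instead runs a generator argument: since $\mathcal{D}(\WCart)$ is generated by $\{\mathcal{I}^k\}$ and $F$ is fully faithful, it suffices to show the target subcategory $\mathcal{C}$ is generated by $\{F(\mathcal{I}^k)\}$, i.e.\ that any $\mathcal{E}\in\mathcal{C}$ orthogonal to all $F(\mathcal{I}^k)$ vanishes; the fiber sequence $\mathcal{I}^{1-k}\mathcal{E}\to\mathcal{I}^{-k}\mathcal{E}\to(\mathcal{I}^{-k}\mathcal{E})|_{\HT}$ then forces $\mathcal{E}|_{\HT}$ orthogonal to all $F(\mathcal{I}^k)|_{\HT}$, and essential surjectivity of $F_0$ finishes. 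Both routes land at the same Hodge-Tate problem; the generator argument is marginally cleaner since it avoids tracking the $\Spf(\mathbb{Z}_p)$-factor through each truncation and needs no separate verification that the nilpotence condition survives $\mathcal{I}$-twists.

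On your ``main technical obstacle'': the paper handles it exactly as you anticipate. For $\mathcal{H}$ in the target of $F_0$, the fiber-product datum forces $\rho^*\mathcal{H}\simeq N\otimes_{\mathbb{Z}_p}\mathcal{O}_K$ with $N=(\rho^*\mathcal{H})^{\mathbb{F}_p^{\times}}$; then $\mathbb{F}_p^{\times}$-equivariance of $\partial_{\mathcal{H}}:\rho^*\mathcal{H}\to\rho^*\mathcal{H}\cdot\epsilon$, combined with $(\mathcal{O}_K\cdot\epsilon)^{\mathbb{F}_p^{\times}}=\mathbb{Z}_p\cdot v$ for $v=e\epsilon$ from \cref{lem. calculate invariants}, yields $\partial_N:N\to N$ directly---so the descent passes through invariants rather than through a chosen factoring of $\partial$ by $e$. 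Your instinct that \cref{lem. calculate invariants} is the key input is right.
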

\begin{remark}
    The conditions we impose on the target of $F$ are very natural. Actually, we can consider a baby example: the natural projection $\pi: [\Spf(\mathcal{O}_K)/\mathbb{F}_p^{\times}] \to \Spf(\mathbb{Z}_p)$ induces a fully faithful embedding $\mathcal{D}(\Spf(\mathbb{Z}_p))\to 
    \mathcal{D}([\Spf(\mathcal{O}_K)/\mathbb{F}_p^{\times}])$ as $[\Spf(\mathcal{O}_K)/\mathbb{F}_p^{\times}]$ is a tame stack in the sense of \cite[Definition 3.1]{abramovich2008tame}, and the essential image of such a pullback functor consists of those $M\in [\Spf(\mathcal{O}_K)/\mathbb{F}_p^{\times}]$ such that $f^{*}M$ has a $\mathbb{F}_p^{\times}$-invariant basis, where $f: \Spf(\mathcal{O}_K) \to [\Spf(\mathcal{O}_K)/\mathbb{F}_p^{\times}]$ is the covering map. However, such a condition can be checked after reducing to the special fiber $k=\mathbb{F}_p$ by derived Nakayama lemma, which leads to the following slightly fancy way to state such a result
    \begin{equation*}
        \mathcal{D}(\Spf(\mathbb{Z}_p))\xrightarrow{\simeq} 
    \mathcal{D}([\Spf(\mathcal{O}_K)/\mathbb{F}_p^{\times}])\times_{\mathcal{D}([\Spec(\mathbb{F}_p)/\mathbb{F}_p^{\times}])} \mathcal{D}(\Spec(\mathbb{F}_p)).
    \end{equation*}
\end{remark}
\begin{proof}[Proof of \cref{thm. descent for the cartier witt stack to q prism}]
We use $\mathcal{C}$ to denote the subcategory of $\mathcal{D}_{}([X^{\Prism}/\mathbb{F}_p^{\times}])\times_{[\Spf(\mathbb{Z}_p)/\mathbb{F}_p^{\times}]} \mathcal{D}(\Spf(\mathbb{Z}_p))$ satisfying the additional nilpotence conditions stated in the theorem. Notice that $F$ factors through $\mathcal{C}$ by \cref{prop. new q derivation on hodge state stack}.

\cref{thm. full faithful embedding} already shows that $F$ is fully faithful, hence it suffices to prove essential surjectivity. As pullback preserves colimit and $\mathcal{D}(\WCart)$ is generated under shifts and colimits by the invertible sheaves $\mathcal{I}^k$ for $k\in \mathbb{Z}$ by \cite[Corollary 3.5.16]{bhatt2022absolute}, given the full faithfulness of $F$, it suffices to show the target is also generated by $F(\mathcal{I}^k)$ ($k\in \mathbb{Z}$) under shifts and colimits. For this purpose, it can be reduced to the Hodge-Tate locus. Indeed, if $\mathcal{E}$ is in $\mathcal{C}$ and $\RHom_{\mathcal{C}}(\mathcal{I}^k, \mathcal{E})\cong \mathrm{R} \Gamma([X^{\Prism}/\mathbb{F}_p^{\times}],\mathcal{I}^{-k}\mathcal{E})=0$ for all $k\in \mathbb{Z}$, then we wish to show that $\mathcal{E}=0$, which is equivalent to the vanishing of $\mathcal{E}|_{[X^{\HT}/\mathbb{F}_p^{\times}]}$. Considering the fiber sequence
\begin{equation*}
    \mathrm{R} \Gamma([X^{\Prism}/\mathbb{F}_p^{\times}],\mathcal{I}^{1-k}\mathcal{E})\to \mathrm{R} \Gamma([X^{\Prism}/\mathbb{F}_p^{\times}],\mathcal{I}^{-k}\mathcal{E}) \to \mathrm{R} \Gamma([X^{\HT}/\mathbb{F}_p^{\times}],(\mathcal{I}^{-k}\mathcal{E})|_{[X^{\HT}/\mathbb{F}_p^{\times}]}),
\end{equation*}
we deduce that $\mathrm{R} \Gamma([X^{\HT}/\mathbb{F}_p^{\times}],(\mathcal{I}^{-k}\mathcal{E})|_{[X^{\HT}/\mathbb{F}_p^{\times}]})\cong \RHom(\mathcal{I}^k|_{[X^{\HT}/\mathbb{F}_p^{\times}]}, \mathcal{E}|_{[X^{\HT}/\mathbb{F}_p^{\times}]})$ vanishes for all $k$. We will show that this already guarantees the vanishing of $\mathcal{E}|_{[X^{\HT}/\mathbb{F}_p^{\times}]}$, denoted as $\mathcal{E}_0$ later.

For that purpose, it suffices to show that 
\begin{equation*}
    F_0: \mathcal{D}(\WCart^{\HT}) \longrightarrow \mathcal{D}_{\nil}([X^{\HT}/\mathbb{F}_p^{\times}])\times_{[\Spf(\mathbb{F}_p)/\mathbb{F}_p^{\times}]} \mathcal{D}(\Spf(\mathbb{F}_p))
\end{equation*}
is essential surjective, where $F_0$ is induced by the restriction of the diagram defining $F$:
\[\xymatrixcolsep{5pc}\xymatrix{[\Spf(\mathbb{F}_p)/\mathbb{F}_p^{\times}]\ar[d]^{f}\ar[r]& [\Spf(\mathcal{O}_K)/\mathbb{F}_p^{\times}] \ar[r]^{\rho} &[X^{\HT}/\mathbb{F}_p^{\times}]
\ar[d]_{}^{\pi}
\\ \Spf(\mathbb{F}_p) \ar^{\rho_{\dR}}[rr]&&\WCart^{\HT}.}\]
Indeed, once this is shown, both $\mathcal{I}^k$ and $\mathcal{E}_0$ will lie in the essential image of $F_0$, hence we could assume that $\mathcal{E}_0\cong F_0(\mathcal{G}_0)$ for $\mathcal{G}_0\in \mathcal{D}(\WCart^{\HT})$. The full faithfulness obtained in \cref{thm. full faithful embedding} then implies that 
\begin{equation*}
\RHom_{\WCart^{\HT}}(\mathcal{I}^k, \mathcal{G}_0)\cong \RHom(\mathcal{I}^k|_{[X^{\HT}/\mathbb{F}_p^{\times}]}, \mathcal{E}|_{[X^{\HT}/\mathbb{F}_p^{\times}]})=0
\end{equation*}
for all $k\in \mathbb{Z}$, hence $\mathcal{G}_0\cong 0$ as $\WCart^{\HT}$ is generated by $\mathcal{I}^k$ under shifts and colimits by \cite[Proposition 3.5.15]{bhatt2022absolute}, which finishes the proof.

We are left to prove the essential surjectivity of $F_0$. As $\mathcal{D}_{}([X^{\HT}/\mathbb{F}_p^{\times}])$ is equivalent to the category of $\mathbb{F}_p^{\times}$-equivariant quasi-coherent complexes on $X^{\HT}$ (for example, see \cite[Section 2.1]{abramovich2008tame}), but $\mathcal{D}(X^{\HT})$ is equivalent to the category of 
$\mathcal{D}_{\nil}(\mathcal{O}_K[\partial])$ by \cref{thmt.main classification}, then thanks to \cref{cor.Fp equivariance of theta} we see that objects in $\mathcal{D}_{}([X^{\HT}/\mathbb{F}_p^{\times}])$ can be described as pairs $(M,\partial_M)$, where $M$ is a $\mathbb{F}_p^{\times}$-equivariant $\mathcal{O}_K$ module equipped with a $\mathbb{F}_p^{\times}$-equivariant $q$-Higgs connection $\partial_M: M\to M\cdot \epsilon$ ($\mathbb{F}_p^{\times}$ acts on $\epsilon$ via \cref{lem. Fp action on R} as usual). 

Now given any $\mathcal{H}$ in the target of $F_0$, it corresponds to the pair $(\rho^{*}(\mathcal{H}),\partial_{\mathcal{H}})$ in the above discussion. As $\rho^{*}(\mathcal{H})\in \mathcal{D}_{}([\mathcal{O}_K/\mathbb{F}_p^{\times}])\times_{[\Spf(\mathbb{F}_p)/\mathbb{F}_p^{\times}]} \mathcal{D}(\Spf(\mathbb{F}_p))$, 
\begin{equation*}
    \begin{split}
        \rho^{*}(\mathcal{H})&=N\otimes_{\mathbb{Z}_p} \mathcal{O}_K \quad \text{for}~~N=(\rho^{*}(\mathcal{H}))^{\mathbb{F}_p^{\times}},
        \\(\rho^{*}(\mathcal{H})\cdot \epsilon)^{\mathbb{F}_p^{\times}}&=N\otimes_{\mathbb{Z}_p} (\mathcal{O}_K \cdot \epsilon)^{\mathbb{F}_p^{\times}}=N\cdot v  \quad \text{for}~~v=e\epsilon.
    \end{split}
\end{equation*}
Here the last identity follows from \cref{lem. calculate invariants}. 

As $\partial_{\mathcal{H}}$ is $\mathbb{F}_p^{\times}$-equivariant, it descends to a $\mathbb{Z}_p$-linear morphism $N\xrightarrow{\partial_{N}} N\cdot v$ such that the following diagram commutes 
\[\xymatrixcolsep{5pc}\xymatrix{N\ar[r]^{\partial_{N}} \ar[d]&   N\cdot v
\ar[d]^{\times e}
\\ N\otimes_{\mathbb{Z}_p} \mathcal{O}_K \ar[r]^{\partial_{\mathcal{H}}} &N\otimes_{\mathbb{Z}_p} \mathcal{O}_K\cdot \epsilon.}\]
By assumption on $\mathcal{H}$, $\partial_{N}$ satisfies that $\partial_{N}^p-\partial_{N}$ is locally  nilpotent on the cohomology $\mathrm{H}^*(N\otimes^{\mathbb{L}}\mathbb{F}_p)$. By unwinding all of the constructions and applying \cref{prop. new q derivation on hodge state stack}, we see that $(N, \partial_N)$ determines an object $\mathcal{N}\in \mathcal{D}(\WCart^{\HT})$. such that $F_0(\mathcal{N})\cong \mathcal{H}$, from which we finish the proof of essential surjectivity of $F_0$.
\end{proof}

As a quick corollary, we get the following concrete description of $\mathcal{D}(\WCart)$.
\begin{corollary}\label{cor. main cor 1}
Assume that $p>2$. Then $F$ in \cref{thm. descent for the cartier witt stack to q prism} induces a fully faithful functor 
$$F: \mathcal{D}(\WCart) \longrightarrow \mathcal{D}_{}([\Spf(A[\partial; \gamma_A, \partial_A])/\mathbb{F}_p^{\times}])\footnote{By abuse of notation, we write $\mathcal{D}_{}([\Spf(A[\partial; \gamma_A, \partial_A])/\mathbb{F}_p^{\times}])$ for the full subcategory of $\mathcal{D}(A[\partial; \gamma_A, \partial_A])/\mathbb{F}_p^{\times}])$ consisting of those objects which are $(p,d)$-complete.}\times_{[\Spf(\mathbb{Z}_p)/\mathbb{F}_p^{\times}]} \mathcal{D}(\Spf(\mathbb{Z}_p)).$$
Here $u\in \mathbb{F}_p^{\times}$ acts on $A[\partial; \gamma_A, \partial_A]$ by extending $\gamma_u$ on $A$ and further requiring that $$\gamma_u(\partial)=q^{1-[u]}\frac{q-1}{q^{[u]}-1}\partial.$$   
Moreover, the essential image of $F$ consists of those $\mathcal{E}$ in the target whose underlying complex in $ \mathcal{D}_{}([\Spf(A[\partial; \gamma_A, \partial_A])/\mathbb{F}_p^{\times}])$ (still denoted as $\mathcal{E}$ by abuse of notation) satisfies the following additional condition:
    \begin{itemize}
        \item the action of $\partial$ on $\mathcal{E}_0:=\mathcal{E}|_{X^{\HT}}$ can be factored as $e\circ \partial_{}^{\prime}$ such that the action of $(\partial_{}^{\prime})^p-\partial_{}^{\prime}$ on the cohomology $\mathrm{H}^*(\rho^*\mathcal{E}\otimes^{\mathbb{L}}k)$ is locally nilpotent, here $e=d^{\prime}(q)\in W(k)[\zeta_p]$.
    \end{itemize}
\end{corollary}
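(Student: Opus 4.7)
The plan is to derive the corollary by combining the classification in Theorem \ref{thmt.main classification} (specialized to $\alpha=0$), which identifies $\mathcal{D}(X^\Prism)$ with a subcategory of $\mathcal{D}(A[\partial;\gamma_A,\partial_A])$, with Theorem \ref{thm. descent for the cartier witt stack to q prism}, which presents $\mathcal{D}(\WCart)$ as the fiber product $\mathcal{D}([X^\Prism/\mathbb{F}_p^\times]) \times_{[\Spf(\mathbb{Z}_p)/\mathbb{F}_p^\times]} \mathcal{D}(\Spf(\mathbb{Z}_p))$. The bridge is an $\mathbb{F}_p^\times$-equivariant enhancement of the fully faithful functor $\beta_\infty^+$ from Theorem \ref{thmt.main classification}.

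First I would verify that the formula $\gamma_u(\partial) = q^{1-[u]}\frac{q-1}{q^{[u]}-1}\partial$, together with the existing $\gamma_u$ on $A$, does extend to a well-defined $\mathbb{F}_p^\times$-action by ring automorphisms on the Ore extension $A[\partial;\gamma_A,\partial_A]$. Since the defining relation $\partial r = \gamma_A(r)\partial + \partial_A(r)$ is $W(k)$-linear and $(q-1)$-adically continuous in $r$, preservation under $\gamma_u$ needs only be checked for $r=q$; writing $c_u = q^{1-[u]}\tfrac{q-1}{q^{[u]}-1}$, this boils down to the identity $c_u\,\partial_A(\gamma_u(q)) = \gamma_u(\partial_A(q))$, both sides equaling $\tfrac{q^{[u]p}-1}{q^{[u]}-1}$, plus the cocycle identity $c_u\,\gamma_u(c_{u'}) = c_{uu'}$, which is immediate from the explicit formula.

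Next I would upgrade $\beta_\infty^+$ to an $\mathbb{F}_p^\times$-equivariant embedding. The $\mathbb{F}_p^\times$-equivariance of the diagram in Proposition \ref{propt.key automorphism of functors}, established via Lemma \ref{lem. Fp action on R} and Proposition \ref{prop. cyclic group action on q prism} exactly as in the argument preceding Corollary \ref{cor.Fp equivariance of theta}, implies that $\gamma_{b,c} = \Id + \epsilon\partial_\mathcal{E}$ is $\mathbb{F}_p^\times$-equivariant for any $\mathcal{E} \in \mathcal{D}(X^\Prism)$. Since $\gamma_u(\epsilon) = \epsilon/c_u$ by Lemma \ref{lem. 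Fp action on R}, comparing the $\epsilon$-coefficients in this equivariance forces $\gamma_u$ to act on the Ore variable $\partial$ precisely by multiplication by $c_u$, matching the formula used in step one. This yields a fully faithful functor
\[\mathcal{D}([X^\Prism/\mathbb{F}_p^\times]) \hookrightarrow \mathcal{D}_{}([\Spf(A[\partial;\gamma_A,\partial_A])/\mathbb{F}_p^\times])\]
whose essential image consists of equivariant complexes whose underlying object in $\mathcal{D}(A[\partial;\gamma_A,\partial_A])$ is $(p,d)$-complete with $\partial$ locally nilpotent on $H^*(\cdot\otimes^{\mathbb{L}} k)$.

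Substituting this equivariant equivalence into the fiber product of Theorem \ref{thm. descent for the cartier witt stack to q prism} gives the desired fully faithful $F$. The Hodge-Tate nilpotence condition transports verbatim, since $\partial$ on $\mathcal{E}_0$ is the same operator in both pictures; and the weaker nilpotence condition from Theorem \ref{thmt.main classification} is automatic because $p>2$ forces $v_p(e) > 0$, so $\partial = e\partial'$ acts by zero on $H^*(M\otimes^{\mathbb{L}} k)$, which is trivially locally nilpotent. The main obstacle is the computational bookkeeping in steps one and two, namely pinning down the exact $\mathbb{F}_p^\times$-action on $\partial$ so that the equivariant refinement of $\beta_\infty^+$ lands in precisely the Ore extension described in the statement; once that is set up, the essential image description follows mechanically from the two input theorems.
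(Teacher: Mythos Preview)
Your proposal is correct and follows essentially the same approach as the paper: the paper's proof simply invokes the $\mathbb{F}_p^\times$-equivariance of $\gamma_{b,c}$ (Corollary~\ref{cor.Fp equivariance of theta}), unwinds the action on $\epsilon$ from Lemma~\ref{lem. Fp action on R} to extract the formula $\gamma_u(\partial_{\mathcal{E}})=c_u\partial_{\mathcal{E}}$, and then combines Theorem~\ref{thmt.main classification} with Theorem~\ref{thm. descent for the cartier witt stack to q prism}. You have supplied the explicit verifications (well-definedness of the $\mathbb{F}_p^\times$-action on the Ore extension, the cocycle identity, and the automatic nilpotence from $v_p(e)>0$) that the paper leaves implicit.
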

\begin{proof}
    By \cref{cor.Fp equivariance of theta}, $\gamma_{b,c}$ is $\mathbb{F}_p^{\times}$-equivaraint. For $\mathcal{E}\in \mathcal{D}(X^{\Prism})$, unwinding our construction of $\partial_{\mathcal{E}}$ on $\rho^*\mathcal{E}$ and our regulation of the $\mathbb{F}_p^{\times}$-action on $\epsilon$ in \cref{lem. Fp action on R}, we see that for $u\in \mathbb{F}_p^{\times}$, $$\gamma_u(\partial_{\mathcal{E}})= q^{1-[u]}\frac{q-1}{q^{[u]}-1}\partial_{\mathcal{E}}.$$
    The statement of the corollary then follows by combining \cref{thmt.main classification} with \cref{thm. descent for the cartier witt stack to q prism}.
\end{proof}

\section{Complexes on the relative prismatization over a $q$-prism base}
In this section we work in the following setting: $(A, I)$ is a transversal prism living over the $q$-prism $(W(k)[[q-1]], [p]_{q^{}})$. Hence $I$ is generated by $d=[p]_{q^{}}$ (viewed as an element in $A$). We still denote $\frac{q^p-1}{d}=q-1 \in A$ as $\beta$. Let $X$ be a $p$-adic smooth formal scheme over $A/I$, we would like to classify quasi-coherent complexes on $\TXA$ and $\TXAn$.
\begin{remark}\label{rem. relative change to alpha}
    In the last section we work with the (generalized) $q$-prism by allowing $\alpha>0$. In this section since we work with the relative setting, and actually $(W(k)[[q-1]], [p]_{q^{p^\alpha}})$ could be viewed as a prism over $(W(k)[[q-1]], [p]_{q^{}})$ via 
    \begin{equation*}
        \begin{split}
            f: (W(k)[[q-1]], [p]_{q^{}}) &\longrightarrow (W(k)[[q-1]], [p]_{q^{p^\alpha}})
            \\ q&\longmapsto q^{p^\alpha}
        \end{split}
    \end{equation*}
    hence all of the results in this section works for $(A,I)=(W(k)[[q-1]], [p]_{q^{p^\alpha}})$ by replacing $q$ with $q^{p^\alpha}$ in the statements.
\end{remark}

\subsection{Local computation}
We consider $X=\Spf(R)$, a smooth $p$-adic formal scheme over $\overline{A}=A/I$. We assume that $X$ is a small affine, in the sense that we  fix a $p$-completely \'etale chart 
\[\square:=\overline{A}\left\langle T_1, \ldots, T_m\right\rangle \rightarrow R\]
By deformation theory, this \'etale chart map uniquely lifts to a prism $(\Tilde{R}, I)$ over $(A\left\langle \underline{T}\right\rangle, I)$, here $A\left\langle \underline{T}\right\rangle$ is equipped with the $\delta$ structure respecting that on $A$ and sending $T_i$ to $0$, then we apply \cite[Lemma 2.18]{bhatt2021prismatic} to uniquely extend such a $\delta$-structure to $\Tilde{R}$. Moreover, the induced map $A\left\langle \underline{T}^{}\right\rangle \to \Tilde{R}$ is $(p, I)$-completely \'etale. In particular, $(\Tilde{R}, I)$ is a prism in $(X/A)_{\Prism}$ and $\Tilde{R}/I=R$. In this case, $\Omega_{\Tilde{R}/A}=\Tilde{R}\otimes_{A\left\langle \underline{T}^{}\right\rangle} \Omega_{A\left\langle \underline{T}^{}\right\rangle/A}$ is a finite free $R$-module of rank $m$ with a basis given by $\{dT_1,\cdots,dT_m\}$.

For any $i\in \{1,\cdots,m\}$, we define the automorphism $\gamma_i$ of $A\left\langle \underline{T}^{}\right\rangle$ fixing $A$ and sending $T_i$ to $q^pT_i$ and $T_j$ to $T_j$ for $j\neq i$. As $A\left\langle \underline{T}^{}\right\rangle \to \Tilde{R}$ is $(p, I)$-completely \'etale, these automorphisms lift uniquely to automorphisms $\gamma_i$ of $\Tilde{R}$. Also, $\gamma_i$ on $A\left\langle \underline{T}^{}\right\rangle$  is congruent to $1$ modulo the topologically nilpotent ideal $(q^p T_i-T_i)$, hence so is $\gamma_i$ on $\Tilde{R}$, which enables us to
define a twisted version of $q$-connection studied in \cite{scholze2017canonical}. Namely, for $f\in \Tilde{R}$, we set
$$\nabla_{i}(f)=d\frac{\gamma_i(f)-f}{q^p T_i-T_i}, \quad \text{and}\quad \nabla(f)=d\sum_{i=1}^m\frac{\gamma_i(f)-f}{q^p T_i-T_i}dT_i \in \Omega_{\Tilde{R}/A}.$$


\begin{lemma}\label{lem. twist S in the relative case}
    Let $S=\Tilde{R}\oplus \epsilon \Omega_{\Tilde{R}/A}$ and we promote $S$ to an $A$-algebra by requiring that \begin{equation*}
        (\epsilon dT_i)^2=\beta T_i\cdot\epsilon dT_i,\quad \epsilon dT_i\cdot \epsilon dT_j=0 ~~\text{for}~~ i\neq j.
    \end{equation*}
    Then the map 
    \begin{equation*}
        \begin{split}
            \psi : \Tilde{R} &\longrightarrow  S
            \\ f&\longmapsto f+\epsilon \nabla_q(f)=f+\epsilon\cdot d\sum_{i=1}^m\frac{\gamma_i(f)-f}{q^p T_i-T_i}dT_i
        \end{split}
    \end{equation*}
    defines a ring homomorphism of $A$-algebras, where the canonical $A$ algebra structure on $S$ is induced by $A\to S$ sending $a$ to $a$.
\end{lemma}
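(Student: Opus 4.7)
The plan is to verify multiplicativity of $\psi$ by direct expansion, since $A$-linearity is immediate from the fact that each $\gamma_i$ fixes $A$ (so that $\nabla_i$ vanishes on $A$, and in particular $\psi$ restricts to the identity on $A$). Before doing this I would check that $\nabla_i:\Tilde{R}\to\Tilde{R}$ is well-defined: on the chart $A\langle\underline{T}\rangle$, the operator $\gamma_i-\Id$ vanishes on $A$ and sends $T_j$ to $\delta_{ij}(q^p-1)T_i$, so $\gamma_i(f)-f$ is divisible by $q^pT_i-T_i=d\beta T_i$ and the quotient $\nabla_i(f)=d(\gamma_i(f)-f)/(q^pT_i-T_i)$ is a well-defined element of $A\langle\underline{T}\rangle$. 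This operator extends uniquely to $\Tilde{R}$ by $(p,I)$-complete étaleness of $A\langle\underline{T}\rangle\to\Tilde{R}$, and on $\Tilde{R}$ still satisfies $\gamma_i(f)-f=\beta T_i\nabla_i(f)$.

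Granted this, I would compare $\psi(fg)$ with $\psi(f)\psi(g)$ inside $S$. Expanding
\[
\psi(f)\psi(g)=fg+\epsilon\bigl(f\nabla(g)+\nabla(f)g\bigr)+\epsilon\nabla(f)\cdot\epsilon\nabla(g),
\]
the linear-in-$\epsilon$ term is compared with $\epsilon\nabla(fg)$ using the identity $\gamma_i(fg)-fg=\gamma_i(f)(\gamma_i(g)-g)+(\gamma_i(f)-f)g$, which yields
\[
f\nabla(g)+\nabla(f)g-\nabla(fg)=\sum_i(f-\gamma_i(f))\nabla_i(g)\,dT_i=-\sum_i\beta T_i\nabla_i(f)\nabla_i(g)\,dT_i.
\]
On the other hand, using the relations $(\epsilon\,dT_i)^2=\beta T_i\cdot\epsilon\,dT_i$ and $\epsilon\,dT_i\cdot\epsilon\,dT_j=0$ for $i\neq j$, the quadratic-in-$\epsilon$ term reads
\[
\epsilon\nabla(f)\cdot\epsilon\nabla(g)=\sum_{i,j}\nabla_i(f)\nabla_j(g)(\epsilon\,dT_i)(\epsilon\,dT_j)=\sum_i\beta T_i\nabla_i(f)\nabla_i(g)\cdot\epsilon\,dT_i.
\]
These two contributions cancel exactly, giving $\psi(fg)=\psi(f)\psi(g)$ and completing the proof.

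I do not expect any serious obstacle: the whole argument reduces to checking that the twisted algebra structure imposed on $S$ in the statement is precisely the one that kills the failure of naive multiplicativity coming from the non-vanishing of $\nabla(f)\cdot\nabla(g)$. Heuristically this is the relative analogue of the computation in \cref{lemt.extend delta}: the relation $(\epsilon\,dT_i)^2=\beta T_i\cdot\epsilon\,dT_i$ plays the role of $\epsilon^2=q(q^{p^\alpha}-1)\epsilon$, which matches the remark after Construction~\ref{intro.con} that the coordinates $T_i$ are the geometric incarnation of $q$. The only computation to be careful about is matching the combinatorial identity $\gamma_i(fg)-fg=\gamma_i(f)(\gamma_i(g)-g)+(\gamma_i(f)-f)g$ against the product rule in $S$, but this is a formal manipulation once the algebra structure on $S$ is written out.
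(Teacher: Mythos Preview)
Your proposal is correct and follows essentially the same approach as the paper: a direct expansion of $\psi(f)\psi(g)$ using the imposed relations on $S$, together with the twisted Leibniz identity $\nabla_i(fg)=\gamma_i(f)\nabla_i(g)+\nabla_i(f)g$, to see that the quadratic correction exactly cancels the defect in the linear term. The paper's write-up organizes the same computation by collecting everything into a single chain of equalities (and records the auxiliary identity $\beta d=q^p-1$ explicitly), whereas you phrase it as ``defect $+$ correction $=0$''; your added remark on the well-definedness of $\nabla_i$ via étaleness is a welcome clarification the paper leaves implicit.
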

\begin{proof}
    Clearly $\psi$ is $A$-linear (as $\gamma_i$'s are all $A$-linear) and additive, we just need to check that $\psi_x(f_1f_2)=\psi(f_1)\cdot \psi(f_2)$ for $f_1, f_2\in \Tilde{R}$. But we have that
    \begin{equation*}
        \begin{split}
         \psi(f_1)\cdot \psi(f_2)&=(f_1+\epsilon\cdot d\sum_{i=1}^m\frac{\gamma_i(f_1)-f_1}{q^p T_i-T_i}dT_i)\cdot (f_2+\epsilon\cdot d\sum_{i=1}^m\frac{\gamma_i(f_2)-f_2}{q^p T_i-T_i} dT_i) 
         \\&=f_1f_2+d\epsilon\sum_{i=1}^m \frac{dT_i}{q^p T_i-T_i}(f_1(\gamma_i(f_2)-f_2)+f_2(\gamma_i(f_1)-f_1)+(\gamma_i(f_1)-f_1)(\gamma_i(f_2)-f_2))
         \\&=f_1f_2+d\epsilon\sum_{i=1}^m \frac{dT_i}{q^p T_i-T_i}(\gamma_i(f_1f_2)-f_1f_2)=\psi(f_1f_2).
        \end{split}
    \end{equation*}
Here the second identity holds as $\epsilon dT_i\cdot \epsilon dT_j=0$ for $i\neq j$ and 
\begin{equation*}
    \begin{split}
        &(d\epsilon \frac{\gamma_i(f_1)-f_1}{q^p T_i-T_i}dT_i)\cdot (d\epsilon \frac{\gamma_i(f_2)-f_2}{q^p T_i-T_i}dT_i)=d^2(\beta T_i\cdot\epsilon dT_i)\frac{(\gamma_i(f_1)-f_1)(\gamma_i(f_2)-f_2)}{(q^p T_i-T_i)^2}
        \\=&\frac{d\epsilon dT_i}{q^p T_i-T_i}(T_i\beta d \frac{(\gamma_i(f_1)-f_1)(\gamma_i(f_2)-f_2)}{q^p T_i-T_i})=\frac{d\epsilon dT_i}{q^p T_i-T_i} \cdot (\gamma_i(f_1)-f_1)(\gamma_i(f_2)-f_2),
    \end{split}
\end{equation*}
where the last equality follows from $\beta d=q^p-1$ by definition.
\end{proof}
\begin{remark}\label{rem. replace psi by i}
    For the reasons that will be clear later, we would like to point out that the above statements still hold if we replace $\psi$ by $\psi_i$ defined as follows
    \begin{equation*}
        \begin{split}
            \psi_i: \Tilde{R} &\longrightarrow  S_i=\Tilde{R}[\epsilon_i]/(\epsilon_i^2-\beta T_i\epsilon_i)=\Tilde{R}\oplus \Tilde{R}\cdot \epsilon_i
            \\ f&\longmapsto f+\epsilon_i \nabla_{i}(f)=f+\epsilon_i\cdot d\frac{\gamma_i(f)-f}{q^p T_i-T_i}
        \end{split}
    \end{equation*}
    Moreover, one can verify that $\nabla_{i}\circ \nabla_{j}=\nabla_{j}\circ \nabla_{i}$ for $i\neq j$, which essentially follows from the commutativity of $\gamma_i$ and $\gamma_j$, i.e. $\gamma_i\circ \gamma_j=\gamma_j\circ \gamma_i$.
\end{remark}
\begin{remark}\label{rem. delta ring structure on s relative}
We can upgrade $S$ to a $\delta$-ring extending that structure on $\Tilde{R}$ by requiring that $\varphi(\epsilon dT_i)=T_i^{p-1}\frac{q^{p}-1}{q^{}-1}\epsilon dT_i$. Actually, one can calculate that $$\varphi(\epsilon dT_i)-(\epsilon dT_i)^p=T_i^{p-1}\epsilon dT_i(\frac{q^{p}-1}{q^{}-1}-(q-1)^{p-1})$$
is zero modulo $p$, hence $\varphi$ is indeed a lift of the Fribenius modulo $p$. Moreover, under such a regulation $\psi$ defined in \cref{lem. twist S in the relative case} is actually a $\delta$-ring homomorphism, see next proposition.
\end{remark}
\begin{proposition}\label{prop. rela psi preserve delta structure}
    $\psi$ (resp. $\psi_i$) constructed in \cref{lem. twist S in the relative case} (resp. \cref{rem. replace psi by i}) is actually a $\delta$-ring homomorphism, where the target is equipped with the $\delta$-structure given in \cref{rem. delta ring structure on s relative}.
\end{proposition}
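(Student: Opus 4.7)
The plan is to show that $\varphi_S\circ \psi = \psi\circ \varphi_{\tilde{R}}$ as ring homomorphisms $\tilde{R}\to S$. Since $\AT \to \tilde{R}$ is $(p,I)$-completely étale and $S$ is $(p,I)$-adically complete, the formal lifting property of étale morphisms implies that any two ring homomorphisms $\tilde{R}\to S$ that coincide on the subring $\AT$ and whose reductions modulo $(p,I)$ agree must in fact be equal: inductively lifting through the tower $S/(p,I)^n$ uses that $\tilde{R}/(p,I)^n$ is classically étale over $\AT/(p,I)^n$. Modulo $p$, any Frobenius lift coincides with the $p$-th power map, so $\varphi_S\circ \psi$ and $\psi\circ \varphi_{\tilde{R}}$ automatically agree modulo $(p,I)$. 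Hence the problem reduces to verifying the identity on $A$ and on each coordinate $T_i$.

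On $A$ the identity is immediate, since $\psi|_A$ is the canonical inclusion $A\hookrightarrow S$ and $\varphi_S$ extends $\varphi_A$ by the definition of the $\delta$-structure in \cref{rem. delta ring structure on s relative}. On $T_i$, since $\gamma_j(T_i) = T_i$ for $j\neq i$, one has $\psi(T_i) = T_i + \epsilon d\cdot dT_i$. Using the formula $\varphi_S(\epsilon dT_i) = T_i^{p-1}\cdot d\cdot \epsilon dT_i$ (which uses $\tfrac{q^p-1}{q-1} = d$) together with $\varphi_A(d) = [p]_{q^p}$ and $\tfrac{\gamma_i(T_i^p)-T_i^p}{q^p T_i - T_i} = T_i^{p-1}[p]_{q^p}$, both $\varphi_S(\psi(T_i))$ and $\psi(\varphi_{\tilde{R}}(T_i)) = \psi(T_i^p)$ evaluate to
\[
T_i^p + [p]_{q^p}\cdot d\cdot T_i^{p-1}\cdot \epsilon dT_i,
\]
completing the verification on generators.

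The main subtlety to check is that the étale rigidity step is legitimate, i.e.\ that the ideal $\epsilon\Omega_{\tilde{R}/A}\subset S$ sits in a topologically nilpotent part of the $(p,I)$-adic topology on $S$; this follows from the relation $(\epsilon dT_i)^k = (\beta T_i)^{k-1}\epsilon dT_i$ together with topological nilpotence of $\beta = q-1$ in $A$ (using $[p]_q \equiv (q-1)^{p-1}\pmod p$, so $(q-1)^{n(p-1)}\in (p,[p]_q)^n$). The argument for $\psi_i$ is the exact analogue with $S_i$, $\epsilon_i$, and the single twist $\nabla_i$ in place of $S$, $\epsilon$, and $\nabla_q$: the verification on $T_j$ with $j\neq i$ is trivial since $\psi_i(T_j)=T_j$ and $\varphi(T_j) = T_j^p$, while the check on $T_i$ is identical to the one above.
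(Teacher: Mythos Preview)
Your proof is correct and follows essentially the same route as the paper: reduce to checking $\varphi\circ\psi=\psi\circ\varphi$ on $A\langle\underline{T}\rangle$ using étaleness of $\tilde{R}$ over $A\langle\underline{T}\rangle$, then verify directly on each $T_i$. The paper phrases the reduction via \cite[Lemma~2.18]{bhatt2022prisms} (uniqueness of $\delta$-structure extensions along $(p,I)$-étale maps applied to $\psi(\tilde{R})$), whereas you invoke formal étaleness of $\tilde{R}/A\langle\underline{T}\rangle$ to pin down the two maps through the tower $S/(p,I)^n$; these are equivalent. One small remark: the ``subtlety'' you raise about $\epsilon\Omega_{\tilde{R}/A}$ lying in the topologically nilpotent part of $S$ is not actually needed for your argument---the $(p,I)$-adic completeness of $S$ already suffices for the inductive lifting step.
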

\begin{proof}
    We only prove the statement for $\psi$. First we claim that it suffices to show that $\psi: A\left\langle \underline{T}^{}\right\rangle \to S$ preserves the $\delta$-structure. Indeed, once this is shown, the two $\delta$-structures on $\psi(\Tilde{R})$, among which one is induced by that on $\Tilde{R}$ and the other is inherited from $S$, must coincide by \cite[Lemma 2.18]{bhatt2022prisms} as both of them extend $\delta$-structure on $\psi(A\left\langle \underline{T}^{}\right\rangle)$ (here we notice that $\Tilde{R}\hat{\otimes}_{A\left\langle \underline{T}^{}\right\rangle,\psi} A\left\langle \underline{T}^{}\right\rangle \xrightarrow[\psi]{\simeq} \psi(\Tilde{R})$ is $(p, I)$-completely \'etale over $A\left\langle \underline{T}^{}\right\rangle$). Given that claim, we are left to show that $\psi(\varphi(T_i))=\varphi(\psi(T_i))$ for all $1\leq i\leq m$ as both sides of $\psi$ are $p$-torsion free. But as $d=\frac{q^p-1}{q-1}$ and $\varphi(q)=q^p$, this follows from 
    \begin{equation*}
        \begin{split}
\psi(\varphi(T_i))&=\psi(T_i^p)=T_i^p+d\epsilon \frac{(q^pT_i)^p-T_i^p}{q^pT_i-T_i} dT_i=T_i^p+ \frac{q^{p^2}-1}{q-1}T_i^{p-1}\epsilon dT_i,
\\\varphi(\psi(T_i))&=\varphi(T_i+d\epsilon dT_i)=T_i^p+\epsilon \frac{q^{p^2}-1}{q^p-1}T_i^{p-1}\frac{q^{p}-1}{q^{}-1}\epsilon dT_i=T_i^p+ \frac{q^{p^2}-1}{q-1}T_i^{p-1}\epsilon dT_i.
        \end{split}
    \end{equation*}
\end{proof}

Next proposition specifies a homotopy between $\Tilde{\psi}: \Tilde{R}\to W(S)/d$ and $\Tilde{\iota}: \Tilde{R}\to W(S)/d$, which will be used later to construct the desired $q$-Higgs derivation.
\begin{proposition}\label{prop. relative c to construct homotopy}
Let $S=\Tilde{R}\oplus \epsilon \Omega_{\Tilde{R}/A}$ as in \cref{lem. twist S in the relative case}, then for any $x\in \Tilde{R}$ there exists a unique $c_{\psi}(x)$ in $W(S)$ such that $$\Tilde{\psi}(x)-\Tilde{\iota}(x)=d \cdot c_{\psi}(x).$$
Here $\Tilde{\psi}$ is the unique $\delta$-ring morphism such that the following diagram commutes:
    \[\begin{tikzcd}[cramped, sep=large]
& W(S) \arrow[d, "p_0"]\\ 
\Tilde{R} \arrow[ru, "\Tilde{\psi}"] \arrow[r, "\psi"]& S
\end{tikzcd}\]
and $\Tilde{\iota}$ is defined similarly with the bottom line in the above diagram replaced with the natural inclusion $\iota: \Tilde{R}\hookrightarrow S$. 
\end{proposition}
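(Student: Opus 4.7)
The plan is to follow the template of \cref{lemt.construct homotopy} verbatim via ghost-coordinate bookkeeping, with the key simplification provided by \cref{prop. rela psi preserve delta structure}: since $\psi$ is a $\delta$-ring homomorphism, its unique $\delta$-lift $\tilde\psi$ commutes with $\varphi$, and likewise for the structure inclusion $\iota$ and its lift $\tilde\iota$. This turns the divisibility we need to verify into a trivial observation.

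First I would dispose of uniqueness. Since $(A,d)$ is transversal and $\tilde R$ is $(p,I)$-completely \'etale over the transversal prism $(A\langle\underline T\rangle,I)$, the ring $\tilde R$ is $\varphi^n(d)$-torsion free for every $n\ge 0$ by \cite[Lemma 2.1.7]{anschutz2023prismatic}. Because $\Omega_{\tilde R/A}$ is finite free over $\tilde R$, the same holds for $S=\tilde R\oplus \epsilon\Omega_{\tilde R/A}$. Hence the ghost map $W(S)\to S^{\mathbb N}$ is injective and $W(S)$ is $\tilde\iota(d)$-torsion free, so a $c_\psi(x)$ satisfying the stated identity is unique if it exists.

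For existence, I would compute the ghost components. Since $\tilde\psi$ and $\tilde\iota$ are $\delta$-lifts of $\psi$ and $\iota$, we have $w_n(\tilde\psi(x))=\psi(\varphi^n(x))$ and $w_n(\tilde\iota(x))=\varphi^n(x)$. Using that $\psi\circ\varphi=\varphi\circ\psi$ (\cref{prop. rela psi preserve delta structure}) together with $\psi(x)-x=\epsilon\nabla_q(x)=\epsilon\, d\sum_i\tfrac{\gamma_i(x)-x}{q^p T_i-T_i}\,dT_i$, I get
\[
w_n(\tilde\psi(x))-w_n(\tilde\iota(x))=\varphi^n\bigl(\psi(x)-x\bigr)=\varphi^n\bigl(\epsilon\nabla_q(x)\bigr)=\varphi^n(d)\cdot r_n,
\]
where
\[
r_n:=\varphi^n\!\left(\epsilon\sum_{i=1}^m \frac{\gamma_i(x)-x}{q^p T_i-T_i}\,dT_i\right)\in S.
\]
The fractions are legitimate elements of $\tilde R$ since $\gamma_i$ is congruent to $\Id$ modulo the non-zero-divisor $q^p T_i-T_i$, so $(r_n)\in S^{\mathbb N}$ is a bona fide candidate sequence of ghost coordinates for $c_\psi(x)$.

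Finally I would invoke Dwork's lemma \cite[4.6 on page 213]{lazard2006commutative}: by construction $\varphi(r_n)=r_{n+1}$ exactly, so the congruences $\varphi(r_n)\equiv r_{n+1}\pmod{p^{n+1}}$ hold trivially, producing a (unique) $c_\psi(x)\in W(S)$ with $w_n(c_\psi(x))=r_n$ for all $n$. Combined with the identity $w_n(\tilde\iota(d)\cdot c_\psi(x))=w_n(\tilde\psi(x)-\tilde\iota(x))$ and the injectivity of the ghost map on the $p$-torsion-free ring $W(S)$, this yields $\tilde\psi(x)-\tilde\iota(x)=d\cdot c_\psi(x)$. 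I do not anticipate any real obstacle: \cref{prop. rela psi preserve delta structure} does all the heavy lifting by forcing $\psi\circ\varphi=\varphi\circ\psi$, which reduces the whole argument to a ghost-coordinate identity that trivializes the Dwork congruences.
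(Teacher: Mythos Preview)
Your proof is correct and is in fact cleaner than the paper's. The paper proceeds by first reducing to $x\in A\langle\underline T\rangle$ via a formal \'etaleness argument (a diagram chase showing $\pi\circ\tilde\psi=\pi\circ\tilde\iota$ on $W(S)/d$), then further to $x=T_i$ since the set of valid $x$ is closed under addition and multiplication, and finally carries out an explicit ghost-coordinate computation for $x=T_i$, verifying $\varphi(r_n)=r_{n+1}$ by hand. You bypass both reduction steps by invoking \cref{prop. rela psi preserve delta structure} directly: once $\psi$ is known to be a $\delta$-ring map, the identity $w_n(\tilde\psi(x))-w_n(\tilde\iota(x))=\varphi^n(\psi(x)-x)=\varphi^n(d)\cdot\varphi^n(y)$ holds for every $x\in\tilde R$, and $r_n:=\varphi^n(y)$ automatically satisfies $\varphi(r_n)=r_{n+1}$. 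Your route makes transparent that the Dwork congruences are vacuous here; the paper's explicit verification for $x=T_i$ is really a special case of your general observation. The only content you use that the paper's argument does not is the full force of \cref{prop. rela psi preserve delta structure} (which the paper has already proven), whereas the paper's \'etaleness reduction is logically independent of that proposition.
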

\begin{proof}
As $W(S)$ is $d$-torsion free by \cref{lem. s torsion freeness}, it suffices to show existence. First we would like to reduce to the case that $x\in A\left\langle \underline{T}^{}\right\rangle \to \Tilde{R}$. For this purpose, we consider the following two commutative diagrams: 
\[\xymatrixcolsep{5pc}\xymatrix{\Tilde{R}\ar[r]^{\iota}\ar@{>}[dr]^{\Tilde{\iota}}& S  \ar[r] &S/d
\\A\left\langle \underline{T}^{}\right\rangle  \ar[u]\ar[r]^{\Tilde{\iota}}&W(S)\ar[u]\ar[r]^{\pi} & W(S)/d \ar[u]} \]
and 
\[\xymatrixcolsep{5pc}\xymatrix{\Tilde{R}\ar[r]^{\psi}\ar@{>}[dr]^{\Tilde{\psi}}& S  \ar[r] &S/d
\\A\left\langle \underline{T}^{}\right\rangle  \ar[u]\ar[r]^{\Tilde{\psi}}&W(S)\ar[u]\ar[r]^{\pi} & W(S)/d \ar[u]} \]
Once the statement is proven for $x\in A\left\langle \underline{T}^{}\right\rangle$, then both $\pi\circ \Tilde{\iota}$ and $\pi\circ \Tilde{\psi}$ make the following diagram commutes
\[\xymatrixcolsep{5pc}\xymatrix{\Tilde{R}\ar[r]^{\psi=\iota}\ar@{.>}[dr]^{}& S/d   
\\A\left\langle \underline{T}^{}\right\rangle  \ar[u]\ar[r]^{\Tilde{\psi}=\Tilde{\iota}}&W(S)/d\ar[u] } \]
However, as $A\left\langle \underline{T}^{}\right\rangle \to \Tilde{R}$ is formally \'etale and $W(S)/d\to S/d$ is a pro-thickening, such lift must be unique, hence $\pi\circ \Tilde{\iota}=\pi\circ \Tilde{\psi}$, the desired result follows.

To verify the existence of $c_{\psi}(x)$ for $x\in \AT$, without loss of generality, we can assume $x=T_i$ as the set of $x$ satisfying the statements is closed under addition and multiplication. For simplicity, we omit $i$ from the subscript. We wish to construct $c=(c_0,c_1,\ldots)$ such that $\Tilde{\psi}(T)-\Tilde{\iota}(T)=\Tilde{\iota}(d)\cdot c$. As $S$ is $p$-torsion free, the ghost map is injective, hence this identity is equivalent to that 
\begin{align}\label{equa for relative.ghost identity}
    \forall n\geq 0, w_n(\Tilde{\psi}(T))-w_n(\Tilde{\iota}(T))=w_n(c)\cdot w_n(\Tilde{\iota}(d)),
\end{align}
where $w_n$ denotes the $n$-th ghost map.
Notice that
\begin{equation*}
    \begin{split}
    &w_n(\Tilde{\iota}(T))=w_0(\varphi^n(\Tilde{\iota}(T)))=w_0(\Tilde{\iota}(\varphi^n(T)))=T^{p^n},
        \\&w_n(\Tilde{\psi}(T))=w_0(\varphi^n(\Tilde{\psi}(T)))=w_0(\Tilde{\psi}(\varphi^n(T)))=\psi(T^{p^n})=T^{p^n}+\epsilon\cdot d\sum_{i=1}^m\frac{\gamma_i(T^{p^n})-T^{p^n}}{q^p T-T}dT,
        \\& w_n(\Tilde{\iota}(d))=\varphi^n(d)=[p]_{q^{p^{n}}}.
    \end{split}
\end{equation*}
Consequently,
\begin{equation*}
    \begin{split}
        &w_n(\Tilde{\psi}(T))-w_n(\Tilde{\iota}(T))=\epsilon\cdot d\frac{q^{p^{n+1}} T^{p^n}-T^{p^n}}{q^p T-T}dT
        \\=&\epsilon d\cdot T^{p^n-1}\frac{q^{p^{n+1}}-1}{q^p-1}dT
        \\=& \epsilon T^{p^n-1} \frac{q^p-1}{q-1} \cdot \frac{q^{p^{n+1}}-1}{q^{p^{n}}-1} \cdot \frac{q^{p^{n}}-1}{q^{p^{}}-1} dT
        \\=&\epsilon T^{p^n-1} [p]_{q^{p^{n}}} \cdot [p^n]_{q^{}} dT
    \end{split}
\end{equation*}
We define $r_n=T^{p^n-1} [p^n]_{q} \cdot\epsilon dT$. As $S$ is $\varphi^n(d)$-torsion free for all $n$, \cref{equa for relative.ghost identity} holds if and only if the ghost coordinates of $c$ is precisely given by $(r_0, r_1,\cdots)$. In other words, we aim to show that $(r_0, r_1,\cdots)\in S^{\mathbb{N}}$ is the ghost coordinate for some element $c\in W(S)$ (such $c$ is unique if exists as ghost map is injective). As $S$ is a $\delta$-ring thanks to \cref{rem. delta ring structure on s relative}, by invoking Dwork's lemma (see \cite[4.6 on page 213]{lazard2006commutative} for details), it suffices to show that for any $n\geq 0$,
$$p^{n+1}|\varphi(r_n)-r_{n+1}.$$
But $\varphi(r_n)-r_{n+1}$ actually vanishes as
\begin{equation*}
    \begin{split}
        &\varphi(r_n)-r_{n+1}=\varphi(T^{p^n-1} [p^n]_{q} \cdot\epsilon dT)-T^{p^{n+1}-1} [p^{n+1}]_{q} \cdot\epsilon dT
        \\=&T^{p-1}\frac{q^{p}-1}{q^{}-1}T^{p^{n+1}-p}\cdot \frac{q^{p^{n+1}}-1}{q^p-1}\cdot\epsilon dT- T^{p^{n+1}-1} \cdot \frac{q^{p^{n+1}}-1}{q^{}-1}\cdot\epsilon dT
        \\=&0.
    \end{split}
\end{equation*}
Hence we finish the proof.
\end{proof}
The following lemma is used in the above proof.
\begin{lemma}\label{lem. s torsion freeness}
    Let $S$ be as above, then $W(S)$ is $d$-torsion free.
\end{lemma}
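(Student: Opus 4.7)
The plan is to reduce $d$-torsion freeness of $W(S)$ to $\varphi^n(d)$-torsion freeness of $S$ for all $n \geq 0$ via the ghost map. Since $S = \Tilde{R} \oplus \epsilon\,\Omega_{\Tilde{R}/A}$ is a direct sum of $\Tilde{R}$-modules, with $\Omega_{\Tilde{R}/A}$ finite free of rank $m$ over $\Tilde{R}$, and since $\Tilde{R}$ is $p$-torsion free (inherited from transversality of $A$ through the $(p,I)$-completely \'etale base change), $S$ is $p$-torsion free. Consequently the ghost map $w = (w_0, w_1, \ldots) : W(S) \to \prod_{n \geq 0} S$ is injective. Now $d \in A \subseteq S$ lifts uniquely to a $\delta$-ring element of $W(S)$ via the composition $A \to W(S)$, and since a $\delta$-ring lift intertwines ghost components with Frobenius, we have $w_n(d) = \varphi^n(d) = [p]_{q^{p^n}} \in A \subseteq S$. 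Thus multiplication by $d$ on $W(S)$ is injective as soon as multiplication by $[p]_{q^{p^n}}$ is injective on $S$ for every $n$.

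To verify the latter, I use again the direct sum decomposition $S = \Tilde{R} \oplus \epsilon\,\Omega_{\Tilde{R}/A}$ as $\Tilde{R}$-modules; since $\Omega_{\Tilde{R}/A}$ is finite free over $\Tilde{R}$, it suffices to show $\Tilde{R}$ itself is $[p]_{q^{p^n}}$-torsion free for every $n$. This is a standard consequence of the transversality of the prism $(\Tilde{R}, I)$: since $(A, I)$ is transversal by hypothesis and $\Tilde{R}$ is $(p, I)$-completely \'etale over $A\langle \underline{T}\rangle$, the prism $(\Tilde{R}, I)$ is itself transversal, i.e.\ $\Tilde{R}/p$ is $d$-torsion free. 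Modulo $p$ one has the congruence $\varphi^n(d) \equiv d^{p^n}$, so $\Tilde{R}/p$ is $\varphi^n(d)$-torsion free. Combined with the $p$-torsion freeness and $p$-adic completeness of $\Tilde{R}$, a routine d\'evissage yields that $\Tilde{R}$ itself is $\varphi^n(d)$-torsion free, as required.

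There is no serious obstacle here; the argument is essentially a ghost-coordinate computation together with bookkeeping of transversality through the two steps $A \leadsto A\langle \underline{T}\rangle \leadsto \Tilde{R}$. The only point worth stating carefully is the identification of ghost coordinates of $d$, which is immediate from the $\delta$-ring functoriality of $A \to W(S)$.
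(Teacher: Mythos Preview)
Your proof is correct and follows essentially the same approach as the paper: both reduce $d$-torsion freeness of $W(S)$ to $\varphi^n(d)$-torsion freeness of $S$ via injectivity of the ghost map (using that $S$ is $p$-torsion free). The only difference is that the paper dispatches the last step by citing a lemma from Ansch\"utz (namely that $\varphi^n(d)$ is a non-zero divisor in $S$), whereas you supply a direct argument via the decomposition $S \cong \Tilde{R}^{m+1}$, transversality of $(\Tilde{R},I)$, the mod-$p$ congruence $\varphi^n(d)\equiv d^{p^n}$, and a $p$-adic d\'evissage; your version is slightly more self-contained but otherwise equivalent.
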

\begin{proof}
    As $S$ is $p$-torsion free, the ghost map is injective, hence it suffices to show $\varphi^n(d)$ is a non-zero divisor in $S$, which follows from \cite[Lemma 3.3]{anschutz2020p}.
\end{proof}
Now we are ready to construct the $q$-Higgs derivation for quasi-coherent complexes on $X^{\Prism}$, based on the following key proposition.
\begin{proposition}\label{propt.key automorphism of functors relative case}
    $c_{\psi}$ constructed in \cref{prop. relative c to construct homotopy} induces an isomorphism $\gamma_{c_{\psi}}$ between functors $\rho: \Spf(S) \xrightarrow{\iota} \Spf(\Tilde{R}) \rightarrow X_{/A}^{\Prism}$
    and $\rho\circ \psi: \Spf(S) \xrightarrow{\psi} \Spf(\Tilde{R})\rightarrow X_{/A}^{\Prism}$, i.e. we have the following commutative diagram:
\[\xymatrixcolsep{5pc}\xymatrix{\Spf(S)\ar[d]^{\iota}\ar[r]^{\psi}& \Spf(\Tilde{R})\ar@{=>}[dl]^{\gamma_{c_{\psi}}} \ar[d]_{}^{\rho}
\\X_{/A}^{\Prism}  \ar^{\rho}[r]&X_{/A}^{\Prism}}\]
\end{proposition}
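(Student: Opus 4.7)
The plan is to follow the structure of the proof of \cref{propt.key automorphism of functors}, with one crucial simplification: since we are working relative to the fixed base prism $(A,I)$, the underlying Cartier-Witt divisor of the two functors $\rho\circ\iota$ and $\rho\circ\psi$ is literally the same. More precisely, for any $(p,d)$-nilpotent $S$-algebra $T$ with structure map $h:S\to T$ inducing $\tilde h:W(S)\to W(T)$, both $(\rho\circ\iota)(T)$ and $(\rho\circ\psi)(T)$ are points of $X_{/A}^{\Prism}(T)$ whose Cartier-Witt divisor is the common base change $\alpha:(d)\otimes_A W(T)\to W(T)$; they differ only in the map of quasi-ideals from $\tilde R$ to $\Cone(\alpha)$, induced respectively by $\tilde h\circ\tilde\iota$ and $\tilde h\circ\tilde\psi$. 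As a consequence, no analog of the auxiliary unit $b$ from \cref{lemT.construct b} is needed, and to specify $\gamma_{c_\psi}$ I only have to supply a homotopy between these two maps.

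I would construct this homotopy by setting
\[\gamma_c(x)\;:=\;d\otimes\tilde h(c_\psi(x))\qquad\text{for }x\in\tilde R,\]
where $c_\psi$ is the element produced by \cref{prop. relative c to construct homotopy}. The defining identity $\tilde\psi(x)-\tilde\iota(x)=d\cdot c_\psi(x)$ in $W(S)$, after base change along $\tilde h$, translates directly into the compatibility $\iota\circ\gamma_c=\tilde h\circ(\tilde\psi-\tilde\iota)$ required of a map of quasi-ideals; the second compatibility $\gamma_c\circ\iota=\tilde h\circ(\tilde\psi-\tilde\iota)$ can then be reduced to the first using the $d$-torsion freeness of $W(S)$ provided by \cref{lem. s torsion freeness}, exactly as in the proof of \cref{propt.key automorphism of functors}. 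Functoriality of the resulting $\gamma_{c_\psi}$ in $T$ will be automatic because every piece of the construction is obtained by base change from the universal case $T=S$ along $\tilde h:W(S)\to W(T)$.

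The main point to check carefully is that $\gamma_c$ really respects the full multiplicative structure, so that it defines an isomorphism of points in $X_{/A}^{\Prism}(T)$ rather than merely a set-theoretic map. This amounts to the twisted Leibniz-type identity $c_\psi(xy)=\tilde\psi(x)\cdot c_\psi(y)+c_\psi(x)\cdot\tilde\iota(y)$ in $W(S)$, which I would extract from the formula $\tilde\psi(x)-\tilde\iota(x)=d\cdot c_\psi(x)$ combined with the ring-homomorphism property of $\tilde\psi$ and $\tilde\iota$, once again using $d$-torsion freeness. I expect this to be the only place where the multivariable structure of $\tilde R$ might complicate things (in contrast to the single coordinate $q$ in \cref{propt.key automorphism of functors}), but because $c_\psi$ has already been constructed as a genuine element of $W(S)$ for \emph{every} $x\in\tilde R$ in \cref{prop. relative c to construct homotopy}, the required identity should follow from the same $W(S)$-level manipulations used there, with no additional appeal to the formal \'etaleness of $A\langle\underline T\rangle\to\tilde R$ beyond what is already implicit in that proposition.
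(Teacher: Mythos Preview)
Your approach is correct and follows the same line as the paper's proof. The paper is considerably more terse: it simply observes that, since $\psi$ is $A$-linear, both $\rho\circ\iota$ and $\rho\circ\psi$ give the \emph{same} Cartier-Witt divisor $\alpha:(d)\otimes_{A,\tilde\iota}W(S)\to W(S)$, so that the two points of $X_{/A}^{\Prism}(S)$ differ only in the map $\eta$ versus $\eta'$ from $\Cone((d)\to\tilde R)$ to $\Cone(\alpha)$; it then declares that $c_\psi$ from \cref{prop. relative c to construct homotopy} ``clearly serves as a homotopy'' between them, drawing the relevant quasi-ideal diagram, and stops. In particular the paper works only at the universal test object $T=S$ and does not separately discuss the Leibniz-type compatibility you single out as the ``main point to check carefully.''

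Your extra caution about that multiplicative compatibility is not wrong, but it is not where the content lies. Once you know $W(S)$ is $d$-torsion free (\cref{lem. s torsion freeness}), the cone $W(S)/^{\mathbb L}d$ is discrete, so the mapping space $\Map_{\bar A\text{-alg}}(R,\Cone(\alpha))$ is discrete and the homotopy between $\eta$ and $\eta'$ is unique if it exists --- hence automatically compatible with all structure. Equivalently, as you note, the identity $c_\psi(xy)=\tilde\psi(x)c_\psi(y)+c_\psi(x)\tilde\iota(y)$ drops out of $\tilde\psi-\tilde\iota=d\cdot c_\psi$ together with $d$-torsion freeness. Either way, this step is routine, and the paper treats it as such; the substantive work was already done in \cref{prop. relative c to construct homotopy}.
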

\begin{proof}
      By abuse of notation we write $\iota: A\to S$ for the composition of $A\to \Tilde{R}\xrightarrow{\iota} S$, then $\rho(S)$ corresponds to the point $$(\alpha:(d) \otimes_{A,\Tilde{\iota}} W(S)\to W(S),\eta: \Cone((d)\to \Tilde{R})\xrightarrow{\Tilde{\iota}} \Cone(\alpha))$$
      in $X^{\Prism}(S)$, while $(\rho\circ \psi)(S)$ corresponds to the point $$(\alpha:(d) \otimes_{A,\Tilde{\iota}} W(S)\to W(S), \eta^{\prime}: \Cone((d)\to \Tilde{R}) \xrightarrow{\Tilde{\psi}} \Cone(\alpha)).$$ 
      
      Then clearly $c_{\psi}$ constructed in \cref{prop. relative c to construct homotopy} serves a homotopy between $\eta^{\prime}$ and $\eta$ by drawing the following diagram as maps of quasi-ideals (in the sense of \cite{drinfeld2020prismatization}), which finishes the proof.
\[\xymatrixcolsep{5pc}\xymatrix{(d) \ar@<-.5ex>[d]_{\Tilde{\psi}} \ar@<.5ex>[d]^{\iota}\ar[r]^{\iota}& \Tilde{R} \ar[d]_{\Tilde{\psi}}
 \ar@<-.5ex>[d]^{\iota}
\\(d) \otimes_{A,\Tilde{\iota}} W(S)\ar[r]^{\iota}&W(S).}\]
\end{proof}

Let $?\in \{\emptyset, 1,\cdots, m\}$ and $n\in \mathbb{N}$. Recall that we have $\psi_{?}: \Tilde{R}\to S_{?}$ constructed in \cref{lem. twist S in the relative case} and \cref{rem. replace psi by i}. Then we are ready to construct a $q$-Higgs derivation $\nabla_{\mathcal{E},?}$ on $\rho^*\mathscr{E}$ for $\mathscr{E}\in \mathcal{D}(X_{/A}^{\Prism})$ (resp. $ \mathcal{D}(X_{/A,n}^{\Prism})$). Based on \cref{propt.key automorphism of functors relative case}, we have an isomorphism $\gamma_{c_{\psi_{?}}}: \rho\circ \psi_{?} \stackrel{\simeq}{\longrightarrow} \rho$. Consequently, for $\mathscr{E}\in \mathcal{D}(X_{/A}^{\Prism})$ (resp. $ \mathcal{D}(X_{/A,n}^{\Prism})$), we have an isomorphism
\[\gamma_{c_{\psi_{?}}}: \psi_{?}^{*}\rho^{*}(\mathscr{E}) \stackrel{\simeq}{\longrightarrow} \rho^{*}\mathscr{E}\otimes_{\Tilde{R}} S_?.\]
Unwinding the definitions, this could be identified with a $\psi_?$-linear morphism 
\begin{align}\label{equat. relative truncated induced morphism}
    \gamma_{c_{\psi_{?}}}: \rho^{*}(\mathscr{E})\rightarrow \rho^{*}(\mathscr{E})\otimes_{\Tilde{R}} S_?.
\end{align} 
Moreover, our definition of $c_{\psi_?}$ in \cref{prop. relative c to construct homotopy} implies that $\gamma_{c_{\psi_{?}}}$ in \cref{equat. relative truncated induced morphism} reduces to the identity modulo $\epsilon_{?}$, hence could be written as $\Id+\epsilon_{?}\nabla_{\mathscr{E},?}$ for some operator $\nabla_{\mathscr{E},?}: \rho^{*}\mathscr{E} \rightarrow \rho^{*}\mathscr{E}\otimes_{\Tilde{R}} \Omega_{\Tilde{R}/A,?}$ (as $S_?=\Tilde{R}\oplus \epsilon \Omega_{\Tilde{R}/A, ?}$). Moreover, the construction implies that $\nabla_{\mathcal{E}}=\sum_{i=1}^m \nabla_{\mathcal{E},i}\otimes dT_i$. We will call $\nabla_{\mathcal{E}}$ the \textit{$q$-Higgs derivation} on the complex $\rho^*\mathscr{E}$.  

Next we show that for $\mathcal{E}\in \mathcal{D}(X_{/A}^{\Prism})$, $(\rho^{*}(\mathscr{E}),\gamma_{c_{\psi_{}}}=\Id+\epsilon \nabla_{\mathcal{E}})$ actually enjoys an additional property.
\begin{proposition}\label{prop. rel two derivation vanish}
For $\mathcal{E}\in \mathcal{D}(X_{/A}^{\Prism})$, $\nabla_{\mathcal{E}}$ on $\rho^*\mathcal{E}$ satisfies the following property: $$\nabla_{\mathcal{E}}\wedge \nabla_{\mathcal{E}}: \rho^*\mathcal{E}\to \rho^*\mathcal{E}\otimes_{\Tilde{R}} \Omega_{\Tilde{R}/A}^2$$ vanishes.
\end{proposition}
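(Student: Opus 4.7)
The plan is to upgrade the diagram of Proposition~\ref{propt.key automorphism of functors relative case} to a second-order version, one pair of coordinates at a time, and to read off the vanishing of the wedge from a symmetry identity. Fix distinct indices $i$ and $j$ and set
\[
S_{ij}^{(2)} := \Tilde{R}[\epsilon_i,\epsilon_j]/(\epsilon_i^2-\beta T_i\epsilon_i,\ \epsilon_j^2-\beta T_j\epsilon_j),
\]
with $\epsilon_i$ and $\epsilon_j$ commuting. Extend the $\delta$-structure from $\Tilde{R}$ exactly as in Remark~\ref{rem. delta ring structure on s relative} (which is done coordinate-wise, so the two regulations on $\epsilon_i$ and $\epsilon_j$ are compatible). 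The maps $\psi_i, \psi_j$ of Remark~\ref{rem. replace psi by i} extend, via the natural inclusions $S_i, S_j\hookrightarrow S_{ij}^{(2)}$, to $\delta$-ring maps that can be composed in two orders, producing
\[
\Psi_{ij},\ \Psi_{ji}:\Tilde{R}\longrightarrow S_{ij}^{(2)},\qquad \Psi_{ij}(f)=f+\epsilon_i\nabla_{\Tilde{R},i}(f)+\epsilon_j\nabla_{\Tilde{R},j}(f)+\epsilon_i\epsilon_j\,\nabla_{\Tilde{R},i}\nabla_{\Tilde{R},j}(f)
\]
and similarly for $\Psi_{ji}$ with the roles of $i,j$ swapped. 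The identity $\Psi_{ij}=\Psi_{ji}$ then follows from the commutativity $\nabla_{\Tilde{R},i}\nabla_{\Tilde{R},j}=\nabla_{\Tilde{R},j}\nabla_{\Tilde{R},i}$ on $\Tilde{R}$ recorded in Remark~\ref{rem. replace psi by i}, itself coming from $\gamma_i\gamma_j=\gamma_j\gamma_i$.

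Next, applying Proposition~\ref{propt.key automorphism of functors relative case} separately to $\psi_i$ and $\psi_j$ over the base $\Spf(S_{ij}^{(2)})$ and composing the two resulting homotopies yields a canonical isomorphism $\rho\circ\iota\simeq \rho\circ\Psi_{ij}$ in $X_{/A}^{\Prism}(S_{ij}^{(2)})$; by the uniqueness of the element $c_\psi$ in Proposition~\ref{prop. relative c to construct homotopy} this isomorphism is independent of the order in which one composes the two first-order homotopies. Pulling back $\mathcal{E}$ produces an isomorphism
\[
\rho^*\mathcal{E}\longrightarrow \rho^*\mathcal{E}\otimes_{\Tilde{R},\Psi_{ij}}S_{ij}^{(2)}
\]
which can be computed both as the iterate of $\gamma_{c_{\psi_i}}$ after $\gamma_{c_{\psi_j}}$ and in the reverse order. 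Comparing the two expansions and extracting the coefficient of $\epsilon_i\epsilon_j$ yields the commutation relation $\nabla_{\mathcal{E},i}\circ\nabla_{\mathcal{E},j}=\nabla_{\mathcal{E},j}\circ\nabla_{\mathcal{E},i}$ on $\rho^*\mathcal{E}$, after compensating for the $\gamma$-twists introduced by the $\psi_i$- and $\psi_j$-semilinearities of the first-order maps.

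Finally, unwinding the definition of $\nabla_{\mathcal{E}}\wedge\nabla_{\mathcal{E}}$ as the composite of $\nabla_{\mathcal{E}}:\rho^*\mathcal{E}\to\rho^*\mathcal{E}\otimes\Omega^1_{\Tilde{R}/A}$ with its natural extension to $\rho^*\mathcal{E}\otimes\Omega^1_{\Tilde{R}/A}\to\rho^*\mathcal{E}\otimes\Omega^2_{\Tilde{R}/A}$ via the $q$-twisted Leibniz rule dictated by Definition~\ref{intro. def Rel q higgs}, one identifies $(\nabla_{\mathcal{E}}\wedge\nabla_{\mathcal{E}})(m)$ with the alternating sum over $i<j$ of the (appropriately twisted) commutators $[\nabla_{\mathcal{E},i},\nabla_{\mathcal{E},j}](m)\,dT_i\wedge dT_j$, which vanishes by the previous step.

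The main technical point I anticipate is the bookkeeping of the $\gamma$-twists when the $\psi_i$-semilinear map $\gamma_{c_{\psi_i}}$ is base-changed through $S_j\hookrightarrow S_{ij}^{(2)}$, together with verifying that the $\delta$-structure and the element $c_\psi$ of Proposition~\ref{prop. relative c to construct homotopy} behave coherently at second order: the latter is a Dwork-style ghost-coordinate verification directly analogous to the one performed at first order, but carried out in $W(S_{ij}^{(2)})$. Once these compatibilities are in hand, the combinatorial identification of the $\epsilon_i\epsilon_j$-coefficient with the $(i,j)$-component of $\nabla_{\mathcal{E}}\wedge\nabla_{\mathcal{E}}$ is formal, since for $\mathcal{E}\in\mathcal{D}(X_{/A}^{\Prism})$ all of the above arguments take place in the derived category and respect the relevant higher homotopies.
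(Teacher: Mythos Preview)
Your proposal is correct and follows essentially the same route as the paper: one introduces the second-order algebra $T=S_i\otimes_{\Tilde{R}}S_j$ (your $S_{ij}^{(2)}$), observes that the two composite maps $\psi_j'\circ\psi_i=\psi_i'\circ\psi_j:\Tilde{R}\to T$ agree because $\nabla_{\Tilde{R},i}$ and $\nabla_{\Tilde{R},j}$ commute, composes the first-order homotopies in the two possible orders, and invokes uniqueness of the homotopy over $T$ to extract $\nabla_{\mathcal{E},i}\nabla_{\mathcal{E},j}=\nabla_{\mathcal{E},j}\nabla_{\mathcal{E},i}$ from the $\epsilon_i\epsilon_j$-coefficient. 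Two of your anticipated difficulties are unnecessary: the paper does not redo a Dwork-style verification at second order (it simply composes the already-constructed first-order isomorphisms $\gamma_{c_{\psi_i}}$ and $\gamma_{c_{\psi_j}}$), and there are no extra $\gamma$-twists to compensate for, since the $T$-linear extension of $\gamma_{c_{\psi_j}}$ applied to $m+\epsilon_i\nabla_{\mathcal{E},i}(m)$ yields $m+\epsilon_i\nabla_{\mathcal{E},i}(m)+\epsilon_j\nabla_{\mathcal{E},j}(m)+\epsilon_i\epsilon_j\nabla_{\mathcal{E},j}\nabla_{\mathcal{E},i}(m)$ directly.
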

\begin{proof}
   As $\nabla_{\mathcal{E}}=\sum_{i=1}^m \nabla_{\mathcal{E},i}\otimes dT_i$, it suffices to show that $\nabla_{\mathcal{E},i}\circ \nabla_{\mathcal{E},j}=\nabla_{\mathcal{E},j}\circ \nabla_{\mathcal{E},i}$ for $i\neq j$. We may assume that $i=1$ and $j=2$. For $i=1,2$, let $S_i=\Tilde{R}[\epsilon_i]/(\epsilon_i^2-\beta T_i\epsilon_i)=\Tilde{R}\oplus \Tilde{R}\cdot \epsilon_i$ and $\psi_i:\Tilde{R}\to S_i=\Tilde{R}[\epsilon_i]$ be as in \cref{rem. replace psi by i}. Denote $T=S_1\otimes_{\Tilde{R}} S_2$ and let $\psi_2^{\prime}: S_1\to T$ be the base change of $\psi_2$, i.e. $\psi_2^{\prime}(x_1+\epsilon_1 y_1)=(\Id+\epsilon_2 \nabla_{q,2})\circ (x_1+\epsilon_1 y_1)=x_1+\epsilon_1 y_1+\epsilon_2(\nabla_{q,2}(x_1))+\epsilon_1\epsilon_2 \nabla_{q,2}(y_1)$. Similarly we define $\psi_1^{\prime}: S_2\to T$ be the base change of $\psi_1$.
   
   We consider the following commutative diagram 
   \[ \xymatrix{ \Spf(T)  \ar@/_2pc/[dd]_{\psi_2} \ar@/^2pc/[rr]^{\psi_1} \ar[r]^{\psi_1^{\prime}} \ar[d]^{\psi_2^{\prime}} & \Spf(S_2)\ar[r]^{\iota} \ar[d]^{\psi_2} &\Spf(\Tilde{R}) \ar[dd]^{\rho}\\ \Spf(S_1)\ar[r]^{\psi_1} \ar[d]^{\iota}
 &  \Spf(\Tilde{R}) \ar@{=>}[dl]^{\gamma_{c_{\psi_1}}} \ar@{=>}[ru]^{\gamma_{c_{\psi_2}}} \ar[rd]^{\rho} &\\  \Spf(\Tilde{R}) \ar[rr]^{\rho} &&X_{/A}^{\Prism}.}\]
 Notice that \begin{equation*}
     \rho\circ \psi_1 \circ \psi_2^{\prime} \xrightarrow[\gamma_{c_{\psi_{1}}}] {\simeq} \rho\circ \psi_2^{\prime}=\rho\circ \psi_2 \xrightarrow[\gamma_{c_{\psi_{2}}}] {\simeq} \rho,
 \end{equation*}
 here the last $\rho$ actually means $\rho: \Spf(T) \xrightarrow{\iota} 
 \Spf(\Tilde{R}) \to X_{/A}^{\Prism}$, and the first map is the canonical morphism. Hence it induces $\gamma_{c_{\psi_{1,2}}}: \psi_2^{\prime,*}(\psi_1^{*}(\rho^{*}\mathcal{E}))\cong \rho^{*}\mathcal{E}\otimes_{\Tilde{R}} T$, which by unwinding definitions is the $T$-linear extension of
 \begin{equation*}
     \rho^{*}\mathcal{E} \xrightarrow{\Id\otimes 1} \rho^{*}\mathcal{E}\otimes_{\Tilde{R}, \psi_1} S_1 \xrightarrow[\gamma_{c_{\psi_{1}}}]{\simeq} \rho^{*}\mathcal{E}\otimes_{\Tilde{R}} S_1 \xrightarrow{\Id\otimes 1} (\rho^{*}\mathcal{E}\otimes_{\Tilde{R}} S_1)\otimes_{S_1, \psi_2^{\prime}} T=\rho^{*}\mathcal{E}\otimes_{\Tilde{R}, \psi_2} T \xrightarrow[\gamma_{c_{\psi_{2}}}]{\simeq} \rho^{*}\mathcal{E}\otimes_{\Tilde{R}} T.
 \end{equation*}
 As $\gamma_{c_{\psi_{1}}}=\Id+\epsilon_1 \nabla_{\mathcal{E},1}$ and $\gamma_{c_{\psi_{1}}}=\Id+\epsilon_1 \nabla_{\mathcal{E},2}$, the above morphism $\rho^{*}\mathcal{E} \to \rho^{*}\mathcal{E}\otimes_{\Tilde{R}} T$ is given by 
 $$(\Id+\epsilon_2 \nabla_{\mathcal{E},2})\circ (\Id+\epsilon_1 \nabla_{\mathcal{E},1})=(\Id+\epsilon_1 \nabla_{\mathcal{E},1}+\epsilon_2 \nabla_{\mathcal{E},2}+\epsilon_1 \epsilon_2 \nabla_{\mathcal{E},2}\circ \nabla_{\mathcal{E},1}).$$
On the other hand, as the left-upper square is commutative by \cref{rem. replace psi by i}, $\psi_1 \circ \psi_2^{\prime}=\psi_2 \circ \psi_1^{\prime}$, repeating the argument via $\rho\circ \psi_2 \circ \psi_1^{\prime} $ instead, we see that $\gamma_{c_{\psi_{1,2}}}$ should also be given the $T$-linear base change of 
$$(\Id+\epsilon_1 \nabla_{\mathcal{E},1})\circ (\Id+\epsilon_2 \nabla_{\mathcal{E},2})=\Id+\epsilon_1 \nabla_{\mathcal{E},1}+\epsilon_2 \nabla_{\mathcal{E},2}+\epsilon_1 \epsilon_2 \nabla_{\mathcal{E},1}\circ \nabla_{\mathcal{E},2},$$
which forces $\nabla_{\mathcal{E},2}\circ \nabla_{\mathcal{E},1}=\nabla_{\mathcal{E},1}\circ \nabla_{\mathcal{E},2}$ (as $\gamma_{c_{\psi_{1,2}}}$ is the unique homotopy determined by the two $\Tilde{R}$-algebra structures on $T$, given by $\iota$ and $\psi_2^{\prime}\circ \psi_1=\psi_1^{\prime}\circ \psi_2$ separately).
\end{proof}

In the special case that $\mathcal{E}\in \mathcal{D}(X_{/A}^{\Prism})^{\heartsuit}$, $\rho^{*}(\mathscr{E})$ is a $\Tilde{R}$-module concentrated on degree $0$ and we observe that $\nabla_{\mathcal{E},i}$ satisfies certain twisted Leibnitz rule.

\begin{lemma}[Twisted Leibnitz rule on the $q$-Higgs derivation]\label{lem. relative leibniz}
   Given a pair $(M,\gamma_M)$ such that $M=\rho^{*}(\mathscr{E})$ as above and $\gamma_M=\gamma_{c_{\psi}}: M\otimes_{\Tilde{R},\psi} S\stackrel{\simeq}{\rightarrow} M\otimes_{\Tilde{R}} S$, write $\gamma_M$ as $\Id+\epsilon \nabla_M$ when restricted to $M$ (hence $\nabla_M: M\to M\otimes_{\Tilde{R}} \Omega_{\Tilde{R}/A}$), then $\nabla_M=\sum_{i=1}^m \nabla_{M,i}\otimes dT_i$, and $\nabla_{M,i}$ satisfies the following twisted Leibnitz rule:
$$\nabla_{M,i}(ax)=a\nabla_{M,i}(x)+\nabla_{i}(a)x+\beta T_i \nabla_{i}(a)\nabla_{M,i}(x)=\gamma_i(a)\nabla_{M,i}(x)+\nabla_{i}(a)x$$ 
for $a\in \Tilde{R}$ and $x\in M$, here $\nabla_{i}$ and $\gamma_i$ on $\Tilde{R}$ is defined above \cref{lem. twist S in the relative case}. In particular, 
   $$\nabla_{M,i}(T_ix)=T_i\nabla_{M,i}(x)+dx+\beta T_i d \nabla_{M,i}(x)=dx+T_iq^p\nabla_{M,i}(x).$$
\end{lemma}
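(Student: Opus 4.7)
The statement is a direct consequence of the $\psi$-linearity of $\gamma_M$ together with the explicit description of both $\psi$ and the noncommutative $A$-algebra structure on $S$ that were established in \cref{lem. twist S in the relative case}, so the plan is essentially a careful coefficient comparison. First, I would note that $\nabla_M$ takes values in $M \otimes_{\tilde{R}} \Omega_{\tilde{R}/A}$, and since $\{dT_1, \ldots, dT_m\}$ is a basis of the free module $\Omega_{\tilde{R}/A}$, the decomposition $\nabla_M(x) = \sum_{i=1}^m \nabla_{M,i}(x) \otimes dT_i$ simply defines the components $\nabla_{M,i} : M \to M$. This reduces the problem to identifying each $\nabla_{M,i}(ax)$.

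Next, I would exploit $\psi$-linearity. By construction, $\gamma_M(ax) = \psi(a) \cdot \gamma_M(x)$, where by \cref{lem. twist S in the relative case} we have $\psi(a) = a + \epsilon \sum_i \nabla_i(a) dT_i$. Expanding
\begin{equation*}
\gamma_M(ax) = \Bigl(a + \epsilon \sum_i \nabla_i(a)\, dT_i\Bigr)\Bigl(x + \epsilon \sum_j \nabla_{M,j}(x)\, dT_j\Bigr)
\end{equation*}
and using the defining relations $\epsilon dT_i \cdot \epsilon dT_j = 0$ for $i \neq j$ and $(\epsilon dT_i)^2 = \beta T_i \cdot \epsilon dT_i$ on $S$, all cross terms collapse and the square terms convert each $\epsilon^2 (dT_i)^2$ back into $\beta T_i \cdot \epsilon dT_i$. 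This gives
\begin{equation*}
\gamma_M(ax) = ax + \epsilon \sum_{i=1}^m \bigl(a\nabla_{M,i}(x) + \nabla_i(a)x + \beta T_i \nabla_i(a)\nabla_{M,i}(x)\bigr) dT_i.
\end{equation*}
Comparing this with the definition $\gamma_M(ax) = ax + \epsilon \sum_i \nabla_{M,i}(ax) dT_i$ component-wise yields the first form of the Leibniz rule.

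To obtain the second form, I would use the key simplification $\beta d = q^p - 1$ (which is how $\beta$ was defined), so that $\beta T_i \nabla_i(a) = \beta T_i d \cdot \frac{\gamma_i(a) - a}{q^p T_i - T_i} = \gamma_i(a) - a$. Substituting this into $a\nabla_{M,i}(x) + \beta T_i \nabla_i(a)\nabla_{M,i}(x)$ yields exactly $\gamma_i(a)\nabla_{M,i}(x)$, which gives the cleaner form $\nabla_{M,i}(ax) = \gamma_i(a)\nabla_{M,i}(x) + \nabla_i(a)x$. Finally, specializing to $a = T_i$ and noting $\gamma_i(T_i) = q^p T_i$ and $\nabla_i(T_i) = d$ yields the explicit formula $\nabla_{M,i}(T_i x) = dx + q^p T_i \nabla_{M,i}(x)$.

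There is no real obstacle here; the only thing requiring care is bookkeeping the noncommutative multiplication in $S$, particularly ensuring that the quadratic relation $(\epsilon dT_i)^2 = \beta T_i \epsilon dT_i$ is applied correctly so that the apparent second-order term in $\epsilon$ contributes back at order one. This is precisely the feature that produces the non-classical twist in the Leibniz rule and distinguishes it from the usual commutative de Rham case.
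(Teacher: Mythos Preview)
Your proof is correct and follows essentially the same approach as the paper: expand $\gamma_M(ax)=\psi(a)\gamma_M(x)$ using the formula for $\psi$ from \cref{lem. twist S in the relative case}, apply the relations $(\epsilon dT_i)^2=\beta T_i\epsilon dT_i$ and $\epsilon dT_i\cdot\epsilon dT_j=0$ in $S$, and compare coefficients. You actually spell out the derivation of the second form and the specialization to $a=T_i$ more explicitly than the paper does (it simply writes ``This implies the desired result'' after the main expansion), but the argument is the same.
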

\begin{proof} 
    For such a pair $(M,\gamma_M)$,
    \begin{equation*}
        \begin{split}
            &\gamma_M(ax)=\psi(a)\gamma_M(x)=(a+\epsilon\nabla(a))(x+\epsilon\nabla_M(x))
          \\=& (a+\epsilon \sum_{i=1}^m \nabla_{i}(a)dT_i)(x+\epsilon \sum_{i=1}^m \nabla_{i}(x)dT_i)
\\=&ax+\sum_{i=1}^m \epsilon dT_i(\nabla_{i}(a)x+a\nabla_{i}(x))+\sum_{i=1}^m \epsilon  \beta T_i \nabla_{i}(a)\nabla_{i}(x)\cdot dT_i
\\=&ax+\sum_{i=1}^m \epsilon dT_i(\nabla_{i}(a)x+a\nabla_{i}(x)+\beta T_i \nabla_{i}(a)\nabla_{i}(x)),
        \end{split}
    \end{equation*}
here the second equation follows from \cref{lem. twist S in the relative case}. This implies the desired result.
\end{proof}

\begin{remark}\label{rem. rel twisted leibnitz in general}
    More generally, let $\mathscr{E}$ and $\mathscr{E}^{\prime}$ be quasi-coherent complexes on $X_n^{\Prism}$, and let $\mathscr{E}\otimes \mathscr{E}^{\prime}$ denote their derived tensor product. Then the same argument shows that the $q$-Higgs derivation $\nabla_{\mathscr{E}\otimes \mathscr{E}^{\prime},i}$ on $\rho^*(\mathscr{E}\otimes \mathscr{E}^{\prime})\cong \rho^*(\mathscr{E}) \otimes \rho^*(\mathscr{E}^{\prime})$ can be identified with $\nabla_{\mathscr{E}, i}\otimes\Id_{\mathscr{E}^{\prime}}+\Id_{\mathscr{E}}\otimes \nabla_{\mathscr{E}^{\prime}, i}+\beta T_i\nabla_{\mathscr{E}, i}\otimes \nabla_{\mathscr{E}^{\prime}, i}$.
\end{remark}

Motivated by \cref{prop. rel two derivation vanish} and \cref{lem. relative leibniz}, it is natural to introduce the following non-commutative ring.
\begin{definition}\label{rel def.skew polynomial}
    Let $\gamma_{\Tilde{R},i}: \Tilde{R}\to \Tilde{R}$ be the ring automorphism $\gamma_i$ defined above \cref{lem. twist S in the relative case}, then $\nabla_{\Tilde{R},i}: \Tilde{R}\to \Tilde{R}$, denoted as $\nabla_i$ above \cref{lem. twist S in the relative case}, is a $\gamma_{\Tilde{R},i}$-derivation of $\Tilde{R}$, i.e. $\nabla_{\Tilde{R},i}(x_1x_2)=\gamma_{\Tilde{R},i}(x_1)\nabla_{\Tilde{R},i}(x_2)+\nabla_{\Tilde{R},i}(x_1)x_2$. We define the \textit{Ore extension} $\Tilde{R}[\underline{\nabla}; \gamma_{\Tilde{R}}, \nabla_{\Tilde{R}}]$ to be the noncommutative ring obtained by giving the ring of polynomials $\Tilde{R}[\nabla_1,\cdots,\nabla_m]$ a new multiplication law, subject to the identity
    \begin{equation*}
        \begin{split}
           \nabla_i\cdot \nabla_j&=\nabla_j\cdot \nabla_i,  ~~~\forall ~ 1\leq i,j\leq m
           \\ \nabla_i \cdot r&=\gamma_{\Tilde{R},i}(r)\cdot\nabla_i+\nabla_{\Tilde{R},i}(r), ~~~\forall~ r\in \Tilde{R}, 1\leq i\leq m .
        \end{split}
    \end{equation*}
\end{definition}

\begin{remark}
\label{rem. rel cohomology of q higgs}
   Note that $\Tilde{R}$ is isomorphic to the quotient of $\Tilde{R}[\underline{\nabla}; \gamma_{\Tilde{R}}, \nabla_{\Tilde{R}}]$ by the (left) ideal generated by $\nabla_i~(1\leq i\leq m)$. In this sense, $\Tilde{R}$ is a left $\Tilde{R}[\underline{\nabla}; \gamma_{\Tilde{R}}, \nabla_{\Tilde{R}}]$-module and $\nabla_i$ acts on $R$ via $\nabla_{\Tilde{R},i}$ as desired. Moreover, the Koszul complex provides a canonical resolution for $\Tilde{R}$ by finite free $\Tilde{R}[\underline{\nabla}; \gamma_{\Tilde{R}}, \nabla_{\Tilde{R}}]$-modules:
   $$[\Tilde{R}[\underline{\nabla}; \gamma_{\Tilde{R}}, \nabla_{\Tilde{R}}] \stackrel{(\nabla_1,\cdots, \nabla_m)}{\longrightarrow} \bigoplus_{1\leq k\leq m}\Tilde{R}[\underline{\nabla}; \gamma_{\Tilde{R}}, \nabla_{\Tilde{R}}] \cdots \to \bigoplus_{1\leq k_1<\cdots< k_s\leq m}\Tilde{R}[\underline{\nabla}; \gamma_{\Tilde{R}}, \nabla_{\Tilde{R}}]\to \cdots]\simeq \Tilde{R}[0],$$
   where $\bigoplus_{1\leq k_1<\cdots< k_s\leq m}\Tilde{R}[\underline{\nabla}; \gamma_{\Tilde{R}}, \nabla_{\Tilde{R}}]$ lives in (cohomological) degree $s-m$ and the differential from $\Tilde{R}[\underline{\nabla}; \gamma_{\Tilde{R}}, \nabla_{\Tilde{R}}]$ in spot $k_1<\cdots<k_s$ to $\Tilde{R}[\underline{\nabla}; \gamma_{\Tilde{R}}, \nabla_{\Tilde{R}}]$ in spot $p_1<\cdots<p_{s+1}$ is nonzero only if $\{k_1,\cdots,k_s\}\subseteq \{p_1,\cdots,p_{s+1}\}$, in which case it sends $f\in \Tilde{R}[\underline{\nabla}; \gamma_{\Tilde{R}}, \nabla_{\Tilde{R}}]$ to $(-1)^{u-1}f\cdot\nabla_{p_u}$, where $u\in \{1,\cdots,s+1\}$ is the unique integer such that $p_u\notin \{k_1,\cdots,k_s\}$.
   
   Consequently, for $N$ an object in $\mathcal{D}(\Tilde{R}[\underline{\nabla}; \gamma_{\Tilde{R}}, \nabla_{\Tilde{R}}])$, the derived category of (left) $\Tilde{R}[\underline{\nabla}; \gamma_{\Tilde{R}}, \nabla_{\Tilde{R}}]$-modules, we have that
   $$\RHom_{\mathcal{D}(\Tilde{R}[\underline{\nabla}; \gamma_{\Tilde{R}}, \nabla_{\Tilde{R}}])}(\Tilde{R}, N)\simeq[N\xrightarrow{\nabla_N:=\sum \nabla_i\otimes dT_i} N\otimes_{\Tilde{R}} \Omega_{\Tilde{R}/A}\xrightarrow{\nabla_{N}\wedge \nabla_{N}} \cdots \to N\otimes_{\Tilde{R}} \Omega_{\Tilde{R}/A}^{m}\to 
        0].$$
   We will call the right-hand side of the above equation the \textit{$q$-de Rham complex of $N$}, denoted as $\mathrm{DR}(N,\nabla)$. Such an observation will be used to calculate the cohomology later.
\end{remark}
    
Given \cref{rel def.skew polynomial}, the discussion in this subsection so far can be then summarized as follows:
\begin{proposition}\label{prop. relative induce functor}
  For $n\in \mathbb{N}\cup \{\infty\}$, the pullback along $\rho: \Spf(\Tilde{R}/d^n)\to X_{/A,n}^{\Prism}$ induces a functor 
  \begin{equation*}
  \begin{split}
      \mathcal{D}(X_{/A,n}^{\Prism}) &\to \mathcal{D}(\Tilde{R}/d^n[\underline{\nabla}; \gamma_{\Tilde{R}}, \nabla_{\Tilde{R}}])\\
      \mathcal{E} &\mapsto (\rho^{*}\mathcal{E}, \nabla_{\mathcal{E}})
  \end{split}
  \end{equation*}
  which will be denoted as $\beta_n^+$ later.
\end{proposition}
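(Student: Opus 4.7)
The plan is to mimic the proof of Proposition \ref{prop. induce functor}, passing through the heart of a standard $t$-structure. First I would establish the relative analogs of Construction \ref{construct. t structure} and Proposition \ref{prop. t independ} for $\mathcal{D}(X_{/A,n}^{\Prism})$: using the \v{C}ech nerve $\Spf(\Tilde{R}^{\bullet}/d^n)\to X_{/A,n}^{\Prism}$ coming from $\rho$, declare $\mathcal{F}\in \mathcal{D}(X_{/A,n}^{\Prism})$ to lie in degrees $\leq 0$ (resp. $\geq 0$) exactly when each pullback to $\Spf(\Tilde{R}^{k}/d^n)$ is so concentrated. Transversality of $(\Tilde{R},d)$ over the $q$-prism should let the same argument go through, yielding a well-defined $t$-structure for which $\rho^*$ is $t$-exact and independent of the chosen cover.

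Next I would prove the analog of Proposition \ref{prop. heart generator}, namely that the natural functor $r_n: \mathcal{D}(\mathcal{C}_n^{\mathrm{rel}})\xrightarrow{\simeq} \mathcal{D}(X_{/A,n}^{\Prism})$ from the derived category of the heart is an equivalence. A d\'evissage via the twist fiber sequence (as in \cref{induction eqaution}) reduces this to the base case $n=1$, i.e. to the relative Hodge-Tate locus $X_{/A}^{\HT}$, which one handles by pullback along an affine cover and by identifying the resulting category with a full subcategory of modules over a polynomial ring, as in the proof of Proposition \ref{prop. heart generator}.

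With these two inputs in place, the functor $\beta_n^+$ is constructed as follows. For $\mathcal{E}\in \mathcal{D}(X_{/A,n}^{\Prism})^{\heartsuit}$, $\rho^*\mathcal{E}$ is a discrete $\Tilde{R}/d^n$-module; by \cref{lem. relative leibniz} each $\nabla_{\mathcal{E},i}$ satisfies the twisted Leibnitz rule
\[
\nabla_{\mathcal{E},i}(ax)=\gamma_{\Tilde{R},i}(a)\,\nabla_{\mathcal{E},i}(x)+\nabla_{\Tilde{R},i}(a)\,x,
\]
and by \cref{prop. rel two derivation vanish} these operators commute pairwise. These are precisely the defining relations of the Ore extension of \cref{rel def.skew polynomial}, so $\rho^*\mathcal{E}$ canonically upgrades to a left $\Tilde{R}/d^n[\underline{\nabla};\gamma_{\Tilde{R}},\nabla_{\Tilde{R}}]$-module. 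This defines a functor on hearts which passes to derived categories $\mathcal{D}(\mathcal{C}_n^{\mathrm{rel}})\to \mathcal{D}(\Tilde{R}/d^n[\underline{\nabla};\gamma_{\Tilde{R}},\nabla_{\Tilde{R}}])$; composing with the quasi-inverse of $r_n$ produces $\beta_n^+$.

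The main obstacle is the second step, verifying $\mathcal{D}(\mathcal{C}_n^{\mathrm{rel}})\simeq \mathcal{D}(X_{/A,n}^{\Prism})$ at the Hodge-Tate base case. In the arithmetic setting the argument relied on an explicit description of $X^{\HT}$ as the classifying stack of $G_\pi$, together with the $\theta$-power torsion identification from \cite{anschutz2022v}. For a general transversal prism $(A,d)$ over the $q$-prism, one needs an analogous description of $X_{/A}^{\HT}$ (roughly as a classifying stack of an appropriate relative Sen group over $X$), after which the same argument using projective generators of the heart should conclude. Everything else—transversality, $t$-exactness, Leibnitz rule, commutativity—is already in place from the preceding constructions.
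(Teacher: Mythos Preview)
Your proposal is correct and follows essentially the same route as the paper: the paper explicitly says the proof of \cref{prop. induce functor} works verbatim given \cref{prop. rel two derivation vanish} and \cref{lem. relative leibniz}, once one knows $\mathcal{D}(X_{/A,n}^{\Prism})$ is the derived category of its heart, and reduces the latter to the Hodge-Tate locus as in \cref{prop. heart generator}. The obstacle you flag is resolved in the paper by citing \cite[Theorem 6.3]{anschutz2023hodge} as the input at $n=1$ (with $X_{/A}^{\HT}\simeq B\,T_{X/\bar A}^\sharp\{1\}$ from \cite[Proposition 5.12]{bhatt2022prismatization}), so no new argument is needed there.
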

\begin{proof}
Given \cref{prop. rel two derivation vanish} and \cref{lem. relative leibniz}, the proof of \cref{prop. induce functor} still works here, provided we know that $\mathcal{D}(X_{/A,n}^{\Prism})$ is equivalent to the derived category of its heart. But the latter can be proven similarly as \cref{prop. heart generator} by reducing to the Hodge-Tate locus and then taking \cite[Theorem 6.3]{anschutz2023hodge} as an input.
\end{proof}

Let $n\in \mathbb{N}\cup \{\infty\}$. As for $\mathcal{E}\in \mathcal{D}(X_{/A, n}^{\Prism})$, the global section of $\mathcal{E}$ is defined as 
\begin{equation*}
    \mathrm{R} \Gamma(X_{/A, n}^{\Prism}, \mathcal{E}):=\lim_{f: \Spec(R)\to X_{/A, n}^{\Prism}} f^{*}\mathcal{E}
\end{equation*}

In particular, the cover $\rho: \Spf(\Tilde{R}/d^n) \rightarrow X_{/A, n}^{\Prism}$ induces a natural morphism $$\mathrm{R} \Gamma(X_{/A, n}^{\Prism}, \mathcal{E})\to \rho^{*}\mathcal{E}.$$

The next proposition shows that it actually factors through the fiber of $\nabla_{\mathcal{E},i}$ for all $i$.
\begin{proposition}\label{propt. relative factor through fiber}
    For any $\mathcal{E}\in \mathcal{D}(X_{/A, n}^{\Prism})$, the natural morphism $\mathrm{R} \Gamma(X_{/A, n}^{\Prism}, \mathcal{E})\to \rho^{*}\mathcal{E}$ induces a canonical morphism $$
    \mathrm{R} \Gamma(X_{/A, n}^{\Prism}, \mathcal{E})\to \fib(\rho^*\mathcal{E}\stackrel{\nabla_{\mathcal{E},i}}{\longrightarrow} \rho^*\mathcal{E}).$$
    for all $i$. Hence it induces a canonical morphism 
    \[ \mathrm{R} \Gamma(X_{/A, n}^{\Prism}, \mathcal{E})\to (\rho^*\mathcal{E} \xrightarrow{\nabla_{\mathcal{E}}} \rho^*\mathcal{E}\otimes_{\Tilde{R}/I^n}\Omega_{(\Tilde{R}/I^n)/(A/I^n)}\to \cdots \xrightarrow{\nabla_{\mathcal{E}}\wedge \nabla_{\mathcal{E}}} \rho^*\mathcal{E}\otimes_{\Tilde{R}/I^n}\Omega_{(\Tilde{R}/I^n)/(A/I^n)}^{m}\to 0)\]
\end{proposition}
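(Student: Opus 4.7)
The plan is to imitate the proof of Proposition~\ref{propt. factor through fiber} in the absolute case, now working with the family of maps $\psi_i: \Tilde{R}\to S_i$ from Remark~\ref{rem. replace psi by i} and their associated isomorphisms $\gamma_{c_{\psi_i}}: \rho\circ \psi_i \xrightarrow{\simeq} \rho$. These isomorphisms come from applying Proposition~\ref{propt.key automorphism of functors relative case} with $\psi$ replaced by $\psi_i$; the proof goes through verbatim since the one-variable analogue of Proposition~\ref{prop. relative c to construct homotopy} is proved using the identical Dwork-lemma computation. I would then descend these to the $n$-truncated relative prismatization $X_{/A,n}^{\Prism}$, using that $\psi_i$ induces a ring map $\Tilde{R}/d^n\to S_i/d^n$ (a direct mimic of Lemma~\ref{lemt.extend delta}, since $\psi_i(d)$ lies in $d\cdot S_i$ by the one-variable version of the computation in Proposition~\ref{prop. relative c to construct homotopy}).

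For the first assertion, I would fix $i$ and repeat the equalizer argument in Proposition~\ref{propt. factor through fiber}. Namely, by the defining limit presentation of $\mathrm{R}\Gamma(X_{/A,n}^{\Prism},\mathcal{E})$, the canonical map to $\rho^{*}\mathcal{E}$ must factor through the equalizer of the two arrows
\[
\rho^{*}\mathcal{E}\xrightarrow{\Id\otimes 1}\rho^{*}\mathcal{E}\otimes_{\Tilde{R}/d^n}S_i/d^n,\qquad \rho^{*}\mathcal{E}\xrightarrow{\gamma_{c_{\psi_i}}}\rho^{*}\mathcal{E}\otimes_{\Tilde{R}/d^n,\psi_i}S_i/d^n
\]
induced by $\iota$ and $\psi_i$. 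Writing $\gamma_{c_{\psi_i}}=\Id+\epsilon_i \nabla_{\mathcal{E},i}$ identifies this equalizer with $\fib(\nabla_{\mathcal{E},i}:\rho^{*}\mathcal{E}\to \rho^{*}\mathcal{E})$, exactly as in the absolute case.

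For the ``hence'' part, I would assemble the factorizations into a morphism landing in the totalization of the $q$-de Rham complex. Concretely, one iterates the construction using products $T_{i_1,\dots,i_r}:=S_{i_1}\otimes_{\Tilde{R}}\cdots\otimes_{\Tilde{R}}S_{i_r}$: the two $\Tilde{R}$-algebra structures on each $T_{i_1,\dots,i_r}$ (and the uniqueness of the homotopy between them, already exploited in the proof of Proposition~\ref{prop. rel two derivation vanish}) produce compatible maps $\mathrm{R}\Gamma(X_{/A,n}^{\Prism},\mathcal{E})\to \rho^{*}\mathcal{E}\otimes_{\Tilde{R}/d^n}\Omega^{r}_{(\Tilde{R}/d^n)/(A/d^n)}$. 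The commutativity of the $\nabla_{\mathcal{E},i}$'s established in Proposition~\ref{prop. rel two derivation vanish} ensures that $\nabla_{\mathcal{E}}\wedge \nabla_{\mathcal{E}}=0$, so these maps genuinely assemble into a morphism into the complex of Remark~\ref{rem. rel cohomology of q higgs}.

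The main obstacle I anticipate is the bookkeeping needed to organize the compatibilities between the different $\psi_i$'s into a single (semi-)cosimplicial diagram in order to pin down the higher coherences of the map to the totalization. However, this is essentially the same technical point that was already handled for two indices in Proposition~\ref{prop. rel two derivation vanish}, so one expects the same uniqueness-of-homotopy argument (together with the $\delta$-structure of Remark~\ref{rem. delta ring structure on s relative} and Proposition~\ref{prop. rela psi preserve delta structure} which ensures that all the relevant $\psi$'s remain $\delta$-ring morphisms) to suffice.
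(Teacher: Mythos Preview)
Your argument for the first assertion is correct and matches the paper's proof exactly: the paper uses the identical equalizer argument with $\gamma_{c_{\psi_i}} = \Id + \epsilon_i \nabla_{\mathcal{E},i}$ that you describe.

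For the ``hence'' part, the paper is considerably terser than your proposal: it disposes of it in a single sentence, asserting that ``the de Rham complex of $\mathcal{E}$ is the totalization of certain finite diagram admitting a morphism from $\prod_{i=1}^m \fib(\nabla_{\mathcal{E},i})$'', without building the iterated tensor products $T_{i_1,\ldots,i_r}$ or tracking the higher coherences you anticipate. Your more careful construction would work, but there is a cleaner route that sidesteps the bookkeeping entirely and is already available at this point in the paper: since $\beta_n^+$ has been constructed as a functor of $\infty$-categories in Proposition~\ref{prop. relative induce functor}, and since $\beta_n^+(\mathcal{O}_{X_{/A,n}^{\Prism}})$ is exactly the module $\Tilde{R}/d^n$ of Remark~\ref{rem. rel cohomology of q higgs}, applying $\beta_n^+$ to morphism spaces sends $\mathrm{R}\Gamma(X_{/A,n}^{\Prism},\mathcal{E}) = \RHom(\mathcal{O},\mathcal{E})$ to $\RHom_{\Tilde{R}/d^n[\underline{\nabla};\gamma_{\Tilde{R}},\nabla_{\Tilde{R}}]}(\Tilde{R}/d^n,\rho^*\mathcal{E})$, which by that remark is the $q$-de Rham complex. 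All coherences are then supplied by functoriality, so the obstacle you flag simply does not arise.
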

\begin{proof}
By the discussion after \cref{propt.key automorphism of functors relative case}, we have an isomorphism $\gamma_{c_{\psi_i}}: \rho\circ \psi_i \stackrel{\simeq}{\longrightarrow} \rho$ as functors  $\Spf(S_i/I^n) \rightarrow X_{/A,n}^{\Prism}$. Then the definition of $ \mathrm{R} \Gamma(X_{/A,n}^{\Prism}, \mathcal{E})$ implies that the natural morphism $\mathrm{R} \Gamma(X_{/A,n}^{\Prism}, \mathcal{E})\to \rho^{*}\mathcal{E}$ factors through the equalizer of $$\rho^*\mathcal{E}\stackrel{\Id\otimes 1}{\longrightarrow} 
 \rho^*\mathcal{E}\otimes_{\Tilde{R}/d^n} S/d^n$$
and 
$$\rho^*\mathcal{E}\stackrel{\gamma_{c_{\psi_i}}}{\longrightarrow} 
 \rho^*\mathcal{E}\otimes_{\Tilde{R}/d^n, \psi} S/d^n,$$
where $\gamma_{c_{\psi_i}}=\Id+\epsilon \nabla_{\mathcal{E},i}$. This produces a canonical morphism 
\begin{equation*}
    \mathrm{R} \Gamma(X_n^{\Prism}, \mathcal{E})\to \fib(\rho^*\mathcal{E}\stackrel{\nabla_{\mathcal{E},i}}{\longrightarrow} \rho^*\mathcal{E}).
\end{equation*}
for arbitrary $i$. Then the last statement follows as the de Rham complex of $\mathcal{E}$ is the totalization of certain finite diagram admitting a morphism from  $\prod_{i=1}^m \fib(\nabla_{\mathcal{E},i})$.
\end{proof}

 Our goal is to prove that $\beta_n^+$ stated in \cref{prop. relative induce functor} identifies the source with a certain subcategory of the target. For that purpose, similar to the absolute case, we would like to understand the behavior of $\nabla_{\mathcal{E}}$ when restricted to the Hodge-Tate locus first. If we work with canonical Higgs field instead of $q$-Higgs derivation, this was discussed in \cite[Section 6.1]{anschutz2023hodge} using Fourier transform. However, for our purpose, it is more convenient to follow the strategy of \cite[Theorem 3.5.8]{bhatt2022absolute}, in the spirit of \cite[Remark 6.14]{anschutz2023hodge}.

Actually, as $\psi: \Tilde{R}\to S$ reduces to the natural embedding after modulo $d$, we see $\gamma_{c_{\psi}}$ descends to an automorphism of
    \[X_{/A}^{\HT}\times_{\Spf(R)} \Spf(R[\epsilon \Omega_{R/\Bar{A}}]/((\epsilon dT_i)^2-\beta T_i \epsilon dT_i)),\]
    which implies that for $\mathcal{E}\in \mathcal{D}(X_{/A}^{\HT})$, $\nabla_{\mathcal{E}}$ also descends to a functor from $\mathcal{E}$ to itself.

    To simplify the notation, we write $\epsilon_i$ for $\epsilon dT_i$ in the following and hence $$S/d=R[\epsilon_1, \cdots, \epsilon_m]/(\epsilon_i^2-\beta T_i \epsilon_i, \epsilon_i \epsilon_j)_{1\leq i< j\leq m}\cong R\oplus \oplus_{i=1}^m R\epsilon_i.$$
    
    As $X_{/A}^{\HT}$ is the classifying stack of $T_{X/\Bar{A}}^{\sharp}\{1\}$ over $X=\Spf(R)$ by \cite[Proposition 5.12]{bhatt2022prismatization}, and we have already fixed a generalizer $d$ of $I$ as well as a basis for $\Omega_{R/\Bar{A}}$, this group can be trivialized as $(\mathbb{G}_a^{\sharp})^m=\prod_{i=1}^m \mathbb{G}_a^{\sharp}$.
    
Under such an identification, $\gamma_{c_{\psi}}$ corresponds to an element in $(\mathbb{G}_a^{\sharp})^m(S/d)$, and we claim this element is precisely $(\epsilon_i)_{1\leq i\leq m}\in (\mathbb{G}_a^{\sharp})^m(S/d)$.\footnote{ Here by our definition we see that $\epsilon_i^k=T_i^{k-1}\beta^{k-1}\epsilon_i$. In particular, it doesn't vanish for $k\geq 1$. However, the divided powers of $\epsilon_i$ still exist in $S/d$ as $v_p(\beta)=v_p(q-1)=\frac{1}{p-1}$, hence $ v_p(\beta^{n-1})=\frac{n-1}{p-1}\geq v_p(n!)$. Consequently, $\epsilon_i \in \mathbb{G}_a^\sharp(S/d)$.}. 
To see this, unwinding the construction of $\gamma_{c_{\psi}}$, we just need to verify that the image of $c_{\psi}(T_i)$ under the natural morphism $$W(S)\to W(S/d)$$ lies in $\mathbb{G}_a^\sharp(S/d)$ and is precisely given by $\epsilon_i$. 
As $S/d$ is $p$-torsion free, it suffices to check that the first coordinate for $c_{\psi}(T_i)$ is $\epsilon_i$ in $S/d$, which is clear from the proof of \cref{prop. relative c to construct homotopy}. 

The next lemma is an analog of \cref{lem. calculate partial on HT} and helps us understand how $\nabla_i$ acts on $\rho^{*}\rho_{*} \mathcal{O}_{X}$ under the covering morphism $\rho: X\to X_{/A}^{\HT}$, in parallel to the behavior of the Sen operator $\theta$ studied in \cite{bhatt2022prismatization} and \cite{anschutz2023hodge} (see \cite[Section 6.1]{anschutz2023hodge} for details).
\begin{lemma}\label{prop. relative calculate partial on HT}
    Let $\mathcal{E}=\rho_{*} \mathcal{O}_{X}$, then 
    \begin{itemize}
        \item $\rho^{*}\mathcal{E}\cong  R\{a_1,\cdots, a_m\}^{\wedge}_{p}$.
        \item the sequence $0\to 
        (R\{a_j\}_{j\neq i})^{\wedge}_{p} \to \rho^{*}\mathcal{E}\xrightarrow{\nabla_i} \rho^{*}\mathcal{E} \to 0$ is exact.
    \end{itemize}
\end{lemma}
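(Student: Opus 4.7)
The plan is to mimic the argument of \cref{lem. calculate partial on HT} from the absolute case, adapted to the relative and multi-variable setting. For the first bullet I would apply flat base change to the torsor $\rho\colon X\to X_{/A}^{\HT}$: since $X_{/A}^{\HT}$ is the classifying stack of the group scheme $T_{X/\Bar{A}}^{\sharp}\{1\}$ (as recalled just before the lemma), one has $\rho^{*}\rho_{*}\mathcal{O}_X\simeq \mathcal{O}_{T_{X/\Bar{A}}^{\sharp}\{1\}}$. The trivialisation of $T_{X/\Bar{A}}^{\sharp}\{1\}$ afforded by the chosen generator $d$ of $I$ and the basis $\{dT_i\}$ of $\Omega_{\Tilde{R}/A}$ identifies this with $(\mathbb{G}_a^{\sharp})^m_R$, and hence with the $p$-completed divided power polynomial algebra $R\{a_1,\ldots,a_m\}^{\wedge}_p$, where $a_i$ is dual to $dT_i\otimes d$.

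For the second bullet, the starting point is the identification, established in the paragraph immediately preceding the lemma, of the homotopy $\gamma_{c_{\psi}}$ with the element $(\epsilon_i)_{i}\in (\mathbb{G}_a^{\sharp})^m(S/d)$. Tracing through the construction, $\nabla_i(f)$ is precisely the coefficient of $\epsilon_i$ in $f(a_1,\ldots,a_i+\epsilon_i,\ldots,a_m)-f(a_1,\ldots,a_m)$, where the group law on $(\mathbb{G}_a^{\sharp})^m$ is componentwise addition (untwisted, in contrast to the absolute case's twisted law $a\mapsto a+b+eab$). Expanding $f$ in $a_i$ and using $\epsilon_i^k/k!=u_k\epsilon_i$ in $S/d$ with $u_k:=T_i^{k-1}\beta^{k-1}/k!\in R$ (well-defined since $v_p(\beta)=1/(p-1)$ gives $v_p(\beta^{k-1})\geq v_p(k!)$), one obtains the explicit formula $\nabla_i=\sum_{k\geq 1}u_k\,\partial_i^k$, where $\partial_i$ is the standard divided-power derivative in $a_i$. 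In particular $u_1=1$, so the leading term of $\nabla_i$ is $\partial_i$, which makes the inclusion $(R\{a_j\}_{j\neq i})^{\wedge}_p\subseteq \ker(\nabla_i)$ transparent.

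For the reverse inclusion and for the surjectivity of $\nabla_i$, I would follow the iterative procedure of the proof of \cref{lem. calculate partial on HT}: reduce first to the residue level to replace $\nabla_i$ by the genuine divided-power derivative $\partial_i$, where both statements are classical, and then bootstrap back up to $R\{a_1,\ldots,a_m\}^{\wedge}_p$ by producing the antiderivative (resp.\ witnessing a kernel element to vanish) one step of the $(p,\beta)$-adic filtration at a time. The main obstacle, exactly as in the absolute case, is the surjectivity: because the higher divided powers $\epsilon_i^k/k!$ do \emph{not} vanish in $S/d$, the operator $\nabla_i$ genuinely contains the tail $\sum_{k\geq 2}u_k\partial_i^k$, and constructing a preimage requires a careful $p$-adic induction that absorbs these tail corrections. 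The combinatorics should be a touch cleaner here than in the absolute case thanks to the untwisted additive group law on $(\mathbb{G}_a^{\sharp})^m$, but the essential devissage carries over.
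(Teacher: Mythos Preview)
Your first bullet and the identification of $\nabla_i$ with the operator $\sum_{k\geq 1}u_k\partial_i^k$ (where $u_k=T_i^{k-1}\beta^{k-1}/k!$, $u_1=1$) match the paper. For the second bullet, however, the paper takes a markedly shorter path than the one you outline.

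The paper exploits exactly the simplification you flag but then underuse: because the group law on $(\mathbb{G}_a^{\sharp})^m$ is the \emph{untwisted} addition $a\mapsto a+b$, the equation $\nabla_i(f)=g$ (with $f=\sum_n c_n a^n/n!$, $g=\sum_n b_n a^n/n!$) becomes the linear system $M\vec{c}=\vec{b}$ for the infinite matrix $M_{i,j}=(\beta T)^{j-i}/(j+1-i)!$ ($i\leq j$), which is upper triangular with $1$'s on the diagonal. Since $\beta$ is topologically nilpotent in the $p$-complete ring $R$ (from $q^p=1$ in $\bar A$), this unipotent matrix is invertible on $p$-adically null sequences, and the paper simply reads off both the surjectivity of $\nabla_i$ and the identification of its kernel with $B=(R\{a_j\}_{j\neq i})_p^{\wedge}$ in one stroke. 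No reduction modulo an ideal, no iterative bootstrap, no devissage is needed.

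By contrast, your plan to ``reduce first to the residue level to replace $\nabla_i$ by the genuine divided-power derivative $\partial_i$'' does not work as stated: you yourself observe that the tail $\sum_{k\geq 2}u_k\partial_i^k$ does not vanish modulo $p$ (indeed $u_{p^t}$ is a unit for every $t$), so there is no residue level at which $\nabla_i$ becomes $\partial_i$. The iterative scheme from \cref{lem. calculate partial on HT} could still be adapted, but it was forced there precisely because the twisted group law $a\mapsto a+b+eab$ destroys the unipotent-triangular structure; here that structure is present, and the direct matrix inversion is both simpler and more transparent.
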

\begin{proof}
    The projection formula tells us that $\rho^{*}\mathcal{E}\cong R\{a_1,\cdots, a_m\}^{\wedge}_{p}$. To prove the second statement, we write $B$ for $(R\{a_j\}_{j\neq i})^{\wedge}_{p}$ and omit $i$ from the subscript of $a_i$ and $T_i$ for the ease of notation. 
    As the formal group law on the $i$-th component of $(\mathbb{G}_a^{\sharp})^m$ is given by $$\Delta: \widehat{\mathcal{O}}_{\mathbb{G}_a^{\sharp}} \to \widehat{\mathcal{O}}_{\mathbb{G}_a^{\sharp}} \hat{\otimes} \widehat{\mathcal{O}}_{\mathbb{G}_a^{\sharp}}, ~~~~a\mapsto a+b$$
and the isomorphism $\gamma_{c_{\psi_i}}$ (hence also the $q$-derivation $\nabla_i$) is constructed via $\epsilon \in \mathbb{G}_a^{\sharp}(S/d)$, hence for $f(a)=\sum_{i=0}^{\infty}c_i\frac{a^n}{n!}\in \rho^*\mathcal{E}=B\{a\}_p^{\wedge}$ with $c_i\in B$, $\gamma_{c_{\psi_i}}(f)=f(a+\epsilon)=f+\epsilon\nabla_i(f)$. We then calculate that 
\begin{equation*}
    \begin{split}
       \epsilon\nabla_i(f)&=f(a+\epsilon)-f(a)=\sum_{n=0}^{\infty}c_n\frac{(a+\epsilon)^n-a^n}{n!}
       \\&=\sum_{n=0}^{\infty}\frac{c_n}{n!}(\sum_{i=0}^{n-1}\binom{n}{i}a^i\epsilon^{n-i})
       =\epsilon \sum_{n=0}^{\infty}\frac{c_n}{n!}(\sum_{i=0}^{n-1}\binom{n}{i}a^i(\beta T)^{n-i-1}),
    \end{split}
\end{equation*}
where the last equality follows from $\epsilon^j=(\beta T)^{j-1}\epsilon$. Consequently
\begin{equation*}
    \begin{split}
        \nabla_i(f)&=\sum_{i=0}^{\infty} a^i(\sum_{n=i+1}^{\infty} \frac{c_n}{n!}\binom{n}{i}(\beta T)^{n-1-i})
        \\&=\sum_{i=0}^{\infty} \frac{a^i}{i!}(\sum_{n=i+1}^{\infty} \frac{c_n}{(n-i)!}(\beta T)^{n-1-i}).
    \end{split}
\end{equation*}
We then consider the infinite dimensional matrix $M$ with $M_{i,j}=\frac{(\beta T)^{j-i}}{(j+1-i)!}$ for $i\leq j$ and $0$ otherwise, then $M$ is an upper triangular matrix with all the diagonal elements being $1$ in $B$, hence $M$ is an invertible matrix. Let $g=\sum_{n=0}^{\infty} b_n\frac{a^n}{n!}$, then the previous calculation tells us that $\nabla_i(f)=g$ if and only if 
\begin{equation}\label{equa. solve c for b}
    M \Vec{c}=\Vec{b}.
\end{equation}
for $\Vec{b}=(b_0,b_1,\cdots)^{\mathrm{T}}$ and $\Vec{c}=(c_1,c_2,\cdots)^{\mathrm{T}}$.
As $M$ is invertible, given any $\Vec{b}$ satisfies that $b_i\to 0$ as $i\to \infty$, there exists a unique $\Vec{c}$ solving \cref{equa. solve c for b}. Moreover, $c_i\to 0$ as $i\to \infty$. This finishes the proof of the second result stated in the proposition.
\end{proof}

As a quick corollary of \cref{prop. relative calculate partial on HT}, we get the following Poincare lemma for $\mathcal{O}_X$.
\begin{corollary}\label{cor. relative poincare lemma}
The de Rham complex for $\mathcal{E}_0=\rho_{*} \mathcal{O}_{X}$ formed via $\nabla_{\mathcal{E}_0}$ is a resolution for $\mathcal{O}_X$. More precisely,
    \begin{equation*}
        R\xrightarrow{\simeq} (\rho^*\mathcal{E}_0 \xrightarrow{\nabla_{\mathcal{E}_0}} \rho^*\mathcal{E}_0\otimes_{R}\Omega_{R/\Bar{A}}\to \cdots \xrightarrow{\nabla_{\mathcal{E}_0}\wedge \nabla_{\mathcal{E}_0}} \rho^*\mathcal{E}_0\otimes_{R}\Omega_{R/\Bar{A}}^{m}\to 0).
    \end{equation*}
\end{corollary}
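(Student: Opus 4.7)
The plan is to trivialize $\Omega^\bullet_{R/\bar{A}}$ via the basis $dT_1, \ldots, dT_m$, which turns the complex on the right-hand side into the Koszul complex of the tuple $(\nabla_1, \ldots, \nabla_m)$ of commuting $R$-linear endomorphisms of $M := \rho^*\mathcal{E}_0$. Commutativity is the Hodge-Tate specialization of Proposition~\ref{prop. rel two derivation vanish}, whose proof of $\nabla_{\mathcal{E}}\wedge\nabla_{\mathcal{E}}=0$ uses only the bifunctoriality of the $\gamma_{c_{\psi_i}}$-construction, and descends to $X_{/A}^{\HT}$ exactly as $\gamma_{c_{\psi_i}}$ itself does (see the discussion after Proposition~\ref{propt.key automorphism of functors relative case}). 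It therefore suffices to show that the unit inclusion $R\hookrightarrow M$ placed in degree $0$ is a quasi-isomorphism onto this Koszul complex.

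The explicit calculation in the proof of the previous lemma shows that $\nabla_i$ only acts in the $a_i$-variable: its action on $a_i^n/n!$ produces a polynomial in $a_i$ whose coefficients involve only $\beta T_i\in R$, and in particular $\nabla_i$ is $(R\{a_j\}_{j\neq i})^{\wedge}_p$-linear. Writing
\[
M \;\cong\; \widehat{\bigotimes}_{i=1}^{m}\, R\{a_i\}_p^{\wedge}
\]
for the natural $p$-completed tensor-product decomposition (i.e. the multi-variable divided-power algebra as a completed tensor product of one-variable ones), the Koszul complex factorizes accordingly as
\[
\mathrm{DR}(\rho^*\mathcal{E}_0,\nabla) \;\simeq\; \widehat{\bigotimes}_{i=1}^{m}\, \bigl[\,R\{a_i\}_p^{\wedge} \xrightarrow{\ \nabla_i\ } R\{a_i\}_p^{\wedge}\,\bigr].
\]
By the $m=1$ specialization of the previous lemma, each tensor factor is quasi-isomorphic to $R$ in degree $0$ via the unit $R\hookrightarrow R\{a_i\}_p^{\wedge}$ (surjectivity kills $H^1$, and the one-variable kernel computation gives $H^0=R$). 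Taking the completed tensor product of these quasi-isomorphisms yields the desired identification $R\xrightarrow{\sim}\mathrm{DR}(\rho^*\mathcal{E}_0,\nabla)$, with the map being the structural inclusion $R\hookrightarrow M$ in degree $0$.

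The main technical point will be justifying the completed-tensor-product decomposition and its compatibility with quasi-isomorphisms, i.e. that the $p$-completed tensor product of bounded two-term complexes of $p$-completely flat $R$-modules preserves quasi-isomorphisms. This is easily handled by reducing modulo $p$ (where divided-power algebras degenerate to ordinary polynomial algebras and the decomposition is classical) and then completing, using derived Nakayama. A cleaner alternative which sidesteps the tensor-product formalism altogether is induction on $m$: the previous lemma in the $m$-th direction gives a short exact sequence $0\to (R\{a_1,\ldots,a_{m-1}\})_p^{\wedge}\to M\xrightarrow{\nabla_m} M\to 0$, so the last column of the Koszul bicomplex contracts to its kernel equipped with the remaining commuting operators $\nabla_1,\ldots,\nabla_{m-1}$, and the inductive hypothesis finishes the proof by repeated application of the previous lemma alone.
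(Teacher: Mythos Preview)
Your proposal is correct, and your ``cleaner alternative'' via induction on $m$ is essentially the paper's own argument: the paper extracts from the proof of Lemma~\ref{prop. relative calculate partial on HT} that each $\nabla_{\mathcal{E}_0,q}$ remains surjective on $\cap_{j<q}\ker(\nabla_{\mathcal{E}_0,j})$, and then invokes a ``well-known lemma'' that under this successive-surjectivity hypothesis the Koszul complex of the commuting operators is concentrated in degree $0$ with $H^0=\cap_j\ker(\nabla_{\mathcal{E}_0,j})$. Your tensor-product factorization is a valid and slightly more structural repackaging of the same content, but the inductive route you sketch at the end is exactly what the paper does.
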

\begin{proof}
By \cref{prop. relative calculate partial on HT},   $(R\{a_j\}_{j\neq i})^{\wedge}_{p}\xrightarrow{\simeq} \fib(\rho^{*}\mathcal{E}_0\xrightarrow{\nabla_{\mathcal{E}_0,i}} \rho^{*}\mathcal{E}_0)$. Moreover, the proof of it actually implies that for any $1< q\leq m$, $\nabla_{\mathcal{E}_0,q}$ is surjective on $\cap_{j=1}^{q-1} \ker(\nabla_{\mathcal{E}_0,j})$. By a well-known lemma, the Koszul complex for $\{\nabla_{\mathcal{E}_0,i}\}_{1\leq i\leq m}$ is hence concentrated on degree $0$ and is given by $\cap_{j=1}^{m} \ker(\nabla_{\mathcal{E}_0,j})$. Consequently by taking limits with respect to all $i$, we get that
\begin{equation*}
    R\xrightarrow{\simeq} 
 (\rho^*\mathcal{E}_0 \xrightarrow{\nabla_{\mathcal{E}_0}} \rho^*\mathcal{E}_0\otimes_{R}\Omega_{R/\Bar{A}}\to \cdots \xrightarrow{\nabla_{\mathcal{E}_0}\wedge \nabla_{\mathcal{E}_0}} \rho^*\mathcal{E}_0\otimes_{R}\Omega_{R/\Bar{A}}^{m}\to 0).
\end{equation*}
\end{proof}

\begin{proposition}\label{prop. relative ht case fiber seq}
    Assume $\mathcal{E}_0=\rho_{*} \mathcal{O}_{X}$ as in the previous corollary. For any $\mathcal{E}\in \mathcal{D}(X_{/A}^{\HT})$, there is a canonical resolution
    \begin{equation*}
        \mathcal{E} \xrightarrow{\simeq} (\rho_*\rho^*\mathcal{E} \xrightarrow{\nabla_{\mathcal{E}}} \rho_*\rho^*\mathcal{E}\otimes_{R}\Omega_{R/\Bar{A}}\to \cdots \xrightarrow{
        } \rho_*\rho^*\mathcal{E}\otimes_{R}\Omega_{R/\Bar{A}}^{m}\to 0).
    \end{equation*}
Moreover, taking cohomology induces a canonical quasi-isomorphism $$\mathrm{R} \Gamma(X_{/A}^{\HT}, \mathcal{E})\xrightarrow{\simeq} (\rho^*\mathcal{E} \xrightarrow{\nabla_{\mathcal{E}}} \rho^*\mathcal{E}\otimes_{R}\Omega_{R/\Bar{A}}\to \cdots \xrightarrow{\nabla_{\mathcal{E}}\wedge \nabla_{\mathcal{E}}} \rho^*\mathcal{E}\otimes_{R}\Omega_{R/\Bar{A}}^{m}\to 0).$$ Hence the morphism constructed in \cref{propt. relative factor through fiber} is a quasi-isomorphism when $n=1$.
\end{proposition}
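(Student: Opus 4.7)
The plan is to bootstrap from the Poincar\'e-type resolution of the structure sheaf in \cref{cor. relative poincare lemma} to an arbitrary $\mathcal{E} \in \mathcal{D}(X_{/A}^{\HT})$ by tensoring, and then extract the cohomology statement using affineness of the cover $\rho$.

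First I would upgrade \cref{cor. relative poincare lemma} to a resolution on the stack $X_{/A}^{\HT}$ itself: the unit map $\mathcal{O}_{X_{/A}^{\HT}} \to \mathcal{E}_0 = \rho_*\mathcal{O}_X$ should extend to a canonical quasi-isomorphism
\[
\mathcal{O}_{X_{/A}^{\HT}} \xrightarrow{\simeq} \bigl(\mathcal{E}_0 \xrightarrow{\nabla_{\mathcal{E}_0}} \mathcal{E}_0 \otimes_R \Omega_{R/\overline{A}} \to \cdots \to \mathcal{E}_0 \otimes_R \Omega_{R/\overline{A}}^{m}\bigr),
\]
where the differentials are defined using the descent of $\nabla_{\mathcal{E}_0}$ to $X_{/A}^{\HT}$ recorded just before \cref{prop. relative calculate partial on HT}. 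Since $\rho : X \to X_{/A}^{\HT}$ is a faithfully flat cover, this can be checked after pullback along $\rho$, where it reduces precisely to \cref{cor. relative poincare lemma}.

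Next, for general $\mathcal{E} \in \mathcal{D}(X_{/A}^{\HT})$, I would tensor the above resolution with $\mathcal{E}$ and invoke the projection formula $\mathcal{E} \otimes_{\mathcal{O}} \rho_*\mathcal{O}_X \simeq \rho_*\rho^*\mathcal{E}$ to produce the desired resolution of $\mathcal{E}$. The delicate point is to identify the induced differentials with $\nabla_{\mathcal{E}}$ on $\rho_*\rho^*\mathcal{E}$. I would verify this after pullback along $\rho$: the twisted Leibniz rule of \cref{rem. rel twisted leibnitz in general} computes the $q$-Higgs connection on $\rho^*\mathcal{E} \otimes_R \rho^*\mathcal{E}_0$ as $\nabla_{\rho^*\mathcal{E}} \otimes 1 + 1 \otimes \nabla_{\rho^*\mathcal{E}_0} + \beta T_i\, \nabla_{\rho^*\mathcal{E}} \otimes \nabla_{\rho^*\mathcal{E}_0}$, whereas the tensored differential coming from the resolution above is $1 \otimes \nabla_{\rho^*\mathcal{E}_0}$. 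Using the explicit trivialization $\rho^*\mathcal{E}_0 \simeq R\{a_1,\ldots,a_m\}_p^{\wedge}$ from \cref{prop. relative calculate partial on HT}, the discrepancy can be absorbed by the standard ``trivialize the comodule'' change of coordinates, exactly as invoked at the end of the proof of \cref{prop. ht case fiber seq}.

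For the cohomology statement, I would apply $\mathrm{R}\Gamma(X_{/A}^{\HT}, -)$ to the resolution. Since $\rho : X \to X_{/A}^{\HT}$ is affine (being a torsor under an affine group scheme) and $X = \Spf R$ is itself affine, we have $\mathrm{R}\Gamma(X_{/A}^{\HT}, \rho_*\rho^*\mathcal{E}) = \mathrm{R}\Gamma(X, \rho^*\mathcal{E}) = \rho^*\mathcal{E}$, together with the analogous identity for each twisted term in the complex, recovering exactly the claimed $q$-de Rham complex. Compatibility with the morphism from \cref{propt. relative factor through fiber} is then a formal naturality check, since both constructions are ultimately built from the single isomorphism $\gamma_{c_{\psi}}$. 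I expect the main obstacle to be the middle step: making rigorous the cancellation of the correction $\beta T_i\, \nabla_{\mathcal{E}} \otimes \nabla_{\mathcal{E}_0}$ produced by the twisted Leibniz rule, while maintaining compatibility with the $\infty$-categorical structure of $\mathcal{D}(X_{/A}^{\HT})$, is the most technically demanding point.
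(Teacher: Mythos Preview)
Your proposal is correct and follows essentially the same approach as the paper: upgrade \cref{cor. relative poincare lemma} to a resolution of $\mathcal{O}_{X_{/A}^{\HT}}$ via faithfully flat descent, tensor with $\mathcal{E}$, use the projection formula, and identify the resulting differential $\Id\otimes\nabla_{\mathcal{E}_0}$ with $\nabla_{\mathcal{E}}$ via the ``trivializing Hopf algebra comodules'' trick. Your flagging of the differential identification as the delicate step is apt, and the paper handles it in exactly the way you suggest; the only cosmetic difference is that the paper phrases the identification directly through the Hopf trivialization rather than routing through the twisted Leibniz rule first.
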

\begin{proof}
By the discussion above \cref{prop. relative calculate partial on HT}, $\nabla_{\mathcal{E}_0}$ descends to a functor from $\mathcal{E}_0$ to $\mathcal{E}_0\otimes_R \Omega_{R/\Bar{A}}$ and hence we have a morphism 
\begin{equation*}
    \mathcal{O}_{X_{/A}^{\HT}}\to (\mathcal{E}_0 \xrightarrow{\nabla_{\mathcal{E}_0}} \mathcal{E}_0\otimes_{R}\Omega_{R/\Bar{A}}\to \cdots \xrightarrow{\nabla_{\mathcal{E}_0}\wedge \nabla_{\mathcal{E}_0}} \mathcal{E}_0\otimes_{R}\Omega_{R/\Bar{A}}^{m}\to 0),
\end{equation*}
which actually is a resolution for $\mathcal{O}_{X_{/A}^{\HT}}$ by \cref{cor. relative poincare lemma} and faithfully flat descent. For a general $\mathcal{E}\in \mathcal{D}(X^{\HT})$, tensoring $\mathcal{E}$ with the above sequence and then yields a canonical resolution for $\mathcal{E}$
\begin{equation*}
        \mathcal{E} \xrightarrow{\simeq} (\mathcal{E}\otimes_{R} \mathcal{E}_0 \xrightarrow{\Id\otimes\nabla_{\mathcal{E}_0}} \mathcal{E}\otimes_{R} \mathcal{E}_0\otimes_{R}\Omega_{R/\Bar{A}}\to \cdots \xrightarrow{} \mathcal{E}\otimes_{R} \mathcal{E}_0\otimes_{R}\Omega_{R/\Bar{A}}^{m}\to 0)
    \end{equation*}
By the usual trick of trivializing Hopf algebra’s comodules, the projection formula $\rho_*\rho^*\mathcal{E}\cong \mathcal{E}\otimes_{R}\rho_*(\mathcal{O}_X)$ identifies $\nabla_{\mathcal{E}}$ with $\Id\otimes \nabla_{\mathcal{E},0}$, hence applying it and then taking cohomology we then get a canonical quasi-isomorphism
\begin{equation*}
    \mathrm{R} \Gamma(X_{/A}^{\HT}, \mathcal{E})\xrightarrow{\simeq} (\rho^*\mathcal{E} \xrightarrow{\nabla_{\mathcal{E}}} \rho^*\mathcal{E}\otimes_{R}\Omega_{R/\Bar{A}}\to \cdots \xrightarrow{\nabla_{\mathcal{E}}\wedge \nabla_{\mathcal{E}}} \rho^*\mathcal{E}\otimes_{R}\Omega_{R/\Bar{A}}^{m}\to 0).
\end{equation*}
\end{proof}
\begin{remark}
    The analog of \cref{prop. relative ht case fiber seq} by replacing the $q$-Higgs derivation with the usual canonical Higgs filed (see \cite[Definition 6.15]{anschutz2023hodge}) is \cite[Lemma 6.10]{anschutz2023hodge}.
\end{remark}
\begin{example}[$q$-Higgs derivations on the structure sheaf when restricted to the Hodge-Tate locus]\label{example. relative action on generators}
     For $\mathcal{E}=\mathcal{O}_{X_{/A}^{\HT}}$, $\nabla_{\mathcal{E}}=0$. Indeed, as in \cite[Corollary 3.5.14]{bhatt2022absolute}, \cref{prop. relative ht case fiber seq}  implies that $\mathcal{E}\in \mathcal{D}(X_{/A}^{\HT})$ is isomorphic to $ \mathcal{O}_{X_{/A}^{\HT}}$ if and only if $\rho^*\mathcal{E}\cong \mathcal{O}_X$ and $\nabla_{\mathcal{E}}=0$.
\end{example}

Now we proceed to classify quasi-coherent sheaves on $X_{/A,n}^{\Prism}$ for all $n$.
\begin{proposition}\label{propt. relative sen calculate cohomology}
    Let $n\in \mathbb{N}\cup \{\infty\}$. For any $\mathcal{E}\in \mathcal{D}(X_{/A,n}^{\Prism})$, the natural morphism $$\mathrm{R} \Gamma(X_{/A, n}^{\Prism}, \mathcal{E})\xrightarrow{\simeq} (\rho^*\mathcal{E} \xrightarrow{\nabla_{\mathcal{E}}} \rho^*\mathcal{E}\otimes_{\Tilde{R}/I^n}\Omega_{(\Tilde{R}/I^n)/(A/I^n)}\to \cdots \xrightarrow{\nabla_{\mathcal{E}}\wedge \nabla_{\mathcal{E}}} \rho^*\mathcal{E}\otimes_{\Tilde{R}/I^n}\Omega_{(\Tilde{R}/I^n)/(A/I^n)}^{m}\to 0)$$ constructed in \cref{propt. relative factor through fiber} is a quasi-isomorphism.
\end{proposition}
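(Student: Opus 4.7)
The plan is to mirror the strategy used in the absolute case (\cref{propt. sen calculate cohomology}): establish the statement at the Hodge–Tate level first, then bootstrap to finite $n$ by dévissage along the Hodge–Tate ideal sheaf, and finally pass to $n=\infty$ by a limit argument.

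First, for the base case $n=1$, the result is exactly \cref{prop. relative ht case fiber seq}, which gives the canonical quasi-isomorphism between $\mathrm{R}\Gamma(X_{/A}^{\HT}, \mathcal{E})$ and the $q$-de Rham complex $\mathrm{DR}(\rho^*\mathcal{E}, \nabla_{\mathcal{E}})$ using the Poincar\'e-type resolution of \cref{cor. relative poincare lemma} together with the projection formula. Critically, the natural morphism here agrees (up to the identifications fixed in the construction of $\nabla_{\mathcal{E}}$) with the one produced in \cref{propt. relative factor through fiber}, because both are induced by the isomorphism $\gamma_{c_{\psi_i}}\colon \rho\circ\psi_i \simeq \rho$.

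Next, for $n\in\mathbb{N}$, I would induct on $n$. Assume the statement for $n-1$ ($n\geq 2$). As in the proof of \cref{prop. heart generator}, there is a fiber sequence in $\mathcal{D}(X_{/A,n}^{\Prism})$
\[
\mathcal{E}\otimes\bigl((i_{n-1,n})_{*}\mathcal{O}_{X_{/A,n-1}^{\Prism}}\{1\}\bigr) \;\longrightarrow\; \mathcal{E} \;\longrightarrow\; \mathcal{E}\otimes\bigl((i_{1,n})_{*}\mathcal{O}_{X_{/A}^{\HT}}\bigr),
\]
coming from the identification of the $(n-1)$-st power of the Hodge–Tate ideal. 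Applying $\mathrm{R}\Gamma(X_{/A,n}^{\Prism}, -)$ and using the projection formula and the adjunctions for $(i_{n-1,n})_{*}$ and $(i_{1,n})_{*}$ identifies the outer terms with global sections of the restrictions of $\mathcal{E}$ (up to a Breuil–Kisin twist) to $X_{/A,n-1}^{\Prism}$ and $X_{/A}^{\HT}$ respectively. On the other side, because $\rho$ is flat, the $q$-de Rham complex $\mathrm{DR}(\rho^*\mathcal{E}, \nabla_{\mathcal{E}})$ fits into an analogous fiber sequence with terms $\mathrm{DR}(\rho^*\mathcal{E}\otimes_{\Tilde{R}/d^n}\Tilde{R}/d^{n-1}, \nabla)$ and $\mathrm{DR}(\rho^*\mathcal{E}\otimes_{\Tilde{R}/d^n}\Tilde{R}/d, \nabla)$; here one uses that $\Omega_{(\Tilde{R}/d^n)/(A/d^n)}$ is a finite free $\Tilde{R}/d^n$-module, so reducing mod $d^{n-1}$ or mod $d$ commutes with forming the de Rham complex. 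The natural map of \cref{propt. relative factor through fiber} is compatible with these two fiber sequences by functoriality, so the inductive hypothesis together with the $n=1$ case yields the claim for $n$ by the five lemma (in the derived sense).

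Finally, for $n=\infty$, writing $\mathcal{E}=\varprojlim_n \mathcal{E}_n$ where $\mathcal{E}_n$ is the restriction of $\mathcal{E}$ to $X_{/A,n}^{\Prism}$, and using that $\mathrm{R}\Gamma$ commutes with limits and that $\mathrm{DR}(\rho^*\mathcal{E},\nabla_{\mathcal{E}}) = \varprojlim_n \mathrm{DR}(\rho^*\mathcal{E}_n, \nabla_{\mathcal{E}_n})$ (because $\Omega^{\bullet}$ is finite free and everything is $d$-complete), we obtain the full statement. I expect the main technical point to be the compatibility of the morphism of \cref{propt. relative factor through fiber} with the two fiber sequences appearing in the inductive step; this is ultimately a diagram chase tracing through the construction of $\gamma_{c_{\psi_i}}$, but it must be done carefully because the $q$-twisted Leibniz rule on $\nabla_{\mathcal{E}}$ (\cref{lem. relative leibniz}) intertwines $\mathcal{E}$ with the Breuil–Kisin twist.
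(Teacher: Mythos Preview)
Your proposal is correct and follows essentially the same approach as the paper: reduce to $n=1$ via \cref{prop. relative ht case fiber seq}, then use standard d\'evissage along the Hodge--Tate ideal for finite $n$, and finally pass to $n=\infty$ by an inverse limit argument. The paper's proof is terser (it just says ``standard d\'evissage''), but your explicit description of the fiber sequence and the inductive step is exactly what is meant.
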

\begin{proof}
    For $n\in \mathbb{N}$, it reduces to $n=1$ by standard  d\'evissage, which follows from \cref{prop. relative ht case fiber seq}.

Finally for $\mathcal{E}\in \mathcal{D}(X_{/A}^{\Prism})$, as taking global sections commutes with limits, by writing $\mathcal{E}$ as the inverse limit of $\mathcal{E}_n$ for $\mathcal{E}_n$ the restriction of $\mathcal{E}$ to $X^{\Prism}_{/A,n}$, the desired result follows as taking inverse limits commutes with taking cohomology and totalization.
\end{proof}

As a consequence of \cref{propt. relative sen calculate cohomology}, we get that
\begin{corollary}
    The global sections functor $$\mathrm{R} \Gamma(X_{/A}^{\Prism}, \bullet): \mathcal{D}(X_{/A}^{\Prism}) \rightarrow \widehat{\mathcal{D}}(\mathbb{Z}_p) \quad \text{resp.}\quad \mathrm{R} \Gamma(X^{\Prism}_{/A,n}, \bullet): \mathcal{D}(X^{\Prism}_{/A,n}) \rightarrow \widehat{\mathcal{D}}(\mathbb{Z}_p)$$
    commutes with colimits.
\end{corollary}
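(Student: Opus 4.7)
The plan is to use the explicit cohomology formula furnished by Proposition~\ref{propt. relative sen calculate cohomology} to reduce the claim to the fact that pullback along $\rho$, tensoring with finite free modules, and finite limits in a stable $\infty$-category all commute with colimits.

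More precisely, fix $n \in \mathbb{N} \cup \{\infty\}$. For $\mathcal{E} \in \mathcal{D}(X_{/A,n}^{\Prism})$, Proposition~\ref{propt. relative sen calculate cohomology} identifies $\mathrm{R}\Gamma(X_{/A,n}^{\Prism}, \mathcal{E})$ with the $q$-de Rham complex
\[
\mathrm{DR}(\rho^{*}\mathcal{E}, \nabla_{\mathcal{E}}) \;=\; \bigl( \rho^{*}\mathcal{E} \xrightarrow{\nabla_{\mathcal{E}}} \rho^{*}\mathcal{E} \otimes_{\Tilde{R}/I^{n}} \Omega_{(\Tilde{R}/I^{n})/(A/I^{n})} \to \cdots \to \rho^{*}\mathcal{E} \otimes_{\Tilde{R}/I^{n}} \Omega^{m}_{(\Tilde{R}/I^{n})/(A/I^{n})} \bigr),
\]
which is the totalization of a \emph{bounded} (length $m+1$) complex naturally associated to $\mathcal{E}$. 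So it suffices to check that each term of this complex, together with the totalization operation, commutes with colimits (formed in $\widehat{\mathcal{D}}(\mathbb{Z}_{p})$ on the target side, and in $\mathcal{D}(X_{/A,n}^{\Prism})$ on the source side).

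First, the pullback functor $\rho^{*}\colon \mathcal{D}(X_{/A,n}^{\Prism}) \to \widehat{\mathcal{D}}(\Tilde{R}/I^{n})$ is a left adjoint in the $\infty$-categorical sense, hence commutes with all small colimits. Next, the $\Tilde{R}/I^{n}$-modules $\Omega^{k}_{(\Tilde{R}/I^{n})/(A/I^{n})}$ are finite free (since we fixed an \'etale chart giving a basis $dT_{1}, \ldots, dT_{m}$), so tensoring with them preserves colimits termwise. Finally, forming the totalization of the above diagram amounts to iterating a finite number of fiber (equivalently, since we are in a stable $\infty$-category, cofiber) constructions; in a stable $\infty$-category every finite limit is also a finite colimit, and colimits commute with colimits, so this totalization preserves colimits as well.

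Composing these three observations, the assignment $\mathcal{E} \mapsto \mathrm{DR}(\rho^{*}\mathcal{E}, \nabla_{\mathcal{E}})$ commutes with colimits, and Proposition~\ref{propt. relative sen calculate cohomology} then yields the corollary for every $n \in \mathbb{N} \cup \{\infty\}$. The only mildly delicate point is ensuring naturality of the identification in Proposition~\ref{propt. relative sen calculate cohomology} (so that the whole chain of equivalences is functorial in $\mathcal{E}$); this is already implicit in the construction of $\nabla_{\mathcal{E}}$ via the isomorphism $\gamma_{c_{\psi}}$, which was built entirely from universal data independent of $\mathcal{E}$. Thus no genuine obstacle arises, and the corollary is established.
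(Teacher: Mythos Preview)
Your proof is correct and follows exactly the approach the paper intends: the paper simply states this corollary as an immediate consequence of Proposition~\ref{propt. relative sen calculate cohomology} without further argument, and you have spelled out the standard reasoning (pullback is a left adjoint, tensoring with finite free modules and finite totalizations in a stable $\infty$-category preserve colimits) that the paper leaves implicit.
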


\begin{proposition}\label{propt. relative generation}
    Let $n\in \mathbb{N}$. The $\infty$-category $\mathcal{D}(X_{/A}^{\Prism})$ (resp. $\mathcal{D}(X^{\Prism}_{/A,n})$) is generated under shifts and colimits by the structure sheaf $\mathcal{O}$.
\end{proposition}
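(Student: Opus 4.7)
The proof parallels the absolute statement \cref{propt.generation} (modeled on \cite[Corollary 3.5.16]{bhatt2022absolute} and \cite[Proposition 2.9]{anschutz2022v}), with one key simplification: since $(A, d)$ is transversal, the Hodge–Tate ideal sheaf $\mathcal{I}$ is invertible on $X_{/A,n}^{\Prism}$ and canonically trivialized by $d$, so the family of generators $\{\mathcal{I}^k\}_{k \in \mathbb{Z}}$ from the absolute case collapses to $\{\mathcal{O}\}$. I would induct on $n$, treating $n = \infty$ separately at the end.

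For the base case $n = 1$, the stack $X_{/A}^{\HT}$ is the classifying stack of the affine group scheme $T_{X/\bar{A}}^{\sharp}\{1\}$ over $X = \Spf R$ (by the relative analog of \cite[Proposition 5.12]{bhatt2022prismatization}), so the cover $\rho$ is affine and $\rho_*$ preserves colimits. Generation can be extracted from the Poincar\'e resolution of \cref{cor. relative poincare lemma}: applied to $\mathcal{O}_{X_{/A}^{\HT}}$ itself, it places $\rho_* \mathcal{O}_X$ in the colimit-and-shift closure $\mathcal{C}_1$ of $\mathcal{O}_{X_{/A}^{\HT}}$; since $\rho^* \mathcal{O} = R$ generates $\mathcal{D}(\Spf R)$ and $\rho_*$ preserves colimits, $\rho_* \rho^* \mathcal{E} \in \mathcal{C}_1$ for every $\mathcal{E}$, and then the resolution of \cref{prop. relative ht case fiber seq} places $\mathcal{E}$ itself in $\mathcal{C}_1$.

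For the inductive step at finite $n$, I would exploit two canonical fiber sequences on $X_{/A,n}^{\Prism}$, coming from $\mathcal{I}^n = 0$ together with the identifications $\mathcal{I}^{n-1} \cong (i_{1,n})_* \mathcal{O}_{X_{/A}^{\HT}}$ (via $d^{n-1}$) and $\mathcal{I} \cong (i_{n-1,n})_* \mathcal{O}_{X_{/A,n-1}^{\Prism}}$ (via $d$, using $\mathrm{ann}_{\mathcal{O}}(d) = \mathcal{I}^{n-1}$):
\begin{equation*}
(i_{1,n})_* \mathcal{O}_{X_{/A}^{\HT}} \xrightarrow{\cdot d^{n-1}} \mathcal{O}_{X_{/A,n}^{\Prism}} \to (i_{n-1,n})_* \mathcal{O}_{X_{/A,n-1}^{\Prism}}, \qquad (i_{n-1,n})_* \mathcal{O}_{X_{/A,n-1}^{\Prism}} \xrightarrow{\cdot d} \mathcal{O}_{X_{/A,n}^{\Prism}} \to (i_{1,n})_* \mathcal{O}_{X_{/A}^{\HT}}.
\end{equation*}
Iterating these (and their analogs for smaller values of $n$) shows $(i_{k,n})_* \mathcal{O}_{X_{/A,k}^{\Prism}} \in \mathcal{C}_n$ for every $k \leq n$. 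Tensoring with an arbitrary $\mathcal{E}$ and invoking the projection formula --- legitimate because the closed immersions are affine, so the pushforwards preserve colimits --- converts the first sequence into $(i_{1,n})_*(\mathcal{E}|_{X_{/A}^{\HT}}) \to \mathcal{E} \to (i_{n-1,n})_*(\mathcal{E}|_{X_{/A,n-1}^{\Prism}})$; the inductive hypotheses then put the outer terms in $\mathcal{C}_n$, hence $\mathcal{E} \in \mathcal{C}_n$.

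Finally, for $n = \infty$, I would show $\mathcal{O}$ is a compact weak generator of $\mathcal{D}(X_{/A}^{\Prism})$. Compactness is visible from \cref{propt. relative sen calculate cohomology}: $\RHom(\mathcal{O}, -) = \mathrm{R}\Gamma(X_{/A}^{\Prism}, -)$ is a finite totalization of $\rho^*(-) \otimes_{\Tilde{R}} \Omega^\bullet_{\Tilde{R}/A}$, and both $\rho^*$ and finite totalizations preserve filtered colimits. Weak generation: for $\mathcal{E}$ with $\RHom(\mathcal{O}[k], \mathcal{E}) = 0$ for all $k$, the cofiber sequence $\mathcal{E} \xrightarrow{d^n} \mathcal{E} \to \mathcal{E}/d^n$ together with adjunction for the closed immersion $X_{/A,n}^{\Prism} \hookrightarrow X_{/A}^{\Prism}$ yields $\RHom_{X_{/A,n}^{\Prism}}(\mathcal{O}_n[k], \mathcal{E}/d^n) = 0$ for all $k, n$; the finite-$n$ case forces $\mathcal{E}/d^n = 0$, and derived $d$-completeness of $\mathcal{E}$ concludes. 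The main obstacle in the whole argument is the mutual-fiber-sequence bootstrap at finite $n$: showing that the pushforwards of the structure sheaves from every stratum $X_{/A,k}^{\Prism}$ belong to $\mathcal{C}_n$ requires carefully combining both fiber sequences above, after which the rest (base case, projection formula, $d$-completeness reduction at infinity) is formal.
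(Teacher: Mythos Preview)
Your argument has a genuine gap in the base case $n=1$, and the inductive bootstrap is circular as stated.

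For the base case: the Poincar\'e resolution of \cref{cor. relative poincare lemma} (or \cref{prop. relative ht case fiber seq}) exhibits $\mathcal{O}_{X_{/A}^{\HT}}$ as a finite totalization of copies of $\rho_*\mathcal{O}_X$, not the other way around. It places $\mathcal{O}$ in the shift-and-colimit closure of $\rho_*\mathcal{O}_X$, which is the wrong direction for your Step~1. Knowing $\mathcal{O}\in\langle\rho_*\mathcal{O}_X\rangle$ does not yield $\rho_*\mathcal{O}_X\in\langle\mathcal{O}\rangle$ (compare $\mathbb{Z}/p\simeq[\mathbb{Z}\xrightarrow{p}\mathbb{Z}]$ in $\mathcal{D}(\mathbb{Z})$). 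What actually works here is the filtration argument used later in the paper for the absolute case (\cref{prop. AR generate by Ik}): filter the regular representation $\rho_*\mathcal{O}_X\cong R\{a_1,\dots,a_m\}^{\wedge}_p$ by total divided-power degree; the computation of $\nabla_i$ in \cref{prop. relative calculate partial on HT} shows each graded piece is $\mathcal{O}$, so $\rho_*\mathcal{O}_X=\varinjlim V_{\leq j}\in\mathcal{C}_1$. The paper simply cites \cite[Lemma 6.12]{anschutz2023hodge} for this.

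For the inductive step: your ``mutual-fiber-sequence bootstrap'' does not break the circularity. Already for $n=2$ both sequences collapse to $(i_{1,2})_*\mathcal{O}_1\to\mathcal{O}_2\to(i_{1,2})_*\mathcal{O}_1$, i.e.\ $(i_{1,2})_*\mathcal{O}_1=\operatorname{cofib}\bigl((i_{1,2})_*\mathcal{O}_1\to\mathcal{O}_2\bigr)$, which is self-referential and does not force $(i_{1,2})_*\mathcal{O}_1\in\mathcal{C}_2$. Iterating or passing to smaller $n$ gives the same equation back. The paper avoids this entirely by arguing via weak generation rather than building $\mathcal{C}_n$ explicitly: if $\RHom(\mathcal{O},\mathcal{E})=0$ then $\RHom(\mathcal{I}^k,\mathcal{E})=0$ for all $k$ (since $\mathcal{I}\cong\mathcal{O}$ in the relative setting), and the fiber sequence $\mathcal{I}^{k+1}\to\mathcal{I}^k\to(i_1)_*\mathcal{O}\{k\}$ together with adjunction gives $\RHom_{X_{/A}^{\HT}}(\mathcal{O},\mathcal{E}|_{X_{/A}^{\HT}})=0$; the Hodge--Tate case then forces $\mathcal{E}|_{X_{/A}^{\HT}}=0$, hence $\mathcal{E}=0$ by $d$-completeness (or by $d^n=0$ in the truncated case). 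This is a two-line argument once the base case is in hand, and it bypasses the need to ever produce $(i_{k,n})_*\mathcal{O}_k$ inside $\mathcal{C}_n$.
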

\begin{proof}
    As we work with the relative prismatization, $X_{/A}^{\Prism}$ lives over $A$, hence the ideal sheaf $\mathcal{I}$ is trivialized, i.e. $\mathcal{I}\cong \mathcal{O}$. Let $\mathcal{E}\in \mathcal{D}(X_{/A}^{\Prism})$ such that $\RHom(\mathcal{O}, \mathcal{E})=0$, then $\RHom(\mathcal{I}^n, \mathcal{E})=0$ for all $n$ as $\mathcal{I}^n\cong \mathcal{O}$. This implies that $\RHom(\mathcal{O}|_{X_{/A}^{\HT}}, \mathcal{E}|_{X_{/A}^{\HT}})=0$, hence $\mathcal{E}|_{X_{/A}^{\HT}}=0$ as $\mathcal{D}(X_{/A}^{\HT})$ is generated under shifts and colimits by the structure sheaf thanks to \cite[Lemma 6.12]{anschutz2023hodge}. Consequently $\mathcal{E}=0$.
\end{proof}
\begin{theorem}\label{thmt. relative main classification}
Let $n\in \mathbb{N}\cup \{\infty\}$. The functor 
    \begin{align*}
        &\beta_n^{+}: \mathcal{D}(X_{/A, n}^{\Prism}) \rightarrow \mathcal{D}(\Tilde{R}/d^n[\underline{\nabla}; \gamma_{\Tilde{R}}, \nabla_{\Tilde{R}}]), \qquad \mathcal{E}\mapsto (\rho^{*}(\mathcal{E}),\nabla_{\mathcal{E}})
    \end{align*}
    constructed in \cref{prop. relative induce functor} is fully faithful. 
     Moreover, its essential image consists of those objects $M\in \mathcal{D}(\Tilde{R}/d^n[\underline{\nabla}; \gamma_{\Tilde{R}}, \nabla_{\Tilde{R}}])$ 
     satisfying the following pair of conditions:
    \begin{itemize}
        \item $M$ is $(p,d)$-adically complete.
        \item The action of $\nabla_{i}$ on the cohomology $\mathrm{H}^*(M\otimes_{\Tilde{R}/d^n}^{\mathbb{L}}\Tilde{R}/(d,p))$ 
        is locally nilpotent for all $i$. 
    \end{itemize}
\end{theorem}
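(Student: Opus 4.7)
My plan follows the same template as the proof of \cref{thmt.main classification}, adapted to the relative setting. The two main ingredients are already essentially in place: the cohomological description in \cref{propt. relative sen calculate cohomology} and the generation statement in \cref{propt. relative generation}. The new wrinkles compared to the absolute cyclotomic case are that the ideal sheaf $\mathcal{I}$ is now trivial (since we work over $A$), so generation reduces entirely to the structure sheaf, and the monodromy is a multi-variable $q$-Higgs field rather than a single operator.

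For full faithfulness, I would let $\mathcal{E}, \mathcal{F}\in \mathcal{D}(X_{/A,n}^\Prism)$ and reduce the claim
\[
\Hom_{\mathcal{D}(X_{/A,n}^\Prism)}(\mathcal{E},\mathcal{F}) \longrightarrow \Hom_{\mathcal{D}(\tilde R/d^n[\underline\nabla;\gamma_{\tilde R},\nabla_{\tilde R}])}(\beta_n^+\mathcal{E},\beta_n^+\mathcal{F})
\]
to the case $\mathcal{E}=\mathcal{O}$ by means of \cref{propt. relative generation}. The left-hand side is then $\mathrm{R}\Gamma(X_{/A,n}^\Prism,\mathcal{F})$, which by \cref{propt. relative sen calculate cohomology} is computed by the $q$-de Rham complex $\mathrm{DR}(\rho^*\mathcal{F},\nabla_\mathcal{F})$. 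The right-hand side is $\RHom_{\tilde R/d^n[\underline\nabla;\gamma_{\tilde R},\nabla_{\tilde R}]}(\tilde R/d^n,\beta_n^+\mathcal{F})$, which by the Koszul resolution in \cref{rem. rel cohomology of q higgs} is also computed by $\mathrm{DR}(\rho^*\mathcal{F},\nabla_\mathcal{F})$. One then checks that the natural comparison map is the identification.

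For the essential image, let $\mathcal{C}_n\subseteq \mathcal{D}(\tilde R/d^n[\underline\nabla;\gamma_{\tilde R},\nabla_{\tilde R}])$ be the subcategory cut out by the two conditions. First I would verify that $\beta_n^+$ lands in $\mathcal{C}_n$: $(p,d)$-completeness of $\rho^*\mathcal{E}$ is automatic, and since $\mathcal{D}(X_{/A,n}^\Prism)$ is generated by $\mathcal{O}$ (\cref{propt. relative generation}), the nilpotence of each $\nabla_i$ on $\mathrm{H}^*(\beta_n^+\mathcal{E}\otimes^{\mathbb L}\tilde R/(d,p))$ reduces to the case $\mathcal{E}=\mathcal{O}$, where it is trivial by \cref{example. relative action on generators} ($\nabla_{\mathcal{O}}=0$ on the Hodge-Tate locus). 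Conversely, to see that $\mathcal{C}_n$ is generated by $\beta_n^+(\mathcal{O})$ under shifts and colimits, suppose $M\in \mathcal{C}_n$ satisfies $\RHom_{\mathcal{C}_n}(\beta_n^+(\mathcal{O}),M)=0$. By $(p,d)$-complete derived Nakayama we reduce to showing $M\otimes^{\mathbb L}\tilde R/(d,p)=0$; an inductive d\'evissage along the Hodge-Tate ideal reduces this to the Hodge-Tate locus (using the twist-free observation that $\mathcal{I}\cong\mathcal{O}$ in the relative setting). Finally, over the Hodge-Tate locus, if $M_0\neq 0$, then by iterating the $\nabla_i$ and invoking the local nilpotence hypothesis I can find some nonzero element annihilated by every $\nabla_i$, producing a nonzero map from $\beta_1^+(\mathcal{O})[m]$ to $M_0$ via the Koszul description.

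The main obstacle I anticipate is the last step, namely extracting a nonzero element jointly killed by all $\nabla_i$ from the local nilpotence of each $\nabla_i$ separately on $\mathrm{H}^*(M\otimes^{\mathbb L}\tilde R/(d,p))$. In the one-variable case of \cref{thmt.main classification} one simply iterates $\partial$; here one must iterate commuting operators $\nabla_1,\dots,\nabla_m$, picking kernels successively. The commutativity relation $\nabla_i\nabla_j=\nabla_j\nabla_i$ (built into the Ore extension) ensures that $\ker(\nabla_i)$ is preserved by $\nabla_j$, so one can restrict to $\cap_{j<i}\ker(\nabla_j)$ at each stage and apply the nilpotence of $\nabla_i$ there; this is the relative analogue of the d\'evissage used implicitly in the Koszul/Poincar\'e lemma of \cref{cor. relative poincare lemma}. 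The restriction-to-vector-bundles addendum analogous to \cref{corot.perfect version of the remain theorem} then follows by a verbatim argument using derived Nakayama and the regularity of $\tilde R/d[\underline\nabla;\gamma_{\tilde R},\nabla_{\tilde R}]$ (which is a finite-type Ore extension of the regular ring $\tilde R/d$).
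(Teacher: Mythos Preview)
Your proposal is correct and follows essentially the same route as the paper: full faithfulness via \cref{propt. relative generation} together with \cref{propt. relative sen calculate cohomology} and the Koszul description in \cref{rem. rel cohomology of q higgs}, then generation of $\mathcal{C}_n$ by $\beta_n^+(\mathcal{O})$ via reduction to the Hodge--Tate locus and an iterated-kernel argument exploiting the commutativity of the $\nabla_i$. The paper phrases the last step as an induction on a common nilpotence exponent $N$ (replacing $x$ by $\nabla_i^{N-1}(x)$ when nonzero), which is a minor variant of your successive-kernel formulation.
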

\begin{proof}
The functor is well-defined thanks to \cref{prop. relative induce functor}. For the full faithfulness, let $\mathcal{E}$ and $\mathcal{F}$ be quasi-coherent complexes on $X_{/A,n}^\Prism$ and we want to show that the natural map 
    \begin{equation*}
        \Hom_{\mathcal{D}(X^\Prism_{/A, n})}(\mathcal{E}, \mathcal{F}) \rightarrow \Hom_{\mathcal{D}(\Tilde{R}/d^n[\underline{\nabla}; \gamma_{\Tilde{R}}, \nabla_{\Tilde{R}}])} (\rho^{*}(\mathcal{E}), \rho^{*}(\mathcal{F}))
    \end{equation*}
    is a homotopy equivalence. Thanks to \cref{propt. relative generation}, we could reduce to the case that $\mathcal{E}$ is the structure sheaf. Then the desired result follows from \cref{propt. relative sen calculate cohomology} and \cref{rem. rel cohomology of q higgs}. 
    
    By \cref{propt. relative generation}, the essential image of $\beta_n^+$ is generated by $\Tilde{R}/d^n$ (with $\nabla_i$ acting via $\nabla_{\Tilde{R},i}$) under shifts and colimits, and $\nabla_{\Tilde{R},i}$ on $\Tilde{R}/d^n$ satisfies that $\nabla_{\Tilde{R},i}(x)\in d(\Tilde{R}/d^n), \forall x\in \Tilde{R}/d^n$. In particular, the essential image of $\beta_n^+$ is contained in the subcategory of $(p, d)$-complete complexes $M\in \mathcal{D}(\Tilde{R}/d^n[\underline{\nabla}; \gamma_{\Tilde{R}}, \nabla_{\Tilde{R}}])$ such that each $\nabla_{i}$ acts locally nilpotently on $\mathrm{H}^*(M\otimes_{\Tilde{R}/d^n}^{\mathbb{L}}\Tilde{R}/(d,p))$.

    Let $\mathcal{C}_n\subseteq \mathcal{D}(\Tilde{R}/d^n[\underline{\nabla}; \gamma_{\Tilde{R}}, \nabla_{\Tilde{R}}])$ be the full subcategory spanned by objects satisfying two conditions listed in \cref{thmt. relative main classification}. As the source $\mathcal{D}(X_{/A,n}^{\Prism})$ is generated under shifts and colimits by the structure sheaf thanks to \cref{propt. relative generation}, to complete the proof it suffices to show that $\mathcal{C}_n$ is also generated under shifts and colimits by $\beta^+_n(\mathcal{O}_{X_{/A,n}^{\Prism}})$. In other words, we need to show that for every nonzero object $M\in \mathcal{C}_n$, $M$ admits a nonzero morphism from $\beta^+_n(\mathcal{O}_{X_{/A,n}^{\Prism}})[m]$ for some $m \in \mathbb{Z}$. 
    
    Arguing as \cref{thmt.main classification}, we can reduce to the Hodge-Tate case and it suffices to show that for every nonzero object $M\in \mathcal{C}_1$, $\RHom_{\mathcal{C}_{1}}(\beta_1^{+}(\mathcal{O}), M)\neq 0$. For this purpose, first we observe that $\RHom_{\mathcal{C}_{1}}(\beta_1^{+}(\mathcal{O}), M)$ can be calculated by $D_{\dR}(M,\nabla)$, the $q$-de Rham complex of $M$ thanks to \cref{rem. rel cohomology of q higgs}. Replacing $M$ by $M\otimes k$ (the derived Nakayama guarantees that $L\otimes k$ detects whether $L$ is zero or not for $p$-complete $L$), we may assume that there exists some cohomology group $\mathrm{H}^{-m}(M)$ containing a nonzero element $x$ killed by $\nabla_{i}^N$ for all $i$, where $N$ is fixed. Furthermore, we could assume this element is non-zero and is actually killed by $\nabla_i$ for all $i$. Indeed, assume that $N$ is the minimal positive integer such that $\nabla_{i}^N$ kills $x$ for all $i$. We do induction on $N$. If $N=1$, this is already done. Otherwise there exists $i$ such that $\nabla_{i}^{N-1}(x)\neq 0$. we first replace $x$ by $\nabla_{i}^{N-1}(x)$. If $x$ is still not killed by $\nabla_{j}^{N-1}$, we replace $x$ with $\nabla_{j}^{N-1}(x)$ and then repeat the process until reduced to the case that $N=1$. Notice that in this step we crucially use the assumption that $\nabla_{i}$ commutes with $\nabla_{j}$ for all $i, j$. It then follows that $\mathrm{H}^{-m}(D_{\dR}(M,\nabla))\neq 0$, hence there exists a non-zero morphism from $\beta^+_1(\mathcal{O}_{})/p[m]$ to $M$ as desired.
\end{proof}
\subsection{Locally complete intersection case}
In this subsection we would like to extend the classification results in the previous section to the locally complete intersection case.

As in the last subsection, we still assume $X=\Spf(R)$ is a small affine and fix the following diagram
\[\xymatrixcolsep{5pc}\xymatrix{A\left\langle \underline{T}^{}\right\rangle\ar[r]^{\square} \ar[d]&   \Tilde{R}
\ar[d]^{}
\\ \Bar{A}\left\langle \underline{T}^{}\right\rangle\ar[r] \ar[r]^{\square} &R,}\]
where the horizontal maps are \'etale chart maps. But we allow $m=0$, for example, $R=\mathcal{O}_K$ (and $\Tilde{R}=A$) in this subsection.

In this paper we always consider $Y=\Spf(R/(\Bar{x}_1,\cdots,\Bar{x}_r)) \hookrightarrow X$ ($x_i\in \Tilde{R}$ and $\Bar{x}_i$ is its image in $\Bar{A}$) to be a closed embedding such that the prismatic envelope with respect to the morphism of $\delta$-pairs $(\Tilde{R}, (d))\to (\Tilde{R}, (d,x_1,\cdots,x_m))$ exists and is exactly given by $\Tilde{R}\{\frac{x_1,\cdots,x_r}{d}\}^{\wedge}_{\delta}$, obtained by freely adjoining $\frac{x_i}{d}$ to $\Tilde{R}$ in the category of derived $(p,I)$-complete simplicial $\delta$-$A$-algebras\footnote{See \cite[Corollary 2.44]{bhatt2022prisms} for the precise definition.}.

We first verify that the assumption on $Y$ holds for a large class of interesting objects.
\begin{lemma}\label{lem. ARL discrete case}
$\Tilde{R}\{\frac{x_1,\cdots,x_r}{d}\}^{\wedge}_{\delta}$ is discrete and $d$-torsion free (hence satisfies the above condition) in the following two cases: 
\begin{itemize}
    \item $(\Bar{x}_1,\cdots,\Bar{x}_r)$ forms a $p$-completely regular sequence relative to $\mathcal{O}_K$ in the sense of \cite[Definition 2.42]{bhatt2022prisms}.
    \item $x_1=(q-1)^{t}\in A$ ($t \geq 1$), $(\Bar{x}_2,\cdots,\Bar{x}_r)$ forms a $p$-completely regular sequence relative to $\mathcal{O}_K$.
\end{itemize}
\end{lemma}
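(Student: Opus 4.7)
The plan is to treat the two cases in turn, with case 1 being a direct application of \cite{bhatt2022prisms} and case 2 requiring a reduction step to handle the element $(q-1)^t$ separately.

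For case 1, I would invoke Bhatt--Scholze's theory of prismatic envelopes directly. Since $(A,d)$ is transversal, $d$ is a non-zero divisor in $\tilde R$, and combining this with the hypothesis that $(\bar x_1,\dots,\bar x_r)$ is $p$-completely regular in $R$ relative to $\mathcal O_K$, the sequence $(x_1,\dots,x_r,d)$ is $(p,d)$-completely regular in $\tilde R$. By \cite[Proposition 3.13]{bhatt2022prisms} (or the derived variant \cite[Corollary 2.44]{bhatt2022prisms}), the prismatic envelope $\tilde R\{\tfrac{x_1,\dots,x_r}{d}\}^{\wedge}_{\delta}$ then agrees with its derived counterpart, is $(p,d)$-completely flat over $\tilde R$, and in particular is discrete and $d$-torsion free.

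For case 2, the obstruction is that $\bar x_1=(q-1)^t$ is not $p$-completely regular over $\mathcal O_K$: modulo $p$ the element $d$ is congruent (up to a unit) to $(q-1)^{p-1}$, so $(q-1)^t$ is a zero-divisor in $\mathcal O_K/p$ whenever $t<p-1$. The strategy is to decompose the envelope into two stages. First, I would compute $A':=A\{(q-1)^t/d\}^{\wedge}_{\delta}$ and show that it is discrete, $d$-torsion free, and that $(A',(d))$ is a transversal prism over $(A,(d))$. Granting this, set $\tilde R':=\tilde R\,\widehat\otimes_A A'$ and observe, via the universal property of prismatic envelopes, that $\tilde R\{\tfrac{x_1,\dots,x_r}{d}\}^{\wedge}_{\delta}\simeq \tilde R'\{\tfrac{x_2,\dots,x_r}{d}\}^{\wedge}_{\delta}$. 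The sequence $(\bar x_2,\dots,\bar x_r)$ in $\tilde R'/d$, viewed as a sequence over $\bar{A'}$, is then $p$-completely regular (being the base change of a $p$-completely regular sequence along the flat map $\mathcal O_K\to \bar{A'}$), and case 1 applies to conclude.

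The main obstacle is the first bullet above: showing that $A\{(q-1)^t/d\}^{\wedge}_{\delta}$ is discrete and $d$-torsion free. Here I would exploit the special structure of the $q$-prism, specifically the identity $\varphi(q-1)=q^p-1=d(q-1)$, which relates Frobenius twists of $(q-1)$ to powers of $d$. Writing $y=(q-1)^t/d$ in the envelope, this allows one to express each $\delta^n(y)$ in terms of lower-order data, giving a concrete presentation
\[
A\{(q-1)^t/d\}^{\wedge}_{\delta}\;=\;A\langle y,\delta(y),\delta^2(y),\dots\rangle^{\wedge}_{(p,d)}\big/(dy-(q-1)^t)
\]
together with explicit relations coming from $\varphi(q-1)=d(q-1)$. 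I would then produce a $(p,d)$-adic filtration whose graded pieces are free $A/p$-modules, for instance by induction on the Frobenius depth, reducing eventually to the classical (non-$\delta$) envelope $A\langle y\rangle^{\wedge}_{(p,d)}/(dy-(q-1)^t)$, which is discrete and $d$-torsion free because $((q-1)^t,d)$ is a (classically) regular sequence in $A$. An alternative route, perhaps cleaner, is to identify $A\{(q-1)^t/d\}^{\wedge}_{\delta}$ with the prism attached to the $p$-adic formal scheme $\Spf(\mathcal O_K/\pi^t)$ (viewed over $A$) and appeal to the existence of a transversal cover in that setting; this is the case that directly motivates the application to $Y=\Spf(W(k)[\zeta_{p^{\alpha+1}}]/\pi^m)$ mentioned earlier in the paper.
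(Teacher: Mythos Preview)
Your case 1 and the two-stage reduction in case 2 (first adjoin $(q-1)^t/d$, then the remaining $x_i/d$ via case 1) match the paper's argument.

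Your approach (a) for $A' = A\{(q-1)^t/d\}^\wedge_\delta$ has a gap. It is true that $((q-1)^t,d)$ is a regular sequence in $A$ (since $d\equiv p \pmod{q-1}$ is a non-zero-divisor in $A/(q-1)^t$), so the classical envelope is fine. But the identity $\varphi(q-1)=d(q-1)$ gives $\varphi(d)\varphi(y)=d^{t+1}y$ for $y=(q-1)^t/d$, and since $\varphi(d)=[p]_{q^p}$ is coprime to $d$ in $A$, this does \emph{not} express $\varphi(y)$ in $A[y]$. So $\delta(y)$ is genuinely new, and there is no evident filtration collapsing the $\delta$-envelope to the classical one; the ``induction on Frobenius depth'' is unsubstantiated.

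Your approach (b) is what the paper actually does, with the following concrete content. By derived Nakayama it suffices that $C/^{\mathbb L}d$ be discrete, where $C=A\{(q-1)^t/d\}^\wedge_\delta$. Via the moduli description of \cref{prop. lci case base to R}, $\Spf(C/^{\mathbb L}d)$ represents $S\mapsto\{y\in W(S):(q-1)^t=yd\}$ on $\mathcal O_K/\pi^t$-algebras. Two computations identify this with $\mathbb G_{a,\mathcal O_K/\pi^t}^\sharp$: first, the $\delta$-lift $A\to W(\mathcal O_K/\pi^t)$ kills $(q-1)^t$ (one checks $\delta^k((q-1)^n)\in((q-1)^n)$ for all $k,n$, so all Witt components vanish), giving a base point $y=0$; second, $d=V(1)$ in $W(\mathcal O_K)$ (\cref{lem. lci image of d}), so $yd=0$ becomes $V(F(y))=0$, i.e.\ $F(y)=0$, whence the functor is $W^{F=0}\simeq\mathbb G_a^\sharp$. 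Thus $C/^{\mathbb L}d\simeq\mathcal O_{\mathbb G_{a,\mathcal O_K/\pi^t}^\sharp}$ is discrete.
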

\begin{proof}
    In the first situation, the desired result is \cite[Proposition 3.13]{bhatt2022prisms} as $(x_1,\cdots,x_r)$ forms $(p,d)$-completely regular sequence relative to $A$. For the second case, it suffices to check for $r=1$ and $x_1=(q-1)^t$. 
    Without loss of generality, we can further assume $\Tilde{R}=A$. To see $C=A\{\frac{(q-1)^{t}}{d}\}^{\wedge}_{\delta}$ is discrete and $d$-torsion free, by derived Nakayama it suffices to show that $C/^{\mathbb{L}} d$ is discrete. Actually we will prove that $C/^{\mathbb{L}} d\simeq \mathcal{O}_{\mathbb{G}_{a,\mathcal{O}_K/\pi^m}^{\sharp}}$ (here $\pi=q-1\in \mathcal{O}_K$). 

    For this purpose, we notice that by the proof of \cref{prop. lci case base to R} (which doesn't require $C/^{\mathbb{L}} d$ to be discrete), $\Spf(C/^{\mathbb{L}} d)$ (in the derived sense) represents the functor sending a test $\mathcal{O}_K/\pi^t$-algebra $S$ to $Y_{/A}^{\HT}(S)=\{y\in W(S)|~(q-1)^t= y d\}$ (here $Y=\Spf(\mathcal{O}_K/\pi^t)$). Hence it suffices to show the latter is represented by $\mathbb{G}_{a,\mathcal{O}_K/\pi^m}^{\sharp}$.

    First we construct a base point in $\Spf(C/^{\mathbb{L}} d)(\mathcal{O}_K/\pi^t)=Y_{/A}^{\HT}(\mathcal{O}_K/\pi^t)$. By definition (see \cite[Corollary 2.44]{bhatt2022prisms}), $C$ is obtained from the pushout diagram 
    \[\xymatrixcolsep{5pc}\xymatrix{A\{x\}_{\delta}^{\wedge}\ar[r]^{w: x\mapsto (q-1)^t} \ar[d]^{v: x\mapsto dz}&   A
\ar[d]^{}
\\ A\{z\}_{\delta}^{\wedge} \ar[r]^{} &C}\]
    in the $\infty$-category of simplicial commutative $\delta$-rings. Let $\iota: A\to A/(d, (q-1)^t)=\mathcal{O}_K/\pi^m$ be the usual quotient map. Then it can be extended to a ring homomorphism $g: A\{z\}_{\delta}^{\wedge}\to \mathcal{O}_K/\pi^m$ by sending $\delta^i(z)$ to $0$ for all $i\geq 0$. Let $\Tilde{g}: A\{z\}_{\delta}^{\wedge}\to W(\mathcal{O}_K/\pi^m)$ (resp. $\Tilde{\iota}: A\to W(\mathcal{O}_K/\pi^m)$) be the unique $\delta$-ring homomorphism lifting $g$ (resp. $\iota$).

    Let $a=q-1\in A=W(k)[[q-1]]$. Then $\varphi(a)=(a+1)^p-1$, hence $$\delta(a)=\frac{\varphi(a)-a^p}{p}=\frac{(a+1)^p-a^p-1}{p}\in (a).$$
As $\delta(xy)=x^p\delta(y)+y^p\delta(x)+p\delta(x)\delta(y)$ for $x,y\in A$, by induction we see that $\delta(a^n)\in (a^n), \forall n\geq 1$. Indeed, a stronger statement that $\delta^k(a^n)\in (a^n), \forall k\geq 1$ also holds. We proceed by induction on $k$. Suppose that the claim holds up to $k$ and let $c\in A$ satisfies that $\delta^k(a^n)=a^nc$, then 
$$\delta^{k+1}(a^n)=\delta(\delta^{k}(a^n))=\delta(a^nc)=\frac{\varphi(a^nc)-a^{np}c^p}{p}=\frac{((a+1)^p-1)^n\varphi(c)-a^{np}c^p}{p}=a^{np}\delta(c)+(a^n)\in (a^n),$$
hence $\delta^{k+1}(a^n)\in (a^n)$ and we finish the induction.

The above analysis shows that $\iota\circ w: A\{x\}_{\delta}^{\wedge}\to 
\mathcal{O}_K/\pi^t$ kills all of $\delta^{i}(x)$, $i\geq 0$, which implies $\Tilde{\iota}\circ w: A\{x\}_{\delta}^{\wedge} \to W(\mathcal{O}_K/\pi^t)$ also kills all of $\delta^{i}(x)$, $i\geq 0$ (by the uniqueness of $\delta$-lifting of $\iota\circ w$). Consequently $\Tilde{l}((q-1)^t)=0$ and we get a base point $y=0\in Y_{/A}^{\HT}(\mathcal{O}_K/\pi^t)$.

Finally we construct a desired isomorphism between functors $Y_{/A}^{\HT}$ and $\mathbb{G}_{a,\mathcal{O}_K/\pi^m}^{\sharp}$ over $\Spf(\mathcal{O}_K/\pi^t)$. By \cref{lem. lci image of d}, there exists a unit $x_d\in W(\mathcal{O}_K)$ such that $d=V(F(x_d))$ holds in $W(\mathcal{O}_K)$. Guaranteeing the existence of such $x_d$, for a test $\mathcal{O}_K/\pi^t$-algebra $S$, sending $y\in \mathbb{G}_{a,\mathcal{O}_K/\pi^m}^{\sharp}(S)\simeq W(S)^{F=0}$ to $y\in Y_{/A}^{\HT}(S)$ defines an isomorphism of functors. The element $y$ satisfies $dy=0$ because $dy=V(F(x_d))\cdot y=V(F(x_d)\cdot F(y))=V(0)=0$. 
\end{proof}
The following lemma is used in the above proof.
\begin{lemma}\label{lem. lci image of d}
    Let $\iota: A=W(k)[[q-1]]\to W(\mathcal{O}_K)$ be the unique $\delta$-ring map lifting the quotient map $A\to \mathcal{O}_K=A/([p]_{q^{p^\alpha}})$. Then there exists a unit $x_{d}\in W(\mathcal{O}_K)$ such that $\iota(d)=V(F(x_{d}))$. 
\end{lemma}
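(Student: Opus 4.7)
The plan is to identify $\iota(d)$ inside $W(\mathcal{O}_K)$ by a direct ghost-coordinate calculation, and show in fact that $\iota(d) = V(1)$, so that one may take $x_d = 1$.

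First I would use the defining property of $\iota$ as a $\delta$-ring lift of $A \to \mathcal{O}_K$: this forces $F \circ \iota = \iota \circ \varphi_A$, hence for every $x \in A$ and every $n \geq 0$,
\[
w_n(\iota(x)) \;=\; w_0(F^n(\iota(x))) \;=\; w_0(\iota(\varphi_A^n(x))) \;=\; \overline{\varphi^n(x)} \quad \text{in } \mathcal{O}_K,
\]
where the bar denotes the image under $A \twoheadrightarrow \mathcal{O}_K$. Applied to $x = d = [p]_{q^{p^\alpha}}$, this gives $w_n(\iota(d))$ as the image in $\mathcal{O}_K$ of $\varphi^n(d) = [p]_{q^{p^{\alpha+n}}} = \sum_{i=0}^{p-1} q^{i p^{\alpha+n}}$.

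Next I would evaluate these images. For $n = 0$, the image is $[p]_{\zeta_{p^{\alpha+1}}^{p^\alpha}} = [p]_{\zeta_p} = 0$. For $n \geq 1$, the element $\zeta_{p^{\alpha+1}}^{p^{\alpha+n}}$ equals $1$, so the image is $\sum_{i=0}^{p-1} 1 = p$. Therefore the ghost coordinates of $\iota(d)$ are $(0, p, p, p, \dots)$, which coincide with the ghost coordinates of $V(1)$ (since $w_0(V(1)) = 0$ and $w_n(V(1)) = p \cdot w_{n-1}(1) = p$ for $n \geq 1$).

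Finally, because $\mathcal{O}_K$ is $p$-torsion free, so is $W(\mathcal{O}_K)$, and hence the ghost map $W(\mathcal{O}_K) \to \prod_{n \geq 0} \mathcal{O}_K$ is injective. This forces the identity $\iota(d) = V(1) = V(F(1))$ in $W(\mathcal{O}_K)$, so we may take $x_d = 1$, which is manifestly a unit. No real obstacle is expected — the only thing one must be careful about is not conflating $\iota(q)$ with the Teichm\"uller lift $[\zeta_{p^{\alpha+1}}]$ (they are generally different), but the argument above bypasses this entirely by working only with ghost coordinates of the element $\iota(d)$ itself.
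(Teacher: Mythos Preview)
Your proof is correct and follows essentially the same approach as the paper: both compute the ghost coordinates of $\iota(d)$ to be $(0,p,p,\dots)$, recognize these as the ghost coordinates of $V(1)$, and conclude $\iota(d)=V(1)=V(F(1))$ by injectivity of the ghost map on $W(\mathcal{O}_K)$. Your presentation is slightly more streamlined in that you identify $V(1)$ directly rather than solving for the Witt coordinates of $x_d$, but the substance is identical.
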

\begin{proof}
    For simplicity, we will just write $d$ for $\iota(d)=\iota([p]_{q^{p^\alpha}})$ in the following proof. First notice that $d$ maps to $0$ under the projection $W(\mathcal{O}_K) \to \mathcal{O}_K$, hence it lies in $V(W(\mathcal{O}_K))$. It then suffices to show that $\lambda=V(F(x))$ has a solution $x_{d}=(x_0,x_1,\cdots)$ in $W(\mathcal{O}_K)$. As $\mathcal{O}_K$ is $p$-torsion free, the ghost map is injective, hence this equation is equivalent to that 
\begin{align}\label{appl.ghost identity}
    \forall n\geq 0, w_n(d)=w_n(V(F(x))).
\end{align}
    We will construct $x_{d}=(x_0,x_1,\cdots)$ inductively on $n$ by showing that the solution exists in $W_n(\mathcal{O}_K)$.

    For $n=0$, $w_0(d)=0 \in \mathcal{O}_K$, $w_0(V(F(x_d)))=0$, hence \cref{appl.ghost identity} always holds.

    For $n\geq 1$, we have that 
    \begin{equation*}
        w_n(d)=w_0(\varphi^n(d))=w_0([p]_{q^{p^{\alpha+n}}})=p,
    \end{equation*}
    and that
    \begin{equation*}
        w_n(V(F(x)))=pw_{n-1}(F(x))=pw_n(x)=p(\sum_{i=0}^n x_i^{p^{n-i}}p^i).
    \end{equation*}
    
    This implies if we take $x_0=1$ and $x_i=0$ for $i\geq 1$ then \cref{appl.ghost identity} holds for all $n$, i.e. $d=V(1)$ in $W(\mathcal{O}_K)$. Clearly such $x_{d}$ is a unit in $W(\mathcal{O}_K)$ as $x_0=1$ by construction.
\end{proof}

\begin{remark}\label{rem. ARL allow second case}
The main reason for considering the second case in \cref{lem. ARL discrete case} is that we are interested in classifying quasi-coherent complexes on the prismatization of $Y=\Spf(\mathcal{O}_K/\pi^t)$ later. Indeed, calculating the cohomology of the structure sheaf is already interesting and it (as well as its filtered versions) plays an important role in studying the $K$-thory of $\mathcal{O}_K/\pi^t$, see \cite{antieau2024k} and \cite{antieau2023prismatic} for this picture.
\end{remark}

Our goal is to study $\mathcal{D}(Y_{/A}^{\Prism})$, where $Y_{/A}^{\Prism}$ is the relative prismatization of $Y$ with respect to $(A,I)$ and hence fits into the following diagram 
\[ \xymatrix{ Y_{/\Tilde{R}}^{\Prism}  \ar[r]^{\rho} \ar[d]^{} & Y_{/A}^{\Prism} \ar[r]^{} \ar[d]^{} &Y^{\Prism} \ar[d]^{}\\ \Spf(\Tilde{R})\ar[r]^{\rho}
 &  X_{/A}^{\Prism} \ar[d]^{}\ar[r]^{} & X^{\Prism} \ar[d]\\ & \Spf(A) \ar[r]^{} &\Spf(\Bar{A})^{\Prism}}\]
where all squares in the diagram are pullback squares.

Our strategy is to use the covering map $Y_{/\Tilde{R}}^{\Prism} \to  Y_{/A}^{\Prism}$ in the above diagram, which will still be denoted as $\rho$. Indeed, $Y_{/\Tilde{R}}^{\Prism}$ is nothing but $\Spf(\Tilde{R}\{\frac{x_1,\cdots,x_r}{d}\}^{\wedge}_{\delta})$, explained by the next proposition. This result is not surprising as $Y_{/\Tilde{R}}^{\Prism}$ could be viewed as the affine stack of $\mathrm{R} \Gamma(Y/\Tilde{R}, \mathcal{O}_{\Prism})$, which is precisely the prismatic envelope by the universal property of the later stated in \cite[Proposition 3.13]{bhatt2022prisms}.
\begin{proposition}\label{prop. lci case base to R}
    $Y_{/\Tilde{R}}^{\Prism}$ (resp. $Y_{/\Tilde{R},n}^{\Prism}$) is represented by $\Spf(\Tilde{R}\{\frac{x_1,\cdots,x_r}{d}\}^{\wedge}_{\delta})$ (resp. $\Spf(\Tilde{R}\{\frac{x_1,\cdots,x_r}{d}\}^{\wedge}_{\delta}/d^n)$).
\end{proposition}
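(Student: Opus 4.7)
The plan is to prove the representability statement by comparing functors of points on $(p,d)$-nilpotent test rings. I will argue for $Y_{/\Tilde{R}}^{\Prism}$; the truncated statement will follow by tracking the closed-substack condition.

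Let $S$ be a test $(p,d)$-nilpotent ring. First, I would unwind the pullback definition: a point of $Y_{/\Tilde{R}}^{\Prism}(S)$ consists of an $\Tilde{R}$-algebra structure $\Tilde{R}\to S$ together with a point of $Y_{/A}^{\Prism}(S)$ whose image in $X_{/A}^{\Prism}(S)$ coincides with the composite $\Spf(S)\to \Spf(\Tilde{R})\xrightarrow{\rho} X_{/A}^{\Prism}$. By \cref{construction. covers}, the $\Tilde{R}$-algebra structure lifts uniquely to a $\delta$-ring map $\Tilde{\iota}:\Tilde{R}\to W(S)$, and the composite above corresponds to the Cartier-Witt divisor $\alpha:(d)\otimes_{\Tilde{R}}W(S)\to W(S)$ determined by $\Tilde{\iota}(d)$, together with the tautological structure map $R\to \Cone(\alpha)=S$. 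Lifting this point from $X_{/A}^{\Prism}$ to $Y_{/A}^{\Prism}$ is precisely the condition that the composition $R\to S$ factor through $R/(\bar{x}_1,\ldots,\bar{x}_r)$, i.e. that each $\bar{x}_i$ map to zero in $\Cone(\alpha)$.

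Next, I would translate this vanishing condition into the statement that $\Tilde{\iota}(x_i)\in \Tilde{\iota}(d)\cdot W(S)$, i.e. that there exist elements $y_i\in W(S)$ with $\Tilde{\iota}(d)\cdot y_i=\Tilde{\iota}(x_i)$ for all $i$. Moreover, such $y_i$ are unique, since $W(S)$ is $d$-torsion free (by the same argument as in \cref{lem. s torsion freeness}, using that $S$ is $p$-torsion free after suitable reduction and that ghost coordinates are $\varphi^n(d)$-torsion free). Thus $Y_{/\Tilde{R}}^{\Prism}(S)$ is in natural bijection with pairs consisting of a $\delta$-ring map $\Tilde{\iota}:\Tilde{R}\to W(S)$ together with elements $y_1,\ldots,y_r\in W(S)$ satisfying $\Tilde{\iota}(d)y_i=\Tilde{\iota}(x_i)$.

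Now I would invoke the universal property of the derived prismatic envelope $\Tilde{R}\{\tfrac{x_1,\ldots,x_r}{d}\}^{\wedge}_{\delta}$ as stated in \cite[Corollary 2.44, Proposition 3.13]{bhatt2022prisms}: $\delta$-$\Tilde{R}$-algebra maps from $\Tilde{R}\{\tfrac{x_1,\ldots,x_r}{d}\}^{\wedge}_{\delta}$ to a derived $(p,d)$-complete simplicial $\delta$-$\Tilde{R}$-algebra $C$ correspond to $r$-tuples $(y_1,\ldots,y_r)\in C^r$ satisfying $dy_i=x_i$ in $C$. Applying this with $C=W(S)$ produces a $\delta$-ring map $\Tilde{R}\{\tfrac{x_1,\ldots,x_r}{d}\}^{\wedge}_{\delta}\to W(S)$, and then by the adjunction $\mathrm{Hom}_{\delta\text{-Rings}}(-,W(S))=\mathrm{Hom}_{\mathrm{Rings}}(-,S)$ (i.e. the right adjointness of $W$ to the forgetful functor), this corresponds uniquely to a ring map $\Tilde{R}\{\tfrac{x_1,\ldots,x_r}{d}\}^{\wedge}_{\delta}\to S$ over $\Tilde{R}$. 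Chasing through these bijections identifies $Y_{/\Tilde{R}}^{\Prism}$ with $\Spf(\Tilde{R}\{\tfrac{x_1,\ldots,x_r}{d}\}^{\wedge}_{\delta})$.

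For the truncated version, the condition of landing in $X_{/A,n}^{\Prism}$ (see \cref{defi.truncated} and \cref{ConsAbsHT}) is precisely that $\alpha^{\otimes n}$ composed with the projection $W(S)\to S$ vanishes, which amounts to $w_0(\Tilde{\iota}(d))^n=0$ in $S$, i.e. $d^n=0$ in the structure map $\Tilde{R}\to S$. Thus the representing ring picks up an additional relation $d^n=0$, giving $\Spf(\Tilde{R}\{\tfrac{x_1,\ldots,x_r}{d}\}^{\wedge}_{\delta}/d^n)$. The main obstacle is keeping track of the derived subtleties, since $\Tilde{R}\{\tfrac{x_1,\ldots,x_r}{d}\}^{\wedge}_{\delta}$ is a priori only a derived ring; but \cref{lem. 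ARL discrete case} guarantees discreteness and $d$-torsion-freeness in the cases of interest, so the functor of points argument upgrades the formal statement to an honest representability result by an affine formal scheme.
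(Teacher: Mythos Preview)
Your approach is essentially the same as the paper's: unwind the functor of points, identify it with tuples $(y_i)\in W(S)^r$ satisfying $dy_i=x_i$, and then match this against the universal property of the prismatic envelope together with the adjunction between $W(-)$ and the forgetful functor.

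There is, however, a genuine logical slip in your second paragraph. You phrase the lift from $X_{/A}^{\Prism}$ to $Y_{/A}^{\Prism}$ as a \emph{condition} (``that there exist elements $y_i$'') and then invoke $d$-torsion-freeness of $W(S)$ to pin them down uniquely. Both halves of this are problematic. First, for an arbitrary $(p,d)$-nilpotent test ring $S$ (which may have $p$-torsion, and in which the image of $d$ is nilpotent), $W(S)$ is certainly not $d$-torsion free, so the appeal to \cref{lem. s torsion freeness} does not apply. Second, and more importantly, the lift is not a condition but \emph{data}: the paper works with the mapping space $\mathrm{Map}_{\Tilde{R}}(R/(\bar{x}_1,\ldots,\bar{x}_r),\, W(S)/{}^{\mathbb{L}} d)$ in $p$-complete animated rings, and since both the source and target are obtained by freely setting elements to zero, this space \emph{is} the set of tuples $(y_1,\ldots,y_r)\in W(S)^r$ with $dy_i=x_i$. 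No uniqueness is claimed or needed. Once you drop the uniqueness sentence and recognize that the $y_i$ are the data parametrizing the lifts, your argument goes through exactly as the paper's does. Your final paragraph's remark about discreteness is also slightly off: the paper's proof is designed to work in the derived sense regardless (it is even invoked in the proof of \cref{lem. ARL discrete case} before discreteness is established), so you should not lean on \cref{lem. ARL discrete case} here.
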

\begin{proof}
    We only prove the statement for $Y_{/\Tilde{R}}^{\Prism}$ as the argument for $Y_{/\Tilde{R},n}^{\Prism}$ is similar. For a $p$-nilpotent test algebra $S$ over $\Tilde{R}$ with structure morphism $f: \Tilde{R} \to S$, by definition we have that 
    $$Y_{/\Tilde{R}}^{\Prism}(S)=\mathrm{Map}_{\Tilde{R}}(R/(\Bar{x}_1,\cdots,\Bar{x}_r), W(S)/{}^{\mathbb{L}} d),$$
    where the mapping space is calculated in $p$-complete animated rings and $W(S)$ is viewed as an algebra over $\Tilde{R}$ by lifting $f$ to the unique $\delta$-ring homomorphism $\Tilde{f}: \Tilde{R}\to W(S)$. As the animated ring $R/(\Bar{x}_1,\cdots,\Bar{x}_r)$ (resp. $W(S)/{}^{\mathbb{L}} d$) is obtained from $\Tilde{R}$ (resp. $W(S)$) by freely setting $(d, x_1,\cdots,x_r)$ (resp. $d$) to be zero and that $\Tilde{R}$ is the initial object in the category of testing algebras, the above then simplifies to
    \begin{equation*}
        Y_{/\Tilde{R}}^{\Prism}(S)=\{(y_1,\cdots,y_r)\in W(S)^r|~x_i= y_i d, ~1\leq i\leq r\}\footnote{here we view $d,x_1,\cdots,x_r$ as elements in $W(S)$ by lifting $f$ to a $\delta$-ring homomorphism $\Tilde{f}: \Tilde{R}\to 
    W(S)$ first and then consider the image of $d, x_i$ in $W(S)$}.
    \end{equation*}
    Given any $y=(y_1,\cdots,y_r) \in Y_{/\Tilde{R}}^{\Prism}(S)$, the unique $\delta$-ring map $\Tilde{f}: \Tilde{R} \to W(S)$ lifting the structure map $f: \Tilde{R}\to S$ extends uniquely to a $\delta$-ring map $\Tilde{f}_y: \Tilde{R}\{\frac{x_1,\cdots,x_r}{d}\}^{\wedge}_{\delta} \to W(S)$ by sending $\delta^{i}(\frac{x_j}{d})$ to $\delta^{i}(y_j)$ ($i\geq 0, 1\leq j\leq r$) due to the universal property of $\Tilde{R}\{\frac{x_1,\cdots,x_r}{d}\}^{\wedge}_{\delta}$. Considering the composition of the projection $W(S)\to S$ and $\Tilde{f}_y$, we obtain a ring homomorphism $f_y: \Tilde{R}\{\frac{x_1,\cdots,x_r}{d}\}^{\wedge}_{\delta} \to S$, corresponding to a point $f_y\in \Spf(\Tilde{R}\{\frac{x_1,\cdots,x_r}{d}\}^{\wedge}_{\delta})$.

    Conversely, given a morphism $f:\Tilde{R}\{\frac{x_1,\cdots,x_r}{d}\}^{\wedge}_{\delta} \to S$, $f$ uniquely lifts to a $\delta$-ring morphism $\Tilde{f}: \mathfrak{S}\{\frac{p^m}{\lambda}\}_{\delta}^{\wedge} \to W(S)$ by the universal property of Witt rings. Let $y_{f,j}=\Tilde{f}(\frac{x_j}{d})$ for $1\leq j\leq r$, then it satisfies that $x_j= y_{f,j}d$, hence $y_{f}:=(y_{f,1},\cdots, y_{f,r})$ determines a point in $Y_{/\Tilde{R}}^{\Prism}(S)$.

    To see $y_{f_y}=y$, we just need to notice that given $y\in  Y_{/\Tilde{R}}^{\Prism}(S)$ , the $\Tilde{f}_y: \Tilde{R}\{\frac{x_1,\cdots,x_r}{d}\}^{\wedge}_{\delta} \to W(S)$ constructed above is precisely the $\delta$-ring morphism lifting $f_y: \Tilde{R}\{\frac{x_1,\cdots,x_r}{d}\}^{\wedge}_{\delta} \to S$ by construction.

    Finally, for the purpose of showing that $f_{y_f}=f$, it suffices to observe that given $f: \Tilde{R}\{\frac{x_1,\cdots,x_r}{d}\}^{\wedge}_{\delta} \to S$, $\Tilde{f}=\Tilde{f}_{y_f}$ by our construction. Then we are done.
\end{proof}
To ease the notation, from now on we denote $\Tilde{R}\{\frac{x_1,\cdots,x_r}{d}\}^{\wedge}_{\delta}$ as $\Prism_X(Y)$. Given \cref{prop. lci case base to R}, we have the following pullback square 
\[\xymatrixcolsep{8pc}\xymatrix{ \Spf(\Prism_X(Y))=\Spf(\Tilde{R}\{\frac{x_1,\cdots,x_r}{d}\}^{\wedge}_{\delta})  \ar[r]^{\rho} \ar[d]^{} & Y_{/A}^{\Prism}  \ar[d]^{} \\ \Spf(\Tilde{R})\ar[r]^{\rho}
 &  X_A^{\Prism} ,}\]
 hence we could apply the strategy in the previous subsection to study $\mathcal{D}(Y_{/A}^{\Prism})$. As usual, we construct $\psi$ first.
\begin{lemma}\label{lemt.lci case appl extend eta}
Let $S=\Tilde{R}\oplus \epsilon \Omega_{\Tilde{R}/A}$ be as that in \cref{lem. twist S in the relative case}. Then the $A$-linear homomorphism $\psi: \Tilde{R} \to  S$ constructed in \cref{lem. twist S in the relative case} uniquely extends to a $\delta$-ring homomorphism $$ \XY \to \XY\otimes_{\Tilde{R}} S,$$
which will still be denoted as $\psi$ by abuse of notation. Moreover, this further induces an $\psi: \Tilde{R}/(d,x_1,\cdots,x_r)^n \to \XY/d^n\otimes_{\Tilde{R}} S$ after modulo $d^n$ for any $n\in \mathbb{N}$.
\end{lemma}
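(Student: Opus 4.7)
The plan is to invoke the universal property of the prismatic envelope. Since $\XY = \Tilde{R}\{\tfrac{x_1,\ldots,x_r}{d}\}^{\wedge}_{\delta}$ is obtained by freely adjoining the $\tfrac{x_i}{d}$ to $\Tilde{R}$ in the category of derived $(p,d)$-complete simplicial $\delta$-$A$-algebras, any such algebra $B$ equipped with a $\delta$-$A$-algebra map $f: \Tilde{R} \to B$ for which each $f(x_i)$ is divisible by $d$ in $B$ admits a unique $\delta$-ring extension $\XY \to B$.

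First I would set up the target. By \cref{prop. rela psi preserve delta structure}, the map $\psi: \Tilde{R} \to S$ is a $\delta$-$A$-algebra homomorphism; the inclusion $\iota: \Tilde{R} \hookrightarrow S$ is likewise a $\delta$-ring map, since the $\delta$-structure on $S$ in \cref{rem. delta ring structure on s relative} was arranged precisely to extend that on $\Tilde{R}$. I form the tensor product
\[
B \;:=\; \XY \otimes_{\Tilde{R},\,\iota} S,
\]
which inherits a natural $\delta$-$A$-algebra structure from its factors. Because $S = \Tilde{R} \oplus \bigoplus_{i=1}^{m} \Tilde{R}\cdot \epsilon\, dT_i$ is free of finite rank over $\Tilde{R}$, the ring $B$ is a finite free $\XY$-module and is therefore automatically $(p,d)$-complete, so no further derived completion is needed. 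The composite
\[
f: \Tilde{R} \xrightarrow{\;\psi\;} S \xrightarrow{\;s\,\mapsto\, 1\otimes s\;} B
\]
is then a $\delta$-$A$-algebra homomorphism by construction.

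The heart of the argument is to check that each $f(x_i)$ is divisible by $d$ in $B$. Expanding,
\[
f(x_i) \;=\; 1 \otimes \psi(x_i) \;=\; 1 \otimes \bigl(x_i + \epsilon\,\nabla(x_i)\bigr) \;=\; x_i \otimes 1 \;+\; 1 \otimes \epsilon\,\nabla(x_i),
\]
the first summand equals $d\cdot \bigl((x_i/d)\otimes 1\bigr)$ since $x_i = d\cdot (x_i/d)$ holds in $\XY$. For the second summand, the definition of $\nabla$ just above \cref{lem. twist S in the relative case} reads
\[
\nabla(x_i) \;=\; d\sum_{j} \frac{\gamma_j(x_i) - x_i}{q^p T_j - T_j}\, dT_j \;\in\; d \cdot \Omega_{\Tilde{R}/A},
\]
so $\epsilon\,\nabla(x_i) \in d\cdot S$, whence $1 \otimes \epsilon\,\nabla(x_i) \in d\cdot B$. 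Combining, $f(x_i) \in d\cdot B$. The universal property then supplies the required unique $\delta$-ring extension $\psi: \XY \to \XY \otimes_{\Tilde{R}} S$.

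Finally, the statement after reducing modulo $d^n$ (which I interpret as $\psi: \XY/d^n \to \XY/d^n \otimes_{\Tilde{R}} S$, the printed source evidently containing a typo) is immediate: since $S$ is free over $\Tilde{R}$, tensoring with $S$ commutes with the quotient by $d^n$, so $(\XY \otimes_{\Tilde{R}} S)/d^n = \XY/d^n \otimes_{\Tilde{R}} S$, and reducing the extended $\psi$ mod $d^n$ yields the asserted map. The only substantive input in this entire proof is the observation that $\nabla$ is visibly divisible by $d$ by its definition; everything else is a formal consequence of \cref{prop. rela psi preserve delta structure} and the universal property of the prismatic envelope, so I expect no real obstacle.
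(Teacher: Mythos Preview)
Your proposal is correct and follows essentially the same route as the paper: both equip $\XY\otimes_{\Tilde{R}} S$ with the natural $\delta$-structure, observe that $\psi(x_j)=x_j+\epsilon\,\nabla(x_j)$ is divisible by $d$ (since $\nabla$ carries an explicit factor of $d$ and $x_j=d\cdot(x_j/d)$ in $\XY$), and then invoke the universal property of the prismatic envelope to obtain the extension. The only cosmetic difference is in the ``moreover'' clause: the paper reads the source as $\Tilde{R}/(d,x_1,\ldots,x_r)^n$ literally and justifies it by noting $\psi(x_j)\in(d)(\XY\otimes_{\Tilde{R}}S)$, whereas you treat it as a typo for $\XY/d^n$; either reading follows immediately from the same divisibility observation.
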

\begin{proof}
Arguing as that in \cref{rem. delta ring structure on s relative}, $\XY\otimes_{\Tilde{R}} S$ could be promoted to a $\delta$-ring extending the $\delta$-structure on $\XY$. Moreover, as $$\psi(x_j)=x_j+\epsilon \nabla_q(x_j)=x_j+\epsilon\cdot d\sum_{i=1}^m\frac{\gamma_i(x_j)-x_j}{q^p T_i-T_i}dT_i, \quad \forall 1\leq j\leq r$$ we have the following holds in $\XY\otimes_{\Tilde{R}} S$:
\begin{equation*}
    \frac{\psi(x_j)}{d}=\frac{x_j}{d}+\epsilon\cdot \sum_{i=1}^m\frac{\gamma_i(x_j)-x_j}{q^p T_i-T_i}dT_i.
\end{equation*}
Hence $\psi:\Tilde{R}\to S$ (which is a $\delta$-ring homomorphism thanks to \cref{prop. rela psi preserve delta structure}) extends to a unique $\delta$-ring homomorphism $\psi: \XY \to \XY\otimes_{\Tilde{R}} S$  sending $\frac{x_j}{d}$ to $\frac{x_j}{d}+\epsilon\cdot \sum_{i=1}^m\frac{\gamma_i(x_j)-x_j}{q^p T_i-T_i}dT_i$ by the universal property of $\XY$. For the moreover part, just notice that $\psi$ preserves the $d$-adic filtration and $\psi(x_j)\in (d)(\XY\otimes_{\Tilde{R}} S)$.
\end{proof}
\begin{remark}\label{rem. lci replace psi by i}
    The same argument shows that $\psi_i$ constructed in \cref{rem. replace psi by i} also extends to $\XY$. Consequently, $\nabla_i=\frac{\psi_i-\Id}{\epsilon_i}$ extends to $\XY$ as well. By defining $\gamma_i=\Id+T_i\beta \nabla_i$, we get an extension of $\gamma_i$ to $\XY$, which is still an automorphism of $\XY$. Moreover, the relations that $\nabla_{i}\circ \nabla_{j}=\nabla_{j}\circ \nabla_{i}$ and that $\gamma_i\circ \gamma_j=\gamma_j\circ \gamma_i$ still hold.
\end{remark}
\begin{warning}
    A key difference with the smooth case studied in the last subsection is that $\nabla_i$ constructed in the last remark doesn't need to vanish on the Hodge-Tate locus $\XY/d$ (for example, $\nabla_i(\frac{x_i}{d})$ is not necessarily in $(d)\XY$). This implies that although $\mathcal{D}(X_{/A}^{\HT})$ can be realized as modules over the commutative ring $R[\underline{\nabla}]$, in general this doesn't hold for $Y$. Instead, certain non-commutative \textit{Weyl-algebras} (which is a special kind of Ore extension) will always enter the picture for the purpose of classifying $\mathcal{D}(Y_{/A}^{\HT})$. This phenomenon already appears for the Hodge-Tate stack of $\mathbb{Z}_p/p
    ^n$, see \cite[Lemma 6.13]{petrov2023non} and \cite[Theorem 6.9, Remark 6.12]{liu2024prismatization} for details.
\end{warning}

Given \cref{lemt.lci case appl extend eta}, we can restrict \cref{propt.key automorphism of functors relative case} to $Y_{/A}^{\Prism}$ as follows.
\begin{proposition}\label{propt. lci key automorphism of functors relative case}
    $\gamma_{c_{\psi}}$ constructed in \cref{propt.key automorphism of functors relative case} restricts to an isomorphism $\gamma_{c_{\psi}}$ between functors $\rho: \Spf(\XY\otimes_{\Tilde{R}} S) \xrightarrow{\iota} \Spf(\XY) \xrightarrow{\rho} Y_{/A}^{\Prism}$
    and $\rho\circ \psi: \Spf(\XY\otimes_{\Tilde{R}} S) \xrightarrow{\psi} \Spf(\XY)\rightarrow Y_{/A}^{\Prism}$, i.e. we have the following commutative diagram:
\[\xymatrixcolsep{5pc}\xymatrix{\Spf(\XY\otimes_{\Tilde{R}} S)\ar[d]^{\iota}\ar[r]^{\psi}& \Spf(\XY)\ar@{=>}[dl]^{\gamma_{c_{\psi}}} \ar[d]_{}^{\rho}
\\\Spf(\XY)  \ar^{\rho}[r]&Y_{/A}^{\Prism}}\]
\end{proposition}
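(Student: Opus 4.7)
The plan is to deduce this proposition from \cref{propt.key automorphism of functors relative case} (the analogous statement for $X_{/A}^{\Prism}$) by exploiting functoriality under the closed embedding $Y \hookrightarrow X$. The starting point is the commutative square of stacks
$$\xymatrix{\Spf(\XY) \ar[r] \ar[d]_{\rho} & \Spf(\Tilde{R}) \ar[d]^{\rho_X} \\ Y_{/A}^{\Prism} \ar@{^(->}[r] & X_{/A}^{\Prism},}$$
where the bottom arrow is a monomorphism (a closed embedding) induced by the closed immersion $Y \hookrightarrow X$ and the functoriality of relative prismatization. Under this embedding, the maps $\rho \circ \iota$ and $\rho \circ \psi$ in the new diagram correspond to $\rho_X$ applied to the restrictions to $\Tilde{R}$ of the extended maps $\iota, \psi : \XY \to \XY \otimes_{\Tilde{R}} S$ from \cref{lemt.lci case appl extend eta}.

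First I would make the identification above precise: by the uniqueness clause in \cref{lemt.lci case appl extend eta}, the extended $\psi : \XY \to \XY \otimes_{\Tilde{R}} S$ restricts on $\Tilde{R} \subset \XY$ to $r \mapsto 1 \otimes \psi(r)$, where $\psi : \Tilde{R} \to S$ on the right is the $\delta$-ring homomorphism of \cref{lem. twist S in the relative case}. The analogous (trivial) statement holds for $\iota$. Combined with the commutative square above, this shows that the two compositions in the new diagram, viewed inside $X_{/A}^{\Prism}$, are precisely the base changes along $\Spf(\XY \otimes_{\Tilde{R}} S) \to \Spf(S)$ of the two compositions appearing in \cref{propt.key automorphism of functors relative case}.

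Second I would base-change the 2-morphism $\gamma_{c_\psi}$ of \cref{propt.key automorphism of functors relative case} along $\Spf(\XY \otimes_{\Tilde{R}} S) \to \Spf(S)$. This produces a canonical isomorphism in $X_{/A}^{\Prism}(\XY \otimes_{\Tilde{R}} S)$ between the two compositions identified in the previous step, realized concretely as the image of $c_\psi$ under $W(S) \to W(\XY \otimes_{\Tilde{R}} S)$. Since both sides of this isomorphism visibly factor through $Y_{/A}^{\Prism}$ (by their construction as $\rho \circ \iota$ and $\rho \circ \psi$), and since $Y_{/A}^{\Prism} \hookrightarrow X_{/A}^{\Prism}$ is a monomorphism, this isomorphism in $X_{/A}^{\Prism}(\XY \otimes_{\Tilde{R}} S)$ automatically refines to an isomorphism in $Y_{/A}^{\Prism}(\XY \otimes_{\Tilde{R}} S)$.

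The main obstacle is conceptual rather than computational: it consists in carefully checking that the 2-morphism in the absolute case really restricts along the closed embedding — concretely, that the pair $(\alpha, \eta)$ of Cartier-Witt data obtained from either composition genuinely encodes a point of $Y_{/A}^{\Prism}$ and not merely of $X_{/A}^{\Prism}$. This is guaranteed by the fact that we factored the maps through the prismatic envelope $\Spf(\XY) \xrightarrow{\rho} Y_{/A}^{\Prism}$ via \cref{prop. lci case base to R} before composing, so no further verification is required. In particular, no explicit new homotopy needs to be constructed: \cref{prop. relative c to construct homotopy} supplies everything, pushed forward from $W(S)$ to $W(\XY \otimes_{\Tilde{R}} S)$.
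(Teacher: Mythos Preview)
Your argument is correct and takes a cleaner, more categorical route than the paper. Where you use that $Y_{/A}^{\Prism} \hookrightarrow X_{/A}^{\Prism}$ is a monomorphism to transfer the base-changed $2$-isomorphism from $X_{/A}^{\Prism}$ down to $Y_{/A}^{\Prism}$, the paper instead reopens the Cartier--Witt description: it writes the two points of $Y_{/A}^{\Prism}(\XY\otimes_{\Tilde{R}} S)$ explicitly as pairs $(\alpha,\eta)$ and $(\alpha,\eta')$ with $\eta,\eta'$ now defined on the enlarged cone $\Cone\bigl((d,x_1,\ldots,x_r)\to\Tilde{R}\bigr)$, observes that $W(\XY\otimes_{\Tilde{R}}S)$ is $d$-torsion free (arguing as in \cref{lem. s torsion freeness}), and checks directly---using the last statement of \cref{lemt.lci case appl extend eta}, i.e.\ that $\psi(x_j)\in(d)$---that the same $c_{\psi}$ from \cref{prop. relative c to construct homotopy} still furnishes the required homotopy on this larger source. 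Your approach sidesteps this verification entirely: once both compositions are seen to factor through $\rho:\Spf(\XY)\to Y_{/A}^{\Prism}$, the monomorphism property does the rest. The paper's route has the mild advantage of being self-contained (it does not invoke that prismatization takes closed immersions to monomorphisms) and keeps the homotopy visibly explicit on the enlarged data; yours is more portable and explains \emph{why} no new construction is required.
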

\begin{proof}
      By abuse of notation we write $\iota: \Tilde{R}\to \XY\otimes_{\Tilde{R}} S$ for the composition of $\Tilde{R}\to \XY\xrightarrow{\iota} \XY\otimes_{\Tilde{R}} S$, then $\rho(\XY\otimes_{\Tilde{R}} S)$ corresponds to the point $$(\alpha:(d) \otimes_{A,\Tilde{\iota}} W(\XY\otimes_{\Tilde{R}} S)\to W(\XY\otimes_{\Tilde{R}} S),\eta: \Cone((d,x_1,\cdots,x_r)\to \Tilde{R})\xrightarrow{\Tilde{\iota}} \Cone(\alpha))$$
      in $X^{\Prism}(S)$, while $(\rho\circ \psi)(S)$ corresponds to the point $$(\alpha:(d) \otimes_{A,\Tilde{\iota}} W(\XY\otimes_{\Tilde{R}} S)\to W(\XY\otimes_{\Tilde{R}} S), \eta^{\prime}: \Cone((d,x_1,\cdots,x_r)\to \Tilde{R})\xrightarrow{\Tilde{\psi}} \Cone(\alpha)).$$ 
      Arguing as \cref{lem. s torsion freeness}, we deduce that $W(\XY\otimes_{\Tilde{R}} S)$ is $d$-torsion free, hence the source of $\alpha$ is just the ideal generated by $d$ in $W(\XY\otimes_{\Tilde{R}} S)$. 
      By applying the last statement in \cref{lemt.lci case appl extend eta} and drawing the following diagram as maps of quasi-ideals (in the sense of \cite{drinfeld2020prismatization}),
\[\xymatrixcolsep{5pc}\xymatrix{(d,x_1,\cdots,x_r) \ar@<-.5ex>[d]_{\Tilde{\psi}} \ar@<.5ex>[d]^{\Tilde{\iota}}\ar[r]^{\iota}& \Tilde{R} \ar[d]_{\Tilde{\psi}}
 \ar@<-.5ex>[d]^{\Tilde{\iota}}
\\(d) \ar[r]^{\iota}&W(\XY\otimes_{\Tilde{R}} S).}\]
we derive that $c_{\psi}$ constructed in \cref{prop. relative c to construct homotopy} serves a homotopy between $\eta^{\prime}$ and $\eta$, hence finish the proof.
\end{proof}
Then following the discussion between \cref{prop. rel two derivation vanish} and \cref{prop. relative induce functor} by taking \cref{propt. lci key automorphism of functors relative case} as the input replacing \cref{propt.key automorphism of functors relative case}, we obtain the following analog of \cref{prop. relative induce functor}:
\begin{proposition}\label{prop. lci induce functor}
    For $n\in \mathbb{N}\cup \{\infty\}$, the pullback along $\rho: \Spf(\XY/d^n)\to Y_{/A,n}^{\Prism}$ induces a functor 
  \begin{equation*}
  \begin{split}
      \mathcal{D}(Y_{/A,n}^{\Prism}) &\to \mathcal{D}(\XY/d^n[\underline{\nabla}; \gamma_{\XY}, \nabla_{\XY}])\\
      \mathcal{E} &\mapsto (\rho^{*}\mathcal{E}, \nabla_{\mathcal{E}})
  \end{split}
  \end{equation*}
  which will be denoted as $\beta_n^+$ later.
\end{proposition}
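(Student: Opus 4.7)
The plan is to mimic the proof of Proposition \ref{prop. relative induce functor} from the smooth case, using the key ingredient Proposition \ref{propt. lci key automorphism of functors relative case} in place of Proposition \ref{propt.key automorphism of functors relative case}. First, for any $\mathcal{E} \in \mathcal{D}(Y_{/A,n}^{\Prism})$ and for each $i \in \{1,\ldots,m\}$ (as well as the unindexed $\psi$), the isomorphism $\gamma_{c_{\psi_i}} : \rho\circ\psi_i \xrightarrow{\simeq} \rho$ obtained by truncating Proposition \ref{propt. lci key automorphism of functors relative case} modulo $d^n$ pulls back to a $\psi_i$-semilinear isomorphism $\rho^*\mathcal{E} \to \rho^*\mathcal{E}\otimes_{\XY/d^n} S_i/d^n$, which reduces to the identity modulo $\epsilon_i$ by the construction of $c_{\psi_i}$ in Proposition \ref{prop. relative c to construct homotopy}. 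Writing it as $\mathrm{Id} + \epsilon_i\nabla_{\mathcal{E},i}$ extracts operators $\nabla_{\mathcal{E},i}: \rho^*\mathcal{E} \to \rho^*\mathcal{E}$, and these assemble into $\nabla_{\mathcal{E}}$.

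Next I would verify at the level of the heart that $(\rho^*\mathcal{E}, \nabla_{\mathcal{E}})$ is naturally a module over the Ore extension $\XY/d^n[\underline{\nabla}; \gamma_{\XY}, \nabla_{\XY}]$ of Definition \ref{intro. def Rel q higgs}. The twisted Leibniz rule
\[
\nabla_{\mathcal{E},i}(ax) = \gamma_{\XY,i}(a)\nabla_{\mathcal{E},i}(x) + \nabla_{\XY,i}(a) x, \qquad a \in \XY/d^n,~x \in \rho^*\mathcal{E},
\]
follows by the same direct computation as Lemma \ref{lem. relative leibniz}; what makes this work in the lci setting is exactly Lemma \ref{lemt.lci case appl extend eta}, which tells us that $\psi_i$ and thus $\nabla_{\XY,i}$, $\gamma_{\XY,i}$ extend compatibly to $\XY$ (cf.\ Remark \ref{rem. lci replace psi by i}). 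The commutation $\nabla_{\mathcal{E},i}\nabla_{\mathcal{E},j} = \nabla_{\mathcal{E},j}\nabla_{\mathcal{E},i}$ is obtained by the same three-variable diagram argument as in Proposition \ref{prop. rel two derivation vanish}, applied to the pushout $S_i \otimes_{\Tilde{R}} S_j$, and using that the extensions $\psi_i$, $\psi_j$ to $\XY$ still commute as $\delta$-ring maps (Remark \ref{rem. lci replace psi by i}). These verifications produce a functor from the heart $\mathcal{D}(Y_{/A,n}^{\Prism})^{\heartsuit}$ to left modules over $\XY/d^n[\underline{\nabla};\gamma_{\XY},\nabla_{\XY}]$.

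To promote this to the claimed functor on derived categories, I would show that the canonical map $\mathcal{D}(\mathcal{D}(Y_{/A,n}^{\Prism})^{\heartsuit}) \to \mathcal{D}(Y_{/A,n}^{\Prism})$ is an equivalence. Imitating Proposition \ref{prop. heart generator}, one reduces by the standard Beilinson-style d\'evissage along the filtration by powers of the Hodge–Tate ideal (and along $n$) to the Hodge–Tate locus $Y_{/A}^{\HT}$ after further reducing modulo $p$. Here I would invoke a lci variant of \cite[Theorem 6.3]{anschutz2023hodge}, describing $\mathcal{D}(Y_{/A}^{\HT})$ as modules over an appropriate (non-commutative) Weyl-type algebra concentrated in degree $0$ acting on $\rho_*\mathcal{O}$, and conclude by the same Stacks Project generic argument used at the end of Proposition \ref{prop. heart generator}.

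The main obstacle I anticipate is precisely this last step: in the lci case $Y_{/A}^{\HT}$ is no longer the classifying stack of a smooth affine group scheme (as the warning before Proposition \ref{propt. lci key automorphism of functors relative case} highlights), so the clean identification with $\mathcal{D}_{\mathcal{B}}(k[\theta])$ used in Proposition \ref{prop. heart generator} is not directly available. A workable substitute is to express $\rho^*\mathcal{O}$ concretely using Proposition \ref{prop. lci case base to R} (so that $\rho_*\mathcal{O}$ is computed by $\XY/d$ over the base), and then check by hand that $\mathcal{D}(Y_{/A}^{\HT}\otimes k)$ is equivalent to the derived category of modules over a ring arising as the endomorphism algebra of $\rho_*\mathcal{O}$, which one can again identify with an Ore extension by the same twisted Leibniz computation. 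Once this heart-level equivalence is in place, the functor $\beta_n^+$ is obtained on the nose, and the well-definedness of $\beta_n^+$ on $\mathcal{D}$ follows.
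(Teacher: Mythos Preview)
Your proposal is correct and follows exactly the approach the paper indicates: the paper itself offers no separate proof, saying only to rerun the discussion between Proposition~\ref{prop. rel two derivation vanish} and Proposition~\ref{prop. relative induce functor} with Proposition~\ref{propt. lci key automorphism of functors relative case} in place of Proposition~\ref{propt.key automorphism of functors relative case}, which is precisely what you do.

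The one point worth noting is your handling of the heart equivalence. You are right that in the lci case $Y_{/A}^{\HT}$ is no longer a classifying stack, so the clean identification used at the end of Proposition~\ref{prop. heart generator} is not directly available. Your proposed workaround---computing the endomorphism algebra of $\rho_*\mathcal{O}$ by hand---would work but is more laborious than necessary. The cleaner route, which the paper in effect uses in the proof of Theorem~\ref{thmt. lci main classification} (see equation~\eqref{equa. lci identify}), is to observe that the map $\pi\colon Y_{/A}^{\HT}\to X_{/A}^{\HT}$ is affine, so $\mathcal{D}(Y_{/A}^{\HT})\simeq \Mod_{\pi_*\mathcal{O}}(\mathcal{D}(X_{/A}^{\HT}))$. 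Since $\pi_*\mathcal{O}$ lies in the heart and the heart equivalence for $X_{/A}^{\HT}$ is already known from the smooth case (via \cite[Theorem~6.3]{anschutz2023hodge}), the heart equivalence for $Y_{/A}^{\HT}$ follows immediately. This reduces the lci heart equivalence to the smooth one rather than reproving it from scratch.
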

\begin{remark}\label{rem. lci. explicit definition}
    The non-commutative ring $\XY/d^n[\underline{\nabla}; \gamma_{\XY}, \nabla_{\XY}]$ stated in the last proposition is defined similarly to \cref{rel def.skew polynomial}. More precisely, let $\gamma_{\XY,i}: \XY\to \XY$ be the ring automorphism $\gamma_i$ defined in \cref{rem. lci replace psi by i} extending $\gamma_{\Tilde{R},i}$ on $\Tilde{R}$, then $\nabla_{\XY,i}: \XY\to \XY$, denoted as $\nabla_i$ in \cref{rem. lci replace psi by i}, is a $\gamma_{\XY,i}$-derivation of $\XY$, i.e. $\nabla_{\XY,i}(x_1x_2)=\gamma_{\XY,i}(x_1)\nabla_{\XY,i}(x_2)+\nabla_{\XY,i}(x_1)x_2$. We define the \textit{Ore extension} $\XY[\underline{\nabla}; \gamma_{\XY}, \nabla_{\XY}]$ to be the noncommutative ring obtained by giving the ring of polynomials $\XY[\nabla_1,\cdots,\nabla_m]$ a new multiplication law, subject to the identity
    \begin{equation*}
        \begin{split}
           \nabla_i\cdot \nabla_j&=\nabla_j\cdot \nabla_i,  ~~~\forall ~ 1\leq i,j\leq m
           \\ \nabla_i \cdot r&=\gamma_{\XY,i}(r)\cdot\nabla_i+\nabla_{\XY,i}(r), ~~~\forall~ r\in \XY, 1\leq i\leq m .
        \end{split}
    \end{equation*}
\end{remark}

Then finally we can state our classification results for relative prismatic crystals on $(Y/A)_{\Prism}$, generalizing \cref{thmt. relative main classification} to the locally complete intersection case.
\begin{theorem}\label{thmt. lci main classification}
Let $n\in \mathbb{N}\cup \{\infty\}$. The functor 
    \begin{align*}
        &\beta_n^{+}: \mathcal{D}(Y_{/A, n}^{\Prism}) \rightarrow \mathcal{D}(\XY/d^n[\underline{\nabla}; \gamma_{\XY}, \nabla_{\XY}]), \qquad \mathcal{E}\mapsto (\rho^{*}(\mathcal{E}),\nabla_{\mathcal{E}})
    \end{align*}
    constructed in \cref{prop. lci induce functor} is fully faithful. 
     Moreover, its essential image consists of those objects $M\in \mathcal{D}(\XY/d^n[\underline{\nabla}; \gamma_{\XY}, \nabla_{\XY}])$ 
     satisfying the following pair of conditions:
    \begin{itemize}
        \item $M$ is $(p,d)$-adically complete.
        \item The action of $\nabla_{i}$ on the cohomology $\mathrm{H}^*(M\otimes_{\XY/d^n}^{\mathbb{L}}\XY/(d,p))$ 
        is locally nilpotent for all $i$. 
    \end{itemize}
\end{theorem}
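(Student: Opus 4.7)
The plan is to follow the template of the proof of \cref{thmt. relative main classification} step by step, with the smooth-case inputs replaced by their LCI analogues. First, I would verify that $\mathcal{D}(Y_{/A,n}^{\Prism})$ is equivalent to the derived category of its heart (so that $\beta_n^+$ is well-defined on derived categories via \cref{prop. lci induce functor}); the argument of \cref{prop. heart generator} carries over once one reduces to $n=1$, and on the Hodge-Tate locus one uses that $\rho \colon \Spf(\XY/d) \to Y_{/A}^{\HT}$ is a faithfully flat affine cover to identify $\mathcal{D}(Y_{/A}^{\HT})$ with a suitable cosimplicial limit of module categories. Next, I would prove generation: $\mathcal{D}(Y_{/A,n}^{\Prism})$ is generated under shifts and colimits by $\mathcal{O}$. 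Since $\mathcal{I}_{\Prism}\cong\mathcal{O}$ on the relative prismatization, the reduction mod powers of $\mathcal{I}$ collapses, and the Hodge-Tate case follows because vanishing of $\RHom(\mathcal{O}, \mathcal{F})$ together with faithfully flat affine descent along $\rho$ forces $\rho^*\mathcal{F} = 0$, hence $\mathcal{F} = 0$.

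The technical heart of the argument is the cohomology computation, the analog of \cref{propt. relative sen calculate cohomology}: for $\mathcal{E}\in\mathcal{D}(Y_{/A,n}^{\Prism})$, the natural map built from \cref{propt. lci key automorphism of functors relative case} gives a quasi-isomorphism
\[
\mathrm{R}\Gamma(Y_{/A,n}^{\Prism},\mathcal{E})\xrightarrow{\simeq}\mathrm{DR}(\rho^*\mathcal{E},\nabla_{\mathcal{E}}).
\]
By standard d\'evissage on the Hodge-Tate filtration, this reduces to the case $n=1$, at which point I would prove a Poincar\'e-type lemma for $\rho_*\mathcal{O}_{\Spf(\XY/d)}$. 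The argument I have in mind mimics \cref{prop. relative calculate partial on HT}: the cover $\Spf(\XY/d)\to Y_{/A}^{\HT}$ is a torsor for a suitable formal group, and $\gamma_{c_\psi}$ is induced by the element $(\epsilon_i)\in(\mathbb{G}_a^\sharp)^m(\XY\otimes S/d)$, so that the action of $\nabla_i$ on $\rho^*\rho_*\mathcal{O}\cong \XY\{a_1,\ldots,a_m\}_p^\wedge$ shifts by $a_i$ along the $i$-th direction, yielding exactness of the $q$-de Rham Koszul complex. Projection formula plus the trivialization trick for Hopf-algebra comodules then upgrades this to all $\mathcal{E}\in\mathcal{D}(Y_{/A}^{\HT})$. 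Given the cohomology formula and generation, the full faithfulness of $\beta_n^+$ follows exactly as in \cref{thmt. relative main classification} via \cref{rem. rel cohomology of q higgs}.

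For the essential image: the image lies in the prescribed subcategory because $\rho^*\mathcal{O} = \XY/d^n$ is $(p,d)$-complete, and by \cref{rem. lci replace psi by i} the operators $\nabla_i$ on $\XY/(d,p)$ are built from $\frac{\gamma_i-\Id}{q^pT_i-T_i}$, which together with the commutation $\nabla_i\nabla_j = \nabla_j\nabla_i$ yields local nilpotence on the generators $\frac{x_j}{d}$ after passing to the residue field. Conversely, to see that any $M$ satisfying the two conditions lies in the essential image, I would argue as at the end of \cref{thmt.main classification}: for any nonzero such $M$, derived Nakayama and the iterated use of commuting $\nabla_i$'s together with local nilpotence produce a nonzero element in some $\mathrm{H}^{-m}(M\otimes^{\mathbb{L}}\XY/(d,p))$ killed by every $\nabla_i$, giving a nonzero map $\beta_n^+(\mathcal{O})[m]\to M$. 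The main obstacle I anticipate is the Hodge-Tate Poincar\'e lemma in the LCI setting: $Y_{/A}^{\HT}$ no longer admits the clean classifying-stack description used in the smooth case, and one has to analyze the cover $\Spf(\XY/d)\to Y_{/A}^{\HT}$ directly, leveraging the divided-power structure on the $\epsilon_i$'s in $\XY\otimes S/d$ and the universal property of the prismatic envelope; a possible fallback is to pull back along the closed embedding $Y_{/A}^{\HT}\hookrightarrow X_{/A}^{\HT}$ and deduce the LCI Poincar\'e lemma from the smooth one in \cref{cor. relative poincare lemma} by tracking how $\nabla_i$ interacts with the extra generators $\frac{x_j}{d}$.
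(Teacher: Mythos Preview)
Your overall plan---redo the smooth-case template directly for $Y$---is a viable route, but it is not the one the paper takes, and the paper's route avoids precisely the obstacle you flag at the end. The paper does not prove an LCI Poincar\'e lemma at all. Instead, after the standard d\'evissage to $n=1$, it uses the pullback square
\[
\xymatrix{\Spf(\XY/d)\ar[r]\ar[d]^{\pi'} & Y_{/A}^{\HT}\ar[d]^{\pi}\\ \Spf(R)\ar[r] & X_{/A}^{\HT}}
\]
together with the affineness of $\pi$ to obtain the identification
\[
\mathcal{D}(Y_{/A}^{\HT})\;\simeq\;\Mod_{\pi_*\mathcal{O}}\bigl(\mathcal{D}(X_{/A}^{\HT})\bigr)\;\simeq\;\Mod_{\beta_1^+(\pi_*\mathcal{O})}\bigl(\mathcal{D}_{\nil}(R[\underline{\nabla}])\bigr),
\]
where the second equivalence is the already-proven smooth theorem (\cref{thmt. relative main classification}). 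One then checks by hand that $\beta_1^+(\pi_*\mathcal{O})=(\XY/d,\nabla_{\XY})$ and that being a module over this algebra object in $\mathcal{D}_{\nil}(R[\underline{\nabla}])$ is exactly the data of a module over the Ore extension $\XY/d[\underline{\nabla};\gamma_{\XY},\nabla_{\XY}]$ with the required nilpotence. Full faithfulness, generation, and the essential image description all come for free from this single categorical equivalence; no separate Poincar\'e lemma, no separate generation argument, and no direct nilpotence computation on $\frac{x_j}{d}$ is needed.

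Your direct approach has a couple of soft spots that the paper's route sidesteps. First, your generation argument (``$\RHom(\mathcal{O},\mathcal{F})=0$ plus descent forces $\rho^*\mathcal{F}=0$'') is incomplete on a stack: vanishing of global sections alone does not kill $\mathcal{F}$ without knowing in advance that $\mathcal{O}$ generates. Second, your verification that $\nabla_i$ acts locally nilpotently on $\XY/(d,p)$ is too quick---the Warning after \cref{rem. lci replace psi by i} notes that $\nabla_i$ need not vanish on $\XY/d$, so nilpotence on the extra generators $\frac{x_j}{d}$ requires a genuine argument (in the paper's approach it is automatic because $\pi_*\mathcal{O}$ lives in $\mathcal{D}(X_{/A}^{\HT})$, hence its image under $\beta_1^+$ satisfies nilpotence by the smooth theorem). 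Your ``fallback'' of pulling back along $Y_{/A}^{\HT}\hookrightarrow X_{/A}^{\HT}$ is morally close to the paper's idea, but the paper goes further: rather than deducing an LCI Poincar\'e lemma from the smooth one, it replaces the whole argument by a module-category identification.
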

\begin{proof}
Given \cref{prop. lci induce functor}, the strategy we employed proving \cref{thmt. relative main classification} still works once we show that the theorem holds for $n=1$, i.e. the Hodge-Tate case. 

For this purpose, first we notice that $\rho: X=\Spf(R) \to X_{/A}^{\HT}$ is a cover with automorphism group $T_{X/\Bar{A}}^{\sharp}\{1\}\cong (\mathbb{G}_a^{\sharp})^m$ (see \cite[Proposition 5.12]{bhatt2022prismatization} and discussion after \cref{propt. relative factor through fiber}, here $\mathbb{G}_a^{\sharp}$ is defined over $X$), hence by restricting the pullback square above \cref{lemt.lci case appl extend eta} to the Hodge-Tate locus, we obtain the following pullback diagram
\[\xymatrixcolsep{8pc} \xymatrix{ \Spf(\XY/d) \ar[r]^{\rho^{\prime}} \ar^{\pi^{\prime}}[d] &  Y_{/A}^{\HT}=\Spf(\XY/d)/(\mathbb{G}_{a}^{\sharp})^m_{Y} \ar^{\pi}[d] \\
X=\Spf(R) \ar[r]^{\rho} & X_{/A}^{\HT}=X/(\mathbb{G}_a^{\sharp})^m=\Spf(R)/(\mathbb{G}_a^{\sharp})^m.}\]
Here $(\mathbb{G}_{a}^{\sharp})^m_{Y}$ is the base change of $(\mathbb{G}_{a}^{\sharp})^m$ along the closed embedding $Y\to X$.

Consequently, we have that 
\begin{equation}\label{equa. lci identify}
    \mathcal{D}(Y_{/A}^{\HT})=\Mod_{\pi_*\mathcal{O}}(\mathcal{D}(X/(\mathbb{G}_a^{\sharp})^m))=\Mod_{\beta_1^+(\pi_*\mathcal{O})}(\mathcal{D}_{\nil}(R[\underline{\nabla}])).
\end{equation}
Here the first identity holds as $\pi$ is affine, and the second equality follows from \cref{thmt. relative main classification} by taking $n=1$ there. Also, we use $\mathcal{D}_{\nil}(R[\underline{\nabla}])$\footnote{Notice that when $n=1$, $\Tilde{R}/d^n[\underline{\nabla}; \gamma_{\Tilde{R}}, \nabla_{\Tilde{R}}]$ is commutative and is just the free polynomial over $R$ with $m$-variables $\nabla_i (1\leq i\leq m)$, hence we just write $R[\underline{\nabla}]$ for simplicity.} to denote the essential image of $\beta_1^+$ stated in \cref{thmt. relative main classification} for simplicity. 

But we claim that the right-hand side of \cref{equa. lci identify} is exactly the category stated in \cref{thmt. lci main classification}.

For this purpose, we first notice that the underlying complex of $\beta_1^+(\pi_*\mathcal{O})$ is given by $\rho^{*}\pi_*\mathcal{O}=\pi^{\prime}_*\rho^{\prime *}\mathcal{O}=\XY/d$. Moreover, unwinding the identification of $Y_{/\Tilde{R}}^{\HT}(S)$ with $\Spf(\XY/d)$ in \cref{prop. lci case base to R} as well as the identification of $\mathrm{Aut}(\rho)$ with $(\mathbb{G}_a^{\sharp})^m$, we see that the $(\mathbb{G}_a^{\sharp})^m$-action on $\XY/d$ is given (hence is also determined) by the usual addition action on $\frac{x_i}{d}$, namely given a test algebra $Q$ over $Y=\Spf(R/(\Bar{x}_1,\cdots,\Bar{x}_r))$,
\begin{equation*}
    \begin{split}
        (\mathbb{G}_a^{\sharp})^m(Q)\times Y_{/\Tilde{R}}^{\HT}(Q)&\longrightarrow Y_{/\Tilde{R}}^{\HT}(Q)
        \\((t_1,\cdots,t_r), (y_1,\cdots,y_r))&\longmapsto (t_1+y_1,\cdots,t_r+y_r).
    \end{split}
\end{equation*}
This implies that the $q$-Higgs derivation associated with $\pi_*\mathcal{O}$ via \cref{thmt. relative main classification} is precisely $\nabla_{\XY}$ described in \cref{rem. lci. explicit definition}. Consequently, we see that $$\beta_1^+(\pi_*\mathcal{O})=(\XY/d, \nabla_{\XY}) 
\in \mathcal{D}_{\nil}(R[\underline{\nabla}]).$$

Finally, we notice that there is a canonical tensor structure on $\mathcal{D}_{\nil}(R[\underline{\nabla}])$ inherited from that on $\mathcal{D}(X^{\HT})$ via \cref{thmt. relative main classification}. In particular, for $\mathcal{E}, \mathcal{F}\in \mathcal{D}_{\nil}(R[\underline{\nabla}])$, the underlying complex for $\mathcal{E}\otimes \mathcal{F}$ is $\mathcal{E}\otimes_R \mathcal{F}$ and $\nabla_{\mathcal{E}\otimes \mathcal{F},i}$ is given by $\nabla_{\mathscr{E}, i}\otimes\Id_{\mathscr{E}^{\prime}}+\Id_{\mathscr{E}}\otimes \nabla_{\mathscr{E}^{\prime}, i}+\beta T_i\nabla_{\mathscr{E}, i}\otimes \nabla_{\mathscr{E}^{\prime}, i}$
according to \cref{rem. rel twisted leibnitz in general}.   Consequently $\mathcal{H}\in \mathcal{D}_{\nil}(R[\underline{\nabla}])$ is a $\beta_1^+(\pi_*\mathcal{O})$-module if and only if the underlying complex of $\mathcal{H}$ is a $\XY/d$-module and the $\underline{\nabla}$ action on $\mathcal{H}$ satisfies that $$\nabla_{i}\cdot a=\gamma_{\XY}(a)\cdot\nabla_i+\nabla_{\XY,i}(a), ~~~\forall a\in \XY/d.$$ Such conditions exactly determine an object in $\mathcal{D}(\XY/d[\underline{\nabla}; \gamma_{\XY}, \nabla_{\XY}])$, hence we finish the proof.
\end{proof}

\section{On the absolute prismatization}
\subsection{Smooth case}
In this section we work with $\mathcal{O}_K=W(k)[\zeta_{p^{\alpha+1}}]$ ($\alpha\geq 0$) and the $q$-prism $(A, I)=(W(k)[[q-1]], [p]_{q^{p^\alpha}})$. 
We consider $X=\Spf(R)$, a smooth $p$-adic formal scheme over $\overline{A}=\mathcal{O}_K$. As in the last section, we assume that $X$ is a small affine and fix the following diagram
\[\xymatrixcolsep{5pc}\xymatrix{A\left\langle \underline{T}^{}\right\rangle\ar[r]^{\square} \ar[d]&   \Tilde{R}
\ar[d]^{}
\\ \Bar{A}\left\langle \underline{T}^{}\right\rangle\ar[r] \ar[r]^{\square} &R,}\]
where the horizontal maps are \'etale chart maps. 
By deformation theory, this \'etale chart map uniquely lifts to a prism $(\Tilde{R}, I)$ over $(A\left\langle \underline{T}\right\rangle, I)$, here $A\left\langle \underline{T}\right\rangle$ is equipped with the $\delta$ structure respecting that on $A$ and sending $T_i$ to $0$, then we apply \cite[Lemma 2.18]{bhatt2021prismatic} to uniquely extend such a $\delta$-structure to $\Tilde{R}$. Moreover, the induced map $A\left\langle \underline{T}^{}\right\rangle \to \Tilde{R}$ is $(p, I)$-completely \'etale. In particular, $(\Tilde{R}, I)$ is a prism in $(X/A)_{\Prism}$ and $\Tilde{R}/I=R$. We have the following diagram 
\[ \xymatrix{  \Spf(\Tilde{R})\ar@/^2pc/[rr]^{\rho_X}\ar[r]^{\rho}
 &  X_{/A}^{\Prism} \ar[d]^{}\ar[r]^{\rho_A} & X^{\Prism} \ar[d]\\ & \Spf(A) \ar[r]^{\rho_A} &\Spf(\mathcal{O}_K)^{\Prism},}\]
here the square is a pullback square and we use $\rho_A$ to denote the covering map, $\rho_X=\rho_A\circ \rho$.

In Section 3, we studied the difference between $\mathcal{D}(\Spf(A))$ and $\mathcal{D}(\Spf(\mathcal{O}_K)^{\Prism})$, which turns out to be governed by an operator $\partial_{\mathcal{E}}$ satisfying certain twisted Leibnitz rule for $\mathcal{E}\in \mathcal{D}(\Spf(\mathcal{O}_K)^{\Prism})$ by \cref{thmt.main classification}. Then later in Section 4, we equipped a $q$-Higgs derivation $\nabla_{\mathcal{F}}$ on $\rho^*\mathcal{F}$ for $\mathcal{F}\in \mathcal{D}(X_{/A}^{\Prism})$, which is enough for the purpose of classify $\mathcal{D}(X_{/A}^{\Prism})$. With such results in hand, it is natural to expect that for $\mathcal{G}\in \mathcal{D}({X^{\Prism}})$, $\rho_X^* \mathcal{G}$ is equipped with both a $q$-connection $\partial_{\mathcal{F}}: \rho_X^* \mathcal{G}\to \rho_X^* \mathcal{G}$ and a $q$-Higgs derivation $\nabla_{\mathcal{G}}: \rho_X^* \mathcal{G}\to \rho_X^* \mathcal{G}\otimes_{\Tilde{R}} \Omega_{\Tilde{R}/A}$. Moreover, we should be able to fully understand $\mathcal{D}({X^{\Prism}})$ with the help of $\partial$ and $\nabla$. We aim to show this is indeed the case in this section.

Per \cref{rem. relative change to alpha}, in this section $\beta=q^{p^\alpha}-1$. For any $i\in \{1,\cdots,m\}$, we define the automorphism $\gamma_i$ of $A\left\langle \underline{T}^{}\right\rangle$ fixing $A$ and sending $T_i$ to $q^{p^{\alpha+1}}T_i$ and $T_j$ to $T_j$ for $j\neq i$. We further define an automorphism $\gamma_0$ of $A\left\langle \underline{T}^{}\right\rangle$ fixing $T_i$ ($1\leq i\leq m$) and sending $q$ to $q^{p^{\alpha+1}}\cdot q=q^{p^{\alpha+1}+1}$. Then these automorphisms lift uniquely to automorphisms $\gamma_i$ of $\Tilde{R}$ following the discussions above \cref{lem. twist S in the relative case}. Moreover, one can check by hand that they satisfy the following:
\begin{equation}\label{equa. relation between j i}
    \gamma_i\circ \gamma_j=\gamma_j\circ \gamma_i, ~~1\leq i, j\leq m,\quad \gamma_0\circ \gamma_i=\gamma_i^{p^{\alpha+1}+1}\gamma_0, 1\leq i\leq m.
\end{equation}

Following the notation in the last section (above \cref{lem. twist S in the relative case}), we define $\partial$ and $\nabla_{i}$ on $\Tilde{R}$ via 
\begin{equation}\label{equa. differentials on the structure sheaf}
    \partial(f)=d\frac{\gamma_0(f)-f}{q^{p^{\alpha+1}}\cdot q-q}=\frac{\gamma_0(f)-f}{q\beta}, \quad \nabla_{i}(f)=d\frac{\gamma_i(f)-f}{q^{p^{\alpha+1}} T_i-T_i}=\frac{\gamma_i(f)-f}{\beta T_i}, \quad \nabla_{}(f)=\sum_{i=1}^{m} \nabla_{i}(f)\epsilon_i.
\end{equation}

As a consequence of \cref{equa. relation between j i}, 
\begin{equation}\label{equa. relation between nabla i, j}
\begin{split}
    \nabla_{i}\circ \nabla_{j}&=\nabla_{j}\circ \nabla_{i}\quad  1\leq i, j\leq m,
    \\(\Id+q\beta \partial) \circ (\Id+T_i\beta \nabla_{i})&=(\Id+T_i\beta \nabla_{i})^{p^{\alpha+1}+1}\circ (\Id+q\beta \partial) \quad 1\leq i\leq m.
\end{split}
\end{equation}
Equation \ref{equa. relation between nabla i, j} could then be reinterpreted as the following lemma, which will be used later.
\begin{lemma}\label{lem. AR relation between 0 and i}
 $\forall ~1\leq i\leq m$, 
    \begin{equation*}
   (1+\beta q \mathscr{D}(\nabla_{i})) \nabla_{i}\circ \partial=s_0(\partial-s_0^{-1}\mathscr{D}(\nabla_{i})+s_1)\circ \nabla_{i},
\end{equation*}
where $s_0=\frac{\gamma_0(\beta)}{\beta [k]_{q^{p^{\alpha+1}}}} \in A^{\times}$ and $s_1=q^{-1}\frac{(1+q\partial(\beta))-[k]_{q^{p^{\alpha+1}}}}{\gamma_0(\beta)}$ for $k=p^{\alpha+1}+1$, $\mathscr{D}(\nabla_{i})$ is defined in the proof.
\end{lemma}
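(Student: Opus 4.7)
The plan is to derive the identity from the fundamental commutation $\gamma_0\gamma_i=\gamma_i^k\gamma_0$ in equation \eqref{equa. relation between nabla i, j}, where $k=p^{\alpha+1}+1$ and $\gamma_0=\Id+q\beta\partial$, $\gamma_i=\Id+T_i\beta\nabla_i$. First I would define $\mathscr{D}(\nabla_i)$ by writing $\gamma_i^k=\Id+T_i\beta\,\mathscr{D}(\nabla_i)$. Using the telescoping factorization $\gamma_i^k-\Id=(\sum_{j=0}^{k-1}\gamma_i^j)\circ(\gamma_i-\Id)=(\sum_{j=0}^{k-1}\gamma_i^j)\circ T_i\beta\nabla_i$, and noting $\gamma_i$ fixes $\beta$ while acting on $T_i$ by multiplication by $q^{p^{\alpha+1}}$, one can push $T_i\beta$ past each $\gamma_i^j$ at the price of a scalar $q^{jp^{\alpha+1}}$. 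This gives
\[
\mathscr{D}(\nabla_i)=\Bigl(\sum_{j=0}^{k-1}q^{jp^{\alpha+1}}\gamma_i^{j}\Bigr)\circ\nabla_i,
\]
which lies in the Ore extension $\XY[\nabla_i;\gamma_{\XY,i},\nabla_{\XY,i}]$ and has "$\nabla_i^{1}$-coefficient" equal to $\sum_{j=0}^{k-1}q^{jp^{\alpha+1}}=[k]_{q^{p^{\alpha+1}}}$ (this is what forces the denominator of $s_0$).

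Next I would expand both sides of $\gamma_0\gamma_i=\gamma_i^k\gamma_0$. On the left, the cross-term $q\beta\partial\cdot T_i\beta\nabla_i$ is straightened via the Ore rule $\partial\cdot T_i\beta=\gamma_0(T_i\beta)\partial+\partial(T_i\beta)=T_i\gamma_0(\beta)\partial+T_i\partial(\beta)$, using $\gamma_0(T_i)=T_i$ and $\partial(T_i)=0$. On the right, since $\gamma_i$ fixes $A$, $\nabla_i$ and therefore $\mathscr{D}(\nabla_i)$ commute with $q\beta$, so $T_i\beta\,\mathscr{D}(\nabla_i)\cdot q\beta\partial=q\beta\,T_i\beta\,\mathscr{D}(\nabla_i)\partial$. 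After cancelling the common $\Id+q\beta\partial$ and dividing through by $T_i\beta$, and using $\gamma_0(\beta)=\beta(1+q\partial(\beta))$, the relation collapses to the compact operator identity
\begin{equation}\label{eq:proposal-compact}
\mathscr{D}(\nabla_i)\circ(\Id+q\beta\partial)=\tfrac{\gamma_0(\beta)}{\beta}\,(\Id+q\beta\partial)\circ\nabla_i.
\end{equation}

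From \eqref{eq:proposal-compact} I would then isolate $\nabla_i\circ\partial$. Writing $\mathscr{D}(\nabla_i)=[k]_{q^{p^{\alpha+1}}}\nabla_i+T_i\beta\,\mathscr{E}(\nabla_i)$ with $\mathscr{E}(\nabla_i)$ the higher-order part, substituting, and transposing, the coefficient of $\partial\circ\nabla_i$ on the right becomes $\gamma_0(\beta)/(\beta[k]_{q^{p^{\alpha+1}}})=s_0$, and the coefficient of $\nabla_i$ comes out as the product $s_0 s_1$ with $s_1=q^{-1}(1+q\partial(\beta)-[k]_{q^{p^{\alpha+1}}})/\gamma_0(\beta)$. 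The higher-order piece $T_i\beta\,\mathscr{E}(\nabla_i)\cdot(1+q\beta\partial)$ reassembles (using the commutation of $\mathscr{D}(\nabla_i)$ with $q\beta$) to produce exactly the factor $(1+\beta q\,\mathscr{D}(\nabla_i))$ on the LHS multiplying $\nabla_i\circ\partial$, together with the $\mathscr{D}(\nabla_i)\circ\nabla_i$ term on the RHS; this matching is the heart of the identity as stated. Finally, $s_0\in A^{\times}$: since $\gamma_0(\beta)/\beta=[k]_{q^{p^{\alpha}}}$, we have $s_0=[k]_{q^{p^{\alpha}}}/[k]_{q^{p^{\alpha+1}}}$, and each $[k]_{q^{p^{n}}}\equiv k\pmod{q-1}$ with $k=p^{\alpha+1}+1$ coprime to $p$, hence both numerator and denominator are units in $A=W(k)[[q-1]]$.

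The easy part is the compact form \eqref{eq:proposal-compact}, which drops out of the commutation relation in essentially one line. The delicate step will be the bookkeeping that turns \eqref{eq:proposal-compact} into the precise form of the lemma: matching the higher-order part $T_i\beta\,\mathscr{E}(\nabla_i)$ on both sides so that the same symbol $\mathscr{D}(\nabla_i)$ reappears in the prefactor $(1+\beta q\,\mathscr{D}(\nabla_i))$ on the LHS and in the correction $-s_0^{-1}\mathscr{D}(\nabla_i)$ on the RHS requires carefully tracking how $\mathscr{E}(\nabla_i)$ interacts with $\nabla_i$ via the Ore straightening rule $\nabla_i\cdot r=\gamma_{\XY,i}(r)\nabla_i+\nabla_{\XY,i}(r)$. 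This is the main obstacle; once that identification is established, the scalars $s_0,s_1$ read off in their stated form.
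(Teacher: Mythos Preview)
Your approach via the telescoping factorization and the compact identity \eqref{eq:proposal-compact} is correct and is arguably cleaner than the paper's proof, which proceeds by an explicit induction to write $(\Id+T_i\beta\nabla_i)^k=\Id+\sum_{j=1}^k a_{k,j}T_i^j\beta^j\nabla_i^j\,q^{\frac{j(j-1)}{2}p^{\alpha+1}}$ with a recursion for the $a_{k,j}$, and then defines
\[
\mathscr{D}(\nabla_i)\;=\;\frac{1}{[k]_{q^{p^{\alpha+1}}}}\sum_{j=2}^{k} a_{k,j}\,T_i^{j-1}\beta^{j-2}\nabla_i^{j-1}\,q^{\frac{j(j-1)}{2}p^{\alpha+1}-1}.
\]
Both routes start from the same relation $\gamma_0\gamma_i=\gamma_i^{k}\gamma_0$; the paper's gain is an explicit closed formula for $\mathscr{D}(\nabla_i)$ (used later when reducing modulo $d$, cf.\ \cref{lem. reinterpret the last condition}), while yours gives a one-line derivation of the operator identity once the correct normalization of $\mathscr{D}$ is in place.

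There is, however, a genuine definitional mismatch in your write-up that you should fix rather than defer to ``bookkeeping''. The operator you introduce via $\gamma_i^k=\Id+T_i\beta\,\mathscr{D}(\nabla_i)$, namely $\mathscr{D}^{\mathrm{stu}}=\bigl(\sum_{j=0}^{k-1}q^{jp^{\alpha+1}}\gamma_i^{j}\bigr)\nabla_i$, is \emph{not} the $\mathscr{D}(\nabla_i)$ that appears in the lemma's displayed identity. The paper's $\mathscr{D}(\nabla_i)$ is instead characterized by
\[
\gamma_i^k-\Id\;=\;[k]_{q^{p^{\alpha+1}}}\,T_i\beta\,\bigl(1+q\beta\,\mathscr{D}(\nabla_i)\bigr)\nabla_i,
\]
so the two are related by $\mathscr{D}^{\mathrm{stu}}=[k]_{q^{p^{\alpha+1}}}(1+q\beta\,\mathscr{D}(\nabla_i))\nabla_i$. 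Your final paragraph asserts that ``the same symbol $\mathscr{D}(\nabla_i)$ reappears'' on both sides, but with your definition this is false: plugging $\mathscr{D}^{\mathrm{stu}}$ into the lemma's display does not give a valid identity with the stated $s_0,s_1$. The fix is immediate: substitute $\mathscr{D}^{\mathrm{stu}}=[k]_{q^{p^{\alpha+1}}}(1+q\beta\,\mathscr{D}(\nabla_i))\nabla_i$ into your compact identity \eqref{eq:proposal-compact} and divide by $q\beta[k]_{q^{p^{\alpha+1}}}$; using $\gamma_0(\beta)/\beta=1+q\partial(\beta)$ one obtains exactly
\[
(1+q\beta\,\mathscr{D}(\nabla_i))\nabla_i\partial=s_0\partial\nabla_i-\mathscr{D}(\nabla_i)\nabla_i+s_0s_1\nabla_i
\]
with the stated $s_0,s_1$ (one checks $\tfrac{s_0-1}{q\beta}=s_0s_1$). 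So no ``delicate matching of $\mathscr{E}$'' is needed once you adopt the correct normalization of $\mathscr{D}(\nabla_i)$ from the outset.
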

\begin{proof}
    For ease of notation, we omit $i$ and write $\nabla_{i}$ as $\nabla$ in the proof. First we show that for all $k\in \mathbb{N}$, there exists a sequence $\{a_{k,j}\}_{1\leq j\leq k}$ such that   \begin{equation*}
        (\Id+T\beta \nabla_{})^k=\Id+\sum_{j=1}^k a_{k,j}(T\beta\nabla)^j q^{\frac{j(j-1)}{2}p^{\alpha+1}}
    \end{equation*}
    and that $$a_{k,1}=[k]_{q^{p^{\alpha+1}}},\quad a_{k+1,j}=a_{k,j}(1+\beta d [j]_{q^{p^{\alpha+1}}})+a_{k,j-1},\quad j\geq 1, \text{here we define}~~a_{k,0}=1.$$
    We do induction on $k$. Clearly it holds when $k=1$. Suppose the claim holds up to $k$. By invoking \cref{lem. twist S in the relative case} and noticing that $\nabla(T)=d, \nabla_{}(T^j)=T^{j-1}d[j]_{q^{p^{\alpha+1}}}$, hence
    \begin{equation*}
        \begin{split}
            (\Id+T\beta \nabla_{})^{k+1}&=(\Id+T\beta \nabla_{})\circ(\Id+T\beta \nabla_{})^k=(\Id+T\beta \nabla_{})\circ (\Id+\sum_{j=1}^k a_{k,j}(T\beta\nabla)^j q^{\frac{j(j-1)}{2}p^{\alpha+1}})
        \\&=\Id+T\beta\nabla+\sum_{j=1}^{k+1} a_{k,j}(T^j\beta^j \nabla^j+\beta^{j+1}T\nabla\circ (T^j \nabla^j))q^{\frac{j(j-1)}{2}p^{\alpha+1}}
        \\&=\Id+\sum_{j=1}^{k+1} (a_{k,j}T\beta\nabla^jq^{\frac{j(j-1)}{2}p^{\alpha+1}}(1+\beta d [j]_{q^{p^{\alpha+1}}})+a_{k,j-1}\beta^j T q^{\frac{j(j-1)}{2}p^{\alpha+1}} \cdot q^{p^{\alpha+1}(j-1)}T^{j-1}\nabla^j)
        \\&=\Id+\sum_{j=1}^{k+1} (a_{k,j}(1+\beta d [j]_{q^{p^{\alpha+1}}})+a_{k,j-1})(T\beta\nabla)^j q^{\frac{j(j-1)}{2}p^{\alpha+1}}.
        \end{split}
    \end{equation*}
    This implies the desired induction formula for $a_{k,j}$. It then follows that $a_{k,1}=[k]_{q^{p^{\alpha+1}}}$ by induction and observing that $\beta d=q^{p^{\alpha+1}}-1$.

    From now we fix $k=p^{\alpha+1}+1$ and rewite $a_{k,j}$ just as $a_j$. Then the above calculation tells us that 
    \begin{equation*}
        \begin{split}
            (\Id+T\beta \nabla_{})^{p^{\alpha+1}+1}\circ (\Id+q\beta \partial)&=(\Id+[k]_{q^{p^{\alpha+1}}} T\beta \nabla+\sum_{j=2}^{k} a_{j}(T\beta\nabla)^j q^{\frac{j(j-1)}{2}p^{\alpha+1}}) (\Id+q\beta \partial)
            \\&= \Id+q\beta \partial+[k]_{q^{p^{\alpha+1}}} (T\beta \nabla + qT\beta^2 \nabla\circ \partial+qT\beta^2\mathscr{D}(\nabla)\nabla+q^2\beta^3 T\mathscr{D}(\nabla)\nabla\partial),
        \end{split}
    \end{equation*}
    here we define $\mathscr{D}(\nabla)=\frac{1}{[k]_{q^{p^{\alpha+1}}}}\sum_{j=2}^{k} a_{j} T^{j-1}\beta^{j-2}\nabla^{j-1} q^{\frac{j(j-1)}{2}p^{\alpha+1}-1}$, which makes sense as $[k]_{q^{p^{\alpha+1}}} \in \Tilde{R}^{\times}$. Indeed, it projects to the unit $k=p^{\alpha+1}+1$ after modulo $q-1$.

    On the other hand, one calculates that
    \begin{equation*}
    \begin{split}
        &(\Id+q\beta \partial) \circ (\Id+T\beta \nabla_{})=\Id+q\beta \partial+T\beta \nabla +qT\beta \partial(\beta \nabla)
        \\=&\Id+q\beta \partial+T\beta (1+q\partial(\beta))\nabla+qT\beta \gamma_0(\beta) \partial\nabla.
    \end{split}
    \end{equation*}

Notice that 
\begin{equation*}
    \frac{\gamma_0(\beta)}{\beta}=\frac{q^{kp^\alpha}-1}{q^{p^{\alpha}}-1}=[k]_{q^{p^{\alpha}}}\in \Tilde{R}^{\times}
\end{equation*}
as its projection after modulo $q-1$ is given by $k=p^{\alpha+1}+1$, which is a unit.

Also, $[k]_{q^{p^{\alpha+1}}}-(1+q\partial(\beta))=[k]_{q^{p^{\alpha+1}}}-q^{p^\alpha} d [p^\alpha]_{q^{p^{\alpha+1}}}-1$ is divisible by $\beta$ as its image after modulo $\beta=q^{p^{\alpha}}-1$ is given by $k-1-p\cdot p^\alpha=0$.

Combining all of the calculations above, Equation \ref{equa. relation between nabla i, j} then states that
\begin{equation*}
\begin{split}
    (\nabla+\beta q \mathscr{D}(\nabla)\nabla)\circ \partial&=(\frac{\gamma_0(\beta)}{\beta [k]_{q^{p^{\alpha+1}}}}\partial-\mathscr{D}(\nabla)+q^{-1}\frac{(1+q\partial(\beta))-[k]_{q^{p^{\alpha+1}}}}{\beta [k]_{q^{p^{\alpha+1}}}})\circ \nabla
    \\&=s_0(\partial-s_0^{-1}\mathscr{D}(\nabla)+s_1)\circ \nabla.
\end{split}
\end{equation*}
for $s_0=\frac{\gamma_0(\beta)}{\beta [k]_{q^{p^{\alpha+1}}}} \in A^{\times}$ and $s_1=q^{-1}\frac{(1+q\partial(\beta))-[k]_{q^{p^{\alpha+1}}}}{\gamma_0(\beta)}$.
\end{proof}
\begin{remark}\label{rem. actual s1}
    By considering the action of two sides of \cref{lem. AR relation between 0 and i} on $T_i\in \Tilde{R}$, we see that $s_1=-\frac{\partial(d)}{d}\in A$.
\end{remark}

The following lemma combines \cref{lemt.extend delta} and \cref{lem. twist S in the relative case}. For simplicity, we rewrite $\epsilon dT_i$ as $\epsilon_i$ for $1\leq i\leq m$ in this section.
\begin{lemma}\label{lem. AR twist S in the relative case}
    Let $S=\Tilde{R}\oplus_{i=0}^{m} \Tilde{R}\epsilon_i$ and we regulate the following algebra structure on $S$: \begin{equation*}
        (\epsilon_0)^2=\beta q\cdot\epsilon_0,\quad (\epsilon_i)^2=\beta T_i\cdot\epsilon_i, ~~1\leq i\leq m,\quad \epsilon_i\cdot \epsilon_j=0 ~~\text{for}~~ i\neq j.
    \end{equation*}
    Then the map 
    \begin{equation*}
        \begin{split}
            \psi : \Tilde{R} &\longrightarrow  S
            \\ f&\longmapsto =f+d\frac{\gamma_0(f)-f}{q^{p^{\alpha+1}}\cdot q-q}\epsilon_0+d\sum_{i=1}^m\frac{\gamma_i(f)-f}{q^{p^{\alpha+1}} T_i-T_i}\epsilon_i
        \end{split}
    \end{equation*}
    defines a ring homomorphism of $W(k)$-algebras. similarly, we get ring homomorphism $\psi_i$ as that in \cref{rem. replace psi by i} for $0\leq i\leq m$.
\end{lemma}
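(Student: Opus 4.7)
The plan is to reduce the lemma to the already-verified cases by a fiber-product decomposition of the target. For $0\le j\le m$, set $\beta_0:=q\beta$ and $\beta_j:=\beta T_j$ for $j\ge 1$, and introduce the auxiliary $\tilde R$-algebra
\begin{equation*}
S_j:=\tilde R[\epsilon_j]/(\epsilon_j^2-\beta_j\epsilon_j)=\tilde R\oplus \tilde R\epsilon_j,
\end{equation*}
together with the candidate map $\psi_j(f):=f+D_j(f)\epsilon_j$, where $D_0:=\partial$ and $D_j:=\nabla_j$ for $j\ge 1$. The relations $\epsilon_i\epsilon_j=0$ ($i\ne j$) imposed on $S$, combined with the $\tilde R$-module decomposition $S=\tilde R\oplus\bigoplus_j \tilde R\epsilon_j$, identify $S$ with the fiber product $S_0\times_{\tilde R}\cdots\times_{\tilde R}S_m$ (each $S_j\to\tilde R$ being the projection $\epsilon_j\mapsto 0$); under this identification, the map $\psi$ of the lemma is exactly the tuple $(\psi_0,\dots,\psi_m)$. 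Hence it suffices to check that each $\psi_j$ is a ring homomorphism.

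The first step is to verify that $D_j(f)\in\tilde R$ so that $\psi_j$ is genuinely well-defined. Rewriting $D_j(f)=(\gamma_j(f)-f)/\beta_j$, this amounts to the divisibilities $\gamma_0(f)-f\in(q\beta)\tilde R$ and $\gamma_i(f)-f\in(\beta T_i)\tilde R$ for $1\le i\le m$. These follow from the base cases $\gamma_0(q)-q=q(q^{p^{\alpha+1}}-1)=q\beta d$ and $\gamma_i(T_i)-T_i=T_i\beta d$ by the Leibniz rule for the ring automorphism $\gamma_j$, together with the fact that $\gamma_j$ fixes the remaining generators and the coefficient ring $W(k)$.

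Next, the ring-homomorphism property of $\psi_j$ reduces to the twisted Leibniz identity
\begin{equation*}
D_j(f_1 f_2)=f_1 D_j(f_2)+f_2 D_j(f_1)+\beta_j D_j(f_1)D_j(f_2),\qquad f_1,f_2\in\tilde R.
\end{equation*}
Since $\gamma_j$ is a ring automorphism, one expands
\begin{equation*}
\gamma_j(f_1)\gamma_j(f_2)-f_1f_2=(\gamma_j(f_1)-f_1)(\gamma_j(f_2)-f_2)+f_1(\gamma_j(f_2)-f_2)+f_2(\gamma_j(f_1)-f_1),
\end{equation*}
and dividing by $\beta_j$ yields the identity. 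Then $\psi_j(f_1)\psi_j(f_2)=\psi_j(f_1f_2)$ follows by using $\epsilon_j^2=\beta_j\epsilon_j$. For $j\ge 1$ this is just the calculation of \cref{lem. twist S in the relative case}/\cref{rem. replace psi by i} (adapted to the current value $\beta=q^{p^\alpha}-1$ and $\gamma_i(T_i)=q^{p^{\alpha+1}}T_i$), and for $j=0$ it is the same calculation as in \cref{lemt.extend delta} carried out on $\tilde R$ rather than $A$; the $W(k)$-linearity is automatic from $D_j(a)=0$ for $a\in W(k)$.

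The only non-routine point — and thus the main obstacle, though a mild one — is confirming that the automorphism $\gamma_0$, which was originally defined on $A\langle\underline T\rangle$ and extended to $\tilde R$ by uniqueness of étale lifts, interacts with the $\gamma_i$'s in the expected way and still satisfies the identities above once restricted to $\tilde R$; this is essentially guaranteed by the commutation relations \eqref{equa. relation between j i}, which were already verified for $A\langle\underline T\rangle$ and lift uniquely along the $(p,I)$-completely étale map $A\langle\underline T\rangle\to\tilde R$. The statement about $\psi_i$ individually in the moreover clause is then the same argument restricted to a single component.
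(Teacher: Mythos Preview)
Your proof is correct and follows essentially the same approach as the paper, which simply observes that the statement follows from \cref{lemt.extend delta} and \cref{lem. twist S in the relative case} after checking that the $\epsilon_0$-component of $\psi$ on powers of $q$ matches the formula in \cref{lemt.extend delta}. Your fiber-product decomposition $S\cong S_0\times_{\tilde R}\cdots\times_{\tilde R}S_m$ makes this reduction explicit; the only superfluous point is your invocation of the commutation relations \eqref{equa. relation between j i}, which are not needed here (only that each $\gamma_j$ is a ring automorphism of $\tilde R$ congruent to the identity modulo $\beta_j$).
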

\begin{proof}
    This follows from \cref{lemt.extend delta} and \cref{lem. twist S in the relative case} after noticing that for $f=q^k$, 
    \begin{equation*}
        \begin{split}
            \frac{\gamma_0(f)-f}{q^{p^{\alpha+1}}\cdot q-q}=\frac{(q^{p^{\alpha+1}+1})^k-q^k}{q^{p^{\alpha+1}+1}-q}=q^{k-1}[k]_{q^{p^{\alpha+1}}},
        \end{split}
    \end{equation*}
    hence $\psi(q^k)$ coincides with that in \cref{lemt.extend delta}. 
\end{proof}
\begin{proposition}\label{propt.key automorphism of functors AR}
    The elements $b$ and $c$ constructed in \cref{lemT.construct b} and \cref{lemt.construct homotopy} together with $c_{\psi}$ constructed in \cref{prop. relative c to construct homotopy} induce an isomorphism $\gamma_{b,c_{\psi}}$ between functors $\rho: \Spf(S) \to \Spf(\Tilde{R}) \xrightarrow{\rho_X} X_{}^{\Prism}$
    and $\rho\circ \psi: \Spf(S) \xrightarrow{\psi} \Spf(\Tilde{R})\xrightarrow{\rho_X} X_{}^{\Prism}$, i.e. we have the following commutative diagram:
\[\xymatrixcolsep{5pc}\xymatrix{\Spf(S)\ar[d]^{\iota}\ar[r]^{\psi}& \Spf(\Tilde{R})\ar@{=>}[dl]^{\gamma_{b,c_{\psi}}} \ar[d]_{}^{\rho_X}
\\\Spf(\Tilde{R})  \ar^{\rho_X}[r]&X_{}^{\Prism}}\]
\end{proposition}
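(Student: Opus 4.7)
The plan is to mirror the proof of \cref{propt.key automorphism of functors} (which treated the purely arithmetic case using the $q$-prism chart) but now enriched by the relative datum of \cref{propt.key automorphism of functors relative case}. Given a $(p,d)$-nilpotent test algebra $T$ over $S$ via $h\colon S\to T$, the two composites $\rho_X\circ\iota$ and $\rho_X\circ\psi$ correspond to the Cartier--Witt points
\[
(\alpha\colon (d)\otimes_{A,\Tilde{h}\circ\Tilde{\iota}}W(T)\to W(T),\ \eta\colon \Cone((d)\to\Tilde{R})\xrightarrow{\Tilde{h}\circ\Tilde{\iota}}\Cone(\alpha))
\]
and
\[
(\alpha'\colon (d)\otimes_{A,\Tilde{h}\circ\Tilde{\psi}}W(T)\to W(T),\ \eta'\colon \Cone((d)\to\Tilde{R})\xrightarrow{\Tilde{h}\circ\Tilde{\psi}}\Cone(\alpha'))
\]
in $X^{\Prism}(T)$, where $\Tilde{\iota},\Tilde{\psi}\colon \Tilde{R}\to W(S)$ are the unique $\delta$-lifts (the $\delta$-structure on $S$ being the combination of \cref{rem. delta ring structure on R} and \cref{rem. delta ring structure on s relative}, extended uniquely to $\Tilde{R}$-algebras by étaleness). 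To produce $\gamma_{b,c_{\psi}}$ we must specify an isomorphism $\sigma\colon\alpha'\xrightarrow{\simeq}\alpha$ of Cartier--Witt divisors together with a homotopy between $\sigma\circ\eta'$ and $\eta$, both natural in $T$.

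For the isomorphism of Cartier--Witt divisors, I would observe that $d\in A\subset\Tilde{R}$, so $\Tilde{\psi}(d)$ depends only on $\psi|_{A}\colon A\to A\oplus A\epsilon_0\subset S$, which is precisely the map denoted $\psi$ in \cref{lemt.extend delta}. Hence the element $b\in W(A[\epsilon_0]/(\epsilon_0^2-q\beta\epsilon_0))^{\times}$ produced by \cref{lemT.construct b}, viewed in $W(S)^{\times}$ via the canonical inclusion, satisfies $\Tilde{\psi}(d)=\Tilde{\iota}(d)\cdot b$. Thus left-multiplication by $\Tilde{h}(b)$ defines $\sigma\colon\alpha'\xrightarrow{\simeq}\alpha$.

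For the homotopy, I would build a map $\gamma_{b,c_{\psi}}\colon \Tilde{R}\to (d)\otimes_{A,\Tilde{\iota}}W(S)$ with $\iota\circ\gamma_{b,c_{\psi}}=\sigma\circ\eta'-\eta$ (with $T=S$ so that $\iota$ is injective, by $d$-torsion freeness of $W(S)$ via \cref{lem. s torsion freeness} and its arithmetic analogue). On generators we set $\gamma_{b,c_{\psi}}(q):=d\otimes c$ using $c\in W(S)$ from \cref{lemt.construct homotopy} (again viewed through $W(A[\epsilon_0])\hookrightarrow W(S)$), and $\gamma_{b,c_{\psi}}(T_i):=d\otimes c_{\psi}(T_i)$ using \cref{prop. relative c to construct homotopy}. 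One extends to $A\langle\underline{T}\rangle$ via the Taylor-trick used at the end of \cref{propt.key automorphism of functors} (writing $s(x)-s(y)=(x-y)k_{s}(x,y)$ for each variable separately, then combining), and finally uniquely to $\Tilde{R}$ because $A\langle\underline{T}\rangle\to\Tilde{R}$ is $(p,I)$-completely étale and $W(S)/d\to S/d$ is a pro-thickening (the same deformation-theoretic argument used in \cref{prop. relative c to construct homotopy} to pass from $A\langle\underline{T}\rangle$ to $\Tilde{R}$).

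The main obstacle is verifying the cross-compatibility: one must check that the two partial homotopies $c$ (arithmetic) and $c_{\psi}$ (geometric) assemble into a single cocycle, i.e.\ that $\iota\circ\gamma_{b,c_{\psi}}=\sigma\circ\eta'-\eta$ as genuine maps of quasi-ideals, not merely on generators. Since both sides are additive and the target is $d$-torsion free, it suffices to test on the generating set $\{q,T_1,\dots,T_m\}$, where the identity reduces to the two established cases. The fact that $\sigma$ only affects the $(d)$-component while the $T_i$-contributions to $\Tilde{\psi}(T_i)-\Tilde{\iota}(T_i)$ lie in the subring $W(\Tilde{R}\oplus\bigoplus_{i\geq 1}\Tilde{R}\epsilon_i)\subset W(S)$ — which is pointwise fixed by $\sigma$ modulo the appropriate ideal — shows there is no interference between the arithmetic and geometric components. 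Naturality in $T$ is then immediate since every piece of the construction is obtained by base change along $\Tilde{h}\colon W(S)\to W(T)$.
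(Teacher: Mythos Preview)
Your proposal is correct and follows essentially the same approach as the paper. The paper's proof is more terse: it simply notes that for every $x\in\Tilde{R}$ there exists a unique $c_{\psi}(x)\in W(S)$ with $\Tilde{\psi}(x)-\Tilde{\iota}(x)=d\cdot c_{\psi}(x)$ by combining \cref{lemt.construct homotopy} (for the generator $q$) with \cref{prop. relative c to construct homotopy} (for the $T_i$ and the \'etale extension to $\Tilde{R}$), and then invokes the argument of \cref{propt.key automorphism of functors}. Your cross-compatibility paragraph is valid but unnecessary: since $\Tilde{\psi}$ and $\Tilde{\iota}$ are ring homomorphisms, the set of $x$ with $\Tilde{\psi}(x)-\Tilde{\iota}(x)\in d\cdot W(S)$ is automatically a subring, so containing $q$ and the $T_i$ already forces it to contain $A\langle\underline{T}\rangle$, and there is no separate cocycle condition linking the arithmetic and geometric pieces.
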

\begin{proof}
We write $\iota: \Tilde{R}\to S$ for the canonical map as usual, then $\rho(S)$ corresponds to the point $$(\alpha:(d) \otimes_{\Tilde{R},\Tilde{\iota}} W(S)\to W(S),\eta: \Cone((d)\to \Tilde{R})\xrightarrow{\Tilde{\iota}} \Cone(\alpha))$$
      in $X^{\Prism}(S)$, while $(\rho\circ \psi)(S)$ corresponds to the point $$(\alpha:(d) \otimes_{\Tilde{R},\Tilde{\psi}} W(S)\to W(S), \eta^{\prime}: \Cone((d)\to \Tilde{R}) \xrightarrow{\Tilde{\psi}} \Cone(\alpha)).$$ 

      We need to specify an isomorphism $\gamma_{b}: \alpha^{\prime}\xrightarrow{\simeq} \alpha$ as well as a homotopy $\gamma_{c_{\psi}}$ between $\gamma_{b}\circ \eta^{\prime}$ and $\eta$. $\gamma_{b}$ is constructed as that in \cref{propt.key automorphism of functors} by viewing $b$ constructed in \cref{lemT.construct b} as an element of $W(S)$. To construct the homotopy, we just need to notice that for any $x\in S$, there exists a unique $c_{\psi}(x)$ in $W(S)$ such that $\Tilde{\psi}_(x)-\Tilde{\iota}(x)=d \cdot c_{\psi}(x)$ by combining \cref{lemt.construct homotopy} and \cref{prop. relative c to construct homotopy}, then the proof of \cref{propt.key automorphism of functors} still works here.   
\end{proof}

Then following the discussion before \cref{prop. rel two derivation vanish}, for $\mathcal{E}\in \mathcal{D}(X_{}^{\Prism})$, $\rho_X^*\mathcal{E}$ is equipped with a $q$-connection $\partial_{\mathcal{E}}$ as well as a $q$-Higgs derivation $\nabla_{\mathcal{E}}=\sum_{i=1}^m \nabla_{\mathcal{E},i}\otimes dT_i$. We first prove they satisfy the following relation, as an analog of \cref{prop. rel two derivation vanish}. 
\begin{proposition}\label{prop. AR two derivation vanish}
For $\mathcal{E}\in \mathcal{D}(X_{}^{\Prism})$, $\nabla_{\mathcal{E}}=\sum_{i=1}^m \nabla_{\mathcal{E},i}\otimes dT_i$ and $\partial_{\mathcal{E}}$ on $\rho_X^*\mathcal{E}$ satisfies the following property: 
\begin{equation*}
    \begin{split}
        \nabla_{\mathcal{E}}\wedge \nabla_{\mathcal{E}}&=0,
        \\(\Id+q\beta \partial_{\mathcal{E}}) \circ (\Id+T_i\beta \nabla_{\mathcal{E},i})&=(\Id+T_i\beta \nabla_{\mathcal{E},i})^{p^{\alpha+1}+1}\circ (\Id+q\beta \partial_{\mathcal{E}}) \quad 1\leq i\leq m.
    \end{split}
\end{equation*}
Moreover, $\nabla_{\mathcal{E},i}$ and $\partial_{\mathcal{E}}$ satisfies the relation stated in \cref{lem. AR relation between 0 and i}, i.e. 
\begin{equation*}
    (\nabla_{\mathcal{E},i}+\beta q \mathscr{D}(\nabla_{\mathcal{E},i})\nabla_{\mathcal{E},i})\circ \partial_{\mathcal{E}}=s_0(\partial_{\mathcal{E}}-s_0^{-1}\mathscr{D}(\nabla_{\mathcal{E},i})+s_1)\circ \nabla_{\mathcal{E},i}.
\end{equation*}
\end{proposition}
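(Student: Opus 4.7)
My plan is to mirror the strategy used in the proof of \cref{prop. rel two derivation vanish}, namely to exploit iterated fiber products of the test ring $S$ and invoke the uniqueness of the homotopy data $(b, c_\psi)$ from \cref{lemT.construct b}, \cref{lemt.construct homotopy}, \cref{prop. relative c to construct homotopy} in order to transfer the algebraic identities satisfied by $\gamma_0, \gamma_1, \dots, \gamma_m$ on $\Tilde{R}$ (namely \cref{equa. relation between j i}) to the corresponding identities on the operators $\partial_{\mathcal{E}}, \nabla_{\mathcal{E},i}$ acting on $\rho_X^*\mathcal{E}$.

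For the first identity $\nabla_{\mathcal{E}}\wedge \nabla_{\mathcal{E}} = 0$, I would observe that the operators $\nabla_{\mathcal{E},i}$ ($1 \le i \le m$) were constructed from the homotopies $\gamma_{c_{\psi_i}}$ attached to the morphism $\rho: \Spf(\Tilde{R}) \to X_{/A}^{\Prism}$, not to $\rho_X$. Since $\rho_X = \rho_A \circ \rho$, the $q$-Higgs derivation on $\rho_X^* \mathcal{E} = \rho^* (\rho_A^* \mathcal{E})$ coincides with that on $\rho^* (\rho_A^*\mathcal{E})$ viewed as an object of $\mathcal{D}(X_{/A}^{\Prism})$, so the statement reduces directly to \cref{prop. rel two derivation vanish} applied to $\rho_A^*\mathcal{E}$.

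For the second identity, I would form the tensor product $T = S_i \otimes_{\Tilde{R}} S_0$ and its higher iterates, and consider the two ``boundary" compositions realizing $\rho_X \circ \gamma_0 \circ \gamma_i$ and $\rho_X \circ \gamma_i^{p^{\alpha+1}+1} \circ \gamma_0$ respectively. By \cref{equa. relation between j i} these compositions agree as morphisms $\Spf(\Tilde{R}) \to X^{\Prism}$, and each is equipped with a canonical homotopy to $\rho_X$ obtained by stacking the homotopies $\gamma_{b,c_{\psi_j}}$ produced in \cref{propt.key automorphism of functors AR} (using the base changes $\psi_0^\prime$, $\psi_i^\prime$ as in the proof of \cref{prop. rel two derivation vanish}). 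By the uniqueness parts of \cref{lemT.construct b}, \cref{lemt.construct homotopy}, and \cref{prop. relative c to construct homotopy}, the two homotopies must coincide; unwinding the definitions on $\rho_X^*\mathcal{E}$ then produces precisely the identity $(\Id+q\beta \partial_{\mathcal{E}})\circ(\Id + T_i\beta\nabla_{\mathcal{E},i}) = (\Id+T_i\beta\nabla_{\mathcal{E},i})^{p^{\alpha+1}+1}\circ(\Id+q\beta\partial_{\mathcal{E}})$. The ``moreover" statement is then a purely formal algebraic manipulation: expanding $(\Id + T_i\beta \nabla_{\mathcal{E},i})^{p^{\alpha+1}+1}$ on $\rho_X^*\mathcal{E}$ exactly as in the computation carried out in \cref{lem. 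AR relation between 0 and i} gives the stated commutation relation involving $s_0, s_1, \mathscr{D}(\nabla_{\mathcal{E},i})$.

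The main obstacle I anticipate is bookkeeping for the $(p^{\alpha+1}+1)$-fold iterate of $\psi_i$: unlike the smooth relative case, where one only needs to commute two first-order deformations, here one must compose many copies of $\psi_i$ on an appropriate iterated tensor product where the relations $\epsilon_i^2 = T_i\beta\epsilon_i$ from \cref{lem. AR twist S in the relative case} interact nontrivially with the base changes. Setting up this iteration so that uniqueness of $b$ and $c_\psi$ can be applied simultaneously on all factors is the technical heart of the argument; once this is done correctly, the rest is formal expansion matching the calculation already performed in the proof of \cref{lem. AR relation between 0 and i}.
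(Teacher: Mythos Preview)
Your treatment of the first identity is exactly the paper's: pull back along $\rho_A$ and invoke \cref{prop. rel two derivation vanish}.

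For the second identity, the paper avoids your ``higher iterates'' entirely by a trick you have not spotted: it restricts the homotopy $\gamma_{b,c_{\psi_i}}$ along the closed embedding $j_i:\Spf(\Tilde R)\hookrightarrow\Spf(S_i)$ given by $\epsilon_i\mapsto \beta T_i$ (resp.\ $\epsilon_0\mapsto q\beta$). Since $\psi_i|_{\epsilon_i=\beta T_i}=\gamma_i$, this produces directly an isomorphism $\gamma_i^*\rho_X^*\mathcal{E}\cong\rho_X^*\mathcal{E}$ acting by $\Id+T_i\beta\nabla_{\mathcal{E},i}$. Now $\gamma_i$ is an honest automorphism of $\Tilde R$, so iterating it $p^{\alpha+1}+1$ times is free; comparing the two homotopies attached to $\gamma_0\gamma_i=\gamma_i^{p^{\alpha+1}+1}\gamma_0$ yields the second identity by uniqueness. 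No tower of tensor products is needed.

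For the ``moreover'' part your argument has a genuine gap. Expanding the second identity exactly as in \cref{lem. AR relation between 0 and i} does give an equation of operators on $M=\rho_X^*\mathcal{E}$, but both sides come out premultiplied by $T_i\beta$, so what you actually obtain is $T_i\beta\cdot(\text{desired relation})=0$. In $\Tilde R$ one can cancel $T_i\beta$; on a general $(p,d)$-complete $M$ one cannot (note $\beta=q^{p^\alpha}-1$ is never a unit). The paper's fix is to work instead on a quotient $T_0$ of $T=S_0\otimes_{\Tilde R}S_1$ in which $\epsilon_0\epsilon_1\neq 0$ and where the key computation of \cref{lem. AR relation between 0 and i} shows that $(\Id+\epsilon_0\partial)\circ(\Id+\epsilon_1\nabla_1)$ and $(\Id+\epsilon_1\nabla_1)^{p^{\alpha+1}+1}\circ(\Id+\epsilon_0\partial)$ agree as ring maps $\Tilde R\to T_0$. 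Uniqueness of the homotopy then forces the same equality for the operators on $M$, and since their difference is $\epsilon_0\epsilon_1$ times the moreover relation, the latter vanishes outright with no division required.
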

\begin{proof}
   The first sentence is due to \cref{prop. rel two derivation vanish}. To prove the second statement,  we may assume that $i=1$. Let $S_0=\Tilde{R}[\epsilon_i]/(\epsilon_0^2-\beta q\epsilon_0)=\Tilde{R}\oplus \Tilde{R}\cdot \epsilon_0$ and $S_1=\Tilde{R}[\epsilon_1]/(\epsilon_1^2-\beta T_1\epsilon_0)=\Tilde{R}\oplus \Tilde{R}\cdot \epsilon_1$.
   For $i=0, 1$, we define $\psi_i:\Tilde{R}\to S_i=\Tilde{R}[\epsilon_i]$ as that in \cref{rem. replace psi by i}. 
   
   We consider the following commutative diagram 
   \[ \xymatrixcolsep{5pc}\xymatrix{  \Spf(\Tilde{R})\ar@/^2pc/[rr]^{\Id}\ar@{^{(}->}[r]^{j_1: \epsilon_1\mapsto \beta T_1}\ar[dr]^{\gamma_1}
 &  \Spf(S_1) \ar[d]^{\psi_1}\ar[r]^{\iota} & \Spf(\Tilde{R})\ar@{=>}[dl]^{\gamma_{b,c_{\psi_1}}} \ar[d]^{\rho_X}\\ & \Spf(\Tilde{R}) \ar[r]^{\rho_X} &X^{\Prism},}\]
 where $j_1$ is the closed embedding corresponding to the quotient map $S_1\to \Tilde{R}$ sending $\epsilon_1$ to $\beta T_1$. The commutativity of the left triangle follows from the observation that the image of $$\psi_1(f)=f+\epsilon_1 \nabla_{q,1}(f)=f+\epsilon_1\frac{\gamma_1(f)-f}{\beta T_1}$$
 in $\Tilde{R}$ after modulo $(\epsilon_1-\beta T_1)$ is precisely $\gamma_1(f)$.
 
Unwinding our construction of $\nabla_{\mathcal{E},1}$, this implies that the restriction of $\gamma_{b,c_{\psi_1}}$ along $j$ induces an isomorphism $\gamma_1^*\rho_X^* \mathcal{E}\cong  \rho_X^* \mathcal{E}$ given by 
\begin{equation*}
    \begin{split}
        \rho_X^* \mathcal{E} \otimes_{\Tilde{R},\gamma_1} \Tilde{R}&\xrightarrow{\simeq} \rho_X^* \mathcal{E}
        \\ x\otimes a &\longmapsto (x+\beta T_1\cdot \nabla_{\mathcal{E},1}(x))\cdot a
    \end{split}
\end{equation*}
Playing the same game for $\gamma_0$ and considering the following diagram \footnote{Here $\gamma$ is the unique homotopy makes the diagram commutes, it exists by the previous discussion.}
 \[ \xymatrixcolsep{5pc}\xymatrix{ \Spf(\Tilde{R})  \ar[r]^{\Id} \ar[d]_{\gamma_1\circ \gamma_0=\gamma_0 \circ \gamma_1^{p^{\alpha+1}+1}} & \Spf(\Tilde{R})\ar@{=>}[dl]^{\gamma_{}}\ar[d]^{\iota} \\ \Spf(\Tilde{R})\ar[r]^{\iota} 
 &  X^{\Prism},}\]
we see that the unique isomorphism (induced by $\gamma$) $\gamma_0^{*}(\gamma_1^{*}(\rho^{*}\mathcal{E}))\cong \rho^{*}\mathcal{E}$ could be calculated via the $\Tilde{R}$-linear extension of $(\Id+q\beta \partial_{\mathcal{E}}) \circ (\Id+T_1\beta \nabla_{\mathcal{E},1})$. On the other hand, as $\gamma_1\circ \gamma_0=\gamma_0 \circ \gamma_1^{p^{\alpha+1}+1}$, a similar discussion implies that it can also be calculated by $\Tilde{R}$-linear extension of $(\Id+T_1\beta \nabla_{\mathcal{E},1})^{p^{\alpha+1}+1}\circ (\Id+q\beta \partial_{\mathcal{E}})$, from which we deduce that 
$$(\Id+q\beta \partial_{\mathcal{E}}) \circ (\Id+T_1\beta \nabla_{\mathcal{E},1})=(\Id+T_1\beta \nabla_{\mathcal{E},1})^{p^{\alpha+1}+1}\circ (\Id+q\beta \partial_{\mathcal{E}}).$$

For the moreover part, denote $T=S_0\otimes_{\Tilde{R}} S_1$. Set $\epsilon_0 \partial(\epsilon_1):=q\epsilon_1 \partial(\beta)$ and 
let $\psi_0^{\prime}: S_1\to T$ be the unique extension of $\psi_0: \Tilde{R}\to T$, i.e. $\psi_0^{\prime}(x_1+\epsilon_1 y_1)=(\Id+\epsilon_0 \partial)\circ (x_1+\epsilon_1 y_1)=x_1+\epsilon_1 y_1+\epsilon_0\partial(x_1)+\epsilon_0\partial(\epsilon_1 y_1)$. Similarly, we extend $\psi_1: \Tilde{R}\to T$ to $\psi_1^{\prime}: S_0\to T$ by regulating that $\nabla(\epsilon_0)=0$.

We consider the following commutative diagram 
   \[ \xymatrixcolsep{8pc}\xymatrix{  \Spf(T_0)=\Spf(T/(\epsilon_0\epsilon_1-qT\epsilon_0, \epsilon_0\epsilon_1-q\beta\epsilon_0))\ar@/^2pc/[rr]^{\Id}\ar@{^{(}->}[r]^{j}\ar[dr]^{f_1}
 &  \Spf(T) \ar[d]^{\psi_1\circ \psi_0^{\prime}}\ar[r]^{\iota} & \Spf(\Tilde{R})\ar@{=>}[dl]^{\gamma} \ar[d]^{\rho_X}\\ & \Spf(\Tilde{R}) \ar[r]^{\rho_X} &X^{\Prism},}\]
 where $f_1$ is the composition of $\psi_1\circ \psi_0^{\prime}$ and the closed embedding $j$, while the existence of the homotopy $\gamma$ follows from the proof of \cref{propt.key automorphism of functors AR}. Consequently, the unique isomorphism (induced by $\gamma$) $f_1^*(\rho^{*}\mathcal{E}))\cong \rho^{*}\mathcal{E}$ could be calculated via the $T_0$-linear extension of $(\Id+\epsilon_0 \partial_{\mathcal{E}}) \circ (\Id+\epsilon_1 \nabla_{\mathcal{E},1})$.

 A key observation is that as ring homomorphisms from $\Tilde{R}$ to $T_0$,
 $$(\Id+\epsilon_0 \partial_{}) \circ (\Id+\epsilon_1 \nabla_{1})=(\Id+\epsilon_1 \nabla_{1})^{p^{\alpha+1}+1}\circ (\Id+\epsilon_0 \partial_{}),$$
 which is implied by the proof of \cref{lem. AR relation between 0 and i}.

 Then arguing as in the proof for the second statement, we conclude that
 $$(\Id+\epsilon_0 \partial_{\mathcal{E}}) \circ (\Id+\epsilon_1 \nabla_{\mathcal{E},1})=(\Id+\epsilon_1 \nabla_{\mathcal{E},1})^{p^{\alpha+1}+1}\circ (\Id+\epsilon_0 \partial_{\mathcal{E}}).$$
 As the difference between the left-hand side and the right-hand side equals $$\epsilon_0\epsilon_1((\nabla_{\mathcal{E},1}+\beta q \mathscr{D}(\nabla_{\mathcal{E},1})\nabla_{\mathcal{E},1})\circ \partial_{\mathcal{E}}-(s_0(\partial_{\mathcal{E}}-s_0^{-1}\mathscr{D}(\nabla_{\mathcal{E},1})+s_1)\circ \nabla_{\mathcal{E},1})),$$ 
 hence it must vanish, which finishes the proof.
\end{proof}

In the special case that $\mathcal{E}\in \mathcal{D}(X_{}^{\Prism})^{\heartsuit}$, $\rho_X^{*}(\mathscr{E})$ is a $\Tilde{R}$-module concentrated on degree $0$ and $\partial_{\mathcal{E}}$ (resp. $\nabla_{\mathcal{E},i}$) satisfies the twisted Leibnitz rule stated in \cref{lem.leibniz} (resp. \cref{lem. relative leibniz}).

Given \cref{prop. AR two derivation vanish}, we introduce the following non-commutative ring, which will be used later for the purpose of classifying $\mathcal{D}(X^{\Prism})$.
\begin{definition}\label{absolute def.skew polynomial}
    Let $\gamma_{\Tilde{R},i}: \Tilde{R}\to \Tilde{R}$ ($0\leq i\leq m$) be the ring automorphism $\gamma_i$ defined above \cref{lem. AR relation between 0 and i}, then $\partial_{\Tilde{R}}: \Tilde{R}\to \Tilde{R}$ (resp. $\nabla_{\Tilde{R},i}: \Tilde{R}\to \Tilde{R}$ for $i\geq 1$), denoted as $\partial$ (resp. $\nabla_i$) above \cref{lem. AR relation between 0 and i}, is a $\gamma_{\Tilde{R},0}$-derivation (resp. $\gamma_{\Tilde{R},i}$-derivation) of $\Tilde{R}$. We define the \textit{Ore extension} $\Tilde{R}[\partial, \underline{\nabla}; \gamma_{\Tilde{R}}]$ to be the noncommutative ring obtained by giving the ring of polynomials $\Tilde{R}[\partial, \nabla_1,\cdots,\nabla_m]$ a new multiplication law, subject to the identity
    \begin{equation*}
        \begin{split}
           \nabla_i\cdot \nabla_j&=\nabla_j\cdot \nabla_i,  ~~~\forall ~ 1\leq i,j\leq m,
           \\ \nabla_i \cdot r&=\gamma_{\Tilde{R},i}(r)\cdot\nabla_i+\nabla_{\Tilde{R},i}(r), ~~~\forall~ r\in \Tilde{R}, 1\leq i\leq m,
           \\ \partial\cdot r&=\gamma_{\Tilde{R},0}(r)\cdot\partial+\partial_{\Tilde{R}}(r), ~~~\forall~ r\in \Tilde{R},
           \\ \partial\cdot \nabla_{i}&=s_0^{-1}(1+\beta q \mathscr{D}(\nabla_{i}))\nabla_{i}\cdot \partial+(s_0^{-1}\mathscr{D}(\nabla_{i})-s_1)\cdot \nabla_{i},~~~\forall ~ 1\leq i\leq m.
        \end{split}
    \end{equation*}
\end{definition}

We observe that $\Tilde{R}[\underline{\nabla}; \gamma_{\Tilde{R}}]$ (denoted as $\Tilde{R}[\underline{\nabla}; \gamma_{\Tilde{R}}, \nabla_{\Tilde{R}}]$ in \cref{rel def.skew polynomial}) is isomorphic to the quotient of $\Tilde{R}[\partial, \underline{\nabla}; \gamma_{\Tilde{R}}]$ by the left ideal generated by $\partial$. Combining with \cref{rem. rel cohomology of q higgs}, we obtain the following:
\begin{lemma}\label{lem. cohomology for modules over the noncommutative ring}
    There is a canonical resolution of $\Tilde{R}$ by finite free $\Tilde{R}[\partial, \underline{\nabla}; \gamma_{\Tilde{R}}]$-modules:
    \[\xymatrixcolsep{2pc}\xymatrix{\Tilde{R}[\partial, \underline{\nabla}; \gamma_{\Tilde{R}}]\ar[d]^{\times\partial^{[m]}}\ar[r]^-{}& \bigoplus_{1\leq k\leq m}\Tilde{R}[\partial, \underline{\nabla}; \gamma_{\Tilde{R}}] \ar[d]_{}^{\times\partial^{[m-1]}}\ar[r] &\cdots \ar[d]\ar[r] & \bigoplus_{1\leq k_1<\cdots< k_s\leq m}\Tilde{R}[\partial, \underline{\nabla}; \gamma_{\Tilde{R}}] \ar[d]^{\times\partial^{[m-s]}}\ar[r] &\cdots\ar[r]&\Tilde{R}[\partial, \underline{\nabla}; \gamma_{\Tilde{R}}]\ar[d]^{\times\partial}
\\ \Tilde{R}[\partial, \underline{\nabla}; \gamma_{\Tilde{R}}]\ar[r]^-{}& \bigoplus_{1\leq k\leq m}\Tilde{R}[\partial, \underline{\nabla}; \gamma_{\Tilde{R}}]\ar[r]& \cdots \ar[r] & \bigoplus_{1\leq k_1<\cdots< k_s\leq m}\Tilde{R}[\partial, \underline{\nabla}; \gamma_{\Tilde{R}}] \ar[r] &\cdots\ar[r] &\Tilde{R}[\partial, \underline{\nabla}; \gamma_{\Tilde{R}}],}\]
   where $\bigoplus_{1\leq k_1<\cdots< k_s\leq m}\Tilde{R}[\partial, \underline{\nabla}; \gamma_{\Tilde{R}}]$ in the first row (resp. the second row) lives in (cohomological) bi-degree $(s-m,-1)$ (resp. $(s-m,0)$) and the horizontal differential in the second row (resp. the first row) from $\Tilde{R}[\partial, \underline{\nabla}; \gamma_{\Tilde{R}}]$ in spot $k_1<\cdots<k_s$ to $\Tilde{R}[\partial, \underline{\nabla}; \gamma_{\Tilde{R}}]$ in spot $p_1<\cdots<p_{s+1}$ is nonzero only if $\{k_1,\cdots,k_s\}\subseteq \{p_1,\cdots,p_{s+1}\}$, in which case it sends $f\in \Tilde{R}[\partial, \underline{\nabla}; \gamma_{\Tilde{R}}]$ to $(-1)^{u-1}f\cdot\nabla_{p_u}$ (resp. $(-1)^{u-1}f\cdot(1+\beta q \mathscr{D}(\nabla_{p_u}))\nabla_{p_u}$), where $u\in \{1,\cdots,s+1\}$ is the unique integer such that $p_u\notin \{k_1,\cdots,k_s\}$. The vertical map $\partial^{[m-s]}$ is defined in the proof. 
\end{lemma}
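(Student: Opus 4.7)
The strategy is to realize the displayed bicomplex as a twisted tensor product of two resolutions we already control: horizontally, the Koszul-type resolution of $\Tilde{R}$ as a left module over $\Tilde{R}[\underline{\nabla};\gamma_{\Tilde{R}},\nabla_{\Tilde{R}}]$ supplied by \cref{rem. rel cohomology of q higgs}; vertically, the two-term complex $[\Tilde{R}[\partial;\gamma_{\Tilde{R},0},\partial_{\Tilde{R}}]\xrightarrow{\,\cdot\partial\,}\Tilde{R}[\partial;\gamma_{\Tilde{R},0},\partial_{\Tilde{R}}]]\simeq\Tilde{R}$ which is the exact analogue of \cref{rem. cohomology of the structure sheaf} for the Ore extension generated by $\partial$. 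The twisted commutation rule $\partial\cdot\nabla_i=s_0^{-1}(1+\beta q\mathscr{D}(\nabla_i))\nabla_i\cdot\partial+(s_0^{-1}\mathscr{D}(\nabla_i)-s_1)\nabla_i$ from \cref{lem. AR relation between 0 and i} is precisely what prevents the naive tensor product of these two resolutions from being a complex; it is also the reason one must replace $\nabla_{p_u}$ in the top row's horizontal differential by $(1+\beta q\mathscr{D}(\nabla_{p_u}))\nabla_{p_u}$, absorbing the leading multiplicative twist, while the additive correction $(s_0^{-1}\mathscr{D}(\nabla_i)-s_1)\nabla_i$ is to be absorbed by a careful definition of the vertical maps.

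Next I would construct the operators $\partial^{[j]}$ for $0\le j\le m$ by downward induction on $j$. At the rightmost column I set $\partial^{[0]}:=\,\cdot\partial$. For $j\ge 1$, on each summand indexed by $k_1<\cdots<k_s$ with $s=m-j$, I would define $\partial^{[j]}$ by imposing commutativity of the bicomplex square to its right: expanding $\partial^{[j-1]}$ post-composed with the bottom-row differential and subtracting the top-row differential pre-composed with $\partial^{[j]}$, the leading twist cancels by the choice of horizontal differential, and \cref{lem. AR relation between 0 and i} expresses the remainder as an explicit combination of the lower-order terms $(s_0^{-1}\mathscr{D}(\nabla_{p_u})-s_1)\nabla_{p_u}$; this combination is the desired formula for $\partial^{[j]}$. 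Well-definedness (independence of the order in which $\partial$ is commuted past the various $\nabla_{k_u}$) follows from the commutativity $\nabla_i\nabla_j=\nabla_j\nabla_i$ recorded in \cref{prop. AR two derivation vanish}, and the bicomplex identity $d_{\mathrm{vert}}\circ d_{\mathrm{hor}}=\pm\,d_{\mathrm{hor}}\circ d_{\mathrm{vert}}$ then holds by construction.

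Acyclicity of the totalization is then established via the spectral sequence of the column filtration. Since $\partial^{[j]}$ agrees with $\,\cdot\partial$ modulo the right ideal generated by $\partial$ itself, the $E_1$-page is exactly the Koszul complex on $\Tilde{R}[\underline{\nabla};\gamma_{\Tilde{R}},\nabla_{\Tilde{R}}]$ for the sequence $\{\nabla_1,\ldots,\nabla_m\}$, and the additional factors $(1+\beta q\mathscr{D}(\nabla_{p_u}))$ are units after reduction modulo the augmentation ideal (their constant term being $1$); this $E_1$ resolves $\Tilde{R}$ by \cref{rem. rel cohomology of q higgs}, so the spectral sequence degenerates and identifies the totalization with $\Tilde{R}$ concentrated in degree zero via the obvious augmentation $\Tilde{R}[\partial,\underline{\nabla};\gamma_{\Tilde{R}}]\twoheadrightarrow\Tilde{R}$ at the bottom-right corner. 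The main obstacle will be the combinatorial bookkeeping for $\partial^{[j]}$ when $j>1$: commuting $\partial$ past a block $\nabla_{k_1}\cdots\nabla_{k_s}$ by iterating \cref{lem. AR relation between 0 and i} generates a sum with nested correction terms whose assembly into commuting squares must be verified on the nose. This is purely algebraic, and the consistency is forced by the commutation relations of \cref{prop. AR two derivation vanish} together with the associativity of the Ore extension $\Tilde{R}[\partial,\underline{\nabla};\gamma_{\Tilde{R}}]$.
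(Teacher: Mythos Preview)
Your overall architecture matches the paper's: both build the displayed bicomplex and deduce acyclicity via the column-filtration spectral sequence, with $E_1$ the Koszul complex of \cref{rem. rel cohomology of q higgs} and $E_2=\Tilde{R}$. The genuine difference is in how $\partial^{[t]}$ is produced. The paper writes down a closed formula
\[
\partial^{[t]}\;=\;s_0^t\,\partial\;+\;\Bigl(\sum_{i=1}^t s_0^i\Bigr)s_1\;-\;\sum_{i=1}^t (\beta q)^{\,i-1}\,P_t^i\!\bigl(\mathscr{D}(\nabla_{p_1}),\ldots,\mathscr{D}(\nabla_{p_t})\bigr)
\]
in elementary symmetric polynomials and then verifies the square-commutativity by a direct computation whose key ingredient is the identity $s_0(1-s_1\beta q)=1$. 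Your inductive construction is cleaner conceptually, but the step you call ``well-definedness'' (independence of the order in which $\partial$ is pushed past the various $\nabla_{p_u}$) is exactly where this identity is hiding: computing the two-variable case both ways and comparing coefficients of $\mathscr{D}(\nabla_{p_u})$ forces $s_0(1-s_1\beta q)=1$. Your appeal to $\nabla_i\nabla_j=\nabla_j\nabla_i$ alone is not sufficient; what actually saves you is that the Ore extension $\Tilde{R}[\partial,\underline{\nabla};\gamma_{\Tilde{R}}]$ is an (associative) domain, so the two expressions for $\bigl(\prod_u(1+\beta q\mathscr{D}(\nabla_{p_u}))\nabla_{p_u}\bigr)\cdot\partial$ must agree, and right-cancelling $\prod_u\nabla_{p_u}$ gives uniqueness. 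If you keep the inductive approach, say this.

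One claim is wrong as stated: ``$\partial^{[j]}$ agrees with $\cdot\partial$ modulo the right ideal generated by $\partial$.'' From the explicit formula (or from your induction) one has $\partial^{[j]}=s_0^j\partial+c_j$ with $c_j\in\Tilde{R}[\underline{\nabla}]$ containing no $\partial$, so $\partial^{[j]}-\partial\notin\partial\cdot\Tilde{R}[\partial,\underline{\nabla}]$ in general. The statement you need for the spectral sequence is rather that right multiplication by $\partial^{[j]}$ is injective with cokernel canonically $\Tilde{R}[\underline{\nabla}]$; this follows from the leading-term argument in the $\partial$-degree (using that $s_0$ is a unit), not from any congruence. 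With that correction your $E_1$ identification goes through, and the remark about $(1+\beta q\mathscr{D}(\nabla_{p_u}))$ being units is unnecessary since the $d_1$ differential is induced from the bottom row alone.
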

\begin{proof}
   Given \cref{rem. rel cohomology of q higgs}, it suffices to construct $\partial^{[t]}$ to make the diagram commute and moreover, each vertical map gives a resolution for $\Tilde{R}[\underline{\nabla}; \gamma_{\Tilde{R}}]$. Let $P_{n}^i$ be the elementary symmetric polynomials in $n$ variables of degree $i$, i.e. $P_{n}^i((x_j)_{1\leq j\leq n})=P_{n}^i(x_1,\cdots,x_n)=\sum_{1\leq j_1<\cdots<j_i\leq n} \prod_{t=1}^{i} x_{j_t}$. Then we define $\partial^{[t]}$ by sending $f\in \Tilde{R}[\partial, \underline{\nabla}; \gamma_{\Tilde{R}}]$ in spot $k_1<\cdots<k_{m-t}$ to the following element in $\Tilde{R}[\partial, \underline{\nabla}; \gamma_{\Tilde{R}}]$ (in spot $k_1<\cdots<k_{m-t}$ as well):
   \begin{equation*}
       f\cdot(s_0^t\partial+(\sum_{i=1}^ts_0^i) s_1-\sum_{i=1}^t \beta^{i-1}q^{i-1}P^i_t(\mathscr{D}(\nabla_{p_{1}}),\cdots,\mathscr{D}(\nabla_{p_{t}}))),
   \end{equation*}
   where $s_0, s_1, \mathscr{D}$ are defined in \cref{lem. AR relation between 0 and i} and $\{p_{1},\cdots p_{t}\}=\{1,\cdots,m\}\backslash \{k_1,\cdots,k_{m-t}\}$. In particular, if $t=0$, then $\partial^{[0]}=\partial$. Clearly each vertical map $\partial^{[t]}$ defined in this way gives a resolution of $\Tilde{R}[\underline{\nabla}; \gamma_{\Tilde{R}}]$ as $s_0$ is a unit in $A$. To verify that it really defines a morphism of complexes, it amounts to checking that for a fixed spot $k_1<\cdots<k_{m-t}$ and any $p_j \in \{p_1,\cdots p_t\}=\{1,\cdots,m\}\backslash \{k_1,\cdots,k_{m-t}\}$, 
   \begin{equation*}
       (s_0^t\partial+(\sum_{i=1}^t (s_0^i s_1-(\beta q)^{i-1}P^i_t(\mathscr{D}(\nabla_{p_{u}}))))\cdot \nabla_{p_j}=((1-\beta q \mathscr{D}(\nabla_{p_j}))\nabla_{p_j})\cdot (s_0^{t-1}\partial+(\sum_{i=1}^{t-1} (s_0^i s_1-(\beta q)^{i-1}P^i_{t-1}(\mathscr{D}(\nabla_{p_{u}}))))),
   \end{equation*}
   here for simplicity we write $P^i_t(\mathscr{D}(\nabla_{p_{u}}))$ for $P^i_t(\mathscr{D}(\nabla_{p_{1}}),\cdots, \mathscr{D}(\nabla_{p_{t}}))$ and $P^i_{t-1}(\mathscr{D}(\nabla_{p_{u}}))$ for\\ $P^i_t(\mathscr{D}(\nabla_{p_{1}}),\cdots,\mathscr{D}(\nabla_{p_{j-1}}),\mathscr{D}(\nabla_{p_{j+1}}),\cdots \mathscr{D}(\nabla_{p_{t}}))$. But this follows from the observation that
   \begin{equation*}
       \begin{split}
           &((1+\beta q \mathscr{D}(\nabla_{p_j}))\nabla_{p_j})\cdot (s_0^{t-1}\partial+(\sum_{i=1}^{t-1} (s_0^i s_1-(\beta q)^{i-1}P^i_{t-1}(\mathscr{D}(\nabla_{p_{u}})))))
           \\=&(s_0^{t-1}(s_0\partial-\mathscr{D}(\nabla_{p_{j}})+s_0s_1)+(\sum_{i=1}^{t-1}s_0^i) s_1+\beta q\mathscr{D}(\nabla_{p_{j}})(\sum_{i=1}^{t-1}s_0^i) s_1+\mathscr{D}(\nabla_{p_{j}})-\sum_{i=1}^t \beta^{i-1}q^{i-1}P^i_t(\mathscr{D}(\nabla_{p_{u}})))\cdot \nabla_{p_j}
            \\=&(s_0^t\partial+(\sum_{i=1}^t (s_0^i s_1-(\beta q)^{i-1}P^i_t(\mathscr{D}(\nabla_{p_{u}}))))\cdot \nabla_{p_j}+(1-s_0^{t-1}+\beta q \sum_{i=1}^{t-1}s_0^is_1)\mathscr{D}(\nabla_{p_{j}})\cdot \nabla_{p_j}
             \\=&(s_0^t\partial+(\sum_{i=1}^t (s_0^i s_1-(\beta q)^{i-1}P^i_t(\mathscr{D}(\nabla_{p_{u}}))))\cdot \nabla_{p_j}.
       \end{split}
   \end{equation*}
   Here the first equality follows from \cref{lem. AR relation between 0 and i} and the commutativity of $\mathscr{D}(\nabla_{a})$ and $\mathscr{D}(\nabla_{b})$ for $a\neq b$ and the last identify follows from the vanishing of $s_0(1-s_1\beta q)-1$:
   \begin{equation*}
       \begin{split}
           s_0(1-s_1\beta q)=\frac{\gamma_0(\beta)}{\beta [k]_{q^{p^{\alpha+1}}}} (1-\beta q\cdot q^{-1}\frac{(1+q\partial(\beta))-[k]_{q^{p^{\alpha+1}}}}{\gamma_0(\beta)})=\frac{\frac{\gamma_0(\beta)}{\beta}-(1+q\partial(\beta))+[k]_{q^{p^{\alpha+1}}}}{[k]_{q^{p^{\alpha+1}}}}=1,
       \end{split}
   \end{equation*}
   where the last identity holds as $\gamma_0(\beta)=\beta+q\beta \partial(\beta)$.
\end{proof}
As a quick corollary, we obtain the following result. 
\begin{corollary}\label{absolute rem. rel cohomology of q higgs}
    Let $N$ be an object in $\mathcal{D}(\Tilde{R}[\partial, \underline{\nabla}; \gamma_{\Tilde{R}}])$, the derived category of (left) $\Tilde{R}[\underline{\nabla}; \gamma_{\Tilde{R}}, \nabla_{\Tilde{R}}]$-modules. If we further assume that $N$ is derived $(p,d)$-complete, then 
    \begin{equation*}
        \RHom_{\mathcal{D}(\Tilde{R}[\partial, \underline{\nabla}; \gamma_{\Tilde{R}}])}(\Tilde{R}, N)\simeq \fib(\mathrm{DR}(N,\nabla)\stackrel{\Tilde{\partial}^{[\bullet]}}{\longrightarrow} \mathrm{DR}(N,\nabla)),
    \end{equation*}
    where $\mathrm{DR}(N,\nabla)$ is the $q$-de Rham complex of $N$ defined in \cref{rem. rel cohomology of q higgs} and $\Tilde{\partial}^{[t]}$ acts on $N$ sitting in spot $l_1<\cdots<l_t$ via $$(\prod_{i=1}^t (1+\beta q \mathscr{D}(\nabla_{l_{i}}))^{-1})(s_0^t\partial+(\sum_{i=1}^ts_0^i) s_1-\sum_{i=1}^t \beta^{i-1}q^{i-1}P^i_t(\mathscr{D}(\nabla_{l_{1}}),\cdots,\mathscr{D}(\nabla_{l_{t}}))).$$
\end{corollary}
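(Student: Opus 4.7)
My strategy is to apply $\RHom_{\Tilde{R}[\partial, \underline{\nabla}; \gamma_{\Tilde{R}}]}(-, N)$ to the explicit two-row free resolution of $\Tilde{R}$ constructed in \cref{lem. cohomology for modules over the noncommutative ring}, and then identify the resulting bicomplex with the claimed fiber. Since every term of the resolution is $\Tilde{R}[\partial, \underline{\nabla}; \gamma_{\Tilde{R}}]$ itself and right multiplication by $r$ dualizes to left multiplication by $r$ on $N$, the $\RHom$ becomes an honest bicomplex of copies of $N$ whose totalization computes $\RHom(\Tilde{R}, N)$.

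First I would relabel each spot $k_1 < \cdots < k_s$ by its complement $l_1 < \cdots < l_t$ (with $t = m - s$), so that after dualization the corresponding copy of $N$ sits in horizontal degree $t$. Under this relabeling the dualized second row is literally $\mathrm{DR}(N, \nabla)$ from \cref{rem. rel cohomology of q higgs}: the horizontal differential from spot $l_1 < \cdots < l_{t-1}$ to spot $l_1 < \cdots < l_t$ inserts the new index $l_u$ with sign $(-1)^{u-1}$ and is left multiplication by $\nabla_{l_u}$. The dualized first row, placed in vertical degree $+1$, has the same shape but with each $\nabla_{l_u}$ replaced by $(1 + \beta q \mathscr{D}(\nabla_{l_u})) \nabla_{l_u}$; call this $\mathrm{DR}'(N, \nabla)$. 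The vertical differential points upward from $\mathrm{DR}(N, \nabla)$ to $\mathrm{DR}'(N, \nabla)$ and acts at spot $l_1 < \cdots < l_t$ as left multiplication by $\partial^{[t]}$.

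Next I would identify $\mathrm{DR}'(N, \nabla)$ with $\mathrm{DR}(N, \nabla)$ by rescaling the copy of $N$ at spot $l_1 < \cdots < l_t$ of the top row via the isomorphism $u_\sigma := \prod_{i=1}^t (1 + \beta q \mathscr{D}(\nabla_{l_i}))$ from new top to old top. This is well defined because $\beta \in \sqrt{(p,d)}$ (indeed $\beta^{p-1} \equiv d \pmod p$), so the derived $(p,d)$-completeness of $N$ makes the geometric series for $u_\sigma^{-1} = \prod (1 + \beta q \mathscr{D}(\nabla_{l_i}))^{-1}$ converge, using that $\mathscr{D}(\nabla_{l_i})$ commutes with $\beta q$. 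Moreover, since $\mathscr{D}(\nabla_{l_i})$ is a polynomial in $T_{l_i}, \beta, q, \nabla_{l_i}$ — all fixed by $\gamma_{l_j}$ and killed by $\nabla_{l_j}$ for $j \neq i$ — the various factors commute pairwise and with $\nabla_{l_j}$ for $j \neq i$. A direct check using $u_\sigma = (1 + \beta q \mathscr{D}(\nabla_{l_u})) \cdot u_{\sigma \setminus \{l_u\}}$ shows that after rescaling the top row's horizontal differentials become the bare $\nabla_{l_u}$, while the vertical becomes $u_\sigma^{-1} \circ \partial^{[t]} = \Tilde{\partial}^{[t]}$. The totalization of the resulting two-row bicomplex is then $\fib(\mathrm{DR}(N,\nabla) \xrightarrow{\Tilde{\partial}^{[\bullet]}} \mathrm{DR}(N,\nabla))$.

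The main subtlety I would need to track is the placement of the rescaling factors relative to $\partial^{[t]}$: since $\partial$ does \emph{not} commute with the $\mathscr{D}(\nabla_{l_i})$'s (per the Ore relation in \cref{lem. AR relation between 0 and i}), it matters that $\prod (1 + \beta q \mathscr{D}(\nabla_{l_i}))^{-1}$ ends up on the \emph{left} of $\partial^{[t]}$. This is dictated by the fact that the dualized vertical map goes from the unmodified bottom row up to the rescaled top row, so the composite is $u_\sigma^{-1} \circ \partial^{[t]}$ rather than $\partial^{[t]} \circ u_\sigma^{-1}$, matching the formula in the corollary.
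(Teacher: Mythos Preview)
Your approach is essentially identical to the paper's: apply $\RHom(-,N)$ to the free resolution of \cref{lem. cohomology for modules over the noncommutative ring} to obtain $\fib(\mathrm{DR}(N,\nabla)\xrightarrow{\partial^{[\bullet]}}\mathrm{DR}(N,\nabla'))$, then use $(p,d)$-completeness to invert each $1+\beta q\mathscr{D}(\nabla_{l_i})$ and rescale the target back to $\mathrm{DR}(N,\nabla)$, turning $\partial^{[\bullet]}$ into $\Tilde{\partial}^{[\bullet]}$. Your write-up simply makes explicit the relabeling by complements, the commutation checks, and the left/right ordering of the rescaling factor that the paper leaves implicit.
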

\begin{proof}
For any $N\in \mathcal{D}(\Tilde{R}[\partial, \underline{\nabla}; \gamma_{\Tilde{R}}])$, \cref{lem. cohomology for modules over the noncommutative ring} implies that \begin{equation*}
        \RHom_{\mathcal{D}(\Tilde{R}[\partial, \underline{\nabla}; \gamma_{\Tilde{R}}])}(\Tilde{R}, N)\simeq \fib(\mathrm{DR}(N,\nabla)\stackrel{\partial^{[\bullet]}}{\longrightarrow} \mathrm{DR}(N,\nabla^{\prime}))
    \end{equation*}
    with $\mathrm{DR}(N,\nabla^{\prime}):=[N\xrightarrow{\nabla_N^{\prime}:=\sum (1+\beta q \mathscr{D}(\nabla_{i}))\nabla_i\otimes dT_i} N\otimes_{\Tilde{R}} \Omega_{\Tilde{R}/A}\xrightarrow{\nabla^{\prime}_{N}\wedge \nabla^{\prime}_{N}} \cdots \to N\otimes_{\Tilde{R}} \Omega_{\Tilde{R}/A}^{m}\to 
        0]$.

    If $N$ is further $(p,d)$-complete, then $1+\beta q \mathscr{D}(\nabla_{i})$ is invertible on $N$ (as $\beta$ is topologically nilpotent with respect to the $(p,d)$-topology), hence we could replace $\mathrm{DR}(N,\nabla^{\prime})$ (resp. $\partial^{[\bullet]}$) with $\mathrm{DR} (N,\nabla^{})$ (resp. $\Tilde{\partial}^{[\bullet]}$) to make the diagram commute.
\end{proof}

\begin{remark}\label{rem. AR compare notation with MW}
 Heuristically one might want to view an object in $\mathcal{D}(\Tilde{R}[\partial, \underline{\nabla}; \gamma_{\Tilde{R}}])$ as an ``enhanced $q$-Higgs module", where such a notation is motivated by \cite[Definition 4.1]{min2022p}\footnote{Notice that $d^{\prime}(q)$ in our setting plays the role of $E^{\prime}(\pi)$ in \cite[Definition 4.1]{min2022p}} and the following reinterpretation of the last condition in \cref{absolute def.skew polynomial} when $d=0$, which is a twisted version of the requirement that $\phi\circ \theta-\theta\circ \phi=E^{\prime}(\pi)\theta$ given in \cite[Definition 4.1]{min2022p}.
\end{remark}
\begin{lemma}\label{lem. reinterpret the last condition}
    If we work with $\Tilde{R}/d[\partial, \underline{\nabla}; \gamma_{\Tilde{R}}]$ instead, then the last condition in \cref{absolute def.skew polynomial} is equivalent to that 
    \begin{equation*}
        (1+q\beta \mathscr{D}(\nabla_{i}))\nabla_{i}\cdot \partial_{}=(\frac{1}{p^{\alpha+1}+1}\partial_{}-\mathscr{D}(\nabla_{i})-\frac{e}{p^{\alpha+1}+1})\cdot \nabla_{i}
    \end{equation*}
    for which $e=d^{\prime}(q)\in \mathcal{O}_K$ and $\mathscr{D}(\nabla_{i})=\frac{1}{q(p^{\alpha+1}+1)}\sum_{j=2}^{p^{\alpha+1}+1} \binom{p^{\alpha+1}+1}{j} T_i^{j-1}\beta^{j-2}\nabla_{i}^{j-1}$.
\end{lemma}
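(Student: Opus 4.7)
The plan is to deduce the reformulated identity directly from the fourth multiplication relation in \cref{absolute def.skew polynomial}, by rewriting that relation and then reducing the scalar coefficients modulo $d$. First I would rearrange the relation
\[\partial\cdot\nabla_{i}=s_0^{-1}(1+\beta q\mathscr{D}(\nabla_{i}))\nabla_{i}\cdot\partial+(s_0^{-1}\mathscr{D}(\nabla_{i})-s_1)\cdot\nabla_{i}\]
into the equivalent form
\[(1+\beta q\mathscr{D}(\nabla_{i}))\nabla_{i}\cdot\partial=(s_0\partial-\mathscr{D}(\nabla_{i})+s_0 s_1)\cdot\nabla_{i}.\]
Matching this with the claimed identity reduces the lemma to three mod-$d$ computations: (a) $s_0\equiv \tfrac{1}{p^{\alpha+1}+1}$; (b) $s_0s_1\equiv -\tfrac{e}{p^{\alpha+1}+1}$; (c) the expression for $\mathscr{D}(\nabla_{i})$ given in the proof of \cref{lem. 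AR relation between 0 and i} collapses modulo $d$ to $\tfrac{1}{q(p^{\alpha+1}+1)}\sum_{j=2}^{p^{\alpha+1}+1}\binom{p^{\alpha+1}+1}{j}T_i^{j-1}\beta^{j-2}\nabla_{i}^{j-1}$.

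All three reductions rest on the single observation that $q^{p^{\alpha+1}}\equiv 1\pmod d$, which follows from the factorization $q^{p^{\alpha+1}}-1=(q^{p^\alpha}-1)\cdot d$ coming from $d=[p]_{q^{p^\alpha}}$. For (a), the short calculation $\gamma_0(\beta)/\beta=[p^{\alpha+1}+1]_{q^{p^\alpha}}$, together with the grouping of the $p^{\alpha+1}+1$ terms into $p^\alpha$ copies of $[p]_{q^{p^\alpha}}=d$ plus a trailing $q^{p^{\alpha+1}}$, gives $\gamma_0(\beta)/\beta\equiv 1\pmod d$; combined with $[k]_{q^{p^{\alpha+1}}}\equiv k=p^{\alpha+1}+1$, this yields $s_0\equiv 1/(p^{\alpha+1}+1)$. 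For (b), using $1+q\partial(\beta)=\gamma_0(\beta)/\beta$ one simplifies
\[s_0s_1=\frac{\gamma_0(\beta)/\beta-[k]_{q^{p^{\alpha+1}}}}{q\beta\,[k]_{q^{p^{\alpha+1}}}}\equiv\frac{-p^{\alpha+1}}{q\beta(p^{\alpha+1}+1)}\pmod d,\]
and invoking \cref{lem. calculate e} (rewritten under the Section 5 convention $\beta=q^{p^\alpha}-1$, so that the original identity $e\beta=p^{\alpha+1}$ reads $eq\beta=p^{\alpha+1}$ in $\mathcal{O}_K$) turns the right-hand side into $-e/(p^{\alpha+1}+1)$.

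For (c), the coefficients $a_j=a_{k,j}$ appearing in $\mathscr{D}(\nabla_{i})$ are controlled by the recursion $a_{k+1,j}=a_{k,j}(1+\beta d[j]_{q^{p^{\alpha+1}}})+a_{k,j-1}$ established in the proof of \cref{lem. AR relation between 0 and i}, with $a_{1,1}=1$ and $a_{k,0}=1$. Since $\beta d=q^{p^{\alpha+1}}-1\equiv 0\pmod d$, this recursion collapses modulo $d$ to Pascal's rule $a_{k+1,j}\equiv a_{k,j}+a_{k,j-1}$, so $a_{k,j}\equiv \binom{k}{j}$ by induction. Combined with $q^{j(j-1)p^{\alpha+1}/2}\equiv 1$ and $[k]_{q^{p^{\alpha+1}}}\equiv k$, substituting these values into the defining formula for $\mathscr{D}(\nabla_{i})$ produces exactly the concrete expression in the statement.

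The only delicate point is the consistent bookkeeping between the Section 3 convention (where $\beta$ denotes $q(q^{p^\alpha}-1)$) and the Section 5 convention (where $\beta$ denotes $q^{p^\alpha}-1$); in particular one must remember that \cref{lem. calculate e} must be read as $eq\beta=p^{\alpha+1}$ in $\mathcal{O}_K$ under the Section 5 convention. Once this is tracked correctly the verification is a purely algebraic simplification governed by the single congruence $q^{p^{\alpha+1}}\equiv 1\pmod d$, and no further obstacle arises.
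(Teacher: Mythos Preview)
Your proposal is correct and follows essentially the same route as the paper's proof: both reduce the coefficients $s_0$, $s_1$, and $a_{k,j}$ modulo $d$ using the single congruence $q^{p^{\alpha+1}}\equiv 1$, obtain $a_{k,j}\equiv\binom{k}{j}$ from the collapsed Pascal recursion, and invoke \cref{lem. calculate e} with the Section~5 convention $\beta=q^{p^\alpha}-1$ to identify $p^{\alpha+1}/(q\beta)$ with $e$. One cosmetic slip: the ``trailing'' term in your grouping of $[k]_{q^{p^\alpha}}$ is $q^{p^{2\alpha+1}}$ rather than $q^{p^{\alpha+1}}$, but since both are $\equiv 1\pmod d$ this does not affect the argument.
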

\begin{proof}
First we notice that after modulo $d$, $q^{p^{\alpha+1}}$ is identified with $1$, hence the inductive formula for $a_{k,j}$ in \cref{lem. AR relation between 0 and i} turns into that $a_{k+1,j}=a_{k,j}+a_{k,j-1}$ with $a_{k,1}=k$, hence $a_{k,j}=\binom{k}{j}$. Applying it to $k=p^{\alpha+1}+1$, we get the desired formula for $\mathscr{D}(\nabla_{i})$. Then by \cref{lem. calculate e}, $e q\beta=p^{\alpha+1}$ (be aware that $\beta$ in \cref{lem. calculate e} is $q\beta$ here) in $\mathcal{O}_K$, hence
\begin{equation*}
    q^{-1}\frac{(1+q\partial(\beta))-[k]_{q^{p^{\alpha+1}}}}{\beta})=\frac{1-(p^{\alpha+1}+1)}{q\beta}=-e.
\end{equation*}
The desired formula then follows by \cref{lem. AR relation between 0 and i}.
\end{proof}
\begin{remark}\label{rem. AR relation modulo q-1 and d}
    The above calculation implies that if we further modulo $q-1$, i.e. working with $\Tilde{R}/(p,d)[\partial, \underline{\nabla}; \gamma_{\Tilde{R}}]$, then the above relation reduces to that 
    \[\nabla\cdot \partial=\partial\cdot \nabla\]
    as $\beta=0$ and $\binom{p^{\alpha+1}+1}{2}=0$ in $A/(d,q-1)$, which implies the vanishing of $\mathscr{D}(\nabla)$.
\end{remark}

Summarizing the discussion so far, we get the following analog of \cref{prop. relative induce functor}. 
\begin{proposition}\label{prop. AR induce functor}
    For $n\in \mathbb{N}\cup \{\infty\}$, the pullback along $\rho_X: \Spf(\Tilde{R})\to X^{\Prism}$ induces a functor 
  \begin{equation*}
  \begin{split}
      \mathcal{D}(X^{\Prism}) &\to \mathcal{D}(\Tilde{R}/d^n[\partial, \underline{\nabla}; \gamma_{\Tilde{R}}])\\
      \mathcal{E} &\mapsto (\rho_X^{*}\mathcal{E}, \partial_{\mathcal{E}}, \nabla_{\mathcal{E}}),
  \end{split}
  \end{equation*}
  which will be denoted as $\beta_n^+$ later.
\end{proposition}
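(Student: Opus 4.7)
The plan is to follow the pattern established by \cref{prop. induce functor} and \cref{prop. relative induce functor}, gluing the arithmetic ingredient $\partial$ from Section 3 with the geometric ingredient $\nabla$ from Section 4. For $\mathcal{E}\in \mathcal{D}(X_n^{\Prism})^{\heartsuit}$, the pullback $\rho_X^*\mathcal{E}$ is a discrete $\Tilde{R}/d^n$-module; the homotopy $\gamma_{b,c_{\psi}}$ supplied by \cref{propt.key automorphism of functors AR} equips it with the pair of operators $(\partial_{\mathcal{E}}, \nabla_{\mathcal{E}})$, read off from the decomposition $\gamma_{b,c_{\psi}} = \mathrm{Id} + \epsilon_0 \partial_{\mathcal{E}} + \sum_{i=1}^m \epsilon_i \nabla_{\mathcal{E},i}$ along the direct sum splitting of $S$ fixed in \cref{lem. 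AR twist S in the relative case}.

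Next I would verify that the triple $(\rho_X^*\mathcal{E}, \partial_{\mathcal{E}}, \nabla_{\mathcal{E}})$ assembles into a left module over the Ore extension $\Tilde{R}/d^n[\partial, \underline{\nabla}; \gamma_{\Tilde{R}}]$ of \cref{absolute def.skew polynomial}. The individual twisted Leibnitz rules $\partial \cdot r = \gamma_{\Tilde{R},0}(r)\cdot \partial + \partial_{\Tilde{R}}(r)$ and $\nabla_i \cdot r = \gamma_{\Tilde{R},i}(r)\cdot \nabla_i + \nabla_{\Tilde{R},i}(r)$ are immediate from the respective analogues of \cref{lem.leibniz} and \cref{lem. relative leibniz}, applied factor by factor to the restrictions of $\gamma_{b,c_{\psi}}$ along the coordinate inclusions $S_i \hookrightarrow S$. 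The remaining relations $\nabla_i\nabla_j = \nabla_j\nabla_i$ and $\partial \cdot \nabla_i = s_0^{-1}(1+\beta q\mathscr{D}(\nabla_i))\nabla_i\cdot \partial + (s_0^{-1}\mathscr{D}(\nabla_i) - s_1)\cdot \nabla_i$ are exactly the content of \cref{prop. AR two derivation vanish}. Together these match the defining relations of \cref{absolute def.skew polynomial}, producing a functor from $\mathcal{D}(X_n^{\Prism})^{\heartsuit}$ to the abelian category of left $\Tilde{R}/d^n[\partial, \underline{\nabla}; \gamma_{\Tilde{R}}]$-modules, and hence a derived functor $\mathcal{D}(\mathcal{D}(X_n^{\Prism})^{\heartsuit}) \to \mathcal{D}(\Tilde{R}/d^n[\partial, \underline{\nabla}; \gamma_{\Tilde{R}}])$.

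To promote this to a functor defined on all of $\mathcal{D}(X_n^{\Prism})$, I would prove the absolute analogue of \cref{prop. heart generator}: the natural functor $\mathcal{D}(\mathcal{D}(X_n^{\Prism})^{\heartsuit}) \to \mathcal{D}(X_n^{\Prism})$ is an equivalence. Following the method of \cref{prop. heart generator}, I would equip $\mathcal{D}(X_n^{\Prism})$ with a $t$-structure via the faithfully flat cover $\rho_X\colon \Spf(\Tilde{R}/d^n) \to X_n^{\Prism}$, and then d\'evissage in $n$ reduces to $n=1$. For the Hodge-Tate stratum, pullback identifies $\mathcal{D}(X^{\HT})$ with modules over an appropriate Sen-type operator algebra (as in the Hodge-Tate input to \cref{thmt. lci main classification} via \cite[Theorem 6.3]{anschutz2023hodge}), which immediately gives the heart equivalence.

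The main technical obstacle is this last step: guaranteeing that $\mathcal{D}(X_n^{\Prism})$ is equivalent to the derived category of its heart in the absolute setting. All the needed ingredients already appear in the proofs of \cref{prop. heart generator} and \cref{prop. relative induce functor}, and the simultaneous presence of $\partial$ and $\nabla$ adds no genuinely new difficulty beyond the careful bookkeeping already encoded in \cref{prop. AR two derivation vanish}; once the heart equivalence is in hand, the construction of $\beta_n^+$ on hearts extends canonically to the derived level.
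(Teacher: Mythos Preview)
Your proposal is correct and follows essentially the same approach as the paper, which simply presents this proposition as ``summarizing the discussion so far'' and an analog of \cref{prop. relative induce functor}. You have correctly unpacked what that summary entails: the Leibniz rules from the $\psi_i$-linearity of $\gamma_{b,c_{\psi}}$, the cross-relations from \cref{prop. AR two derivation vanish}, and the heart equivalence as in \cref{prop. heart generator}, with the Hodge-Tate base case now supplied by the description $X^{\HT}\simeq BG_X$ of \cref{lem. AR HT stack geometric structure} (rather than the purely relative input \cite[Theorem 6.3]{anschutz2023hodge} you cite, though the argument shape is the same).
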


By combining \cref{prop. ht case fiber seq} and \cref{prop. relative ht case fiber seq}, we get the following result, which is a combination of \cref{propt. relative sen calculate cohomology} and \cref{propt. sen calculate cohomology}.
\begin{proposition}\label{prop. AR fully faithful}
     Assume that $p>2$ or $\alpha>0$. Let $n\in \mathbb{N}\cup \{\infty\}$. For any $\mathcal{E}\in \mathcal{D}(X_n^{\Prism})$, the pullback functor $\beta_n^+: \mathcal{D}(X^{\Prism}) \to \mathcal{D}(\Tilde{R}/d^n[\partial, \underline{\nabla}; \gamma_{\Tilde{R}}])$ constructed in \cref{prop. AR induce functor} is fully faithful. Consequently,
     \begin{equation*}
        \mathrm{R} \Gamma(X_{n}^{\Prism}, \mathcal{E})\xrightarrow{\simeq} \fib(\mathrm{DR}(\rho_X^*\mathcal{E},\nabla_{\mathcal{E}})\xrightarrow{\Tilde{\partial}_{\mathcal{E}}^{[\bullet]}} \mathrm{DR}(\rho_X^*\mathcal{E},\nabla_{\mathcal{E}})),
    \end{equation*}
    where $\Tilde{\partial}_{\mathcal{E}}^{[\bullet]}$ is constructed in \cref{absolute rem. rel cohomology of q higgs}.
\end{proposition}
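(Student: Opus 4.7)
The plan is to derive the statement from the two partial cohomology calculations already established, exploiting the factorization $\rho_X = \rho_A \circ \rho$ of the cover through the relative prismatization $X_{/A,n}^{\Prism}$. In broad strokes, the arithmetic input is \cref{propt. sen calculate cohomology} and the geometric input is \cref{propt. relative sen calculate cohomology}; the substance of the argument is checking that the two combine compatibly into the operator $\Tilde{\partial}^{[\bullet]}$ from \cref{absolute rem. rel cohomology of q higgs}, after which full faithfulness is formal.

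First I would establish the cohomology formula. Let $\sigma\colon X_n^{\Prism} \to \Spf(\mathcal{O}_K)^{\Prism}_n$ be the structure morphism and let $\tau\colon X_{/A,n}^{\Prism} \to \Spf(A/d^n)$ be its base change along $\rho_A$. For $\mathcal{E} \in \mathcal{D}(X_n^{\Prism})$ I would rewrite $\mathrm{R}\Gamma(X_n^{\Prism}, \mathcal{E}) = \mathrm{R}\Gamma(\Spf(\mathcal{O}_K)^{\Prism}_n, R\sigma_*\mathcal{E})$ and apply \cref{propt. sen calculate cohomology} to the pushforward $R\sigma_*\mathcal{E}$, producing a fiber sequence
\[
\mathrm{R}\Gamma(X_n^{\Prism}, \mathcal{E}) \longrightarrow \rho_A^* R\sigma_*\mathcal{E} \xrightarrow{\;\partial\;} \rho_A^* R\sigma_*\mathcal{E}.
\]
Flat base change along the outer pullback square identifies $\rho_A^* R\sigma_*\mathcal{E}$ with $R\tau_*(\rho_A^*\mathcal{E}) = \mathrm{R}\Gamma(X_{/A,n}^{\Prism}, \mathcal{E}|_{X_{/A,n}^{\Prism}})$, which by \cref{propt. relative sen calculate cohomology} is the $q$-de Rham complex $\mathrm{DR}(\rho_X^*\mathcal{E}, \nabla_{\mathcal{E}})$. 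Combining,
\[
\mathrm{R}\Gamma(X_n^{\Prism}, \mathcal{E}) \simeq \fib\bigl(\mathrm{DR}(\rho_X^*\mathcal{E}, \nabla_{\mathcal{E}}) \xrightarrow{\;\partial_{\mathcal{E}}\;} \mathrm{DR}(\rho_X^*\mathcal{E}, \nabla_{\mathcal{E}})\bigr).
\]

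The main obstacle will be the next step: identifying the arithmetic operator that $\partial_{\mathcal{E}}$ induces on the $q$-de Rham complex with $\Tilde{\partial}_{\mathcal{E}}^{[\bullet]}$ from \cref{absolute rem. rel cohomology of q higgs}. The actions of $\partial_{\mathcal{E}}$ and of the wedge differentials $\nabla_{\mathcal{E},i}$ no longer commute; instead they obey the twisted relation of \cref{lem. AR relation between 0 and i} (equivalently the second identity of \cref{prop. AR two derivation vanish}). Passing $\partial_{\mathcal{E}}$ through each $\nabla_{\mathcal{E},l_i}$ in the Koszul-type pattern used in the proof of \cref{lem. cohomology for modules over the noncommutative ring} forces the accumulation of precisely the factors $s_0$, $s_1$, and the elementary symmetric expressions $P^i_t(\mathscr{D}(\nabla_{\mathcal{E},l_\bullet}))$; after inverting the units $1+\beta q\mathscr{D}(\nabla_{\mathcal{E},l_i})$, legitimate by the $(p,d)$-completeness of $\rho_X^*\mathcal{E}$, one recovers the formula for $\Tilde{\partial}_{\mathcal{E}}^{[t]}$ at wedge-degree $t$. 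This step is bookkeeping in character but is the conceptual core.

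Finally, full faithfulness would follow from the cohomology formula by a standard reduction to generators, as in the proofs of \cref{thmt.main classification} and \cref{thmt. relative main classification}. By \cref{absolute rem. rel cohomology of q higgs}, the Hom group $\RHom_{\Tilde{R}/d^n[\partial, \underline{\nabla}; \gamma_{\Tilde{R}}]}(\Tilde{R}/d^n, \beta_n^+(\mathcal{F}))$ is computed by the very same fiber expression, so the cohomology formula already matches the natural map $\mathrm{R}\Gamma(X_n^{\Prism}, \mathcal{F}) \to \RHom(\Tilde{R}/d^n, \beta_n^+(\mathcal{F}))$. Combining the arithmetic generation of $\mathcal{D}(\Spf(\mathcal{O}_K)^{\Prism}_n)$ by the $\mathcal{I}^k$ from \cref{propt.generation} with the relative generation by the structure sheaf from \cref{propt. relative generation} (applied fiberwise and pushed forward along $\rho_A$), one reduces the general full-faithfulness statement to this case and concludes.
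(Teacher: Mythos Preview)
Your proposal is correct and takes essentially the same approach as the paper: push forward along the structure map to $\Spf(\mathcal{O}_K)_n^{\Prism}$, apply the arithmetic formula (\cref{propt. sen calculate cohomology}) to the pushforward, base-change to identify the result with the relative cohomology over $A/d^n$, apply the relative formula (\cref{propt. relative sen calculate cohomology}) to obtain the $q$-de Rham complex, and then match the induced operator with $\Tilde{\partial}^{[\bullet]}$ via the commutation relations of \cref{lem. AR relation between 0 and i}. The only organizational difference is that the paper first reduces to $n=1$ by d\'evissage and there invokes affineness of the structure map $X^{\HT}\to\Spf(\mathcal{O}_K)^{\HT}$, whereas you work directly at level $n$ using flat base change along $\rho_A$.
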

\begin{proof}
    Arguing as that in \cref{propt. relative sen calculate cohomology}, by standard d\'evissage, it suffices to prove the statement for $n=1$. By restricting the diagram shown up at the beginning of this section to the Hodge-Tate locus, we get the following commutative diagram
    \[ \xymatrixcolsep{5pc}\xymatrix{  \Spf(R)\ar@/^2pc/[rr]^{\rho_X}\ar[r]^{\rho}
 &  X_{/A}^{\HT} \ar[d]^{f^{\prime}}\ar[r]^{\rho_A} & X^{\HT} \ar[d]^{f}\\ & \Spf(\mathcal{O}_K) \ar[r]^{\rho_A} &\Spf(\mathcal{O}_K)^{\HT},}\]
 where $f$ is the structure morphism. Moreover, $f$ is affine by \cite[Lemma 2.10]{anschutz2023hodge}, hence $f_*=Rf_*$. Consequently, for $\mathcal{E}\in \mathcal{D}(X^{\HT})$,
 \begin{equation}\label{equa. abc cohomology}
     \mathrm{R} \Gamma(X^{\HT}, \mathcal{E})= \mathrm{R} \Gamma(\Spf(\mathcal{O}_K)^{\HT}, f_*\mathcal{E})\cong\fib(\rho_A^*f_*\mathcal{E}\xrightarrow{\partial_{f_{_*}\mathcal{E}}} \rho_A^*f_*\mathcal{E}),
 \end{equation}
 where the last equality is due to \cref{prop. ht case fiber seq}. On the other hand, we notice that $$\rho_A^*f_*\mathcal{E}\cong f^{\prime}_*\rho_A^*\mathcal{E}=\mathrm{R} \Gamma(X_{/A}^{\HT}, \rho_A^*\mathcal{E})\cong \RHom_{\mathcal{D}(\Tilde{R}[\underline{\nabla}; \gamma_{\Tilde{R}}])}(\Tilde{R}, \rho_X^*\mathcal{E})\cong \mathrm{DR}(\rho_X^*\mathcal{E},\nabla_{\mathcal{E}}).$$
Here the second to last isomorphism follows from \cref{thmt. relative main classification} and the last isomorphism is due to \cref{rem. rel cohomology of q higgs}. Moreover, by the proof of \cref{absolute rem. rel cohomology of q higgs} and unwinding the construction of $\partial_{f_{_*}\mathcal{E}}$ and $\partial_{\mathcal{E}}$,  we see that under the above isomorphism $\partial_{f_{_*}\mathcal{E}}$ could be identified with $\Tilde{\partial}^{[\bullet]}_{\mathcal{E}}$ on $\mathrm{DR}(\rho_X^*\mathcal{E},\nabla_{\mathcal{E}})$. Combining with \cref{equa. abc cohomology}, we conclude that

\begin{equation*}
        \mathrm{R} \Gamma(X_{}^{\HT}, \mathcal{E})\xrightarrow{\simeq} \fib(\mathrm{DR}(\rho_X^*\mathcal{E},\nabla_{\mathcal{E}})\xrightarrow{\Tilde{\partial}_{\mathcal{E}}^{[\bullet]}} \mathrm{DR}(\rho_X^*\mathcal{E},\nabla_{\mathcal{E}})),
\end{equation*}
hence finish the proof by \cref{absolute rem. rel cohomology of q higgs}.
\end{proof}

Next we proceed to show that $\mathcal{D}(X_{}^{\Prism})$ is generated under shifts and colimits by the $\mathcal{I}^k$, $k\in \mathbb{Z}$, generalizing \cite[Corollary 3.5.16]{bhatt2022absolute}. A key ingredient is the geometry of $X^{\HT}$ studied in \cite{bhatt2022prismatization} and \cite{anschutz2023hodge}, which is summarized as \cite[Corollary 3.11]{anschutz2023hodge} in our setting.
\begin{lemma}[{\cite[Corollary 3.11]{anschutz2023hodge}}]\label{lem. AR HT stack geometric structure}
    Let $G_X$ be the group sheaf of automorphisms $\mathrm{Aut}(\rho_X)$ of $\rho_X: X=\Spf(R)\to X^{\HT}$, the restriction of $\rho_X$ to the Hodge-Tate locus. Then 
    \[G_X \cong (\mathbb{G}_a^{\sharp})^{m+1}=\prod_{i=0}^m \mathbb{G}_a^{\sharp}.\]
    Moreover, the group structure on $G_X$ transfers through this to the map
    \begin{equation*}
        \prod_{i=0}^m \mathbb{G}_a^{\sharp} \times \prod_{i=0}^m \mathbb{G}_a^{\sharp},\quad ((c_i)_{i=0,\cdots,m},(a_i)_{i=0,\cdots,m})\mapsto (a_k+c_k(1+ea_0))_{k=0,\cdots,m}
    \end{equation*}
    with $e$ defined to be the image of $d^{\prime}(q)$ in $\mathcal{O}_K$ as usual.
\end{lemma}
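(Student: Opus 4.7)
The plan is to decompose $G_X$ via the composition $X \xrightarrow{\rho} X_{/A}^{\HT} \xrightarrow{\rho_A} X^{\HT}$, which realizes $G_X$ as an extension of the arithmetic automorphism sheaf $\mathrm{Aut}(\rho_A)|_X$ (pulled back from $\Spf(\mathcal{O}_K)^{\HT}$) by the geometric automorphism sheaf $\mathrm{Aut}(\rho) \cong T_{X/\bar A}^{\sharp}\{1\}$. Both factors are already understood individually; the content of the lemma is to identify the extension explicitly and read off the multiplication.

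First I would recall from the discussion after \cref{lem. calculate e} that $\mathrm{Aut}(\rho_A) \cong G_\pi = \{(t,a) \in \mathbb{G}_m^\sharp \ltimes \mathbb{G}_a^\sharp \mid t - 1 = e\cdot a\}$, which is isomorphic to $\mathbb{G}_a^\sharp$ via projection onto the second factor (so that $t = 1 + ea$). Unwinding the semidirect product law in $\mathbb{G}_m^\sharp \ltimes \mathbb{G}_a^\sharp$ and using $t_i = 1 + ea_i$, the multiplication on $G_\pi$ takes the form $(c_0)(a_0) = a_0 + c_0(1 + ea_0)$; this already matches the $k=0$ component of the stated formula. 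Second, by \cite[Proposition 5.12]{bhatt2022prismatization}, $\mathrm{Aut}(\rho) \cong T_{X/\bar A}^\sharp\{1\}$, and the chosen basis $dT_1,\ldots,dT_m$ of $\Omega_{R/\bar A}$ together with our fixed trivialization of $\mathcal{O}\{1\}$ (via $d$) identifies this with $(\mathbb{G}_a^\sharp)^m$, on which the group law is simple addition $a_k \mapsto a_k + c_k$, as already used in the discussion after \cref{propt. relative factor through fiber}.

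The third and subtlest step is to compute the semidirect product structure, i.e.\ how the arithmetic $\mathbb{G}_a^\sharp$ (with coordinate $a_0$) acts by conjugation on the geometric $(\mathbb{G}_a^\sharp)^m$. The key observation is that the $\mathbb{G}_m^\sharp$-factor $t = 1 + ea_0$ of the arithmetic automorphism records its action on the Breuil-Kisin twist $\mathcal{O}\{1\}$, and since $T_{X/\bar A}^\sharp\{1\}$ is itself twisted by $\mathcal{O}\{1\}$, conjugation scales each geometric coordinate by $t$. Concretely, I would carry this out by expressing both factors on the common covering $\Spf(S)$ of \cref{propt.key automorphism of functors AR} and tracking how the element $b \in W(S)^\times$ from \cref{lemT.construct b} transports the geometric class $\epsilon dT_i \in \mathbb{G}_a^\sharp(S/d)$ to $(1+e\epsilon_0)\epsilon dT_i = (1 + ea_0) a_i$. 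Combining this twist with the individual group laws gives the formula
\[((c_i)_i, (a_i)_i) \longmapsto (a_k + c_k(1 + ea_0))_{k=0,\ldots,m}.\]

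The main obstacle is the third step, where one must correctly pin down the twist by $1 + ea_0$: the calculation is clean once one realizes that the \emph{same} factor $1 + e\epsilon$ that appeared in \cref{example.action on generators} governing the action of $\partial_{\mathcal{I}}$ also controls the conjugation action here, because both trace back to the $\mathbb{G}_m^\sharp$-component of $G_\pi$. If this identification is granted, the rest is a formal manipulation of the semidirect product law. Alternatively, one may simply appeal to \cite[Corollary 3.11]{anschutz2023hodge}, whose hypotheses — a smooth $p$-adic formal scheme over a cyclotomic $\mathcal{O}_K$ with a chosen toric chart — match ours exactly.
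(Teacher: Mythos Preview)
Your proposal is correct, but it is substantially more detailed than what the paper actually does. The paper's own proof consists of a single sentence: it cites \cite[Corollary 3.11]{anschutz2023hodge} directly and remarks that, by unwinding the constructions there, $E'(\pi)$ may be replaced by $d'(q)$ when one works with the $q$-prism instead of the Breuil--Kisin prism. This is precisely the ``alternative'' you mention in your final sentence.

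Your sketch via the factorization $X \to X_{/A}^{\HT} \to X^{\HT}$, identifying $\mathrm{Aut}(\rho_A)$ with $G_\pi$ and $\mathrm{Aut}(\rho)$ with $T_{X/\bar A}^\sharp\{1\}$, and then reading off the conjugation action from the $\mathbb{G}_m^\sharp$-component $1+ea_0$ acting on the twist $\{1\}$, is a genuinely different and more self-contained route. It has the advantage of making transparent \emph{why} the factor $1+ea_0$ appears uniformly in all coordinates (namely, because the geometric piece is a Breuil--Kisin twist and the arithmetic piece records the action on $\mathcal{O}\{1\}$), and it reuses ingredients already developed in the paper (the identification of $\gamma_{b,c}$ with $(1+e\epsilon,\epsilon)\in G_\pi$ and the discussion above \cref{prop. relative calculate partial on HT}). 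The paper's approach, by contrast, simply defers to the literature, which is more economical but less illuminating about the origin of the group law.
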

\begin{proof}
    This is \cite[Corollary 3.11]{anschutz2023hodge}. Note that by unwinding all of the constructions in \textit{loc.cit.}, $E^{\prime}(\pi)$ there can be replaced with $d^{\prime}(q)$  if one works with the $q$-prism instead of the Breuil-Kisin prism.
\end{proof}

As $\psi: \Tilde{R}\to S$ constructed in \cref{lem. AR twist S in the relative case} reduces to the natural embedding after modulo $d$, we see $\gamma_{b, c_{\psi}}$ constructed in \cref{propt.key automorphism of functors AR} descends to an automorphism of
    \[X_{}^{\HT}\times_{\Spf(R)} \Spf(R\oplus_{i=0}^m R\epsilon_i),\]
    which implies that for $\mathcal{E}\in \mathcal{D}(X_{}^{\HT})$, $\partial_{\mathcal{E}}, \nabla_{\mathcal{E}}$ also descends to a functor from $\mathcal{E}$ to itself.

Under such an identification, $\gamma_{b, c_{\psi}}$ corresponds to an element in $(\mathbb{G}_a^{\sharp})^m(S/d)$, which is precisely $(\epsilon,\epsilon,\cdots,\epsilon)_{0\leq i\leq m}\in (\mathbb{G}_a^{\sharp})^{m+1}$ after identifying the latter with $G_X$ via \cref{lem. AR HT stack geometric structure}. Indeed, this follows from the discussion above \cref{lem. calculate e} and \cref{prop. relative calculate partial on HT}. Consequently, we can describe $\partial_{\mathcal{E}}$ and $\nabla_{\mathcal{E}}$ for $\mathcal{E}=\rho_{X,*}(\mathcal{O}_X)$ very explicitly, which could be viewed as a combination of \cref{lem. calculate partial on HT} and \cref{prop. relative calculate partial on HT}.
\begin{lemma}\label{prop. AR calculate partial on HT}
    Let $\mathcal{E}=\rho_{X,*} \mathcal{O}_{X}$, then 
    \begin{itemize}
        \item $\rho_X^{*}\mathcal{E}\cong  R\{a_0, a_1,\cdots, a_m\}^{\wedge}_{p}$.
        \item Suppose that $p>2$ or $\alpha>0$,then the sequence $0\to 
        (R\{a_j\}_{j\neq i})^{\wedge}_{p} \to \rho_X^{*}\mathcal{E}\xrightarrow{\nabla_i} \rho_X^{*}\mathcal{E} \to 0$ is exact for all $0\leq i\leq m$, here $\nabla_0=\partial$ by abuse of notation.
    \end{itemize}
\end{lemma}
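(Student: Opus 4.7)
The plan is to combine the strategies used in \cref{lem. calculate partial on HT} and \cref{prop. relative calculate partial on HT}, exploiting the explicit group structure of $G_X$ provided by \cref{lem. AR HT stack geometric structure}. For the first bullet, one applies the projection formula to the $G_X$-torsor $\rho_X: X \to X^{\HT}$: the trivialization $G_X \cong (\mathbb{G}_a^\sharp)^{m+1}$ gives $\rho_X^*\rho_{X,*}\mathcal{O}_X \cong \mathcal{O}_{G_X} \cong R\{a_0, a_1, \dots, a_m\}_p^\wedge$.

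For the second bullet, I would first make the action of $\nabla_i$ on $\rho_X^*\mathcal{E}$ explicit. By the discussion immediately preceding the lemma, $\gamma_{b,c_\psi}$ is identified with the element $(\epsilon_0, \epsilon_1, \dots, \epsilon_m) \in (\mathbb{G}_a^\sharp)^{m+1}(S/d) \cong G_X(S/d)$. Restricting to the element that has only the $i$-th coordinate nonzero and invoking the twisted group law $((c_k),(a_k)) \mapsto (a_k + c_k(1+ea_0))_k$ from \cref{lem. AR HT stack geometric structure}, the operator $\nabla_i$ acts on $f \in R\{a_0,\dots,a_m\}_p^\wedge$ by
\[
\nabla_i(f) \;=\; \epsilon_i^{-1}\bigl(f(a_0,\dots,a_i + \epsilon_i(1+ea_0),\dots,a_m) - f(a_0,\dots,a_m)\bigr),
\]
which makes sense because $\epsilon_i$ lies in $\mathbb{G}_a^\sharp$ and thus has all divided powers.

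Under the assumption $p>2$ or $\alpha>0$, we have $v_p(e) > 0$, so modulo $p$ the twist disappears and
\[
\nabla_i(f) \equiv \epsilon_i^{-1}\bigl(f(a_0,\dots,a_i+\epsilon_i,\dots,a_m) - f\bigr) \pmod{p}.
\]
This is exactly the formalism treated in \cref{prop. relative calculate partial on HT}, with $a_i$ playing the role of the active variable and $R\{a_j\}_{j\neq i}^\wedge_p/p$ serving as the coefficient ring (for $i\geq 1$ one uses $\epsilon_i^2 = \beta T_i\epsilon_i$, while for $i=0$ the coefficient $T_i$ is replaced by $q$, but the argument is identical). Expanding $f = \sum c_n a_i^n/n!$ in the variable $a_i$ and comparing coefficients produces an infinite upper-triangular matrix with units on the diagonal, from which one deduces the short exact sequence modulo $p$.

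The final step is to lift from mod $p$ to the $p$-complete statement by the inductive mechanism of \cref{lem. calculate partial on HT}: given the mod-$p$ surjectivity, build the preimage of $g$ as a $p$-adic series $f = \sum_{k\geq 1} p^{k-1} h_k$, where at stage $k$ we produce $h_k$ solving the mod-$p$ equation with right-hand side the error of the previous partial sum; the same bootstrapping argument controls the kernel. The main obstacle I anticipate is bookkeeping the case $i=0$, since the active variable $a_0$ then also appears in the factor $1+ea_0$ that one is trying to trivialize; one must verify that, modulo $p$, this self-reference really does collapse, so that the infinite-matrix argument applies uniformly and there is no discrepancy with the more delicate Weierstrass-preparation analysis of \cref{lem. calculate partial on HT}.
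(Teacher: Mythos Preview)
Your reduction step contains a genuine error: from $v_p(e) > 0$ you conclude that $e \equiv 0 \pmod{p}$, but this fails because $\mathcal{O}_K = W(k)[\zeta_{p^{\alpha+1}}]$ is ramified over $W(k)$. The condition $v_p(e) > 0$ only places $e$ in the maximal ideal $(\pi)$ of $\mathcal{O}_K$, not in $p\mathcal{O}_K = (\pi^{p^\alpha(p-1)})$; concretely, for $\alpha = 0$ one has $v_p(e) = 1 - \tfrac{1}{p-1} < 1$, so $e \not\equiv 0 \pmod p$. Hence the factor $1+ea_0$ does \emph{not} become $1$ modulo $p$, and you cannot reduce to the untwisted setup of \cref{prop. relative calculate partial on HT}.

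The paper's argument for $i \geq 1$ avoids any reduction: one works directly over $B = (R\{a_j\}_{j\neq i})^\wedge_p$ and observes that $u := 1+ea_0$ is already a \emph{unit} in $B$. This is what $v_p(e)>0$ actually buys --- the inverse $\sum_k (-ea_0)^k = \sum_k (-e)^k k!\,\tfrac{a_0^k}{k!}$ converges because $v_p(e^k k!) \to \infty$. Expanding $f(a_i+\epsilon_i u)-f(a_i)$ then yields an upper-triangular matrix with diagonal entries $u \in B^\times$, and the short exact sequence follows at once, with no mod-$p$ step and no lifting. For $i=0$ the paper simply defers to \cref{lem. calculate partial on HT}, whose group law agrees with the $0$-th factor of $G_X$ in \cref{lem. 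AR HT stack geometric structure}; that argument does reduce, but modulo the uniformizer $\pi$ (where $e$ genuinely vanishes) followed by $\pi$-adic induction. The delicate point there is not the ``self-reference'' you flag --- that disappears once $e\equiv 0 \bmod \pi$ --- but rather that the divided powers $\tfrac{\epsilon_0^{p^t}}{(p^t)!}$ survive mod $\pi$ even though $\epsilon_0^2$ does not, so the resulting linear system is sparser and needs the inductive solution recorded in \cref{lem. calculate partial on HT}.
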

\begin{proof}
    \cref{lem. AR HT stack geometric structure} and the projection formula tells us that $\rho_X^{*}\mathcal{E}\cong R\{a_0, a_1,\cdots, a_m\}^{\wedge}_{p}$. For the second statement, the case that $i=0$ was already treated in \cref{lem. calculate partial on HT} (note that the group formula used there are the same as that stated in \cref{lem. AR HT stack geometric structure}). For any $i\geq 1$,
    we write $B$ for $(R\{a_j\}_{j\neq i})^{\wedge}_{p}$ and omit $i$ from the subscript of $a_i$ and $T_i$ for the ease of notation. By \cref{lem. AR HT stack geometric structure}, the formal group law on the $i$-th component of $(\mathbb{G}_a^{\sharp})^m$ is given by $$\Delta: \widehat{\mathcal{O}}_{\mathbb{G}_a^{\sharp}} \to \widehat{\mathcal{O}}_{\mathbb{G}_a^{\sharp}} \hat{\otimes} \widehat{\mathcal{O}}_{\mathbb{G}_a^{\sharp}}, ~~~~a\mapsto a+b(1+ea_0)$$
and the isomorphism $\gamma_{b,c_{\psi_i}}$ (hence also the $q$-derivation $\nabla_i$) is constructed via $\epsilon \in \mathbb{G}_a^{\sharp}(S/d)$, hence for $f(a)=\sum_{i=0}^{\infty}c_i\frac{a^n}{n!}\in \rho^*\mathcal{E}=B\{a\}_p^{\wedge}$ with $c_i\in B$, $\gamma_{b,c_{\psi_i}}(f)=f(a+\epsilon (1+ea_0))=f+\epsilon\nabla_i(f)$. If we denote $1+ea_0\in B$ as $u$ for simplicity, then it follows that 
\begin{equation*}
    \begin{split}
       \epsilon\nabla_i(f)&=f(a+\epsilon u)-f(a)=\sum_{n=0}^{\infty}c_n\frac{(a+\epsilon u)^n-a^n}{n!}
       \\&=\sum_{n=0}^{\infty}\frac{c_n}{n!}(\sum_{i=0}^{n-1}\binom{n}{i}a^i\epsilon^{n-i} u^{n-i})
       =\epsilon \sum_{n=0}^{\infty}\frac{c_n}{n!}(\sum_{i=0}^{n-1}\binom{n}{i}a^i u^{n-i}(\beta T)^{n-i-1}),
    \end{split}
\end{equation*}
where the last equality follows from $\epsilon^j=(\beta T)^{j-1}\epsilon$. Consequently
\begin{equation*}
    \begin{split}
        \nabla_i(f)&=\sum_{i=0}^{\infty} a^i(\sum_{n=i+1}^{\infty} \frac{c_n}{n!}\binom{n}{i}u^{n-i}(\beta T)^{n-1-i})
        \\&=\sum_{i=0}^{\infty} \frac{a^i}{i!}(\sum_{n=i+1}^{\infty} \frac{c_n}{(n-i)!}u^{n-i}(\beta T)^{n-1-i}).
    \end{split}
\end{equation*}
We then consider the infinite dimensional matrix $M$ with $M_{i,j}=\frac{u^{j+1-i}(\beta T)^{j-i}}{(j+1-i)!}$ for $i\leq j$ and $0$ otherwise, then $M$ is an upper triangular matrix with all the diagonal elements being $u\in B^{\times}$, hence $M$ is an invertible matrix. Let $g=\sum_{n=0}^{\infty} b_n\frac{a^n}{n!}$, then the previous calculation tells us that $\nabla_i(f)=g$ if and only if 
\begin{equation}\label{equa. AR solve c for b}
    M \Vec{c}=\Vec{b}.
\end{equation}
for $\Vec{b}=(b_0,b_1,\cdots)^{\mathrm{T}}$ and $\Vec{c}=(c_1,c_2,\cdots)^{\mathrm{T}}$.
As $M$ is invertible, given any $\Vec{b}$ satisfies that $b_i\to 0$ as $i\to \infty$, there exists a unique $\Vec{c}$ solving \cref{equa. AR solve c for b}. Moreover, $c_i\to 0$ as $i\to \infty$. This finishes the proof of the second result stated in the proposition.
\end{proof}

Thanks to \cref{prop. AR calculate partial on HT}, we get the following Poincare lemma for $\mathcal{O}_X$.
\begin{corollary}\label{cor. AR poincare lemma}
Let $\mathcal{E}_0=\rho_{X,*} \mathcal{O}_{X}\in \mathcal{D}(x^{\HT})$. Suppose that $p>2$ or $\alpha>0$. Then there is a canonical resolution from $R$ to the total complex of 
    \[\xymatrixcolsep{2pc}\xymatrix{\rho_X^*\mathcal{E}_0\ar[d]^{\partial_{\mathcal{E}_0}}\ar[r]^-{\nabla_{\mathcal{E}_0}}& \rho_X^*\mathcal{E}_0\otimes_{R} \Omega^1_{R/\Bar{A}} \ar[d]_{}^{\Tilde{\partial}^{[1]}_{\mathcal{E}_0}}\ar[r] &\cdots \ar[d]\ar[r] & \rho_X^*\mathcal{E}_0\otimes_{R} \Omega^m_{R/\Bar{A}} \ar[d]^{\Tilde{\partial}^{[m]}_{\mathcal{E}_0}}\ar[r] &0
\\ \rho_X^*\mathcal{E}_0\ar[r]^-{\nabla_{\mathcal{E}_0}}& \rho_X^*\mathcal{E}_0\otimes_{R} \Omega^1_{R/\Bar{A}}\ar[r]& \cdots \ar[r] & \rho_X^*\mathcal{E}_0\otimes_{R} \Omega^m_{R/\Bar{A}} \ar[r] &0.}\]
\end{corollary}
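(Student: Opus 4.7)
The plan is to deduce this Poincar\'e-type resolution directly from the cohomology formula in \cref{prop. AR fully faithful} applied to the sheaf $\mathcal{E}_0=\rho_{X,*}\mathcal{O}_X$ itself. The key observation is that the displayed double complex is nothing other than the total complex of the fiber of $\Tilde{\partial}^{[\bullet]}_{\mathcal{E}_0}$ acting on the $q$-de Rham complex, i.e.,
\begin{equation*}
\Tot\bigl(\text{double complex above}\bigr)\;\simeq\;\fib\bigl(\mathrm{DR}(\rho_X^{*}\mathcal{E}_0,\nabla_{\mathcal{E}_0})\xrightarrow{\Tilde{\partial}^{[\bullet]}_{\mathcal{E}_0}}\mathrm{DR}(\rho_X^{*}\mathcal{E}_0,\nabla_{\mathcal{E}_0})\bigr),
\end{equation*}
since the fiber of a map of cohomologically graded complexes is precisely the totalization of the two-row double complex with that vertical differential.

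First I would compute both sides of the equivalence predicted by \cref{prop. AR fully faithful}. On the geometric side, $\rho_X\colon X\to X^{\HT}$ is affine (its automorphism group $G_X\cong(\mathbb{G}_a^{\sharp})^{m+1}$ is affine over $X$ by \cref{lem. AR HT stack geometric structure}), hence
\begin{equation*}
\mathrm{R}\Gamma(X^{\HT},\mathcal{E}_0)=\mathrm{R}\Gamma(X^{\HT},\rho_{X,*}\mathcal{O}_X)\xrightarrow{\simeq}\mathrm{R}\Gamma(X,\mathcal{O}_X)=R.
\end{equation*}
On the algebraic side, the projection formula together with \cref{prop. AR calculate partial on HT} identifies $\rho_X^{*}\mathcal{E}_0\cong R\{a_0,a_1,\dots,a_m\}_p^{\wedge}$, with $\nabla_{\mathcal{E}_0}$ and $\partial_{\mathcal{E}_0}$ (hence $\Tilde{\partial}_{\mathcal{E}_0}^{[t]}$) acting by the explicit formulas of that lemma. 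Feeding these identifications into \cref{prop. AR fully faithful} yields the desired quasi-isomorphism $R\xrightarrow{\simeq}\Tot$.

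The canonicity of the map $R\to\Tot$ is built into this argument: it is induced by the unit of adjunction $\mathcal{O}_{X^{\HT}}\to\rho_{X,*}\mathcal{O}_X=\mathcal{E}_0$, which upon taking global sections becomes the inclusion $R\hookrightarrow R\{a_0,\dots,a_m\}_p^{\wedge}$ placed in bidegree $(0,0)$ of the double complex, and then upgraded through the cohomology formula. The assumption $p>2$ or $\alpha>0$ is precisely the hypothesis needed for \cref{prop. AR fully faithful}, so no additional case analysis is required.

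The main obstacle -- if any -- is just a bookkeeping one: confirming that the vertical differentials predicted by \cref{absolute rem. rel cohomology of q higgs} agree term-by-term with the operators $\Tilde{\partial}^{[t]}_{\mathcal{E}_0}$ displayed in the corollary (in particular that the signs and the units $(1+\beta q\mathscr{D}(\nabla_{l_i}))^{-1}$ are absorbed correctly in both rows). This is a direct unwinding of \cref{lem. cohomology for modules over the noncommutative ring} and the definition of $\Tilde{\partial}^{[\bullet]}$ in \cref{absolute rem. rel cohomology of q higgs}, so no genuinely new computation is needed beyond what is already recorded there.
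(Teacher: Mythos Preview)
Your argument is correct and gives a clean, conceptual proof: once \cref{prop. AR fully faithful} is available, plugging in $\mathcal{E}_0=\rho_{X,*}\mathcal{O}_X$ and using affineness of $\rho_X$ immediately yields $R\simeq\mathrm{R}\Gamma(X^{\HT},\mathcal{E}_0)\simeq\fib(\mathrm{DR}(\rho_X^*\mathcal{E}_0,\nabla_{\mathcal{E}_0})\xrightarrow{\Tilde\partial^{[\bullet]}}\mathrm{DR}(\rho_X^*\mathcal{E}_0,\nabla_{\mathcal{E}_0}))$, which is the total complex in question.

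The paper, however, takes a different and more elementary route. Rather than invoking \cref{prop. AR fully faithful}, it argues directly from the exact sequences of \cref{prop. AR calculate partial on HT}: each $\nabla_i$ (and $\partial$) is surjective on $\rho_X^*\mathcal{E}_0$ with kernel a divided-power polynomial ring in one fewer variable, so a Koszul-style staircase argument (worked out explicitly for $m=1$) shows the double complex is acyclic outside degree $0$ with $H^0=R$. The point of doing it this way is logical independence: in the paper's narrative, \cref{cor. AR poincare lemma} feeds into \cref{prop. AR ht case fiber seq}, which is then advertised as an \emph{alternative} proof of \cref{prop. AR fully faithful}. Your route reverses this flow, which is perfectly valid since \cref{prop. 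AR fully faithful} was already established by the first method (via the affine structure map $f\colon X^{\HT}\to\Spf(\mathcal{O}_K)^{\HT}$), but it does forfeit the self-contained character the paper was aiming for here.
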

\begin{proof}
    Given \cref{prop. AR calculate partial on HT}, the proof is similar to that for \cref{cor. relative poincare lemma}. For simplicity, we treat the case that $m=1$, which amounts to showing that the total complex of 
    \[\xymatrixcolsep{2pc}\xymatrix{\rho_X^*\mathcal{E}_0\ar[d]^{\partial_{\mathcal{E}_0}}\ar[r]^-{\nabla_{\mathcal{E}_0}}& \rho_X^*\mathcal{E}_0\ar[d]_{}^{\Tilde{\partial}^{[1]}_{\mathcal{E}_0}}
\\ \rho_X^*\mathcal{E}_0\ar[r]^-{\nabla_{\mathcal{E}_0}}& \rho_X^*\mathcal{E}_0}\]
is concentrated on degree $0$ and is given by $R$. The only nontrivial part is to check that given a pair $(x,y)\in \rho_X^*\mathcal{E}_0\oplus \rho_X^*\mathcal{E}_0$ such that $\Tilde{\partial}^{[1]}_{\mathcal{E}_0}(x)-\nabla(y)=0$, then there exists $t\in \rho_X^*\mathcal{E}_0$ satisfying that $\nabla(t)=x$ and that $\partial(t)=y$. For this purpose, first notice that $\nabla$ is surjective by \cref{prop. AR calculate partial on HT}, hence we could find $t_1\in \rho_X^*\mathcal{E}_0$ such that $\nabla(t_1)=x$. Then $\nabla(y-\partial(t_1))=\nabla(y)-\Tilde{\partial}^{[1]}_{\mathcal{E}_0}\circ \nabla(t_1)=0$, hence $y-\partial(t_1) \in \ker(\nabla)$. By the proof of \cref{prop. AR calculate partial on HT}, $\partial: \ker(\nabla)\to \ker(\nabla)$ is also surjective, hence there esists $f\in \ker(\nabla)$ such that $\partial(f)=y-\partial(t_1)$, then $t=t_1+f$ satisfies that $\nabla(t_1)=x$ and that $\partial(t)=y$, we are done.
\end{proof}

As a byproduct, we get the following refinement of \cref{prop. AR fully faithful} when $n=1$, which is also an analog of \cref{prop. relative ht case fiber seq}. Moreover, \cref{prop. AR ht case fiber seq} could lead to a direct proof of \cref{prop. AR fully faithful} by standard  d\'evissage.
\begin{proposition}\label{prop. AR ht case fiber seq}
    Suppose that $p>2$ or $\alpha>0$.. For any $\mathcal{E}\in \mathcal{D}(X_{}^{\HT})$, there is a canonical resolution
    from $\mathcal{E}$ to the total complex of 
    \[\xymatrixcolsep{2pc}\xymatrix{\rho_{X,*}\rho_X^*\mathcal{E}\ar[d]^{\partial_{\mathcal{E}}}\ar[r]^-{\nabla_{\mathcal{E}}}& \rho_{X,*}\rho_X^*\mathcal{E}\otimes_{R} \Omega^1_{R/\Bar{A}} \ar[d]_{}^{\Tilde{\partial}^{[1]}_{\mathcal{E}}}\ar[r] &\cdots \ar[d]\ar[r] & \rho_{X,*}\rho_X^*\mathcal{E}\otimes_{R} \Omega^m_{R/\Bar{A}} \ar[d]^{\Tilde{\partial}^{[m]}_{\mathcal{E}}}\ar[r] &0
\\ \rho_{X,*}\rho_X^*\mathcal{E}\ar[r]^-{\nabla_{\mathcal{E}}}& \rho_{X,*}\rho_X^*\mathcal{E}\otimes_{R} \Omega^1_{R/\Bar{A}}\ar[r]& \cdots \ar[r] & \rho_{X,*}\rho_X^*\mathcal{E}\otimes_{R} \Omega^m_{R/\Bar{A}} \ar[r] &0.}\]
Moreover, taking cohomology induces a canonical quasi-isomorphism from $\mathrm{R} \Gamma(X_{}^{\HT}, \mathcal{E})$ to the total complex of 
\[\xymatrixcolsep{2pc}\xymatrix{\rho_X^*\mathcal{E}\ar[d]^{\partial_{\mathcal{E}}}\ar[r]^-{\nabla_{\mathcal{E}}}& \rho_X^*\mathcal{E}\otimes_{R} \Omega^1_{R/\Bar{A}} \ar[d]_{}^{\Tilde{\partial}^{[1]}_{\mathcal{E}}}\ar[r] &\cdots \ar[d]\ar[r] & \rho_X^*\mathcal{E}\otimes_{R} \Omega^m_{R/\Bar{A}} \ar[d]^{\Tilde{\partial}^{[m]}_{\mathcal{E}}}\ar[r] &0
\\ \rho_X^*\mathcal{E}\ar[r]^-{\nabla_{\mathcal{E}}}& \rho_X^*\mathcal{E}\otimes_{R} \Omega^1_{R/\Bar{A}}\ar[r]& \cdots \ar[r] & \rho_X^*\mathcal{E}\otimes_{R} \Omega^m_{R/\Bar{A}} \ar[r] &0.}\]
\end{proposition}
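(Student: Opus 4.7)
The plan is to mirror the proof of Proposition~4.2.10 (\cref{prop. relative ht case fiber seq}), using Corollary~5.1.12 (\cref{cor. AR poincare lemma}) as the Poincar\'e lemma input together with faithfully flat descent along the cover $\rho_X \colon X \to X^{\HT}$. The key observation is that, as discussed at the beginning of the paragraph above \cref{prop. AR calculate partial on HT}, the operators $\partial$ and $\nabla_i$ descend to endomorphisms (respectively, differentials) on any $\mathcal{E} \in \mathcal{D}(X^{\HT})$, and likewise the twisted operators $\Tilde{\partial}_{\mathcal{E}}^{[k]}$ from \cref{absolute rem. rel cohomology of q higgs} are assembled from these. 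Hence the double complex in the statement already exists naturally on $X^{\HT}$, not just after pulling back along $\rho_X$.

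First I would treat the case $\mathcal{E} = \mathcal{O}_{X^{\HT}}$. Form the double complex on $X^{\HT}$ with entries $(\rho_{X,*}\rho_X^*\mathcal{O}_{X^{\HT}}) \otimes_R \Omega^i_{R/\Bar A}$, horizontal differentials induced by $\nabla$ (in the two rows as prescribed), and vertical differentials $\Tilde{\partial}^{[k]}$. There is a canonical augmentation $\mathcal{O}_{X^{\HT}} \to \mathrm{Tot}(\cdots)$ obtained by viewing $\mathcal{O}_{X^{\HT}}$ as the joint kernel of $\partial$ and $\nabla$ in degree $(0,0)$. Pulling back along the faithfully flat cover $\rho_X$ identifies this with the complex of \cref{cor. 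AR poincare lemma} applied to $\mathcal{E}_0 = \rho_{X,*}\mathcal{O}_X$ (via the projection formula $\rho_X^*\rho_{X,*}\mathcal{O}_X \cong \rho_X^*\mathcal{O}_{X^{\HT}} \otimes \rho_X^*\mathcal{O}_X$, then trivializing using the group structure from \cref{lem. AR HT stack geometric structure}). Since \cref{cor. AR poincare lemma} asserts this pulled-back complex is a resolution of $R$, faithfully flat descent implies the augmentation on $X^{\HT}$ is a resolution of $\mathcal{O}_{X^{\HT}}$.

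For a general $\mathcal{E} \in \mathcal{D}(X^{\HT})$, I tensor the resolution of $\mathcal{O}_{X^{\HT}}$ just obtained with $\mathcal{E}$ and apply the projection formula term by term: each $\mathcal{E} \otimes \rho_{X,*}\rho_X^*\mathcal{O}_{X^{\HT}} \otimes \Omega^i_{R/\Bar A} \cong \rho_{X,*}\rho_X^*\mathcal{E} \otimes_R \Omega^i_{R/\Bar A}$. The usual trick of trivializing a Hopf-algebra comodule (used already in the proofs of \cref{prop. ht case fiber seq} and \cref{prop. relative ht case fiber seq}) identifies the horizontal differentials $\Id \otimes \nabla_{\mathcal{O}}$ with $\nabla_{\mathcal{E}}$, and the vertical differentials $\Id \otimes \Tilde{\partial}^{[k]}_{\mathcal{O}}$ with $\Tilde{\partial}^{[k]}_{\mathcal{E}}$; this is where one must carefully match signs and multiplicative constants against \cref{absolute def.skew polynomial}, but this is a formal consequence of the construction of $\partial_{\mathcal{E}}$ and $\nabla_{\mathcal{E}}$ out of $\gamma_{b,c_\psi}$. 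This yields the first asserted resolution of $\mathcal{E}$.

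For the moreover part, take $\mathrm{R}\Gamma(X^{\HT}, -)$ of the resolution. Since $X = \Spf(R)$ is affine, $\mathrm{R}\Gamma(X^{\HT}, \rho_{X,*}\rho_X^*\mathcal{E} \otimes_R \Omega^i_{R/\Bar A}) = \mathrm{R}\Gamma(X, \rho_X^*\mathcal{E} \otimes_R \Omega^i_{R/\Bar A}) = \rho_X^*\mathcal{E} \otimes_R \Omega^i_{R/\Bar A}$, and totalization commutes with this computation since the double complex has uniformly bounded rows and columns. The resulting complex is exactly the one stated. The step I expect to be the most delicate is the sign- and scalar-bookkeeping when matching the differentials $\Tilde{\partial}^{[k]}_{\mathcal{O}}$ on $\rho_{X,*}\mathcal{O}_X$ with $\Tilde{\partial}^{[k]}_{\mathcal{E}}$ under the comodule trivialization, since the twisted Leibniz rule in \cref{rem.twisted leibnitz in general} and \cref{rem. rel twisted leibnitz in general} means $\partial_{\mathcal{E} \otimes \mathcal{E}'} \neq \partial_{\mathcal{E}} \otimes \Id + \Id \otimes \partial_{\mathcal{E}'}$, so the identification is not completely formal; however, on the image of the trivialization this reduces to the identity already established in the smooth and in the cyclotomic cases, and so the verification is routine.
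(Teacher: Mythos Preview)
Your proposal is correct and follows exactly the approach the paper implicitly indicates: the paper does not give a separate proof for this proposition, but introduces it as ``a byproduct'' of \cref{cor. AR poincare lemma} and ``an analog of \cref{prop. relative ht case fiber seq}'', which is precisely the template you are mirroring. Your identification of the delicate point---matching the twisted vertical differentials $\Tilde{\partial}^{[k]}$ under the comodule trivialization in the presence of the non-additive Leibniz rule---is the only place requiring care beyond the relative case, and you handle it appropriately.
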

\begin{example}[$q$-Higgs connections and derivations on the Breuil-Kisin twists when restricted to the Hodge-Tate locus]\label{example. AR relative action on generators}
     For $\mathcal{E}=\mathcal{O}_{X^{\HT}}\{k\}$, combining \cref{example.action on generators} with \cref{example. relative action on generators}, we see that $\nabla_{\mathcal{E}}=0$ and $\partial_{\mathcal{E}}$ is given by multiplication by $e\frac{(1+p^{\alpha+1})^k-1}{p^{\alpha+1}}$. Moreover, as in \cite[Corollary 3.5.14]{bhatt2022absolute}, \cref{prop. AR ht case fiber seq} implies that $\mathcal{E}\in \mathcal{D}(X_{}^{\HT})$ is isomorphic to $ \mathcal{O}_{X^{\HT}}\{k\}$ if and only if $\rho^*\mathcal{E}\cong \mathcal{O}_X$, $\nabla_{\mathcal{E}}=0$ and $\partial_{\mathcal{E}}$ is given by multiplication by $e\frac{(1+p^{\alpha+1})^k-1}{p^{\alpha+1}}$.
\end{example}
The resolution above essentially leads to the following desired result.
\begin{proposition}\label{prop. AR generate by Ik}
    Let $n\in \mathbb{N}\cup \{\infty\}$. The $\infty$-category $\mathcal{D}(X_{}^{\Prism})$ is generated under shifts and colimits by the $\mathcal{I}^k$, $k\in \mathbb{Z}$. 
\end{proposition}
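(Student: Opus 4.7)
The plan is to mimic the strategy used for \cref{propt.generation} (the arithmetic point case) and combine it with the relative Hodge-Tate generation \cref{propt. relative generation}. The argument has two layers: devissage along the $\mathcal{I}$-adic filtration on $X^{\Prism}$ to reduce to the Hodge-Tate case, and then a projection-formula argument along the structure morphism $f: X^{\HT}\to\Spf(\mathcal{O}_K)^{\HT}$.

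For the devissage, suppose $\mathcal{E}\in\mathcal{D}(X_n^{\Prism})$ (resp.\ $\mathcal{D}(X^{\Prism})$) satisfies $\RHom(\mathcal{I}^k,\mathcal{E})=0$ for all $k\in\mathbb{Z}$. Using the cofiber sequences $\mathcal{I}^{r}\to\mathcal{O}_{X_{r+1}^{\Prism}}\to\mathcal{O}_{X_r^{\Prism}}$ along the $\mathcal{I}$-adic layers, together with the identification $\mathcal{I}^{r}/\mathcal{I}^{r+1}\simeq i_*\mathcal{O}_{X^{\HT}}\{r\}$, induction on $r$ reduces the vanishing on $X_r^{\Prism}$ to the vanishing of $\RHom_{X^{\HT}}(\mathcal{O}\{k\},\mathcal{E}|_{X^{\HT}})$ for all $k$; for $n=\infty$, $\mathcal{I}$-completeness of $\mathcal{E}$ then gives $\mathcal{E}=0$. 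Thus the question localizes to: if $\mathcal{F}\in\mathcal{D}(X^{\HT})$ satisfies $\RHom(\mathcal{O}\{k\},\mathcal{F})=0$ for every $k$, then $\mathcal{F}=0$.

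For the Hodge-Tate case, consider $f: X^{\HT}\to\Spf(\mathcal{O}_K)^{\HT}$. Since the Breuil-Kisin ideal sheaf on $X^{\HT}$ is pulled back from that on $\Spf(\mathcal{O}_K)^{\HT}$, one has $f^*\mathcal{O}\{k\}\simeq\mathcal{O}\{k\}$, and adjunction yields
\[
\RHom_{\Spf(\mathcal{O}_K)^{\HT}}(\mathcal{O}\{k\},f_*\mathcal{F})\simeq\RHom_{X^{\HT}}(\mathcal{O}\{k\},\mathcal{F})=0
\]
for every $k\in\mathbb{Z}$. By \cref{propt.generation} applied to the point $\Spf(\mathcal{O}_K)$ this forces $f_*\mathcal{F}=0$, so it remains to verify that $f_*$ is conservative. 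I will check this after base change along the fpqc cover $\rho_A:\Spf(\mathcal{O}_K)\to\Spf(\mathcal{O}_K)^{\HT}$: the resulting map is $f':X_{/A}^{\HT}\to\Spf(\mathcal{O}_K)$, and $f'_*\mathcal{G}=0$ implies $\mathrm{R}\Gamma(X_{/A}^{\HT},\mathcal{G})=0$, which by the generation of $\mathcal{D}(X_{/A}^{\HT})$ by $\mathcal{O}$ alone (\cref{propt. relative generation}) gives $\mathcal{G}=0$.

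The main technical obstacle I expect is justifying the base change formula $\rho_A^* f_*\simeq f'_*\pi^*$ (with $\pi:X_{/A}^{\HT}\to X^{\HT}$ the base change of $\rho_A$) and the naturality $f^*\mathcal{O}\{k\}=\mathcal{O}\{k\}$. Both are really statements about the behavior of the Hodge-Tate stacks under fpqc base change and the stack-theoretic construction of the Hodge-Tate ideal sheaf; they should hold because $\rho_A$ is fpqc and $\mathcal{I}$ is defined as a universal Cartier-Witt divisor. Once these compatibilities are in place, the combination of \cref{propt.generation} and \cref{propt. relative generation} closes the argument in precisely the shape sketched above.
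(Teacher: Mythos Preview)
Your proof is correct and takes a genuinely different route from the paper's.

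After the same d\'evissage reduction to $X^{\HT}$, the paper argues in the style of \cite[Proposition 3.5.15]{bhatt2022absolute}: invoking the resolution of \cref{prop. AR ht case fiber seq}, it suffices to show that $\rho_{X,*}\mathcal{O}_X$ lies in the subcategory $\mathcal{C}$ generated by the $\mathcal{I}^k$; this is done by constructing an explicit exhaustive filtration $V_j$ on the regular representation $R\{a_0,\ldots,a_m\}^\wedge_p$ of $G_X$ whose successive quotients are identified with Breuil--Kisin twists $\mathcal{O}_{X^{\HT}}\{n-k\}$, using the explicit formulas from \cref{lem. calculate partial on HT} and \cref{prop. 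AR calculate partial on HT}. Your argument instead factors through $f:X^{\HT}\to\Spf(\mathcal{O}_K)^{\HT}$, uses adjunction plus \cref{propt.generation} to kill $f_*\mathcal{F}$, and then verifies conservativity of $f_*$ by flat base change to the relative situation and \cref{propt. relative generation}. This is cleaner and more conceptual: it reuses the arithmetic and relative generation statements directly rather than redoing a filtration computation, and it avoids appealing to \cref{prop. AR ht case fiber seq} (which carries the hypothesis $p>2$ or $\alpha>0$; the paper handles that separately in \cref{rem. Xht generates holds in general}). The compatibilities you flag as obstacles are indeed routine: $\mathcal{I}$ is pulled back from $\WCart$, hence $f^*\mathcal{O}\{k\}\simeq\mathcal{O}\{k\}$; and flat base change for $f_*$ holds since $f$ is affine (as used in the proof of \cref{prop. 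AR fully faithful}) and $\rho_A$ is faithfully flat. In fact, affineness of $f$ already gives conservativity of $f_*$ directly via $\mathcal{D}(X^{\HT})\simeq\Mod_{f_*\mathcal{O}}(\mathcal{D}(\Spf(\mathcal{O}_K)^{\HT}))$, so your base-change step is not strictly necessary, though it is a perfectly valid alternative that stays within the paper's established toolkit.
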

\begin{proof}
    Arguing as \cite[Corollary 3.5.16]{bhatt2022absolute}, we are reduced to proving that $\infty$-category $\mathcal{D}(X_{}^{\HT})$ is generated under shifts and colimits by the $\mathcal{I}^k$, $k\in \mathbb{Z}$. We follow the proof of \cite[Proposition 3.5.15]{bhatt2022absolute}. Without loss of generality, we assume that $m=1$, which already reflects the non-commutativity of $G_X$ for which $X_{}^{\HT}=BG_X$. For general $m$, a similar argument works. By replacing $\mathcal{E}$ with $\rho_{X,*}\rho_X^*\mathcal{E}$ thanks to \cref{prop. AR ht case fiber seq}, it suffices to show that $\rho_{X,*}\mathcal{O}_X$ lies in the full subcategory $\mathcal{C}\subseteq \mathcal{D}(X_{}^{\HT})$ defined to be the full subcategory generated under shifts and colimits by the $\mathcal{I}^k$. By \cref{lem. AR HT stack geometric structure}, we can identify $\mathcal{D}(X_{}^{\HT})$ with the $\infty$-category of $\mathcal{O}_{G_X}$-comodule objects in $\widehat{\mathcal{D}}(R)$. Under this identification, $\rho_{X,*}\mathcal{O}_X$ corresponds to the $p$-complete regular representation of $\widehat{\mathcal{O}}_{\prod_{i=0}^1\mathbb{G}_a^{\sharp}}=R\{a_0, a_1\}^{\wedge}_{p}$ thanks to \cref{prop. AR calculate partial on HT}. For each $n\geq 0, 0\leq k\leq n$, Let $V_{\frac{n(n+1)}{2}+k}$ denote the $R$-submodule of $R\{a_0, a_1\}^{\wedge}_{p}$ generated by $\frac{a_0^i}{i!}\cdot \frac{a_1^j}{j!}$ for $i+j<n$ or $i+j=n, j\leq k$. Then the calculation in the proof of \cref{lem. calculate partial on HT} and \cref{prop. AR calculate partial on HT} implies that 
    \begin{equation*}
    \begin{split}
        \partial(\frac{a_0^{n-k}}{(n-k)!} \frac{a_1^k}{k!})=\frac{a_0^{n-k}}{(n-k)!}\frac{a_1^k}{k!}\cdot (e\frac{(1+p^{\alpha+1})^{n-k}-1}{p^{\alpha+1}}) \quad &\text{mod} \quad V_{\frac{n(n+1)}{2}+k-1},
        \\ \nabla(\frac{a_0^{n-k}}{(n-k)!} \frac{a_1^k}{k!})=0 \quad &\text{mod} \quad V_{\frac{n(n+1)}{2}+k-1}.
    \end{split}
    \end{equation*}
Invoking \cref{example. AR relative action on generators}, we obtain fiber sequences 
\[V_{\frac{n(n+1)}{2}+k-1}\to V_{\frac{n(n+1)}{2}+k}\to \mathcal{O}_{X^{\HT}}\{n-k\}\]
It follows by induction on $j$ that each $V_{\leq j}$ belongs to the category $\mathcal{C}$. Taking colimit on $j$ implies that $\rho_{X,*}\mathcal{O}_X$ belongs to $\mathcal{C}$.
\end{proof}
\begin{remark}\label{rem. Xht generates holds in general}
    Although we assume that $X$ is small affine over $\mathcal{O}_K=W(k)[\zeta_{p^{\alpha+1}}]$ in this section, \cref{prop. AR generate by Ik} holds for such an $X$ over a general $\mathcal{O}_K$. This follows as \cref{prop. AR calculate partial on HT} holds in this general setting by replacing $\nabla_i$ with the Higgs field $\theta_i$ studied in \cite{anschutz2023hodge} (the proof of \cref{prop. AR calculate partial on HT} works verbatimly by replacing $e=d^{\prime}(q)$ with $E^{\prime}(\pi)$). 
\end{remark}

Finally we can state and prove the main results in this section.
\begin{theorem}\label{thmt. AR main classification}
Assume that $p>2$ or $\alpha>0$. Let $n\in \mathbb{N}\cup \{\infty\}$. The functor 
    \begin{align*}
        &\beta_n^{+}: \mathcal{D}(X_{ n}^{\Prism}) \rightarrow \mathcal{D}(\Tilde{R}/d^n[\partial, \underline{\nabla}; \gamma_{\Tilde{R}}]),
       \qquad \mathcal{E} \mapsto (\rho_X^{*}\mathcal{E}, \partial_{\mathcal{E}}, \nabla_{\mathcal{E}})
    \end{align*}
    constructed in \cref{prop. AR induce functor} is fully faithful. 
     Moreover, its essential image consists of those objects $M\in \mathcal{D}(\Tilde{R}/d^n[\partial, \underline{\nabla}; \gamma_{\Tilde{R}}])$ 
     satisfying the following pair of conditions:
    \begin{itemize}
        \item $M$ is $(p,d)$-adically complete.
        \item The action of $\partial$  and $\nabla_i$ on the cohomology $\mathrm{H}^*(M\otimes_{\Tilde{R}/d^n}^{\mathbb{L}}\Tilde{R}/(d,q-1))$ is locally nilpotent for all $i$.
    \end{itemize}
\end{theorem}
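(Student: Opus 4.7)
The plan is to run the same three-step template used for \cref{thmt.main classification} and \cref{thmt. relative main classification}, now fed by the combined cohomology computation of \cref{prop. AR fully faithful} together with the resolution in \cref{lem. cohomology for modules over the noncommutative ring}. First I would check that $\beta_n^+$ actually lands in the described subcategory. The $(p,d)$-completeness of $\rho_X^*\mathcal{E}$ is automatic from faithfully flat descent along $\rho_X$, so only the nilpotence condition needs verification. By \cref{prop. AR generate by Ik} the source is generated under shifts and colimits by the twists $\mathcal{I}^k$, $k\in\mathbb{Z}$; since local nilpotence is preserved under shifts and colimits and behaves well under the twisted Leibniz rule (\cref{rem.twisted leibnitz in general}, \cref{rem. rel twisted leibnitz in general}), it suffices to check the condition on each $\mathcal{I}^k$. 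For these, \cref{example. AR relative action on generators} shows $\nabla_{\mathcal{I}^k}=0$ and $\partial_{\mathcal{I}^k}$ is multiplication by $e\cdot\tfrac{(1+p^{\alpha+1})^k-1}{p^{\alpha+1}}\in\mathcal{O}_K$. Under the hypothesis $p>2$ or $\alpha>0$ the ring $\mathcal{O}_K$ is ramified over $W(k)$, hence $e=d'(q)$ has positive $\pi$-adic valuation and this scalar vanishes in $\Tilde{R}/(d,q-1)=R\otimes_{\mathcal{O}_K}k$.

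For full faithfulness, invoking \cref{prop. AR generate by Ik} and twisting reduces the claim to showing
\[
\RHom_{\mathcal{D}(X_n^\Prism)}(\mathcal{O},\mathcal{F})\xrightarrow{\simeq}\RHom_{\mathcal{D}(\Tilde{R}/d^n[\partial,\underline{\nabla};\gamma_{\Tilde{R}}])}(\Tilde{R}/d^n,\beta_n^+(\mathcal{F}))
\]
for arbitrary $\mathcal{F}\in\mathcal{D}(X_n^\Prism)$. The left-hand side is $\mathrm{R}\Gamma(X_n^\Prism,\mathcal{F})$, which by \cref{prop. AR fully faithful} is identified with $\fib\bigl(\mathrm{DR}(\rho_X^*\mathcal{F},\nabla_{\mathcal{F}})\xrightarrow{\Tilde{\partial}_{\mathcal{F}}^{[\bullet]}}\mathrm{DR}(\rho_X^*\mathcal{F},\nabla_{\mathcal{F}})\bigr)$. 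The right-hand side is computed by the explicit bi-complex resolution of \cref{lem. cohomology for modules over the noncommutative ring}; under the $(p,d)$-completeness of $\beta_n^+(\mathcal{F})$ it simplifies by \cref{absolute rem. rel cohomology of q higgs} to exactly the same fiber, and the two identifications agree by construction.

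For essential surjectivity onto the subcategory $\mathcal{C}_n$ described in the statement, because $\mathcal{D}(X_n^\Prism)$ is generated by $\{\mathcal{I}^k\}$ and $\beta_n^+$ is fully faithful, it is enough to show $\mathcal{C}_n$ is generated under shifts and colimits by $\{\beta_n^+(\mathcal{I}^k)\}$. Standard d\'evissage in $n$ via the fiber sequence coming from $\mathcal{I}^{n-1}/\mathcal{I}^n\to\mathcal{O}/\mathcal{I}^n\to\mathcal{O}/\mathcal{I}^{n-1}$ reduces this to the Hodge--Tate case $n=1$. Given a non-zero $M\in\mathcal{C}_1$, I would produce a non-zero morphism from some $\beta_1^+(\mathcal{O})[s]$ as follows: by derived Nakayama $M\otimes^{\mathbb{L}}_{\Tilde{R}/d}\Tilde{R}/(d,q-1)\neq 0$; and by \cref{rem. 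AR relation modulo q-1 and d} all of $\partial,\nabla_1,\ldots,\nabla_m$ pairwise commute after this reduction (since $\beta$ vanishes and the correction $\mathscr{D}(\nabla_i)$ dies modulo $p$). Combined with the assumed local nilpotence, the usual iterative argument for a finite collection of commuting locally nilpotent operators produces a non-zero cohomology class simultaneously killed by $\partial$ and every $\nabla_i$, which by \cref{absolute rem. rel cohomology of q higgs} corresponds to a non-zero map $\beta_1^+(\mathcal{O})/p[s]\to M$.

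The hard part will be precisely this last step. Without the commutativity of $\partial$ and the $\nabla_i$ modulo $(d,q-1)$, iterating one operator to reach its kernel would typically destroy annihilation by the others, and the simultaneous-kernel construction would not close; so the real content here is that the ramified hypothesis $p>2$ or $\alpha>0$ plays a double role, both forcing the action on twists to vanish modulo $\pi$ (so that $\beta_n^+(\mathcal{I}^k)\in\mathcal{C}_n$) and degenerating the relations in $\Tilde{R}[\partial,\underline{\nabla};\gamma_{\Tilde{R}}]$ to strict commutativity modulo $(d,q-1)$ via \cref{rem. AR relation modulo q-1 and d}, which is exactly what makes the generation argument on the Hodge--Tate locus go through.
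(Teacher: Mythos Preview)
Your proposal is correct and takes essentially the same approach as the paper's proof, which is quite terse and simply says to copy the argument of \cref{thmt. relative main classification} with \cref{absolute rem. rel cohomology of q higgs} in place of \cref{rem. rel cohomology of q higgs}; you have correctly filled in the key detail that the simultaneous-kernel argument on the Hodge--Tate locus requires the commutativity of $\partial$ with the $\nabla_i$ modulo $(d,q-1)$ supplied by \cref{rem. AR relation modulo q-1 and d}. One minor slip: your final map should be from $\beta_1^+(\mathcal{O})\otimes^{\mathbb L} k$ (reduction modulo $q-1$) rather than $\beta_1^+(\mathcal{O})/p$, since the reduction you performed is modulo $(d,q-1)=(p,q-1)$, which strictly contains $(p,d)$ in $A$.
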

\begin{proof}
The functor is well-defined thanks to \cref{prop. AR induce functor} and the full faithfulness follows from \cref{prop. AR fully faithful} directly. 

    To see that the image of $\beta_n^{+}$ satisfies the stated conditions, we just notice that by \cref{prop. AR generate by Ik}, the essential image of $\beta_n^+$ is generated by $\beta_n^{+}(\mathcal{I}^k)$ ($k\in \mathbb{Z}$) under shifts and colimits, and the action of $\partial$ and $\nabla_{i}$ on the underlying module $\rho_X^{*}$ satisfies the nilpotence condition due to \cref{example. AR relative action on generators}.

    Let $\mathcal{C}_n\subseteq \mathcal{D}(\Tilde{R}/d^n[\partial, \underline{\nabla}; \gamma_{\Tilde{R}}])$ be the full subcategory spanned by objects satisfying two conditions listed in \cref{thmt. relative main classification}. As the source $\mathcal{D}(X_{n}^{\Prism})$ is generated under shifts and colimits by the structure sheaf thanks to \cref{propt. relative generation}, to complete the proof it suffices to show that $\mathcal{C}_n$ is also generated under shifts and colimits by $\beta^+_n(\mathcal{I}^{k})$, $k\in \mathbb{Z}$. In other words, we need to show that for every nonzero object $M\in \mathcal{C}_n$, $M$ admits a nonzero morphism from $\beta^+_n(\mathcal{I}^{k})[m]$ for some $k, m \in \mathbb{Z}$. For this purpose, one can reduce to the Hodge-Tate case first and then copy the proof of \cref{thmt. relative main classification} by replacing \cref{rem. rel cohomology of q higgs} with \cref{absolute rem. rel cohomology of q higgs}.
\end{proof}

\subsection{Locally complete intersection case}
In this subsection we would like to classify quasi-coherent complexes on the prismatization of $Y$, for $Y$ a locally complete intersection over $\mathcal{O}_K$. We still work with $\mathcal{O}_K=W(k)[\zeta_{p^{\alpha+1}}]$ ($\alpha\geq 0$) and the $q$-prism $(A, I)=(W(k)[[q-1]], [p]_{q^{p^\alpha}})$. 

As in the last subsection, $X=\Spf(R)$ is a small affine over $\mathcal{O}_K=W(k)[\zeta_{p^{\alpha+1}}]$ ($\alpha\geq 0$) and we fix the following diagram
\[\xymatrixcolsep{5pc}\xymatrix{A\left\langle \underline{T}^{}\right\rangle\ar[r]^{\square} \ar[d]&   \Tilde{R}
\ar[d]^{}
\\ \Bar{A}\left\langle \underline{T}^{}\right\rangle\ar[r] \ar[r]^{\square} &R,}\]
where the horizontal maps are \'etale chart maps. 

As usual, we assume that $Y=\Spf(R/(\Bar{x}_1,\cdots,\Bar{x}_r)) \hookrightarrow X$ ($x_i\in \Tilde{R}$ and $\Bar{x}_i$ is its image in $\Bar{A}$) is a closed embedding such that the prismatic envelope with respect to the morphism of $\delta$-pairs $(\Tilde{R}, (d))\to (\Tilde{R}, (d,x_1,\cdots,x_m))$ exists and is exactly given by $\Tilde{R}\{\frac{x_1,\cdots,x_r}{d}\}^{\wedge}_{\delta}$. Main examples for such $Y$ are given in \cref{lem. ARL discrete case}.

To study $\mathcal{D}(Y_{}^{\Prism})$, we analyze the following diagram shown above \cref{prop. lci case base to R}:
\[ \xymatrix{ Y_{/\Tilde{R}}^{\Prism} \ar@/^2pc/[rr]^{\rho_Y} \ar[r]^{\rho} \ar[d]^{} & Y_{/A}^{\Prism} \ar[r]^{\rho_A} \ar[d]^{} &Y^{\Prism} \ar[d]^{}\\ \Spf(\Tilde{R})\ar[r]^{\rho}
 &  X_{/A}^{\Prism} \ar[d]^{}\ar[r]^{\rho_A} & X^{\Prism} \ar[d]\\ & \Spf(A) \ar[r]^{\rho_A} &\Spf(\Bar{A})^{\Prism},}\]
where all squares in the diagram are pullback squares and $\rho_Y=\rho_A\circ \rho$.

By \cref{prop. lci case base to R}, $Y_{/\Tilde{R}}^{\Prism}$ is represented by $\Prism_X(Y)=\Tilde{R}\{\frac{x_1,\cdots,x_r}{d}\}^{\wedge}_{\delta}$. We then proceed by extending $\psi$ in \cref{lem. AR twist S in the relative case} to $\Prism_X(Y)$ first.
\begin{lemma}\label{lemt.ARL lci case appl extend eta}
Let $S=\Tilde{R}\oplus_{i=0}^{m} \Tilde{R}\epsilon_i$ be as that in \cref{lem. AR twist S in the relative case}. Then the $W(k)$-linear homomorphism $\psi: \Tilde{R} \to  S$ constructed in \cref{lem. AR twist S in the relative case} uniquely extends to a $\delta$-ring homomorphism $$ \XY \to \XY\otimes_{\Tilde{R}} S,$$
which will still be denoted as $\psi$ by abuse of notation. Moreover, this further induces an $\psi: \XY/d^n \to \XY/d^n\otimes_{\Tilde{R}} S$ after modulo $d^n$ for any $n\in \mathbb{N}$.
\end{lemma}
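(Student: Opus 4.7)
The plan is to mirror the argument of \cref{lemt.lci case appl extend eta}, now carrying along the arithmetic direction $\epsilon_0$ in addition to the geometric ones $\epsilon_1, \ldots, \epsilon_m$. Concretely, I will proceed in three steps: (i) equip $S$ (and hence $\XY \otimes_{\Tilde{R}} S$) with a $\delta$-structure extending that on $\Tilde{R}$; (ii) verify that $\psi : \Tilde{R} \to S$ is a $\delta$-ring homomorphism; and (iii) apply the universal property of the prismatic envelope $\XY = \Tilde{R}\{\tfrac{x_1,\ldots,x_r}{d}\}^{\wedge}_{\delta}$ to produce a unique $\delta$-ring lift $\XY \to \XY \otimes_{\Tilde{R}} S$, then check that it descends modulo $d^n$.

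For (i), I set $\varphi(\epsilon_0) := q^{p-1} d\, \epsilon_0$ and $\varphi(\epsilon_i) := T_i^{p-1} d\, \epsilon_i$ for $1 \leq i \leq m$. Compatibility with the defining relations $\epsilon_0^2 = \beta q\, \epsilon_0$, $\epsilon_i^2 = \beta T_i \epsilon_i$, and $\epsilon_i \epsilon_j = 0$ for $i \neq j$ is routine, and each formula lifts Frobenius modulo $p$ by the congruence $d = [p]_{q^{p^\alpha}} \equiv (q^{p^\alpha} - 1)^{p-1} = \beta^{p-1} \pmod{p}$. This simultaneously generalizes \cref{rem. delta ring structure on R} and \cref{rem. delta ring structure on s relative}.

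For (ii), by $(p,d)$-completely \'etale descent of $\delta$-structures (\cite[Lemma 2.18]{bhatt2022prisms}), it suffices to verify $\delta$-compatibility of $\psi|_{A\langle \underline{T}\rangle}$ on the generators $q$ and $T_i$. Writing $\psi(f) = f + \epsilon_0 \partial(f) + \sum_i \epsilon_i \nabla_i(f)$, the identity $\varphi(\psi(f)) = \psi(\varphi(f))$ decouples, thanks to the orthogonality relations from (i), into the two independent conditions $\partial(\varphi(f)) = q^{p-1} d\, \varphi(\partial(f))$ and $\nabla_i(\varphi(f)) = T_i^{p-1} d\, \varphi(\nabla_i(f))$. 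Each of these matches the explicit calculations already performed in \cref{lemt.extend delta} (for the arithmetic direction) and \cref{prop. rela psi preserve delta structure} (for the geometric directions).

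For (iii), the key observation is that $\gamma_0 - \Id$ factors through multiplication by $q^{p^{\alpha+1}+1} - q = qd\beta$ on $A\langle \underline{T}\rangle$ (since $\gamma_0$ fixes the $T_i$ and sends $q \mapsto q^{p^{\alpha+1}+1}$), and this divisibility lifts to $\Tilde{R}$ by \'etaleness; analogously $\gamma_i - \Id$ factors through $T_i d\beta$. Hence $\partial(\Tilde{R}), \nabla_i(\Tilde{R}) \subseteq d\Tilde{R}$, so
\[
\frac{\psi(x_j)}{d} \;=\; \frac{x_j}{d} + \epsilon_0 \cdot \frac{\partial(x_j)}{d} + \sum_{i=1}^{m} \epsilon_i \cdot \frac{\nabla_i(x_j)}{d}
\]
is a well-defined element of $\XY \otimes_{\Tilde{R}} S$, and the universal property of the prismatic envelope supplies the desired unique $\delta$-ring extension. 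The same divisibility gives $\psi(d) \in d\cdot(\XY \otimes_{\Tilde{R}} S)$, so $\psi$ preserves the $d$-adic filtration and descends modulo $d^n$. The principal technical point is really just the bookkeeping in (ii): one must confirm that the arithmetic- and geometric-coordinate $\delta$-compatibilities assemble without cross-terms, which is granted precisely by the orthogonality $\epsilon_0 \epsilon_i = 0$ built into $S$.
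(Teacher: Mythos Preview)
Your proof is correct and follows essentially the same route as the paper, which simply says ``The proof is the same as that for \cref{lemt.lci case appl extend eta}.'' You have filled in the details of that reference, including the one point the paper leaves implicit in Section~5: that the $\psi$ of \cref{lem. AR twist S in the relative case} is itself a $\delta$-ring homomorphism, which you correctly obtain by combining the arithmetic check (as in \cref{rem. delta ring structure on R}) with the geometric one (\cref{prop. rela psi preserve delta structure}) via the orthogonality $\epsilon_0\epsilon_i=0$.
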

\begin{proof}
    The proof is the same as that for \cref{lemt.lci case appl extend eta}.
\end{proof}
\begin{remark}\label{rem. ARL RELATIONS OF GAMMA}
    As the automorphisms $\gamma_i$ ($0\leq i\leq m$) on $\Tilde{R}$ is $\psi$-equivariant and $\gamma_i(x_j)$ belongs to the ideal generated by $d$ in $\XY$, running a similar argument as that for $\psi$, we see that all of these $\gamma_i$'s extend uniquely to automorphisms of $\XY$ by the universal property of the latter. Moreover, the relations given in Equation (\ref{equa. relation between j i}) and \cref{lem. AR relation between 0 and i} still holds on $\XY$. Then similarly as \cref{absolute def.skew polynomial}, we can define the \textit{Ore extension} $\XY[\partial, \underline{\nabla}; \gamma_{\XY}]$.
\end{remark}

Given \cref{lemt.ARL lci case appl extend eta}, we then restrict \cref{propt.key automorphism of functors AR} to $Y_{}^{\Prism}$ as follows.
\begin{proposition}\label{propt. ARL lci key automorphism of functors relative case}
    $\gamma_{b,c_{\psi}}$ constructed in \cref{propt.key automorphism of functors AR} restricts to an isomorphism $\gamma_{b, c_{\psi}}$ between functors $\rho: \Spf(\XY\otimes_{\Tilde{R}} S) \xrightarrow{\iota} \Spf(\XY) \xrightarrow{\rho_Y} Y_{}^{\Prism}$
    and $\rho\circ \psi: \Spf(\XY\otimes_{\Tilde{R}} S) \xrightarrow{\psi} \Spf(\XY)\xrightarrow{\rho_Y} Y_{}^{\Prism}$, i.e. we have the following commutative diagram:
\[\xymatrixcolsep{5pc}\xymatrix{\Spf(\XY\otimes_{\Tilde{R}} S)\ar[d]^{\iota}\ar[r]^{\psi}& \Spf(\XY)\ar@{=>}[dl]^{\gamma_{c_{\psi}}} \ar[d]_{}^{\rho_Y}
\\\Spf(\XY)  \ar^{\rho_Y}[r]&Y_{}^{\Prism}}\]
\end{proposition}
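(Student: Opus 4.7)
The approach is to mirror the proof of \cref{propt. lci key automorphism of functors relative case}, but replace the input from the relative smooth case \cref{propt.key automorphism of functors relative case} with the absolute smooth case \cref{propt.key automorphism of functors AR}. The essential new ingredient compared to the relative setting is the presence of the Cartier–Witt divisor on the base $\Spf(\mathbb{Z}_p)^{\Prism}$, which is handled by the element $b$ of \cref{lemT.construct b}. The essential new ingredient compared to the smooth absolute case is the need to factor through the prismatic envelope $\Prism_X(Y)$, which is handled by extending $\psi$ via \cref{lemt.ARL lci case appl extend eta}.

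Concretely, set $T = \XY \otimes_{\Tilde{R}} S$. For a test algebra, a point of $Y^{\Prism}(T)$ consists of a Cartier–Witt divisor $\alpha:(d)\otimes W(T)\to W(T)$ together with a morphism $\eta:\Cone((d,x_1,\ldots,x_r)\to \Tilde{R})\to \Cone(\alpha)$; so the two functors in question determine two such data: on one side $(\alpha,\eta)$ with $\alpha$ defined via $\Tilde{\iota}$ and $\eta$ induced by $\Tilde{\iota}$, on the other side $(\alpha',\eta')$ with $\alpha'$ defined via $\Tilde{\psi}$ and $\eta'$ induced by $\Tilde{\psi}$. The point is that $\Tilde{\psi}$ is well-defined on $(x_1,\ldots,x_r)$ with image landing in $(d)\cdot W(T)$, which is guaranteed precisely by \cref{lemt.ARL lci case appl extend eta}: the relation $\frac{\psi(x_j)}{d}\in T$ lifts along Witt vectors.

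Next, I would take the isomorphism $\gamma_b:\alpha'\xrightarrow{\simeq}\alpha$ from \cref{lemT.construct b} (viewed in $W(T)$ through $\Tilde{R}\to T$) and the homotopy $c_\psi$ produced in the proof of \cref{propt.key automorphism of functors AR} for $\Spf(T)\to \Spf(\Tilde{R})\to X^{\Prism}$, and verify that these assemble into a morphism of quasi-ideals
\[\xymatrix{(d,x_1,\ldots,x_r)\ar@<-.5ex>[d]_{\Tilde{\psi}}\ar@<.5ex>[d]^{\gamma_b\circ\Tilde{\iota}}\ar[r]&\Tilde{R}\ar[d]_{\Tilde{\psi}}\ar@<-.5ex>[d]^{\Tilde{\iota}}\\ (d)\otimes_{\Tilde{R},\Tilde{\iota}}W(T)\ar[r]&W(T).}\]
The restriction of the two left vertical maps to the generator $d$ already agrees up to $\gamma_b$ by the construction of $b$; on the extra generators $x_j$, the two maps take values in $(d)\cdot W(T)$ thanks to \cref{lemt.ARL lci case appl extend eta}, and the difference is killed by the (extended) homotopy. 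Constructing this last extension is the step that needs a small verification: for each $x_j$ one must exhibit $c_\psi(x_j)\in W(T)$ with $\Tilde{\psi}(x_j)-\gamma_b\circ\Tilde{\iota}(x_j)=d\cdot c_\psi(x_j)$, compatibly with $c_\psi$ on $\Tilde{R}$. Since $W(T)$ is $d$-torsion free (by the analog of \cref{lem. s torsion freeness} applied to $T$), uniqueness is automatic, and existence reduces via Dwork's lemma to a ghost-component identity of the same shape as in the proof of \cref{prop. relative c to construct homotopy}.

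The main obstacle, modest but not entirely formal, is precisely the last point: keeping track of the interaction between the $b$-twist (which is responsible for the arithmetic direction $\partial$) and the extension across the $x_j$ (which is responsible for passing from $X^{\Prism}$ to $Y^{\Prism}$). Once the quasi-ideal diagram above is verified, naturality in the test algebra is immediate from the naturality of $b$ and $c_\psi$, and the resulting $2$-morphism $\gamma_{b,c_\psi}$ is the desired isomorphism between $\rho_Y\circ\iota$ and $\rho_Y\circ\psi$ in $Y^{\Prism}$.
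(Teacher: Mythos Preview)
Your approach is correct and essentially matches the paper's: restrict $\gamma_b$ to $W(\XY\otimes_{\Tilde{R}} S)$ for the Cartier--Witt divisor isomorphism, then invoke the homotopy argument from \cref{propt. lci key automorphism of functors relative case} with the input upgraded from \cref{propt.key automorphism of functors relative case} to \cref{propt.key automorphism of functors AR}.

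One point where you over-elaborate: you flag the construction of $c_\psi(x_j)$ as ``the step that needs a small verification'' and propose re-running a Dwork's lemma argument. This is unnecessary. The elements $x_j$ already lie in $\Tilde{R}$, and the proof of \cref{propt.key automorphism of functors AR} (combining \cref{lemt.construct homotopy} with \cref{prop. relative c to construct homotopy}) has already produced $c_\psi(x)\in W(S)$ for every $x\in\Tilde{R}$ satisfying $\Tilde{\psi}(x)-\Tilde{\iota}(x)=d\cdot c_\psi(x)$; one simply pushes this forward along $W(S)\to W(T)$. The genuinely new input in the l.c.i.\ setting is that both $\Tilde{\psi}(x_j)$ and $\Tilde{\iota}(x_j)$ land in $(d)\cdot W(T)$ so that the quasi-ideal diagram makes sense, and this is exactly what \cref{lemt.ARL lci case appl extend eta} provides. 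A small notational point: in your quasi-ideal diagram, the right-hand vertical maps are $\Tilde{\psi}$ and $\Tilde{\iota}$ (as you wrote), but $\gamma_b$ acts as the identity on the $W(T)$ side, so writing $\gamma_b\circ\Tilde{\iota}$ on the left is harmless but potentially confusing; the $b$-twist only affects the line-bundle side.
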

\begin{proof}
    By restricting $\gamma_b$ used in the proof of \cref{propt.key automorphism of functors AR} to $W(\XY\otimes_{\Tilde{R}} S)$, we get the desired isomorphism of Cartier-Witt divisors obtained via $\rho$ and $\rho\circ \psi$ separately. The rest follows from the proof of \cref{propt. lci key automorphism of functors relative case}.
\end{proof}

Similarly as the discussion in the previous sections, \cref{propt. ARL lci key automorphism of functors relative case} and \cref{rem. ARL RELATIONS OF GAMMA} imply that for $\mathcal{E}\in \mathcal{D}(Y^{\Prism})$, $\rho_Y^*\mathcal{E}$ is equipped with $\nabla_{\mathcal{E}}$ and $\partial_{\mathcal{E}}$ satisfying a sequence of relations. 
With these preliminaries in hand, we extend  \cref{thmt. AR main classification} to the locally complete intersection case.
\begin{theorem}\label{thmt. ARL main classification}
Assume that $p>2$ or $\alpha>0$. For $n\in \mathbb{N}\cup \{\infty\}$, the covering map $\rho_Y: \Spf(\Prism_X(Y)/d^n)\to Y_{n}^{\Prism}$ induces a functor
    \begin{align*}
        &\beta_n^{+}: \mathcal{D}(Y_{ n}^{\Prism}) \rightarrow \mathcal{D}(\XY/d^n[\partial, \underline{\nabla}; \gamma_{\Tilde{R}}]),
       \qquad \mathcal{E} \mapsto (\rho_X^{*}\mathcal{E}, \partial_{\mathcal{E}}, \nabla_{\mathcal{E}})
    \end{align*}
    which is fully faithful. 
     Moreover, its essential image consists of those objects $M\in \mathcal{D}(\XY/d^n[\partial, \underline{\nabla}; \gamma_{\Tilde{R}}])$ 
     satisfying the following pair of conditions:
    \begin{itemize}
        \item $M$ is $(p,d)$-adically complete.
        \item The action of $\partial$  and $\nabla_{i}$ on the cohomology $\mathrm{H}^*(M\otimes_{\XY/d^n}^{\mathbb{L}}\XY/(d,q-1))$ 
        is locally nilpotent for all $i$.
    \end{itemize}
\end{theorem}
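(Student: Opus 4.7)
The plan is to assemble this theorem by combining the two prior classifications: \cref{thmt. AR main classification} (absolute smooth case) and \cref{thmt. lci main classification} (relative lci case), exactly as Construction \ref{intro.con} and the discussion before \cref{intro.main thm4} suggest. First I would check that the functor is well-defined. The homotopy $\gamma_{b,c_\psi}$ of \cref{propt. ARL lci key automorphism of functors relative case} produces, for $\mathcal{E}\in \mathcal{D}(Y_n^{\Prism})$, a $q$-connection $\partial_\mathcal{E}$ and $q$-Higgs derivations $\nabla_{\mathcal{E},i}$ on $\rho_Y^*\mathcal{E}$. By \cref{rem. ARL RELATIONS OF GAMMA}, the automorphisms $\gamma_i$ (and derivations $\partial,\nabla_i$) extend uniquely along $\tilde R\to\Prism_X(Y)$ preserving all the Ore relations proved in \cref{prop. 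AR two derivation vanish} and \cref{lem. AR relation between 0 and i}; hence $\rho_Y^*\mathcal{E}$ acquires the structure of a module over $\Prism_X(Y)/d^n[\partial,\underline\nabla;\gamma_{\tilde R}]$, giving $\beta_n^+$.

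For fully faithfulness, by a standard argument using generation of $\mathcal{D}(Y_n^{\Prism})$ under shifts and colimits by $\{\mathcal{I}^k\}_{k\in\mathbb{Z}}$ (see below), it suffices to identify $\mathrm{R}\Gamma(Y_n^{\Prism},\mathcal{E})$ with $\RHom_{\Prism_X(Y)/d^n[\partial,\underline\nabla;\gamma_{\tilde R}]}(\Prism_X(Y)/d^n,\beta_n^+(\mathcal{E}))$, which by \cref{absolute rem. rel cohomology of q higgs} equals $\fib(\mathrm{DR}(\rho_Y^*\mathcal{E},\nabla_\mathcal{E})\xrightarrow{\tilde\partial^{[\bullet]}_\mathcal{E}}\mathrm{DR}(\rho_Y^*\mathcal{E},\nabla_\mathcal{E}))$. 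I would prove this by iterating the two established classifications along the pullback square
\[\xymatrix{Y^{\Prism}_{/A,n}\ar[d]\ar[r]^{\pi}& Y_n^{\Prism}\ar[d]^{f}\\ \Spf(A/d^n)\ar[r]^{\rho_A}&\Spf(\mathcal{O}_K)_n^{\Prism}.}\]
Using that $f$ is affine (as a base-change of $\Spf(\mathcal{O}_K)^{\Prism}_n$-stacks along a closed immersion, verified as in \cite[Lemma 2.10]{anschutz2023hodge}), \cref{thmt.main classification} gives $\mathrm{R}\Gamma(Y_n^{\Prism},\mathcal{E})\simeq\fib(\rho_A^*Rf_*\mathcal{E}\xrightarrow{\partial}\rho_A^*Rf_*\mathcal{E})$. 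Flat base-change identifies $\rho_A^*Rf_*\mathcal{E}$ with $\mathrm{R}\Gamma(Y^{\Prism}_{/A,n},\pi^*\mathcal{E})$, and then \cref{thmt. lci main classification} (applied with the transversal prism $A$ in place of the base) realizes the latter as $\mathrm{DR}(\rho_Y^*\mathcal{E},\nabla_\mathcal{E})$. The compatibility of the two pieces of the homotopy $\gamma_{b,c_\psi}$, originating from \cref{lemT.construct b} and \cref{prop. relative c to construct homotopy}, forces the induced $\partial$-action on this de Rham complex to equal the operator $\tilde\partial^{[\bullet]}_\mathcal{E}$ constructed in \cref{absolute rem. rel cohomology of q higgs}; this matching should be done spot-by-spot using \cref{lem. cohomology for modules over the noncommutative ring}.

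For the essential image, the containment of $\beta_n^+(\mathcal{D}(Y_n^{\Prism}))$ in the claimed subcategory reduces, by generation by $\mathcal{I}^k$, to checking the nilpotence on $\mathcal{I}^k$, which follows from \cref{example. AR relative action on generators} (after noting that $\nabla_i$ kills the Breuil–Kisin twist and $\partial$ acts by a multiple of $e$, vanishing modulo $(d,p,q-1)$ under the standing hypothesis $p>2$ or $\alpha>0$). Conversely, given $M$ in the target satisfying both conditions, I would mimic the end of \cref{thmt. AR main classification}: $(p,d)$-completeness reduces to the Hodge-Tate locus, then derived Nakayama reduces to base change to $k=A/(d,q-1)$, where by \cref{rem. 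AR relation modulo q-1 and d} the operators $\partial,\nabla_i$ all commute. The joint nilpotence then allows us to iteratively apply $\partial$ and the $\nabla_i$'s until reaching a nonzero cohomology class simultaneously killed by all operators, producing a nonzero map from $\beta_1^+(\mathcal{O})[m]$ to $M$.

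The main obstacle is establishing generation of $\mathcal{D}(Y_n^{\Prism})$ under shifts and colimits by $\{\mathcal{I}^k\}_{k\in\mathbb{Z}}$ in this lci setting: the proof of \cref{prop. AR generate by Ik} used the explicit structure of $X^{\HT}$ as a classifying stack together with the filtration argument of \cite[Proposition 3.5.15]{bhatt2022absolute}, and both ingredients require care for $Y$ lci. I expect this can be handled by pushing forward along the closed immersion $Y^{\Prism}_n\hookrightarrow X^{\Prism}_n$ and leveraging \cref{prop. AR generate by Ik} in the smooth ambient setting, together with the affineness of the closed immersion on Hodge–Tate stacks; alternatively, one directly analyzes $Y^{\HT}$ as an affine quotient of $\Spf(\Prism_X(Y)/d)$ by the pullback of $G_X$ from \cref{lem. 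AR HT stack geometric structure}, using the identification from the proof of \cref{thmt. lci main classification}.
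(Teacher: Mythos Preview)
Your proposal is correct in outline and would succeed, but the paper takes a shorter route that sidesteps precisely what you flag as the ``main obstacle.''

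Your approach re-runs the full faithfulness and essential image arguments from scratch for $Y$: factor through $Y_{/A,n}^{\Prism}\to Y_n^{\Prism}$, combine \cref{thmt.main classification} with \cref{thmt. lci main classification} to compute cohomology, then prove generation of $\mathcal{D}(Y_n^{\Prism})$ by $\{\mathcal{I}^k\}$ separately. This works; in particular, generation follows easily from your second suggestion once you observe that $\pi\colon Y^{\HT}\to X^{\HT}$ is affine, so $\pi_*$ is conservative and adjunction plus \cref{prop. AR generate by Ik} for $X$ yields generation for $Y$.

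The paper instead reduces to $n=1$ by the same d\'evissage as in \cref{thmt. AR main classification}, and then for the Hodge--Tate locus uses the affine morphism $\pi\colon Y^{\HT}\to X^{\HT}$ to write
\[
\mathcal{D}(Y^{\HT})\;\simeq\;\Mod_{\pi_*\mathcal{O}}\bigl(\mathcal{D}(X^{\HT})\bigr)\;\simeq\;\Mod_{\beta_1^+(\pi_*\mathcal{O})}\bigl(\mathcal{D}_{\mathrm{nil}}(R[\partial,\underline\nabla;\gamma_{\tilde R}])\bigr),
\]
the second equivalence coming from \cref{thmt. AR main classification}. One then checks, exactly as in the proof of \cref{thmt. lci main classification}, that $\beta_1^+(\pi_*\mathcal{O})=(\Prism_X(Y)/d,\partial_{\Prism_X(Y)},\nabla_{\Prism_X(Y)})$ and that modules over this algebra object are precisely the desired category. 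This single step simultaneously gives full faithfulness and the essential image at $n=1$, with no separate generation argument needed. Your second alternative for resolving the obstacle is essentially this, but the paper uses it to prove the whole $n=1$ statement at once rather than only generation.
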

\begin{proof}
Given \cref{propt. ARL lci key automorphism of functors relative case} and \cref{rem. ARL RELATIONS OF GAMMA}, an analogous result of \cref{prop. AR two derivation vanish} for $\mathcal{E}\in \mathcal{D}(Y_n^{\Prism})$  still holds, hence $\beta_n^+$ is well-produced. Moreover, the inductive process for proving \cref{thmt. AR main classification} still works once we show that the theorem holds for $n=1$, i.e. the Hodge-Tate case. 

For this purpose, first we notice that $\rho_X: X=\Spf(R) \to X_{}^{\HT}$ is a cover with automorphism group $G_X$ by \cref{lem. AR HT stack geometric structure} (here $G_X$ is defined over $X$). Then by restricting the pullback square above \cref{lemt.ARL lci case appl extend eta} to the Hodge-Tate locus, we obtain the following pullback diagram
\[\xymatrixcolsep{8pc} \xymatrix{ \Spf(\XY/d) \ar[r]^{\rho_Y^{\prime}} \ar^{\pi^{\prime}}[d] &  Y_{}^{\HT}=\Spf(\XY/d)/(G_X)_{Y} \ar^{\pi}[d] \\
X=\Spf(R) \ar[r]^{\rho_Y} & X_{}^{\HT}=X/G_X=\Spf(R)/G_X.}\]
Here $(G_X)_{Y}$ is the base change of $G_X$ along the closed embedding $Y\to X$.

Consequently, we have that 
\begin{equation}\label{equa. ARlci identify}
    \mathcal{D}(Y_{}^{\HT})=\Mod_{\pi_*\mathcal{O}}(\mathcal{D}(X/G_X)=\Mod_{\beta_1^+(\pi_*\mathcal{O})}(\mathcal{D}_{\nil}(\XY/d[\partial, \underline{\nabla}; \gamma_{\Tilde{R}}])).
\end{equation}
Here we use $\mathcal{D}_{\nil}(\XY/d[\partial, \underline{\nabla}; \gamma_{\Tilde{R}}])$ to denote the essential image of $\beta_1^+$ stated in \cref{thmt. AR main classification} for simplicity. 

But arguing as that in the proof of \cref{thmt. lci main classification}, we conclude that the right-hand side of \cref{equa. ARlci identify} is exactly the wanted category in \cref{thmt. ARL main classification}.
\end{proof}

\bibliographystyle{amsalpha}
\bibliography{main,preprints}

\end{document}